\newtheorem{theo}{Theorem}[section]
\newtheorem{theorem}[theo]{Theorem}
\newtheorem{prop}[theo]{Proposition}
\newtheorem{lemma}[theo]{Lemma}
\newtheorem{coro}[theo]{Corollary}
\newtheorem{conj}[theo]{Conjecture}
\theoremstyle{definition}
\newtheorem{defin}[theo]{Definition}
\newtheorem{example}[theo]{Example}
\theoremstyle{remark}
\newtheorem{remark}[theo]{Remark}
\newcommand{\C}{\mathbb{C}}
\newcommand{\F}{\mathbb{F}}
\newcommand{\A}{\mathbb{A}}
\renewcommand{\P}{\mathbb{P}}
\newcommand{\mS}{\mathcal{S}}
\newcommand{\X}{\mathcal{X}}
\newcommand{\Xp}{\mathcal{X}^\prime}
\newcommand{\Xpp}{\mathcal{X}^{\prime\prime}}
\newcommand{\wX}{\widehat{\mathcal{X}}}
\renewcommand{\l}{\ell}
\renewcommand{\L}{\mathcal{L}}
\newcommand{\wL}{\widehat{\mathcal{L}}}
\newcommand{\Y}{\mathcal{Y}}
\newcommand{\mC}{\mathcal{C}}
\newcommand{\mQ}{\mathcal{Q}}
\newcommand{\mO}{\mathcal{O}}
\DeclareMathOperator{\tor}{\dasharrow}
\DeclareMathOperator{\Sec}{Sec}
\DeclareMathOperator{\Sing}{Sing}
\DeclareMathOperator{\Pic}{Pic}
\DeclareMathOperator{\mult}{mult}
\DeclareMathOperator{\Bl}{Bl}
\DeclareMathOperator{\II}{II}
\begin{document}

\begin{titlepage}
\begin{center}

\includegraphics[width=0.13\textwidth]{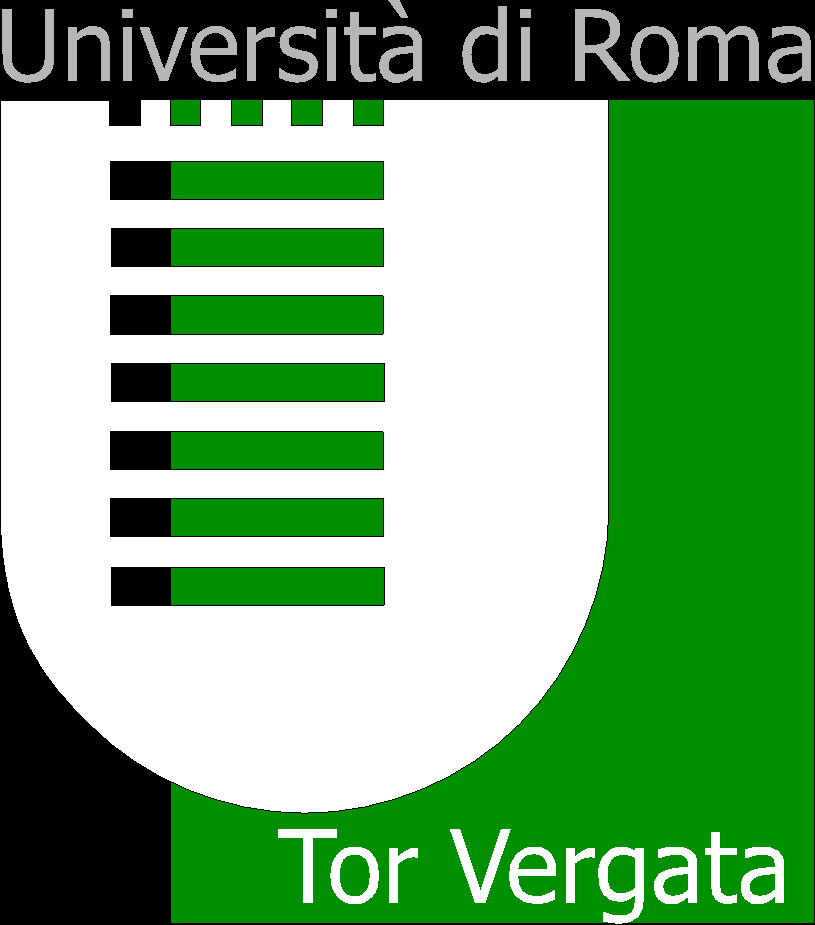}

\LARGE \textbf
{ \mbox{UNIVERSITÀ DEGLI STUDI DI ROMA} \\
``TOR VERGATA"}

 \vskip 8mm

\Large
FACOLTÀ DI MATEMATICA

 \vskip 8mm

DOTTORATO DI RICERCA IN MATEMATICA

 \vskip 8mm

CICLO XXV 

 \vskip 28mm

\LARGE
Threefolds with one apparent double point

 \vskip 28mm

\Large
Vitalino Cesca Filho

 \vskip 5mm

A.A. 2013/2014

 \vskip 6mm

\end{center} 
\begin{flushleft}

Docente Guida/Tutor: Prof. Ciro Ciliberto

 \vskip 3mm

Coordinatore: Prof. Carlo Sinestrari

\end{flushleft}
\end{titlepage}

\newpage
\tableofcontents

\chapter*{Introduction}
\addcontentsline{toc}{chapter}{Introduction}

Given an irreducible variety $X$ of dimension $n$ in $\P^{2n+1}$, the number of \emph{apparent double points} of $X$ is the number of secant lines to $X$ passing through a general point of $\P^{2n+1}$. If this number is 1, then $X$ is called a \emph{variety with one apparent double point}, or OADP variety. Roughly, $X$ is said to be \emph{secant defective} if through a general point of $\P^{2n+1}$ there is no secant line to $X$. Therefore OADP varieties are considered the simplest non defective varieties. However, their classification is very rich and challenging. 

In dimension one, the only OADP variety is the rational normal cubic (see for instance \cite[Proposition 2.2]{cr}). The classification in dimension two started with Severi. In \cite{severi},  he stated that the OADP surfaces having at most a finite number of singularities were degree four rational normal scrolls and weak Del Pezzo surfaces. A gap in his arguments was fixed in \cite{r_severi}, which considers only the smooth case. More recently, Ciliberto and Russo  classified all OADP surfaces, including the Verra Varieties to Severi's list, see \cite{cr}. 

The classification of smooth OADP threefolds was done by Ciliberto, Mella and Russo, in \cite{cmr}. In the same work, the authors have also shown an important property of a smooth OADP variety $X$: if $x$ is a general point of $X$, then the projection $\tau:X\tor \P^n$ from the projective tangent space $T_xX$ is birational. This also holds for singular OADP varieties, as remarked in \cite{cr}. The inverse of $\tau$ contracts to the point $x$ a hypersurface $V\subset \P^n$, which  the authors called \emph{fundamental hypersurface}. They studied the case in which $V$ is a hyperplane, giving a partial classification of OADP varieties for arbitrary dimension $n$.

The aim of the present work is to study singular OADP threefolds such that the fundamental surface $V$ is not a plane. This will be done through the analysis of linear systems $\X$ in $\P^3$ that define the inverse of a tangential projection $\tau$ of $X$.

This approach classifies the so-called \emph{Bronowski varieties}, i.e., varieties such that the general tangential projection $\tau$ is birational. As explained above, OADP varieties are Bronowski. The converse is known as the \emph{Bronowski conjecture} and it has been first claimed in \cite{bronowski}. All examples of Bronowski varieties that appear in our study are also OADP.

Some remarks made in \cite{cr} allow us to simplify the study of Bronowski threefolds. The first remark is that $V$ is a projection of the Veronese quartic surface to $\P^3$. The case in which $V$ is a quadric is studied in Chapter \ref{ed_chapter}. In Chapter \ref{sl_chapter}, the cubic case is considered. Finally, in Chapter \ref{sc_chapter}, the case in which $V$ is a quartic surface is analysed. 

The second remark is that the linear system $\X$ which defines the map $\tau^{-1}:\P^3\tor X\subset \P^7$ has degree $d\geq 2\delta+1$, where $\delta=\deg V$. Here, we focus in the cases in which equality holds, that is, we assume the following hypothesis:
\begin{itemize}
\item[(H)] Let $X$ be a normal Bronowski threefold such that the linear system $\X$ defining the inverse of the tangential projection at a general point $x\in X$ has degree $d=2\delta+1$, where $\delta$ is the degree of the fundamental surface.
\end{itemize}

The hypothesis on the degree of $\X$ may seem unnatural at a first glance, but it will be explained later that, for instance, it holds in the smooth cases (see page \pageref{pr_hipoteseH}).

Other properties of $\X$ explained in \cite{cr} are used in the classification (see Lemma \ref{pr_X'eX''} below).

By understanding the different linear systems $\X$ that can define a map $\tau^{-1}$, we can study the varieties $X$ given by the images of this map. The obvious way to do that is to blow up the base locus of $\X$. This is simplified by the fact that, in most of the cases, the curves in the base locus of $\X$ are rational curves. Another option is to look for a family of surfaces in $X$ that span projective spaces of low dimension. Then we can use the fact that $X$ is \emph{linearly normal} (see page \pageref{pr_lin_normal}) together with  a result on linearly normal varieties from \cite{eh_minimal} to prove that it lies on a rational normal scroll.

The singularities of $X$ can be understood with the help of Lemma \ref{pr_truqueconetangente} and Lemma \ref{pr_truque_secao_tgente}. In addition, the fact that $\tau$ is a projection implies that most of these singular points are mapped to points in $\P^3$ (more precise statements are given in Lemma \ref{ed_singularidades_vem_de_C4}, Lemma \ref{sl_singularidades_vem_de_C6} and Lemma \ref{sc_singularidades_vem_de_C6}).

Most of the threefolds considered in this thesis are degenerations of known OADP threefolds. The ones in Chapter \ref{ed_chapter} are degenerations of the Edge variety of degree seven. In 6 of the 19 degenerations, the fundamental surface is a cone. The varieties in Chapter \ref{sl_chapter} are degenerations of the degree eight smooth scroll in lines described in Example \ref{pr_exemplo_sl}. In all of them, the fundamental surface is the general cubic scroll with a double line.

All the threefolds that appear in Chapter \ref{sc_chapter} are singular, so they do not appear in the classification in \cite{cmr}. They have degree nine and all of them have a point of multiplicity four. In the general threefold, the tangent cone at this singular point is a cone over the quartic Veronese surface. The fundamental surface can be any of the three quartic Steiner surfaces, each producing different OADP varieties.

Considering the classification of smooth OADP threefolds presented in \cite{cmr}, these examples of degree nine are quite unexpected. This shows that not only smooth OADP varieties are interesting, but also the normal ones.

The main results of this thesis are the following:

In Chapter \ref{ed_chapter}, a full classification of the case in which $V$ is a quadric is given, with a description of the singularities, in Theorem \ref{ed_the_classification}. In the same result there is a description of these varieties as residual intersections of quadrics with a cone over the Segre embedding of $\P^1\times\P^2$, which allow us to prove that those are OADP varieties. In Table \ref{ed_tabela}, the 19 varieties fitting in this case are listed.

In Chapter \ref{sl_chapter}, the number of varieties produced is very high, so an explicit list is not given. This is due to the number of possible degenerations of a curve in the base locus of $\X$. A hypothesis on the base locus of $\X$ was added, namely that it has pure dimension one. This is due to the fact that we do not study in detail its scheme structure, so there could be unexpected embedded points. The types of singularities of $X$ are explained and a geometric description is given (see Theorem \ref{sl_the_classification}). The fact that these threefolds are OADP is also proved.
 
In Chapter \ref{sc_chapter}, a similar hypothesis is also made on the base locus of $\X$. The singularities of $X$ are explained in Proposition \ref{sc_singularidades_de_X}. We prove that the general variety is OADP, by describing it as an intersection of divisors in a certain cone.

\chapter{Preliminaries}

\section{Notations and basic definitions}

Let $X\subset \P^N$ be a \emph{variety}, that is, a reduced projective scheme over $\C$, and $x$ a point in $X$. In what follows, $T_xX$ denotes the \emph{embedded projective space} to $X$ at $x$.

Let $\varepsilon:\Bl_x(X)\to X$ be the blow of $X$ in $x$, with exceptional divisor $E$. If $y$ is a point in $E$, we say that $y$ is \emph{infinitely near to} $x$, writing $x\prec y$.  When doing blow-ups, I refer in the same way to varieties and their strict transforms, if there is no danger of confusion.\label{ciro_transf_estrita}

More generally, we say that $y$ is \emph{infinitely near to $x$ of order $n$} if there is a sequence of blow ups:
\[ X_n \xrightarrow{\,\varepsilon_n\,} X_{n-1}\xrightarrow{\,\varepsilon_{n-1}\,} \cdots \xrightarrow{\,\varepsilon_2\,} X_1\xrightarrow{\,\varepsilon_1\,} X_0=X \]
where $\varepsilon_i$ is the blow up at $x_{i-1}$, such that $\varepsilon_i(x_i)=x_{i-1}$ for $i=1,\ldots,n$, $x=x_0$ and $\varepsilon_n(y)=x_{n-1}$.

If a point $x\in X$ is not infinitely near to any other point in $X$, we say that $x$ is \emph{proper}.

I will now introduce some notations.

For rational normal scrolls, the same notation of \cite{eh_minimal} will be used. So given $0 \leq a_0\leq \dots \leq a_d$, $S(a_0,\ldots,a_d)$ is the image of the projective bundle $\P(\mathcal{E})$ via $\vert \mO_{\P(\mathcal{E})}(1)\vert$, where:
\[ \mathcal{E} = \bigoplus_{i=0}^d \mO_{\P^1}(a_i) \]
In other words, it is the scroll described by the $\P^d$'s joining corresponding points of $d+1$ rational normal curves of degrees $a_0,\ldots,a_d$ (where the rational normal curve of degree zero is a point). It has dimension $d+1$, degree $a_0+\dots +a_d$ and spans a space of dimension $a_0+\dots+a_d+d$.

For a curve $C\subset \P^3$, the exceptional divisor of its blow up is denoted by $E_C$ and the intersection of the strict transform of a surface (or a linear system) $S$ with $E_C$ is denoted $S_C$. The same notation is used in the blow up of points.

Moreover, if $C$ is a rational curve and it is the complete intersection of two surfaces $C=S_1\cap S_2$, then its normal bundle is:
\[ N_C=N_{C/\P^3}\cong \mO_{\P^1}(a)\oplus\mO_{\P^1}(b) \]
where $C^2=a$ in $S_1$ and $C^2=b$ in $S_2$. Then, blowing up $C$, it follows that $E_C\cong\F_n$, where $n=\vert b-a \vert$, since $E_C$ is isomorphic to $\P(N_C)$.

If $C$ is a curve and $E_C\cong \F_0$, I will denote the divisor classes of $E_C$ as $(a,b)$, where $(1,0)$ are the fibers over points of $C$. If $E_C\cong \F_n$, for $n\geq 1$, I'll write its divisor classes as $ae_n+bf_n$, where $e_n$ is the $(-n)$-section and $f_n$ is a fiber.

Sometimes it is usefull to map curves in $E_C$ birationally to $\P^2$. This will be done in the following way:
\begin{lemma}\label{pr_imagemF1F2}
Let $D$ be a curve in $E\cong \F_n$. Then there is a birational map that maps $D$ to a curve $C\subset \P^2$ such that:
\begin{itemize}
\item[(i)] If $n=0$ and $D\equiv (a,b)$, then $C$ has degree $a+b$, one point of multiplicity $a$ and one point of multiplicity $b$. 
\item[(ii)] If $n>0$ and $D\equiv ae_n+bf_n$, then $C$ has degree $b$, a point $p$ of multiplicity $b-a$ and $n-1$ points of multiplicity $a$ infinitely near to $p$. The points of $D$ lying on $e_n$ are mapped to points infinitely near to $p$.
\end{itemize}
\end{lemma}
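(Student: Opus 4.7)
My plan is to construct, in each case, an explicit birational map $\F_n \dashrightarrow \P^2$ and read off the image of $D$ via a divisor class computation.

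For (i), $\F_0 = \P^1 \times \P^1$ admits the classical Cremona-type construction: blow up a general point $p \in \F_0$ chosen off $D$, and contract the strict transforms of the two rulings through $p$, both of which become $(-1)$-curves on the blow-up. Their images are distinct points $q_1, q_2 \in \P^2$. Since a bidegree $(a,b)$ divisor meets the two rulings at $p$ in $a$ and $b$ points respectively, its image acquires multiplicities $a$ at $q_1$ and $b$ at $q_2$; the total degree is $a+b$, as seen by pulling back a general line (which becomes a bidegree $(1,1)$ divisor through $p$).

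For (ii), let $\widetilde{\P^2}$ be the blow-up of $\P^2$ at $p = p_0$ together with $n-1$ additional distinct points $p_1,\dots,p_{n-1}$ on the exceptional divisor $E_0$; write $L$ for the pullback of a line and $E_i$ for the total transform of the $i$-th exceptional. The key observation is that $\widetilde E_0 = E_0 - E_1 - \cdots - E_{n-1}$ has self-intersection $-n$, and the $n-1$ strict transforms $\ell_i = L - E_0 - E_i$ of the lines through $p_0$ in the directions $p_i$ are mutually disjoint $(-1)$-curves, each disjoint from $\widetilde E_0$. Contracting the $\ell_i$ yields a birational morphism $\widetilde{\P^2} \to \F_n$: the result has Picard rank $2$, $\widetilde E_0$ becomes the $(-n)$-section $e_n$, and the class $L - E_0$ (unchanged by contraction since $(L-E_0)\cdot \ell_i = 0$) becomes a general fiber $f_n$, with $(L-E_0)\cdot \widetilde E_0 = 1$.

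Under this identification, $a e_n + b f_n$ pulls back to
\[
a(E_0 - E_1 - \cdots - E_{n-1}) + b(L - E_0) = bL - (b-a)E_0 - aE_1 - \cdots - aE_{n-1},
\]
which pushes forward via the blow-down $\widetilde{\P^2} \to \P^2$ to a plane curve of degree $b$ with multiplicity $b-a$ at $p$ and multiplicity $a$ at each infinitely near point $p_i$. The $b - na = D \cdot e_n$ intersections of $D$ with $e_n$ correspond to intersections of $D$ with $\widetilde E_0$, which all lie over $p$ and therefore map to points infinitely near $p$, matching the final assertion. The self-intersection equality $2ab - na^2 = b^2 - (b-a)^2 - (n-1)a^2$ serves as a consistency check. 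The only substantive step is the intersection-number verification that the contraction indeed yields $\F_n$ with the claimed correspondence; this is routine bookkeeping and presents no essential obstacle.
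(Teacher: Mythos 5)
Your proof is correct, and the birational map you construct is in fact the very same one the paper uses, just resolved in one step rather than factored. The paper writes $\F_n\dashrightarrow\P^2$ as a chain of elementary transformations $\F_n\dashrightarrow\F_{n-1}\dashrightarrow\cdots\dashrightarrow\F_1\to\P^2$, each blowing up a general point off the negative section and contracting the fiber through it, and it only tracks the class $ae_k+bf_k$ together with the multiplicity-$a$ point created at each step; the common resolution of that composite is exactly your $\widetilde{\P^2}$, with your $\ell_i$ as the contracted fiber components. What your presentation buys is an explicit description of the infinitely near configuration (all $n-1$ multiplicity-$a$ points lie on $E_0$, i.e.\ are first-order infinitely near to $p$), the identification $e_n=\widetilde E_0$ that makes the last assertion of the lemma immediate, and the self-intersection consistency check for free; the paper's induction is shorter. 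One point both arguments leave tacit, and which you could make explicit: your class computation gives the pushforward of the \emph{total} transform of $D$, and this agrees with the actual image curve $C$ only when $D$ misses the $n-1$ indeterminacy points $q_i=\pi(\ell_i)$ of $\F_n\dashrightarrow\P^2$ (otherwise each $\mult_{q_i}D>0$ lowers the degree and the multiplicities accordingly). Since the $q_i$ lie off $e_n$ on the chosen fibers and can be placed generally there, a general choice of the map arranges this --- the same genericity the paper invokes by blowing up ``a general point not contained in $e_n$''.
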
 
\begin{proof}
In item $(i)$, just consider the map that blows up a general point of $\F_0$ and contracts the two lines through it.

To prove $(ii)$, for $n=1$ take the map contracting the $(-1)$-section. For $n>1$, consider the elementary transformation $\varepsilon:\F_n\tor\F_{n-1}$, consisting of the blow up at a general point not contained in $e_n$ and the contraction of the strict transform of the fiber through this point. It maps a curve of class $ae_n+bf_n$ to a curve of class $ae_{n-1}+bf_{n-1}$ and it creates a point of multiplicity $a$ in $e_{n-1}$. Then the result follows.
\\ \end{proof}

After mapping curves in $E_C$ to curves in $\P^2$, we often perform suitable Cremona transformations, in order to obtain simpler curves. Those that will be used the most are the \emph{standard quadratic transformations}:

\begin{defin}\label{pr_def_stdquad}
Let $p_1,p_2,p_3$ be three points in $\P^2$. The standard quadratic transformation based in $p_1,p_2,p_3$ is the Cremona transformation in $\P^2$ determined by the linear system of conics through these points. It blows up $p_1,p_2,p_3$ and contracts the three lines containing pairs of these points. 

A curve of degree $d$ and multiplicity $m_i$ in  $p_i$ is mapped via such transformation to a curve of degree $2d-m_1-m_2-m_3$ with multiplicity $d-m_j-m_k$ in the contraction of the line through $p_j$ and $p_k$.
\end{defin}

We now recall the definition of tangent cone. Let $Y\subset \mathbb{A}^n$ be an affine variety defined by an ideal $I$ and let $y$ a point in $Y$. Consider all the polynomials in $I$ expanded around $y$ and let $I^*$ be the ideal generated by the \emph{leading forms} of these polynomials, that is, the homogeneous parts of lower degree. Then the \emph{affine tangent cone} of $Y$ in $y$ is the scheme defined by the ideal $I^*$, that is, it is the spectrum of the ring:
\[ \C[X_1,\ldots,X_n]/I^* \cong \bigoplus_{n\geq 0} \frac{m_y^n}{m_y^{n+1}} \]
where $X_1,\ldots,X_n$ are the coordinates in $\A^n$ and $(\mO_y,m_y)$ is the local ring of regular functions of $Y$ in $y$.

If $Y\subset \P^n$ is a projective variety, the \emph{projective tangent cone} of $Y$ in $y$, denoted $\mC_yY$ is the closure of the respective affine tangent cone in an affine chart containing $y$. We will use the term \emph{tangent cone} for the projective definition, as the varieties studied here are usually projective.

The dimension of $\mC_yY$ is equal to the local dimension of $Y$ in $y$ and its degree is equal to the multiplicity of $Y$ in $y$. Since it is defined by homogeneous polynomials, it can be projectivized. Its projectivization is isomorphic to the exceptional divisor of the blow up of $Y$ in $y$.

The following is an easy result on tangent cones:

\begin{lemma}\label{pr_interseccao_conetangente}
Let $Y,Z$ be two hypersurfaces in $\P^n$ and let $q\in Y\cap Z$ be a point such that:
\[ \mult_q(Y\cap Z)=(\mult_q Y)\cdot (\mult_q Z) \]

Then:
\[ \mC_q(Y\cap Z) = \mC_qY \cap \mC_qZ \] 
\end{lemma}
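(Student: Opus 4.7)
The plan is to work in local affine coordinates around $q$. Pick an affine chart $\A^n \subset \P^n$ centered at $q$, and let $f, g$ be local defining equations of $Y, Z$. Expand them as $f = f_a + f_{a+1} + \cdots$ and $g = g_b + g_{b+1} + \cdots$ with $a = \mult_q Y$, $b = \mult_q Z$ and $f_a, g_b$ the leading forms. By definition, $\mC_q Y$ and $\mC_q Z$ are the subschemes cut out by $f_a$ and $g_b$, so $\mC_q Y \cap \mC_q Z$ is the subscheme of $\A^n$ defined by the ideal $(f_a, g_b)$.

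The scheme $\mC_q(Y \cap Z)$ is, by definition, cut out by the ideal $I^*$ generated by the leading forms of all elements of $(f, g)$. Since $f_a$ and $g_b$ are themselves the leading forms of $f$ and $g$, they lie in $I^*$, whence $(f_a, g_b) \subseteq I^*$ and consequently $\mC_q(Y \cap Z) \subseteq \mC_q Y \cap \mC_q Z$ as closed subschemes. For the reverse inclusion I would compare degrees: by the recalled property of tangent cones, the degree of $\mC_q(Y \cap Z)$ (viewed via its projectivization in $\P^{n-1}$) equals $\mult_q(Y \cap Z) = ab$. On the other hand, the projectivization of $\mC_q Y \cap \mC_q Z$ in $\P^{n-1}$ is cut out by forms of degrees $a$ and $b$, so Bezout bounds its degree by $ab$, with equality exactly when $f_a$ and $g_b$ share no common factor, i.e.\ $(f_a, g_b)$ is a complete intersection. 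The inclusion forces $\deg \mC_q(Y\cap Z) \leq \deg(\mC_q Y \cap \mC_q Z)$, so both degrees equal $ab$; then $(f_a, g_b)$ is a complete intersection, hence Cohen-Macaulay with no embedded components, and the closed subscheme $\mC_q(Y \cap Z)$ of the same pure dimension and the same degree must coincide with it.

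The main subtlety is in justifying the scheme-theoretic degree comparison and in excluding the possibility that $f_a$ and $g_b$ share a common factor. Such a factor would cause $\mC_q Y \cap \mC_q Z$ to carry a component of strictly larger dimension than $\mC_q(Y \cap Z)$, and the pure-dimensional Bezout bound would not apply directly; ruling this out amounts to the classical inequality $\mult_q(Y \cap Z) \geq \mult_q Y \cdot \mult_q Z$ in a regular local ring, with equality forcing the tangent cones to meet properly. The cleanest formalization proceeds via Hilbert-Samuel multiplicities of the associated graded rings, using the fact that a proper submodule of full dimension inside a Cohen-Macaulay module strictly decreases the multiplicity.
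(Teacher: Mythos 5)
Your proof follows essentially the same route as the paper's: the containment $\mC_q(Y\cap Z)\subset \mC_qY\cap\mC_qZ$ from the leading forms $f_a,g_b$ lying in $I^*$, followed by a dimension and degree comparison using $\deg\mC_q(Y\cap Z)=\mult_q(Y\cap Z)=ab$. If anything, your version is slightly more careful than the paper's at the last step, since you explicitly invoke the Cohen--Macaulayness of the complete intersection $(f_a,g_b)$ to rule out embedded components before concluding that equal dimension and degree force equality, a point the paper leaves implicit.
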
 
\begin{proof}
Let $\A^n$ be an affine chart of $\P^n$ containing $q$. Then $Y$ and $Z$ are defined in $\A^n$ respectively as the zero locus of polynomials $f$ and $g$ in $n$ variables. Let $f^*$ and $g^*$ be the leading forms of these polynomials expanded around $q$ and let $I$ be the ideal generated by $f^*$ and $g^*$.

Then $\mC_qY$ is defined by the ideal $(f^*)$, $\mC_qZ$ is defined by $(g^*)$ and $\mC_q Y\cap \mC_q Z$ is defined by $I$. The tangent cone of $Y\cap Z$ is generated by the leading forms of polynomials in the ideal $(f,g)$. These include the polynomials in $I$. Therefore: \[\mC_q(Y\cap Z) \subset( \mC_qY \cap \mC_qZ) \]

The hypothesis implies that the $\mC_qY$ and $\mC_qZ$ do not have common components. Therefore:
\[ \dim (\mC_qY \cap \mC_qZ)=n-2=\dim \mC_q(Y\cap Z) \]

Moreover, the hypothesis also implies that:
\[ \deg \mC_q(Y\cap Z)=\deg\mC_qY \cdot \deg \mC_qZ = \deg(\mC_qY \cap \mC_qZ)\]
Hence the result is proved.
\\ \end{proof}

Let $\phi:\P^n\tor X\subset \P^N$ be a birational map defined by a linear system $\L$ in $\P^n$. Suppose there is a point $p\in \P^n$ of multiplicity $m>0$ for $\L$ that is mapped to a point $x_p\in X$. In other words, blowing up $p$, the strict transform of $\L$ intersects $E_p\cong \P^{n-1}$ in a fixed divisor of degree $m$. The following Lemma gives a method to understand the projectivized tangent cone of $X$ at $x_p$, that is,  the exceptional divisor of the blow up of $X$ in $x_p$.

\begin{lemma}\label{pr_truqueconetangente}
Keeping the above notation, let $\wL$ be the linear system of hypersurfaces in $\L$ having multiplicity $m+1$ in $p$. Let $C$ be the image of $E_p$ via the map defined by the moving part of $\wL\cap E_p$. If $E_p$ is not contracted by this map, that is, if $C$ has dimension $n-1$, then it is projectively equivalent to a component of the projectivized tangent cone of $X$ at $x_p$. 
\end{lemma}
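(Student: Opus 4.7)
The plan is to identify the map $E_p\tor C$ defined by the moving part of $\wL\cap E_p$ with the natural map induced on $E_p$ by the linear projection from $x_p$, and then recognize the target as sitting inside the projectivized tangent cone $\P(\mC_{x_p}X)\subset\P^{N-1}$.

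First I would show that $\wL$ coincides with the sublinear system of $\L$ consisting of pullbacks via $\phi$ of hyperplanes of $\P^N$ passing through $x_p$. A blow-up computation shows that the pullback of such a hyperplane has multiplicity at least $m+1$ at $p$, since its strict transform on $\Bl_p\P^n$ must contain $E_p$ (as $E_p$ is sent to $x_p$); the converse follows from the fact that both linear systems have codimension one in $\L$. Consequently, the composition $\psi=\pi_{x_p}\circ\phi:\P^n\tor\P^{N-1}$ of $\phi$ with the linear projection $\pi_{x_p}$ from $x_p$ is defined precisely by $\wL$.

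Next I would pass to blow-ups. The hypothesis that the strict transform of $\L$ cuts on $E_p$ a fixed divisor of full degree $m$ means that $E_p$ is truly contracted to $x_p$ by $\phi$, so by the universal property of the blow-up $\phi$ lifts to a rational map $\tilde\phi':\Bl_p\P^n\tor\Bl_{x_p}X$ whose restriction to $E_p$ takes values in the exceptional divisor of $\Bl_{x_p}X$, canonically identified with $\P(\mC_{x_p}X)$. Since $\pi_{x_p}$ extends to a morphism $\Bl_{x_p}\P^N\to\P^{N-1}$ that is the identity on the exceptional $\P^{N-1}$, composing gives a rational map $\tilde\psi:\Bl_p\P^n\tor\P^{N-1}$ lifting $\psi$ and sending $E_p$ into $\P(\mC_{x_p}X)$. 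By construction $\tilde\psi|_{E_p}$ is the rational map defined by the moving part of $\wL\cap E_p$, whose image is $C$; hence $C\subset\P(\mC_{x_p}X)$.

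To conclude, $C$ is irreducible as the image of an irreducible variety, by hypothesis $\dim C=n-1=\dim\P(\mC_{x_p}X)$, and the projectivized tangent cone has pure dimension $n-1$ since it is the exceptional divisor of $\Bl_{x_p}X$. Therefore $C$ is an irreducible top-dimensional component of $\P(\mC_{x_p}X)$, hence projectively equivalent to one such component up to the linear embedding of the target of $\wL\cap E_p$ into $\P^{N-1}$. The delicate point will be verifying that $\tilde\psi|_{E_p}$ really is given by the full moving part of $\wL\cap E_p$ (rather than by some smaller linear subsystem), which requires a careful tracking, through the blow-up, of how the tangent cones at $p$ of members of $\wL$ correspond to the restrictions to $E_p$ of their strict transforms.
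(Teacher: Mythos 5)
Your proposal is correct and follows essentially the same route as the paper's proof: both identify $\wL$ with the pullbacks of hyperplanes through $x_p$, so that the map it defines is the internal projection from $x_p$ composed with $\phi$, and both then observe that $E_p$ is sent into the (projectively embedded) exceptional divisor of $\Bl_{x_p}X$, i.e.\ the projectivized tangent cone, before concluding by the dimension count. The only difference is presentational: the paper phrases the containment $C\subset C'$ via the resolution $\tilde{\pi}$ of the projection and the remark that $\pi^{-1}$ contracts $C$ to $x_p$, whereas you lift $\phi$ itself to the blow-ups, which amounts to the same thing.
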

\begin{proof}
The hypersurfaces in $\L$ correspond, in $\P^N$, to the hyperplane sections of $X$. Since $p$ is mapped to $x_p$, $\wL$ corresponds to those hyperplane sections which contain $x_p$. Then $\wL$ defines a map $\widehat{\phi}:\P^n\tor X^\prime\subset \P^{N-1}$, where $X^\prime$ is the image of $X$ via the internal projection $\pi$ from $x_p$. Note that $\pi^{-1}(C)=x_p$. Let $\tilde{\pi}$ be the resolution of $\pi$ by the blow up $\tilde{X}\to X$ of $X$ in $x_p$, with exceptional divisor $E\subset \P^{N-1}$, where $\P^{N-1}$ is the exceptional divisor of the blow up of $\P^N$ in $x_p$. See the diagram below:

\[ \xymatrix{ 
& 	&	&\tilde{X} \ar@{->}[dl]_{\Bl_{x_p}} \ar@{->}[ddl]^{\tilde{\pi}} \\
&\P^n\ar@{-->}[r]^\phi	&X\ar@{-->}[d]^\pi	& \\
&						&X^\prime\ar@{<--}[ul]^{\widehat{\phi}}	&
} \]

The exceptional divisor $E$ is projectively equivalent to its image $C^\prime\subset X^\prime$ via $\tilde{\pi}$, that is, $C^\prime$ is projectively equivalent to the projectivized tangent cone of $X$ in $x_p$. Note that the cone over $C^\prime$ with vertex $x_p$ is actually the tangent cone of $X$ in $x_p$, since it consists of the lines having an extra tangency with $X$ in $x_p$. 

Since $\pi^{-1}$ contracts $C$ to $x_p$, it follows that $C\subset C^\prime$. By hypothesis, $C$ has dimension $n-1$, so it is a component of $C^\prime$ and the result follows.
\\ \end{proof}

To illustrate this Lemma, we'll give an example:

\begin{example}
Let $\L$ be the linear system of cubic curves in $\P^2$ having two base points: $p$ and $q$, with $p\prec q$. It defines a map $\phi:\P^2\tor X\subset\P^7$. It is known that $X$ has a singularity of type $A_1$, which is locally of the form $x^2+y^2+z^2=0$. This singular point $x_p$ is the image of $p$.

Let $\wL$ be the linear system of curves in $\L$ having multiplicity two in $p$. Note that $p$ is the only base point of $\wL$. It defines the map $\widehat{\phi}:\P^2\tor X^\prime\subset \P^6$, where $X^\prime=\pi(X)$ is the projection of $X$ from $x_p$. The point $x_p$ is the contraction via the inverse of $\pi$ of a conic $C\subset X^\prime$: the cone over $C$ with vertex $x_p$ is the tangent cone of $X$ in $x_p$.

Blowing  up the point $p$, the restriction of $\wL$ to $E_p$ has degree two and no base points, so it is mapped by $\widehat{\phi}$ to a conic. By Lemma \ref{pr_truqueconetangente}, this conic is $C$.

Now let $\mathcal{M}$ be the linear system of cubic curves in $\P^2$ having three infinitely near base points, that is, $p\prec q\prec q^\prime$. It defines a map $\psi:\P^2\tor Y\subset\P^6$ which maps $p$ to a point $y_p$.

Let $\widehat{\mathcal{M}}$ be the linear system of curves in $\mathcal{M}$ having multiplicity two in $p$. These curves have a further base point in $q$. Therefore, blowing up $p$, the moving part of $\widehat{\mathcal{M}}\cap E_p$ maps it to a line. By Lemma \ref{pr_truqueconetangente}, the projectivized tangent cone of $Y$ in $y_p$ contains a line, that is, this tangent cone contains a plane.

This agrees with the fact that $y_p$ is a singularity of type $A_2$, which is locally of the form $x^2+y^2+z^3=0$. Its tangent cone is $x^2+y^2=0$, which factors as the union of two planes.
\end{example}

\section{Bronowski and OADP varieties}

Let $X\subset \P^N$ be an irreducible, non degenerate, projective variety of dimension $n$. Let $X(2)$ be the twofold symmetric product of $X$. Consider the incidence correspondence:
\[ I(x):=\{ ((x_1,x_2),p)\in X(2)\times \P^N : x_1\neq x_2, p\in <x_1,x_2> \} \]
Define the \emph{abstract secant variety} of $X$ to be the closure $\mS(X)$ of $I(X)$ in $X(2)\times \P^N$. The image $\Sec(X)$ of the projection of $\mS(X)$ to $\P^N$ is the \emph{secant variety} of $X$. In other words, $\Sec(X)$ is the closure in $\P^N$ of the union of the \emph{secant lines} of $X$, that is, the lines spanned by two distinct points in $X$.

Since $\mS(X)$ has dimension $2n+1$, the surjective map $p_X:\mS(X)\to \Sec(X)$ gives:
\[ \dim(\Sec(X))\leq \min\{2n+1,N\} \]
and $X$ is said to be \emph{secant defective} if the strict inequality holds. According to \emph{Terracini's Lemma}, $X$ is secant defective if and only if:
\[ \dim(T_{x_1}X\cap T_{x_2}X) > \min\{2n-N,-1\} \]

A \emph{general tangential projection} of $X\subset \P^{2n+1}$ is a projection $\tau_x:X\tor \P^n$ from the tangent space $T_xX$ at a general point $x\in X$. Then $X$ is said to be a \emph{Bronowski variety} if its general tangential projection is birational. 

A Bronowski variety is not secant defective. Indeed, let $C$ be the join of $T_xX$ and $X$. Then by \cite[equation 1.2.8]{r_tangent}:
\[ \dim(C)=\dim(T_xX)+\dim(X)-\dim(T_xX\cap T_yX) \] 
with $y$ a general point in $X$. If $X$ is Bronowski, $\tau_x$ is dominant and $\dim(C)=2n+1$, giving $\dim(T_xX\cap T_yX)=-1$. Then the result follows by  Terracini's Lemma. 

Suppose now that $X\subset \P^{2n+1}$ is not secant defective, that is, $\Sec(X)=\P^{2n+1}$. Then we say that $X$ is a \emph{variety with one apparent double point}, shortly \emph{OADP variety}, if  $p_X:\mS(X)\to \P^{2n+1}$ is birational, that is, if through a general point of $\P^{2n+1}$ there is a unique secant line to $X$. 

Using the same argument of \cite[\S 3]{cmr}, we have that \emph{OADP varieties are Bronowski varieties}. The following is known as the \emph{Bronowski conjecture}:
\begin{conj}
Any Bronowski variety is OADP.
\end{conj}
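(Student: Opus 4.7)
The plan is to deduce the birationality of the secant map $p_X: \mS(X) \to \P^{2n+1}$ from the birationality of the tangential projection $\tau_x$. Since a Bronowski variety is automatically non-defective (as observed immediately above), $p_X$ is surjective, and source and target both have dimension $2n+1$; it therefore suffices to show that through a general $p \in \P^{2n+1}$ there passes a \emph{unique} secant line of $X$.

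To this end, I would fix such a $p$ and one secant $\ell = \langle x_1, x_2 \rangle$ through it, with $x_1, x_2 \in X$ general. Projection from $T_{x_1} X$ collapses $\ell$ to the single point $q := \tau_{x_1}(x_2) \in \P^n$ and sends $p$ to the same $q$. If a second secant $\ell' = \langle x_1', x_2' \rangle$ also contained $p$, then $\tau_{x_1}(x_1')$ and $\tau_{x_1}(x_2')$ would be two distinct points lying on a line in $\P^n$ through $q$. The aim is then to use the inverse map $\tau_{x_1}^{-1}: \P^n \tor X$, defined by the linear system $\X$ on $\P^n$, to show that the existence of two distinct $X$-preimages in this configuration forces the pair $\{x_1',x_2'\}$ into the indeterminacy locus of $\tau_{x_1}^{-1}$, contradicting the generality of $p$.

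The main obstacle is precisely this last step: birationality of $\tau_{x_1}$ controls only \emph{generic} fibers, and nothing a priori prevents the extra pair $\{x_1',x_2'\}$ from meeting the fundamental hypersurface $V \subset \P^n$ or the base locus of $\X$. A satisfactory argument would demand a tight link between the secant incidence at $p$ and the projective geometry of $\X$, for instance via a dimension count over the family of lines in $\P^n$ meeting $V$ in a prescribed way, or a monodromy argument on the incidence correspondence parametrising pairs $(\ell,p)$ of secants through a base point. Since the implication stated is the longstanding Bronowski conjecture, no uniform proof is known in full generality; the classification results of the subsequent chapters should accordingly be read as establishing the conjecture case by case within the regime $d=2\delta+1$, rather than as attempting to settle it abstractly.
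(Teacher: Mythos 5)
There is nothing in the paper to compare your argument against: the statement is labelled as a \emph{conjecture} (the Bronowski conjecture, going back to \cite{bronowski}), and the thesis offers no proof of it in general. You correctly recognise this and stop short of claiming a proof, which is the right call. What the paper actually does is verify the conjecture for the specific classes it constructs, and by a different mechanism than the one you sketch: each threefold is exhibited as a residual intersection of quadrics with a rational normal scroll or a cone over a Segre variety (Theorem \ref{ed_the_classification}, Proposition \ref{sl_X_contido_em_F}, Proposition \ref{sc_X_contido_em_3_cones}), and the OADP property is then read off from that description via the argument of \cite[Example 2.6]{cmr}. So the conjecture is confirmed here case by case within the regime $d=2\delta+1$, exactly as you say in your closing sentence.

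One remark on your sketch itself: the difficulty sits even earlier than the point you flag. If a second secant $\ell'=\langle x_1',x_2'\rangle$ passes through $p$, then $\tau_{x_1}(x_1')$ and $\tau_{x_1}(x_2')$ are merely two points on a line of $\P^n$ through $q=\tau_{x_1}(x_2)$; since $\tau_{x_1}$ is dominant onto $\P^n$, this configuration is in no tension with birationality. Birationality only constrains the fiber of $\tau_{x_1}$ over a \emph{single} general point, so a contradiction could arise only if one of $x_1',x_2'$ were forced into the fiber over $q$ itself (i.e.\ into $\langle T_{x_1}X,x_2\rangle$), which the incidence $p\in\ell'$ does not impose. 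This is precisely why the implication ``Bronowski $\Rightarrow$ OADP'' is not formal and remains open; your proposed dimension counts or monodromy arguments would have to supply exactly the missing global input.
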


\section{The tangential projection}

Let $X\subset \P^7$ be a Bronowski threefold. The results in this section (except for Lemma \ref{pr_truque_secao_tgente}) were presented in \cite{cr} for arbitrary $n=\dim X$.

Let $x$ be a general point of $X$ and let $\tau=\tau_x:X\tor \P^3$ be the associated tangential projection. Consider the blow up $\pi:\Bl_x(X)\to X$ of $X$ at $x$ with exceptional divisor $E\cong \P^2$ and set $\tilde{\tau}=\tau \circ \pi$. Let $V$ be the image of $E$ via $\tilde{\tau}$. Then the map:
\[ \bar{\tau} = \tilde{\tau}_{\vert E}:E\tor V \subset \P^3 \]
is birational (see \cite[(5,7)]{gh}). It is defined by a linear system of conics, the \emph{second fundamental form} $\II_{X,x}$ of $X$ at $x$. See \cite[(3.2)]{iveylands} for other definitions of the second fundamental form.

The surface $V$ is called the \emph{fundamental surface} of $X$. Since it is the image of $\P^2$ via a linear system of conics, $V$ is a birational projection to $\P^3$ of the Veronese surface of degree four in $\P^5$. Hence:
\begin{lemma}
If $X$ is a Bronowski threefold, then one of the following holds:
\begin{itemize}
\item[(i)] $\II_{X,x}$ has a fixed line and $V$ is a plane;
\item[(ii)] $\II_{X,x}$ has three base points and $V$ is a plane;
\item[(iii)] $\II_{X,x}$ has two base points and $V$ is a smooth quadric surface or a quadric cone;
\item[(iv)] $\II_{X,x}$ has one base point and $V$ is a cubic surface, projection of a cubic scroll from an external point;
\item[(v)] $\II_{X,x}$ has no base points and $V$ is a Steiner quartic surface, projection of the Veronese quartic from an external point.
\end{itemize}
\end{lemma}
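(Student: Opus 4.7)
The plan is to view $V = \bar\tau(E)$ as the birational image of $E \cong \P^2$ under the linear system $\II_{X,x}$ of conics, and to enumerate such images up to projective equivalence. The complete system $|\mO_{\P^2}(2)|$ realises the Veronese embedding of $\P^2$ in $\P^5$, so $\II_{X,x}$ corresponds to projecting the Veronese surface from a linear subspace $L \subset \P^5$; the base points of $\II_{X,x}$ in $\P^2$ match the intersection of $L$ with the Veronese, counted with the appropriate multiplicity for infinitely near configurations. Since $V$ lies in $\P^3$, the linear span $\langle V \rangle$ is either $\P^3$ (with $\dim \II_{X,x} = 3$ and $L$ a line) or a plane (with $\dim \II_{X,x} = 2$ and $L$ a two-plane).

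I would first dispose of fixed components. A common divisorial factor of conics can only be a line, since a fixed conic would force the system to be zero-dimensional. If $\II_{X,x}$ has a fixed line $\ell$, then $\II_{X,x} = \ell \cdot \Lambda'$ for some linear subsystem $\Lambda' \subseteq |\mO_{\P^2}(1)|$, so $\dim \II_{X,x} = \dim \Lambda' \leq 2$; for the map to be birational onto a surface one needs $\Lambda' = |\mO_{\P^2}(1)|$, and then the induced map is essentially linear and $V$ is a plane, which is case (i).

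Assuming no fixed component, the base locus is finite. Computing $\deg V$ as the self-intersection of the strict transform of a general conic on the resolution gives $\deg V = 4 - \sum_p m_p^2$, where $p$ ranges over (ordinary or infinitely near) base points. The bound $\deg V \geq 1$, together with the observation that a base point of multiplicity $\geq 2$ would produce a system of singular conics whose image degenerates to a curve, reduces the analysis to lengths $k \in \{0,1,2,3\}$ for the base locus. When $k = 0$ the system is the base-point-free $3$-dimensional subsystem obtained by projecting the Veronese from a line disjoint from it, and $V$ is a Steiner quartic (case (v)). When $k = 1$ the system is contained in the $4$-dimensional complete system of conics through a point, which embeds $\Bl_p \P^2$ as the cubic scroll in $\P^4$; the further projection from an external point yields the cubic surface of case (iv). When $k = 2$ the system equals the complete $3$-dimensional system of conics through the two base points, and the resolved map sends $\P^2$ onto a quadric, smooth when the points are distinct and a cone when one is infinitely near to the other (case (iii)). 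When $k = 3$ a fixed line is excluded by assumption, so the three base points are not collinear; the system then has dimension $2$ and realises the standard quadratic Cremona transformation, with $V$ a plane, giving case (ii).

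The main obstacle I foresee is keeping track of configurations involving infinitely near base points, in order to obtain the quadric-cone sub-case of (iii) correctly, and to verify that three collinear simple base points actually force a fixed line and hence fall under (i) rather than (ii). The geometric identifications of $V$ with a Steiner quartic, a projected cubic scroll, a quadric (smooth or cone), or a plane are then consequences of the classical description of linear systems of conics having the corresponding base loci.
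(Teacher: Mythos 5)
Your proposal is correct and is essentially the argument the paper leaves implicit: the paper simply records that $V$ is a birational projection of the Veronese quartic to $\P^3$ and lists the cases, and your enumeration of the conic systems by the length and configuration of their base scheme (with the degree count $\deg V=4-\sum m_p^2$, the exclusion of multiple base points, and the collinearity/fixed-line dichotomy for $k=3$) is the standard justification of exactly that list. No discrepancy with the paper's route.
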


Cases $(i)$ and $(ii)$ have been completely described in \cite{cmr}. Case $(iii)$ will be considered in Chapter \ref{ed_chapter}, case $(iv)$ in Chapter \ref{sl_chapter} and case $(v)$ in Chapter \ref{sc_chapter}.

Let $\sigma$ be the inverse of $\tau$, defined by a linear system $\X$, of dimension $7$ and degree $d\leq \deg(X)$. It contracts $V$ to the point $x\in X$, so $\X$ has fixed intersection with $V$.

Set $\delta=\deg(V)$. Let $\X^\prime$ be the linear system of surfaces $F$ of degree $d-\delta$ such that $F+V\in \X$. The moving part of $\X^\prime$ corresponds, via $\tau$, to hyperplane sections of $X$ through $x$.

And let $\X^{\prime\prime}$ be the linear system of surfaces $F$ of degree $d-2\delta$ such that $F+2V\in \X$. Its moving part corresponds, via $\tau$, to tangent hyperplane sections of $X$ at $x$, that is, hyperplane sections of $X$ containing $T_xX$. In conclusion:
\begin{lemma}\label{pr_X'eX''}
Let $\X^\prime$ and $\X^{\prime\prime}$ be as defined above. Then:
\begin{itemize}
\item[(i)] The image of $\P^3$ via the map defined by $\X^\prime$ is the image of $X$ by the internal projection $\pi_x:X\to X^\prime\subset \P^6$ from $x$. 
\item[(ii)] The restriction of $\X^\prime$ to $V$ defines the map $\bar{\sigma}$, inverse of $\bar{\tau}$
\item[(iii)] The moving part of $\X^{\prime\prime}$ is the linear system of planes in $\P^3$, which correspond, via $\tau$, to the tangent hyperplane sections of $X$ at $x$.
\item[(iv)] If $\Pi\subset\P^3$ is a plane containing a base curve $C$ of $\X$, then it corresponds to a tangent hyperplane section of $X$ containing the image of the blow up at $C$.
\item[(v)] The fixed part of $\X^{\prime\prime}$ consists of the exceptional divisors corresponding to the blow up at the indeterminacy locus of $\tau$ out of $x$ and not contracted by $\tau$.
\end{itemize}
\end{lemma}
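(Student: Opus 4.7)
The approach is to set up a bijection between the linear system $\X$ on $\P^3$ and the linear system $|\mO_X(1)|$ of hyperplane sections of $X$ via $F\leftrightarrow H$ with $\sigma^*(H\cap X)=F$. The crucial observation is that, because $\sigma$ contracts $V$ to $x$, the divisor $\sigma^*(H\cap X)$ contains $V$ with multiplicity equal to the order of contact of $H$ with $X$ at $x$: zero if $x\notin H$, one if $x\in H$ but $T_xX\not\subset H$, and two if $T_xX\subset H$. This yields the identifications $\X^\prime\leftrightarrow\{H:H\ni x\}$ and $\X^{\prime\prime}\leftrightarrow\{H:H\supset T_xX\}$, from which the rest follows.

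Parts (i) and (iii) are then formal. For (i), the internal projection $\pi_x:X\dashrightarrow X^\prime$ is defined on $X$ by the subsystem of hyperplane sections through $x$; pulling back via $\sigma$, the composition $\pi_x\circ\sigma$ is defined on $\P^3$ by the moving part of $\X^\prime$, so its image is $X^\prime$. For (iii), the tangential projection $\tau$ is defined on $X$ by $\tau^*|\mO_{\P^3}(1)|$, and using $\tau\circ\sigma=\mathrm{id}_{\P^3}$ we get $\sigma^*\tau^*|\mO_{\P^3}(1)|=|\mO_{\P^3}(1)|$, so the moving part of $\X^{\prime\prime}$ is the full linear system of planes. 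For (ii), I would lift $\sigma$ to a rational map $\tilde\sigma:\P^3\dashrightarrow\Bl_x X$ sending $V$ onto $E$; it is defined by the moving part of $\X^\prime$ (which on $\Bl_x X$ corresponds to $|H-E|$), and its restriction to $V$ is by construction the birational inverse $\bar\sigma$ of $\bar\tau=\tilde\tau|_E$.

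For (iv), by (iii) any plane $\Pi\subset\P^3$ corresponds to the tangent hyperplane $H_\Pi=\tau^{-1}(\Pi)$; if $C$ is a base curve of $\X$ and $Y_C$ denotes the image in $X$ of the exceptional divisor $E_C$ of the blow-up of $C$, the equality $\tau\circ\tilde\sigma=\pi$ of rational maps on $\Bl_C\P^3$ (where $\pi$ is the blow-down) forces $\tau(Y_C)\subset C$, hence $Y_C\subset\tau^{-1}(\Pi)=H_\Pi$. For (v), any $p\in X\setminus\{x\}$ in the indeterminacy of $\tau$ must lie on $T_xX$, so every hyperplane $H\supset T_xX$ contains $p$; blowing up $p$ in $X$, if the exceptional divisor $E_p$ is not contracted by $\tau$ it maps to a surface $D_p\subset\P^3$, and the previous remark forces $D_p$ to be a component of $\sigma^*(H\cap X)$ for every tangent $H$. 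Hence $D_p$ is a fixed component of $\X^{\prime\prime}$, and a degree count (degree $d-2\delta$ with moving part of degree one, by (iii)) conversely shows that all $d-2\delta-1$ degrees of fixed part are accounted for by such divisors. The most delicate step is (v), where one must carefully analyse the indeterminacy locus of $\tau$ outside $x$ --- including possibly embedded or infinitely near components --- and verify that the degree bookkeeping closes up; (ii) also requires some care in identifying the lifted map with the system coming from $\X^\prime$.
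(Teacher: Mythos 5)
Your proposal is correct and takes essentially the same approach as the paper, which in fact does not prove this lemma at all but recalls it from \cite{cr}, offering as justification exactly your key observation: the multiplicity of $V$ in the member of $\X$ corresponding to a hyperplane $H$ equals the order of contact of $H$ with $X$ at $x$, so that subtracting $V$ once (resp.\ twice) isolates the hyperplane sections through $x$ (resp.\ containing $T_xX$). The one step worth tightening is the converse direction in $(v)$: a degree count alone does not close the argument, and the cleaner route is to note that any fixed component $D\neq V$ of $\X^{\prime\prime}$ satisfies $\sigma(D)\subset\bigcap_{H\supset T_xX}(H\cap X)=T_xX\cap X$, so that, outside the hypersurface and scroll-over-a-curve cases in which $T_xX\cap X$ acquires a two-dimensional component (a situation the paper handles separately on page \pageref{pr_hipoteseH}), $D$ must be contracted by $\sigma$ and is therefore the image of an exceptional divisor over the indeterminacy locus of $\tau$ off $x$.
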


Item $(iii)$ implies that $d\geq 2\delta+1$ and equality holds if and only if $\X^{\prime\prime}$ has no fixed part. In this thesis, the following hypothesis will be made on $X$:\label{pr_hipoteseH}
\begin{itemize}
\item[(H)] Let $X$ be a normal Bronowski threefold such that the linear system $\X$ defining the inverse of the tangential projection at a general point $x\in X$ has degree $d=2\delta+1$, where $\delta$ is the degree of the fundamental surface.
\end{itemize}

This is a working hypothesis, so we are restricting to a class of normal Bronowski threefolds satisfying this condition on the degree of $\X$. We now make some remarks in order to clarify its meaning.

As noted in Lemma \ref{pr_X'eX''}, $d>2\delta+1$ if and only if $\Xpp$ has a fixed part, which consists of the exceptional divisors corresponding to the blow up at the indeterminacy locus of $\tau$ out of $x$. So, one has to analyse the intersection scheme $T_xX\cap X$ and decide if its components, with the exception of $x$, are contracted to curves or points in $\P^n$.

Since $\dim X=3$, by \cite[Proposition 5.2]{cmr} $T_xX\cap X$ has a component of dimension two if and only if $X$ is either a hypersurface or a scroll over a curve. Moreover, it was explained in \cite[Proposition 5.5]{cr} that a normal Bronowski threefold that is a scroll over a curve is a smooth rational normal scroll. In this case, $\X$ is the linear system of cubic surfaces having a double and a simple fixed lines, both lying in the plane $V$. Therefore, if $X$ is a normal Bronowski threefold, hypothesis (H) does not hold only if $\dim (T_xX\cap X)\leq 1$. 

Note also that (H) is satisfied when $X$ is a smooth OADP threefold. This can be proven by analysing the tangential projection of the threefolds given in the classification in \cite{cmr}. Another option is to use the fact that in this case the support of $T_xX\cap X$ is either $x$ or a bunch of lines through $x$ (see \cite{cmr}, Lemma 5.3 and Proposition 5.4). Moreover, by \cite[Proposition 6.5]{cmr}, a line of $X$ is mapped by $\tau$ to a line in $\P^3$. Therefore, the image of the blow up of the indeterminacy locus of $\tau$ off $x$ is one-dimensional, so $d=2\delta+1$.

If these considerations can be generalized for a smooth Bronowski threefold, then the results in this thesis imply that the Bronowski conjecture holds for smooth threefolds.

The condition $d=2\delta+1$ is also satisfied when $X$ is a normal Bronowski variety in which the fundamental surface is a plane, that is, $\delta=1$. This follows from the remarks made above on scrolls over a curve and from \cite[Proposition 2.3 and Proposition 5.7]{cr}.

\quad

Before proceeding to the next result, we'll say some words on the dual variety. If $X\subset \P^r$ is an irreducible variety, the \emph{dual variety} of $X$, denoted $X^*$ is the image in ${\P^r}^*$ of the \emph{conormal scheme}, which is the Zariski closure in $X\times {\P^r}^*$ of the irreducible scheme:
\[ \{(x,H):x\in X \setminus \Sing(X), H\in{\P^r}^*, T_xX\subset H \} \]
of dimension $r-1$. Therefore $\dim(X^*)\leq r-1$, and $X$ is said to be \emph{dual defective} if strict inequality holds. According to \cite[(3.5)]{gh}, $X$ is dual defective if and only if at a general point $x\in X$, the second fundamental form $\II_{X,x}$ is a singular linear system, that is, every member is singular.

These definitions can be extended, component-wise, for reducible varieties.

The following Lemma will be useful in understanding the singularities of $X$.

\begin{lemma}\label{pr_truque_secao_tgente}
Let $X$ be a Bronowski threefold, $x\in X$ a general point and $\sigma:\P^3\tor X$ be the inverse of $\tau$, as defined above. Suppose that $\II_{X,x}$ has general smooth member. Let $q\in \P^3$ be a point that is the preimage via $\sigma$ of a point $x_q\in X$. If the image of a general plane through $q$ has a singularity of multiplicity $m$ in $x_q$, then $X$ has a singularity of multiplicity $m$ in $x_q$.
\end{lemma}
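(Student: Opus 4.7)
The plan is to identify, in a neighborhood of $x_q$, the surface $S = \sigma(\Pi)$ with a tangent hyperplane section of $X$ at $x$ passing through $x_q$, and then invoke the fact that such a section has multiplicity $\mult_{x_q} X$ at $x_q$ whenever the hyperplane is not also tangent to $X$ at $x_q$.

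First I would apply Lemma \ref{pr_X'eX''}(iii) to identify a plane $\Pi\subset\P^3$ with the moving part of a tangent hyperplane section $Y = H\cap X$ of $X$ at $x$, where $H\supset T_xX$. The condition $q\in\Pi$ combined with $\sigma(q) = x_q$ forces $x_q\in H$, so as $\Pi$ varies over planes through $q$, $H$ ranges over a $2$-dimensional linear system $\Lambda$ of hyperplanes containing $T_xX\cup\{x_q\}$. Under hypothesis (H), $\dim(T_xX\cap X)\leq 1$, and since $x$ is general we have $x_q\notin T_xX$; thus the part of $H\cap X$ contributed by $T_xX\cap X$ is at most $1$-dimensional and does not meet $x_q$. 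Therefore in a neighborhood of $x_q$ we have $S = H\cap X$ as schemes, so $\mult_{x_q}S = \mult_{x_q}(H\cap X)$.

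Next I would show that for a generic $H\in\Lambda$, $\mult_{x_q}(H\cap X) = \mult_{x_q}X$. The hypothesis that $\II_{X,x}$ has general smooth member is equivalent, by the preceding discussion of the dual variety, to $X$ not being dual defective: the generic tangent hyperplane $H\supset T_xX$ is tangent to $X$ only at $x$ itself. So $H$ does not contain any component of $\mC_{x_q}X$, and standard properties of tangent cones (cf.\ Lemma \ref{pr_interseccao_conetangente}) yield $\mC_{x_q}(H\cap X) = H\cap \mC_{x_q}X$, which has degree $\mult_{x_q}X$. Using the genericity of $x$ to ensure that $\Lambda$ is not contained in the closed locus of hyperplanes tangent to $X$ at both $x$ and $x_q$, upper semicontinuity of multiplicity yields the equality $\mult_{x_q}(H\cap X) = \mult_{x_q}X$ for the generic $H\in\Lambda$. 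Combining the two steps gives $m = \mult_{x_q}S = \mult_{x_q}X$.

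The hard part will be the second step: verifying that $\Lambda$ is not entirely contained in the locus of hyperplanes tangent to $X$ at both $x$ and $x_q$. This uses both the hypothesis on $\II_{X,x}$ — which, via non-dual-defectiveness, guarantees that doubly tangent hyperplanes form a proper closed subset of the $\P^3$-family of tangent hyperplanes at $x$ — and a general-position argument exploiting the genericity of $x$ relative to the fixed $x_q$. The first step is more routine, being a direct consequence of hypothesis (H) and the generality of $x$.
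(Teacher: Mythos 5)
Your first step matches the paper's: via Lemma \ref{pr_X'eX''}(iii), planes through $q$ correspond to tangent hyperplane sections of $X$ at $x$ through $x_q$, so everything reduces to showing that the general such section has multiplicity exactly $\mult_{x_q}X$ at $x_q$. The gap is in your second step, and you have in fact flagged it yourself: you must show that the $2$-dimensional family $\Lambda$ of hyperplanes containing $T_xX\cup\{x_q\}$ is not entirely contained in the locus of hyperplanes whose section has excess multiplicity at $x_q$, and the mechanism you propose does not do this. Non-dual-defectiveness (equivalently, $\II_{X,x}$ having a smooth general member) controls tangency at \emph{smooth} points of $X$: it tells you that the general member of the full $\P^3$ of hyperplanes containing $T_xX$ is tangent to $X$ only at $x$. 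It says nothing about the general member of the proper subfamily $\Lambda\subset\P^3$, and, more importantly, ``tangent only at $x$ among smooth points'' does not imply ``does not contain a component of $\mC_{x_q}X$'': the excess-multiplicity condition at the (typically singular) point $x_q$ is governed by the dual of the surface $Y$ over which $\mC_{x_q}X$ is a cone, not by $X^*$. Knowing that the bad locus is a proper closed subset of the $\P^3$ of tangent hyperplanes at $x$ does not prevent it from containing the particular $\P^2=\Lambda$, which is exactly the possibility that has to be excluded.

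The paper closes this gap with a dual-variety argument that your sketch still needs. Assuming for contradiction that the general member of $\Lambda_x$ gives excess multiplicity for every general $x$, all these hyperplanes are tangent to the cone $\mC_{x_q}X$, i.e., to $Y$; since the families $\Lambda_x$ sweep out a dense subset of the $5$-dimensional $\Sigma_X=\Sigma\cap X^*$ (this is where $\dim X^*=6$, i.e., non-dual-defectiveness, actually enters), one gets $\Sigma_X\subseteq\Sigma\cap Y^*$, hence $\Sigma_X=\Sigma\cap Z^*$ for a component $Z$ of $Y$ by dimension reasons. This forces $(X^\prime)^*=Z^*$ for the projection $X^\prime$ of $X$ from $x_q$, and biduality gives $X^\prime=Z$, absurd since $X^\prime$ is a threefold and $Z$ a surface. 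Some argument of this kind --- relating the families $\Lambda_x$ for varying $x$ to the geometry of the tangent cone at the fixed point $x_q$ --- is indispensable; as written, your proof defers precisely this step.
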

\begin{proof}
By Lemma \ref{pr_X'eX''}, planes in $\P^3$ are mapped by $\sigma$ to tangent hyperplane sections of $X$ at $x$. 
Since $x_q$ is projected to $q$, tangent hyperplane sections of $X$ at $x$ containing $x_q$ are projected to planes in $\P^3$ through $q$.

Since the second fundamental is not singular at a general point of $X$, it follows that $\dim(X^*)=6$. The tangent hyperplane sections of $X$ (at any point) that contain $x_q$ correspond to points in a hyperplane section $\Sigma\cap X^*=\Sigma_X$ of $X^*$, with $\Sigma\subset (\P^7)^*$. 

By hypothesis, the image of a general plane through $q$ has a singularity of multiplicity $m$ in $x_q$. Then this is a point of $X$ of multiplicity $m^\prime\leq m$. Suppose that equality does not hold, that is $m^\prime<m$. Then a general tangent hyperplane section of $X$ at $x$ through $x_q$ is tangent to the tangent cone $C$ of $X$ in $x_q$. Since $x$ is general, it can be replaced by any general point, that is, this holds for a hyperplane section corresponding to a general point in $\Sigma_X$. 

The threefold $C$ is a cone over a (possibly reducible) surface $Y$ with vertex $x_q$. Therefore a hyperplane is tangent to $C$ in $x_q$ if and only if it contains $x_q$ and is tangent to $Y$.  Such hyperplanes correspond to points in the hyperplane section $\Sigma\cap Y^*=\Sigma_Y$ of $Y^*$. Note that $\dim(\Sigma_Y)\leq 5$.

Since a general tangent hyperplane section of $X$ through $x_q$ is also tangent to $C$, it follows that $\Sigma_X\subset \Sigma_Y$. Therefore $\Sigma_X$ is a component of $\Sigma_Y$, that is, there is a component $Z$ of $Y$ such that $\Sigma_X=\Sigma_Z:=\Sigma\cap Z^*$.

Hence, tangent hyperplane sections of $X$ through $x_q$ coincide with hyperplane sections of $X$ through $x_q$ that are tangent to $Z\subset Y$. Remember that the cone over $Y$ with vertex $x_q$ is the tangent cone of $X$ at this point. Then, if $X^\prime$ is the projection of $X$ from $x_q$ to a $\P^6$ containing $Y$, it follows that $Y\varsubsetneq X^\prime$, since $X^\prime$ is a threefold ($X$ is not a cone). Moreover, tangent hyperplane sections of $X^\prime$ coincide with hyperplane sections of $X^\prime$ that are tangent to the surface $Z\subset X^\prime$. This implies $(X^\prime)^*=Z^*$.

Dualizing this equality gives $X^\prime=Z$ (see \cite{harris}, Theorem 15.24 and Example 16.20), a contradiction. Thus $m=m^\prime$.
\\ \end{proof}

\section{OADP threefolds}

I will now quickly recall two examples of OADP threefolds that will be used in the following chapters. A more detailed description of these and other examples, as well as the proofs that these are OADP threefolds, can be found in \cite{cmr}.

\begin{example}[Degree seven Edge threefolds]\label{pr_exemplo_edge}
Let $Y\subset \P^7$ be the Segre embedding of $\P^1\times \P^3$. Let $\Pi$ be a general $\P^3$ of the ruling of $Y$ and $Q$ a quadric of $\P^7$ containing $\Pi$. Then the residual intersection of $Q$ with $Y$ is a smooth threefold, called the \emph{Edge threfold of degree seven}. It is part of a series of OADP varieties with dimension $n\geq 2$ and degree $2n+1$ defined by Edge, in \cite{edge}, and Babbage, in \cite{babbage}. In the same paper, Edge also defines another series of OADP varities, which have degree $2n$.

The degree seven Edge threefold has two lines through a general point.
In Chapter \ref{ed_chapter}, it will be showed that the fundamental surface of $X$ is a smooth quadric.
\end{example}

\begin{example}[Degree eight scroll in lines]\label{pr_exemplo_sl}
This example was first proven to be an OADP variety by Alzati and Russo, in \cite{ar_oadp}. See also \cite{cmr}. Let $Y$ be the Segre embedding of $\P^1\times \P^2$ and let $F\subset \P^7$ be the cone over $Y$ having a line as vertex. Define $X$ to be the residual intersection of $F$ with two general quadrics containing a $\P^4$ of its ruling.

This threefold has degree eight and is ruled by lines. 
In Chapter \ref{sl_chapter}, we'll see that $\II_{X,x}$ has one base point and the fundamental surface $V$ is a cubic.
\end{example}

Note that these two OADP threefolds are defined by the residual intersection of quadrics with a rational normal scroll. Related to this, is the following definition: An irreducible non-degenerate variety $X\subset \P^N$ is \emph{linearly normal}\label{pr_lin_normal} if one of the following equivalent properties holds:
\begin{itemize}
\item [(i)] there is no variety $X'\subset \P^{N'} $ with $N'>N$ and a linear projection $\phi: \P^ {N'}\dasharrow \P^ {N}$ inducing an isomorphism $\phi: X'\to X$;
\item [(ii)] the linear system $\vert \mO_{X}(1)\vert$ has dimension $N$.
\end{itemize}

\begin{remark}\label{pr_oadpLN}
It was proven in \cite{cr} that OADP varieties are linearly normal. In the case of a Bronowski variety $X\subset\P^{2n+1}$, let $\X$ be the linear system defining the inverse of a general tangent projection. Suppose $\X$ is \emph{relatively complete}, that is, it is the complete linear system on $\P^n$ with the same class in $\Pic(\P^n)$ and with the same base locus of $\X$. Then the strict transform of $\X$ via $\tau$, which is the complete linear system $\vert \mO_X(1)\vert$, has dimension $2n+1$. Then $X$ is linearly normal.
\end{remark}

In order to understand the varieties constructed here as subvarieties of a rational normal scroll, the following theorem (see \cite{eh_minimal}) will be used:

\begin{theorem}\label{pr_contidonumscroll}
Let $X\subset \P^N$ be a linearly normal variety, and $D\subset X$ a divisor. If $D$ moves in a pencil $\{D_\lambda\vert \lambda\in\P^1\}$ of linearly equivalent divisors, then writing $\overline{D}_\lambda$ for the linear span of $D_\lambda$ in $\P^N$, the variety:
\[S = \bigcup_\lambda \overline{D}_\lambda \]
is a rational normal scroll.
\end{theorem}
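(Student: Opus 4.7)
The idea is to realise $S$ as the image of a projective bundle over $\P^1$ via a line bundle which restricts to $\mO(1)$ on each fibre, and then invoke Grothendieck's splitting theorem to identify it with a scroll of the form $S(a_0,\dots,a_d)$ as described at the beginning of the chapter.

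\emph{Step 1: all spans have the same dimension.} This is the one point where linear normality enters. Because the restriction map $H^0(\P^N,\mO_{\P^N}(1))\to H^0(X,\mO_X(1))$ is an isomorphism, hyperplanes of $\P^N$ containing $D_\lambda$ are in bijection with sections of $\mO_X(1)$ vanishing along $D_\lambda$, that is, with $H^0(X,\mO_X(1)(-D_\lambda))$. Since $D_\lambda\sim D$ for every $\lambda$, the dimension of this space does not depend on $\lambda$, so neither does $\dim\overline{D}_\lambda$; call this common dimension $d$.

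\emph{Step 2: bundle structure.} Form the incidence variety $\tilde S=\{(p,\lambda)\in \P^N\times \P^1 : p\in \overline{D}_\lambda\}$, with projections $\pi\colon \tilde S\to \P^1$ and $f\colon \tilde S\to S\subset\P^N$. Step 1 together with flatness of the pencil makes $\pi$ a Zariski-locally trivial $\P^d$-bundle, hence $\tilde S\cong \P(\mathcal{E})$ for some rank $d+1$ vector bundle $\mathcal{E}$ on $\P^1$. By Grothendieck, $\mathcal{E}\cong \bigoplus_{i=0}^d \mO_{\P^1}(a_i)$ with $a_0\leq\dots\leq a_d$.

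\emph{Step 3: identification as a rational normal scroll.} The morphism $f$ restricts to a linear embedding on each fibre of $\pi$, so it is defined by a line bundle on $\tilde S$ whose restriction to every fibre is $\mO_{\P^d}(1)$. After twisting $\mathcal{E}$ by an appropriate line bundle on $\P^1$, this line bundle may be taken to be $\mO_{\P(\mathcal{E})}(1)$. Since $f$ is a morphism, $\mO_{\P(\mathcal{E})}(1)$ is globally generated, which forces $a_i\geq 0$ for all $i$. Then $f$ is exactly the map defined by the complete linear system $\vert\mO_{\P(\mathcal{E})}(1)\vert$, and $S=f(\P(\mathcal{E}))$ is, by the very definition recalled on the preceding pages, the rational normal scroll $S(a_0,\dots,a_d)$.

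The main obstacle I expect is in Step 3: one must check that the map $f$ is birational onto its image, otherwise $S$ would be covered by $\P^d$'s but not traced out by a true $\P^1$-family of distinct subspaces, and the degree/span computation that makes $S(a_0,\dots,a_d)$ a \emph{rational normal} scroll (rather than a scroll embedded in a too large projective space) would fail. Concretely, one needs that the spans $\overline{D}_\lambda$ are pairwise distinct for distinct $\lambda$; this should follow from the fact that $\{D_\lambda\}$ is an honest pencil of distinct effective divisors on $X$, since a coincidence $\overline{D}_{\lambda}=\overline{D}_{\lambda'}$ would force $D_\lambda+D_{\lambda'}$ to impose the same conditions as a single $D_\lambda$ on hyperplanes, contradicting the dimension count in Step 1.
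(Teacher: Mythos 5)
The paper states this theorem as an imported result from \cite{eh_minimal} and gives no proof of its own, so your argument can only be judged on its own terms. Steps 1 and 2 are correct and coincide with the standard opening of the Eisenbud--Harris argument: linear normality identifies the hyperplanes containing $D_\lambda$ with $H^0(X,\mO_X(1)(-D_\lambda))$, linear equivalence makes this dimension independent of $\lambda$, and the resulting morphism from $\P^1$ to the Grassmannian of $d$-planes yields $\widetilde S\cong\P(\mathcal{E})$ with $\mathcal{E}$ split by Grothendieck's theorem.

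The gap is in Step 3, and it is genuine. The morphism $f$ is a priori defined only by the subsystem $W=\mathrm{im}\bigl(H^0(\P^N,\mO_{\P^N}(1))\to H^0(\P^1,\mathcal{E})\bigr)$ of $\vert\mO_{\P(\mathcal{E})}(1)\vert$; you assert without argument that $W$ equals all of $H^0(\mathcal{E})$ and that $f$ is birational, and these two facts together are essentially the content of the theorem: if $W$ were proper, $S$ would be a projection of $S(a_0,\dots,a_d)$ into a too small projective space, which is in general not a rational normal scroll. Your proposed remedy for birationality --- pairwise distinctness of the spans --- is not the right condition. Take the twisted cubic $C\subset\P^3$ and the family $\{2p\}_{p\in C}$ of linearly equivalent degree-two divisors: $C$ is linearly normal, the spans $\overline{2p}=T_pC$ are pairwise distinct lines of constant dimension, yet their union is the tangent developable, a quartic surface in $\P^3$ which is not a rational normal scroll (here $f$ is even birational, but $W\subsetneq H^0(\mathcal{E})$ and $S$ is a projection of a quartic scroll from $\P^5$). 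The only hypothesis violated is that the family be a \emph{pencil}, i.e.\ a line in $\vert D\vert$ rather than a conic; so the linearity of the family must be used again in Step 3, beyond the constancy of $\dim\overline{D}_\lambda$. One standard way to do this is to compute $c_k=h^0(\mO_X(1)(-kD))$ for all $k$: for distinct $\lambda_1,\dots,\lambda_k$ the span of $\overline{D}_{\lambda_1}\cup\dots\cup\overline{D}_{\lambda_k}$ has dimension $N-c_k$, and the Koszul complex of the pencil (base-point-free pencil trick) gives $c_{k-1}-c_k\geq c_k-c_{k+1}$; from this one extracts $\dim\langle S\rangle$ and the bound $\deg S\leq \operatorname{codim}_{\langle S\rangle}S+1$, so $S$ has minimal degree, and a variety of minimal degree swept out by a one-dimensional family of $d$-planes is a rational normal scroll (a cone when some $a_i=0$); completeness of $W$ and birationality of $f$ then follow a posteriori. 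Until this, or some equivalent use of the pencil hypothesis, is supplied, Step 3 does not close.
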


\chapter{The quadric case} 
\label{ed_chapter}

  This chapter treats the case in which the fundamental surface $V$ is a quadric. This is the situation of the Edge threefold of degree seven, described in Example \ref{pr_exemplo_edge}. 

\section{First Considerations}

\subsection{The base locus of $\X$}
\label{ed_blocus_sec}

 The main result of this section is the following:  

\begin{lemma}\label{ed_baselocus}
The linear system $\X$ is defined as degree five surfaces having:
\begin{itemize}
\item multiplicity two in $C_4$, a quadric section of $V$;
\item multiplicity two at a smooth point $p$ of $V$;
\item multiplicity one in $\l$ and $\l^\prime$, the two (possibly infinitely near) lines in $V$ through $p$.
\end{itemize}
  In particular:
\[ \X \cap V=2C_4 +\l + \l^\prime \]

If $V$ is a cone with vertex $q$, then $\X$ has multiplicity three or four at $q$. If it is three,  there is a  quadric containing $C_4$ which is smooth in $q$. If it is four, all such quadrics are singular in this point.  
\end{lemma}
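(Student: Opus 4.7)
The plan is to unpack $\X$ using the two nested sublinear systems $\Xpp\subset\Xp\subset\X$ from Lemma \ref{pr_X'eX''}, combining the geometric description of $\bar\sigma$ in case (iii) with hypothesis (H) on $\Xpp$.

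In case (iii), $\II_{X,x}$ is a linear system of conics on $E\cong\P^2$ with two (possibly infinitely near) base points, so $\bar\tau$ is defined by these conics and its inverse $\bar\sigma:V\tor\P^2$ is a $2$-dimensional subsystem of $\vert\mO_V(1)\vert$. The only such subsystem defining a birational map to $\P^2$ is the system of hyperplane sections of $V$ through a point, so $\bar\sigma$ is the projection from a smooth point $p\in V$; the two curves contracted by $\bar\sigma$ to the two base points of $\II_{X,x}$ are the two lines $\l,\l^\prime$ of $V$ through $p$ (distinct rulings in the smooth case, infinitely near in the cone case). By Lemma \ref{pr_X'eX''}(ii), $\Xp\vert_V$ defines $\bar\sigma$ and has degree $6$ on $V$; writing its moving part as the pencil of conics on $V$ through $p$ (degree $2$), the fixed part is a bidegree $(2,2)$ curve $C_4$ of degree $4$, i.e.\ a quadric section of $V$. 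Each cubic $F\in\Xp$ therefore contains $C_4$ simply and passes through $p$ in $\P^3$.

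To propagate these conditions to $\X$, I exhibit two families of members. A surface $F+V\in\X$ (with $F\in\Xp$) contains $V$ once and $F$ once, so carries multiplicity $2$ along $C_4$, multiplicity $2$ at $p$, and multiplicity $1$ along each of $\l,\l^\prime$. By hypothesis (H), $\Xpp$ is the complete linear system of planes, so every $\Pi+2V$ lies in $\X$ and has multiplicity at least $2$ along $V$, hence along each of these loci. A dimension count then shows that the full linear system of degree $5$ surfaces imposing precisely these base conditions has projective dimension $7$, matching $\dim\X$, so the two systems coincide and every member of $\X$ (not just those of the above two forms) carries the stated multiplicities. The identity $\X\cap V=2C_4+\l+\l^\prime$ now follows from the degree equation $\deg(\X\cap V)=2\cdot 5=10=2\cdot 4+1+1$ together with these multiplicity inclusions.

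For the cone case, the equation of $V$ has multiplicity $2$ at the vertex $q$, so $\mult_q(\Pi+2V)=4$ while $\mult_q(F+V)=\mult_qF+2$. Hence $\mult_q\X=2+\min_{F\in\Xp}\mult_qF$, and the cubics $F$ containing $C_4$ have $\mult_qF=1$ exactly when there is a quadric through $C_4$ that is smooth at $q$, yielding $\mult_q\X=3$; otherwise every such quadric is singular at $q$, $\mult_qF\geq 2$, and $\mult_q\X=4$. The principal obstacle is the dimension count that ensures the base conditions extracted from the families $F+V$ and $\Pi+2V$ already cut out all of $\X$, so that the claimed multiplicities apply to every member of $\X$ rather than just to these two sublinear systems; this can be handled by realizing $\X$ as the kernel of an explicit restriction map to $V$ and comparing with the complete linear system of degree $5$ surfaces imposing the listed base conditions. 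A secondary subtlety in the cone case is the local computation at $q$ linking $\mult_qF$ for $F\in\Xp$ to the smoothness at $q$ of quadrics through $C_4$.
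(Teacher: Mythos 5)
Your identification of $p$, of the lines $\l,\l^\prime$ as the curves contracted by $\bar\sigma$ to the base points of $\II_{X,x}$, and of $C_4$ as the degree-four fixed part of $\Xp\cap V$ matches the paper's argument exactly, and your treatment of the upper bounds via $\Xpp+2V$ is the paper's as well. The problem is the step you yourself flag as the ``principal obstacle'': establishing that \emph{every} member of $\X$, and not just the reducible members $F+V$ and $\Pi+2V$, carries the stated multiplicities. Those reducible members span only the subsystem $\Xp+V$, which has projective dimension $6$ inside the $7$-dimensional $\X$; the remaining generator of $\X$ (the pullback of a hyperplane section of $X$ not through $x$) is untouched by your argument. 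The dimension count you propose cannot close this: two linear systems of dimension $7$ sharing a $6$-dimensional subsystem need not coincide, and the inclusion $\X\subseteq\L$ that would make the count decisive is precisely the assertion in question. The paper avoids this by arguing with the residual systems themselves -- $C_4$ is a fixed curve of $\Xp$ and $p$ a base point of $\Xp$, and reattaching $V$ raises these multiplicities by one, while $\Xpp$ being the full system of planes caps them at two -- the same additivity it uses again in the cone case (``reattaching $V$ we find $\mult_q\X\geq 3$'').

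There is a second, independent gap in your cone case. Your dichotomy ``$\mult_qF=1$ exactly when there is a quadric through $C_4$ smooth at $q$'' tacitly assumes that every cubic $F\in\Xp$ passes through $q$; if $q\notin C_4$, a cubic containing $C_4$ need not meet $q$ at all, your formula would give $\mult_q\X=2$, and the dichotomy collapses. The missing input is the paper's observation that $\Xp\vert_V$ defines $\bar\sigma$, which must resolve the singularity of $V$ at the vertex, forcing $\mult_q\Xp\geq 1$; from this the paper deduces that $q$ is a point of $C_4$ of multiplicity two or four, and reads off the smoothness or singularity of the quadrics through $C_4$ from that local structure rather than from the equations of the cubics. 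You should also note that the paper separately treats the possibility $p\in C_4$ (showing the infinitely near base point cannot lie on $C_4$, using hypothesis (H)), which your sketch omits.
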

\begin{proof}
Since $\deg V=2$, the map $\bar{\tau}:E\tor V$ is defined by conics with two fixed points. Its inverse is defined by plane sections of $V$ through a fixed smooth point $p$. The quadric $V$ is a cone if and only if the two base points of $\bar{\tau}$ are infinitely near. 

By hypothesis (H), $\deg(\X)=5$. Then the restriction of $\X^\prime$ to $V$ gives cubic sections of $V$. The moving part is made of plane sections through $p$. So the fixed part is a quadric section $C_4$ of $V$. 

Being a fixed curve of $\X^\prime$, $C_4$ is a double curve of $\X$. So the restriction of $\X$ to $V$ is $C_4$ with multiplicity two and a fixed plane section of $V$.

Since $p$ is in the base locus of $\X^\prime$, it is a double point of $\X$. Note that, removing $V$ twice from $\X$ gives the linear systems of planes in $\P^3$, so $\X$ has not multiplicity greater than two in $C_4$ or in $p$.

If $p\notin C_4$, the degree two curve in $\X\cap V$ is a pair of (possibly coincident) lines through $p$.

If $p\in C_4$, then $\X^\prime \cap V$ has at least a double point at $p$. But, as noted above, $p$ cannot be a double point of $\X^\prime$. So $\X^\prime$ is tangent to $V$ at $p$ and, hence, $\X$ has multiplicity two in $p\in C_4$ and in an infinitely near point to $p$. Then again the degree two curve is  a pair of lines through $p$. 

 This proves the first part. Note that the case where $p\in C_4$, the point infinitely near to $p$ cannot lie on $C_4$. Indeed, this would imply that $\Xp$ has multiplicity three in $p$, which would then be a base point of $\Xpp$. This contradicts hypothesis (H).  

Suppose now that $V$ is a cone and let $q$ be its vertex. So $\l=\l^\prime=<p,q>$ (remember that $p$ is a smooth point of $V$). Since $\X^\prime$ must desingularize $V$, $\mult_q \X^\prime\geq 1$. Reattaching $V$ we find that:
\[ \mult_q \X\geq 3 \]
On the other hand, it can't be greater than four, since $\X^{\prime\prime}$ has no base points.

If $\mult_q \X=4$, then $\mult_q \X^\prime=2$, so $C_4$ has multiplicity four in $q$ and breaks into four lines. In particular, a quadric intersecting $V$ in $C_4$ is another cone with vertex $q$. 
 
 If $\mult_q \X=3$, then $q$ is a double point of $C_4$. In this case, a quadric intersecting $V$ in  $C_4$ is not a cone with vertex $q$, otherwise $C_4$ would be the union of four lines through $q$ and $\X$ would have multiplicity four at $q$. Then there is a quadric containing $C_4$ which is smooth in $q$. 
\\ \end{proof}

If $V$ is a smooth quadric, $\l$ and $\l^\prime$ are obviously the pair of lines through $p$ in $V$. So, in this case, different Bronowski threefolds are determined by the choice of $C_4$ and $p$ (they define a unique quadric $V$).  This is also the case if $V$ is a cone, with the difference that the lines through $p$ are infinitely near. 

Since $C_4$ is a quadric section of $V$, it is the base locus of a pencil of quadrics $\mQ$. 

\begin{lemma}\label{ed_imagemquadricas}
Each quadric of $\mQ$ is mapped isomorphically by $\sigma$ to a quadric contained in $X$. The point $p$ is mapped to a quadric $S_2^x$ in $X$ through $x$, isomorphic to $V$. These form a family $\mQ^\prime$ of disjoint quadric surfaces that cover $X$.
\end{lemma}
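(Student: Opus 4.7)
The plan is to identify the images under $\sigma$ of the members of $\mQ$ and of the exceptional divisor over $p$, and then check that together they form a family of disjoint quadrics covering $X$.

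For a general $Q \in \mQ$, I would restrict $\X$ to $Q$. By Lemma \ref{ed_baselocus}, $\X$ has multiplicity two along $C_4 \subset Q$, contributing a fixed part $2C_4$ to $\X|_Q$. Since a general member of $\X$ cuts $Q$ in a divisor of degree $10$ and $2C_4$ has degree $8$, the residual system has degree $2$, i.e.\ consists of hyperplane sections of $Q \subset \P^3$. A dimension count on the subsystem of $\X$ having $Q$ as a fixed component (reducing to cubics through $C_4$ of multiplicity two at $p$) shows the residual equals the complete system of hyperplane sections, of projective dimension three. Hence $\sigma|_Q$ is the composition of the hyperplane embedding $Q \hookrightarrow \P^3$ with a linear inclusion $\P^3 \hookrightarrow \P^7$, and $\sigma(Q) \subset \P^7$ is a quadric surface --- smooth or a cone according to whether $Q$ is --- isomorphic to $Q$.

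For the point $p$, I would blow up $p$ and compute the restriction of $\X$ to $E_p \cong \P^2$. The multiplicity-two condition at $p$ makes this restriction a system of conics; the multiplicity-one conditions along $\l$ and $\l^\prime$ force base points $a, a^\prime \in E_p$ at the tangent directions of these lines at $p$, which become infinitely near exactly when $V$ is a cone. The resulting system of conics through two (possibly infinitely near) points has projective dimension three and maps $E_p$ birationally onto a smooth quadric or a quadric cone $S_2^x$ in a $\P^3 \subset \P^7$, contracting the line $\overline{aa^\prime} \subset E_p$ to a single point. Since $\l, \l^\prime \subset V$, one has $\overline{aa^\prime} = T_p V$, and this line is precisely the intersection of $E_p$ with the strict transform of $V$; as $V$ is contracted by $\sigma$ to $x$, its image is $x$, so $x \in S_2^x$. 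The isomorphism $S_2^x \cong V$ then follows because both arise from $\P^2$ via conics through the same configuration of base points, just as in the construction of $\bar{\tau}$.

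For the family property, the pencil $\mQ$ covers $\P^3$ with base locus $C_4$, and since $\sigma$ is birational, $\{\sigma(Q) : Q \in \mQ\} \cup \{S_2^x\}$ covers $X$. For disjointness, distinct $Q_1, Q_2 \in \mQ$ meet only along $C_4 \subset V$; after blowing up $C_4$, their strict transforms are disjoint (by transversality of the quadrics of $\mQ$ at a general point of $C_4$) and also disjoint from the strict transform of $V$. Since the locus contracted by $\sigma$ to $x$ is supported on the strict transform of $V$ together with the exceptional divisors over $p, \l, \l^\prime$, one deduces $x \notin \sigma(Q)$ for $Q \in \mQ$ and $\sigma(Q_1) \cap \sigma(Q_2) = \emptyset$, so $S_2^x$ is the unique member of $\mQ^\prime$ through $x$. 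The main obstacle will be this last step: tracking the image of the exceptional divisor $E_{C_4}$ in the resolution of $\sigma$ in order to verify rigorously that the $\sigma(Q_i)$ miss $x$ and remain pairwise disjoint. Everything else reduces to standard manipulations of restricted linear systems.
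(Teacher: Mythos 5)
Your proposal follows essentially the same route as the paper: restricting $\X$ to a quadric of $\mQ$ to get $2C_4$ plus moving plane sections, restricting to $E_p$ to get conics through two (possibly infinitely near) base points with $V_p$ the line through them contracted to $x$, and deducing disjointness from the blow-up at $C_4$ together with the fact that $V$ is the only member of $\mQ$ through $p$. Two small points to adjust: $E_p$, $E_\l$, $E_{\l^\prime}$ are not contracted to $x$ (they map onto $S_2^x$ and onto lines inside it), and the covering of points lying over $E_{C_4}$ --- which you flag as the remaining obstacle --- is settled in the paper simply by observing that every quadric of $\mQ$ contains $C_4$, so their strict transforms sweep out $E_{C_4}$.
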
	
\begin{proof}
The restriction of $\X$ to a quadric of $\mQ$ (except for $V$) is the double curve $C_4$ and moving plane sections. So it is mapped in $\P^7$ to a isomorphic quadric contained in $X$. Two quadrics of $\mQ$ are disjoint after the blow up at $C_4$.

Blowing up  $p$, $\X_p$ is a linear system of conics with two base points, given by the directions of $\l$ and $\l^\prime$ at $p$. These points are infinitely near if and only if $V$ is a cone. So $p$ is mapped to a quadric isomorphic to $V$. Since $V_p$ is the line through the two base points of $\X_p$, this quadric contains $x$. The only quadric of $\mQ$ through $p$ is $V$, so the quadrics in $\mQ^\prime$ are disjoint.

Given a point $y\in X$, it is the image of a point in $y^\prime\in \P^3$ or it lies on the exceptional locus of $\sigma$. In the first case, there is a quadric of $\mQ$ through $y^\prime$ which is mapped to a quadric of $\mQ^\prime$ through $y$. If $y$ is the image of $y^\prime\in E_{C_4}$, again there is a quadric of $\mQ$ through this point (if this quadric is $V$, then $y=x$). The point $p$ is mapped to $S_2^x$ and, as it will be noted in Lemma \ref{ed_imagem_p_e_retas}, $\l$ and $\l^\prime$ are mapped to lines in $S_2^x$. This finishes the proof.
\\ \end{proof}

Remember that planes in $\P^3$ correspond to tangent hyperplane sections of $X$ at $x$. $\X$ intersects a general plane in curves of degree five having four double and two simple points, mapping it to a surface of degree:
\[ 25 - 4\cdot 4 -2 = 7 \]
As a consequence, $X$ has degree seven.

\subsection{Equations for $\X$}

We now give general equations for $\X$. Suppose $V$ is given by $(g=0)$ and take another quadric of $\mQ$ with equation $(g^\prime=0)$. An equation for $\X$ is then:
\[ (g)^2(a_0x_0+a_1x_1+a_2x_2+a_3x_3) + gg^\prime\cdot P_p + a_7g^\prime h =0 \]
where $P_p=0$ is a general equation for the linear system of planes through $p$ (so it has three parameters) and $(h=0)$ is a cubic surface not containing $V$ that has a double point at $p$ and contains $C_4$. Such a cubic can be taken from the linear system:
\[ g(b_0x_0+b_1x_1+b_2x_2+b_3x_3)+g^\prime (c_0x_0+c_1x_1+c_2x_2+c_3x_3)=0 \]
by imposing the condition of singularity in $p$.

Note that (after removing the common factor) the first four monomials give $\X^{\prime\prime}$, and the first seven give $\X^\prime$.

Setting $p=(1:0:0:0)$, the map $\sigma$ can be written as:
\[ \sigma=(g^2x_0:g^2x_1:g^2x_2:g^2x_3:gg^\prime x_1:gg^\prime x_2:gg^\prime x_3:g^\prime h) \]

\subsection{A Cremona transformation}
\label{ed_jonqui_sec}

We now start with a curve $C_4$ and a point $p\notin C_4$ (possibly infinitely near to a point of $C_4$) in $\P^3$, and suppose that $C_4$ is the base locus of a pencil of quadrics $\mQ$ with general smooth member. Suppose also that the quadric $V\in\mQ$ containing $p$ is irreducible   and smooth in $p$  . If $V$ is smooth, let $\l$ and $\l^\prime$ be the lines in $V$ through $p$. If $V$ is a cone let $\l=\l^\prime$ be the line joining $p$ and the vertex of $V$.

Let $\L$ be the linear system of cubic surfaces containing $C_4$ and having a double point at $p$. If $p$ is infinitely near to $\hat{p}\in C_4$, define $\L$ containing $C_4$, having a double point at $\hat{p}$ and with tangent cone at $\hat{p}$ containing the tangent plane of $V$ in $\hat{p}$.

In both cases, those cubic surfaces contain the lines $\l$ and $\l^\prime$ and intersect $V$ at $C_4+\l+\l^\prime$.

\begin{lemma}\label{ed_cremona}
Suppose that the  conditions imposed on $\L$ are independent. Then $\L$ defines a Cremona transformation:
\[f_\L = f:\P^3\tor \P^3 \]

It maps $p$ to a quadric $V^\prime$ isomorphic to $V$ and contracts $V$ to a point $p^\prime\in V$. It maps the other quadrics of $\mQ$ to quadrics of a pencil having a base curve $C_4^\prime$ of the same type of $C_4$. 
\end{lemma}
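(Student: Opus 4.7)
The strategy is to analyze $f$ on each member of the pencil $\mQ$ and deduce birationality from these restrictions. First I verify that $\dim\L = 3$. Cubics through $C_4$ form the $7$-dimensional projective system $\{L_1 g + L_2 g' : L_1, L_2 \text{ linear}\}$, the parametrization being injective because $g,g'$ are coprime quadrics. Imposing a double point at $p$ (or, in the infinitely near case, a double point at $\hat p$ with the prescribed tangent-cone condition) is $4$ further conditions, independent of the previous ones by hypothesis, so $\dim\L = 3$ and $\L$ defines a rational map $f: \P^3 \tor \P^3$.

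To describe the images of the distinguished loci, note first that every $F\in\L$ meets $V$ in the fixed divisor $C_4+\l+\l'$ of degree $6$, so $\L|_V$ has no moving part and $V$ is contracted to a point $p'$. For the image of $p$, I blow up $p$ and restrict $\L$ to $E_p\cong\P^2$: since a generic member has multiplicity exactly $2$ at $p$ and contains $\l, \l'$, the restriction is a subsystem of the complete linear system of conics through the two tangent directions of $\l$ and $\l'$ at $p$ (coincident, i.e.\ infinitely near, precisely when $V$ is a cone). Both systems have projective dimension $3$, so they coincide, and $E_p$ is mapped birationally onto a quadric $V'$ of the same type as $V$. The strict transform of $V$ meets $E_p$ along a line through both base points, which the conic system contracts; the resulting point of $V'$ is necessarily $p'$, whence $p' \in V'$.

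For a general $Q \in \mQ \setminus \{V\}$, the restriction map $\L \to \L|_Q$ has trivial kernel: any component $Q \subset F$ would give $F = Q \cdot L$ with $L$ linear, and then $\mathrm{mult}_p F \leq 1$ since $p \notin Q$. Hence $\dim\L|_Q = 3$; its fixed part is $C_4$ and its moving part sits inside the $3$-dimensional complete system of plane sections of $Q$, forcing equality. Thus $f|_Q$ realizes the standard embedding of $Q$ onto a quadric $Q' \subset \P^3$. Since a general point of $\P^3$ lies on a unique $Q \in \mQ$ and is mapped injectively there, $f$ is birational. Symmetry of the construction then gives that $f^{-1}$ is defined by an analogous cubic linear system in the target attached to $(C_4',p')$; consequently $\mQ'$ is a pencil of quadrics and its base curve $C_4'$ has the same type as $C_4$.

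The main delicacy is the infinitely near case $p \prec \hat p \in C_4$: the two base points on $E_p$ collapse and one must check that $\L|_{E_p}$ still has dimension $3$ and maps $E_p$ onto a quadric cone $V'$. This is most cleanly handled via Lemma \ref{pr_truqueconetangente} applied at $\hat p$, or by a local computation in coordinates.
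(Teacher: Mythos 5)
Your dimension count for $\L$, the contraction of $V$, and the identification of $f(E_p)$ with a quadric of the same type as $V$ all match the paper's argument (the paper likewise counts the eight affine parameters of cubics through $C_4$ and imposes four independent conditions at $p$), and your observation that the line $E_p\cap V$ is contracted by the conics on $E_p$, so that $p^\prime\in V^\prime$, is a nice explicit justification of a point the paper leaves implicit. Where you genuinely diverge is the proof of birationality, and there your argument has a gap. The paper shows $\deg f=1$ by intersecting three general members of $\L$: using the reducible members $V+\Pi$ (with $\Pi$ a plane through $p$) it reduces $F_1\cap F_2$ to $C_4+\l+\l^\prime$ plus a plane cubic section through $p$, and two such plane sections meet in a single point off the base locus. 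You instead show that $f$ restricts to an isomorphism of each $Q\in\mQ$ onto a quadric $Q^\prime$ and conclude that, since a general point of $\P^3$ lies on a unique $Q$ and is mapped injectively there, $f$ is birational. That inference is not valid: a fibre of $f$ over a general target point $z$ consists of one point on each quadric $Q$ whose image $Q^\prime$ passes through $z$, so $\deg f$ equals the number of members of the image family $\{Q^\prime\}$ through a general point, i.e.\ the degree of the curve $\lambda\mapsto[Q^\prime_\lambda]$ in the $\P^9$ of quadrics. Injectivity on each $Q$ separately says nothing about this number; you would need to prove that the image family is a pencil, which is essentially equivalent to what you are trying to establish.

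The next step compounds the problem: you derive that $\mQ^\prime$ is a pencil with base curve $C_4^\prime$ of the same type as $C_4$ from ``symmetry of the construction,'' i.e.\ from the shape of $f^{-1}$ --- but $f^{-1}$ only exists once birationality is known, and identifying its defining linear system is the content of the lemma that follows in the paper, not something available here. To repair the argument along your own lines you could show directly that every $Q^\prime$ contains the common quartic curve $Q^\prime_1\cap Q^\prime_2$ (for instance by tracking the image of a section of the exceptional divisor over $C_4$), since the quadrics containing a fixed $(2,2)$ curve form exactly a pencil and then exactly one $Q^\prime$ passes through a general target point; or simply fall back on the paper's computation of $F_1\cap F_2\cap F_3$. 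As written, the birationality of $f$ --- the heart of the lemma --- is not established.
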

\begin{proof}
Cubic surfaces containing $C_4$ have equations $(gf_1+g^\prime f_2=0)$, where $(g=0)$ and $(g^\prime =0)$ are equations for two quadrics of $\mQ$ and $f_1,f_2$ are linear forms; so there are eight parameters. If $p\notin C_4$, the number of conditions for surfaces to have a double point in $p$ is four. If $p$ is infinitely near to $\hat{p}\in C_4$, two conditions are needed for a double point in $\hat{p}$ and two more conditions for the tangent cone of $\L$ in $\hat{p}$ to contain $T_{\hat{p}}V$. Since the conditions are independent, this linear system has projective dimension three. So $f$ maps to $\P^3$.

The union of $V$ and  planes through $p$ give three independent cubics in $\L$. Hence $\L$ is generated by these three cubics plus another cubic not containing $V$. So the intersection of two surfaces of $\L$ is made of $C_4,\l,\l^\prime$ and plane sections of this cubic through $p$. And two plane sections through $p$ intersect at a single point besides $p$ itself. Hence $f$ is a Cremona transformation.

The contraction of $V$ is obvious, since $\L$ has fixed intersection with it. Blowing up  $p$, $\L$ cuts $E_p$ in conics with two base points (just like what happened with $\X$). So $f$ maps $p$ to a quadric surface $V^\prime$, which is a cone if and only if $V$ is.

A quadric of $\mQ$ is cut by $\L$ in the fixed curve $C_4$ and a linear system of plane sections. So it is mapped again to an isomorphic quadric surface. Hence, the image of $\mQ$ is a pencil having a base curve $C_4^\prime$ of the same type of $C_4$.
\\ \end{proof}

The map $f$ is a Cremona transformation of De Jonquieres type: it restricts to planes through $p$ in  the linear system of cubics with one double point and four simple points. These planes are mapped to planes through $p^\prime$.

The intersection of $\L$ with a general plane consists of cubic curves with six simple points. Therefore this plane is mapped by $f$ to a cubic surface,   that is, $f^{-1}$ is also defined by cubic surfaces.  

The linear system defining $f^{-1}$ has fixed intersection with $V^\prime$ and a double point in $p^\prime=f(V)$. This fixed intersection is $C_4^\prime$ and the two lines in $V^\prime$ through $p$. Indeed, $f^{-1}$ maps each quadric of the pencil having base locus $C_4^\prime$ to a quadric in $\mQ$, so this curve lies on the base locus of $f^{-1}$. Therefore:

\begin{lemma}
The rational map $f^{-1}$ is defined by the linear system of cubic surfaces containing $C_4^\prime$ and having a double point in $p^\prime$, the image of $V$ via $f$.
\end{lemma}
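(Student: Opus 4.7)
The plan is to identify $\L^*$, the defining linear system of $f^{-1}$, by matching base locus and dimension. From the paragraph preceding the statement, $\L^*$ is a $3$-dimensional linear system of cubic surfaces in the target $\P^3$, so it suffices to exhibit $\L^*\subseteq \mathcal{M}$, where $\mathcal{M}$ is the linear system of cubic surfaces containing $C_4'$ with a double point at $p'$, and then invoke a dimension count.

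To show that $\L^*$ has a double point at $p'$: since $f$ contracts $V$ to $p'$ by Lemma \ref{ed_cremona}, the inverse $f^{-1}$ sends $p'$ to $V$. Resolving $f^{-1}$ by blowing up $p'$, the exceptional divisor $E_{p'}\cong \P^2$ must map birationally onto the quadric $V$. Any such birational map $\P^2\tor V$ is defined by a linear system of conics (with at most one base point in the cone case), so the strict transform of $\L^*$ restricts to $E_{p'}$ with degree $2$, forcing $\L^*$ to have multiplicity $2$ at $p'$. To show that $C_4'$ lies in the base locus of $\L^*$: for a general $Q'\in \mQ'$, the restriction $f^{-1}\vert_{Q'}$ is an isomorphism onto the smooth quadric $Q\in \mQ$ corresponding to $Q'$. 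Since $Q\hookrightarrow \P^3$ is embedded by the complete linear system of its hyperplane sections, which has class $(1,1)$ on $Q\cong \P^1\times \P^1$, the moving part of $\L^*\vert_{Q'}$ has class $(1,1)$. As $\L^*\vert_{Q'}$ has total class $(3,3)$, the fixed part has class $(2,2)$, i.e.\ degree $4$. Any $1$-dimensional component of the base locus of $\L^*$ lying in every general member of $\mQ'$ must be contained in $\bigcap_{Q'\in \mQ'} Q' = C_4'$, and by the class computation it must equal $C_4'$.

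Finally, by the same dimension count as in the proof of Lemma \ref{ed_cremona} applied to the data $(C_4',p',V')$, the linear system $\mathcal{M}$ has dimension $3$. Combined with $\L^*\subseteq \mathcal{M}$, this gives $\L^*=\mathcal{M}$. The most delicate point is ruling out extra $1$-dimensional components of the base locus of $\L^*$ off $C_4'$: such a component would meet a general $Q'\in \mQ'$ only in points, contradicting the fact that the $1$-dimensional part of the fixed locus of $\L^*\vert_{Q'}$ has pure class $(2,2)$, and forcing all $1$-dimensional base components to lie in every $Q'$, hence in $C_4'$.
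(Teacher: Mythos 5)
Your proof is correct and follows essentially the same route as the paper: the double point at $p^\prime$ comes from the exceptional divisor over $p^\prime$ mapping onto the quadric $V$, and $C_4^\prime$ is forced into the base locus because $f^{-1}$ carries the quadrics through $C_4^\prime$ isomorphically onto the quadrics of $\mQ$, so the fixed part of the restricted cubics has class $(3,3)-(1,1)=(2,2)$; your added dimension count at the end only makes explicit what the paper leaves implicit. One small repair: the claim that \emph{any} birational map $\P^2\tor V$ onto a quadric is given by conics is not literally true (one may precompose with a Cremona transformation of $\P^2$), but the intended conclusion $\mult_{p^\prime}\L^*=2$ still follows, since multiplicity $1$ would send $E_{p^\prime}$ to a plane while multiplicity $3$ would make every member of $\L^*$ a cone with vertex $p^\prime$, contradicting birationality of $f^{-1}$.
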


\subsection{The Segre symbol of a pencil of quadrics}
\label{ed_segre_sec}

Since each quadric of the pencil $\mQ$ is mapped isomorphically to a quadric of $\mQ^\prime$, a good tool to understand $X$ is the \emph{Segre symbol} of $\mQ$. It is due to a classification of C. Segre of pencils of quadrics, done in \cite{segresymbol}. The following quick revision is based on a paper of D. Avritzer and R. Miranda \cite{avritzer}.

Suppose the pencil $\mQ$ is given by:
\[ \lambda F_1 + \mu F_2 = 0\]

Let $A_1$ and $A_2$ be the two $4\times 4$ symmetric matrices defining the quadratic forms $F_1$ and $F_2$ and consider the equation:
\begin{equation}\label{ed_det}
\det( \lambda A_1+\mu A_2 )= 0
\end{equation}

Suppose there is a smooth quadric in $\mQ$. This implies that the left-hand side of \eqref{ed_det} is not identically zero, so it has $4$ roots in $(\lambda:\mu)$, counting multiplicities. These roots correspond to the singular quadrics in $\mQ$, being in general $4$ cones.

In some cases, a root of \eqref{ed_det} is also a solution of the equations on the $3\times 3$ minors, producing a pair of planes. If it's also a solution of the equations on all $2\times 2$ minors, a double plane is produced.

Given a root of \eqref{ed_det}, let $k$ be the dimension of the singular locus of the quadric determined by this root (so it's a root for the minors of order $4-k$) and let  $l_i$ be the minimum multiplicity with which a root appears in the equations of subdeterminants of order $4-i$, for $i=0,\ldots,k$. Define $e_i:=l_i-l_{i+1}\geq 0$ (set $l_{k+1}=0$ in order to define $e_k$). Note that $\sum e_i = l_0$ is the multiplicity of the given root in \eqref{ed_det}.

This gives, for each root  $(\lambda_j:\mu_j)$ a sequence $(e^j_0 \cdots e^j_{k_j})$, with $k_j\leq 2$. Let $r$ be the number of distinct roots of \eqref{ed_det}.
\begin{defin}
The Segre symbol of the pencil $\mQ$ is:
\[ [ (e^1_0 \cdots e^1_{k_1}) , \ldots , (e^r_0 \cdots e^r_{k_r}) ] \]
When $k_j=0$, the parenthesis are omitted.
\end{defin} 

For example, a pencil with Segre symbol $[1,1,1,1]$ has four cones, which is the maximum in this case. While the singular members of a pencil having Segre symbol $[(11),2]$ is a pair of planes and a cone.

Since the quadrics, contained in $X\subset \P^7$, of the family $\mQ^\prime$ are isomorphic to the ones of $\mQ$, the Segre symbol of $\mQ$ will give us information on the singular members of $\mQ^\prime$. It will help us to describe the singularities of $X$ and to study the different possibilities for the base locus $C_4$ of $\mQ$. The computations on the Segre symbol and singular elements of $\mQ$ done is this thesis can be performed by hand. Alternatively, it can be found in \cite[p. 305]{hodge}.

In \cite{segresymbol}, Segre studies the properties of the base curve of a pencil of quadrics considering its Segre symbol. The following result is a simplified version of this investigation made by Segre. It can also be obtained by a direct analysis of the possible Segre symbols in $\P^3$, also done in \cite{hodge}.
\begin{prop}\label{ed_singsegre}
Let $q$ be a singular point of the base locus of a pencil $\mQ$ of quadric surfaces in $\P^3$ with general smooth member. Then $q$ is a singular point of a quadric in $\mQ$ corresponding to a root of \eqref{ed_det} with multiplicity greater than $1$.
\end{prop}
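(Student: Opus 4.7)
The plan is to translate the geometric hypothesis on $q$ into linear-algebra conditions on the symmetric matrix pencil $\lambda A_1+\mu A_2$, and to proceed in two steps: first, show that $q$ must be the vertex of some singular quadric $Q_0\in \mQ$; second, show that $Q_0$ corresponds to a root of \eqref{ed_det} of multiplicity at least two.

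For the first step I would argue by contradiction. Suppose every quadric in $\mQ$ is smooth at $q$; in particular $\nabla Q_1(q)$ and $\nabla Q_2(q)$ are both nonzero. Since $q$ is a singular point of $B=Q_1\cap Q_2$, the intersection of two surfaces both smooth at $q$, the tangent planes $T_qQ_1$ and $T_qQ_2$ must coincide, and hence the two gradients are parallel. Then some nontrivial combination $\lambda_0\nabla Q_1(q)+\mu_0\nabla Q_2(q)$ vanishes, so the quadric $Q_0=\lambda_0 Q_1+\mu_0 Q_2\in \mQ$ is singular at $q$, contrary to the assumption.

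For the second step, let $v\in \C^4\setminus\{0\}$ represent $q$, so that $v$ lies in the kernel of $A_0=\lambda_0 A_1+\mu_0 A_2$. If $A_0$ has corank $k+1\geq 2$, then directly from the definition of the Segre symbol recalled in Section \ref{ed_segre_sec} the root $(\lambda_0:\mu_0)$ has multiplicity $\sum_{i=0}^{k}e_i\geq k+1\geq 2$, and we are done. Otherwise $A_0$ has corank one and its adjugate has rank one, equal to $c\,vv^T$ for some nonzero scalar $c$. By Jacobi's formula,
\[ \frac{\partial}{\partial \lambda}\det(\lambda A_1+\mu A_2)\Big|_{(\lambda_0,\mu_0)}=\operatorname{tr}\bigl(\operatorname{adj}(A_0)\,A_1\bigr)=c\,v^T A_1 v, \]
and analogously for $\partial/\partial \mu$. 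Because $q\in B=Q_1\cap Q_2$ translates exactly into $v^TA_1v=v^TA_2v=0$, both partial derivatives of $\det(\lambda A_1+\mu A_2)$ vanish at $(\lambda_0:\mu_0)$, so the root has multiplicity at least two.

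The hard part is the corank-one case: the mere singularity of $Q_0$ at $q$ is not enough to force a double root, and one must exploit the fact that $q$ actually lies on the base locus. The Jacobi-formula computation above is precisely the device that converts the geometric vanishing $v^TA_iv=0$ into the vanishing of the first-order terms of $\det(\lambda A_1+\mu A_2)$ at $(\lambda_0:\mu_0)$; the corank-$\geq 2$ case is then essentially tautological from the Segre-symbol bookkeeping.
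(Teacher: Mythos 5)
Your argument is correct, and it takes a genuinely different route from the paper: the text states Proposition \ref{ed_singsegre} without proof, deferring to Segre \cite{segresymbol} and to the exhaustive enumeration of Segre symbols in $\P^3$ carried out in \cite{hodge}, whereas you give a short self-contained linear-algebra proof (Jacobian criterion for the complete intersection to produce the singular quadric, then the adjugate and Jacobi's formula to force the double root). Your route has the advantage of working verbatim for pencils of quadrics in $\P^n$ for any $n$, and of isolating exactly where the hypothesis $q\in Q_1\cap Q_2$ enters, which the case-by-case check obscures. Two small remarks. First, in the corank $\geq 2$ case the appeal to ``the definition of the Segre symbol'' is slightly loose: from the definitions as recalled in Section \ref{ed_segre_sec} one only gets $e_i\geq 0$ immediately, so $l_0=\sum e_i\geq k+1$ is not quite tautological; the clean statement you need is that the multiplicity of a root of \eqref{ed_det} is at least the corank of the corresponding matrix, which follows from expanding $\det(A_0+tB)=\sum_j t^j\sigma_j$, where $\sigma_j$ is a sum of products of $(4-j)\times(4-j)$ minors of $A_0$ with entries of $B$. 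Second, the case split is in fact unnecessary: Jacobi's formula
\[ \frac{\partial}{\partial\lambda}\det(\lambda A_1+\mu A_2)\Big|_{(\lambda_0,\mu_0)}=\operatorname{tr}\bigl(\operatorname{adj}(A_0)\,A_1\bigr) \]
handles both situations at once, since $\operatorname{adj}(A_0)=0$ when the corank is at least two, and $\operatorname{adj}(A_0)=c\,vv^T$ with $v^TA_1v=v^TA_2v=0$ when the corank is one; either way both partial derivatives of the quartic binary form \eqref{ed_det} vanish at $(\lambda_0:\mu_0)$, which for a binary form is equivalent to that point being a root of multiplicity at least two.
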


\section{The smooth case}
\label{ed_smooth_sec}

Suppose that the fundamental surface $V$ is a smooth quadric. As noted in Section \ref{ed_blocus_sec}, the threefold $X$ is determined by the choices of $C_4$ and $p$.  

\begin{lemma}

 Let $f$ be the Cremona transformation of Section \ref{ed_jonqui_sec} associated to a choice of $C_4$ and $p$, and let $C_4^\prime$ be as in Lemma \ref{ed_cremona}.  The image of $\X$ via $f$ is the linear system of cubic surfaces containing $C_4^\prime$. In the general case ($C_4^\prime$ smooth), this is the rational representation of $X$ described by Edge, in \cite{edge}, and Babbage, in \cite{babbage}.

In particular, the threefold $X$ depends only on the choice of $C_4$.
\end{lemma}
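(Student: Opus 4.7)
The plan is to compute $f_*\X$ by restricting to the general member of the pencil $\mQ$ and pushing through $f$, then conclude the global identification by a dimension count and invoke Example~\ref{pr_exemplo_edge}.

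First take a general smooth quadric $Q\in\mQ$ with $Q\neq V$, so $Q\cap V=C_4$. Then $Q\cong\P^1\times\P^1$ and $C_4$ has bidegree $(2,2)$ on $Q$. A general $S\in\X$ meets $Q$ in a curve of bidegree $(5,5)$ containing the fixed double curve $2C_4$ of bidegree $(4,4)$; the moving residual $(S\cap Q)-2C_4$ is therefore a $(1,1)$-divisor on $Q$, i.e.\ a general plane section. By Lemma~\ref{ed_cremona}, $f$ maps $Q$ isomorphically to a quadric $Q'\in\mQ'$, so the moving residual pushes forward to a plane section of $Q'$, again a $(1,1)$-divisor.

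Now observe that if $T$ is a surface of degree $d$ in $\P^3_y$ containing $C_4'$, and since $C_4'$ has bidegree $(2,2)$ on a general $Q'\in\mQ'$, the residual $(T\cap Q')-C_4'$ is a $(d-2,d-2)$-divisor on $Q'$. Matching with the output $(1,1)$ above forces $d=3$, so $f_*S$ is a cubic surface containing $C_4'$. This gives $f_*\X\subseteq|3H_y-C_4'|$. Since $f$ is birational, $\dim f_*\X=\dim\X=7$ by hypothesis (H); meanwhile, the exact sequence $0\to I_{C_4'}(3)\to\mO_{\P^3}(3)\to\mO_{C_4'}(3)\to 0$ combined with Riemann--Roch on the elliptic quartic yields $\dim|3H_y-C_4'|=20-12-1=7$. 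Thus $f_*\X=|3H_y-C_4'|$, which is precisely the Edge--Babbage rational representation described in Example~\ref{pr_exemplo_edge}.

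For the final assertion, this identification shows that $X$ is the Edge--Babbage threefold associated to $C_4'$. Since the Edge--Babbage construction depends (up to projective equivalence in $\P^7$) only on the elliptic quartic used, and since the quartics $C_4'$ obtained for different choices of $p$ with $C_4$ fixed all arise as base curves of pencils $\mQ'$ related via the various Cremona transformations $f$ to the fixed pencil $\mQ$, the threefold $X$ depends only on $C_4$.

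The main obstacle will be making the restriction-and-push-forward argument fully rigorous at the level of schemes: one must verify that the cycle-theoretic push-forward $f_*S$ is indeed the cubic surface suggested by the restriction analysis, without spurious contributions from the base points $p,\l,\l'$ or unexpected multiplicities along $C_4'$. A cleaner but more laborious alternative is to resolve $f$ on the blow-up of $\P^3_x$ at $p$, $C_4$, $\l$, $\l'$, and to compute $\deg f_*S$ and the multiplicity of $f_*S$ along $C_4'$ directly via intersection numbers on that resolution, taking the strict transform $\widetilde{S}$ of $\widetilde{S}\cdot (f^*H_y)^2=3$ as the decisive verification.
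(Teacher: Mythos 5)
Your restriction-to-$Q$ computation only yields a \emph{relation} between the two unknowns, not their values: if $d=\deg f_*S$ and $m=\operatorname{mult}_{C_4'}f_*S$, then matching the moving part of $(f_*S)\vert_{Q'}$ with the $(1,1)$-divisor you found gives $d-2m=1$, which is satisfied by $(d,m)=(3,1)$ but equally by $(5,2)$, $(7,3)$, etc. Your ``matching'' step silently assumes $m=1$ before deducing $d=3$, so the argument is circular at exactly the point you flag at the end as ``the main obstacle.'' The danger is real: the naive pushforward of $S\vert_Q=2C_4+M$ under the isomorphism $f\vert_Q$ is $2C_4'+M'$, of bidegree $(5,5)$, and ruling out $\operatorname{mult}_{C_4'}f_*S=2$ requires analysing how the strict transform of $S$ meets the exceptional divisor over $C_4$ (which $f$ contracts onto $C_4'$) --- precisely the bookkeeping you defer. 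Without it, neither the containment $f_*\X\subseteq\vert 3H_y-C_4'\vert$ nor the subsequent dimension count is available. A secondary point: your count $\dim\vert 3H_y-C_4'\vert=7$ via Riemann--Roch is stated only for the smooth elliptic quartic, whereas the first assertion of the lemma covers all degenerations of $C_4'$; one would need to observe that $h^0(\mathcal{I}_{C_4'}(3))=8$ for any complete intersection of two quadrics (e.g.\ from the Koszul resolution), which you do not do.

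The paper avoids all of this with a two-line argument that you should compare with. Under hypothesis (H), $\X^{\prime\prime}$ (the surfaces $S$ with $S+2V\in\X$) is exactly the system of planes in $\P^3$, and the image under $f$ of the system of planes is, by definition, the linear system defining $f^{-1}$, which Section \ref{ed_jonqui_sec} has already identified as the cubics containing $C_4'$ with a double point at $p'$. Since $f$ contracts $V$ to $p'$, adding back the fixed part $2V$ to $\X^{\prime\prime}$ corresponds on the target side to removing the double-point condition at $p'$; hence $f(\X)$ is the system of cubics containing $C_4'$, uniformly in all cases. If you want to salvage your route, the cleanest fix is the one you sketch yourself: resolve $f$, compute $\tilde S\cdot(f^*H_y)^2=3$ and $\tilde S\cdot F=1$ for $F$ a general fibre of the exceptional divisor over $C_4$ mapping onto $C_4'$; but this is substantially more work than the paper's use of Lemma \ref{pr_X'eX''}.
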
\label{ed_sistema_linear_simplificado}
\begin{proof}
To find the image of $\X$ via $f$, first look at $\X^{\prime\prime}$, the planes in $\P^3$. Their image is the linear system defining the inverse $f^{-1}$, that is, cubic surfaces containing $C_4^\prime$ and having a double point at $p^\prime$. Since $V$ is contracted to $p^\prime$ and   $\X^{\prime\prime}=\{S\; ;\; S+2V\in\X\}$,   it follows that $\X$ is mapped to cubic surfaces containing $C_4^\prime$. 
\\ \end{proof}

This Lemma allows us to concentrate our study to the simpler linear system  $\Y$ of cubic surfaces containing $C_4$, since Lemma \ref{ed_cremona} asserts that $C_4^\prime$ is of the same type of $C_4$. We will choose to not follow this path here, since we are interested in the description of the inverse of the general tangential projection of $X$. 

However it is worth to mention the meaning of the map defined by $\Y$. As it will be proved in Theorem \ref{ed_the_classification}, the threefold $X$ is contained in the Segre embedding $Y$ of $\P^1\times\P^3$, being the residual intersection of $Y$ with a quadric containing a $\P^3$ of its ruling. As noted by Edge, the map defined by $\Y$ is the inverse  of the birational projection of $X$ from a $\P^3$  of the ruling of $Y$ to another $\P^3$  of the ruling. This projection contracts a degree eight scroll, described by the directrix lines of $Y$ that are contained in $X$, to the curve $C_4$ (see \cite{edge} and \cite{babbage}). On the other hand, it will be shown later that the image of the exceptional divisor of $C_4$ via $\sigma$ is a degree twelve scroll in conics.

\quad

In what follows, we will first study properties of the map $\sigma$ which are common to all different choices of $C_4$. This includes the image of $p$ and the study of conics in $X$.

Next, we will study how the possible irreducible components of $C_4$ are blown up and mapped by $\sigma$.

After that, the singular locus of $X$ will be analysed. As it will be seen, it arises from the singularities of $C_4$. Finally, in Section \ref{ed_sum_sec}, the results for which $V$ is smooth will be collected.

\subsection{Some properties}

As noted before, we can suppose that $p$ is a general point in $V$. We can also suppose that $\l$ and $\l^\prime$ cut $C_4$ transversally. If $C_4$ has only reduced components, $\l$ and $\l^\prime$ intersect $C_4$ in two distinct points each.

\begin{lemma}\label{ed_imagem_p_e_retas}
The point $p$ is blown up and mapped to a smooth quadric surface $S_2^x$ through $x$ and the lines $\l$ and $\l^\prime$ are mapped respectively to $\l_x$ and $\l^\prime_x$, the lines in $S_2^x$ through $x$.
\end{lemma}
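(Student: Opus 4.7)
My plan is to treat the two assertions in turn. The first—that $p$ is blown up to a smooth quadric $S_2^x\subset X$ through $x$—is essentially the content of Lemma \ref{ed_imagemquadricas}: there one shows that $p$ is blown up to a quadric isomorphic to $V$ passing through $x$, and since we are in the smooth case, $V$ is a smooth quadric and so is $S_2^x$. Only the identification of the images of $\l$ and $\l^\prime$ with the two rulings of $S_2^x$ through $x$ still needs proof.

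For that identification the strategy is to resolve $\sigma$ by successive blow-ups and to track the restriction of $\X$ on each relevant exceptional divisor. Blow up $p$, with exceptional divisor $E_p\cong\P^2$. By the proof of Lemma \ref{ed_imagemquadricas}, $\X_p$ is a linear system of conics with two distinct base points $a_1,a_2$, namely the tangent directions of $\l$ and $\l^\prime$ at $p$; they are distinct because $V$ is smooth at $p$. Blowing up $a_1,a_2$ on $E_p$, the strict transform of $\X_p$ becomes base-point free of class $2H-E_1-E_2$. The line $r=\overline{a_1a_2}\subset E_p$ has strict transform of class $H-E_1-E_2$ and intersection $(H-E_1-E_2)\cdot(2H-E_1-E_2)=0$ with the system, hence it is contracted to a single point. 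Since $\l,\l^\prime\subset V$ are tangent to $V$ at $p$, the line $r$ is exactly the trace of $T_pV$ on $E_p$, i.e.\ $r=\widetilde V\cap E_p$ for the strict transform $\widetilde V$ of $V$. Recall from Lemma \ref{ed_baselocus} that $\X\cap V=2C_4+\l+\l^\prime$ has degree $10=5\cdot\deg V$ on $V$, so the restriction of $\X$ to $V$ is entirely fixed and $\sigma$ contracts $V$ to the point $x$. Thus $r\subset\widetilde V$ is carried along to $x$, and the image quadric $S_2^x$ passes through $x$ at the image of $r$. The two exceptional curves $F_1,F_2$ above $a_1,a_2$ satisfy $F_i\cdot(2H-E_1-E_2)=1$, so they map to lines of $S_2^x$ through $x$; because $S_2^x$ is a smooth quadric, these must be precisely its two rulings through $x$.

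It remains to identify those rulings with the images of $\l$ and $\l^\prime$. The strict transform of $\l$ after blowing up $p$ meets $E_p$ exactly at $a_1$, by the very definition of $a_1$. When $\l$ is further blown up, its exceptional divisor $E_\l$ meets $E_p$ in the fiber over $p$ of $E_\l\to\widetilde\l$, and this fiber coincides with $F_1$. Consequently, under the resolved $\sigma$, the image of $\l$ is the image of $F_1$, one ruling $\l_x$ of $S_2^x$ through $x$; the same argument applied to $\l^\prime$ yields the second ruling $\l^\prime_x$. The main subtlety is to confirm that $r$ really contracts to $x$ rather than to some other point of $S_2^x$; this follows cleanly from the identification $r=\widetilde V\cap E_p$ together with the contraction of $\widetilde V$ to $x$, and everything else reduces to routine linear-system arithmetic on the blow-up of $\P^2$ at two points.
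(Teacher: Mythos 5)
Your treatment of the first assertion and of the quadric $S_2^x$ itself is correct and, for that part, genuinely different from the paper: you work entirely on $E_p$, blow up the two base points $a_1,a_2$ of $\X_p$, and read off that the line $r=\overline{a_1a_2}=V_p$ is contracted to $x$ while the exceptional curves $F_1,F_2$ map to the two rulings of $S_2^x$ through $x$. That is all fine and consistent with Lemma \ref{ed_imagemquadricas}.

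The gap is in the last step. From $E_\l\cap E_p=F_1$ you conclude that ``the image of $\l$ is the image of $F_1$''. This only gives one inclusion: the image of the exceptional divisor $E_\l$ \emph{contains} the line $\sigma(F_1)$. To conclude that it \emph{equals} that line you must show that $\sigma$ contracts the surface $E_\l$ to a curve, and this is not automatic: for a line $r\subset C_4$ the analogous exceptional divisor $E_r$ is mapped to a cubic scroll, a surface (Proposition \ref{ed_line}), so the outcome depends on the restricted linear system and not just on incidence with $E_p$. This is exactly the computation the paper performs and your proposal omits: on $E_\l\cong\F_0$ one has $\X_\l\equiv(4,1)$, which splits as three fixed fibres $F_q+F_{q^\prime}+F_p$ (over the two points of $C_4\cap\l$ and over $p$) plus a moving part $M\equiv(1,1)$ with two base points lying on $F_q$ and $F_{q^\prime}$; since $M^2=2$ equals the number of base points, $M$ is composed with a pencil and $E_\l$ is contracted to a line. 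Once that is in place, your incidence argument $E_\l\cap E_p=F_1$ does correctly identify that line with the ruling $\l_x=\sigma(F_1)$, and likewise for $\l^\prime$.
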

\begin{proof}
The image of $p$ is explained in Lemma \ref{ed_imagemquadricas}.

Blowing up  $\l$ gives an exceptional divisor $E_\l\cong \F_0$. A plane through $\l$ cuts $\X$ in $\l$ itself plus quartic curves. These cut $\l$ in one moving and three fixed points: two in $C_4$ (name them $q$ and $q^\prime$) and one in $p$. So:
\[ \X_p = M + F_q + F_{q^\prime} + F_p \equiv (4,1) \]
where $M\equiv (1,1)$ is the moving part of $\X_p$ and $F_q, F_{q^\prime}, F_p$ are fixed lines, the fibers over these points. The moving part has two base points, lying on $F_q$ and $F_{q^\prime}$, associated to the tangent directions of $C_4$ at $q$ and $q^\prime$. Using Lemma \ref{pr_imagemF1F2}, $M$ corresponds, in $\P^2$, to conics through four points, so $E_\l$ is mapped back to a line. The fixed parts have no moving intersection with $\X_p$, except for $F_p$, so they are contracted. 

Blowing up  $p$, and then  $\l$ and $\l^\prime$, we see that these lines are mapped to the lines in $S_2^x$ through $x$.

If $C_4$ has a non reduced component, it may happen that the two base points in $E_\l$ become infinitely near. However, the result will still be the same.
\\ \end{proof}

Also note that a general plane through $p$ is cut by $\X$ in quintics with five double points. The conic through these points is the intersection of $V$ with this plane. So it is mapped to a quintic (possibly weak) Del Pezzo surface through $x$ (this map is the inverse of the tangential projection from $x$). Remember that planes correspond to tangent hyperplane sections of $X$ in $x$. Planes through $p$ correspond to those reducible sections containing $S_2^x$.

\quad

Now I make a small remark about conics in $X$. Since through a general point $x^\prime \in X$ there is a quadric surface contained in $X$, there is a two-dimensional family of conics through $x^\prime$: the hyperplane sections of this quadric. The image via $\tau$ of such a conic is a conic in the quadric of $\mQ$ through $\tau(x^\prime)$, cutting $C_4$ in four points. 

 There is also an one-dimensional family of conics through $x^\prime$ parametrized by $C_4$.  Fixed a point in $C_4$, consider the plane containing this point, $p$ and $\tau(x^\prime)$. It cuts $C_4$ in three other points. Then there is a conic in this plane passing through these three points, $\tau(x^\prime)$ and $p$. It is mapped to a conic through $x^\prime$, since $\X$ has multiplicity two in $p$ and along $C_4$.

The conics through $x$ of the first family are mapped to the point $p$, since it is the image of $S_2^x$. The conics through $x$ of the second family are mapped to the points of $C_4$.

Obviously, any of these conics can be a pair of lines or a double line.

\subsection{The image of irreducible components of $C_4$}
\label{ed_componentes_C4}

We now study how the irreducible components of $C_4$ are blown up and mapped by $\sigma$. We'll suppose we are not in the general case (where $C_4$ is an elliptic quartic and $X=E_{3,7}$), so   either $C_4$ is an irreducible rational quartic or each component is a smooth rational curve. Then, after the desingularization, the exceptional divisor of the blow up at $C_4$ is a union of Hirzebruch surfaces.

Since $C_4$ is a $(2,2)$ curve in $V$, the possible irreducible components are: a rational quartic curve (nodal or cuspidal), a twisted cubic, a conic, a line, a double conic and a double line. Note that the singularities (and double components) of $C_4$ mean tangency conditions, and not singularities for surfaces in $\X$.

\subsubsection{A rational quartic}

If $C_4$ is a rational quartic, it has a double point $q$. It can be a nodal or a cuspidal point of $C_4$. If it is a nodal point, $\mQ$ has Segre symbol $[2,1,1]$, if it is cuspidal, the symbol is $[3,1]$. This can be proven with direct computations. In both cases, the special cone (corresponding to the numbers $2$ and $3$) of $\mQ$ has vertex $q$.

\begin{prop}\label{ed_ratquartic}
Suppose $C_4$ is a rational quartic curve. Then it is mapped to a scroll in conics $S_{12}^x$ of degree $12$ having multiplicity four in $x$, a double conic $C_{x_q}$ and two double lines. It cuts the quadric $S_2^x$ in the two double lines through $x$ and cuts the other quadrics of $\mQ^\prime$ in rational quartic curves with a double point. It is a cusp if and only if $C_4$ is cuspidal in $q$. These quartics are met by four lines in $S_{12}^x$, each cutting one of the two double lines through $x$. The conic $C_{x_q}$ is described by the singular points of these quartic curves.

  To distinguish both cases, denote by $^nS_{12}^x$ the image of a nodal quartic and by $^cS_{12}^x$ the image of a cuspidal quartic. A sketch of $^nS_{12}^x$ is presented in Figure \ref{ed_figura_S12}.  
\end{prop}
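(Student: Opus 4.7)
The plan is to resolve the base locus of $\sigma$ along $C_4$ by first blowing up the singular point $q$ and then the strict transform $\widetilde{C}_4$, which becomes a smooth rational curve. Since $C_4$ is a $(2,2)$-curve in every member of the pencil $\mQ$, its self-intersection $C_4^2=8$ on a smooth $Q\in\mQ$, combined with the analogous intersection on $V$, determines the normal bundle of $\widetilde{C}_4$ in $\Bl_q(\P^3)$ and hence the Hirzebruch surface $E_{\widetilde{C}_4}\cong\F_n$ for a computable $n$.

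Next, I would analyse the restriction $\X_{\widetilde{C}_4}$. Since $\X$ has multiplicity two along $C_4$, its strict transform meets a general fiber of $E_{\widetilde{C}_4}$ in two moving points, so every fiber is mapped to a conic in $\P^7$; this, together with Lemma \ref{pr_imagemF1F2} applied to the class of $\X_{\widetilde{C}_4}$, gives $S_{12}^x$ the structure of a scroll in conics over $\widetilde{C}_4\cong\P^1$ and allows the degree $12$ to be read off by intersecting with a section. The strict transform of $V$ cuts $E_{\widetilde{C}_4}$ in a curve that is contracted by $\sigma$ to $x$, and its degree in the fiber class yields the multiplicity four at $x$. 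The fibers over the points of $\l\cap C_4$ and $\l'\cap C_4$ acquire extra base points from the fixed lines $\l,\l'$, so, as in the analysis of Lemma \ref{ed_imagem_p_e_retas}, they are mapped to the two lines $\l_x,\l'_x$ in $S_2^x$ through $x$, producing the two double lines of $S_{12}^x$.

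For the remaining parts of the statement I would use that $\sigma\vert_Q:Q\to Q'$ is an isomorphism for $Q\in\mQ$, $Q\neq V$, by Lemma \ref{ed_imagemquadricas}, so $S_{12}^x\cap Q'=\sigma\vert_Q(C_4)$ is isomorphic to $C_4$ and is therefore a rational quartic with a node or cusp according to the type of singularity of $C_4$ at $q$. The double conic $C_{x_q}$ arises as the image under $\sigma$ of the exceptional divisor $E_q\cong\P^2$ of the blow up at $q$: a computation of $\mult_q\X$, from the two branches of $C_4$ together with the doubling along $C_4$, shows that the restricted linear system contracts $E_q$ onto a conic, and as $Q$ varies in $\mQ$ the singular points of the quartic sections $\sigma\vert_Q(C_4)$ sweep out precisely this conic. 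The four lines in $S_{12}^x$ correspond to degenerate fibers of the scroll over the preimages in $\widetilde{C}_4$ of the generatrices through $q$ of the singular cone of $\mQ$ with vertex $q$, identified as the entry of multiplicity greater than one in the Segre symbol $[2,1,1]$ or $[3,1]$ of $\mQ$ (Section \ref{ed_segre_sec}). The main obstacle will be the local analysis at $q$: in particular, determining $\mult_q\X$ and verifying that $E_q$ contracts onto a conic (rather than a line or a higher degree curve), which requires separate bookkeeping in the nodal and cuspidal cases and a careful identification of the section classes on $E_{\widetilde{C}_4}$ cut out by $V$ and by the singular cone.
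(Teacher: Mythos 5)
Your overall skeleton (blow up $q$, then the strict transform of $C_4$, compute normal bundles, read the scroll structure, the degree $12$ and the multiplicity four at $x$ off the classes on $E_{C_4}\cong\F_0$) matches the paper, and your observation that $\sigma\vert_Q$ is an isomorphism for $Q\in\mQ$, $Q\neq V$, so that the quartic sections of $S_{12}^x$ are isomorphic to $C_4$ and hence nodal or cuspidal accordingly, is a clean shortcut. But there is a genuine gap in the local analysis at $q$, and it propagates into three of the claims.

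After blowing up $q$, the restriction $\X_q$ to $E_q\cong\P^2$ is a curve of degree two with two double points (the tangent directions of $C_4$ at $q$, infinitely near in the cuspidal case), hence $\X_q=2t$ is a \emph{fixed} double line. There is no moving part, so $E_q$ is contracted to a point $x_q$, not mapped onto a conic as you assert. The double conic $C_{x_q}$ is instead the image of $E_t$, the exceptional divisor over the infinitely near base line $t$: one computes $N_t=\mO_{\P^1}(1)\oplus\mO_{\P^1}(-1)$, so $E_t\cong\F_2$ and $\X_t\equiv 2e_2+4f_2$ with two double base points, which is Cremona-equivalent to pairs of lines through a point and therefore maps $E_t$ to a conic through $x_q$ (and through $x$, by the fixed intersection with $V_t$). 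Your plan omits the blow-up of $t$ entirely, and without it you cannot obtain the conic, nor see why it is a \emph{double} curve of $S_{12}^x$: that follows from $E_t\cap E_{C_4}\equiv(1,0)+(1,0)$ in $E_{C_4}$, i.e.\ two fibers of the scroll both mapping $2$-to-$1$ onto $C_{x_q}$. The same intersection is what shows the singular points of the quartic sections lie on $C_{x_q}$ and distinguishes node from cusp intrinsically.

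You have also misattributed the four lines and the two double lines. The four lines of $S_{12}^x$ meeting the quartics are the images of the four $(1,0)$-fibers of $E_{C_4}$ through the base points $\l\cap E_{C_4}$ and $\l^\prime\cap E_{C_4}$: such a fiber meets $\X_{C_4}\equiv(4,2)$ in one fixed and one moving point, hence goes to a line rather than a conic. They have nothing to do with generatrices of the singular cone of $\mQ$ at $q$, and the Segre symbol plays no role here. The two double lines are $\l_x$ and $\l^\prime_x$ themselves: the four base points of $\X_{C_4}$ are blown up and mapped onto these two lines, two points onto each, which is why each is a double line of $S_{12}^x$ contained in $S_2^x$. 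Your version, in which the fibers over $\l\cap C_4$ and $\l^\prime\cap C_4$ map directly onto $\l_x$ and $\l^\prime_x$, conflates these two mechanisms and would not yield the four extra lines at all.
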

\begin{proof}
The blow ups are represented in Figure \ref{ed_figura_C4}.  

\begin{figure}
\centering
\includegraphics[trim=10 600 100 0,clip,width=1\textwidth]{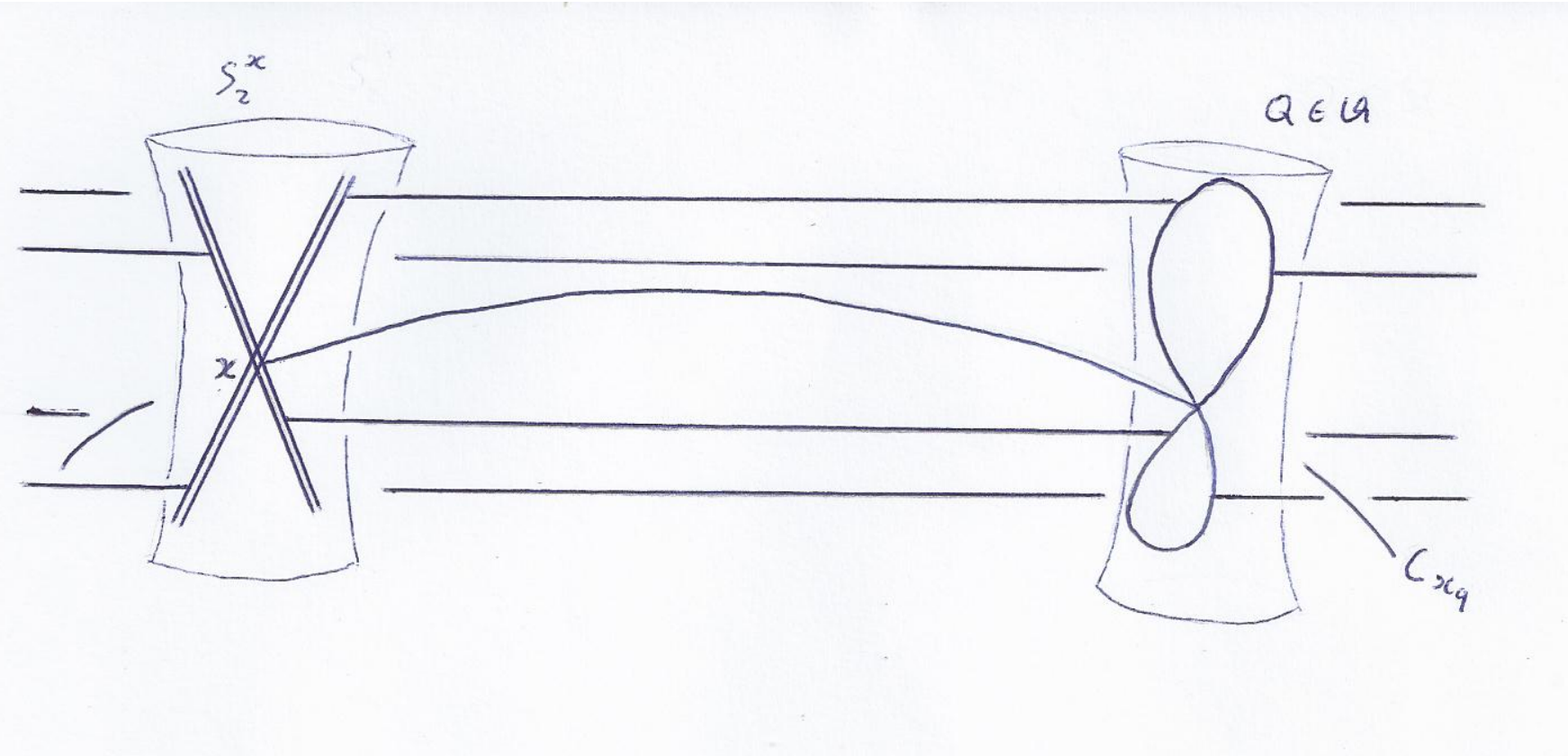}
\caption{The surface $^nS_{12}^x$}\label{ed_figura_S12}
\end{figure} 

\begin{figure}[tb]
\centering
\includegraphics[trim=10 5 240 470,clip,width=0.9\textwidth]{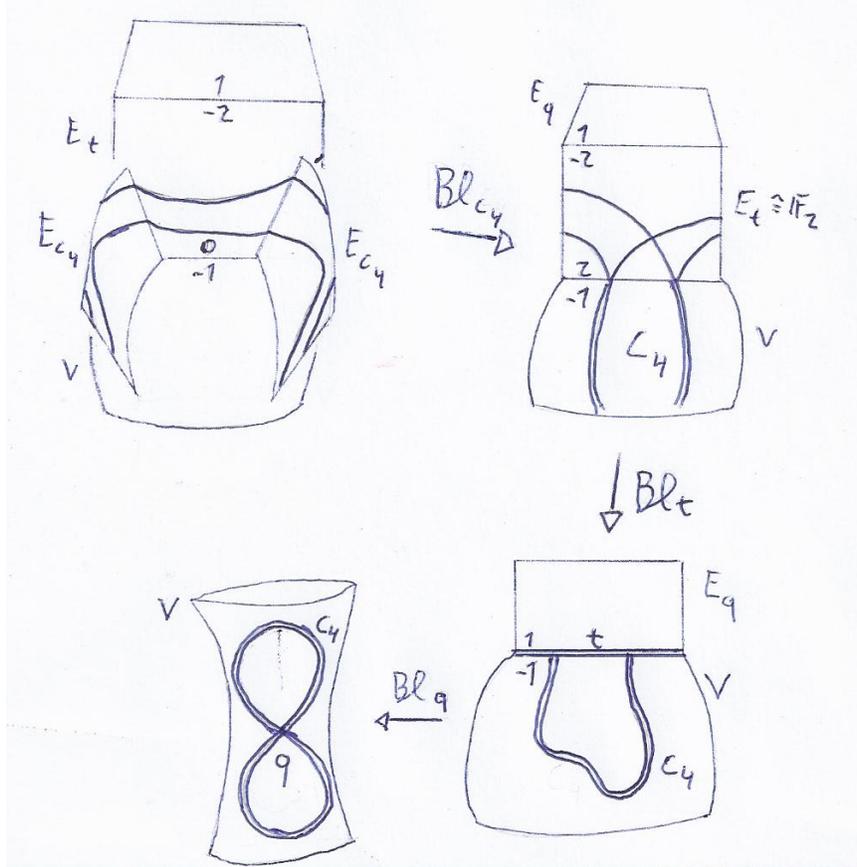}
\caption{$\X$ and the blow up at $C_4$ in the nodal case}\label{ed_figura_C4}
\end{figure} 

Before blowing up the curve itself, one must blow up $q$. $\X_q$ has two double points in $E_q\cong \P^2$, corresponding to the tangent cone of $C_4$ in $q$. These points are infinitely near if $q$ is a cusp. So $\X_q$ is a fixed double line connecting these two points, name it $t$.

The line $t$ has to be blown up too. It is the complete intersection of $E_q$ and $V$. In $E_q$, $t^2=1$; in $V$, $t^2=-1$, since it comes from the blow up of $V$ in $q$. Hence, the normal bundle of $t$ is:
\[ N_t = \mO_{\P^1}(1)\oplus \mO_{\P^1}(-1) \]

So blowing up  $t$ gives $E_t\cong \F_2$, where $V_t\equiv e_2+2f_2$ and $(E_q)_t\equiv e_2$. The linear system $\X_t$ must cut each fiber $f_2$ in two points, $E_q$ in none and $V$ in two fixed double points. So:
\[ \X_t \equiv  2e_2+4f_2 \]
with two double base points.  As showed in Lemma \ref{pr_imagemF1F2}, it birationally corresponds in $\P^2$ to degree four curves having  two double points, a third double point and a fourth double point infinitely near to this one. A quadratic standard transformation maps it to a system of conics with one double point, that is, pairs of lines through a point. Thus, $E_t$ is sent to a conic in $X$.

Note that, in the original linear system on $\F_2$, $\X_t$ has fixed intersection with $V_t$, so the conic passes through $x$. Moreover, the plane $E_q$ has no intersection with $\X$ at all, so it is mapped to a point $x_q\in X$ contained in this conic. Name the conic $C_{x_q}$.

Now let's find the normal bundle of the strict transform of $C_4$ after these two blow ups. $C_4$ is a complete intersection of two quadrics and its self-intersection in each of these quadrics is $8$. After blowing up  $q$, it decreases to $4$. Both quadrics contain $t$, since their tangent plane in $q$ coincide. Then, after the blow in $t$, $C_4$ is still a complete intersection. Its self intersection number continues to be $4$, since $t$ lied in the quadrics. Hence, after these blow ups, $C_4$ has normal bundle:
\[ N_{C_4} = \mO_{\P^1}(4)\oplus\mO_{\P^1}(4) \]

Now blow up  $C_4$, so $E_{C_4}\cong \F_0$. Lines of type $(0,1)$ correspond to quadrics containing $C_4$, so, in particular, $V_{C_4}\equiv (0,1)$.

The linear system $\X$ has multiplicity two in $C_4$ and cuts a quadric through it in $2C_4$ plus moving plane sections. So $\X_{C_4}$ is a $(4,2)$ curve.

The lines $\l$ and $\l^\prime$  cut $E_{C_4}$ in two points each, all of them lying on $V_{C_4}$. These are the base points of $\X_{C_4}$. So the image of $E_{C_4}$ by $\sigma$ has degree: 
\[ 4\cdot 2+2\cdot 4-2-2=12 \]
and it contains $x$ (the contraction of $V_{C_4}$), name it $S_{12}^x$. It is a scroll in conics, since it maps a general line of type $(1,0)$ to a conic through $x$.

To find the multiplicity of $S_{12}^x$ in $x$, note that $V_{C_4}$ has self-intersection $0$ in $E_{C_4}$ and $\X_{C_4}$ intersects it in four fixed simple points, corresponding to the lines $\l$ and $\l^\prime$. Then, after the blow up at the base points, $(V_{C_4})^2=-4$ and it is contracted to a point of multiplicity four of $S_{12}^x$.

To prove that $S_{12}^x$ is also singular along the conic $C_{x_q}$, note that:
\[ E_t\cap E_{C_4} \equiv  (1,0)+(1,0)\]
in $E_{C_4}$, that is, $E_t$ cuts $E_{C_4}$ in two fibers (which are infinitely near if $C_4$ is cuspidal in $q$), each is mapped to a conic. But since $E_t$ is mapped to $C_{x_q}$, both fibers are mapped to $C_{x_q}$. Therefore it is a double conic of $S_{12}^x$, since $\X_{C_4}$ restricted to $E_t\cap E_{C_4}$ defines a $2$-to-$1$ map.

A general quadric $Q$ of $\mQ$ intersects $E_{C_4}$ in a general line of type $(0,1)$, which is mapped to a rational quartic curve with a double point in $C_{x_q}$ (image of the intersection with $E_t\cap E_{C_4}$). This curve is the intersection of a quadric of $\mQ^\prime$, image of $Q$, with $S_{12}^x$. This double point is cuspidal if and only if the two intersections with $E_t\cap E_{C_4}$ are infinitely near, that is, if an only if $q$ is cuspidal. Since $E_t$ is mapped to $C_{x_q}$, this conic is described by the singular points of such quartic curves.

The four lines of type $(1,0)$ containing the base points of $\X_{C_4}$ are mapped to lines, cutting the quadrics of $\mQ^\prime$ in one point each.

Since $\l$ and $\l^\prime$ are mapped to the two lines through $x$, the four base points of $\X_{C_4}$ are blown up and mapped to these two lines. Each line comes from two points, so they are double lines of $S_{12}^x$ in $S_2^x$. This completes the proof.
\\ \end{proof}

\subsubsection{A twisted cubic}

\begin{prop}
A twisted cubic contained in $C_4$ is mapped to a degree nine scroll in conics $S_9^x$, having multiplicity three in $x$. It contains one double and one simple line through $x$, contained in $S_2^x$. It cuts the other quadrics of $\mQ^\prime$ in twisted cubics. Through a general point of $S_9^x$ there is a conic intersecting the twisted cubics and containing $x$. There are also three lines in $S_9^x$ meeting the twisted cubics.
\end{prop}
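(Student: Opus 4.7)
The plan is to mirror the strategy of the rational quartic case (Proposition~\ref{ed_ratquartic}), adapted to the reducible curve $C_4 = C_3 + \l_0$. On the smooth quadric $V \cong \P^1\times\P^1$, the twisted cubic $C_3$ has bidegree $(1,2)$, so the residual line $\l_0$ has bidegree $(1,0)$ and meets $C_3$ transversely in two points $q_1, q_2$. These are the analogue of the node in the quartic case, since at each $q_i$ the linear system $\X$ has multiplicity $2$ along two distinct branches of $C_4$; a local computation of the quadratic part of $\X$ at $q_i$ yields a fixed double line $t_i$ on the exceptional $\P^2$, lying on the tangent plane of $V$.

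First I would blow up $q_1, q_2$ and then $t_1, t_2$, exactly as in the quartic proof; the exceptional divisors $E_{t_i}$ are each mapped to conics in $X$. After these preliminary blow ups the strict transforms of $C_3$ and $\l_0$ are disjoint. Blowing up $C_3$, a bookkeeping of the normal bundle gives $E_{C_3} \cong \F_0$, with $V_{C_3} \equiv (0,1)$ (in the convention where $(1,0)$ is the fiber class) and, by a computation analogous to the quartic case, $\X_{C_3} \equiv (3,2)$. The base points of $\X_{C_3}$ on $V_{C_3}$ are the three intersections $(\l+\l')\cap C_3$: with $\l\equiv(1,0)$, $\l'\equiv(0,1)$ on $V$ and $C_3\equiv(1,2)$, we get $2+1=3$ simple base points on $V_{C_3}$. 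The image $S_9^x$ has degree $\X_{C_3}^2 - 3 = 12 - 3 = 9$, and since $\X_{C_3}\cdot(1,0)=2$, fibers of $E_{C_3}\to C_3$ are mapped to conics, making $S_9^x$ a scroll in conics. For the multiplicity at $x$: $V_{C_3}$ is contracted, and after blowing up the three base points on it, $V_{C_3}^2 = 0 - 3 = -3$, giving multiplicity $3$.

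For the remaining claims, the two lines in $S_2^x$ through $x$ arise as in Lemma~\ref{ed_imagem_p_e_retas}: the two base points of $\X_{C_3}$ from $\l\cap C_3$ are both mapped to the image of $\l$ (a single line through $x$), producing a double line, while the single base point from $\l'\cap C_3$ gives a simple line. A general quadric $Q\in\mQ$ meets $E_{C_3}$ in a $(0,1)$-section on which $\X_{C_3}$ restricts to degree $3$, so it is mapped to a twisted cubic inside the image of $Q$. The conic through a general point of $S_9^x$ is the image of the fiber of $E_{C_3}\to C_3$ through the corresponding preimage; it contains $x$ because every fiber meets $V_{C_3}$, and it meets each $(0,1)$-section once, hence each twisted cubic once. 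The three lines meeting the twisted cubics are the images of the three fibers of $E_{C_3}\to C_3$ over the base points, on each of which $\X_{C_3}$ restricts (after accounting for the simple base point) to a linear system of degree $1$.

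The main obstacle will be the careful bookkeeping of the preliminary blow ups at $q_1, q_2$ and $t_1, t_2$ and the derivation of the class $\X_{C_3}\equiv(3,2)$ on $E_{C_3}$, along with verifying that the only base points are the three coming from $\l+\l'$, so that the image-degree computation yields exactly $9$.
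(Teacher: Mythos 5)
Your proposal is correct and reaches every stated conclusion, but it organizes the blow-ups differently from the paper, and the comparison is instructive. The paper blows up $C_3$ directly, using the normal bundle $\mO_{\P^1}(5)\oplus\mO_{\P^1}(5)$ of the rational normal cubic to get $E_{C_3}\cong\F_0$ with quadric sections of class $(1,1)$ (pinned down by the residual line meeting $C_3$ twice); the intersections with the line component of $C_4$ are then absorbed as two, possibly infinitely near, \emph{double} base points of $\X_{C_3}\equiv(5,2)$, so the degree is $20-3-4-4=9$ and $(V_{C_3})^2=2-5=-3$ gives the triple point. You instead first blow up $q_1,q_2$ and the fixed double lines $t_1,t_2$, exactly as the paper does for the irreducible rational quartic; this separates the line from $C_3$, drops the quadric sections to self-intersection $0$, and leaves $\X_{C_3}\equiv(3,2)$ with only the three simple base points, giving $12-3=9$ and $0-3=-3$. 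The two computations are related by elementary transformations of $E_{C_3}$ over $q_1,q_2$ and agree on every invariant, so your route is sound. Its price is the bookkeeping you flag: each $q_i$ lowers $\deg N_{C_3}$ by $2$ and each $t_i$ by $1$, leaving degree $4$, and one must still argue that $E_{C_3}$ is $\F_0$ rather than $\F_2$ (the disjoint \emph{irreducible} sections $Q_{C_3}$ of self-intersection $0$ exclude $\F_2$, where every curve of such a class contains the $(-2)$-section) and that $E_{q_i}$ and $E_{t_i}$ meet $E_{C_3}$ only in fibers, contributing no further base points --- this is exactly where the claimed class $(3,2)$ could otherwise fail. Its payoff is that it simultaneously exhibits the conics of $X$ through the double points $x_{q_1},x_{q_2}$, which the paper defers to the singularity section. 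One caveat: in case (E5) the line is tangent to the cubic, $q_1$ and $q_2$ coalesce, and your preliminary blow-ups acquire an infinitely near structure; the paper's formulation with ``two double points (possibly infinitely near)'' covers that degeneration with no extra work.
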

\begin{proof}
Let $C_3$ be the twisted cubic. Being a rational normal curve, its normal bundle is $\mO_{\P^1}(5)\oplus\mO_{\P^1}(5)$, so blowing up $C_3$ gives $E_{C_3}\cong \F_0$. 

Two quadrics containing $C_3$ intersect each other in $C_3$ plus a line cutting it in two points. So, if these quadrics intersect $E_{C_3}$ in curves of type $(a,b)$, it follows that:
\[ 2=(a,b)^2=2ab \]
Hence they are of type $(1,1)$.

If $Q$ is a quadric containing the twisted cubic, then:
\[ \X\cap Q\equiv 2(1,2)+(3,1) \equiv 2C_3+(3,1) \]
So the residual intersection of $\X$ with $Q$ cuts $C_3$ again in seven points.

Therefore $\X_{C_3}$ is of type $(5,2)$, since it cuts $(1,1)$-curves in seven points. It has three simple base points, corresponding to the intersections with $\l$ and $\l^\prime$, and two double points (possibly infinitely near), corresponding to the intersections with the other component (a line) of $C_4$. It intersects $V_{C_3}$ in these base points, so $V_{C_3}$ is contracted to $x$.

Then $\X_{C_3}$ maps $E_{C_3}$ to a surface of degree $10+10-3-4-4=9$ through $x$, call it $S_9^x$. It cuts a general quadric of $\mQ^\prime$ in twisted cubics, since the quadrics of $\mQ$ cut $E_{C_3}$ in curves of bidegree $(1,1)$ containing the two double points of $\X_{C_3}$. A general line of type $(1,0)$ is mapped to a conic through $x$ cutting each twisted cubic in one point.

The simple base points are mapped to the two lines through $x$ in $S_2^x$. Since those are three points, one of them is a double line of $S_9^x$. The three $(1,0)$-lines containing these points are mapped to lines meeting the twisted cubics in $\mQ^\prime$.

Since $V_{C_3}\equiv (1,1)$, it has self-intersection $2$ in $E_{C_3}$. On the other hand, $\X_{C_3}$ intersects this curve in its five base points. Then, after the blow up at the base points, $(V_{C_3})^2=2-5=-3$. Therefore $x$ is a triple point of $S_9^x$. 
\\ \end{proof}

\subsubsection{A conic}

\begin{prop}\label{ed_conic}
Let $C_2$ be a conic in $C_4$. Then it is mapped to a weak Del Pezzo surface of degree six $S_6^x$, having a double point in $x$. It contains the two lines through $x$ and cuts quadrics of $\mQ^\prime$ in conics. There are two lines in $S_6^x$ intersecting these conics, and each line intersects one of the lines through $x$. 

The $\P^6$ containing $S_6^x$ cuts $X$ in a tangent hyperplane section through $x$. It is the union of $S_6^x$ and the image of the plane $\Sigma$ containing $C_2$.
\end{prop}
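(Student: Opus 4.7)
The plan mirrors the approach used above for the rational quartic and the twisted cubic. Since $C_2$ is a plane conic, it is the complete intersection of its plane $\Sigma$ with any quadric $Q\in\mQ$, and from $C_2^2=4$ in $\Sigma$ and $C_2^2=2$ in $Q$ we get $N_{C_2/\P^3}\cong\mO(2)\oplus\mO(4)$ and $E_{C_2}\cong\F_2$. The natural sections are $\Sigma_{C_2}\equiv e_2$ (the $(-2)$-section) and $Q_{C_2}\equiv e_2+2f_2$ for every $Q\in\mQ$; in particular $V_{C_2}\equiv e_2+2f_2$. Combining the fiber intersection $\X_{C_2}\cdot f_2=2$ (from the multiplicity $2$ along $C_2$) with the computation $\X_{C_2}\cdot V_{C_2}=(2C_2'+\l+\l^\prime)\cdot C_2=4+1+1=6$ on $V\cong\P^1\times\P^1$, where $C_2'$ is the residual $(1,1)$-conic, one deduces $\X_{C_2}\equiv 2e_2+6f_2$, so $\X_{C_2}^2=16$.

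Next I would locate the base points of $\X_{C_2}$. On $V_{C_2}$ there are four: $p_\l$ and $p_{\l^\prime}$, over $\l\cap C_2$ and $\l^\prime\cap C_2$, each of multiplicity one; and $p_1,p_2$, over the two points $q_1,q_2$ of $C_2'\cap C_2$. The delicate point is that, because $C_2'\subset V$, the $C_2'$-direction and the $V$-direction at each $q_i$ coincide in the normal $\P^1$, so the conditions imposed on $\X$ along $C_2$ and along $C_2'$ are both concentrated at the same point $p_i$. A short local computation in coordinates adapted to $V$, $C_2$, $C_2'$ shows that $F\in I_{C_2}^2\cap I_{C_2'}^2$ forces the linear part of the expansion of $\X_{C_2}$ at $p_i$ to vanish, so $\X_{C_2}$ has multiplicity $2$ at each $p_i$. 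The multiplicities add up to $1+1+2+2=6=\X_{C_2}\cdot V_{C_2}$, and blowing up these four base points produces a moving system $\mathcal{M}$ with
\[ \mathcal{M}^2 = 16-(1+1+4+4) = 6, \qquad \tilde V_{C_2}^{\,2} = 2-4 = -2. \]
Since $V$ is contracted by $\sigma$ to $x$, the $(-2)$-curve $\tilde V_{C_2}$ is contracted to an $A_1$-point at $x$, giving a double point on the degree six image $S_6^x$, which is therefore a (weak) del Pezzo surface of degree six.

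To finish, I would identify the distinguished curves and the tangent hyperplane section. The exceptional divisors $E_{p_\l}$, $E_{p_{\l^\prime}}$ have $\mathcal{M}$-degree one and meet $\tilde V_{C_2}$, so they map to the two lines through $x$ coming from $\l$, $\l^\prime$ by Lemma~\ref{ed_imagem_p_e_retas}. For $Q\in\mQ$ the section $Q_{C_2}$ passes through $p_1,p_2$ (since $T_{q_i}Q\supseteq T_{q_i}C_2+T_{q_i}C_2'=T_{q_i}V$) but not through $p_\l,p_{\l^\prime}$ for $Q\neq V$, so $\mathcal{M}\cdot Q_{C_2}=6-4=2$, and its image is the conic cut on $S_6^x$ by the corresponding quadric $Q'\in\mQ'$. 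The two extra lines of the statement arise from the strict transforms of the fibers over $q_\l,q_{\l^\prime}$: each becomes a $(-1)$-curve with $\mathcal{M}$-degree $2-1=1$, meets every conic once (since $f_2\cdot(e_2+2f_2)=1$), and meets the corresponding $E_{p_\l}$ or $E_{p_{\l^\prime}}$, hence the corresponding line through $x$. Finally, $\X|_\Sigma=2C_2+L$ for a moving line $L$, so $\Sigma$ maps to a plane $\Pi\subset\P^7$; by Lemma~\ref{pr_X'eX''}(iv) $\Sigma$ corresponds to a tangent hyperplane section of $X$ containing $S_6^x$, and the degree count $6+1=7=\deg X$ identifies this section as $S_6^x\cup\Pi$. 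The main obstacle I anticipate is the local multiplicity analysis at $p_1,p_2$: the multiplicity $2$ is the combined effect of the $C_2$- and $C_2'$-vanishing being concentrated along the same normal direction, and confirming it requires working with $I_{C_2}^2\cap I_{C_2'}^2$ rather than with either ideal separately.
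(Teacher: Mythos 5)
Your proposal is correct and follows essentially the same route as the paper's proof: blow up $C_2$ to get $E_{C_2}\cong\F_2$, compute $\X_{C_2}\equiv 2e_2+6f_2$ with two double and two simple base points on $V_{C_2}$, obtain the degree $16-8-2=6$, contract $V_{C_2}$ to the double point $x$, and read off the conics, the two extra lines, and the tangent hyperplane section $S_6^x\cup\sigma(\Sigma)$ from Lemma \ref{pr_X'eX''}. The only cosmetic difference is that the paper certifies the weak Del Pezzo structure by passing to $\P^2$ via Lemma \ref{pr_imagemF1F2} and two standard quadratic transformations (reducing to cubics through three points, one pair infinitely near), whereas you argue directly from the contraction of the $(-2)$-curve $V_{C_2}$; both are fine.
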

\begin{proof}
As $C_2$ is a complete intersection, its normal bundle is:
\[ \mO_{\P^1}(2)\oplus\mO_{\P^1}(4) \]
so blowing up $C_2$ gives $E_{C_2}\cong \F_2$. 

If $\Sigma$ is the plane containing $C_2$, then $\Sigma_{C_2}\equiv e_2$. Quadrics containing $C_2$ intersect $E_{C_2}$ in sections $e_2+2f_2$.

$\X$ intersects $\Sigma$ in $2C_2$ plus moving lines, so $ \X_{C_2}\cdot e_2=2$ and $\X_{C_2}\cdot f_2=2$. 
Hence $\X_{C_2}\equiv 2e_2+6f_2$. 

Remember that $C_4$ is of type $(2,2)$ in $V$, so a conic cuts the other components in two (possibly infinitely near) points. And it cuts the lines through $p$ in one point each. Therefore the base points of $\X_{C_2}$ are two double and two simple points, all four in $V_{C_2}$. The double points can be infinitely near.  We can also compute the degree of the image surface:
\[ (2e_2+6f_2)^2-2\cdot 4-2 = -8+24-8-2 = 6\]
So name it $S_6^x$ ($V_{C_2}$ is contracted to $x$).

  Following Lemma \ref{pr_imagemF1F2}, $\X_{C_2}$ corresponds,  in $\P^2$, to degree six curves with  two double points (from the other components of $C_4$), two simple points (from $\l$ and $\l^\prime$), one point of multiplicity four and one double point infinitely near to this last point.

Note that $V_{C_2}$ corresponds to a conic with simple points in all six points above (so they are not in general position).

Performing two standard quadratic transformations, we map $V_{C_2}$ to a point and $\X_{C_2}$ to cubics with a base point, a second base point in the contraction of $V_{C_2}$ and a third base point infinitely near to it. The result is the same if the two double points are infinitely near or not.

Hence, the conic is mapped to a weak Del Pezzo surface of degree six $S_6^x$, having a double point in $x$.

Quadrics of $\mQ$ intersect $E_{C_2}$ in sections $e_2+2f_2$ containing the two double points, so they are mapped to conics in $\mQ^\prime$. These conics don't contain $x$, except for the quadric $S_2^x$, which intersects $S_6^x$ in the two lines through $x$ (images of the simple base points of $\X_{C_2}$).

The conics in $\mQ^\prime$ are intersected by conics through $x$ (image of the fibers $f_2$) and by two lines (image of the fibers through the simple base points).

  Note that the plane $\Sigma$ is mapped again to a plane. 
So, according to Lemma \ref{pr_X'eX''}, $\Sigma$ corresponds to the tangent hyperplane section made of $S_6^x$ and a plane. The plane intersects $S_6^x$ in a conic (the image of $e_2\subset E_{C_2}$). This conic is contained in a quadric of $\mQ^\prime$: the image of the reducible quadric of $\mQ$ containing $\Sigma$.  
\\ \end{proof}

\subsubsection{A line}

\begin{prop}\label{ed_line}
A line contained in $C_4$ is mapped to a scroll $S(1,2)$ through $x$, denoted $S_3^x$. The lines of its ruling are the intersections with quadrics of $\mQ^\prime$, including a line through $x$. Together with quartic scrolls, it forms tangent hyperplane sections of $X$ at $x$.

This cubic scroll is represented in Figure \ref{ed_figura_S3}.
\end{prop}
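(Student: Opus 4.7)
The plan is to imitate the blow-up analyses of the previous three propositions in the line case.

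Blow up $L$. Since $L$ is a line, $N_{L/\P^3}\cong\mO(1)\oplus\mO(1)$ and $E_L\cong\F_0$; write $F_1$ for a fiber of $E_L\to L$ and $F_2$ for the class of a section cut by a plane through $L$. The mult-$2$ condition of $\X$ along $L$ gives $\X_L\cdot F_1=2$, and restricting a generic member of $\X$ to a plane $\Pi$ through $L$ produces a quintic of the form $2L+R$ whose residual cubic $R$ meets $L$ in three points, so $\X_L\cdot F_2=3$. Hence $\X_L\equiv 3F_1+2F_2$ with self-intersection $12$. Any quadric through $L$ cuts $E_L$ in a $(1,1)$-section, and in particular $V_L\equiv F_1+F_2$.

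Next I locate the base points on $V_L$. Since $L$ lies in one of the rulings of $V$, exactly one of $\l,\l^\prime$ meets $L$ in a single point, giving one simple base point of $\X_L$; the identity $L\cdot(C_4-L)=(1,0)\cdot(1,2)=2$ in $V$ then yields two (possibly infinitely near) intersections of $L$ with the remaining components of $C_4$, each producing a double base point because the combined mult-$2$ conditions of $\X$ along $L$ and the meeting component collapse the tangent cone of a member of $\X$ at such a point to the double plane through $T_qL$ and the tangent to the meeting component. The total $2\cdot 4+1=9$ subtracted from $\X_L^2=12$ gives the image surface $S_3^x$ of degree $3$; since $V_L$ contains all five base points with total weight $(F_1+F_2)\cdot(3F_1+2F_2)=5$, it is contracted, and as $V$ is the fundamental surface the contraction point must be $x$, so $x\in S_3^x$.

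To identify $S_3^x$ as $S(1,2)$, I apply Lemma~\ref{pr_imagemF1F2}(i) to realize $\X_L$ as plane quintics with multiplicities $3,2,2,2,1$ at five points, then perform the standard quadratic transformation (Definition~\ref{pr_def_stdquad}) based at the triple point and two of the double points; the resulting linear system of plane cubics with one double and two simple base points defines the embedding of $\F_1$, blown up at two points, into $\P^4$ as the cubic scroll $S(1,2)$. For the ruling description, a general $Q\in\mQ\setminus\{V\}$ contains the rest of $C_4$, so $Q_L\equiv F_1+F_2$ passes through both double base points (its tangent at a point of $L\cap(C_4-L)$ contains the tangent to the meeting component) but not through the simple base point (as $\l,\l^\prime\not\subset Q$ generically); hence $Q_L$ is sent to a curve of degree $5-2\cdot 2=1$, a line in the image quadric $\sigma(Q)\in\mQ^\prime$. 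These sweep out the ruling of $S_3^x$ as $Q$ varies in the pencil, and the unique ruling line through $x$ is $S_3^x\cap S_2^x$, corresponding to $S_2^x\in\mQ^\prime$ (the image of $p$). Finally, by Lemma~\ref{pr_X'eX''}(iv), a plane $\Pi$ through $L$ corresponds to a tangent hyperplane section of $X$ at $x$ containing $S_3^x$; the residual surface $\sigma(\Pi)$ has degree $\deg X-\deg S_3^x=7-3=4$, and an analysis of the moving part of $\X|_\Pi$ analogous to the conic case in Proposition~\ref{ed_conic} identifies $\sigma(\Pi)$ as a quartic scroll.

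The step I expect to be most delicate is the double-base-point computation at $L\cap(C_4-L)$, which rests on a local tangent-cone argument showing that the two overlapping mult-$2$ conditions collapse the generically moving pair of points on the relevant fiber of $E_L$ into a single doubled point rather than two independent simple ones; careful bookkeeping of infinitely near intersections will also be needed when $C_4-L$ is itself non-reduced or tangent to $L$.
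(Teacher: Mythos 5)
Your proof follows essentially the same route as the paper: blow up the line to get $E_L\cong\F_0$ with $\X_L$ of bidegree $(3,2)$, locate the two double and one simple base points on $V_L\equiv(1,1)$, contract $V_L$ to $x$, identify the degree-three image as $S(1,2)$ via Lemma \ref{pr_imagemF1F2} and quadratic transformations, and read off the ruling from the $(1,1)$-curves cut by the quadrics of $\mQ$. The only (harmless) differences are that the paper performs a second standard quadratic transformation to land on conics through one point, which makes the identification of $S(1,2)$ immediate, and that it explicitly recognizes the moving cubics on a plane through $L$ as the inverse tangential projection of $S(2,2)$ rather than deferring that step by analogy with Proposition \ref{ed_conic}.
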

\begin{proof}
Let $r$ be this line. Blowing up $r$ gives $E_r\cong \F_0$, and lines of type $(0,1)$ correspond to intersections with planes through $r$.

  A general plane containing $r$ is cut by $\X$ in $r$ with multiplicity two and cubics with free intersections with $r$.  So $\X_r$ is of type $(3,2)$. It has two (possibly infinitely near) double points in the intersections with  the other components of $C_4$. It also has one base point in the intersection with one of the lines through $p$. These three base points lie on the curve $V_r$ of type $(1,1)$, which is contracted to $x$.

  In view of Lemma \ref{pr_imagemF1F2},   such linear system can be birationally mapped to $\P^2$ as quintics with one triple, three double and one simple points. With two standard quadratic transformations, we get a system of conics through one point. So $r$ is mapped to a cubic scroll $S(1,2)$, name it $S_3^x$. The result is the same if the two double points are infinitely near.

Looking directly in $E_r$, we see that the directrix line of $S_3^x$ comes from the fibre through the simple base point of $\X_r$, while the blow up at this point is mapped to one of the lines through $x$. The other lines of the ruling of $S_3^x$ come from $(1,1)$ curves through the double points, which come from the quadrics in $\mQ$.

A general plane containing $r$ is cut by $\X$ in $r$ with multiplicity two and cubics with one double point (at the intersection with the other component of $C_4$ - a cubic) and one simple point (at the intersection with one of the lines through $p$), none of them in $r$. These cubics define the inverse of the tangential projection of a scroll $S(2,2)$. So such a plane corresponds to a tangent section of $X$ made of the cubic scroll $S_3^x$ and a quartic scroll, both through $x$. They intersect each other in a twisted cubic, the image of a general $(0,1)$-curve in $E_r$. Note that special planes through $r$ may produce scrolls of type $S(1,3)$.
\\ \end{proof}

\begin{figure}
\centering
\includegraphics[trim=10 590 50 0,clip,width=1\textwidth]{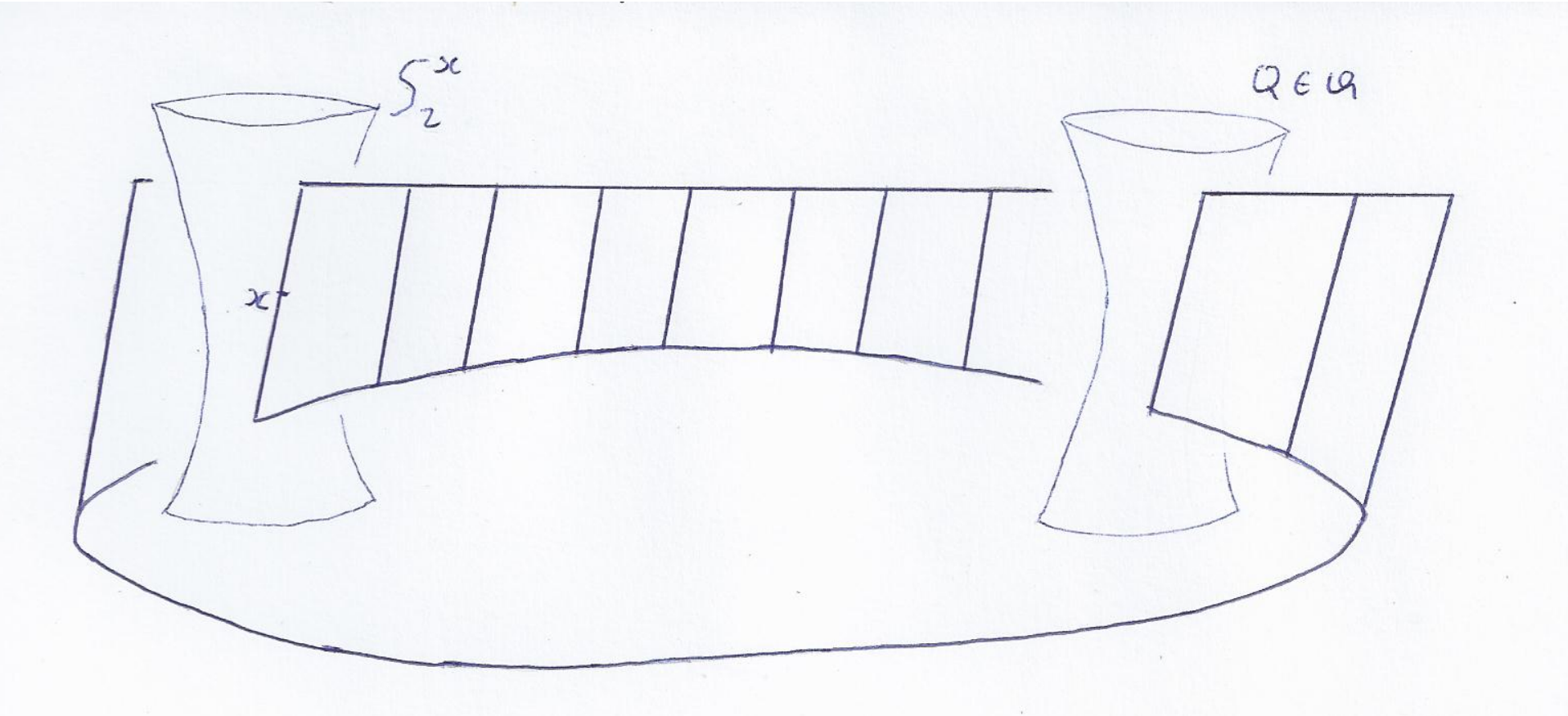}
\caption{The surface $S_3^x$}\label{ed_figura_S3}
\end{figure}

\subsubsection{A double conic}

\begin{prop}\label{ed_doubleconic}
Let $C_2$ be a double conic in $C_4$, so that $\X$ has multiplicity two in a second conic $t$ infinitely near to $C_2$. Then $C_2$ is mapped to a conic $C_2^\prime$ and $t$ is mapped to a degree six weak Del Pezzo surface $S_6^{x}$, singular at $x$. It contains $C_2^\prime$, being its intersection with the double plane (image of $\Sigma$) of $\mQ^\prime$.
\end{prop}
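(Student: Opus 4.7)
The plan is to adapt the two-step blow-up argument of Proposition \ref{ed_conic} to account for the second infinitely near conic $t$ of multiplicity two.

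First I would blow up the smooth conic $C_2$. As the complete intersection of the plane $\Sigma$ and a quadric, its normal bundle is $\mO_{\P^1}(2)\oplus\mO_{\P^1}(4)$, so $E_{C_2}\cong\F_2$ with $\Sigma_{C_2}\equiv e_2$ and a general quadric through $C_2$ restricting to a section $e_2+2f_2$. Since $\mult_{C_2}\X=2$ one has $\X_{C_2}\cdot f_2=2$, and using $\X\cap V=4C_2+\l+\l^\prime$ together with the intersection of $\X$ with $\Sigma$ pins down the class of $\X_{C_2}$ on $\F_2$. The excess $4C_2$ of $\X\cap V$ that is not already removed by the multiplicity two of $\X$ along $C_2$ must be carried by a double section of $V_{C_2}\equiv e_2+2f_2$; this section is the infinitely near conic $t\equiv e_2+bf_2$ of the statement, with $b$ forced by the above intersections. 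Setting $R=\X_{C_2}-2t$, one finds $R\cdot f_2=0$, so the moving part of $\X_{C_2}$ is a multiple of fibres of $\F_2\to C_2$. Hence $E_{C_2}$ is contracted along its ruling, and its image is a rational curve which a global degree count identifies as a conic $C_2^\prime\subset X$. This proves the first assertion.

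Next I would blow up $t$. Since $t$ lies on both $E_{C_2}$ and on the strict transform of the quadric $V$, its normal bundle is $\mO_{\P^1}(a_1)\oplus\mO_{\P^1}(a_2)$ with $a_i$ the two self-intersections on these surfaces, so $E_t\cong\F_{|a_1-a_2|}$. The class of $\X_t$ on $E_t$ is then determined by the multiplicity two of $\X$ along $t$, by the fixed intersection with $V_t$, and by the simple intersections with $\l$ and $\l^\prime$; in particular $V_t$ is contracted to the single point $x$. Mapping $\X_t$ birationally to a linear system on $\P^2$ via Lemma \ref{pr_imagemF1F2} and applying one or two standard quadratic transformations, exactly as in the analysis of Proposition \ref{ed_conic}, I would reduce $\X_t$ to a system of plane cubics through one assigned point together with two further infinitely near points. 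This is the usual rational representation of a weak Del Pezzo surface of degree six with a node at $x$, and this surface is the claimed $S_6^x$.

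Finally, for the last assertion I would show that the plane $\Sigma$ maps via $\sigma$ to a plane $\Sigma^\prime\subset\P^7$, which by Lemma \ref{pr_X'eX''}(iv) is a component of a tangent hyperplane section of $X$ at $x$ and which, by the analogue of the closing argument of Proposition \ref{ed_conic}, is the double plane of $\mQ^\prime$ referred to in the statement. By construction $\Sigma^\prime$ meets $S_6^x$ along the image of $E_t\cap E_{C_2}$, which is exactly $C_2^\prime$. The main obstacle in filling in this plan is the careful bookkeeping of the two successive blow-ups — first pinning down the integer $b$, equivalently the precise class of $t$ on $\F_2$, and then the normal bundle of $t$ after the first blow-up — so that the class of $\X_t$ really works out to yield a weak Del Pezzo of degree six and not some other rational surface.
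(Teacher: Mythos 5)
Your plan follows the paper's proof essentially step for step: blow up $C_2$ to get $E_{C_2}\cong\F_2$, identify the infinitely near conic as $t=V_{C_2}\equiv e_2+2f_2$ (so $b=2$), observe that $\X_{C_2}\equiv 2e_2+6f_2=2t+\{2f_2\}$ contracts $E_{C_2}$ along its ruling to a conic, then blow up $t$ (the paper computes $t^2=2$ on both $V$ and $E_{C_2}$, hence $N_t=\mO_{\P^1}(2)\oplus\mO_{\P^1}(2)$, $E_t\cong\F_0$ and $\X_t\equiv(2,2)$ with two simple base points at $\l$ and $\l^\prime$), and finally reduce by a single standard quadratic transformation to the very cubic system appearing in Proposition \ref{ed_conic}. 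The bookkeeping you flag as the remaining obstacle closes up exactly as you predict, so the proposal is correct and coincides with the paper's argument.
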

\begin{proof}
Let $\Sigma$ the plane containing $C_2$. In this situation, the Segre symbol of $\mQ$ is $[(111),1]$. This means that there are two singular quadrics in $\mQ$: a cone and the double plane $\Sigma$.

The linear system $\X$ has multiplicity two in this conic and its tangent cone at a general point of $C_2$ is a double plane: the tangent plane of $V$ at this point.

As before, blowing up $C_2$ gives $E_{C_2}\cong\F_2$, with $e_2$ being the intersection with the plane $\Sigma$. Again, $t=V_{C_2}$ is a section of type $e_2+2f_2$ in $E_{C_2}$, and $\X_{C_2}$ has type $2e_2+6f_2$. But now, the tangency condition implies that  in each fibre of $E_{C_2}$, $\X_{C_2}$ has a double point at the intersection of $t$ with this fibre. Hence:
\[ \X_{C_2} \equiv  2e_2+6f_2 \equiv 2t + \{ 2f_2 \} \] 
So it maps $E_{C_2}$ to a conic $C_2^\prime\subset X$. Note that $t$ is the other component of $C_4$, but now it is infinitely near to $C_2$.

Now blow up $t$. It's a complete intersection of $V$ and $E_{C_2}$. In $V$, $t^2=(C_2)^2=2$; in $E_{C_2}$, $t^2=(e_2+2f_2)^2=2$. Hence its normal bundle is:
\[ N_t=\mO_{\P^1}(2)\oplus \mO_{\P^1}(2) \]
and $E_t\cong \F_0$.

Both $E_{C_2}$ and $V$ intersect $E_t$ in $(0,1)$ lines. Since the moving part of $\X_{C_2}$ intersected $t$ in two points, it follows that $\X_t$ is made of curves of type $(2,2)$. It has two simple base points at the intersections with $\l$ and $\l^\prime$. 

  In Figure \ref{ed_figura_C2} a sketch of the blow up at $C_2$ is given.  

\begin{figure}
\centering
\includegraphics[trim=17 4 40 670,clip,width=1\textwidth]{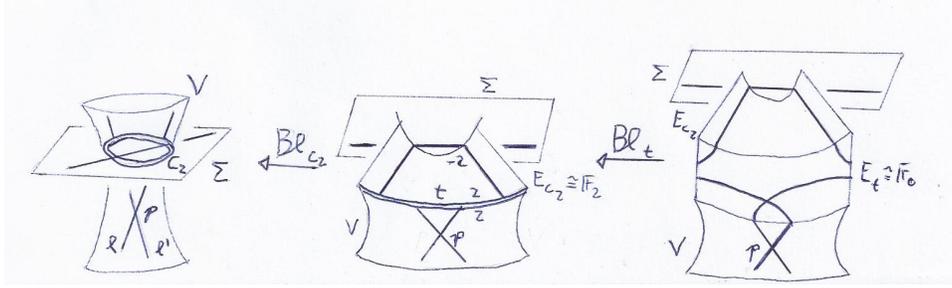}
\caption{Blow up at $C_2$}\label{ed_figura_C2}
\end{figure} 

$\X_t$ is birationally equivalent to quartic curves in $\P^2$ with two double points and two simple points. Here, $V_t$ is a line through one of the double points and both simple points. With a quadratic standard transformation, one maps $\X_t$ to plane cubic with two base points and a third one infinitely near to one of them (the image of $V_t$).

This is the same linear system found in Proposition \ref{ed_conic}. Therefore $E_t$ is mapped to $S_6^x$, a degree six weak Del Pezzo surface with a double point in $x$.

Since $\Sigma_{C_2}\equiv e_2$, this plane has no intersection with $E_t$. With the contraction of $E_{C_2}$ to $C_2^\prime$, the image of $\Sigma$ intersects $S_6^x$ in $C_2^\prime$.
\\ \end{proof}
\quad

In Section \ref{ed_singularities_sec} it will be proven that $C_2^\prime$ is a double conic of $X$.

\subsubsection{A double line}

\begin{prop}\label{ed_doubleline}

 Let $r$ be a double line of $C_4$, so $\X$ has multiplicity two in a second line $t$ infinitely near to $r$. Then $t$ is mapped to the surface $S_3^{x}\cong S(1,2)$, defined in Proposition \ref{ed_line}. The image of $r$ is a line $r^\prime$ of the ruling of $S_3^x$. 
\end{prop}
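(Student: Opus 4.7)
My plan is to adapt the argument of Proposition~\ref{ed_doubleconic} to the line case, performing a two-step blow-up: first $r$, then its infinitely near line $t$. After blowing up $r$, since $r$ is a line in $\P^3$ with normal bundle $\mO_{\P^1}(1)\oplus\mO_{\P^1}(1)$, we have $E_r\cong \F_0$, and the strict transform of $V$ meets $E_r$ along $t=V_r$, a section of class $(0,1)$. The same reasoning used in Proposition~\ref{ed_line} gives $\X_r\equiv (3,2)$: multiplicity two along $r$ forces the fiber coefficient to be $2$, while the cubic residual of a general plane section through $r$ meets $r$ in three points. The tangency hypothesis that $\X$ has multiplicity two along $t$ then decomposes
$$\X_r\equiv 2t+\{3f\},$$
with $2t$ as fixed part and the moving part contained in $|3f|$.

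Next I blow up $t$. Its self-intersection is zero both on $\tilde V$ (via $\tilde V\cong V$, under which $t$ corresponds to $r\subset V$ with $r^2=0$) and on $E_r$ (since $(0,1)^2=0$ on $\F_0$), so $N_t\cong \mO_{\P^1}^{\oplus 2}$ and $E_t\cong \F_0$, with both $\tilde V$ and $\tilde E_r$ meeting $E_t$ in $(0,1)$-sections. To compute the trace $\X_t$, multiplicity two along $t$ yields fiber coefficient $2$; and since the moving part $3f$ of $\X_r$ meets $t$ in $3$ moving points, these lift to $3$ moving fibers on $E_t\to t$, giving section coefficient $3$. Thus $\X_t\equiv (3,2)$, and the inherited base locus (on the sections coming from $V$ and from $E_r$) reproduces the configuration of Proposition~\ref{ed_line}, so $E_t$ maps to the cubic scroll $S_3^x\cong S(1,2)$. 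In particular the intersection $\tilde E_r\cap E_t=t$ maps under $\X_t$ to one of the ruling lines of $S_3^x$.

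For the image of $r$ itself, the moving part of $\X_r$ factors through the projection $\F_0\to r$, since $3f$ is pulled back from $r$. A local analysis at the intersections of $r$ with the remaining components of $C_4$ (which, being contained in $V$, meet $E_r$ along $t$ and contribute further fixed fibers) shows that the effective moving linear system on $r$ reduces to a pencil, so $E_r$ is mapped to a single line $r'\subset \P^7$. Since $r'$ meets the scroll $S_3^x$ along the image of $\tilde E_r\cap E_t$, which is a ruling line, $r'$ coincides with this ruling line. The main obstacle is precisely this last local analysis: carefully extracting the fixed fiber contributions on $E_r$ so that the moving part collapses from $|3f|$ down to a pencil, which is what makes the image of $r$ a single line rather than a twisted cubic.
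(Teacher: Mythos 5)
Your computation goes wrong at the very first step after the blow up of $r$: you take $t=V_r$ to be a curve of class $(0,1)$ in $E_r\cong\F_0$, but its class is $(1,1)$. Indeed, with the paper's conventions ($(1,0)$ the fibres over points of $r$, $(0,1)$ the traces of planes through $r$), a general hyperplane class restricts to $(1,0)$ on $E_r$ and a plane through $r$ to $(0,1)$, so $E_r\vert_{E_r}\equiv(1,-1)$ and hence $V_r=(2H-E_r)\vert_{E_r}\equiv(2,0)-(1,-1)=(1,1)$; equivalently, $V_r$ must meet each fibre once (since $V$ is smooth along $r$) \emph{and} meet each plane section $(0,1)$ once (since $V\cap\Pi=r+\ell$ with $\ell$ meeting $r$). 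This single error propagates through everything that follows. First, the decomposition of $\X_r\equiv(3,2)$ is $2t+(1,0)$, not $2t+3f$: the moving part is already a single pencil of fibres, so $E_r$ maps to a line $r'$ immediately, and the "local analysis collapsing $|3f|$ to a pencil" that you flag as the main obstacle is not needed — it is an artifact of the wrong class. Second, $t^2=(1,1)^2=2$ in $E_r$ (while $t^2=0$ in $V$), so $N_t=\mO_{\P^1}(0)\oplus\mO_{\P^1}(2)$ and $E_t\cong\F_2$, not $\F_0$; consequently your trace $\X_t\equiv(3,2)$ on $\F_0$ is not the right linear system. The paper finds $\X_t\equiv 2e_2+5f_2$ on $\F_2$, with two double and one simple base points all on $V_t\equiv e_2+2f_2$, and identifies the image as $S(1,2)$ via Lemma \ref{pr_imagemF1F2} and two standard quadratic transformations.

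Your overall strategy — blow up $r$, then $t$, read off the fixed and moving parts, and locate $r'$ as the image of $E_r\cap E_t$ — is exactly the paper's, and the statement about $r'$ being a ruling line is recovered there from the reducible section $e_2$ plus the two fibres through the double points. But as written the proposal does not constitute a proof: the class of $V_r$, the normal bundle of $t$, the surface $E_t$, and the linear system $\X_t$ are all incorrect, and the correct conclusions are not derivable from the stated intermediate claims.
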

\begin{proof}
As in the previous case, the tangent cone of $\X$ at a general point of $r$ is the tangent plane of $V$ at this point, with multiplicity two. 

If $r\equiv (1,0)$ in $V$, the other components of $C_4$ are a $(1,0)$-line infinitely near to $r$ and two other lines of type $(0,1)$.

As before, blowing up $r$ gives $E_r\cong\F_0$. $\X_r$ has degree $(3,2)$ and has two double points (which can be infinitely near) and one simple base point. But in each line $(1,0)$ the system $\X_r$ has a double point in the intersection of this line with $t=V_r\equiv (1,1)$. So:
\[ \X_r\equiv (3,2)\equiv 2t+(1,0) \]
and $r$ is mapped back to a line $r^\prime\subset X$.   Note that $t$ is a line of $V$ infinitely near to $r$.  

Since $t$ is a complete intersection of $V$ and $E_r$, and since $t^2=0$ in $V$ and $t^2=2$ in $E_r$, its normal bundle is:
\[ N_t=\mO_{\P^1}(0)\oplus\mO_{\P^1}(2) \] 
so blowing up $t$ gives $E_t\cong\F_2$. The system $\X_t$ cuts $E_r\cap E_t\equiv e_2$ in one moving point and each fiber in two points. Hence:
\[\X_t\equiv 2e_2+5f_2\]

It has two double points (in the intersection with the other components of $C_4$) and one simple point (in the intersection with a line through $p$). All of them lie on $V_t\equiv e_2+2f_2$.

  The blow up at $r$ is represented in Figure \ref{ed_figura_r}.  

\begin{figure}[tb]
\centering
\includegraphics[trim=10 30 170 450,clip,width=0.9\textwidth]{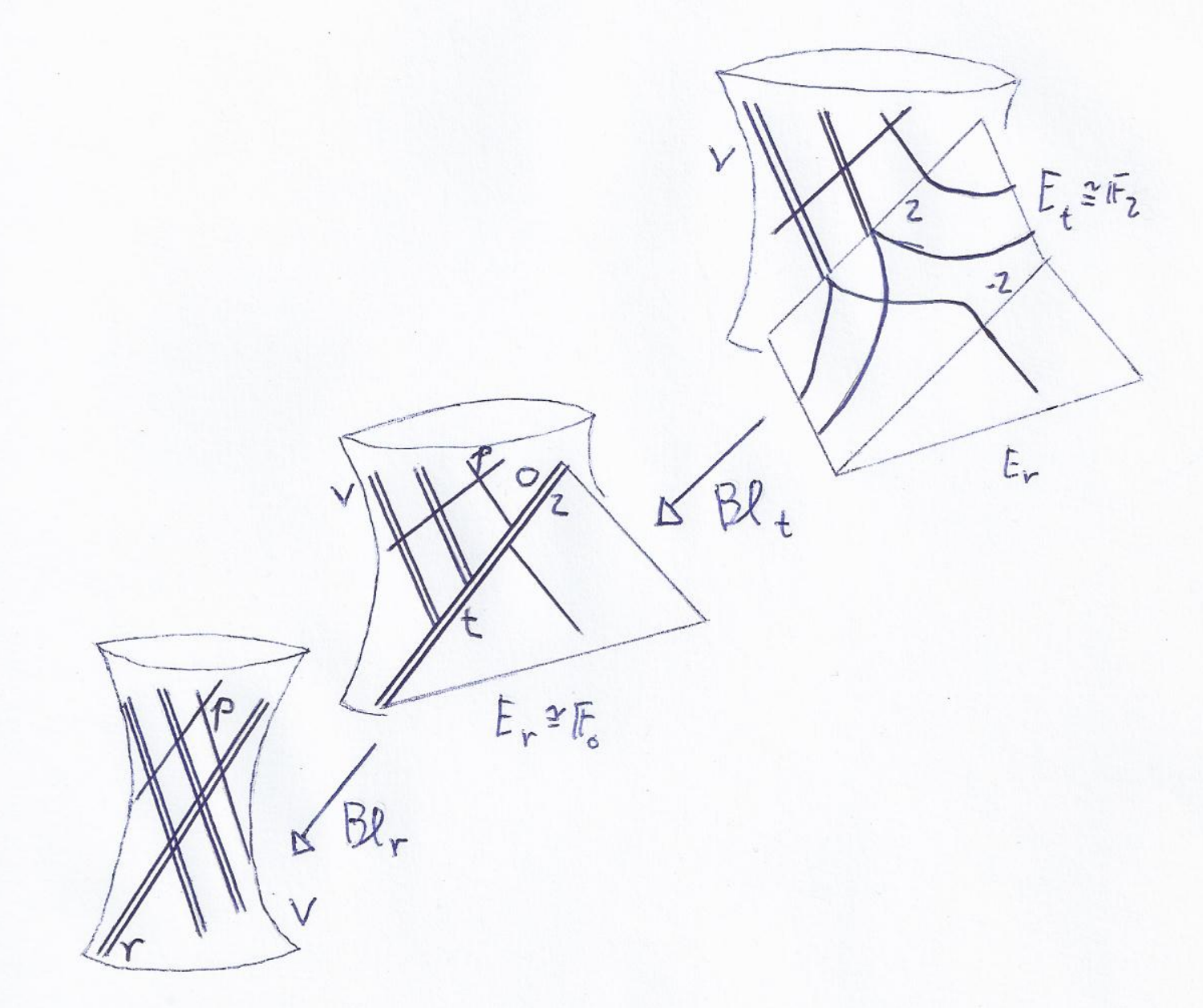}
\caption{Blow up at $r$}\label{ed_figura_r}
\end{figure} 

Using Lemma \ref{pr_imagemF1F2}, $\X_t$ is mapped, in $\P^2$, to curves of degree five with  two double points, one simple point, one point with multiplicity three and one double point infinitely near to this point.

Moreover, $V_t\equiv e_2+2f_2$ is mapped to a conic through all those five points. After two standard quadratic transformations, $\X_t$ is a system of conics with one base point, while $V_t$ is another point in $\P^2$.  Then $E_t$ is mapped to a cubic scroll $S_3^x$ through $x$.

In $E_t$, the sections of type $e_2+2f_2$ containing the two double points are mapped to lines of the ruling of $S_3^x$. This includes the reducible section consisting of the union of $e_2=E_r\cap E_t$ with the two fibers through the double points. In particular, $r^\prime$ is a line of the ruling of $S_3^x$.
\\ \end{proof}

\subsection{Singularities of $X$}
\label{ed_singularities_sec}

  In this section we will study the singularities of $X$.   The map $\sigma$ is an isomorphism outside $V$, which is contracted to $x$, so these singular points lie on the images of exceptional divisors. By Lemma \ref{ed_imagem_p_e_retas}, only $C_4$ can produce singularities.  Moreover:

\begin{lemma}\label{ed_singularidades_vem_de_C4}
A singularity of $X$ is the image of a singular point of $C_4$. This includes points in non reduced components of $C_4$. 
\end{lemma}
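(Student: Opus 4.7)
The plan is to analyze where the birational map $\sigma:\P^3\dashrightarrow X$ can fail to be a local isomorphism. Since $\sigma$ and its inverse $\tau$ are mutually inverse morphisms on a common dense open subset, the singular locus of $X$ is contained in the image of the base locus of $\X$, that is in $\sigma(V)\cup\sigma(p)\cup\sigma(\ell)\cup\sigma(\ell')\cup\sigma(C_4)$. My strategy is to rule out contributions from $V$, $p$, $\ell$ and $\ell'$, and to show that smooth points of $C_4$ contribute only smooth points of $X$, so that all singularities must come from singular points of $C_4$.

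First, $V$ is contracted to $x$, a general point of $X$ and hence smooth. By Lemma \ref{ed_imagem_p_e_retas}, the components $p,\ell,\ell'$ are mapped to the smooth quadric $S_2^x$ through $x$, with $\ell,\ell'$ mapping to the two lines through $x$ on $S_2^x$. To show that $S_2^x$ lies in the smooth locus of $X$, I pass to the resolution: blowing up first $p$ and then $\ell,\ell'$, the exceptional divisor $E_p$ is mapped birationally onto $S_2^x$ by the restricted linear system, and the sections of $\X$ separate the normal direction to $E_p$ from the tangent plane to $S_2^x$, because the multiplicity of $\X$ at $p$ is exactly two (Lemma \ref{ed_baselocus}). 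Consequently the resolved morphism $\tilde\sigma$ is a local isomorphism at a generic point $e\in E_p$, so $X$ is smooth at the corresponding generic point of $S_2^x$. An analogous tangent-space argument along $E_\ell$ and $E_{\ell'}$ covers the two lines through $x$.

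Next, let $q\in C_4$ be a point where $C_4$ is scheme-theoretically smooth (reduced with smooth support) and does not lie on $\ell\cup\ell'$ nor on any other component of $C_4$. Blowing up $C_4$, the fiber of $E_{C_4}$ over $q$ is a $\P^1$ which is mapped to a conic in the appropriate scroll of $X$. Since $\X$ has multiplicity exactly two along $C_4$, the same tangent-space calculation shows that $\tilde\sigma$ is a local isomorphism at a generic point of this fiber, and therefore $X$ is smooth at the corresponding generic point of the scroll. Consequently, the only remaining candidates for singular points of $X$ are images of singular points of $C_4$, interpreted broadly: nodes, cusps, intersections of components, or smooth points of a non-reduced component (where the extra tangency condition of Lemma \ref{ed_baselocus} forces infinitely near base points in the resolution, as in Propositions \ref{ed_doubleconic} and \ref{ed_doubleline}).

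The main obstacle is precisely the tangent-space calculation which certifies that $\tilde\sigma$ is a local isomorphism at a generic point of $E_p$ (and of a smooth fiber of $E_{C_4}$); this rules out a hidden tangential singularity of $X$ along $S_2^x$ or along the scrolls. I would carry out the check by a direct local computation using the explicit equations of $\X$ written down in the preceding subsection, or obtain it as a limiting case of Lemma \ref{pr_truque_secao_tgente} applied to nearby points $q'\in\P^3$ outside the base locus of $\X$, at which $X$ already has multiplicity one.
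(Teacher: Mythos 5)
Your overall decomposition --- confine $\Sing(X)$ to the image of the base locus of $\X$ and then eliminate the contributions of $V$, $p$, $\l$, $\l^\prime$ and the smooth points of $C_4$ --- is the same as the paper's, but your elimination step has a genuine gap. You only claim that the resolved map $\tilde\sigma$ is a local isomorphism at a \emph{generic} point of $E_p$ (respectively, of the fiber of $E_{C_4}$ over a smooth point $q\in C_4$). That would show $\Sing(X)$ does not contain $S_2^x$, the degree twelve scroll, or a conic $C_q$ as an irreducible component; but $\Sing(X)$ is just a closed subset of codimension at least two, so it could still consist of isolated points or curves sitting at \emph{special} positions inside these surfaces, and nothing in your argument excludes that. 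Moreover, the step you yourself flag as the main obstacle --- the tangent-space computation --- is not carried out, and the proposed fallback via Lemma \ref{pr_truque_secao_tgente} does not apply there: that lemma computes the multiplicity of $X$ at a point $x_q$ whose preimage under $\sigma$ is a \emph{single} point of $\P^3$, which is exactly not the situation at a point of $S_2^x$ or of a conic $C_q$, these being images of infinitely near points.

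The paper closes this with a structural fact you are missing: since $X$ is normal, a singular point of $X$ must be the image of a positive-dimensional fiber of $\tilde\sigma$ (otherwise $\tilde\sigma$ would be a local isomorphism there and $X$ would be smooth). So one only has to list what is contracted to a point and where it lands. Over a smooth point $q$ of $C_4$ nothing is contracted: the tangent cone of $\X$ at $q$ is a pair of moving planes through the tangent line of $C_4$, so $E_q$ --- and likewise the fiber of $E_{C_4}$ over $q$ --- is mapped to a conic. The loci contracted over $p$, $\l$, $\l^\prime$ land in $\{x\}\cup\l_x\cup\l^\prime_x\subset T_xX$, and Terracini's Lemma (with $x$ general) forbids an isolated singular point, or a general point of a singular curve, from lying on $T_xX$. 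What remains are exactly the singular points of $C_4$ (including non-reduced components, whose exceptional divisors are the only ones contracted to curves). To salvage your route you would have to upgrade ``local isomorphism at a generic point'' to a statement covering every fiber over $V\setminus\Sing(C_4)$ away from $T_xX$, which in practice reduces to the same contraction analysis.
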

\begin{proof}
Since $x$ is a general point in $X$, Terracini's Lemma implies that an isolated singular point does not lie on $T_xX$. 
Then it is projected by $\tau$ to a point $q$. As just noted above, $q\in C_4$. 

But singular points of $X$ are contractions, via $\sigma$, of surfaces (or curves) which have fixed intersection with the strict transform of $\X$ after the blow up at its base locus. 

If $q$ is a smooth point in $C_4$, the tangent cone of $\X$ at $q$ is a pair of moving planes containing the tangent line of $C_4$ in $q$. So after the blow up at $q$, $\sigma$ maps $E_q$ to a conic. Therefore $q$ is not the projection of an isolated singular point of $X$.

Then $q$ is a singular point of $C_4$. 

On the other hand, if the singularity lies on a singular curve of $X$, this curve is the contraction of an exceptional divisor of the blow of $\X$ in its base locus. By Lemma \ref{ed_imagem_p_e_retas}, this exceptional divisor comes from $C_4$. And by the analysis made in Section \ref{ed_componentes_C4} the associated component of $C_4$ is either a double line or a double conic. In both cases, a point in a singular curve of $X$ is mapped to a singular point of $C_4$.
\\ \end{proof}

Note that if $q$ is the projection of a singular point of $X$, the tangent cone of $\X$ in $q$ is a fixed double plane: the tangent plane of $V$ at $q$. After the blow up at $q$, $\X$ cuts the exceptional plane $E_q$ in a fixed double line, so this plane is re-contracted to a point $x_q\in X$. 

By the above Lemma and Proposition \ref{ed_singsegre}, it follows:
\begin{coro}\label{ed_singsegreX}
Each singularity of $X$ is also a singularity of a quadric of $\mQ^\prime$ corresponding to a root of multiplicity greater than $1$ in the Segre symbol.
\end{coro}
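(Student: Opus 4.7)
The plan is to combine the two ingredients named in the statement: Lemma \ref{ed_singularidades_vem_de_C4}, which identifies the source in $\P^3$ of any singularity of $X$, and Proposition \ref{ed_singsegre}, which pins down where the singularities of the base locus $C_4$ of $\mQ$ sit with respect to the Segre symbol. The bridge between the two is Lemma \ref{ed_imagemquadricas}, which asserts that $\sigma$ restricts to an isomorphism from each quadric of $\mQ$ onto the corresponding quadric of $\mQ^\prime$.

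Concretely, I would argue as follows. Let $y \in X$ be a singular point. By Lemma \ref{ed_singularidades_vem_de_C4}, $y$ is the image under $\sigma$ of a singular point $q$ of $C_4$ (this includes the case of points lying on a non reduced component of $C_4$). Since $C_4$ is the base locus of the pencil $\mQ$ and $\mQ$ has general smooth member, Proposition \ref{ed_singsegre} applies and gives a quadric $Q \in \mQ$ whose singular locus contains $q$ and which corresponds to a root of \eqref{ed_det} of multiplicity at least $2$ in the Segre symbol. Let $Q^\prime \in \mQ^\prime$ be the image of $Q$ under $\sigma$.

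By Lemma \ref{ed_imagemquadricas}, the restriction $\sigma_{|Q} : Q \to Q^\prime$ is an isomorphism. Because $\sigma$ is resolved by the sequence of blow-ups described in Section \ref{ed_componentes_C4}, and because $q$ lies on $C_4 \subset Q$, the isomorphism $\sigma_{|Q}$ sends $q \in Q$ to $y \in Q^\prime$. Being a singularity is preserved under isomorphism, so $y$ is a singular point of $Q^\prime$, and $Q^\prime$ corresponds to the same (high-multiplicity) root of the Segre symbol as $Q$, since the Segre symbol data for $\mQ$ and $\mQ^\prime$ match through the identification of pencils provided by Lemma \ref{ed_imagemquadricas}.

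The only delicate point is the claim that $\sigma_{|Q}(q) = y$. It is here that one must be sure the various blow-ups at singular points of $C_4$ (and along the non-reduced components, cf.\ Propositions \ref{ed_doubleconic} and \ref{ed_doubleline}) do not separate $q$ from $y$ on the strict transform of $Q$; this follows case-by-case from the descriptions given in Section \ref{ed_componentes_C4}, where the exceptional divisor $E_q$ is contracted back to the singular point $x_q \in X$ lying on the image of $Q$. Apart from checking this compatibility, the corollary is a direct combination of the two cited results.
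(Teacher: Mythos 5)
Your proposal is correct and follows essentially the same route as the paper, which derives the corollary directly by combining Lemma \ref{ed_singularidades_vem_de_C4} with Proposition \ref{ed_singsegre}, the transfer from $\mQ$ to $\mQ^\prime$ being exactly the isomorphism of Lemma \ref{ed_imagemquadricas}. Your extra care about the contraction of $E_q$ to $x_q$ is a reasonable detail to make explicit, but it does not change the argument.
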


The multiplicity of $X$ in $x_q$ can be easily computed using Lemma \ref{pr_truque_secao_tgente}.

\begin{lemma}\label{ed_singularidades_sao_duplas}
Any singular point of $X$ is of multiplicity two.
\end{lemma}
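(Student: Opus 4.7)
The plan is to apply Lemma~\ref{pr_truque_secao_tgente}: since the second fundamental form $\II_{X,x}$ is the linear system of conics on $E$ with two (possibly infinitely near) base points and therefore has a smooth general member, the multiplicity of $X$ at a singular point $x_q$ equals the multiplicity of $\sigma(\Pi)$ at $x_q$ for a general plane $\Pi$ through $q=\tau(x_q)$. So it is enough to prove that this multiplicity is two.

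By Lemma~\ref{ed_singularidades_vem_de_C4} and the paragraph following it, $q$ is a singular point of $C_4$ (possibly on a non-reduced component), $\X$ has multiplicity two at $q$, and its tangent cone there is the fixed double plane $2T_qV$. Restricting to a general $\Pi$ through $q$ transverse to $T_qV$ therefore yields a linear system $\X|_\Pi$ of plane quintics with multiplicity two at $q$ whose tangent cone is the fixed double line $2t_\Pi$, where $t_\Pi = T_qV\cap\Pi$. Blowing up $q$ in $\Pi$ produces an exceptional curve $\ell_\Pi\cong\P^1$ on which the strict transform of $\X|_\Pi$ restricts to the fixed divisor $2r$, with $r=\ell_\Pi\cap t_\Pi$, so $\ell_\Pi$ is contracted by $\sigma$ to the point $x_q$.

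A further blow up at $r$ turns $\ell_\Pi$ into a $(-2)$-curve and introduces a new exceptional $E_r\cong\P^1$. A direct local expansion of a generic element of $\X|_\Pi$ near $q$ shows that, on the resulting smooth surface, the new strict transform is base-point free along $E_r\cup\ell_\Pi$ and maps $E_r$ birationally onto a conic through $x_q$ while keeping $\ell_\Pi$ entirely contracted to $x_q$. Near $x_q$, the surface $\sigma(\Pi)$ is thus obtained from a smooth surface by contracting the single $(-2)$-curve $\ell_\Pi$, which produces an $A_1$ singularity of multiplicity two, and combining with Lemma~\ref{pr_truque_secao_tgente} yields the claim. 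The technical point—to be verified uniformly across all the singular configurations of $C_4$ listed in Section~\ref{ed_componentes_C4}, namely nodes and cusps of a rational quartic, transverse intersections of distinct irreducible components, and points of a double conic or double line—is that the tangent cone of $\X|_\Pi$ at $q$ really is the fixed double line $2t_\Pi$ and that no further infinitely near base point on $\ell_\Pi$ extends the chain of exceptional curves beyond the two blow ups above; this uniformity is a consequence of the base-locus identity $\X\cap V=2C_4+\ell+\ell'$ of Lemma~\ref{ed_baselocus}, which forces every surface of $\X$ to have the same second-order tangency to $V$ along $C_4$.
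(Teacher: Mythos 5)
Your proof follows essentially the same route as the paper's: reduce via Lemma~\ref{ed_singularidades_vem_de_C4} to a singular point $q$ of $C_4$ at which the tangent cone of $\X$ is the fixed double plane supported on $T_qV$, restrict to a general plane through $q$, blow up twice to obtain a $(-2)$-curve disjoint from the system, and conclude with Lemma~\ref{pr_truque_secao_tgente}. One caveat: the ``technical point'' you assert --- that the chain of exceptional curves never extends beyond the two blow-ups, so that $\sigma(\Pi)$ always acquires an $A_1$ point --- is false in the degenerate configurations (for a contact of two branches or a triple point of $C_4$ the paper's local computations exhibit further infinitely near double points), but this does not affect the conclusion, since the multiplicity at $x_q$ is still $-e_q^2=2$ when the extra contracted curves merely lengthen the chain of $(-2)$-curves.
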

\begin{proof}
Let $x_q$ be a singular point of $X$. Then by Lemma \ref{ed_singularidades_vem_de_C4}, $x_q$ is the image via $\sigma$ of a singular point $q$ of $C_4$. More precisely, after the blow up at $q$, $\X$ intersects $E_q$ in a fixed double line, namely $V_q$. The plane $E_q$ is then contracted to $x_q$. Conversely, the preimage of $x_q$ via $\sigma$ is the point $q$ alone.

Let $\Omega$ be a general plane through $q$. Then $\X\cap\Omega$ has multiplicity two in $q$. Blowing up $q$, $\X\cap \Omega$ intersects the exceptional curve $e_q=E_q\cap\Omega$ in a fixed double point. Blowing up this point, we get  $(e_q)^2=-2$ and $e_q$ has no intersection with $\X\cap\Omega$. Then it is contracted to a double point, namely $x_q$, of $\sigma(\Omega)$. By Lemma \ref{pr_truque_secao_tgente}, $x_q$ is a double point of $X$.
\\ \end{proof}

In order to understand better the singularity of $X$  at $x_q$, we will use the fact that the blow up of $\P^3$ along a singular curve is a singular threefold. Then one can understand the singularities of $X$ by considering the singularities of the blow up of $\P^3$ along $C_4$. 

By studying the different possible singular points $q$ of $C_4$, we blow up $\C^3$ along a curve having the same type of singularity in the origin. Then this blow up gives a singular threefold that represents locally the singularity of $X$ in $x_q$.

We will now consider the possible singularities of $C_4$.

\paragraph*{A transversal intersection of two simple branches of $C_4$:}
In this case, $q$ is a nodal point of $C_4$. Then one needs to analyse the blow up of $\C^3$ along a curve with a nodal point in the origin. For instance, consider the curve $xy=z=0$. The blown up threefold is the subvariety of $\C^3\times \P^1$  with coordinates $(x,y,z),(u:v)$ given by the equation $uz-xyv=0$. In one of the affine charts $\C^4=\C^3\times \C$, it is given by $z-xyv=0$ and it is smooth. In the other affine chart, its equation is $uz-xy=0$, which is a rank four quadric cone in $\C^4$. Therefore this is the local equation of the singularity $x_q\in X$.

\paragraph*{A cuspidal point:}
This case can be explained by the blow up of $\C^3$ along the curve $z=x^3-y^2=0$. This produces the threefold $uz-v(x^3-y^2)=0$ in $\C^3\times\P^1$. Its singularity in $(0,0,0),(0:1)$ is given locally by $uz-x^3+y^2=0$. Then the tangent cone is $uz+y^2=0$, which has rank 3.

Blowing up $uz-x^3+y^2=0$ in the origin, one sees that there is no singular point infinitely near to it.

\paragraph*{A contact of two branches of $C_4$:}
A local representation of this singularity is given by the curve $z=x(x-y^2)=0$. The blow up of $\C^3$ along this curve produces the threefold $uz-vx(x-y^2)=0$ having a double point in $(0,0,0),(0:1)$ given locally by $uz-x^2+xy^2=0$. The tangent cone is $uz-x^2=0$, which also has rank 3.
 
Blowing up the origin, one of the affine charts is given by:
\[ x=\mu y \qquad z=\nu y \qquad u=\rho y \]
In this chart, the strict transform of $uz-x^2+xy^2=0$ is $\nu\rho-\mu^2+\mu y=0$, which has a double point at the origin. In the other charts there are no further singular points.

Therefore, $x_q$ is a double point with a second double point infinitely near to it.

\paragraph*{A general point of a double curve:} 
This singularity can be explained by the blow up of $\C^3$ along the curve $z=x^2=0$, which gives $uz-x^2v=0$. The double point is given locally by $uz-x^2=0$. This is also the equation of the tangent cone, which has rank 3, that is, it has a double line.

As  remarked in Proposition \ref{ed_doubleconic} and in Proposition \ref{ed_doubleline}, a double line of $C_4$ is mapped to a line in $X$, and a double conic is mapped to a conic in $X$. Then the double point actually moves in a double curve of $X$.

\paragraph*{A transversal intersection of three branches of $C_4$:} 
Locally, this singular point is given by $z=xy(x+y)=0$. Blowing up $\C^3$ along this curve gives a threefold with a double point locally given by $uz-xy(x+y)=0$. The tangent cone is $uz=0$, that is, a pair of three-dimensional planes.

Now, blow up the origin and consider the affine chart given by:
\[ y=\eta x \qquad z=\nu x \qquad u=\rho x \]
The strict transform of $uz-xy(x+y)=0$ is $\rho\nu-x\eta-x\eta^2=0$. It has double points in $(x,\eta,\nu,\rho)=(0,0,0,0)$ and $(0,1,0,0)$. There is a third double point in the origin of the chart given by:
\[ x=\mu y \qquad z=\nu y \qquad u=\rho y \]

The three double points are collinear. They lie on the intersection of the exceptional divisor with the strict transform of the plane given by $z=u=0$.

Therefore $X$ has a double point in $x_q$ with three collinear double points infinitely near to it.

\paragraph*{A transversal intersection of a double and a simple branch of $C_4$:} This singularity can be locally given by $z=x^2y=0$. Blowing up $\C^3$ along this curve, gives $uz-x^2y=0$, which has a double line through the origin. Moreover, blowing up the origin, in the chart given by:
\[ y=\eta x \qquad z=\nu x \qquad u=\rho x \]
its strict transform is $\nu\rho-\eta x=0$. It has a double point in the origin. This point does not lie on the double line, which cannot be seen in this chart.

Therefore, $X$ has a double line through $x_q$ and a double point infinitely near to $x_q$.

\paragraph*{A transversal intersection of two double branches of $C_4$:} Note that this is precisely the case in which $C_4$ is two double lines. As already noted, a double line of $C_4$ is mapped to a double line of $X$. Then $x_q$ lies on the intersection of two double lines of $X$. 

This can also be verified blowing up $z=x^2y^2=0$, giving $uz-x^2y^2=0$. This threefold has two double lines intersecting in the origin. The tangent cone is a pair of three-dimensional planes.

\quad

The main results of this section are now collected:

\begin{prop}\label{ed_singularidades}
A singular point of $C_4$ is mapped to a double point of $X$. This includes double components of $C_4$, which are mapped to double curves of $X$. More specifically let $q$ be a singular point of $C_4$ and $x_q$ its image. Then the tangent cone of $X$ in $x_q$ has:
\begin{itemize}
\item rank $4$,  if $q$ is the transversal intersection of two simple branches of $C_4$;
\item rank $3$,  if $q$ is a cuspidal point,  a point of contact of two simple branches or a general point of a double component of $C_4$;
\item rank $2$, that is, it is a pair of three-dimensional planes, if $q$ is the intersection of three simple branches, the intersection of a double and a simple branch or the intersection of two double branches of $C_4$.\end{itemize}

Moreover, $x_q$ is a singular point of a quadric of $\mQ^\prime$ corresponding to a root of multiplicity greater than one in the Segre symbol. 
\end{prop}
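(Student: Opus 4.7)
The plan is to combine the previously established lemmas with a local computation in each of the seven possible singularity types for $C_4$. The location of the singular points and their multiplicity two follow immediately from Lemma \ref{ed_singularidades_vem_de_C4} and Lemma \ref{ed_singularidades_sao_duplas}; the statement about the Segre symbol is exactly Corollary \ref{ed_singsegreX}. So the only real content is to determine the rank of $\mC_{x_q}X$ in each case.

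The key reduction is that, near a singular point $q$ of $C_4$, the map $\sigma$ resolves by blowing up $C_4$: indeed $p$, $\l$, $\l'$ are disjoint from $q$, and Lemma \ref{ed_singularidades_vem_de_C4} shows that in a neighborhood of $q$ the only base locus affecting the picture is $C_4$ itself. After the blow-up of $\P^3$ along $C_4$, the linear system $\X$ has fixed intersection $V_q$ in $E_q\cong \P^2$, so $E_q$ is contracted to $x_q$; the local model of $X$ at $x_q$ is therefore a suitable affine piece of the blow-up of $\A^3$ along a curve whose singularity at the origin matches that of $C_4$ at $q$. For each of the seven cases I would write a standard local normal form $z = g(x,y) = 0 \subset \A^3$:

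\begin{itemize}
\item $g=xy$ (node), $g=x^3-y^2$ (cusp), $g=x(x-y^2)$ (tacnode), $g=x^2$ (double curve), $g=xy(x+y)$ (three transversal branches), $g=x^2y$ (double meeting a simple), $g=x^2y^2$ (two transversal double branches).
\end{itemize}

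The blow-up of $\A^3$ along $\{z = g = 0\}$ is the hypersurface $uz-vg(x,y)=0$ in $\A^3\times \P^1$, and in the chart $v=1$ it has local equation $uz-g(x,y)=0$ at the origin. The initial form of this equation is the tangent cone, which is precisely $\mC_{x_q} X$ by Lemma \ref{pr_interseccao_conetangente} applied to the hypersurface $uz=g$ (and by Lemma \ref{pr_truque_secao_tgente} for the multiplicity match). Reading off initial forms yields the ranks $4$, $3$, $3$, $3$, $2$, $2$, $2$ announced in the statement, where the three rank-$3$ cases come from $uz-xy=0$ (node, after a change of variable absorbing the higher-order cuspidal term $x^3$ into $uz+y^2$ for the cusp, similarly $uz-x^2$ for the tacnode and the general double-curve point) and the three rank-$2$ cases degenerate to $uz=0$.

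The main obstacle is twofold. First, one must be careful that the cases involving infinitely near singularities (tacnode, triple intersection, double-meets-simple) are correctly identified: this is done by blowing up the origin of each local model a second time and checking in the relevant affine charts that the strict transform of $uz-g=0$ acquires further ordinary double points on the exceptional divisor, exactly as described in the text preceding the proposition. Second, one must justify that the exceptional plane $E_q$ contributes only $x_q$ and not an extra component of $\mC_{x_q}X$; this uses the fact that $V_q \subset E_q$ is a double line of the restricted linear system, so $E_q$ is mapped to a point and the local model above indeed captures the full tangent cone. Once these two points are handled, collecting the case-by-case answers yields the statement.
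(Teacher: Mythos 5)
Your proposal is correct and follows essentially the same route as the paper: the multiplicity and Segre-symbol statements are delegated to Lemma \ref{ed_singularidades_vem_de_C4}, Lemma \ref{ed_singularidades_sao_duplas} and Corollary \ref{ed_singsegreX}, and the ranks are read off from the initial form of $uz-g(x,y)=0$, the local chart of the blow-up of $\A^3$ along $\{z=g=0\}$, using the same seven normal forms for $g$ that the paper uses. The further blow-ups you describe to detect infinitely near double points in the tacnodal, triple-intersection and double-meets-simple cases also match the paper's computations.
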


\subsection{Summing up}
\label{ed_sum_sec}

Now that we have enough information, we can easily describe the possible varieties $X$. We just consider different possibilities for a curve $C_4$ of type $(2,2)$ in $V$, which are:
\begin{itemize}
\item [\textbf{(E1)}] a smooth elliptic quartic (general case) -- $(2,2)$
\item [\textbf{(E2)}] an irreducible rational nodal quartic -- $(2,2)$
\item [\textbf{(E3)}]an irreducible rational cuspidal quartic -- $(2,2)$
\item [\textbf{(E4)}]a twisted cubic and a transversal line -- $(2,1)+(0,1)$
\item [\textbf{(E5)}]a twisted cubic and a tangent line -- $(2,1)+(0,1)$
\item [\textbf{(E6)}]two transversal conics -- $(1,1)+(1,1)$
\item [\textbf{(E7)}]two tangent conics -- $(1,1)+(1,1)$
\item [\textbf{(E8)}]a conic and two lines intersecting it in two points -- $(1,1)+(1,0)+(0,1)$
\item [\textbf{(E9)}]a conic and two lines intersecting it in one point -- $(1,1)+(1,0)+(0,1)$
\item [\textbf{(E10)}]four lines -- $(1,0)+(1,0)+(0,1)+(0,1)$
\item [\textbf{(E11)}]one double and two simple lines -- $2(1,0)+(0,1)+(0,1)$
\item [\textbf{(E12)}]two double lines -- $2(1,0)+2(0,1)$
\item [\textbf{(E13)}]one double conic -- $2(1,1)$
\end{itemize}

  We will refer to the resulting Bronowski variety by the string on the left in the above list. This leads to the result:

\begin{theorem}
Let $X$ be a Bronowski threefold satisfying hypothesis (H) and with fundamental surface being a smooth quadric. Then $X$ corresponds to one of the $13$ quartic curves listed above.
\end{theorem}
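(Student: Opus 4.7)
The plan is to reduce the classification to an enumeration of $(2,2)$-divisors on a smooth quadric, and then invoke the preceding results to argue that each such divisor yields (at most) one Bronowski threefold on the list.

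First, I would note that because $V$ is smooth, we have $V\cong\P^1\times\P^1$, with its Picard group generated by the two rulings, and the restriction of $\mO_{\P^3}(1)$ to $V$ is of bidegree $(1,1)$. By Lemma \ref{ed_baselocus} the base curve $C_4$ is a quadric section of $V$, hence a divisor of bidegree $(2,2)$. Next, by Lemma \ref{ed_sistema_linear_simplificado}, once $V$ is fixed the threefold $X$ depends only on $C_4$, since the point $p$ and the two lines $\l,\l^\prime$ are absorbed by the De Jonqui\`eres Cremona transformation of Section \ref{ed_jonqui_sec} into the choice of a new $(2,2)$-curve $C_4^\prime$ of the same type. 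Therefore the classification of the possible $X$ reduces to the classification of $(2,2)$-curves on $\P^1\times\P^1$, up to the obvious involution exchanging the two rulings.

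The main step is then the combinatorial enumeration of effective divisors of class $(2,2)$. I would organize the list by the partition of $(2,2)$ into irreducible summands $(a_i,b_i)$ with $a_i+b_i>0$, treating the two rulings as interchangeable:
\begin{itemize}
\item the irreducible case $(2,2)$, a curve of arithmetic genus $1$, split according to whether it is smooth elliptic, rational with a node, or rational with a cusp (cases \textbf{(E1)}--\textbf{(E3)});
\item the decomposition $(2,1)+(0,1)$, a twisted cubic plus a line of the complementary ruling, split by whether the line meets the cubic transversally or tangentially (cases \textbf{(E4)}--\textbf{(E5)});
\item the decomposition $(1,1)+(1,1)$ into two conics, split by transversal vs.\ tangent intersection (cases \textbf{(E6)}--\textbf{(E7)});
\item the decomposition $(1,1)+(1,0)+(0,1)$, split according to whether the two lines meet the conic at two distinct points or at a single point (cases \textbf{(E8)}--\textbf{(E9)});
\item the decomposition $(1,0)+(1,0)+(0,1)+(0,1)$ into four lines (case \textbf{(E10)});
\item the non-reduced decompositions $2(1,0)+(0,1)+(0,1)$, $2(1,0)+2(0,1)$ and $2(1,1)$ (cases \textbf{(E11)}--\textbf{(E13)}).
\end{itemize}

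The last step is to verify that no further sub-cases are produced by further specialization of the position of $p$ and $\l,\l^\prime$: since $p$ is taken general on $V$, the intersection of $\l,\l^\prime$ with $C_4$ is transversal (and avoids the singularities of $C_4$) whenever $C_4$ has only reduced components, so that the subdivisions into sub-cases above exhaust the Bronowski threefolds produced by hypothesis (H). No sub-case is expected to be the genuine obstacle; the only point needing some care is to check that apparently different configurations --- such as $(2,1)+(0,1)$ versus $(1,2)+(1,0)$, or a tangency at a smooth point of $C_4$ between $\l$ and one of its components --- are either identified by the involution exchanging the two rulings of $V$, or are absorbed into a case already listed, so that the enumeration is both exhaustive and without repetitions.
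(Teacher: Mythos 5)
Your proposal is correct and follows essentially the same route as the paper: Lemma \ref{ed_baselocus} pins down the base locus, Lemma \ref{ed_sistema_linear_simplificado} (via the De Jonqui\`eres transformation of Section \ref{ed_jonqui_sec}) reduces the classification to the choice of $C_4$ alone, and the theorem then amounts to the combinatorial enumeration of effective $(2,2)$-divisors on $\P^1\times\P^1$ up to exchanging the rulings, refined by the tangency/coincidence data that the paper also distinguishes. The closing remarks about the generality of $p$ and the transversality of $\l,\l^\prime$ with $C_4$ match the paper's own assumptions at the start of Section \ref{ed_smooth_sec}.
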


Note that in all cases, $C_4$ is mapped to a (possibly reducible) scroll in conics with degree $12$ and multiplicity four in $x$ (if there is a double curve in $C_4$ its image is counted with multiplicity two). This scroll cuts each quadric of $\mQ^\prime$ in curves of the same type of $C_4$. The only exception is the quadric through $x$, $S_2^x$, which is cut in two lines, which are double lines of the degree $12$ scroll. Since the singular points of $X$ come from $C_4$, the scroll contains these points. 

A description of these threefolds is given in Theorem \ref{ed_the_classification}, together with those obtained in the case in which $V$ is a cone.

\quad

\section{The singular case}

Suppose now that $V$ is a quadric cone with vertex $q$.   According to Lemma \ref{ed_baselocus}, either $\mult_q\X=4$ and all quadrics of $\mQ$ are singular in $q$ or $\mult_q\X=3$ and there is a quadric in $\mQ$  which is a smooth in this point.   In both cases, we have: 
\[ \X\cap V=2C_4+2\l \]
where $C_4$ is the base locus of $\mQ$ and $\l$ is the line joining $p$ and $q$. Note that $\l$ is not a double curve of $\X$. The multiplicity $2$ in  the intersection is due to a tangency of $\X$ and $V$, that is, there is a line $\l^\prime$ infinitely near to $\l$ in the base locus of $\X$.

  As it was remarked in Lemma \ref{ed_imagemquadricas},  the point $p$ is mapped to a quadric cone $S_2^x$ through the smooth point $x$. It belongs to the family $\mQ^\prime$. 

Made these considerations, we analyse the two possibilities.

\subsection{When the multiplicity in $q$ is three}

If $\mult_q \X=3$, there are two possibilities. Either $\mQ$ has a smooth quadric or it is a pencil of cones with moving vertex.

In the first case, the general quadric in $\mQ^\prime$ is smooth, so this family has at most four cones. Then it seems that $x$ is not a general point of $X$, since it lies on one of the four cones of $\mQ^\prime$. In fact:

\begin{lemma}\label{ed_cone_nao_geral}
Suppose $V$ is a quadric cone with vertex $q$ and suppose there is a smooth quadric in $\mQ$. Then $X$ is projectively equivalent to one of the threefolds described in  Section \ref{ed_smooth_sec}.
\end{lemma}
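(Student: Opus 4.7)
The plan is to argue that, in the present situation, the chosen ``general'' point $x$ is in fact not fully general on $X$, and that a more generic choice $x^\prime\in X$ recovers the smooth-quadric setup treated in Section \ref{ed_smooth_sec}.

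First, since $\mQ$ contains a smooth quadric, Lemma \ref{ed_imagemquadricas} gives the associated family $\mQ^\prime$ of pairwise disjoint quadric surfaces covering $X$ a smooth general member as well. A pencil of quadric surfaces in $\P^3$ with smooth general member has at most four singular members (read off from its Segre symbol, as recalled in Section \ref{ed_segre_sec}), and the same therefore holds for $\mQ^\prime$. Hence the union $Z\subset X$ of the cones in $\mQ^\prime$ is a proper closed subvariety. On the other hand, by Lemma \ref{ed_imagemquadricas} the quadric surface $S_2^x$ through $x$ is isomorphic to $V$; since $V$ is a cone, $S_2^x$ is a cone too, so $x\in S_2^x\subset Z$ and $x$ is not a truly generic point of $X$.

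Now pick $x^\prime\in X\setminus Z$, so $x^\prime$ lies on a smooth quadric $Q^\prime\in\mQ^\prime$. We repeat the construction of this chapter at $x^\prime$ in place of $x$: let $\tau_{x^\prime}:X\tor \P^3$ be the tangential projection at $x^\prime$, with inverse defined by a linear system $\X_{x^\prime}$ whose base pencil of quadrics $\mQ_{x^\prime}\subset\P^3$ is sent by Lemma \ref{ed_imagemquadricas} to a family $\mQ^\prime_{x^\prime}$ of pairwise disjoint quadric surfaces covering $X$. The fundamental surface $V_{x^\prime}$ at $x^\prime$ is projectively equivalent to the member $S_2^{x^\prime}$ of $\mQ^\prime_{x^\prime}$ through $x^\prime$. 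Since a one-parameter family of pairwise disjoint quadric surfaces covering $X$ is unique (two such families would give two distinct rational fibrations $X\tor\P^1$ with quadric fibres through a general point, impossible), one has $\mQ^\prime_{x^\prime}=\mQ^\prime$ and hence $S_2^{x^\prime}=Q^\prime$ is smooth. Therefore $V_{x^\prime}$ is a smooth quadric, placing $X$ in the smooth case of Section \ref{ed_smooth_sec}. Since the classification there is up to projective equivalence, $X$ is projectively equivalent to one of the threefolds described in Section \ref{ed_smooth_sec}.

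The step I expect to be the main obstacle is the identification of the covering families $\mQ^\prime_{x^\prime}$ and $\mQ^\prime$. Although Lemma \ref{ed_imagemquadricas} gives a covering family from any base point, producing a smooth fundamental surface at $x^\prime$ requires knowing that the family obtained at $x^\prime$ literally is $\mQ^\prime$, so that the quadric through $x^\prime$ is the smooth $Q^\prime$. I expect this uniqueness to follow either from the birational description of the quadrics as fibres of a rational map $X\tor\P^1$, or directly from the fact that through a general point of $X$ there passes a unique quadric surface swept out by conics mapped by $\tau_{x^\prime}$ to plane sections of a fixed quadric in $\P^3$.
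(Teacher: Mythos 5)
Your guiding idea --- that $x$ sits on one of the finitely many cones of $\mQ^\prime$ and is therefore not a truly general point of $X$ --- is precisely the remark the paper makes just before this lemma, but the paper's proof implements it without ever leaving $\P^3$: it applies the De Jonquieres-type Cremona transformation $f$ of Section \ref{ed_jonqui_sec} (cubics through $C_4$ with a double point at $p$), followed by the transformation of the same type based at $f(\bar p)$, where $\bar p$ is chosen so that the quadric of $\mQ$ through it is smooth. Since $f$ carries the quadrics of $\mQ$ isomorphically to the quadrics of a pencil of the same type (Lemma \ref{ed_cremona}), the composite carries $\X$ to a linear system with exactly the structure of Lemma \ref{ed_baselocus} but having fixed intersection with a \emph{smooth} quadric, and the image of $\P^3$ under this new system is still $X$. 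That places $X$ directly in Section \ref{ed_smooth_sec}, with no need to analyse the tangential projection at a second point of $X$.

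Your route instead re-runs the construction at a new point $x^\prime$, and there are two genuine gaps. First, to invoke Lemma \ref{ed_imagemquadricas} at $x^\prime$ and speak of a family $\mQ^\prime_{x^\prime}$ you must already know that $V_{x^\prime}$ is a quadric and that the standing hypotheses (degree five, the base locus of Lemma \ref{ed_baselocus}, hypothesis (H)) hold at $x^\prime$; none of this is automatic. Semicontinuity only bounds the base scheme of $\II_{X,x^\prime}$ by that of $\II_{X,x}$, so a priori $V_{x^\prime}$ could have degree three or four, in which case the family you wish to compare with $\mQ^\prime$ does not exist. A non-circular fix is available --- the smooth quadric $Q^\prime\in\mQ^\prime$ through $x^\prime$ contains two distinct lines of $X$ through $x^\prime$, whose directions are two distinct base points of $\II_{X,x^\prime}$, forcing $V_{x^\prime}$ to be a smooth quadric directly --- but that is not the argument you wrote, and it bypasses rather than proves your identification of families. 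Second, your justification of the uniqueness of the covering family ("two distinct quadric fibrations \dots impossible") is not a proof: a threefold can carry several fibrations in quadric surfaces spanning $\P^3$'s (the Segre embedding of $\P^1\times\P^1\times\P^1$ carries three), so uniqueness here would require an argument specific to $X$. Both difficulties are exactly what the paper's Cremona computation avoids.
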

\begin{proof}
The idea is to change coordinates to fit this situation in the smooth case of Section \ref{ed_smooth_sec}.  This is done using the Cremona transformation of Section \ref{ed_jonqui_sec}. Choose a point $\bar{p}$ in $\P^3$ such that the quadric of $\mQ$ containing it is smooth. Apply the Cremona transformation $f$ defined in Section \ref{ed_jonqui_sec} and then apply the inverse transformation associated to the point $f(\bar{p})$ (instead of using the point $p^\prime=f(V)$).

The image of $\X$ by the composition of these two maps is a similar linear system having fixed intersection with a smooth quadric, instead of a cone. The quartic curve in its base locus is of the same type of the original $C_4$.
\\ \end{proof}

As remarked in \cite[p.305]{hodge}, in the second case there is only one possibility, in which $\mQ$ is a pencil of cones tangent to a plane $\Pi$ along a fixed line $r$. The base locus of $\mQ$ is $C_4=2r+C$, where $C$ is a conic intersecting $r$ in one point. This implies that the only reducible quadric in $\mQ$ is the union of $\Pi$ and the plane $\Sigma$ containing $C$. 

Note that, since $\X$ intersects a cone $Q$ of $\mQ$ different from $V$ in the union of $C_4$ and moving plane sections, such quadric is mapped back to a cone, and the vertex of this cone is the image of the vertex of $Q$.

This configuration produces a new Bronowski threefold, which we identify as (E14):

\begin{prop}\label{ed_vertice_movel_prop}
Suppose $\mQ$ is a pencil of  cones with moving vertex and let $q$ be the vertex of $V$. Then the base locus of $\mQ$ is $2r+C$, where $r$ is a line and $C$ is a conic, and the multiplicity of $\X$ in $q$ is three.

The threefold $X$ has a double conic $L$, the image of $r$. This conic is described by the vertices of cones of the one-dimensional family $\mQ^\prime$ contained in $X$. There is exactly one reducible quadric in this family, which intersects $L$ in one point.

The line $r^\prime$  in the base locus of $\mQ$ which is infinitely near to $r$ is mapped to a cubic scroll. The conic $C$ is mapped to a weak Del Pezzo surface of degree six having a double point in $x$.
\end{prop}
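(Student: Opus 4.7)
The argument naturally splits into three parts: the structure of $\mQ$ and of $C_4$, the image of the double component $r$, and the images of the remaining components $r'$ and $C$.

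First, I would invoke the classification of pencils of quadrics in $\P^3$ (see \cite[p.~305]{hodge}) to describe $\mQ$. A pencil of cones with moving vertex must be a pencil of quadrics all tangent to a fixed plane $\Pi$ along a fixed line $r$; the base locus is then $C_4 = 2r+C$, with $C$ a conic lying in a plane $\Sigma\neq\Pi$ and meeting $r$ in a single point, and the only reducible element of $\mQ$ is $\Pi+\Sigma$. For the multiplicity of $\X$ at $q$, Lemma~\ref{ed_baselocus} leaves only $\mult_q\X=3$ or $4$, with the value being $3$ as soon as some quadric containing $C_4$ is smooth at $q$. Since the cones in $\mQ$ have vertices running over all of $r$, any cone whose vertex is different from $q$ is smooth at $q$; hence $\mult_q\X=3$.

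The central step is the analysis of the image of $r$. Because $r$ is a double component of $C_4$, there is an infinitely near line $r'$ in the base locus of $\X$ with multiplicity two, as in Proposition~\ref{ed_doubleline}. My plan is to blow up $q$ first and then the strict transform of $r$: the new feature compared to Proposition~\ref{ed_doubleline} is the triple point of $\X$ at $q\in r$, which forces $\X_r$ to have an extra vanishing along the fibre of $E_r$ over $q$. A direct computation analogous to Proposition~\ref{ed_doubleconic} should then split $\X_r$ as twice the strict transform of $V_r$ plus a pencil of fibres that maps $E_r$ two-to-one onto a conic $L\subset X$. That $L$ is genuinely a double curve of $X$, and not merely the image of a contracted surface, then follows from Lemma~\ref{pr_truque_secao_tgente}: a general plane through a general point of $r$ is cut by $\X$ with multiplicity two along $r$, so its image acquires an ordinary double curve along $L$.

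That $L$ is traced by the vertices of the cones of $\mQ^\prime$ is a direct consequence of Lemma~\ref{ed_imagemquadricas}, since every cone $Q\neq V$ in $\mQ$ is mapped isomorphically to a cone of $\mQ^\prime$, vertex to vertex, and the vertices of $\mQ$ sweep exactly $r$. The unique reducible member of $\mQ^\prime$ is the image of $\Pi+\Sigma$; as $\Sigma$ meets $r$ transversally in one point, this reducible image meets $L$ in a single point. Finally, the images of $r'$ and of $C$ are handled by reproducing the blow-up computations of Propositions~\ref{ed_doubleline} and~\ref{ed_conic} respectively, adapted to the fact that $V$ is a cone through $r$; this gives that $r'$ is mapped to a cubic scroll $S_3^x$ and that $C$ is mapped to a weak Del~Pezzo surface of degree six with a double point at $x$.

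The main technical obstacle is the first conclusion in the central step, namely verifying that $r$ is contracted to a \emph{conic} rather than to a line as in Proposition~\ref{ed_doubleline}: the presence of the triple point at $q\in r$ alters the normal bundle of the strict transform of $r$ and the base-point structure of $\X_r$ on $E_r$, and one must carefully track these changes to certify that the moving part of $\X_r$ has the bidegree producing a conic image.
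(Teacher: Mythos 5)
Your proposal follows essentially the same route as the paper's proof: the same identification of $\mQ$ as a pencil of cones tangent to a plane $\Pi$ along $r$ with base locus $2r+C$, the same argument for $\mult_q\X=3$ via Lemma \ref{ed_baselocus}, the blow ups at $q$, then $r$, then $r^\prime$ and $C$, and the same use of Lemma \ref{ed_imagemquadricas} for the locus of vertices and of Lemma \ref{pr_truque_secao_tgente} for the double-curve claim.

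One slip in your central step needs correcting. After the blow ups at $q$ and $r$ one finds $\X_r\equiv(2,2)$ on $E_r\cong\F_0$ with $r^\prime=V_r\equiv(0,1)$ a fixed double curve, so the moving part has class $(2,0)$: it is the \emph{complete net} of pairs of fibres over points of $r$, which contracts each fibre to a point and maps the base $r\cong\P^1$ onto $L$ by the complete system $\vert \mO_{\P^1}(2)\vert$, i.e.\ isomorphically onto a conic. It is not a pencil mapping $E_r$ two-to-one onto its image; a pencil of degree-two divisors on $r$ would give a $2:1$ map onto a \emph{line}, which would contradict both the statement and the fact (via Lemma \ref{ed_imagemquadricas}) that distinct cones of $\mQ^\prime$ have distinct vertices, so that $r\to L$ must be injective. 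With that correction, and with the small additional bookkeeping the paper does on $E_{r^\prime}$ to see that the image of $\Pi\cup\Sigma$ meets $L$ in exactly one point (your one-line justification via $\Sigma\cap r$ glosses over the role of $\Pi$, which contains $r$), your plan matches the paper's argument.
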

\begin{proof}
The first part has already been explained.

By Lemma \ref{ed_baselocus}, $\X$ has multiplicity two in $p\notin r$, $C$, $r$ and in a line $r^\prime$ infinitely near to $r$, corresponding to the plane $\Pi$. Then:
\[ \X\cap\Pi=4r+\{\text{lines}\} \] 
The linear system also contains the line $\l$ through $p$ and $q$ and a line $\l^\prime$ infinitely near to $\l$.

The blow ups that now follow are represented in Figure \ref{ed_figura_vertice_movel}.

\begin{figure}[tb]
\centering
\includegraphics[trim=2 25 65 380,clip,width=1\textwidth]{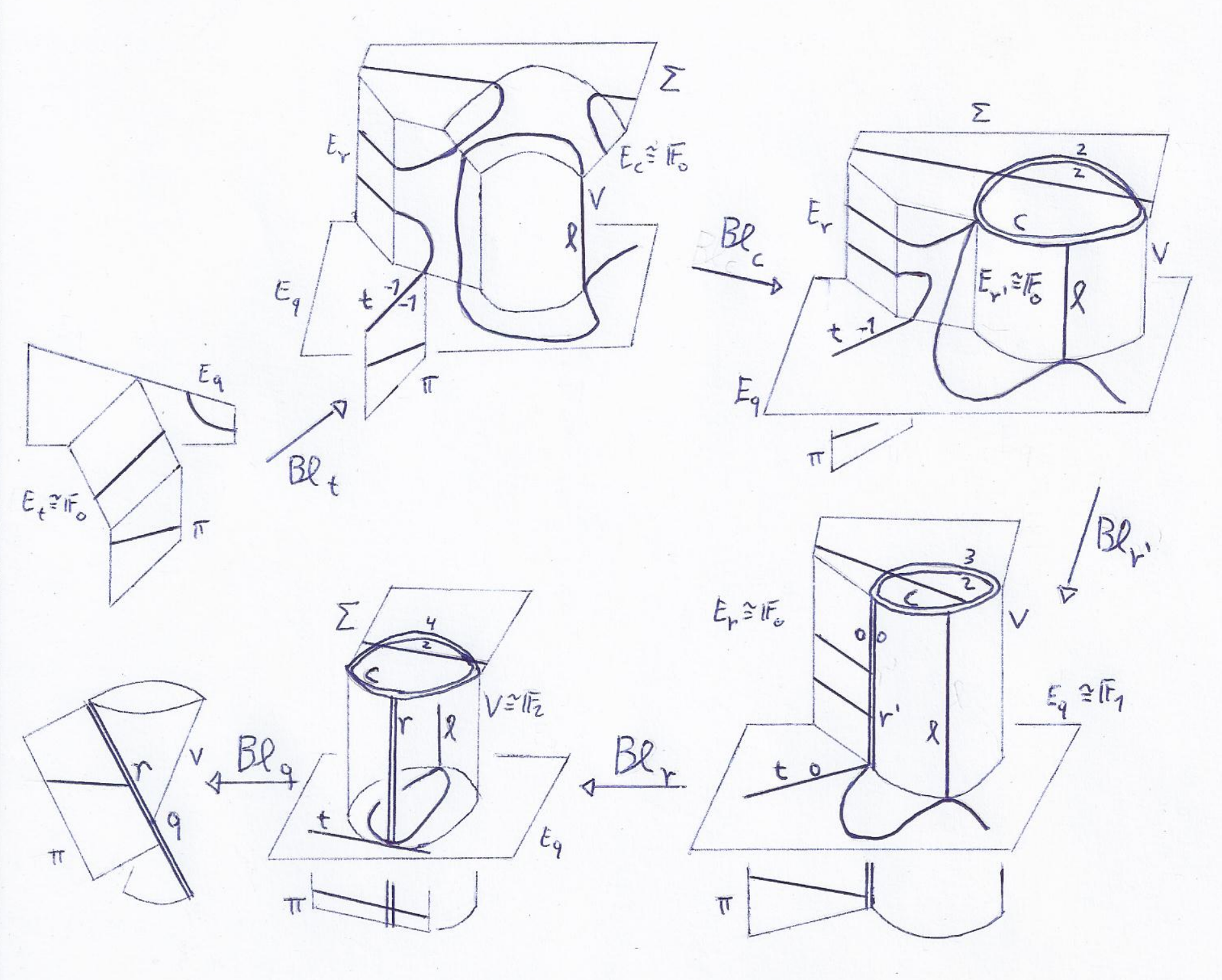}
\caption{$\X$ and the blow ups in Proposition \ref{ed_vertice_movel_prop}}\label{ed_figura_vertice_movel}
\end{figure} 

Start blowing up $q$. The degree three curve $\X_q$ has a double point in $r_q$ and a second double point in ${r^\prime}_q$ infinitely near to it. It also has a simple point in $\l_q$ and a second simple point in ${\l^\prime}_q$ infinitely near to it. Therefore, the line $t=\Pi_q$ through the two double points is a fixed component of $\X_q$. It is the tangent line to the conic $V_q$ in $r_q$. Hence:
\[ \X_q=t+\{\text{conics}\} \] 
where the moving conics have four simple base points, that is, they are tangent to $V_q$ in $\l_q$ and $r_q$. This moving part is equivalent, by a standard quadratic transformation in $E_q$, to lines through a point. Then $E_q$ is mapped to a line. Blowing up $\l$ and $\l^\prime$, it follows that the image of $E_q$ is the line through $x$.

Let $\Pi^\prime$ be a general plane containing $r$, so that $r=\Pi\cap\Pi^\prime$. After the blow up at $q$, we have that $r^2=0$ in both planes. Then its normal bundle is:
\[ N_r = \mO_{\P^1}(0)\oplus \mO_{\P^1}(0) \]

Now blow up $r$. Before the blow ups, the intersection  $\X\cap \Pi^\prime$ consisted of $r$ with multiplicity two and cubic curves. These curves intersected $r$ in $q$ and two moving points. Then, after the blow ups at $q$ and $r$, $\X_r\equiv (2,2)$. But in $E_r\cong \F_0$, the line $r^\prime=V_r$ is a double curve of $\X_r$.  Therefore:
\[ \X_r \equiv (2,2) = 2r^\prime+(2,0) \]
The moving part consists of pairs of fibers over points of $r$, mapping $E_r$ to a conic $L$. Since $r$ is the locus of vertices of cones in $\mQ$, $L$ is the locus of vertices of cones in $\mQ^\prime$. 

In $E_q$, both $t$ and the moving conics intersect $E_r$ in $r^\prime$.

The line $r^\prime=E_r\cap V$ has self-intersection $0$ in $E_r$ and $0$ in $V\cong \F_2$. Then its normal bundle is:
\[ N_{r^\prime} = \mO_{\P^1}(0)\oplus \mO_{\P^1}(0) \]

Blow up $r^\prime$, giving again $\X_{r^\prime}\equiv (2,2)$. This linear system has a double point in $C_{r^\prime}$, since $C$ intersected $r$ in one point and since $C$ is tangent to $\Pi$. It also has a simple point in $t_{r^\prime}$. By Lemma \ref{pr_imagemF1F2}, $\X_{r^\prime}$ corresponds to a linear system of quartic curves in $\P^2$ with three double points and one simple point. After a quadratic standard transformation, we get conics with one base point. Therefore $E_{r^\prime}$ is mapped to a cubic scroll.

As it was already noted before, there is only one reducible quadric in $\mQ$, namely the union of $\Pi$ and $\Sigma$. The linear system $\X$ intersects $\Pi$ in $4r$ and moving lines, mapping it to a plane, and intersects $\Sigma$ in $2C$ plus moving lines, mapping it to a plane too. Therefore the image of this pair of planes is a reducible quadric in $\mQ^\prime$.

The plane $\Sigma$ intersects $E_{r^\prime}$ in the fiber through the double point of $\X_{r^\prime}$, and $\Pi_{r^\prime}\equiv (0,1)$ contains the simple point. Since $(E_r)_{r^\prime}\equiv (0,1)$ contains none of these base points, the image of $\Sigma\cup\Pi$ intersects $L$ in one point.

Next, we investigate the image of $C$. Before the blow ups, the conic $C$ was the complete intersection of $\Sigma$ and $V$. And this is still the case after the blow ups, since $r$ and $r^\prime$ lied in $V\setminus \Sigma$ and $q\notin \Sigma$ (there is no conic in $V$ through $q$, so $q\notin C$). In $\Sigma$, $C$ had two points blown up, so $C^2=2$. In $V$, none of the blow ups has affected its self-intersection, so $C^2=2$. Then:
\[ N_C = \mO_{\P^1}(2)\oplus \mO_{\P^1}(2) \]

Since $\X\cap\Sigma=2C+\{\text{lines}\}$, the blow up at $C$ gives $\X_C\equiv (2,2)$. It has a base point in $\l_C$ and a second base point infinitely near to it. Both points lie on $V_C\equiv (0,1)$. The linear system $\X_C$ corresponds, in $\P^2$, to quartics with two double points and two simple points. A standard quadratic transformation maps it to cubics with three base points. These points lie on the image of $V_C$, which is a line. Hence $E_C$ is mapped to a weak Del Pezzo sextic surface having a double point in $x$.

The line $t$ is the complete intersection of $E_q$ and $\Pi$. In $E_q$, $t^2=-1$, since both $r$ and $r^\prime$ intersected $E_q$ in points of $t$. In $\Pi$, $t$ was the exceptional curve of the blow up at $q$. Since both $r$ and $r^\prime$ lied in $\Pi$, it follows that $t^2=-1$. Then, the normal bundle of $t$ is:
\[ N_t = \mO_{\P^1}(-1)\oplus \mO_{\P^1}(-1) \]

In both $\Pi$ and $E_q$, the moving part of $\X$ does not intersect $t$. Then blowing up $t$ gives $\X_t\equiv (0,1)$ with no base points, and $E_t$ is mapped to a line. This is the directrix line of the cubic scroll, since $E_t$ intersects $E_{r^\prime}$ in the exceptional divisor of the blow up at the simple base point of $\X_{r^\prime}$.

This completes the analysis of the images of the base curves of $\X$. We now proceed to the study of the singularities of $X$. According to Lemma \ref{ed_singularidades_vem_de_C4} (which applies with no change), these singularities lie on $L$, the image of $r$. Let $q^\prime$ be a general point of $r$ and let $\Omega$ be a general plane through $q^\prime$. Then $\X\cap \Omega$ consists of degree five curves with a double point in $q^\prime$, a second double point infinitely near to it (corresponding to $r^\prime$), two double points in $C\cap \Omega$, a simple point in $l\cap \Omega$ and a second simple point infinitely near to it.

Blowing up $q^\prime$, $\X\cap\Omega$ intersects the exceptional curve $e$ in a fixed double point. Blowing up this point, $e$ no longer intersects $\X\cap\Omega$ and $e^2=-2$. Then it is mapped to a double point of the image of $\Omega$. By Lemma \ref{pr_truque_secao_tgente}, it is a double point of $X$. Therefore, $L$ is a double conic of $X$.
\\ \end{proof}

\subsection{When the multiplicity in $q$ is four}

This implies that $C_4$ is a union of four lines through $q$, the intersection of $V$ with another cone with vertex $q$. Since all quadrics of $\mQ^\prime$ are singular and the general one is a cone, there is a curve in $X$ described by these vertices.

\begin{lemma}\label{ed_imagemvertice}
Blowing up  the vertex $q$ of $V$, the plane $E_q$ is mapped to the line $L$ described by the vertices of the cones of $\mQ^\prime$. It is a double line of $X$. The conic $C=V\cap E_q$ is mapped to the line through $x$ in the cone $S_2^x$.
\end{lemma}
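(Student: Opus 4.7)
The plan is to analyze the behaviour of $\sigma$ on the exceptional divisor $E_q$ via the pencil structure of $\mQ$, and then to understand the image of $C$ by a further blow up.

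First, after blowing up $q$, the exceptional divisor $E_q \cong \P^2$ contains the conic $C = \tilde{V} \cap E_q$ (the projectivized tangent cone of $V$ at $q$), the four points $P_i = C_4 \cap E_q$ ($i=1,\dots,4$) lying on $C$, and the base point $P_\l = \l \cap E_q \in C$ (with $P_{\l^\prime}$ infinitely near). Since every cone $Q \in \mQ$ has multiplicity $2$ at $q$, each $\tilde{Q} \cap E_q$ is a conic, and these conics form a pencil $|N|$ on $E_q$ with base locus $\{P_1,\dots,P_4\}$; the conic $C$ is the member of $|N|$ through $P_\l$, corresponding to $V$ itself. A general $F \in \X$ decomposes as $F = V + F^\prime$ with $F^\prime \in \Xp$ and $\mult_q F^\prime = 2$, yielding $F_q = C + F^\prime_q$ where $F^\prime_q$ is a conic through $P_1,\dots,P_4$, i.e., a member of $|N|$. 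Hence $\X_q = C + |N|$, with fixed component $C$ (of multiplicity one) and moving pencil $|N|$.

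Next, I would identify $L$. The map $\sigma|_{E_q}$ is defined by the moving pencil $|N|$, contracting each conic of the pencil to a single point and sending $E_q$ to a rational image $L \cong \P^1$. Since the moving part has degree $2$ and each fiber also has degree $2$, we get $\deg L = 1$, so $L$ is a line in $\P^7$. Geometrically, each $Q \in \mQ$ is mapped by $\sigma$ to a cone $Q^\prime \in \mQ^\prime$, and the exceptional conic $\tilde{Q} \cap E_q$ (which resolves the vertex $q$ of $Q$) collapses to the vertex of $Q^\prime$; hence $L$ coincides with the locus of vertices of cones in $\mQ^\prime$.

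To prove $L$ is a double line of $X$, I would apply Lemma \ref{pr_truque_secao_tgente}. For a general plane $\Omega$ through $q$, the restriction $\Omega_q = \Omega \cap E_q$ is a general line, meeting each conic of $|N|$ in two points that are identified by $\sigma|_{E_q}$; so the induced map $\Omega_q \to L$ has degree $2$. Consequently $\sigma(\Omega)$ traces $L$ doubly, making $L$ a double curve of the tangent hyperplane section $\sigma(\Omega)$, and by the Lemma, $X$ has multiplicity two along $L$.

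Finally, for the image of $C$: since $C$ is a fixed component of $\X_q$, the map $\sigma$ is indeterminate along $C$, and we must blow up $C$ to resolve. With $C^2 = 4$ on $E_q$ and $C^2 = -2$ on $\tilde{V} \cong \F_2$ (as the $(-2)$-section), the normal bundle is $N_C = \mO_{\P^1}(4) \oplus \mO_{\P^1}(-2)$, so $E_C \cong \F_6$. Analysing the linear system $\X_C$ on $E_C$ (determined by $\mult_C \X = 1$ together with the intersections of $C$ with the strict transforms of $C_4$, $\l$ and $\l^\prime$), one sees that $\X_C$ defines a map from $E_C$ onto a line in $\P^7$. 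This line passes through $x$ (since $C \subset \tilde{V}$ and $\tilde{V}$ contracts to $x$, so $E_C$ lies over the contracting part of $V$) and lies in $S_2^x$ (via the correspondence $\mQ \leftrightarrow \mQ^\prime$ pairing $V$ with $S_2^x$). Hence it is the unique ruling of the cone $S_2^x$ through $x$. The main obstacle is precisely this last step: the explicit bidegree computation of $\X_C$ on $E_C \cong \F_6$ requires careful bookkeeping of the local intersections of $C$ with the other base components, which is the delicate part of the argument.
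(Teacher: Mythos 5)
Your treatment of the first two assertions follows essentially the paper's route: you identify $\X_q$ as the fixed conic $C=V_q$ plus the pencil of conics through the four points of $C_4\cap E_q$ cut out by $\mQ$, contract that pencil to the line $L$ of vertices, and deduce multiplicity two of $X$ along $L$ from the $2:1$ cover of $L$ by a general line of $E_q$. One justification along the way is false: a general member of $\X$ does \emph{not} decompose as $V+F'$ with $F'\in\Xp$; the members containing $V$ form the proper subsystem corresponding to hyperplane sections through $x$. The conclusion $\X_q=C+\vert N\vert$ is still correct, and the clean way to get it (which is what the paper does) is a Bezout count on $E_q$: $\X_q$ is a quartic with four double points and two infinitely near simple points, all lying on the conic $C$, which forces $C$ to split off as a fixed component with residual the pencil of conics through the four points.

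The genuine gap is in the last assertion, about the image of $C$. You blow up $C$ immediately after $q$, find $N_C=\mO_{\P^1}(4)\oplus\mO_{\P^1}(-2)$ and $E_C\cong\F_6$, and then explicitly leave the computation of $\X_C$ and of the image of $E_C$ as the remaining obstacle; but that computation \emph{is} the proof of this part, so it cannot be deferred, and your setup for it is not the right one. The four double lines of $\X$ (the components of $C_4$), whose strict transforms meet $E_q$ in four points of $C$, must be blown up \emph{before} $C$, otherwise they leave fixed components on $E_C$. After those blow ups the self-intersection of $C$ in $E_q$ drops from $4$ to $0$, while $C^2=-2$ in $V$, so $N_C=\mO_{\P^1}(-2)\oplus\mO_{\P^1}(0)$ and $E_C\cong\F_2$, not $\F_6$. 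On this $\F_2$ the argument closes: since before these blow ups $\X_q$ met $C$ only at the four points coming from $C_4$, the system $\X_C$ has no intersection with the $(-2)$-section $E_q\cap E_C$, whence $\X_C\equiv e_2+2f_2$ with two infinitely near base points on $V_C\equiv e_2+2f_2$ coming from $\l$ and $\l'$; by Lemma \ref{pr_imagemF1F2} this is Cremona equivalent to a pencil of lines through a point, so $E_C$ is mapped to a line, and the contraction of $V_C$ to $x$ identifies it with the line through $x$ of the cone $S_2^x$.
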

\begin{proof}
The following blow ups are represented in Figure \ref{ed_figura_blvertice}.  

Consider the blow up at $q$. Then $\X_q$ is made of quartic curves having four double and two simple points. The simple points are infinitely near. These six points lie on the conic $C=V_q$. So the linear system $\X_q$ has a fixed conic $C$ and then conics through four base points. This moving part is Cremona equivalent to lines through a point. It maps $E_q$ to a line, name it $L$.   

In $E_q$, each conic through the four base points is mapped to a point of $L$. So these points are double points of $X$. These moving conics in $E_q$ are also the intersections of this plane with the quadrics of $\mQ$, which are also singular in $q$. Hence, $L$ is the line described by the vertices of the quadrics $\mQ^\prime$.

Before blowing up  the conic $C$, one should blow up the four (possibly infinitely near) double lines of $\X$, that is, $C_4$, in order to avoid other fixed components. After this blow up, $C$ continues to be a complete intersection of $V$ and $E_q$. But now, its self intersection in $E_q$ has decreased by four, that is, $C^2=0$ in $E_q$. In $V$, $C^2=-2$, as before. So:
\[ N_C=\mO_{\P^1}(-2)\oplus \mO_{\P^1}(0) \] 

Note that, before the blow ups, $\X_q$ intersected $C$ only in the four points coming from the lines of $C_4$. So now, $\X_C$ does not intersect $E_q\cap E_C$, which is the $(-2)$-section of $E_C\cong \F_2$. Therefore $\X_C\equiv e_2+2f_2$.

This linear system has two infinitely near base points, in the intersection with the line $\l$. These two points lie on $V_C$, which is also a section of type $e_2+2f_2$. Hence, $V_C$ is contracted to the point $x$.

By Lemma \ref{pr_imagemF1F2}, $\X_C$ corresponds in $\P^2$ to conics with four base points in two infinitely near pairs. After a standard quadratic map, we get a pencil of lines through a point. So $E_C$ is mapped to a line. It is the one line through $x$ of the cone $S_2^x$.
\\ \end{proof}

\begin{figure}[tb]
\centering
\includegraphics[trim=30 480 10 5,clip,angle=180,width=1\textwidth]{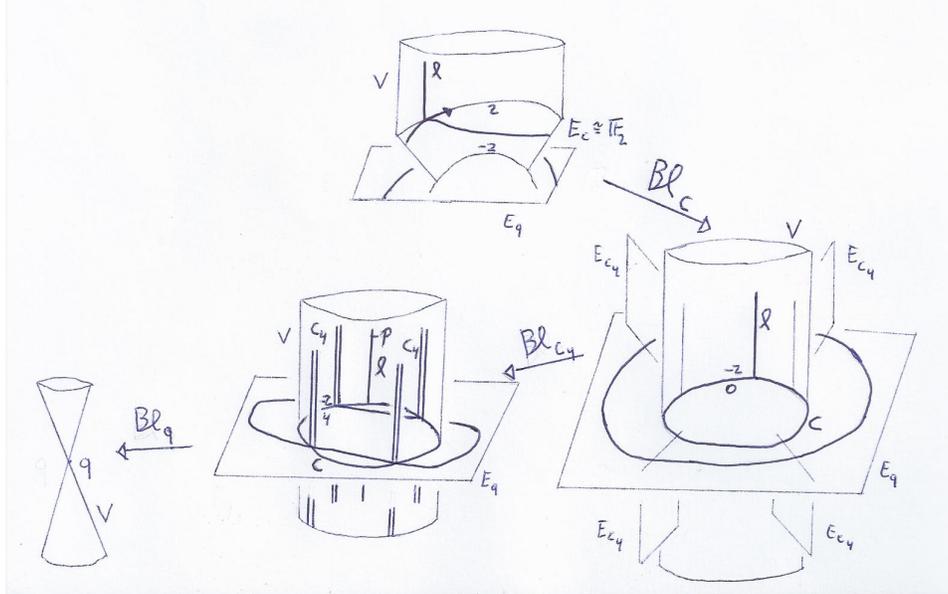}
\caption{Blow up at $q$ and $C$}\label{ed_figura_blvertice}
\end{figure} 

Next, we will find the image of $\l$. 

\begin{lemma}
The lines $\l^\prime$ and $\l$ are mapped respectively to the line through $x$ in $S_2^x$ and to the vertex of this cone.
\end{lemma}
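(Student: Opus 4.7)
The plan is to analyze the blow-ups at $\ell$ and $\ell'$ in conjunction with the local structure of the map $E_p\to S_2^x$ already understood from Lemma \ref{ed_imagemquadricas}. Since $V$ is a cone, $\X_p$ is a linear system of conics on $E_p\cong\P^2$ with two infinitely near base points $a\prec a'$ lying on $V_p$, where $a$ encodes the tangent direction of $\ell$ at $p$ and $a'$ that of $\ell'$. A direct resolution of this rational map (for instance, using affine coordinates in which $\X_p$ has the form $(x^2:y^2:xy:yz)$) shows that the line $V_p$ through $a$ and $a'$ is contracted to the smooth point $x$ of $S_2^x$, the exceptional $\P^1$ above $a$ is contracted to the vertex of $S_2^x$, and the exceptional $\P^1$ above $a'$ is mapped isomorphically onto the unique line of the ruling of $S_2^x$ through $x$, namely the line joining $x$ to the vertex.

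Next I would match this local picture at $p$ with the global blow-ups of $\ell$ and $\ell'$. The strict transform of $\ell$ after blowing up $p$ meets $E_p$ at the proper base point $a$, so the further blow-up of $\ell$ in $\P^3$ is equivalent, over a neighborhood of $p$, to blowing up $a$ inside $E_p$; likewise, blowing up $\ell'$ corresponds to blowing up $a'$. Hence the fibers of $E_\ell$ and $E_{\ell'}$ above $p$ are identified with the exceptional curves over $a$ and $a'$, which are mapped by $\X$ to the vertex of $S_2^x$ and to the line through $x$ in $S_2^x$ respectively.

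To upgrade this pointwise identification to a description of the images of all of $E_\ell$ and $E_{\ell'}$, I would mimic the strategy of Lemma \ref{ed_imagem_p_e_retas}: determine the normal bundles of $\ell$ and $\ell'$ after blowing up $q$, $C$, $p$ and the four lines of $C_4$ (each of which passes through $q$ and therefore meets $\ell$), express the restrictions $\X_\ell$ and $\X_{\ell'}$ using the multiplicity data $\X\cap V=2C_4+2\ell$, and split off the fixed components (in particular the section cut out by $\ell'$ on $E_\ell$). I expect the moving part of $\X_\ell$ to collapse entirely, forcing $E_\ell$ to contract to a single point, while the moving part of $\X_{\ell'}$ will consist of a pencil of fibers mapping $E_{\ell'}$ birationally to a line in $\P^7$.

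The main obstacle will be verifying the compatibility of the fiber of $E_\ell$ over $q$, which lies in $E_q$ and hence maps into the double line $L$ of Lemma \ref{ed_imagemvertice}, with the fiber over $p$ that goes to the vertex of $S_2^x$; this forces $L$ to meet $S_2^x$ at the vertex, and the two limiting directions of $\ell$ at $p$ and at $q$ must produce the same point of $X$. An analogous compatibility must be checked for $\ell'$, where the fiber over $q$ must land on the line through $x$ in $S_2^x$ that is already the image of $C$ by Lemma \ref{ed_imagemvertice}. The careful bookkeeping of the intersections of $\ell$ with the strict transforms inside $E_q$ (where $\ell$ meets $C$, itself about to be blown up) is the place where the infinitely near structure $\ell'\prec\ell$ must be kept in play throughout.
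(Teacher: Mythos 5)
Your proposal is correct, and its computational core coincides with the paper's proof: after the preliminary blow ups one computes the normal bundles of $\l$ and $\l^\prime$, writes $\X_\l$ and $\X_{\l^\prime}$ as divisor classes on the exceptional quadrics, and finds that $\X_\l\equiv(1,1)\equiv\l^\prime+F_p$ is entirely fixed (so $E_\l$ is contracted to a point) while $\X_{\l^\prime}\equiv(1,1)$ has two simple base points lying on $V_{\l^\prime}$ (so $E_{\l^\prime}$ is mapped to a line through $x$). Where you genuinely diverge is in how the two images are identified. The paper pins down the contraction point of $E_\l$ as the vertex of $S_2^x$ purely by incidence: $E_\l$ meets $E_q$, whose image is $L$, and meets $E_{\l^\prime}$, whose image is the line through $x$ in $S_2^x$, and these two lines meet only at the vertex. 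You instead read the identification off the local model of $\X_p$ at $p$, the system $(x^2:y^2:xy:yz)$ of conics through two infinitely near points $a\prec a^\prime$, whose resolution sends the $(-2)$-curve over $a$ (which is $E_\l\cap E_p$) to the singular point of $S_2^x$ and the $(-1)$-curve over $a^\prime$ to the ruling line through $x$; this computation is correct and is more self-contained, since it does not invoke Lemma \ref{ed_imagemvertice}, at the price of having to justify the compatibility of the two blow-up orders, which you do. One caution on execution: you propose to blow up $p$, $C$ and the four lines of $C_4$ before computing $N_\l$. The strict transforms of the four double lines are disjoint from $\l$ once $q$ is blown up, so they are harmless, but both $p$ and the conic $C$ do meet the strict transform of $\l$, so including them alters the normal bundle (the paper obtains $N_\l=\mO_{\P^1}(0)\oplus\mO_{\P^1}(0)$ by blowing up only $q$ first) and hence every subsequent divisor class on $E_\l$ and $E_{\l^\prime}$; the conclusion survives, but the bookkeeping must be kept consistent with whichever order you fix, and your expectation that the moving part of $\X_{\l^\prime}$ is a pencil of fibers should come out instead as a pencil of $(1,1)$-curves through the two base points.
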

\begin{proof}
The linear system $\X$ cuts a general plane $\Pi$ through $\l$ in this line plus degree four curves with multiplicity three in $q$ and passing through $p$. There is also another base point infinitely near to $q$, which will now be described. Consider the blow up at $q$. Remember that $C=V_q$ is a fixed part of $\X_q$. It intersects the line $\Pi_q$ in two points, one of them in $\l$. The other point is the infinitely near point mentioned above.

After this blow up, the line $\l$ has normal bundle:
\[ N_\l=\mO_{\P^1}(0)\oplus \mO_{\P^1}(0) \]

Remember that $\X$ contains a line $\l^\prime$ infinitely near to $\l$, corresponding to the tangent plane of $V$ at a general point of $\l$. So $\l^\prime$ is a $(0,1)$-curve in $E_\l$, intersecting $E_q$ in a point of the conic $C$. 

Since the degree four curves in $\Pi$ intersected $\l$ in $p$, it follows that:
\[ \X_\l \equiv  (1,1) \equiv \l^\prime + F_p \]
where $F_p$ is the fiber over the point $p\in\l$.

The line $\l^\prime$ is the intersection of $V\cong \F_2$ (since it was blown up at $q$) and $E_\l$. So:
\[ N_{\l^\prime}=\mO_{\P^1}(0)\oplus \mO_{\P^1}(0) \]

In $E_{\l^\prime}$, $E_q$ intersects in a line of type $(1,0)$, while $V$ and $E_\l$ in lines of type $(0,1)$. $\X_{\l^\prime}$ has a base point in the intersection with $C=E_q\cap V$ and another in the intersection with $F_p\subset E_\l$.

Therefore, $\X_{\l^\prime}$ is made of curves of type $(1,1)$ with two simple base points. It maps $E_{\l^\prime}$ to a line through $x$ (the contraction of $V_{\l^\prime}$).

The surface $E_\l$ is mapped to a point. It is the intersection of the line through $x$ and $L$, that is, the vertex of $S_2^x$.

  These blow ups are sketched in Figure \ref{ed_figura_ll'}.  
\\ \end{proof}

\begin{figure}[tb]
\centering
\includegraphics[trim=15 20 70 520,clip,width=1\textwidth]{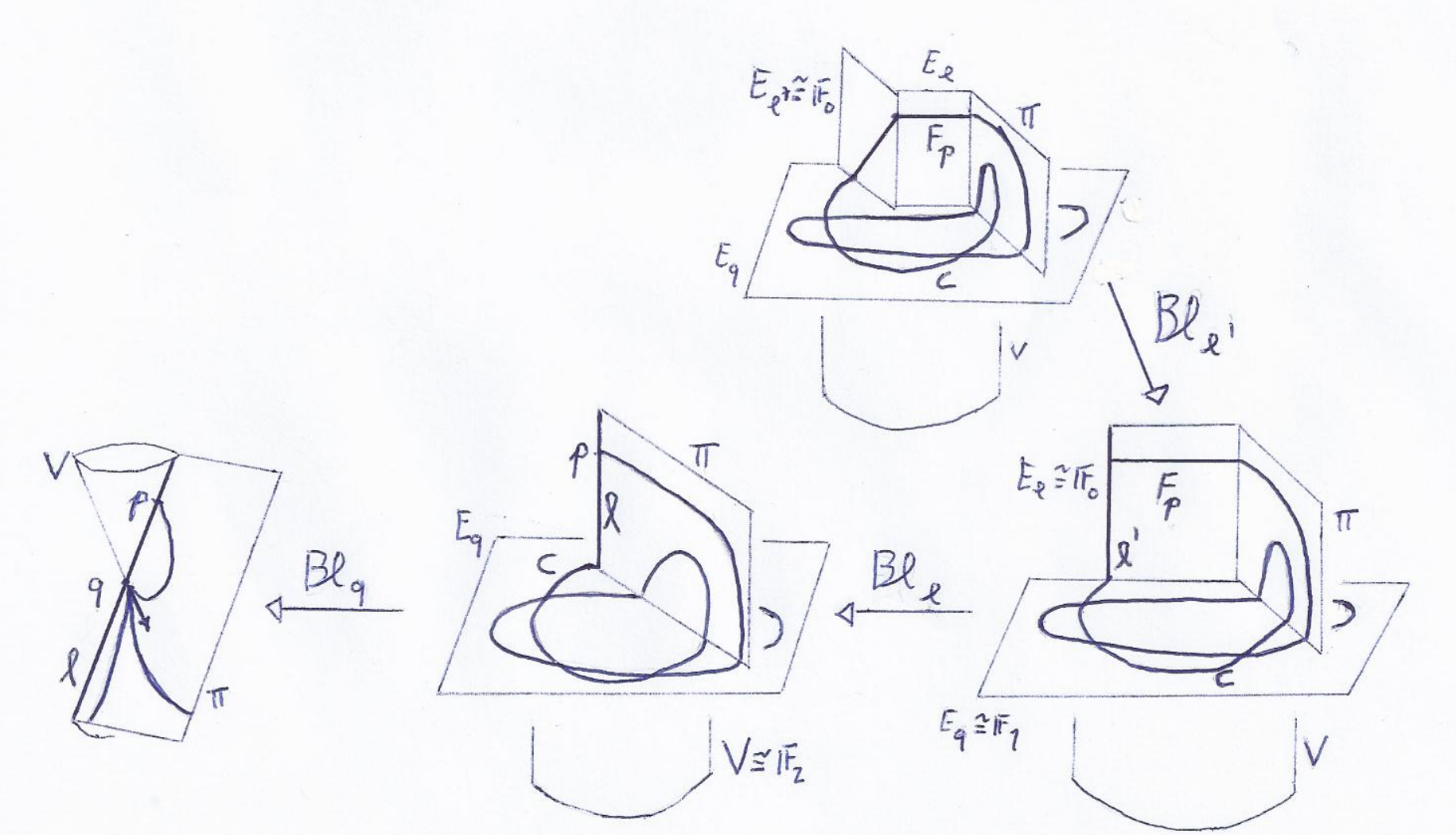}
\caption{Blow up at $\l$ and $\l^\prime$}\label{ed_figura_ll'}
\end{figure}

Note that there is no contradiction with Lemma \ref{ed_imagemvertice}, since, in $E_C$, the blow up at the base points determined by $\l^\prime$ and $\l$ are mapped to the line through $x$ and the vertex of $S_2^x$.

Before entering into the specific cases, let's make a remark on Segre symbols. Since $\mQ$ has no smooth element, the Segre symbol is not defined. But a general plane section of $\mQ$ gives a pencil of conics. Since $V$ is irreducible, there is a smooth conic in this pencil. Moreover, the singular elements in this pencil correspond to the special elements (i.e. pairs of planes and double planes) of $\mQ$.

So I will denote the \emph{Segre symbol of $\mQ$} to be the Segre symbol of the pencil of conics defined by a general plane section of $\mQ$.   To distinguish from the other notation, I will use two brackets.   For example, the special elements of the pencil with symbol $[[1,2]]$ are two pairs of planes, while the one with symbol $[[(21)]]$ has only a double plane.

There are five possibilities for $C_4$, which give five possibilities for the Segre symbol of $\mQ$:
\begin{itemize}
\item [\textbf{(E15)}] four distinct lines -- $[[1,1,1]]$
\item [\textbf{(E16)}] one double and two simple lines -- $[[2,1]]$
\item [\textbf{(E17)}] two double lines -- $[[(11),1]]$
\item [\textbf{(E18)}] a triple and a simple line -- $[[3]]$
\item [\textbf{(E19)}] a line with multiplicity four -- $[[(21)]]$
\end{itemize}

\subsubsection{(E15) Four simple lines}

Let $r$ be a simple line in $C_4$. Then:

\begin{figure}
\centering
\includegraphics[trim=35 670 185 5,clip,width=0.9\textwidth]{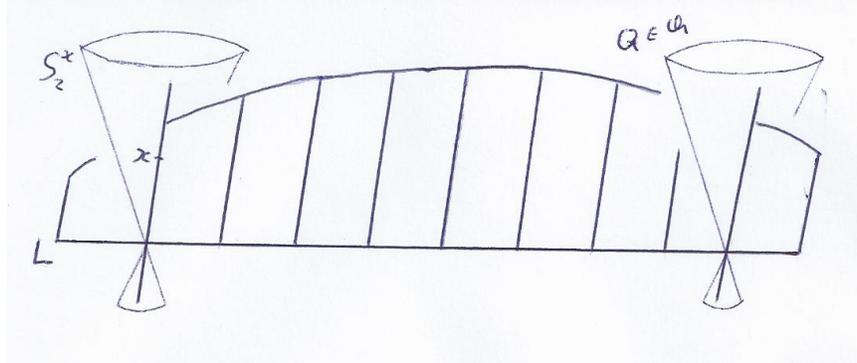}
\caption{The scroll $S_3^x$}\label{ed_figura_cone_S3x}
\end{figure} 

\begin{lemma}\label{ed_cone_4retas_lema}
A simple line $r\subset C_4$ is mapped to a cubic scroll $S_3^x$ through $x$. Its directrix line is $L$ and the cones of $\mQ^\prime$ intersect this scroll in lines of the ruling. Figure \ref{ed_figura_cone_S3x} gives a representation of $S_3^x$.
\end{lemma}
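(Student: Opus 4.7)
The plan is to blow up in two stages — first $q$, then $r$ — compute the class and base locus of the strict transform $\X_r$ on $E_r$, and apply Lemma \ref{pr_imagemF1F2} together with a standard quadratic transformation to recognise the resulting rational map. For the normal bundle of $r$ after the first blow-up: since $r$ is a common ruling of $V$ and of the second cone $Q_0 \in \mQ$ cutting out $C_4$ on $V$, and both surfaces become $\F_2$ with $r$ a fiber, one has $r^2 = 0$ on each; transversality at a generic point of $r$ gives $N_r = \mO_{\P^1}(0) \oplus \mO_{\P^1}(0)$, whence $E_r \cong \F_0$.

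Next I would compute $\X_r$ on $E_r$. Writing the class of $\X$ on the blown-up threefold as $5H - 4E_q - 2E_r$ (other base contributions do not meet $r$) and restricting to $E_r$, using $H|_{E_r} \equiv (1,0)$, $E_q|_{E_r} \equiv (1,0)$ (the fiber $F_{\bar q}$ over $\bar q := r \cap E_q$), and $-E_r|_{E_r} \equiv (0,1)$, one obtains $\X_r \equiv (1,2)$. The other lines $r_2, r_3, r_4$ of $C_4$, as well as $\l$ and $\l^\prime$, meet $r$ only at $q$ and are separated from it by the blow-up at $q$, while $p \notin r$. So the only base point of $\X_r$ comes from $\bar q$: on $E_q \cong \P^2$ the system $\X_q$ is the fixed conic $C = V_q$ plus moving conics through the four points $\bar q_i = r_i \cap E_q$ (by Lemma \ref{ed_imagemvertice}), and its tangent cone at $\bar q$ is the tangent to $C$ together with a moving line. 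This yields a single fixed base point of $\X_r$ on $F_{\bar q}$, located at $V_r \cap F_{\bar q}$, where $V_r \equiv (0,1)$ is the section of $E_r$ cut by the strict transform of $V$.

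By Lemma \ref{pr_imagemF1F2}(i), $\X_r \equiv (1,2)$ corresponds in $\P^2$ to plane cubics with multiplicities $1, 2, 1$ at three points — the two coming from the $(a,b)$ structure and the one from the fixed base point above. A standard quadratic transformation based at these three points sends the system to conics through a single point, which defines the embedding of $\P^2$ (blown up at a point) as the cubic scroll $S(1,2) \subset \P^4$. Hence $\sigma$ maps $E_r$ birationally onto a cubic scroll $S_3^x$.

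Finally I would identify $x$, the directrix $L$, and the rulings. The section $V_r$ lies on the strict transform of $V$ and is therefore contracted to $x$, so $x \in S_3^x$. The fiber $F_{\bar q}$ satisfies $\X_r \cdot F_{\bar q} = 2$ but contains the base point, so its strict transform (after blowing up that point) has self-intersection $-1$ and is disjoint from the strict transform of the contracted $V_r$; its image in $S_3^x$ is therefore the $(-1)$-directrix. Since $F_{\bar q} = E_q \cap E_r$, this directrix coincides with $L$, the image of $E_q$ from Lemma \ref{ed_imagemvertice}. A generic cone $Q \in \mQ$ contains $r$ as a ruling, so $Q \cap E_r$ is a $(0,1)$-section distinct from $V_r$; such sections form a pencil and each maps to a line of self-intersection $0$ on $S_3^x$, i.e.\ a ruling. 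Hence every cone of $\mQ^\prime$ meets $S_3^x$ in such a ruling line. The hard part will be verifying that $V_r \cap F_{\bar q}$ is really the only fixed base point of $\X_r$ — in particular, that $V_r$ is not a fixed component — since this is what ensures that the image is the smooth cubic scroll $S(1,2)$ rather than a more singular cubic surface.
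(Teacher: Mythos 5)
Your proof is correct and follows essentially the same route as the paper's: blow up $q$ and then $r$, find $N_r=\mO_{\P^1}(0)\oplus\mO_{\P^1}(0)$ so that $E_r\cong\F_0$, identify $\X_r\equiv(1,2)$ with a single fixed base point at $V_r\cap(E_q\cap E_r)$, and pass via Lemma \ref{pr_imagemF1F2} and one quadratic transformation to conics through a point, with $V_r$ contracted to $x$, $E_q\cap E_r$ giving the directrix $L$, and the sections $Q_r$ for $Q\in\mQ$ giving the ruling. The only cosmetic differences are that the paper computes $N_r$ using the two planes through $r$ rather than $V$ and a second cone, and obtains the class $(1,2)$ by intersecting with $\Pi_r$ and $E_q\cap E_r$ rather than by restricting $5H-4E_q-2E_r$; the "hard part" you flag is in fact immediate from $\X\cap V=2C_4+2\l$ in Lemma \ref{ed_baselocus}, which shows the intersection multiplicity along $r$ is exactly two and hence $V_r$ cannot be a fixed component of $\X_r$.
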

\begin{proof}
The linear system  $\X$ cuts a general plane $\Pi$ through $r$ in $2r$ plus cubics with a double point at $q$. Consider the blow up at $q$, as done in Lemma \ref{ed_imagemvertice}. The line $r$ intersects $E_q$ in a point of the conic $C$ and in a point of the moving part of $\X_q$. Since it is the complete intersection of two planes (blown up at $q$), its normal bundle is:
\[ N_r=\mO_{\P^1}(0)\oplus \mO_{\P^1}(0) \]

Now blow up  $r$. $\X_r$ cuts $E_q\cap E_r\equiv (1,0)$ in two points. One of them lies on $C$, so it is a fixed point. The other is a moving point. Moreover $\X_r$ cuts lines $\Pi_r\equiv (0,1)$ in one moving point.

Therefore, $\X_r$ is made of $(1,2)$ curves with one base point. These correspond to cubics in $\P^2$ with one double and two simple points, which are Cremona equivalent to conics with one base point. Hence $E_r$ is mapped to a cubic scroll $S(1,2)$. Name it $S_3^x$, since $V_r\equiv (0,1)$ is contracted to $x$.

The other quadrics of $\mQ$ also intersect $E_r$ in curves of type $(0,1)$. Then the quadrics of $\mQ^\prime$ intersect $S_3^x$ in lines of its ruling. Since $E_q$ intersects $E_r$ in the line $(1,0)$ through the base point, it is mapped to the directrix line of $S_3^x$. 
\\ \end{proof}

There are four cubic scrolls through the point $x$, which are contracted by the tangential projection $\tau$ to lines. In all four of these scrolls, the directrix line is $L$. 

Note that, blowing up  all four double lines of $\X$, the four exceptional divisors do not intersect. Therefore, the only curves in common to the four scrolls come from contractions of surfaces. So each scroll intersects a cone of $\mQ^\prime$ in a different line. The only exception is the cone $S_2^x$ through $x$, which is intersected by all four in the line through $x$. 

Since $x$ is a general point in $X$, there are cubic scrolls through other general points of $S$. The following Lemma describes the image, via $\tau$, of these surfaces:

\begin{lemma}
Through a general point $y\in X$ there are four scrolls $S(1,2)$ contained in $X$ intersecting quadrics of $\mQ^\prime$ in lines and having directrix line $L$. The image via $\tau$ of these scrolls in $\P^3$ are quadric cones through $\tau(y)$ containing three of the four double lines of $\X$ and $\l$.
\end{lemma}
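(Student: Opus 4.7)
The plan is to first establish the existence of four cubic scrolls through a general point $y \in X$ by invoking the genericity of $x$, and then to identify each scroll's image under $\tau$ as a specific quadric cone through $q$.

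First, since $x$ was chosen as an arbitrary general point of $X$, the construction of Lemma~\ref{ed_cone_4retas_lema}---which produces four cubic scrolls $S(1,2)$ through $x$, each with directrix $L$ and cutting the cones of $\mQ^\prime$ in rulings---applies equally well at $y$. This yields the four scrolls through $y$ asserted in the statement; the directrix $L$ is common to all of them because the locus of vertices of the cones of $\mQ^\prime$ is intrinsic to $X$ and independent of the basepoint chosen.

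Next I determine the shape of $\tau(S)$ for one such scroll $S$. Every ruling of $S$ is a line contained in some cone $Q^\prime \in \mQ^\prime$, and the isomorphism $\sigma|_Q\colon Q \to Q^\prime$ (from Lemma~\ref{ed_imagemquadricas}) identifies rulings of $Q$ with rulings of $Q^\prime$. Consequently $\tau$ sends each ruling of $S$ to a line through the vertex of the corresponding $Q \in \mQ$, and since in the multiplicity-four case every cone of $\mQ$ has the common vertex $q$, the image $\tau(S)$ is a union of lines through $q$ parametrised by $\mQ \cong \P^1$---a rational cone with vertex $q$ that contains $\tau(y)$ and whose rulings are parametrised by the pencil $\mQ$.

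Finally, to identify $\tau(S)$ as a quadric cone with the stated incidences I construct the candidate cones directly in $\P^3$. Quadric cones with vertex $q$ correspond to conics in a base plane and form a five-parameter linear system; for each three-element subset $J \subset \{1,2,3,4\}$, the conditions of containing the three $C_4$-lines $\ell_j$ for $j \in J$, the line $\l$, and the point $\tau(y)$ impose five independent linear conditions, determining a unique quadric cone $T_J$. The four choices of $J$ give four distinct $T_J$. For each $T_J$ one has $V \cap T_J = \sum_{j \in J} \ell_j + \l$ of total degree four; restricting $\X$ to $T_J$ and removing the fixed component $2(V \cap T_J)$ of degree eight leaves a residual linear system of conics on $T_J$, which after resolving the remaining base points at $q$, $p$ and the infinitely near line $\l^\prime$ defines a birational map from $T_J$ onto a cubic scroll $S(1,2)$ through $y$, with directrix $L$ and cutting the cones of $\mQ^\prime$ in their rulings. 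Matching these four $\sigma(T_J)$ with the four scrolls from the first step concludes the proof.

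The main obstacle is this last degree and base-locus computation, namely showing that the residual pencil of conics on $T_J$ defines a birational map to an $S(1,2)$ (rather than a surface of higher or lower degree) and justifying why $\l$ must be one of the four rulings of $T_J$ coming from $C_4$-lines, instead of the fourth $\ell_i$. This requires carefully tracking the infinitely near structure of $\X$ at $q$, $p$ and along $\l^\prime$, paralleling the analysis carried out in Lemma~\ref{ed_cone_4retas_lema} for the lines of $C_4$, and using the fact that $\l \subset V$ is distinguished by passing through $p$ whereas none of the $\ell_i$ do.
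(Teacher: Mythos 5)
Your overall strategy is essentially the paper's: for each choice of three of the four double lines $r_1,r_2,r_3$ you take the quadric cone $T_J$ with vertex $q$ containing them, $\l$ and $\tau(y)$ (the observation that such cones form a pencil with a unique member through the general point $\tau(y)$ is exactly how the paper gets the number four), and you then try to show that $\sigma$ maps $T_J$ to a cubic scroll. The decisive computation, however, is set up incorrectly, and as set up it cannot produce a cubic scroll. You take the fixed part of $\X\cap T_J$ to be $2(V\cap T_J)$, of degree eight, leaving a residual system of conics. But $\X$ has multiplicity one, not two, along $\l$: the coefficient $2$ of $\l$ in $\X\cap V$ comes from the infinitely near line $\l^\prime=V_\l$, which lies on $V$ and not on a general cone $T_J$ (that would force $T_J$ to be tangent to $V$ along $\l$). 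Hence the fixed part of $\X\cap T_J$ is $2r_1+2r_2+2r_3+\l$, of degree seven, and the moving part has degree three. Concretely, after blowing up $q$ so that $T_J\cong\F_2$,
\[ \X\cap T_J \equiv 5e_2+10f_2 \equiv 4e_2+2r_1+2r_2+2r_3+\l+\{e_2+3f_2\}, \]
and the moving part $e_2+3f_2$ has a single simple base point at $p\in\l$, giving an image of degree $(e_2+3f_2)^2-1=3$, a scroll $S(1,2)$. With your residual class $e_2+2f_2$ (plane sections of the cone) the image would have degree at most $(e_2+2f_2)^2-1=1$ once the base point at $p$ is imposed: a linear system of conics on a quadric cone can never map it birationally onto a cubic scroll. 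So the step you flag as ``the main obstacle'' is not merely unfinished; its starting point has to be corrected before the argument can go through.

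Two smaller remarks. Your intermediate step arguing that $\tau(S)$ is a cone with vertex $q$ presupposes that every ruling of $S$ lies in a cone of $\mQ^\prime$, which is part of what is being established; the paper avoids this by working entirely in $\P^3$ and reading off the properties of the image afterwards, and the step becomes dispensable once the computation on $T_J$ is correct. Also, the final ``matching'' of the four $\sigma(T_J)$ with the four scrolls obtained by genericity is not needed: exhibiting the four $\sigma(T_J)$ with the required properties is already the content of the lemma.
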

\begin{proof}
Let $r_1,r_2,r_3$ be three of the four double lines of $\X$. Let $S$ be a quadric surface containing these lines and $\l$. This implies that $S$ is a cone with vertex $q$. Consider the blow up at $q$, so that $S\cong \F_2$. Then:
\[ \X\cap S \equiv 5e_2+10f_2 \equiv 4e_2+2r_1+2r_2+2r_3+\l+ \{e_2+3f_2\} \] 
where the moving part has a base point in $p$, which is a double point of $\X$.

Using Lemma \ref{pr_imagemF1F2}, the moving part is birationally equivalent to cubics in $\P^2$ with one simple point, one double point and a third simple point infinitely near to it. Therefore $S$ is mapped to a cubic scroll $S(1,2)$.

The intersection with quadrics of $\mQ$ is:
\[ \mQ\cap S \equiv 2e_2+4f_2 \equiv 2e_2+r_1+r_2+r_3+ \{f_2\} \] 
which are mapped to lines of the ruling of the cubic scroll.

The directrix line of the scroll is the image of the $(-2)$-section $e_2$, the blow up at $q$, which is mapped to $L$.

The family of quadrics containing these four lines through $q$ forms a pencil, so through a general point in $\P^3$ there is only one of these quadrics. Since there are four possible choices of $r_1$ $r_2$ and $r_3$, there are four quadrics through $\tau(y)$ which are mapped to cubic scrolls in $X$.
\\ \end{proof}
 
A description of case (E15) is given in the following proposition:

\begin{prop}
Let $X$ be the Bronowski threefold corresponding to case (E15). Then $X$ has a double line $L$, described by the vertices of cones of the one-dimensional family $\mQ^\prime$ contained in $X$. There are three reducible quadrics in this family, each is a pair of planes. 

There are four smooth quadric surfaces in $X$ that intersect the cones of $\mQ^\prime$ in lines of one of its rulings.

The four double lines of $\X$ in $\P^3$ are mapped to cubic scrolls $S_3^x$ through $x$. A line of the ruling of each scroll lies on a quadric of $\mQ^\prime$ and the directrix line is $L$.
\end{prop}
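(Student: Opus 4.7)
The proposition synthesises several results established earlier in this subsection; my plan is to assemble them in order.

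First, the double line $L$ and its description as the locus of vertices of the cones in $\mQ^\prime$ is exactly the content of Lemma~\ref{ed_imagemvertice}: blowing up the vertex $q$ of $V$ maps the exceptional plane $E_q$ onto the line $L$, and the identification of each point of $L$ with the vertex of a cone of $\mQ^\prime$ is read off from the moving conics cut by $\X_q$ on $E_q$, each of which is the trace of a quadric of $\mQ$. That $L$ is a \emph{double} line follows by applying Lemma~\ref{pr_truque_secao_tgente} to a general plane through a point of $L$, exactly as in the proof of Lemma~\ref{ed_singularidades_sao_duplas}.

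Second, the three reducible members of $\mQ^\prime$ are extracted from the Segre symbol $[[1,1,1]]$. By the convention fixed at the start of the singular-case section, this symbol records the singular elements of the pencil of conics cut out by a general plane section of $\mQ$, asserting that there are exactly three singular conics, each a pair of lines joining two of the four base points. Each such pair lifts to a reducible cone of $\mQ$, consisting of two planes through $q$ whose combined intersection with $V$ is $C_4$, one pair for each of the three partitions of $\{r_1,r_2,r_3,r_4\}$ into two pairs. By Lemma~\ref{ed_imagemquadricas} each quadric of $\mQ$ maps isomorphically onto its image in $\mQ^\prime$, so these three reducible cones yield three pairs of planes in $\P^7$, i.e.\ the three reducible members of $\mQ^\prime$.

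The four cubic scrolls $S_3^x$ with directrix $L$ follow by applying Lemma~\ref{ed_cone_4retas_lema} to each of the four double lines $r_1,\dots,r_4$ of $\X$ in turn: that lemma directly yields, for each $r_i$, a cubic scroll $S_3^x$ through $x$ with directrix $L$, meeting every cone of $\mQ^\prime$ in a line of the ruling.

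The main obstacle is producing the four smooth quadric surfaces in $X$. My plan is to exhibit each as the image under $\sigma$ of a suitable surface in $\P^3$, chosen so that the fixed multiplicities of $\X$ along $C_4$, $\l$, $\l^\prime$, $p$ and at $q$ force the image in $\P^7$ to drop to degree exactly two (rather than three, as in the cubic scroll case), and so that one of its two rulings matches the family of ruling lines of the cones of $\mQ^\prime$ fibrewise. The count of four is expected to arise from the four natural combinatorial choices (e.g.\ singling out one of $r_1,\dots,r_4$), but the required blow-up calculation, parallel in spirit to the proof of Lemma~\ref{ed_cone_4retas_lema} but starting from a different configuration, together with the verification that the image is smooth and that its second ruling lies inside $X$, is the most delicate step.
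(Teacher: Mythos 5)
Most of your assembly is sound: the first part does follow from Lemma~\ref{ed_imagemvertice} (which already establishes that $L$ is a double line, so the extra appeal to Lemma~\ref{pr_truque_secao_tgente} is harmless but redundant), your derivation of the three reducible quadrics from the Segre symbol $[[1,1,1]]$ is equivalent to the paper's direct count of the six planes through pairs of the four lines, and the cubic scrolls are indeed just Lemma~\ref{ed_cone_4retas_lema} applied four times.

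The genuine gap is the four smooth quadric surfaces. You correctly guess that the count of four should come from singling out one of $r_1,\dots,r_4$, but you never identify the surfaces in $\P^3$ whose images they are, and you leave the "delicate blow-up calculation" entirely unperformed. In fact no blow-up is needed and the surfaces are not exotic: for each double line $r=r_i$ of $\X$, take the plane $\Pi$ spanned by $r$ and $\l$ (the line joining $p$ to the vertex $q$); these two lines meet at $q$, so $\Pi$ is well defined, and there are exactly four such planes. Since $\X$ has degree five, multiplicity one along $\l$, multiplicity two along $r$, multiplicity two at $p$ and four at $q$, one gets
\[ \X\cap \Pi=\l+2r+ \{\text{conics through } p \text{ and } q \}, \]
and the moving conics through the two distinct proper points $p,q$ map $\Pi$ birationally onto a smooth quadric surface of degree $2^2-1-1=2$. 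The ruling statement is then immediate: a line of $\Pi$ through $q$ lies on a cone of $\mQ$, hence its image is a line of the ruling of the corresponding cone of $\mQ^\prime$. Without pinning down these planes, the existence, smoothness, and the interaction with the rulings of the cones of $\mQ^\prime$ are all unestablished, so this part of the proposition remains unproved in your write-up.
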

\begin{proof}

Given four intersecting lines, there are six planes containing pairs of them. Then there are three reducible quadrics in $\mQ$, that is, three pairs of planes. Together with Lemma \ref{ed_imagemvertice}, this proves the first part. The last part is the content of Lemma \ref{ed_cone_4retas_lema}.

Let $\Pi$ be a plane containing $\l$ and a double line $r$ of $\X$. There are four of such planes.   Then:
\[ \X\cap \Pi=l+2r+ \{\text{conics through } p \text{ and } q \} \]
and $\Pi$ is mapped to a smooth quadric surface. A line in $\Pi$ through $p$ is mapped to a line intersecting   the cones of $\mQ^\prime$. A line through $q$ lies on a cone of $\mQ$, so it is mapped to a line in a cone of $\mQ^\prime$.
\\ \end{proof}

\subsubsection{(E16) A double line and two simple lines}
\label{ed_conedoubleline_sec}

This case is now described:

\begin{prop}
Let $X$ be the Bronowski threefold corresponding to case (E16). In this case,  the one-dimensional family of quadric cones  $\mQ^\prime$ contained in $X$ has two reducible quadrics, both are pairs of planes. One of them corresponds to the root with multiplicity two in the Segre symbol. 

Let $R$ be the singular line of this quadric and let $L$ be the line described by the vertices of cones of $\mQ^\prime$. Then $R$ and $L$ are double lines of $X$ and they intersect each other in one point.

There are three smooth quadric surfaces in $X$ that intersect the cones of $\mQ^\prime$ in lines of one of its rulings.

One of the four double lines of $\X$ in $\P^3$ is mapped to $R$. The other three lines are mapped to cubic scrolls $S_3^x$ through $x$ with directrix line $L$. A line of the ruling of each scroll lies on a quadric of $\mQ^\prime$. One of these scrolls contains $R$. 
\end{prop}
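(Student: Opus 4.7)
The plan is to adapt the analysis of case (E15) together with that of Proposition \ref{ed_doubleline} to the configuration at hand, in which $C_4=2r_0+r_1+r_2$ is three distinct lines through $q$ with $r_0$ doubled, so $\X$ has a fourth, infinitely near, ``double line'' $r_0^\prime$ along $r_0$.

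First I would enumerate the reducible members of $\mQ$. Since every element of $\mQ$ is a cone with vertex $q$, a general plane section of $\mQ$ is a pencil of conics tangent at the trace of $r_0$ and simple at the traces of $r_1,r_2$, hence of Segre symbol $[2,1]$. By degeneration from the generic $[1,1,1]$ case, such a pencil contains exactly two reducible members. Lifting to $\mQ$ one obtains the two reducible cones (a) $\mathrm{plane}(r_0,r_1)\cup\mathrm{plane}(r_0,r_2)$, singular along $r_0$ (the ``double root''), and (b) $T_{r_0}V\cup\mathrm{plane}(r_1,r_2)$, singular along another line through $q$. As in the proof of Proposition \ref{ed_conic} and the final paragraph of (E15), each plane containing components of $C_4$ is mapped by $\sigma$ to a plane, so both reducible cones descend to pairs of planes in $\mQ^\prime$, giving the two reducible quadrics of the statement.

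Next I would identify the images of the four double lines of $\X$. Applying the argument of Proposition \ref{ed_doubleline} to $r_0$ and $r_0^\prime$---after the preliminary blow up at $q$ of Lemma \ref{ed_imagemvertice}, after which $N_{r_0}$ is trivial and $E_{r_0}\cong\F_0$---the line $r_0$ maps to a line $R\subset X$ and $r_0^\prime$ maps to a cubic scroll $S_3^x$ in which $R$ is a ruling line. Since the double-root cone (a) is singular along $r_0$, its image under $\sigma$ is singular along $\sigma(r_0)=R$, identifying $R$ with the singular line of the ``2''-member of $\mQ^\prime$. By Lemma \ref{ed_cone_4retas_lema} the simple lines $r_1,r_2$ are mapped to cubic scrolls $S_3^x$ with directrix $L$ and with rulings lying in quadrics of $\mQ^\prime$, which provide the other two scrolls. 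That $R$ is a double line of $X$ follows from Proposition \ref{ed_singularidades} applied at a general point of the double component $r_0$ of $C_4$, whose tangent cone in $X$ has rank $3$ and hence yields a double curve; that $L$ is a double line is Lemma \ref{ed_imagemvertice}. The intersection $R\cap L$ reduces to a single point because the strict transform of $r_0$ after blowing up $q$ meets $E_q$ in exactly one point, whose $\sigma$-image lies simultaneously on $R$ and on $L$, and $E_q\to L$ is birational by Lemma \ref{ed_imagemvertice}.

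It remains to produce the three smooth quadrics. Exactly as in (E15), for each $i\in\{0,1,2\}$ the plane $\Pi_i$ through $\l$ and $r_i$ satisfies $\X\cap\Pi_i=\l+2r_i+\{\text{conics through }p\text{ and }q\}$, whose moving part defines a birational map onto a smooth quadric; together with $S_2^x$ and the two reducible pairs these exhaust $\mQ^\prime$. The principal obstacle in carrying this plan out is the careful bookkeeping when adapting Proposition \ref{ed_doubleline} in the presence of $\mult_q\X=4$: one must track how the blow up at $q$ modifies the normal bundle and base-point structure of $\X_{r_0}$ and $\X_{r_0^\prime}$, and confirm that no extra reducible or singular element of $\mQ^\prime$ appears from contractions of exceptional divisors whose images could otherwise masquerade as additional singular fibres of the one-dimensional family of cones.
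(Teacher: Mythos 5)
Your plan reproduces the paper's own proof essentially step for step: the same identification of the two reducible members of $\mQ$ (with the pair of planes singular along $r_0$ picked out as the double root via Proposition \ref{ed_singsegre} applied to the plane section), the same blow-ups at $q$, $r_0$ and $r_0'$ producing the line $R$ and the cubic scroll containing it as a ruling line, the same source for the doubleness of $R$ (the paper runs the plane-section computation through Lemma \ref{pr_truque_secao_tgente} directly; your appeal to Proposition \ref{ed_singularidades} is legitimate because the local analysis at a general point of a double component is insensitive to whether $V$ is a cone), and the same three planes spanned by $\l$ and $r_i$ for the smooth quadrics. Two imprecisions should be fixed. First, $E_q$ cannot map \emph{birationally} onto $L$: it is a plane contracted onto a line by a pencil of conics. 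What you need, and what the paper actually uses, is only that the single curve $E_q\cap E_{r_0}$ is contracted to one point lying on both $R$ and $L$; that $R\cap L$ is exactly one point is then automatic for two distinct lines. Second, the three smooth quadrics do not ``together with $S_2^x$ and the two reducible pairs exhaust $\mQ'$'': $\mQ'$ is the one-dimensional family of images of the pencil $\mQ$ (generically irreducible cones, of which $S_2^x$ and the two pairs of planes are members), whereas the smooth quadrics are images of the planes spanned by $\l$ and $r_i$, which are not members of $\mQ$ at all — they are transversal surfaces meeting each cone of $\mQ'$ in a line of its ruling. Neither slip affects the validity of the overall argument, but the second reflects a confusion about what the family $\mQ'$ is.
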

\begin{proof}

Let $r$ be the double line. This means that there is a line $r^\prime$ infinitely near to $r$ in the singular locus of $\X$. The special members of the pencil $\mQ$ are two pairs of planes. Each of these planes contains two lines of the singular locus of $\X$, so it is mapped to a plane in $X$. Let $Q\in\mQ$ be the pair of planes which is singular in $r$. It corresponds to the root with multiplicity two in the Segre symbol. This follows from Proposition \ref{ed_singsegre} applied to the associated pencil of conics in $\P^2$. 

The existence of the three smooth quadric surfaces in $X$ follows, as before, from considering the planes through $\l$ and a double line of $\X$. Since $r^\prime$ is infinitely near to $r$, there are only three of such planes.

The linear system $\X$ intersects  a general plane through $r$ in $2r$ and cubics with a double point in $q$. Now blow up the point $q$, and then the line $r$. So far, the situation is pretty much the same as in the simple line case, and $E_r\cong \F_0$. 

In $E_r$, the line $V_r=r^\prime$, of type $(0,1)$, is a fixed double line of $\X$. It follows then:
\[ \X_r\equiv (1,2)\equiv 2r^\prime + (1,0) \]
where $(1,0)$ is the moving part, mapping $E_r$ to a line $R$ in $X$. It maps each point of $r$ to a point in $R$.   Note that $E_q\cap E_r$ is mapped to a point, so $R$ intersects $L$ in one point.

The line $r^\prime$ is a complete intersection and:
\[ N_{r^\prime}=\mO_{\P^1}(0)\oplus\mO_{\P^1}(0) \]

In $E_{r^\prime}$, both $E_r$ and $V$ intersect in lines of type $(0,1)$. Hence $\X_{r^\prime}$ is of type $(1,2)$. It has a simple point in $E_q\cap E_{r^\prime}$, lying on the curve $C=V_q$. So it is mapped to a cubic scroll $S_3^x$, like the one in the simple line case.

The reducible quadric $Q$ does not contain the plane through $r$ and $r^\prime$. Then $Q_r\equiv (0,2)$ does not contain $V_r$ and $Q$ is mapped to a pair of planes intersecting $S_3^x$ in $R$.

It remains to prove that $R$ is a double line of $X$.

Let $q^\prime$ be a general point of $r$ and let $\Omega$ be a general plane through $q^\prime$. As it was remarked above, a point of $r$ is mapped to a point of $R$.

The intersection of $\X$ with $\Omega$ consists of degree five curves with a double point in $q^\prime$, a second double point $q^{\prime\prime}$ infinitely near to it, two other double points and two simple points. Blowing up $q^\prime$ and $q^{\prime\prime}$, the curve $\Omega_{q^\prime}$ has self intersection $-2$ in $\Omega$ and no intersection with $\X\cap\Omega$. Then $\Omega$ is mapped to a surface with a double point in $\sigma(q^\prime)\in R$. By Lemma \ref{pr_truque_secao_tgente}, it is a double point of $X$. Hence $R$ is a double line of $X$.
\\ \end{proof}

\subsubsection{(E17) Two double lines}

All the information needed here is detailed in the previous section.   Then we get:
\begin{prop}
Let $X$ be the Bronowski threefold corresponding to case (E17). The one-dimensional family of quadric cones  $\mQ^\prime$  in $X$  has a double plane and a pair of planes.

Let $L$ be the line described by the vertices of cones of $\mQ^\prime$, it is a double line of $X$. Two other lines, $R$ and $R^\prime$, which are contained in the double plane of $\mQ^\prime$, are double lines of $X$. These two lines intersect in a point of $L$.

There are two smooth quadric surfaces in $X$ that intersect the cones of $\mQ^\prime$ in lines of one of its rulings.

Two of the four double lines of $\X$ in $\P^3$ are mapped to $R$ and $R^\prime$. The other two lines, infinitely near to those, are mapped to cubic scrolls $S_3^x$ through $x$. A line of the ruling of each scroll lies on a quadric of $\mQ^\prime$ and the directrix line is $L$. One of these scrolls contains $R$, the other contains $R^\prime$. 
\end{prop}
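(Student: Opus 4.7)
The plan is to recycle verbatim the local analysis from Section \ref{ed_conedoubleline_sec} (case (E16)), applied separately to each of the two double lines of $\X$. Write $C_4 = 2r_1 + 2r_2$, with $r_i^\prime$ infinitely near to $r_i$ for $i=1,2$. Near each pair $(r_i, r_i^\prime)$ the base-locus structure of $\X$ is identical to that of the single pair $(r, r^\prime)$ treated in Section \ref{ed_conedoubleline_sec}, so the blow-up computations transport without change.

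First I would identify the reducible members of $\mQ$ from the Segre symbol $[[(11),1]]$: a general plane cuts $\mQ$ in a pencil of conics whose special members are a double line (from $(11)$) and a pair of lines (from $1$), so $\mQ$ itself contains a double plane and a pair of planes; by Lemma \ref{ed_imagemquadricas} (each non-vertex quadric of $\mQ$ is mapped isomorphically to a quadric of $\mQ^\prime$) the same reducible structure transfers to $\mQ^\prime$. Next, for each $i$, I apply the blow-up sequence of Section \ref{ed_conedoubleline_sec} at $q$, $r_i$, $r_i^\prime$: the computation $\X_{r_i}\equiv(1,2)\equiv 2r_i^\prime+(1,0)$ shows that $r_i$ maps to a line $R_i\subset X$; then $\X_{r_i^\prime}\equiv(1,2)$ with one simple base point lying on $V_{r_i^\prime}$ yields a cubic scroll $S_3^x$, which contains $R_i$ as a line of its ruling, has directrix $L$ (image of $E_q$, by Lemma \ref{ed_imagemvertice}), and whose remaining rulings lie on cones of $\mQ^\prime$. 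The fact that $R_i\cap L$ is a single point follows again because $E_q\cap E_{r_i}$ is a $(1,0)$-line in $E_{r_i}$ and hence is contracted to a point by the $(1,0)$ moving part of $\X_{r_i}$. The argument that $R_i$ is a \emph{double} line of $X$ is the very same Lemma \ref{pr_truque_secao_tgente} calculation performed in Section \ref{ed_conedoubleline_sec}, with a general plane $\Omega$ through a general point of $r_i$ acquiring a $(-2)$-curve contracted to a double point of $\sigma(\Omega)$.

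For the count of smooth quadrics in $X$, I would look at planes of $\P^3$ containing $\l$ and $r_i$; there are exactly two such planes (one per double line, since the infinitely near $r_i^\prime$ does not give a new plane), and each is mapped to a smooth quadric of $X$ cutting the cones of $\mQ^\prime$ in rulings, exactly as in case (E16). The reducible quadric identification comes by noting that the pair of planes in $\mQ$ must be the plane through $r_1$ and $r_2$ together with the plane through $p$ and $q$ (forced by the requirement that it contain two lines through $q$ of the singular locus), and the double plane must be the plane spanned by the two tangent directions associated with the Segre entry $(11)$, which contains both $r_1$ and $r_2$ (hence both $R$ and $R^\prime$) and also contains $L$.

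The main obstacle, the only step not literally covered by Section \ref{ed_conedoubleline_sec}, is to prove that $R\cap R^\prime$ lies on $L$. My plan is to exploit the double plane $\Pi$ of $\mQ^\prime$: both $R$ and $R^\prime$ lie in $\Pi$ (by the previous paragraph) and $L\subset\Pi$ (as the limiting position of the vertex as a smooth cone in $\mQ^\prime$ degenerates to the double plane), so $R,R^\prime,L$ are three lines in a common plane. I then argue that $R,R^\prime,L$ must be concurrent by pulling back to $E_q\cong\P^2$: the pencil of moving conics cut out by $\X_q$ has its four base points arranged as two pairs of infinitely near points (one pair per $r_i$), the image of $E_q$ is $L$, and the two reducible conics of this pencil—namely the two lines joining the two base clusters, each counted in the degeneration—are precisely what contracts to $R\cap L$ and $R^\prime\cap L$; the infinitely near configuration forces these two contractions to coincide, giving $R\cap L=R^\prime\cap L$. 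Verifying this coincidence carefully (most cleanly by tracking a standard quadratic transformation of $E_q$ that resolves the two infinitely near pairs) is the one genuinely new computation required.
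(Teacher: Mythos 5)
Your overall strategy --- running the (E16) analysis of Section \ref{ed_conedoubleline_sec} once for each of the two double lines --- is exactly what the paper does (its ``proof'' of (E17) is a one-line deferral to the previous subsection), and the parts of your argument that are literal transports are fine: the images of $r_i$ and $r_i^\prime$, the doubleness of $R$ and $R^\prime$ via Lemma \ref{pr_truque_secao_tgente}, and the count of two smooth quadrics coming from the planes spanned by $\l$ and $r_i$. The problems are concentrated precisely in the two places where (E17) genuinely differs from (E16). First, your identification of the reducible members of $\mQ$ is wrong: a member of $\mQ$ must cut $V$ in $C_4=2r_1+2r_2$, so the pair of planes is $T_{r_1}V\cup T_{r_2}V$, the union of the two tangent planes of the cone $V$ along $r_1$ and $r_2$ (each cutting $V$ in a doubled line), while the double plane is $2\Sigma$ with $\Sigma=\langle r_1,r_2\rangle$. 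Your ``plane through $r_1$ and $r_2$ together with the plane through $p$ and $q$'' is not a quadric of $\mQ$ (and $p,q$ span only a line; members of $\mQ$ need not contain $p$ at all). The conclusion you want, $R,R^\prime\subset\sigma(\Sigma)$, does survive --- $\Sigma$ contains $r_1,r_2$ and $\Sigma_{r_i}\equiv(0,1)$ meets the moving part $(1,0)$ of $\X_{r_i}$ once, hence maps isomorphically to $R_i$ --- but not by your route.

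Second, and more seriously, the claim $L\subset\sigma(\Sigma)$ is false: by Theorem \ref{ed_the_classification} the $\P^3$'s spanned by the members of $\mQ^\prime$ are pairwise disjoint, and $L$ meets every member in its vertex, so $L$ meets the span of the double plane in exactly one point. Your ``three coplanar lines'' configuration therefore does not exist, and the concurrency argument built on it collapses; the limiting position of the vertex is a single point of the double plane, not all of $L$. The claim $R\cap R^\prime\in L$ should instead be proved directly on $E_q$. The moving part of $\X_q$ is the pencil of conics through $(r_1)_q\prec(r_1^\prime)_q$ and $(r_2)_q\prec(r_2^\prime)_q$, whose two degenerate members are the pair of tangent lines $(T_{r_1}V)_q+(T_{r_2}V)_q$ and the single double line $2\Sigma_q$ --- there is only one line joining the two base clusters, not two as you write. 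After resolving the base points, the $(-2)$-curves lying over $(r_1)_q$ and over $(r_2)_q$ (which are $E_q\cap E_{r_1}$ and $E_q\cap E_{r_2}$) both meet the strict transform of $\Sigma_q$ and hence both lie in the fibre of the map $E_q\tor L$ over the point corresponding to the member $2\Sigma_q$; so they are contracted to the \emph{same} point of $L$. Since that point is $R\cap L$ and also $R^\prime\cap L$, the two distinct lines $R,R^\prime$ meet exactly there, which is the missing step. With this replacement the proposition is proved; as written, your proof of the intersection claim does not go through.
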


\subsubsection{(E18) A triple and a simple line}
\label{ed_conetripleline_sec}

\begin{prop}
Let $X$ be the Bronowski threefold corresponding to case (E18). The only reducible quadric in $\mQ^\prime$ is a pair of planes. 

Let $R$ be the intersection of these two planes and let $L$ be the line described by the vertices of cones of $\mQ^\prime$. Then $X$ has multiplicity two in $L$, $R$ and in a line $R^\prime$ infinitely near to $R$.

There are two smooth quadric surfaces in $X$ that intersect the cones of $\mQ^\prime$ in lines of one of its rulings.

Two infinitely near double lines of $\X$ in $\P^3$ are mapped to $R$ and $R^\prime$. The other two lines are mapped to cubic scrolls $S_3^x$ through $x$. A line of the ruling of each scroll lies on a quadric of $\mQ^\prime$ and the directrix line is $L$. One of these scrolls contains $R$ and $R^\prime$. 
\end{prop}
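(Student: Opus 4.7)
The plan is to follow the pattern established in cases (E15)--(E17), treating (E18) as an iteration of the double-line analysis of Section \ref{ed_conedoubleline_sec}. The base locus of $\X$ inside $V$ consists of four double lines through $q$, arranged as a chain $r_1\prec r_2\prec r_3$ (accounting for the triple line) together with one simple line $r_4$. I would blow up $q$ first, using Lemma \ref{ed_imagemvertice} to produce the double line $L\subset X$, the locus of vertices of the cones in $\mQ^\prime$, and then handle $r_4$ exactly as in Lemma \ref{ed_cone_4retas_lema} to produce a cubic scroll $S_3^x$ through $x$ with directrix $L$ and a line of its ruling inside each cone of $\mQ^\prime$.

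The essentially new content is the cascade of three successive blow ups of $r_1,r_2,r_3$. The first two stages apply the argument of Section \ref{ed_conedoubleline_sec} verbatim: at $r_1$ the curve $V_{r_1}=r_2$ is a fixed double component of $\X_{r_1}$, so $\X_{r_1}\equiv (1,2)\equiv 2r_2+(1,0)$ and $E_{r_1}$ contracts to a line $R$; repeating with $r_2$ and its further infinitely near double line $r_3$ yields a line $R^\prime$, infinitely near to $R$ because $E_{r_2}$ is infinitely near to $E_{r_1}$. At the third stage no further infinitely near double line remains, so the analysis reverts to that of Lemma \ref{ed_cone_4retas_lema}: $\X_{r_3}\equiv (1,2)$ has a single simple base point at $E_q\cap E_{r_3}$ on the conic $C=V_q$, and $E_{r_3}$ is mapped to a second cubic scroll $S_3^x$ with directrix $L$. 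Since $E_{r_3}$ meets $E_{r_1}$ and $E_{r_2}$ along fibers of their $\F_0$-rulings, this scroll contains both $R$ and $R^\prime$.

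The remaining assertions follow in the same style as the earlier cases. The Segre symbol $[[3]]$ forces a unique reducible element $Q\in\mQ$, a pair of planes tangent to $V$ along the triple line whose singular line is $R$; its image in $\P^7$ is the unique reducible quadric of $\mQ^\prime$, and by the computation of Section \ref{ed_conedoubleline_sec} it meets the cubic scroll coming from $r_3$ along $R$. The two smooth quadrics in $X$ arise as the images of the two planes through $\l$ and a distinct line of $C_4$; the infinitely near chain $r_1\prec r_2\prec r_3$ contributes only one such plane, while $r_4$ contributes the other, giving exactly two. Finally, that $R$ and $R^\prime$ are double lines of $X$ follows by applying Lemma \ref{pr_truque_secao_tgente} to a general plane through a general point of $r_1$ or $r_2$, precisely as at the end of Section \ref{ed_conedoubleline_sec}.

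The main obstacle is the bookkeeping in the cascade: one must confirm that the iterated blow up genuinely produces $R^\prime$ as a line infinitely near to $R$ rather than as a distinct line meeting $R$, and that both $R$ and $R^\prime$ lie on the cubic scroll coming from $r_3$. This reduces to checking at each step that $E_{r_i}$ meets $E_{r_{i-1}}$ in a single fiber of the $\F_0$-ruling whose image gives the direction along which $R^\prime$ is infinitely near to $R$; this is a normal-bundle calculation of exactly the same type already carried out in Section \ref{ed_conedoubleline_sec} and in the preceding case (E17).
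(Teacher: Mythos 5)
Your proposal follows essentially the same route as the paper: blow up $q$ to obtain $L$, run the cascade of blow ups along the chain $r\prec r^\prime\prec r^{\prime\prime}$ to produce $R$, $R^\prime$ and the cubic scroll (with the simple line treated as in (E15)), identify the pair of planes via the Segre symbol, and count the planes through $\l$ for the two smooth quadrics. The only small imprecision is in the multiplicity check: since $r_2$ is infinitely near it has no proper points in $\P^3$ to take planes through, so one should instead take a general plane $\Omega$ through a general point of $r$ itself and track the chain of three infinitely near double points of $\X\cap\Omega$, whose blow up yields two intersecting $(-2)$-curves and hence, by Lemma \ref{pr_truque_secao_tgente}, both $R$ and $R^\prime$ as double lines --- which is exactly how the paper argues and is consistent with the rest of your computation.
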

\begin{proof}
Let $r$ be the triple line. The situation is very similar to the previous cases. There is a line $r^\prime$ infinitely near to $r$ and a third line $r^{\prime\prime}$ infinitely near to $r^\prime$. These three lines are double lines of $\X$.

Blowing up $q$ and $r$, the moving part of $\X_r$ maps $E_r$ to a line $R$. Blowing up $r^\prime$, the moving part of $\X_{r^\prime}$ maps $E_r^\prime$ to a line $R^\prime$. This line is infinitely near to $R$, since $E_r$ and $E_{r^\prime}$ intersect in a line of type $(1,0)$, while the moving part of $\X$ cut these surfaces in $(0,1)$-lines. 

After the blow up at $r^{\prime\prime}$ the surface $E_{r^{\prime\prime}}$ is mapped to a cubic scroll $S_3^x$. It intersects the one pair of planes of $\mQ^\prime$ in the line $R$.

To compute the multiplicity of $X$ in $R$ and $R^\prime$, let $\Omega$ be a general plane through $q^\prime$, a general point of $r$. Then $\X\cap \Omega$ has a double point in $q^\prime=r\cap \Omega$, a second double point $q^{\prime\prime}$ infinitely near to it (corresponding to $r^\prime$) and a third double point infinitely near to $q^{\prime\prime}$, corresponding to $r^{\prime\prime}$. After the blow up at these three points, there are two intersecting curves with self-intersection $-2$ in $\Omega$. These are mapped to two infinitely near double points in the image of $\Omega$. By Lemma \ref{pr_truque_secao_tgente}, these are double points of $X$. This proves that $R$ and $R^\prime$ are double lines of $X$.

The quadric of $\mQ^\prime$ containing $R$ is the image of the quadric of $\mQ$ which is singular in $r$, namely, the pair of planes. The images of the planes spanned by $\l$ and $r$ and by $\l$ and the other double line of $\X$ are the two smooth quadric surfaces mentioned in the proposition.
\\ \end{proof}

\subsubsection{(E19) A line with multiplicity four}

This case can be easily described with what was studied before.   Therefore:

\begin{prop}
Let $X$ be the Bronowski threefold corresponding to case (E19), in which the only reducible quadric in $\mQ^\prime$ is a double plane. 

Let $L$ be the line described by the vertices of cones of $\mQ^\prime$. Then $L$ is a double line of $X$. There is another double line $R$ in the double plane of $\mQ^\prime$ a third double line $R^\prime$ infinitely near to it and a fourth double line $R^{\prime\prime}$ infinitely near to $R^\prime$.

There is one smooth quadric surface in $X$ that intersects the cones of $\mQ^\prime$ in lines of one of its rulings.

Three double lines of $\X$ in $\P^3$ are mapped to $R$, $R^\prime$ and $R^{\prime\prime}$. The other line is mapped to a cubic scroll $S_3^x$ through $x$. A line of its ruling lies on a quadric of $\mQ^\prime$ and the directrix line is $L$. This scroll contains $R$, $R^\prime$ and $R^{\prime\prime}$. 
\end{prop}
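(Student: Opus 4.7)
The proof will mimic cases (E16)--(E18), extending the inductive blow-up to the full chain of four infinitely near lines. Let $r$ be the quadruple line in $C_4$, and let $r^\prime, r^{\prime\prime}, r^{\prime\prime\prime}$ denote the three successive infinitely near lines that lie in the base locus of $\X$. First I would apply Lemma \ref{ed_imagemvertice} to the vertex $q$ of $V$: blowing up $q$ maps $E_q$ to the double line $L$ traced by the vertices of the cones of $\mQ^\prime$.

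Next I would blow up the chain $r, r^\prime, r^{\prime\prime}$ inductively, imitating Section \ref{ed_conetripleline_sec}. After the preceding blow-ups each line has normal bundle $\mO_{\P^1}(0)\oplus\mO_{\P^1}(0)$, so the exceptional surface is $\F_0$; on each of them the next infinitely near line is a fixed double $(0,1)$-component of a linear system $\X_\bullet \equiv (1,2)$, leaving a moving part of class $(1,0)$. This maps each such exceptional divisor to a line; call these lines $R, R^\prime, R^{\prime\prime}$. Since consecutive exceptional divisors meet along a $(1,0)$-curve while the moving parts have class $(0,1)$, the three lines $R, R^\prime, R^{\prime\prime}$ are pairwise infinitely near in $X$. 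The fourth blow-up, at $r^{\prime\prime\prime}$, has no further infinitely near base line, and the computation becomes identical to that in Lemma \ref{ed_cone_4retas_lema}: $\X_{r^{\prime\prime\prime}} \equiv (1,2)$ has one simple base point on $V_{r^{\prime\prime\prime}} \equiv (1,1)$, which contracts $V_{r^{\prime\prime\prime}}$ to $x$ and maps $E_{r^{\prime\prime\prime}}$ to a cubic scroll $S_3^x \cong S(1,2)$ through $x$, with directrix $L$ (coming from $E_q \cap E_{r^{\prime\prime\prime}}$) and ruling meeting each cone of $\mQ^\prime$ in one line. In particular $S_3^x$ contains $R, R^\prime, R^{\prime\prime}$.

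The multiplicities of $X$ along $R, R^\prime, R^{\prime\prime}$ are detected via Lemma \ref{pr_truque_secao_tgente}, exactly as in Section \ref{ed_conetripleline_sec}. Pick a general point $q^\prime \in r$ and a general plane $\Omega$ through it; then $\X \cap \Omega$ is a plane quintic with a chain of four infinitely near double points at $q^\prime$, one from each of $r, r^\prime, r^{\prime\prime}, r^{\prime\prime\prime}$. Resolving the chain produces three consecutive $(-2)$-curves disjoint from the strict transform of $\X \cap \Omega$, which contract to three infinitely near double points of the image. Lemma \ref{pr_truque_secao_tgente} then yields the three infinitely near double lines $R, R^\prime, R^{\prime\prime}$ of $X$; the double line $L$ is already given by Lemma \ref{ed_imagemvertice}.

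Finally, the Segre symbol $[[(21)]]$ forces the associated plane-section pencil of conics to have a unique singular member, a double line, so $\mQ$ contains a unique reducible quadric: the double plane tangent to $V$ along $r$. Its image is the double plane in $\mQ^\prime$, which by the previous paragraph contains $R, R^\prime, R^{\prime\prime}$. The unique smooth quadric surface in $X$ is the image of the plane $\Pi$ spanned by $\l$ and $r$: the restriction $\X \cap \Pi$ decomposes as $\l + 2r$ plus a pencil of residual conics through $p$ and $q$, so $\Pi$ maps to a smooth quadric meeting each cone of $\mQ^\prime$ along a single ruling line, matching the analysis in Section \ref{ed_conetripleline_sec}. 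The main obstacle I anticipate is simply the bookkeeping: ensuring that the iterated blow-up behaves as expected through the full chain of four lines and that no unexpected base points or fixed components emerge after the third step. This is ultimately forced by hypothesis (H), which pins down $d = 5$ and rules out fixed parts of $\Xpp$, but requires a careful check of the normal bundles and intersection numbers on the resulting tower of exceptional divisors.
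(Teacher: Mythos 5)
Your proposal is correct and is essentially the argument the paper intends: the paper gives no proof for (E19) at all, remarking only that the case "can be easily described with what was studied before," and your extension of the (E15)--(E18) blow-up chain, the $A_3$-type plane-section argument via Lemma \ref{pr_truque_secao_tgente}, and the identification of the unique plane spanned by $\l$ and $r$ as the source of the one smooth quadric is exactly that omitted argument. The only blemishes are notational (e.g.\ $V_{r^{\prime\prime\prime}}$ should be of type $(0,1)$ rather than $(1,1)$, so that the single simple base point accounts for its contraction to $x$, and your $(1,0)$/$(0,1)$ labels for the moving part flip once mid-argument), and these do not affect the conclusion.
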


\section{The classification}

Let us now put together the varieties studied in this chapter.

\begin{theorem}\label{ed_the_classification}
Let $X$ be a Bronowski threefold having as fundamental surface a quadric $V$. Suppose that the linear system $\X$ defining the inverse of a general tangential projection of $X$ has degree five. 

Then $\X$ has multiplicity two in the base locus $C_4$ of a pencil of quadrics $\mQ$ and in a point $p\in V$, and multiplicity one in the two (possibly infinitely near) lines of $V$ through $p$. 

The threefold $X$ has the following properties:
\begin{itemize}
\item[(i)] It is an OADP variety;
\item[(ii)] It is the residual intersection of the Segre Embedding of $\P^1\times\P^3$ with a quadric of $\P^7$ containing a $\P^3$ of the ruling;
\item[(iii)] The singularities of $X$ have multiplicity two;
\item[(iv)] The pencil $\mQ$ is mapped to a one-dimensional family of quadric surfaces $\mQ^\prime$ in $X$, which has general smooth member if and only if $V$ is smooth;
\item[(v)]  If $V$ is singular, then $X$ has a double curve $L$ which is described by the vertices of cones of $\mQ^\prime$. If the cones of $\mQ$ have the same vertex, this curve is a line; otherwise it is a conic and $X$ is of type (E14); 
\item[(vi)] Apart from $L$, each  component of the singular locus  of $X$ lies on the singular locus of a quadric of $\mQ^\prime$ corresponding to a root of multiplicity greater than one in the Segre symbol.
\end{itemize}

In Table \ref{ed_tabela}, the singularities of $X$ are described, associated to each possible configuration of $C_4$. Varieties (E1) to (E13) correspond to cases where $V$ is smooth.  In cases (E15) to (E19),  the Segre symbol (with double brackets) refers to the pencil of conics cut by $\mQ$ on a general plane of $\P^3$. 
\end{theorem}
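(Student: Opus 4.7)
The first paragraph of the statement is exactly Lemma \ref{ed_baselocus}, so only items (i)--(vi) and Table \ref{ed_tabela} require argument. The plan is to reduce these to previously proved results together with one global geometric construction, namely the rational normal scroll of linear spans swept out by the family $\mQ^\prime$.

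I would first dispatch the easier items. Item (iv) is Lemma \ref{ed_imagemquadricas} together with the observation that $\sigma$ restricts to an isomorphism on each member of $\mQ$, so that the Segre symbols of $\mQ$ and $\mQ^\prime$ coincide and $\mQ^\prime$ has a smooth general member if and only if $V$ does. Item (v) is Lemma \ref{ed_imagemvertice} in the case where the cones of $\mQ$ share a common vertex, and Proposition \ref{ed_vertice_movel_prop} in the moving-vertex case (E14), where $L$ turns out to be a conic. Item (iii) is Lemma \ref{ed_singularidades_sao_duplas} and item (vi) is Corollary \ref{ed_singsegreX}. Table \ref{ed_tabela} is then obtained by collating the per-case analyses of Propositions \ref{ed_ratquartic}--\ref{ed_doubleline} for cases (E1)--(E13) and of the propositions in the subsections devoted to (E14)--(E19).

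The core of the proof is item (ii), from which I intend to deduce (i). First I would verify that $X$ is linearly normal: by Remark \ref{pr_oadpLN} it suffices to check that $\X$ is relatively complete, and this reduces to a dimension count on the explicit equations written after Lemma \ref{ed_baselocus}. The family $\mQ^\prime$ is then a pencil of linearly equivalent divisors on $X$, each of whose members spans a $\P^3$ in $\P^7$, so Theorem \ref{pr_contidonumscroll} applies and produces a rational normal scroll $S$ of dimension four containing $X$. A degree computation, using $\deg X = 7$ and the fact that each $D_\lambda\in\mQ^\prime$ is a hyperplane section of its fiber $\P^3$, identifies $S$ with $S(1,1,1,1)$, the Segre embedding of $\P^1\times\P^3$ of degree four in $\P^7$. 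Inside $S$, the divisor $X$ has degree seven, which forces it to be cut residually to a single $\P^3$ of the ruling in the intersection of $S$ with a quadric of $\P^7$; this ruling $\P^3$ is the span of $S_2^x$.

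Item (i) then follows from (ii) by the argument used by Edge and Babbage (see Example \ref{pr_exemplo_edge}): the counting of secants through a general point of $\P^7$ takes place entirely on the ambient scroll $S\cong\P^1\times\P^3$, is independent of the smoothness of $X$, and yields exactly one secant. The main obstacle in the plan will be step (ii), specifically the verification of relative completeness of $\X$ and the identification of the cohomology class of $X$ on $S$ that pins down the ruling $\P^3$ to be removed; once these are settled, the remainder of the proof is bookkeeping over the chapter.
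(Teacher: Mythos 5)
Your overall architecture coincides with the paper's: items (iii)--(vi) and the table are collated from the earlier per-case results, and the substance is item (ii) via linear normality, Theorem \ref{pr_contidonumscroll}, and the scroll swept out by the spans of the quadrics of $\mQ^\prime$, with (i) deduced from (ii) at the end. However, there is one genuine gap in your step (ii). You claim that ``a degree computation, using $\deg X=7$ and the fact that each $D_\lambda$ is a hyperplane section of its fiber $\P^3$, identifies $S$ with $S(1,1,1,1)$.'' This does not work: every four-dimensional rational normal scroll spanning $\P^7$ has degree $7-3=4$, so the degree computation cannot distinguish $S(1,1,1,1)$ from the cones $S(0,1,1,2)$, $S(0,0,2,2)$, $S(0,0,1,3)$ or $S(0,0,0,4)$. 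What singles out the Segre embedding of $\P^1\times\P^3$ among these is precisely that the $\P^3$'s of the ruling are pairwise disjoint, i.e.\ that the scroll has no vertex, and this requires a separate argument. The paper supplies it: if two spans met, two quadrics of $\mQ^\prime$ not through $x$ would span only a $\P^6$, hence some member of $\X$ would contain two quadrics of $\mQ$ other than $V$; such a quintic surface would have to be the union of those two quadrics and the plane $\langle \l,\l^\prime\rangle$, which has multiplicity only one at $p$ and so cannot lie in $\X$ by Lemma \ref{ed_baselocus}. Without this (or an equivalent) disjointness argument your identification of the ambient scroll, and hence items (ii) and (i), is not established.

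A second, smaller point: you defer the verification that $\X$ is relatively complete to ``a dimension count on the explicit equations,'' which is reasonable and consistent with the paper's (terse) treatment, but note that the equations given after Lemma \ref{ed_baselocus} exhibit eight parameters and you must check that they are independent and that no further quintics with the prescribed base scheme exist; this is where the hypothesis on the base locus actually enters. Once the disjointness of the rulings is supplied, the rest of your plan (degree-seven divisor on $\P^1\times\P^3$ meeting a general ruling $\P^3$ in a quadric, hence residual to a ruling $\P^3$ in a quadric section, and the Edge--Babbage secant count for (i)) matches the paper's proof.
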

\begin{proof}
The Segre symbol is easily computable in each case,   these computations can be found in \cite[p. 305]{hodge}. Here, it is defined and explained in Section \ref{ed_segre_sec}.

Besides that, Only items $(i)$ and $(ii)$ are in need of a proof, the other results were explained in the previous Sections. 

In all the examples that were studied, the linear system $\X$ is \emph{relatively complete}, so $X$ is linearly normal (cf. Remark \ref{pr_oadpLN}). Then, by Theorem \ref{pr_contidonumscroll}, $X$ lies on a rational normal scroll $Y$, described by the spans of quadrics in $\mQ^\prime$. It is a four-dimensional scroll in $\P^7$ containing a one dimensional family of 3-spaces. 

These 3-spaces are disjoint. Indeed, if they weren't, there would be two quadrics of $\mQ^\prime$ spanning a $\P^6$, none of them containing the general point $x$. This implies that there is a surface of $\X$ containing two quadrics of $\mQ$ different from $V$. Such surface should then be the union of these two quadrics and the plane containing $\l$ and $\l^\prime$. However, this surface has  multiplicity one in $p$, so it cannot belong to $\X$, a contradiction.

Hence $Y$ is the Segre embedding of $\P^1\times \P^3$.  Item $(ii)$ now follows from the fact that $X$ has degree seven and is a divisor on $Y$ that intersects a general $\P^3$ of the ruling in a quadric. 

Now, $(i)$ follows from $(ii)$, using the same argument of \cite[Proposition 2.5]{cmr}.
\\ \end{proof}

\begin{table}
\begin{center}
\begin{tabular}{|c|c|c|c|}
\hline  String & $C_4$ & Segre symbol  & Singularities of $X$  \\ 
\hline (E1) & smooth quartic & $[1,1,1,1]$ & none \\ 
\hline (E2) & nodal quartic &  $[2,1,1]$  & one point \\ 
\hline (E3) & cuspidal quartic &  $[3,1]$ & one point \\ 
\hline (E4) & twisted cubic and  & $[2,2]$ & two points \\ 
& a transversal line  & &\\
\hline (E5) & twisted cubic and & $[4]$  &  two infinitely \\ 
& a tangent line  & & near points \\
\hline (E6) & two transversal conics  & $[(11),1,1]$ & two points\\ 
\hline (E7) & two tangent conics  & $[(21),1]$  & two infinitely \\ 
&&& near points \\
\hline (E8) & one conic and two lines  & $[(11),2]$  & three points \\ 
&	intersecting it in two points && \\
\hline (E9) & one conic and two lines & $[(31)]$ & one point and three  \\
& intersecting it in one point  && infinitely near to it\\ 
\hline (E10) & four lines &   $[(11),(11)]$  & four points\\ 
\hline (E11) & one double and  & $[(22)]$  & one line \\ 
&two simple lines  && \\
\hline (E12) & two double lines  & $[(211)]$ & two lines \\ 
\hline (E13) & one double conic & $[(111),1]$ & one conic \\ 
\hline\hline (E14) & a double line and a conic & -- & one conic  \\ 
\hline (E15) & four lines & $[[1,1,1]]$ & one line \\ 
\hline (E16) & one double and two  & $[[2,1]]$ & two lines \\ 
& simple lines &&  \\
\hline (E17) & two double lines & $[[(11),1]]$ & three lines\\ 
\hline (E18) & one triple and  & $[[3]]$ & three lines: $L,$\\ 
& one simple lines && $R\prec R^\prime$ \\ 
\hline (E19) & one  line with  & $[[(21)]]$ & four lines: $L,$\\ 
& multiplicity four && $R\prec R^\prime\prec R^{\prime\prime}$ \\ 
\hline 
\end{tabular} 
\caption{Varieties satisfying hypothesis (H) and having quadratic fundamental surface.}\label{ed_tabela}
\end{center}
\end{table}

\chapter{The cubic case}
\label{sl_chapter}

In this chapter we consider the case in which the fundamental surface $V$ is a cubic. This is the situation of the degree eight scroll in lines, described in Example \ref{pr_exemplo_sl}. According to hypothesis (H), $\X$ has degree seven. We'll also suppose that the base locus of $\X$ has pure dimension one (see Proposition \ref{sl_baselocus}).

\section{Base locus of $\X$}
\label{sl_blocus_sec}

Since $V$ is a cubic surface, the map $\bar{\tau}:E\cong \P^2\tor V\subset \P^3$ is defined by a linear system of conics with one base point. It is not complete, since the complete linear system maps to the scroll $S(1,2)\subset \P^4$. Then $V$ is a non normal cubic surface, a projection of $S(1,2)$ from an external point.

The proof of the following proposition can be found in \cite{dolga_classical}: 
\begin{prop}\label{sl_projecoes_de_S(1,2)}
Let $V\subset \P^3$ be a projection of the cubic scroll $S(1,2)$ from an external point and let $r\subset V$ be the projection of the directrix line of the scroll. Then $V$ has a double line $s$, projection of a conic of $S(1,2)$ and it is projectively equivalent to one of the following surfaces:
\begin{align*}
(i) &\quad x_0^2x_2 + x_1^2x_3=0 \\
(ii) &\quad x_0^2x_3+x_0x_1x_2+x_1^3=0
\end{align*}

Moreover, in $(i)$ (the general case), $r$ and $s$ are skew lines. In $(ii)$, the conic of $S(1,2)$ that is projected to the double line $s$ is the union of the directrix line and a line of the ruling of the scroll, so $r$ is infinitely near to $s$. The surface $(ii)$ is called \emph{Cayley's ruled cubic}.  
\end{prop}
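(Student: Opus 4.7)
The idea is to realize $S(1,2)$ as the Hirzebruch surface $\F_1$ embedded in $\P^4$ by $|e_1+2f_1|$, where $e_1$ is the directrix $L_1$ and $f_1$ is a fiber, and to reduce the classification of $V$ to that of the conic $D\subset S(1,2)$ projected two-to-one onto the double line of $V$. Conics on $S(1,2)$ form the linear system $|e_1+f_1|$ of projective dimension $2$: the irreducible members are smooth plane conics, each spanning a $\P^2\subset\P^4$, and the reducible members are unions $L_1+f$ (also spanning a $\P^2$, since $L_1\cap f$ is a single point).

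First I would establish that the non-normal locus of $V$ is a line cut out by a conic on $S(1,2)$. Since $\deg S(1,2)=3$ and $p$ is external, $V$ is a cubic surface. A general hyperplane $H$ through $p$ meets $S(1,2)$ in a smooth twisted cubic $C_H$, whose projection from $p$ to a $\P^2$ is a plane cubic with exactly one node (coming from the unique secant to $C_H$ through $p$); as $H$ varies, the nodes trace a curve in $\P^3$ that meets a general plane in one point, so the non-normal locus of $V$ is a line $s$. Its preimage on $S(1,2)$ has degree $2$; since $p$ lies on no fiber of $S(1,2)$, this preimage cannot decompose as a sum of fibers, hence is a conic $D\in|e_1+f_1|$.

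Next I would split into cases according to whether $D$ is irreducible. Using the parametrization $(\lambda,\mu,t)\mapsto[\lambda:\lambda t:\mu:\mu t:\mu t^2]$ of $S(1,2)$, with ideal of $2\times 2$-minors $(y_0y_3-y_1y_2,\,y_0y_4-y_1y_3,\,y_2y_4-y_3^2)$, in case (i) I project from $p=[0:0:0:1:0]$; eliminating $y_3$ from the first two minors gives $y_0^2y_4-y_1^2y_2=0$, which the substitution $(x_0,x_1,x_2,x_3)=(y_0,y_1,y_4,-y_2)$ converts into $x_0^2x_2+x_1^2x_3=0$. In these coordinates $r=\{x_2=x_3=0\}$ is manifestly skew to $s=\{x_0=x_1=0\}$. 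In case (ii), choose $p\in\langle L_1,f_0\rangle\setminus S(1,2)$ for $f_0$ the fiber over $t=0$; now $L_1$ and $f_0$ both project onto the same line, and an analogous elimination produces a cubic projectively equivalent to $x_0^2x_3+x_0x_1x_2+x_1^3=0$. Set-theoretically $r$ coincides with $s$ here; to justify $r\prec s$, blow up $s$ in $\P^3$ and observe that the strict transform of $V$ meets the exceptional divisor in two components corresponding to the two distinct directions in $\langle L_1,f_0\rangle$, with $r$ being the $L_1$-component. The main obstacle is this last step: executing the projection in case (ii) concretely and exhibiting the projective change of coordinates bringing the cubic into Cayley's form, together with the blow-up computation confirming that $r$ is infinitely near to $s$ rather than equal to it.
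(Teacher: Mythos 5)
The paper gives no proof of this proposition --- it only cites \cite{dolga_classical} --- so your argument is necessarily a different route, and in outline it is the right one. I checked the computations. In case $(i)$ the elimination of $y_3$ from $y_0y_3-y_1y_2$ and $y_0y_4-y_1y_3$ does yield $y_0^2y_4-y_1^2y_2=0$, and your substitution turns this into $x_0^2x_2+x_1^2x_3=0$ with $r=\{x_2=x_3=0\}$ skew to the singular line $\{x_0=x_1=0\}$. The case $(ii)$ elimination that you flag as the main obstacle also goes through: projecting from $p=[0:1:1:0:0]\in\langle L_1,f_0\rangle$ via $(x_0,x_1,x_2,x_3)=(y_0,\,y_1-y_2,\,y_3,\,y_4)$ one gets $t=x_3/x_2$, $\mu=x_2^2/x_3$, hence the equation $x_0x_3^2-x_1x_2x_3-x_2^3=0$, which the coordinate change $(x_0,x_1,x_2,x_3)\mapsto(x_3,-x_2,x_1,x_0)$ takes to Cayley's form; both $L_1$ and $f_0$ land on the singular line $\{x_2=x_3=0\}$ as expected. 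For the infinitely-near statement you do not need a new computation: the paper carries it out explicitly in Lemma \ref{sl_r_inf_prox_s}, where blowing up $s$ on Cayley's cubic exhibits $V_s=r+s^\prime$ with $r$ the $(1,1)$-component, exactly the blow-up you propose. Also note that the exhaustiveness of your dichotomy on $D$ is cleaner than the reason you give: two distinct fibers cannot both project onto one line because they are skew, and no class $2f_1$ curve can map two-to-one onto a line; more structurally, $S(1,2)$ is cut out by quadrics, so it has no trisecants and the double curve really is a conic.

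The one genuine gap is not the elimination but the reduction to exactly two normal forms. As written, you exhibit one projection landing on form $(i)$ and one landing on form $(ii)$; to conclude that \emph{every} external point gives a surface projectively equivalent to one of these, you must also show that the projective equivalence class of the image depends only on whether $D$ is irreducible. Concretely, you need $\mathrm{Aut}(S(1,2))$ to act transitively on pairs $(D,p)$ with $D$ an irreducible conic and $p\in\langle D\rangle\setminus D$, and likewise on pairs with $D=L_1+f$ and $p\in\langle L_1,f\rangle\setminus(L_1\cup f)$. Both statements are true --- for $(i)$ it reduces to the transitivity of the stabilizer of a smooth conic in $\mathrm{PGL}_3$ on the complement of the conic (identify a point off the conic with the unordered pair of contact points of the two tangents from it), and for $(ii)$ to the transitivity of the stabilizer of the ordered pair of lines on the complement of their union --- but neither is addressed in your write-up, and without them you have only shown that both normal forms occur, not that they exhaust the classification.
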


When computing self-intersections of curves in $V$, we'll consider its normalization $S(1,2)$.

Let $p$ be the base point of $\II_{X,x}$. Then $\bar{\tau}$ maps lines through $p$ to lines of the ruling of $V$. The point $p$ is blown up and mapped to $r$. The image of a general line is a conic. But one of these lines, name it $\hat{s}$, is mapped to the double line $s$.

If $V$ is a general projection of $S(1,2)$, $\hat{s}$ does not contain $p$. Then the map: 
\[ \bar{\tau}\vert_{\hat{s}} : \hat{s}\to s \]
gives a double cover of $\P^1$, ramified over two points  $q_1^s$ and $q_2^s$ in $s$. 

If $V$ is the Cayley's ruled cubic, then $\hat{s}$ is a line through $p$. After the blow up at $p$, the union of $\hat{s}$ and the exceptional divisor is mapped to $s$.

Looking at plane sections of $V$, we have:
\begin{lemma}\label{sl_conicas_em_IIXx}
A conic in $E$ through $p$ lies on the linear system $\II_{X,x}$ if and only if its two points of intersection with $\hat{s}$ are mapped by $\bar{\tau}$ to the same point of $s$.

In the Cayley's ruled cubic case, since $p$ lies on $\hat{s}$, these two points consist of a proper point in $\hat{s}$ and a point infinitely near to $p$.
\end{lemma}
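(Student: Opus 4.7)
The plan is to factor $\bar\tau$ through the complete linear system of conics through $p$ and then translate the membership condition $C\in\II_{X,x}$ into a projective incidence on the cubic scroll $S(1,2)$. The complete system $|2h-p|$ of conics in $E\cong\P^2$ through $p$ has projective dimension $4$ and, after blowing up $p$, defines a birational morphism $\phi_0:\Bl_p E\to S(1,2)\subset\P^4$. Since $V$ is a projection of $S(1,2)$ from an external point $o$ by Proposition \ref{sl_projecoes_de_S(1,2)}, the map $\bar\tau$ factors as $\bar\tau=\pi_o\circ\phi_0$, with $\pi_o:\P^4\tor\P^3$ the projection from $o$. As $\II_{X,x}$ defines $\bar\tau$ into $V\subset\P^3$, it has projective dimension $3$ and is therefore cut out inside the $4$-dimensional system $|2h-p|$ by the single linear condition that the corresponding hyperplane section of $S(1,2)$ contain $o$. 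Concretely, a conic $C\in|2h-p|$ belongs to $\II_{X,x}$ if and only if the hyperplane $H_C\subset\P^4$ cutting out $\phi_0(C)$ contains $o$.

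I would next analyse the preimage $\hat s':=\phi_0(\hat s)\subset S(1,2)$, which by Proposition \ref{sl_projecoes_de_S(1,2)} is the conic of $S(1,2)$ collapsed by $\pi_o$ to the double line $s$: in the general case an irreducible plane conic (image of the line $\hat s\not\ni p$), and in Cayley's case the reducible conic $e+f_0$ made of the directrix line and the fiber through the image of $p$ (image of the strict transform of $\hat s$ together with the exceptional divisor $E_p$). In both cases $\hat s'$ spans a plane $\Pi\subset\P^4$. Since $\pi_o|_{\hat s'}$ is a $2$-to-$1$ cover of $s$, there is a one-parameter family of chords of $\hat s'$ through $o$; each such chord lies in $\Pi$, which forces $o\in\Pi$.

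The heart of the proof is then a short chain of equivalences. For a hyperplane $H=H_C$, the condition $o\in H$ is equivalent to the line $\ell:=H\cap\Pi$ passing through $o$; and $\ell$ passes through $o$ exactly when the two points $\ell\cap\hat s'$ lie on a chord of $\hat s'$ through $o$, that is, when they have the same image in $s$ under $\pi_o$. Since $\ell\cap\hat s'=H\cap\hat s'=\phi_0(C)\cap\hat s'$ corresponds via $\phi_0$ to the two points of $C\cap\hat s$ on $E$, this is precisely the statement of the lemma. The Cayley refinement is then a simple B\'ezout count on $\Bl_p E$: when $p\in\hat s$, the strict transform of $C$ meets the strict transform of $\hat s$ (a fiber of the scroll) in one proper point, and meets the exceptional divisor $E_p$ in the single point recording the tangent direction of $C$ at $p$, which is the promised point infinitely near to $p$.

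The only mildly subtle step is keeping the pairing $C\cap\hat s\leftrightarrow\phi_0(C)\cap\hat s'$ consistent in the Cayley case, where one of the two intersection points lives on $E_p$ and its $\phi_0$-image sits on the directrix component of the reducible conic $e+f_0$. Once this bookkeeping is performed on the blow-up, the equivalence goes through uniformly in both cases.
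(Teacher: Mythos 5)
Your proof is correct, but it takes a genuinely different route from the one in the text. The paper argues directly in $\P^3$: a conic through $p$ is sent by $\bar\tau$ to a cubic curve on $V$, and membership in $\II_{X,x}$ is equivalent to that cubic being a plane section, which in turn is equivalent to the cubic acquiring a double point on $s$ --- i.e.\ to the two points of $C\cap\hat s$ being identified. You instead lift everything to the normalization $S(1,2)\subset\P^4$ and rephrase $C\in\II_{X,x}$ as the linear condition $o\in H_C$ on the projection centre $o$, reducing the lemma to an incidence statement in the plane $\Pi$ spanned by the conic $\hat s'$ lying over $s$. Your version makes the ``if and only if'' completely mechanical (a hyperplane contains $o$ iff its trace on $\Pi$ is a chord through $o$) and treats the general and Cayley cases uniformly, at the cost of more setup; the paper's version is shorter but leans on the geometric fact that a singular space cubic is planar. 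One small point common to both arguments: when $C$ is tangent to $\hat s$ at a non-ramification point the divisor $C\cap\hat s$ is $2P$ rather than a fibre $P+\iota(P)$ of the double cover, so the phrase ``mapped to the same point'' must be read scheme-theoretically (the intersection divisor is a fibre of $\hat s\to s$), and your chord formulation actually makes this reading explicit, since a tangent line of $\hat s'$ at such a point does not pass through $o$. Your bookkeeping in the Cayley case (one intersection point on the strict transform of $\hat s$, one on $E_p$, mapping respectively to the fibre and directrix components of the reducible conic $\hat s'=e+f$) is also consistent with the intersection numbers $(2h-e)\cdot(h-e)=1$ and $(2h-e)\cdot e=1$ on $\F_1$.
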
 
\begin{proof}
A conic of $\II_{X,x}$ is mapped to a plane section of $V$, which intersects $s$ in a double point. Then the two points of intersection of such conic with $\hat{s}$  are mapped to the same point of $s$. 

Conversely, a conic of $E$ through $p$ is mapped to a cubic curve. If it intersects $\hat{s}$ in two corresponding points, the cubic curve will have a double point in $s$. Hence it is a plane cubic, and therefore a plane section of $V$.

In the Cayley's ruled cubic case, a conic through $p$ intersects $\hat{s}$ in $p$ and in another point. Since $p$ is blown up, the points infinitely near to $p$ are mapped to distinct points, giving the last assertion.
\\ \end{proof}

\label{sl_notacao_divisores_de_V}
Let $\widetilde{E}$ be the plane $E$ blown up at $p$, and write $(a,b)$ for the class of a curve in $\widetilde{E}$ of degree $a$ intersecting the exceptional divisor in $b$ points. So a curve of type $(1,1)$ is mapped by $\bar{\tau}$ to a line of the ruling of $V$, a curve of type $(1,0)$ is mapped to a conic (or to $2s$), a curve of type $(2,1)$ is mapped to a cubic (a plane section of $V$ if it lies on $\II_{X,x}$) and the exceptional curve $(0,-1)$ is mapped to $r$. Abusing notation, the same terminology will be used for curves in $E$ and in $V$.

Note that also in case $(ii)$ the line $\hat{s}$ in $\widetilde{E}$ is of type $(1,0)$. It is the union of a line through $p$ and the exceptional line, so $(1,0)=(1,1)+(0,-1)$.

Below is a description of the linear system $\X$:
\begin{prop}\label{sl_baselocus}
Let $s$ be the double line of $V$ and let $r$ be the image of the blow up at the base point of $\II_{X,x}$. Suppose the base locus of $\X$ has pure dimension 1. Then $\X$ is the linear system of surfaces with degree seven having:
\begin{itemize}
\item multiplicity four in $s$
\item multiplicity two in $C_6$ 
\item multiplicity one in $r$,
\end{itemize} 
where $C_6\subset V$ is a curve of type $(5,4)$, that is, it is the image of a quintic in $E$ having multiplicity four in $p$. In particular, $C_6$ cuts $s$ in five points and $r$ in four points, supposing it does not contain these lines.
\end{prop}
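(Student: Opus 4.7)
The plan is to pin down the fixed and moving parts of $\X^{\prime}\vert_V$ using Lemma \ref{pr_X'eX''}(ii), transfer the data to the blow-up $\widetilde E = \F_1$ of $E$ at $p$, and read off the base curves and multiplicities of $\X$ from there. By hypothesis (H) and the paragraph preceding it, $\X^{\prime\prime}$ has no fixed part and coincides with the system of planes in $\P^3$. The inverse $\bar\sigma = \bar\tau^{-1}$ is defined on $V$ by the pullback of lines from $E$, namely the two-dimensional system $|\mC|$ of conics on $V$ (class $e_1 + f_1$ on $\widetilde E$), realized as residuals of plane sections of $V$ that contain a fiber of the ruling. By Lemma \ref{pr_X'eX''}(ii), this is exactly the moving part of $\X^{\prime}\vert_V$.

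Since $\X^{\prime}$ has degree $4$, a general member cuts $V$ in a curve of degree $12$, leaving a fixed divisor of degree $10$ after removing the moving conic. Pulled back to $\widetilde E$ via $\bar\tau$, its class is $4(e_1 + 2f_1) - (e_1 + f_1) = 3 e_1 + 7 f_1$. The preimage of $s$ has class $\hat s \equiv e_1 + f_1$, with $\bar\tau\vert_{\hat s}$ being two-to-one onto $s$ in the general case, while the preimage of $r$ is $\hat r \equiv e_1$. Using the assumption that the base locus of $\X$ has pure dimension one to restrict to at most one further irreducible component $C_6 \subset V$ with pullback $\hat C_6 \equiv \alpha e_1 + \beta f_1$, the system $a \hat s + b \hat r + \hat C_6 \equiv 3 e_1 + 7 f_1$ combined with non-negativity forces the unique solution $a = 2$, $b = 0$, $\hat C_6 = e_1 + 5 f_1 = 5(e_1 + f_1) - 4 e_1$. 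Hence $C_6$ is the image of a plane quintic with a quadruple point at $p$, of type $(5,4)$ as claimed, and the intersections $C_6 \cdot s = (e_1 + 5f_1)(e_1 + f_1) = 5$ and $C_6 \cdot r = (e_1 + 5f_1)\,e_1 = 4$ follow at once.

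Pushing back down to $V$, the two-to-one map $\bar\tau\vert_{\hat s}$ doubles the $s$-coefficient, giving fixed part $4s + C_6$ on $V$ of degree $10$. This exhibits multiplicities $2$, $1$, $0$ for $\X^{\prime}$ along $s$, $C_6$, $r$ respectively. Since $V + \X^{\prime} \subset \X$ and $V$ itself has multiplicity $2$ on $s$ and $1$ on each of $C_6$ and $r$, the elements of $V + \X^{\prime}$ have multiplicities $4$, $2$, $1$ on $s$, $C_6$, $r$. For a surface $S \in \X \setminus (V + \X^{\prime})$, which exists because $\dim \X = 7 > \dim(V + \X^{\prime}) = 6$, the restriction $S\vert_V$ equals the fixed divisor $\X\vert_V$ of degree $21$; a local analysis near a generic point of $s$ (where $V$ consists of two transverse smooth sheets) shows that $S$ must have multiplicity $4$ on $s$, and analogous checks give the multiplicities $2$ on $C_6$ and $1$ on $r$ uniformly in $\X$. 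The main obstacle is the uniqueness of the decomposition of $3 e_1 + 7 f_1$: ruling out alternative effective decompositions genuinely requires the pure-dimension-one hypothesis on the base locus, and the Cayley ruled cubic case needs a separate treatment, since there $\hat s$ becomes reducible as the sum of the directrix and a fiber and must be handled as a limit of the general configuration.
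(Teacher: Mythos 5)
Your overall strategy is the same as the paper's: restrict $\X$ and $\Xp$ to $V$, transfer the divisor classes to the blow-up of $E$ at $p$, and read off the fixed part by a class computation. The computation of the class $3e_1+7f_1$ of the degree-ten fixed divisor of $\Xp\vert_V$ is correct and equivalent to the paper's $(8,4)-(1,0)=(7,4)$. However, there is a genuine gap exactly where you flag "the main obstacle": the decomposition $a\hat{s}+b\hat{r}+\hat{C}_6\equiv 3e_1+7f_1$ is \emph{not} unique under non-negativity alone. Writing $\hat{C}_6\equiv\alpha e_1+\beta f_1$, the conditions are $a+b+\alpha=3$ and $a+\beta=7$, which admit many effective solutions, e.g.\ $a=1$, $b=0$, $\hat{C}_6\equiv 2e_1+6f_1$ (a curve of type $(6,4)$, degree eight on $V$), or $a=3$, $b=0$, $\hat{C}_6\equiv 4f_1$ (four lines of the ruling), or $a=2$, $b=1$, $\hat{C}_6\equiv 5f_1$. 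Asserting uniqueness and then conceding at the end that establishing it is the real difficulty leaves the heart of the proof undone.

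The paper closes this gap with three inputs you do not supply. First, $\Xp\vert_V$ defines $\bar{\sigma}$, which must desingularize the non-normal surface $V$, so $\mult_s\Xp\geq 1$ and hence $\mult_s\X\geq 3$. Second, hypothesis (H) forces $\Xpp$ to be base-point free, so $\mult_s\X\leq 4$; this leaves $m=\mult_s\X\in\{3,4\}$, i.e.\ $a\in\{1,2\}$ in your notation. Third, the case $m=3$ is excluded by a degree count: the residual curve $C$ would then have degree eight and, being a double curve of $\X$, would give $\X\cap V\supseteq 6s+2C$ of degree at least $22>21=\deg\X\cdot\deg V$. Only then is $a=2$ and $\deg C_6=6$ forced; the simple fixed line $r$ is then found not inside $\Xp\cap V$ but from the class of $\X\cap V$, namely $(14,7)-(4,0)-(10,8)=(0,-1)$. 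A secondary weak point: your closing local analysis along $s$ (two transverse sheets of $V$, each cut in $4s$) only yields $\mult_s S\leq 4$ for a general $S\in\X$, since tangency of the tangent cone of $S$ with the branches of $V$ could inflate the restricted multiplicities; the equality $\mult_s\X=4$ again comes from the desingularization bound together with the exclusion of $m=3$, not from the restriction to $V$ alone.
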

\begin{proof}

Following the notation of Lemma \ref{pr_X'eX''}, the linear system $\Xp$ must desingularize the cubic $V$, so $\mult_s\Xp \geq 1$. This implies that $\mult_s\X \geq 3$. On the other hand, the multiplicity of $\X$ in $s$ is at most $4$, since hypothesis (H) implies that $\Xpp$ has no base locus. Therefore:
\[ m=\mult_s\X \in \{3,4\} \]

As remarked in Lemma \ref{pr_X'eX''}, the moving part of $\Xp\cap V$ defines the inverse of $\bar{\tau}$. Then there is a curve $C\subset V$ such that:
\[ \Xp\cap V = 2(m-2)s+\lbrace\text{conics}\rbrace+C \]
In particular, $C$ is a double curve of $\X$.

If $m=3$, then $C$ has degree eight and $\X$ intersects $V$ in a curve of degree at least $2\cdot 3+2\cdot 8=22$, which is not possible. Therefore, $m=4$, $C=C_6$ has degree six and:
\[ \Xp\cap V = 4s+\lbrace\text{conics}\rbrace+C_6 \]

The intersection of $V$ with a hypersurface of degree $d$ has class $(2d,d)$. Then, for the intersection of $\X^\prime$ with $V$, we have:
\[ (8,4) \equiv \X^\prime\cap V  = 4s+\lbrace\text{conics}\rbrace+C_6 \equiv (2,0)+(1,0)+(a,b)\]
which gives $C_6\equiv (5,4)$. 

The intersection with $\X$ must be a fixed divisor of $V$, then:
\[ (14,7) \equiv \X\cap V = 8s+2C_6+\lbrace \text{fixed line} \rbrace \equiv (4,0)+(10,8)+(a,b)\] 
so the fixed line has type $(0,-1)$, that is, it is $r$. This finishes the proof, since hypothesis (H) implies that the base locus of $\X$ lies on $V$.
\\ \end{proof}

Note however that $C_6$ can contain $r$ or $s$ as a component, as is now described:

\begin{lemma}\label{sl_r_s_em_C6}
If $r\subset C_6$, then there is another line $r^\prime$ infinitely near to $r$ in the base locus of $\X$ and $C_6$ consists of $r$ and five lines of the ruling. In particular:
\[\X\cap V = 2\sum_{i=1}^5 \l_i + 8s + 3r \]
and $C_6$ cuts $s$ in five points.

If $s\subset C_6$, then there is another line $s^\prime$ infinitely near to $s$, which is a double line of $\X$. In this case, $C_6$ is the union of $s$ with multiplicity two and four lines of the ruling and:
\[\X\cap V = 2\sum_{i=1}^4 \l_i + 12s + r \]
In particular, $C_6$ cuts $r$ in four points.
\end{lemma}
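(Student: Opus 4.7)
The plan is to carry out, in each case, the same class computation used in the proof of Proposition \ref{sl_baselocus}, and then to deduce the infinitely near base component by a local analysis of the tangent cone of $\X$ along the multiple component.

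Recall from the notation introduced after Lemma \ref{sl_conicas_em_IIXx} that on the blowup $\widetilde E$ a line of the ruling of $V$ has class $(1,1)$, the directrix $r$ has class $(0,-1)$, the divisor $2s$ (where $s$ is the double line of $V$) has class $(1,0)$, and $C_6\equiv (5,4)$. If $r\subset C_6$ the residual has class $(5,4)-(0,-1)=(5,5)=5\cdot(1,1)$, and the only effective divisor of this class on $\widetilde E$ is a union of five lines through $p$; hence $C_6 = r + \sum_{i=1}^{5}\l_i$. Similarly, if $s\subset C_6$, then since $s$ is a double line of $V$ the smallest effective class on $\widetilde E$ containing it is $(1,0)=2s$, so the residual has class $(4,4)=4\cdot(1,1)$, forcing $C_6 = 2s + \sum_{i=1}^{4}\l_i$. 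Substituting these decompositions into the identity $\X\cap V = 8s + 2C_6 + r$ from Proposition \ref{sl_baselocus} yields the two stated formulas, and the intersection counts $C_6\cdot s = 5$ and $C_6\cdot r = 4$ follow by bilinearity, using $r\cdot s = 0$ in the generic non-Cayley case together with the fact that each line of the ruling of $V$ meets $r$ and $s$ in one point.

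For the existence of the infinitely near lines $r'$ (respectively $s'$) in the base of $\X$, the argument follows the same template as Propositions \ref{ed_doubleline} and \ref{ed_doubleconic} in the previous chapter: one examines the tangent cone of a general $F\in\X$ at a generic point of the multiple component. When $s$ appears with multiplicity two in $C_6$, the tangent cone of $F$ at a generic point of $s$ is forced to be the doubled tangent plane of $V$ at that point; after the blow up along $s$ this tangency persists as a fixed section $s'$ of the exceptional divisor, on which the strict transform of $\X$ vanishes to order two, giving the double line $s'$ of the statement. In case (a), where $r$ is a simple component of $C_6$, the parallel analysis produces a single infinitely near simple line $r'$, whose multiplicity in the base of $\X$ matches the multiplicity of $r$ in $C_6$.

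The main obstacle I anticipate is the last step: unlike the decomposition of $C_6$, the existence of the infinitely near components cannot be read off from the divisorial identity alone. One must use the fact that $V$ is smooth along $r$ (so that the coefficient $3$ in $\X\cap V$ forces $\X$ itself to have multiplicity $3$ along $r$, which cannot be fully resolved by a single blow up of $r$) and that $V$ has multiplicity $2$ along $s$ (so that the coefficient $12$ of $s$ in $\X\cap V$, combined with the non-reduced structure of $C_6$ along $s$, translates into the combined contribution of $s$ and an infinitely near $s'$ in the base of $\X$).
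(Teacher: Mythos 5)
The first half of your argument --- the decomposition of $C_6$ from the class computation $(5,4)-(0,-1)=5\cdot(1,1)$ and $(5,4)-(1,0)=4\cdot(1,1)$, the substitution into $\X\cap V=8s+2C_6+r$, and the intersection counts with $r$ and $s$ --- is correct and is exactly what the paper does.

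The gap is in your justification of the infinitely near lines, specifically for $r$. You claim that ``the coefficient $3$ in $\X\cap V$ forces $\X$ itself to have multiplicity $3$ along $r$.'' This is false, and in fact contradicts hypothesis (H): if $\mult_r\X=3$, then removing $2V$ (which carries $r$ with multiplicity one) would leave $\Xpp$ with $r$ as a fixed component, so $d>2\delta+1$. The correct situation is $\mult_r\X=2$ (it is part of the double curve $C_6$), and the coefficient $3$ in the \emph{intersection} $\X\cap V$ exceeds $\mult_r\X\cdot\mult_r V=2$ precisely because the general member of $\X$ is tangent to $V$ along $r$ --- that tangency \emph{is} the infinitely near line $r'$, so you cannot use the coefficient $3$ to force a higher multiplicity and then deduce $r'$ from it; the logic is circular at best. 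The paper instead gets $r'$ by a clean numerical argument on the exceptional divisor: blowing up $r$ gives $E_r\cong\F_0$ with $\X_r\equiv(5,2)$ and $V_r\equiv(2,1)$ irreducible; the five lines $\l_i\subset C_6$ impose five double points of $\X_r$ all lying on $V_r$, and since $(5,2)\cdot(2,1)=9<10$, the curve $V_r=r'$ must be a component of $\X_r$. You should replace your ``multiplicity $3$'' step with this computation (or with an explicit excess-intersection argument along $V$).

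For $s$ your mechanism is closer to the truth but imprecisely stated: the tangent cone of $V$ at a general point of $s$ is a \emph{pair of distinct planes} (Lemma \ref{sl_conetangente_de_V_e_secoes_planas}), not a single doubled plane, and $\X$ already has multiplicity four along $s$, so its tangent cone there is a quartic cone of planes. The actual source of $s'$ is that the non-reduced part $2s$ of $C_6$, as a divisor on $V$, is carried after the blow up at $s$ by the irreducible curve $V_s\equiv(1,2)$; since $\X$ has multiplicity two along $C_6$, the curve $s'=V_s$ is a double curve of $\X_s\equiv(3,4)$, leaving the moving part $(1,0)$. With these two repairs your proof would match the paper's.
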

\begin{proof}
Suppose first that $r\subset C_6$. Since $r\equiv (0,-1)$, we have that:
\[ C_6 \equiv (5,4) \equiv  (0,-1)+(5,5) \]
So $C_6$ is the union of $r$ and five lines $\l_1,\ldots,\l_5$ of the ruling. 

Note that $\X$ has multiplicity two in $r$. Blowing up $r$, we have that $\X_r\equiv (5,2)$ and $V_r\equiv (2,1)$. But these two curves intersect in five double points, corresponding to the five lines in $C_6$. Since $(5,2)\cdot (2,1)=9$ and $V_r$ is irreducible, it follows that $\X_r$ contains $V_r=r^\prime$. 

Suppose now that $s\subset C_6$. But $s$ is a double line of $V$, so we must have $2s\subset C_6$. Since $2s\equiv (1,0)$, then:
\[ C_6 \equiv (5,4) \equiv  (1,0)+(4,4) \]
and $C_6$ is the union of $2s$ and four lines $\l_1,\ldots,\l_4$ of the ruling. Since $\X$ has multiplicity four in $s$ and two in $C_6$, it follows that:
\[\X\cap V = 2\sum_{i=1}^4 \l_i + 12s + r \]
So blowing up $s$, $V_s\equiv (1,2)$ is irreducible and $\X_s \equiv (3,4)$ contains $2V_s=2s^\prime$, that is:
\[ \X_s \equiv 2s^\prime + \{ (1,0) \} \]
and the result follows.
\\ \end{proof}

We can now give a simple description of the possibilities for the base locus of $\X$:
\begin{coro}\label{sl_decomposicoes_de_C6}
Let $C_6$ be the sextic curve in the base locus of $\X$, given by Proposition \ref{sl_baselocus}. Then one of the following holds:
\begin{itemize}
\item[(i)] $C_6$ is the union of $r$ and five lines of the ruling;
\item[(ii)] $C_6$ is the union of $2s$ and four lines of the ruling;
\item[(iii)] $C_6$ is the union of an irreducible curve of degree $d$, which is smooth outside $s$, and $6-d$ lines of the ruling, for $2\leq d \leq 6$. 
\end{itemize}
\end{coro}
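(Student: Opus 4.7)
The strategy is to enumerate the ways the class $(5,4)$ on $\widetilde E$ given by Proposition~\ref{sl_baselocus} decomposes into irreducible effective summands. Lemma~\ref{sl_r_s_em_C6} already establishes cases (i) and (ii) under the hypothesis $r\subset C_6$ or $s\subset C_6$, so the task reduces to showing that if neither $r$ nor $s$ is a component, the decomposition is forced to take the form of case (iii).

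I would first catalogue the irreducible effective classes on $\widetilde E$: the exceptional class $(0,-1)$ (whose image on $V$ is $r$), lines through $p$ of class $(1,1)$ (mapping to lines of the ruling), lines not through $p$ of class $(1,0)$ (the distinguished line $\hat s$ maps $2$-to-$1$ onto $s$ and thus contributes $2s$, while every other $(1,0)$-line maps to an irreducible conic on $V$), and for $a\ge 2$ strict transforms of irreducible plane curves of class $(a,b)$ with $0\le b\le a-1$. An elementary effectiveness check shows $E_p$ and $\hat s$ each appear in $C_6$ with multiplicity at most one and not simultaneously, because $(5,4)-(0,-1)-(1,0)=(4,5)$ is not effective. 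When $E_p$ (resp.\ $\hat s$) is a component, the residual class is $(5,5)$ (resp.\ $(4,4)$), and any irreducible summand $(a_i,b_i)$ with $a_i=b_i\ge 2$ would be a plane curve with an $a_i$-fold point and hence split into lines through $p$; so the residual consists of five (resp.\ four) lines through $p$, reproducing (i) and (ii) via Lemma~\ref{sl_r_s_em_C6}.

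When neither $E_p$ nor $\hat s$ occurs, write $C_6=\sum(a_i,b_i)$ with $a_i\ge b_i\ge 0$ and $a_i\ge 1$; from $\sum a_i=5$ and $\sum b_i=4$ one obtains $\sum(a_i-b_i)=1$. Since each $a_i-b_i$ is a non-negative integer, exactly one summand $(a_j,b_j)$ satisfies $a_j-b_j=1$ and all others satisfy $a_i=b_i$, forcing the latter to be $(1,1)$-lines through $p$ by the same argument as above. The distinguished summand is either a $(1,0)$-line distinct from $\hat s$, mapping to an irreducible conic on $V$ (so $d=2$ together with $4$ ruling lines), or an irreducible $(a_j,a_j-1)$-curve with $a_j\in\{2,3,4,5\}$; its image on $V$ has degree $(a_j,a_j-1)\cdot(2,1)=a_j+1$, giving $d\in\{3,4,5,6\}$ together with $6-d$ ruling lines. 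This exhausts the possibilities of case (iii).

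The assertion that the irreducible degree-$d$ component is smooth outside $s$ follows from a Pl\"ucker-style genus count: the arithmetic genus $\binom{a_j-1}{2}$ of a plane curve of class $(a_j,a_j-1)$ is fully absorbed by the $(a_j-1)$-fold point at $p$, so no further singularities fit, and $\bar\tau$ is an isomorphism away from $\hat s$; for $d=2$ the distinguished line meets $\hat s$ at only one point, and its image is a smooth conic on $V$. I expect the core of the argument to be the combinatorics of decomposing $(5,4)$, with the main subtlety being the genus bookkeeping that rules out additional singularities on the distinguished component.
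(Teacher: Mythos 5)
Your proposal is correct and follows essentially the same route as the paper: the paper likewise reduces everything to the decomposition of the $(5,4)$ class of $\bar{\sigma}(C_6)$ in $E\cong\P^2$, observes that effectivity forces any irreducible component of degree $d^\prime$ to have multiplicity $d^\prime-1$ at $p$ (so the residual is $(5-d^\prime,5-d^\prime)$, a union of lines through $p$), and deduces smoothness outside $s$ from the same genus bookkeeping, with cases (i) and (ii) delegated to Lemma \ref{sl_r_s_em_C6}. Your write-up is simply a more explicit version of the paper's compressed argument.
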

\begin{proof}
Items $(i)$ and $(ii)$ have been proved in Lemma \ref{sl_r_s_em_C6}.

Consider the curve $\bar{\sigma}(C_6)$ in $E\cong \P^2$, having degree five and multiplicity four in $p$, that is, a $(5,4)$ curve. If it contains an irreducible curve $\hat{C}$ of degree $d^\prime$, this curve must have multiplicity $d^\prime-1$ in $p$ and no other singularities, since:
\[ (5,4)-(d^\prime,d^\prime-1)=(5-d^\prime,5-d^\prime) \]

So $\bar{\sigma}(C_6)$ is the union of $\hat{C}$ and $5-d^\prime$ lines through $p$. Then the result follows with $d=d^\prime+1$. 
\\ \end{proof}

Another easy consequence of Lemma \ref{sl_baselocus} and Lemma \ref{sl_r_s_em_C6} is the following:

\begin{coro}\label{sl_X_is_ruled}
If $V$ is a cubic surface, then the associated threefold $X$ is ruled by lines.
\end{coro}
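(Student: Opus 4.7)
The plan is to show that for a general plane $\Pi$ through the double line $s$, the image $S_\Pi := \sigma(\Pi) \subset X$ is a surface ruled by lines. Since such planes sweep out $\P^3$, the surfaces $S_\Pi$ cover $X$, and a ruling of each $S_\Pi$ by lines yields a covering family of lines on $X$.

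First I would restrict $\X$ to a general plane $\Pi$ through $s$. Since $s$ is a double line of $V$, one has $\Pi \cap V = 2s + \l$ where $\l$ is the line of the ruling of $V$ lying in $\Pi$. By Lemma \ref{sl_baselocus}, $\X|_\Pi$ has $s$ as a fixed component of multiplicity four, so $\X|_\Pi = 4s + \mathcal{M}$ where $\mathcal{M}$ is a residual linear system of plane cubics in $\Pi$. Computing on the normalisation $\widetilde E \cong \F_1$ of $V$ gives $C_6 \cdot 2s = (5,4)\cdot(1,0) = 5$ and $C_6 \cdot \l = (5,4)\cdot(1,1) = 1$, so the length-six subscheme $\Pi \cap C_6$ splits as the five points of $C_6 \cap s$ together with a single residual point $Q^* \in \l \setminus s$. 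Thus each cubic in $\mathcal{M}$ has a double point at $Q^*$ and a simple base point at $\Pi \cap r$.

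Second, I would use this nodal structure to produce lines on $S_\Pi$. For a line $\lambda \subset \Pi$ through $Q^*$ not passing through $\Pi \cap r$, the restriction $\mathcal{M}|_\lambda$ has degree three with fixed contribution two at $Q^*$, hence moving degree one. So $\sigma(\lambda)$ is a line in $S_\Pi$. Since the pencil of lines through $Q^*$ sweeps out $\Pi$, the surface $S_\Pi$ is ruled by lines. Letting $\Pi$ vary in the pencil of planes through $s$, the resulting $S_\Pi$ cover $X$, proving the claim.

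The main obstacle is the degenerate situation described in Lemma \ref{sl_r_s_em_C6}. When $s \subset C_6$ there is an infinitely near line $s'$ in the base locus of $\X$, which alters the base scheme of $\mathcal{M}$; when $r \subset C_6$ the point $Q^*$ coincides with $\Pi \cap r$, so two of the base conditions merge. In both cases the essential feature persists, namely that the general member of $\mathcal{M}$ has an effective double point through which a pencil of lines in $\Pi$ maps to a ruling of $S_\Pi$; a careful bookkeeping of infinitely near base points (as in the proof of Lemma \ref{sl_r_s_em_C6}) then yields the same conclusion.
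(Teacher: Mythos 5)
Your argument is correct and is essentially the paper's own proof: in both cases one takes a plane $\Pi$ through $s$, locates the single residual point of $C_6\cap\Pi$ off $s$, and observes that a line of $\Pi$ through that point meets the base scheme of $\X$ with total multiplicity $4+2=6$, leaving one moving intersection, so its image is a line; sweeping out $\Pi$ and then the pencil of planes through $s$ gives the ruling. The degenerate case $s\subset C_6$ that you defer is dispatched in the paper by the same one-line count after blowing up $s$ (multiplicity four at $p$ plus an infinitely near double point from $s'$ again totals six).
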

\begin{proof}
Let $y$ be a general point of $X$ and set $y^\prime=\tau(y)$. Consider the plane $\Pi$ spanned by $s$ and $y^\prime$. 

If $C_6$ does not contain $s$, it intersects $\Pi$ in five points in $s$ and a sixth point $p$.  Let $\l$ be the line spanned by $p$ and $y^\prime$. Then $\X$ intersects $\l$ in $p$ with multiplicity two, in $\l\cap s$ with multiplicity four and in a moving point. Therefore $\l$ is mapped to a line through $y$.

Suppose now that $C_6$ contains $s$ and consider the blow up at $s$. Then $\Pi_s\equiv (0,1)$ intersects $s^\prime=V_s\equiv(1,2)$ in one point, lying on the fiber over a point $p\in s$. Then the line through $p$ and $y^\prime$ is intersected by $\X$ in $p$ with multiplicity four and a double point infinitely near to it. Therefore it is mapped to a line in $X$ through $y$.
\\ \end{proof}

\section{The general case}
\label{sl_general_sec}

Throughout this section, suppose $V$ is of type $(i)$ in Proposition \ref{sl_projecoes_de_S(1,2)}, so $r$ and $s$ are skew lines. We will first study specific properties of $V$ and the images of $r$ and $s$ in $X$.

Next, we identify the possible singularities of $X$ based on properties of the base locus of $\X$ and present a family of surfaces contained in $X$ which helps to understand its geometry. Finally, a specific example will be studied in detail.

\subsection{Some properties}

Let's first give more details on $V$:

\begin{lemma}\label{sl_rulingV}
The surface $V$ is ruled by lines that intersect both $s$ and $r$. There is only one line of the ruling through each of the points $q_1^s$ and $q_2^s$ of $s$. Through any other point of $s$ there are two lines of the ruling. Apart from these pairs, the lines of the ruling are disjoint.
\end{lemma}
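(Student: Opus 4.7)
The plan is to transfer the ruling of $S(1,2)$ to $V$ via the projection that defines $V$, and then to count intersections by tracking the identifications made by $\bar\tau$. Since $V=\pi(S(1,2))$ for a projection $\pi$ from a point $O\in\P^4\setminus S(1,2)$, and $S(1,2)$ is ruled by the lines joining its directrix line to its directrix conic, the projection sends this ruling to a ruling of $V$ by lines. Via the isomorphism $\widetilde{E}\cong S(1,2)$ provided by the complete linear system $|(2,1)|$, the lines of the ruling correspond to the $(1,1)$-curves of $\widetilde{E}$ (strict transforms of lines through $p$), the directrix line corresponds to the exceptional divisor $(0,-1)$, and the directrix conic corresponds to $\hat{s}$, which is of type $(1,0)$.

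With this dictionary in hand, I would first verify that each line of the ruling meets $r$ and $s$ exactly once: a $(1,1)$-curve meets the exceptional divisor in one point and meets $\hat{s}\equiv(1,0)$ in $(1,1)\cdot(1,0)=1$ point, so its image in $V$ meets $r$ and $s$ each in a single point.

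Next, I would determine when two distinct lines of the ruling meet. Two distinct lines through $p$ are disjoint in $\widetilde{E}$, so their images in $V$ can meet only at points where $\bar\tau$ identifies distinct points of $\widetilde{E}$. Since $\pi:S(1,2)\to V$ is a finite birational morphism from a smooth surface, it is the normalization of $V$, whose non-normal locus is precisely $s$. Hence the identifications of $\bar\tau$ are exactly those of the $2$-to-$1$ cover $\bar\tau|_{\hat{s}}:\hat{s}\to s$, ramified over $q_1^s$ and $q_2^s$. Consequently, two lines of the ruling meet only on $s$: over a general point of $s$, the two preimages on $\hat{s}$ lie on two distinct lines through $p$, yielding two lines of the ruling through that point, while over each $q_i^s$ the unique preimage yields a single such line. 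Distinct lines of the ruling meet $r$ at distinct points, since the exceptional divisor maps isomorphically onto $r$.

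The main technical point will be justifying that the identifications of $\bar\tau$ are confined to $\hat{s}\to s$. This should follow cleanly from the normalization argument above, using that $V$ is non-normal precisely along its double line $s$, a fact inherent in the construction of $V$ at the start of the section.
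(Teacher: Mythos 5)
Your proposal is correct and follows essentially the same route as the paper: the ruling consists of images of lines through $p$, each meeting $\hat{s}$ once and the exceptional divisor once, and the count of lines through a point of $s$ comes from the $2$-to-$1$ cover $\hat{s}\to s$ ramified at $q_1^s,q_2^s$. Your normalization argument merely makes explicit the disjointness claim that the paper dispatches with ``then the result follows,'' so it is a welcome but not essentially different elaboration.
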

\begin{proof}
The lines of the ruling are images of lines in $E$ through $p$. Since $p$ does not lie on $\hat{s}$, these cut $\hat{s}$ in a point different from $p$.

Given a general point in $s$, it has two distinct preimages in $\hat{s}$. If the point is $q_1^s$ or $q_2^s$, it has only one preimage in $\hat{s}$. Then the result follows.
\\ \end{proof}

A plane section of $V$ is a cubic with a singular point in $s$. More precisely:
\begin{lemma}\label{sl_conetangente_de_V_e_secoes_planas}
Let $q$ be a point in $s$. If $q\neq q_1^s,q_2^s$, then the tangent cone of $V$ in $q$ is a pair of planes, each containing $s$ and one of the lines of the ruling through $q$. Moreover, a general plane section of $V$ through $q$ is a nodal cubic.

If $q$ is $q_1^s$ or $q_2^s$, the tangent cone of $V$ in $q$ is a double plane, the plane spanned by $s$ and the line of the ruling through $q$. A general plane section of $V$ through $q$ is a cuspidal cubic.
\end{lemma}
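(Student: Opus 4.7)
My plan is to work with the explicit model of Proposition \ref{sl_projecoes_de_S(1,2)}$(i)$, where $V$ is defined by $x_0^2 x_2 + x_1^2 x_3 = 0$ and $s = \{x_0 = x_1 = 0\}$. The argument splits into two parts: first the tangent cone at a point of $s$, and then the plane section through that point.

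For the first part, I would fix $q = (0:0:a:b) \in s$, translate $q$ to the origin in an affine chart containing it, and compute the leading form of the defining equation. A direct calculation shows that (up to a nonzero scalar) this leading form is the quadratic form $a\, x_0^2 + b\, x_1^2$, which factors over $\C$ as a product of two distinct linear forms when $ab \neq 0$ and collapses to a double linear form when $ab = 0$. Hence $\mC_q V$ is a pair of distinct planes (each containing $s$) for $q \notin \{(0:0:1:0),(0:0:0:1)\}$, and a double plane otherwise.

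To identify the two exceptional points with $q_1^s, q_2^s$, I would parametrize lines of $V$ through a point of $s$ directly from the equation. Substituting a line of the form $(x_0,x_1,x_3) = (\alpha u, \beta u, t + \gamma u)$ in the affine chart $x_2 = 1$ shows that, besides $s$, exactly two distinct ruling lines pass through a generic $q \in s$ (one lying in each plane of $\mC_q V$ and both meeting $r = \{x_2 = x_3 = 0\}$), while through $(0:0:1:0)$ and $(0:0:0:1)$ only a single ruling line passes, lying in the unique plane of the double-plane tangent cone. By Lemma \ref{sl_rulingV} this identifies the two coordinate points with the branch points $q_1^s, q_2^s$, and also confirms that each plane of $\mC_q V$ is spanned by $s$ together with a ruling line through $q$.

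For the plane sections, let $Z \subset \P^3$ be a general plane through $q$. Since $Z$ does not contain any component of $\mC_q V$, one has $\mult_q(V \cap Z) = 2 = (\mult_q V)(\mult_q Z)$, so Lemma \ref{pr_interseccao_conetangente} gives $\mC_q(V \cap Z) = \mC_q V \cap Z$. This intersection is a pair of distinct lines when $\mC_q V$ is a pair of distinct planes, and a double line when $\mC_q V$ is a double plane; thus the cubic $V \cap Z$ has a node at $q$ in the first case and a cusp in the second. For general $Z$ the cubic is moreover irreducible (a reducible plane cubic would contain one of the finitely many lines of $V$ through $q$, namely $s$ and the one or two ruling lines, which is avoided for generic $Z$), yielding a nodal or cuspidal cubic respectively. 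The main obstacle is the bookkeeping identifying the two coordinate singular points with the intrinsically defined ramification points $q_1^s, q_2^s$ of $\bar\tau|_{\hat s}$; once handled, the rest follows directly from Lemma \ref{pr_interseccao_conetangente}.
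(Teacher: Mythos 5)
Your proof is correct, but it takes a genuinely different route from the paper's. The paper argues synthetically: since $V$ has multiplicity two along $s$, the tangent cone $\mC_qV$ is a quadric singular along $s$, hence a pair of (possibly coincident) planes through $s$; it must contain the ruling lines through $q$, which by Lemma \ref{sl_rulingV} number two at a general point and one at $q_1^s,q_2^s$, and in the latter case the double plane is pinned down by an intersection-multiplicity argument (a plane through $s$ lying in $\mC_qV$ meets $V$ in $2s$ plus a ruling line with total multiplicity three at $q$, forcing that ruling line into the plane). The nodal/cuspidal dichotomy is then read off by cutting the tangent cone with a general plane, essentially the same final step as yours, though the paper never invokes Lemma \ref{pr_interseccao_conetangente} explicitly. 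You instead compute the leading form $a x_0^2+b x_1^2$ at $q=(0:0:a:b)$ in the normal form of Proposition \ref{sl_projecoes_de_S(1,2)}$(i)$, which immediately gives the rank of the tangent cone, at the cost of the extra bookkeeping you flag: identifying the two coordinate points $ab=0$ with the ramification points $q_1^s,q_2^s$, which you correctly do by counting ruling lines via $e^2a+f^2b=0$ and appealing to Lemma \ref{sl_rulingV}. Your computation buys explicitness and easy verifiability; the paper's argument is coordinate-free and is the template reused for the quartic case (Lemma \ref{sc_cones_tangentes_em_V}), where an explicit normal form would be more painful. One small imprecision in your irreducibility aside: a reducible section $V\cap Z$ contains a line of $V$ that need not pass through $q$; the generality argument still works (the planes through $q$ containing some line of the one-dimensional family of lines of $V$ form a proper subfamily), but the justification should be phrased that way.
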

\begin{proof}
In both situations, $s$ is a double line of the tangent cone of $V$ in $q$.

In the first case, the tangent cone must contain both lines of the ruling through $q$, so it is the pair of planes containing $s$ and one of these lines. A general plane through $q$ cuts these planes in two distinct lines, which lie on the tangent cone of the cubic curve in $q$. Then it is a nodal point.

Suppose now that $q$ is $q_1^s$ or $q_2^s$. A plane containing $s$ cuts $V$ in $2s$ and a line of the ruling. If such plane lies on the tangent cone of $q$, this intersection has multiplicity three in $q$, so the unique line of the ruling through $q$ is contained in this plane. Then the tangent cone is the claimed double plane, name it $\Pi$.

A general plane section through $q$ is a cubic with tangent cone at $q$ contained in $\Pi$. But the cubic itself cannot lie on $\Pi$, so the double point is a cusp.
\\ \end{proof}

The next step is to determine the images via $\sigma$ of the lines $r$ and $s$. 
By Proposition \ref{sl_baselocus}, if $C_6$ does not contain these lines, it cuts  $r$ in four points and $s$ in five points. Remember that the case in which one of these is contained in $C_6$ is described in Lemma \ref{sl_r_s_em_C6}.

\begin{prop}\label{sl_imagem_r_e_s}
If $r$ is not contained in $C_6$, it is mapped by $\sigma$ to the line $\l_x$ through $x$. This line corresponds to the base point of $\II_{X,x}$, which was mapped to $r$ in the first place. If $r\subset C_6$ it is mapped to a plane and the line $r^\prime$ infinitely near to $r$ is mapped to $\l_x$. These two intersect in a point.

If $s$ is not contained in $C_6$, it is mapped to a weak Del Pezzo surface of degree four $D_4^x$ through $x$.  If $s\subset C_6$, then it is mapped to a line $L$. The line $s^\prime$ infinitely near to $s$ is mapped to a quartic surface $D_4^x$, a projection of the Veronese quartic surface having  multiplicity two along $L$.

Since $r$ and $s$ are disjoint, $\l_x$ and $D_4^x$ intersect only in $x$.
\end{prop}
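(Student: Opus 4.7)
The plan is to blow up each of the lines $r$ and $s$ (together with the infinitely-near companion that appears in the degenerate sub-cases), compute the class of the restricted linear system on the exceptional divisor, and then identify the image by mapping birationally to $\P^2$ via Lemma~\ref{pr_imagemF1F2} and applying a chain of standard quadratic Cremona transformations, exactly in the spirit of Propositions~\ref{ed_line}, \ref{ed_doubleline} and \ref{ed_doubleconic}. The key bookkeeping device at each step is the curve $V_C$ (the intersection of the strict transform of $V$ with the exceptional divisor $E_C$): it carries the base points coming from $C_6$ and is itself contracted to $x$ by $\sigma$.

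For $r \not\subset C_6$ one has $N_{r/\P^3} \cong \mO_{\P^1}(1)^{\oplus 2}$, so $E_r \cong \F_0$; since $\mult_r \X = 1$ and $\mult_r V = 1$, one obtains $\X_r \equiv (6,1)$ and $V_r \equiv (2,1)$. The four transverse points of $C_6 \cap r$ lift to four double base points of $\X_r$ lying on $V_r$, and $(2,1)\cdot(6,1) = 8 = 4\cdot 2$ exhausts $V_r\cap\X_r$. Lemma~\ref{pr_imagemF1F2}(i) followed by standard quadratic transformations reduces $\X_r$ to a pencil of lines in $\P^2$, so $E_r$ is mapped to a $\P^1$ containing $x$ (since $V_r$ collapses to $x$). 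Because a line through $x$ in $X$ pulls back under $\bar{\tau}$ to a base point of $\II_{X,x}$, and $r$ is by construction the image under $\bar{\sigma}$ of the unique base point $p$ of $\II_{X,x}$, this image must be the line $\l_x$ whose tangent at $x$ is $p$. In the sub-case $r \subset C_6$, Lemma~\ref{sl_r_s_em_C6} gives $\mult_r \X = 2$; the inequality $(5,2)\cdot(2,1) = 9 < 10$ forces $V_r = r'$ to appear as a fixed component of $\X_r$, so the residual system $\equiv (3,1)$ (with five simple base points coming from the five lines of the ruling inside $C_6$) maps $E_r$ to a plane, and a further blow-up of $r'$ followed by the analogous reduction produces $\l_x$ as the image of $E_{r'}$, meeting the plane in the single point coming from $E_r \cap E_{r'}$.

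For $s$ one always has $E_s \cong \F_0$, $\X_s \equiv (3,4)$ and $V_s \equiv (1,2)$ (using $\mult_s V = 2$), while Lemma~\ref{sl_conetangente_de_V_e_secoes_planas} supplies the local structure of $V$ near the ramification points $q_1^s,q_2^s$. In the sub-case $s \not\subset C_6$, the five points of $C_6 \cap s$ lift to double base points on $V_s$, with $(1,2)\cdot(3,4) = 10 = 5\cdot 2$; the expected degree of the image is $(3,4)^2 - 5\cdot 4 = 4$, and Lemma~\ref{pr_imagemF1F2}(i) combined with a chain of Cremona transformations reduces the system to plane cubics through six points in general position, so the image is the weak Del Pezzo quartic $D_4^x$ through $x$. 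In the sub-case $s \subset C_6$, Lemma~\ref{sl_r_s_em_C6} forces $V_s = s'$ as a double fixed component of $\X_s$; the residual $(3,4) - 2(1,2) = (1,0)$ is a free pencil of fibers contracting $E_s$ to the line $L$, and a further blow-up of $s'$ followed by the analogous reduction yields the quartic surface $D_4^x$ with $L$ as a double line, which is recognized as a projection of the Veronese quartic by inspecting its defining linear system in $\P^2$.

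Finally, $r$ and $s$ are skew in case~(i) of Proposition~\ref{sl_projecoes_de_S(1,2)}, so the exceptional divisors $E_r$ and $E_s$ (together with $E_{r'}$ and $E_{s'}$ in the degenerate sub-cases) are disjoint in the iterated blow-up; since $\sigma$ is an isomorphism off $V$ and contracts $V$ to $x$, the images $\l_x$ and $D_4^x$ can only meet at $x$. The main obstacle is the analysis of $s$: because $V$ is singular along $s$, a careful tracking of the tangent-cone degeneracies of $V$ at $q_1^s$ and $q_2^s$ through the successive Cremona transformations is what forces the image of $E_s$ (respectively $E_{s'}$) to be precisely the weak Del Pezzo of degree four, or in the $s \subset C_6$ case the specific projection of the Veronese surface with $L$ as a double line, rather than an arbitrary quartic.
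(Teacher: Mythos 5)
Your overall strategy is exactly the paper's: blow up $r$ and $s$ (and the infinitely near companions $r'$, $s'$ when they occur), read off the classes $\X_r\equiv(6,1)$, $\X_r\equiv(5,2)=r'+(3,1)$, $\X_s\equiv(3,4)$, $\X_s\equiv 2s'+\{(1,0)\}$, detect fixed components by the intersection counts $(2,1)\cdot(6,1)=8$ and $(5,2)\cdot(2,1)=9<10$, and identify the images via Lemma~\ref{pr_imagemF1F2} and standard quadratic transformations; all of that matches the paper's proof. Two corrections, one cosmetic and one substantive. Cosmetic: for $s\not\subset C_6$ the reduction lands on cubics through \emph{five} points, not six — six points would give a cubic surface, contradicting your own (correct) degree count $(3,4)^2-5\cdot 4=4$.

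The substantive gap is in the sub-case $s\subset C_6$. You assert that $E_{s'}$ maps to a projection of the Veronese quartic with multiplicity two along $L$, and you attribute the subtlety to ``tangent-cone degeneracies of $V$ at $q_1^s$ and $q_2^s$''; but that is not where the double line comes from, and your ``analogous reduction'' does not establish it. After blowing up $s'$ one gets $E_{s'}\cong\F_3$ and $\X_{s'}\equiv 2e_3+8f_3$ with four double base points, which Cremona-reduces to a system of \emph{conics}; the issue is that $\X_{s'}\cdot e_3=2$, so the negative section could a priori map isomorphically onto a conic (giving a surface with no double line) rather than two-to-one onto a line. The paper resolves this by observing that the moving part $\{(1,0)\}$ of $\X_s$ cuts the rational curve $s'$ in a linear series of degree two and projective dimension \emph{one}, hence non-complete; this incompleteness persists on $e_3=(E_s)_{s'}$ and forces it to map $2$-to-$1$ onto the line $L$, so the system of conics is non-complete and the image is a projected Veronese surface singular along $L$. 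Without this argument the claim ``having multiplicity two along $L$'' is unproved. (For the same reason, your closing remark that the main obstacle for $s\not\subset C_6$ is tracking tangent cones at $q_1^s,q_2^s$ is misplaced: the paper needs no such analysis there, only the observation that some of the five double points may become infinitely near, which does not affect the weak Del Pezzo conclusion.)
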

\begin{proof}
Remember that the base locus of $\X$ has no embedded points. Suppose first that $r$ is not contained in $C_6$.

Blowing up $r$, $\X$ cuts $E_r\cong \F_0$ in curves of type $(6,1)$ having four double points at the intersections with $C_6$. So they break in the fibres over these points plus $(2,1)$ curves with four base points. 

Applying Lemma \ref{pr_imagemF1F2}, the moving part corresponds, in $\P^2$, to cubics with five simple points and one double point. After two standard quadratic maps, we get a linear system of lines through a point. So $E_r$ is mapped to a line. The cubic $V$ cuts the exceptional divisor in a $(2,1)$-curve through the four base points, so it  is contracted to $x$.

The fixed components intersect the moving part in the base points, so they do not affect the image of $E_r$. 

Now suppose $r\subset C_6$. By Lemma \ref{sl_r_s_em_C6}, $C_6$ is the union of $r$ and five lines of the ruling. 

Blow up $r$. As already noted:
\[ \X_r \equiv (5,2) \equiv r^\prime + (3,1)	 \]
and the moving part has five base points in the intersections with the five lines. It maps $E_r$ to a plane.

To find the image of $r^\prime$, first blow up the five lines $\l_i$. Now, $r^\prime$ is the complete intersection of $E_r$ blown up at five points with $V$. In $E_r$, $(r^\prime)^2=4$, so after the blow ups we have $(r^\prime)^2=-1$. In $V$, $(r^\prime)^2=-1$. Hence, blowing up $r^\prime$ gives $E_{r^\prime}\cong \F_0$.

In $E_r$, $\X$ intersects $r^\prime$ in its five base points. So after the blow ups there is no intersection, and $\X_{r^\prime}\equiv (0,1)$. Hence $r^\prime$ is mapped to a line. This is the one line $\l_x$ through $x$, since $V_{r^\prime}\equiv (0,1)$ is contracted to $x$. Moreover, $(E_r)_{r^\prime}\equiv (0,1)$, so the line intersects the plane in one point.

This proves the first part.

For the second part, suppose first that $s$ is not contained in $C_6$.

Consider the blow up at $s$. $\X_s$ is made of curves of bidegree $(3,4)$, having five double points at the intersections with $C_6$. Depending on $C_6$, some of these points can be infinitely near. The linear system is birationally equivalent to cubics in $\P^2$ with five base points. $V$ intersects the exceptional surface in a $(1,2)$ curve through the five points, which is contracted to $x$. Then $s$ is mapped to a weak Del Pezzo quartic surface through $x$. 

Now suppose $s\subset C_6$. By Lemma \ref{sl_r_s_em_C6}, $C_6$ is the union of $2s$ and four lines of the ruling. Blowing up $s$, we have:
\[ \X_s \equiv 2s^\prime + \{ (1,0) \} \equiv (3,4) \]
 where $s^\prime=V_s\equiv (1,2)$. The moving lines of type $(1,0)$ cut the rational curve $s^\prime$ in a linear series of degree two and projective dimension one. This linear series is not complete, otherwise it would have dimension two.

In $E_s$, $(s^\prime)^2=4$. In $V$, $(s^\prime)^2=1$, since the self-intersection of a conic in $S(1,2)$ is $1$.
So blowing up $s^\prime=V\cap E_s$ gives $E_{s^\prime}\cong \F_3$. $\X_{s^\prime}$ cuts $(E_s)_{s^\prime}\equiv e_3$ in two points, cuts a fiber $f_3$ in two points and cuts $V_{s^\prime}\equiv e_3+3f_3$ in four fixed double points. These double points are the intersections of the four double lines of $\X$ with $E_{s^\prime}$. Then:
\[ \X_{s^\prime}\equiv 2e_3+8f_3 \]
having four double base points. 

Using Lemma \ref{pr_imagemF1F2}, $\X_{s^\prime}$ corresponds, in $\P^2$, to degree eight curves with four double points, one point of multiplicity six and two double points infinitely near to it. After three standard quadratic transformations, one maps these curves to conics with no base points. 

As noted above, before the blow up at $s^\prime$, the moving part of $\X_s$ intersected it in a non complete linear series. Then, after this blow up, the restriction of $\X_{s^\prime}$ to $(E_s)_{s^\prime}\equiv e_3$ is a non complete linear series of degree two and dimension one, so it maps the $(-3)$ section to a double line, instead of a conic. Therefore, the base point free linear system of conics we have found is not complete and $\X_{s^\prime}$ maps $E_{s^\prime}$ to a projection of a Veronese surface with a double line in the image of $e_3$.
\\ \end{proof}

\subsection{Singularities of $X$}
\label{sl_singularities_sec}

Note that by hypothesis (H) $X$ is normal, therefore regular in codimension one.  Remember that we are supposing that the base locus of $\X$ has no embedded points. The following result will guide our search for singularities of $X$.

\begin{lemma}\label{sl_singularidades_vem_de_C6}
Let $x_q$ be an isolated singular point or a general point in a singular curve of $X$. Then $x_q$ does not lie on the base locus of $\tau$ and it is mapped to a singular point $q$ of $C_6$ not lying on $r$. This includes general points in non reduced components of $C_6$.
\end{lemma}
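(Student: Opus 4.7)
The plan is to follow closely the strategy of the analogous Lemma \ref{ed_singularidades_vem_de_C4} from the quadric case, adapted to the present setting. First I will show that $\tau$ is defined at $x_q$, equivalently $x_q\notin T_xX$. For an isolated singular point this follows from Terracini's Lemma applied to the Bronowski variety $X$ together with the generality of $x$: the set of $x\in X$ for which $x_q\in T_xX$ is a proper closed subvariety, since otherwise the tangential variety of $X$ would fill $\P^7$, contradicting the non secant-defectivity of $X$. When $x_q$ is a general point of a singular curve $D\subset X$, a similar argument applies, since the locus of $x$ such that $T_xX$ passes through a general point of $D$ is still proper. Setting $q:=\tau(x_q)\in\P^3$, we obtain a well defined image.

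Next I will argue that $q$ must lie in the base locus of $\X$, which by Proposition \ref{sl_baselocus} coincides with $r\cup s\cup C_6$. If $q$ were not in this locus, then $\sigma$ would be a morphism on a neighborhood of $q$; combined with $\tau$ being a morphism at $x_q$ and the fact that $\sigma$ and $\tau$ are mutually inverse birational maps, this would yield an isomorphism between a smooth neighborhood of $q\in\P^3$ and a neighborhood of $x_q\in X$, contradicting the assumed singularity of $X$ at $x_q$. I will then rule out $q\in r$ and $q\in s$ (the latter when $s\not\subset C_6$) by invoking Proposition \ref{sl_imagem_r_e_s}: the images under $\sigma$ of the exceptional divisors associated with $r$ and $s$ are, respectively, the line $\l_x$ (or a plane through $x$) and the quartic surface $D_4^x$ through $x$, each of which depends on the choice of the general point $x$. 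Therefore a fixed singular point $x_q$ of $X$ cannot lie on these images for general $x$. The remaining borderline case $s\subset C_6$ with $q\in s$ is one of the configurations explicitly covered by the statement, since $s$ is then a non reduced component of $C_6$.

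Finally, I show that $q$ is a singular point of $C_6$ in the broad sense (including general points on non reduced components). Suppose by contradiction that $q$ is a smooth reduced point of $C_6$ off $r\cup s$. Because $\X$ has multiplicity two along $C_6$, the tangent cones at $q$ of surfaces of $\X$ form a linear system of conics on $E_q\cong\P^2$ all containing the point corresponding to the tangent direction of $C_6$; this system is not fixed, so $E_q$ is mapped by $\sigma$ to a positive-dimensional (in fact one-dimensional) variety, and the exceptional divisor $E_{C_6}$ is mapped near $q$ to a smooth surface in $X$. Hence $x_q$ would be a smooth point of $X$, the desired contradiction. I expect this last step to be the most delicate, since one must verify that no unexpected fixed component of the tangent cone system is forced along $C_6$ beyond what Proposition \ref{sl_baselocus} records, which is precisely what ensures that the image of $E_q$ is a genuine conic rather than a point.
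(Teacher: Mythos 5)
Your outline tracks the paper's proof for the first half: Terracini's Lemma gives $x_q\notin T_xX$, and the fact that $\sigma$ is an isomorphism off $V$ while $V$ minus the base locus of $\X$ is contracted to the smooth point $x$ forces $q$ into the base locus $r\cup s\cup C_6$. (Your justification of the Terracini step is imprecise: the point is not that the tangential variety would fill $\P^7$, but that two general tangent spaces of a non-defective threefold in $\P^7$ are disjoint, so a fixed point cannot lie on a general $T_xX$.) The exclusion of $q\in r$ also goes through, though the paper's route is more direct: $\sigma(E_r)=\l_x\subset T_xX$ and $x_q\notin T_xX$.

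The genuine gap is your treatment of $q\in s$. You propose to rule out $q\in s$ (for $s\not\subset C_6$) on the grounds that $D_4^x=\sigma(E_s)$ depends on the general point $x$, so a fixed $x_q$ cannot lie on it. This fails, and the conclusion it would yield is false. By Lemma \ref{sl_pencil_mQ} the surfaces $D_4^x$ form a pencil $\mQ^\prime$ whose base locus is a line $L$, so every point of $L$ lies on $D_4^x$ for \emph{every} choice of $x$; and by Proposition \ref{sl_ptos_triplos_mQ'} the triple points of $X$ --- which are exactly the images of singular points of $C_6$ lying on $s$, cf. Proposition \ref{sl_singularidades_em_C6.s} --- lie on $L$. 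So singular points of $C_6$ on $s$ do produce singularities of $X$, and your argument would erroneously exclude them, contradicting the rest of the chapter. What actually has to be proved here (and is the delicate part of the paper's proof) is that a point of $s\setminus C_6$, or a \emph{smooth} point of $C_6$ lying on $s$, does not produce a singularity: the paper does this by intersecting $\X$ with a general plane $\Omega$ through $q$, checking that the base points at $q$ consist of a point of multiplicity four plus exactly one double point infinitely near to it (smoothness of $C_6$ at $q$ excluding further infinitely near double points), so that the exceptional curves over $q$ map to conics and $\sigma(\Omega)$ --- a general tangent hyperplane section through $x_q$ --- is smooth at $x_q$. Your final step only covers smooth points of $C_6$ off $r\cup s$, where $\X$ has multiplicity two and the tangent-cone analysis is easy; the case $q\in s$ is precisely the one your case division loses.
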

\begin{proof}
If $x_q$ is an isolated singular point of $X$, by Terracini's Lemma it cannot lie on a general tangent space of $X$, therefore it does not lie on the base locus of $\tau$. For the same reason, a singular curve cannot lie on a general tangent space, so a general point of it does not lie on the base locus of $\tau$. It may happen though that $T_xX$ contains one or more points of this curve, but it does not contain general points.

Since $\sigma$ (the inverse of $\tau$) restricted to $\P^3\setminus V$ is an isomorphism, $q=\tau(x_q)$  is contained in $V$. Since $\sigma(V)=x$ and $X$ is smooth in $x$, $q$ lies on the base locus of $\X$. Remember that this base locus has no isolated points. In other words, $x_q$ lies on a surface in $X$ that is contracted by $\tau$ to a curve in $V$.

The line $r$ is mapped by $\sigma$ to the line $\l_x$ through $x$, which lies on $T_xX$. Therefore $q\notin r$.

If $q$ is not a singular point of $C_6$, we will prove that $x_q$ is a smooth point of a general tangent hyperplane section of $X$ at $x$ through $x_q$. In particular, $x_q$ is not a singular point of $X$. 

Since $x_q$ is either an isolated singular point or a general point of a singular curve of $X$, a general tangent hyperplane section of $X$ at $x$ through $x_q$ is mapped by $\tau$ to a general plane through $q$.
So, in order to prove the assertion, we study the image of a general plane $\Omega$ through $q$. 

Note that a general plane is cut by $\X$ in degree seven curves having six double points, one point of multiplicity four and one simple base point. This is the case of $\Omega$ if $q$ is a smooth point of $C_6$ not lying on $s$ or a point in $s\setminus C_6$, so this plane is mapped to a smooth hyperplane section of $X$.

Suppose then that $q$ is a smooth point of $C_6$ lying on $s$ (in particular, $s\not\subset C_6$). Then $\X\cap\Omega$ consists of degree seven curves with multiplicity four in $q=s\cap \Omega$, six double points in $C_6\cap \Omega$ and one simple point in $r\cap\Omega$. Since $q\in C_6$, one of the six double points $q^\prime$ lies infinitely near to $q$. This can also be verified by blowing up $q$ and noting that $\X_q$ consists of the line through $s_q$ and $(C_6)_q$ with multiplicity two and two moving lines through $s_q$ (see the proof of Lemma \ref{sl_intersecoes_de_C6_com_2retas_do_ruling}). Since $\Omega_q$ is a general line of $E_q$, $\X_q$ intersects it in a fixed double point and two moving points. This explains the double point $q^\prime$ of $\X\cap\Omega$ infinitely near to $q$. Since $q$ is a smooth point of $C_6$, $\X\cap\Omega$ has no other double points infinitely near to $q$ or $q^\prime$.

Now it easily follows that the image of $\Omega$ is not singular in $x_q$. Blowing up $q$, the exceptional curve $\Omega_q\subset \Omega$ is mapped to a conic. Blowing up $q^\prime$, this second exceptional curve is also mapped to a conic. Then none of these curves are contracted. Moreover, $\sigma$ is an isomorphism outside $V$, so the only curve in $\Omega$ through $q$ that is contracted is $V\cap\Omega$, which is mapped to the smooth point $x$. Hence $x_q$ is a smooth point in the image of $\Omega$.
\\ \end{proof}

We will begin studying the singularities of $C_6$ that do not lie on $s$:

\begin{prop}\label{sl_pontos_duplos_de_X}
Let $q\in C_6$ be a singular point of $C_6$ outside $s$. Then $q$ is mapped to a double point $x_q$ of $X$.  Moreover, if $q$ does not lie on a non reduced component of $C_6$, then the tangent cone of $X$ in $x_q$ has rank four. 
\end{prop}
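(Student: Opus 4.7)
The plan is to mimic the argument of Lemma \ref{ed_singularidades_sao_duplas} and the local analysis in Proposition \ref{ed_singularidades} from Chapter 2. Since $q\notin s$ by hypothesis and $q\notin r$ by Lemma \ref{sl_singularidades_vem_de_C6}, the surface $V$ is smooth at $q$ and the only base condition on $\X$ near $q$ is multiplicity two along $C_6$. I would choose local coordinates $(x,y,z)$ in $\P^3$ centered at $q$ so that the tangent plane of $V$ at $q$ is $\{z=0\}$; then $I_{C_6}=(z,g(x,y))$ locally, where $g$ cuts out the singularity of $C_6$ on $V$. Since $z^2\in I_{C_6}^{(2)}$, a generic element $S\in\X$ has multiplicity exactly two at $q$, with tangent cone the fixed double tangent plane $\{z^2=0\}$.

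For the first assertion, take a general plane $\Omega$ through $q$ and apply Lemma \ref{pr_truque_secao_tgente}. The restriction $\X|_\Omega$ has multiplicity two at $q$ with fixed tangent cone a double line (the intersection of the tangent plane of $V$ at $q$ with $\Omega$). Blowing up $q$ in $\Omega$ produces an exceptional $(-1)$-curve $e_q$ meeting the strict transform of $\X|_\Omega$ in a fixed double point at the tangent direction; blowing up this infinitely near point, $e_q$ becomes a $(-2)$-curve with zero intersection with the moving part of the strict transform, hence it contracts to a double point of $\sigma(\Omega)$ at $x_q$. Lemma \ref{pr_truque_secao_tgente} then gives $\mult_{x_q}X=2$. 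This argument is identical in structure to the one in Lemma \ref{ed_singularidades_sao_duplas}.

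For the rank-four assertion, I would treat the model case of a transversal node, $g(x,y)=xy$, which is the generic reduced singularity. The blow up of $\P^3$ along $C_6$ is then locally the threefold $uz-xyv=0$ in $\C^3\times\P^1$, with isolated singular point at $(0,0,0)\times(0:1)$ of local equation $uz-xy=0$, a rank-four quadric cone---this is precisely the case treated in Proposition \ref{ed_singularidades}. Since $\X$ has multiplicity two along $C_6$ and no other base locus near $q$, the map $\sigma$ factors through this blow up (after an additional blow up absorbing the fixed double tangent plane $\{z^2=0\}$, exactly as in Chapter 2); a local calculation then shows that the strict transform of $\X$ is base point free in a neighborhood of the singular point over $q$, so $\sigma$ is a local isomorphism there, and the tangent cone of $X$ at $x_q$ is projectively equivalent to $\{uz-xy=0\}$, a quadric of rank four.

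The principal obstacle is the verification that, after the sequence of blow ups resolving the indeterminacy of $\sigma$ near $q$, no residual base locus of $\X$ survives at the singular point of the blow up lying over $q$---this is what legitimises the identification of the local structure of $X$ at $x_q$ with that of the blow up of $\P^3$ along $C_6$. It is a direct but careful affine computation in the local charts of the successive blow ups, entirely analogous to the one carried out in Section \ref{ed_singularities_sec} of Chapter 2.
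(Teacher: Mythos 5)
Your treatment of the first assertion is essentially the paper's own: a general plane $\Omega$ through $q$, the fixed double point of $\X\cap\Omega$ on $E_q\cap\Omega$ coming from $\X_q=2V_q$, the resulting $(-2)$-curve disjoint from the moving part, and Lemma \ref{pr_truque_secao_tgente}. That part is correct.

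For the rank-four assertion you take a genuinely different route, and it has a real gap. The paper does not redo the local computation of Section \ref{ed_singularities_sec}; it applies Lemma \ref{pr_truqueconetangente}: the subsystem $\wX\subset\X$ of members with multiplicity three at $q$ cuts $E_q$ in the fixed line $t=V_q$ plus conics through $\l_q$ and $C_q$, so $E_q$ is mapped to a smooth quadric surface, which is then a component of the projectivized tangent cone; since $\mult_{x_q}X=2$ it is all of it, whence rank four. Your route has two concrete problems. First, the premise that ``the only base condition on $\X$ near $q$ is multiplicity two along $C_6$'' fails in one of the two cases the statement must cover: by Corollary \ref{sl_decomposicoes_de_C6} and Lemma \ref{sl_r_s_em_C6}, $q$ may be the transversal intersection of a line of the ruling with $r\subset C_6$, in which case $\X$ carries an additional simple base line $r^\prime$ infinitely near to $r$, sitting over $q$ as well; the nodal model with ideal $(z,xy)$ does not see it, and the paper's proof has to treat this extra infinitely near base point explicitly. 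Second, the step you defer as ``a direct but careful affine computation'' is exactly the substance of the claim: one must show that the strict transform of $\X$ on $\Bl_{C_6}\P^3$ defines a local embedding at the singular point over $q$, that nothing through that point is contracted, and that this singular point is the one mapping to $x_q$ --- note that the fibre of $\Bl_{C_6}\P^3$ over $q$ is \emph{not} contracted by $\sigma$ (it is mapped to a conic through $x_q$, cf.\ the analysis of $E_t\cap E_{C_4}$ in Proposition \ref{ed_ratquartic}), so the identification of a neighbourhood of $x_q$ in $X$ with a neighbourhood in the blow up is not automatic. Carrying this out would amount to an argument of the same strength as Lemma \ref{pr_truqueconetangente}, which is precisely why the paper invokes that lemma: it extracts a degree-two component of the projectivized tangent cone from a purely two-dimensional computation on $E_q$, and the multiplicity bound from the first part then closes the argument in both cases at once.
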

\begin{proof}
By Corollary \ref{sl_decomposicoes_de_C6}, $C_6$ is either: the union of $r$ and five lines of the ruling of $V$; the union of $2s$ and four lines of the ruling; or the union of a curve of degree $d$, which is smooth outside $s$, and $6-d$ lines of the ruling. As seen in the proof of this result, this curve is of type $(d-1,d-2)$, so it intersects a line of the ruling in one point (possibly in a second point lying on $s$, since $\bar{\tau}$ is $2:1$ in $s$). Therefore each intersection outside $s$ of this degree $d$ curve with a line of the ruling is transversal.

Therefore, if $q$ is a singular point of $C_6$ not lying on $s$, it is either:
\begin{itemize}
\item[(a)] the transversal intersection of a line of the ruling of $V$ and an irreducible curve of degree $d>1$, which is smooth outside $s$;
\item[(b)] the transversal intersection of a line of the ruling of $V$ and $r\subset C_6$;
\item[(c)] a point in a line of the ruling of $V$ which is a non reduced component of $C_6$ (and possibly (a) or (b)).
\end{itemize}

Remember that, by Proposition \ref{sl_baselocus}, $\X$ has multiplicity two in $C_6$. Then in all three cases above, blowing up $q$,  $\X_q$ has degree two and has two or more double points. These points lie on the line $V_q$, since $V$ is smooth outside $s$. Then $\X_q=2V_q$ and $q$ is mapped to a point $x_q\in X$.

To prove that $x_q$ is a double point, we'll use Lemma \ref{pr_truque_secao_tgente}. Let $\Omega$ be a general plane containing $q$. The curves $\X\cap \Omega$ have multiplicity two in $q$. Blowing up $q$, there is another double point in $e=E_q\cap \Omega$, since $\X_q=2V_q$. Blowing up this point, we get $e^2=-2$ in $\Omega$, with $\X\cap \Omega$ having no intersection with $e$. So $e$ is mapped to a double point in the image of $\Omega$. There may be other infinitely near double points. By Lemma \ref{pr_truque_secao_tgente}, $x_q$ is a double point of $X$.

Now we have to prove that, in cases (a) and (b), the tangent cone of $X$ in $x_q$ is a rank four quadric cone. This will be done using Lemma \ref{pr_truqueconetangente}.

Let $\l$ be the line of the ruling through $q$ and let $C$ be the other component of $C_6$ through $q$. In case (b), $C=r$ and $\X$ contains a line $r^\prime$ infinitely near to $r$ (see Proposition \ref{sl_imagem_r_e_s}). Let $\wX$ be the linear system of surfaces in $\X$ having multiplicity three in $q$. 

Blowing up $q$, in both cases $\wX_q$ has multiplicity two in $\l_q$ and $C_q$. In case (b) it has a further base point infinitely near to $C_q$, corresponding to $r^\prime$. Note that $C_q$ and $\l_q$ are not infinitely near, since the intersection of $C$ and $\l$ is transversal. All base points must lie on $t=V_q$, which is a line, since $q\notin s$. Then $\wX_q$ must contain $t$ and:
\[ \wX_q=t+\{ \text{conics through $\l_q$ and $C_q$} \} \]
The moving conics map $E_q$ to a smooth quadric surface.

Since we know that $x_q$ is a double point of $X$, Lemma \ref{pr_truqueconetangente} implies that the projectivization of the tangent cone of $X$ in $x_q$ is a smooth quadric surface. Therefore the rank of the tangent cone is four.
\\ \end{proof}

Next, we discuss the case in which $q$ lies on $C_6\cap s$. We will first study the intersection of a component of $C_6$ with the lines of $V$ through a smooth point of this component.

\begin{lemma}\label{sl_intersecoes_de_C6_com_2retas_do_ruling} 
Let $\l$ and $\l^\prime$ be distinct lines of the ruling of $V$ intersecting in a point $q\in s$. Let $C$ be an irreducible component of $C_6$ having degree $d>1$ and being smooth in $q$. Then, besides $q$, $C$ intersects only one of these lines, say $\l$, in another point.

Moreover, after the blow up at $q$, $E_q$ is mapped to a point in $X$ if and only if $l\subset C_6$. If this is not the case, $\X_q$ has a fixed double line $t^\prime$. On the other hand, if $\l^\prime\subset C_6$ then, blowing up $s$ and $t^\prime$, $E_{t^\prime}$ is mapped to a point in $X$.
\end{lemma}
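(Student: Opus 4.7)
The plan is to address the three claims in turn. For the first claim, I would work in the normalization $\widetilde{E} \cong S(1,2) \cong \F_1$ of $V$, where $\hat{C}$---the preimage of the irreducible component $C \subset C_6$ of degree $d$---has class $e + (d-1)f$, since by Corollary \ref{sl_decomposicoes_de_C6} the corresponding plane curve in $E$ is of type $(d-1, d-2)$. The two preimages $a, a' \in \hat{s}$ of $q$ correspond to the fibers $\hat{\l}, \hat{\l}'$ through $p$. Since $\hat{C}$ is irreducible and $\hat{C} \cdot f = 1$, each of $\hat{\l}, \hat{\l}'$ meets $\hat{C}$ in a single point. Smoothness of $C$ at $q$ in $V$ forces $C$ to have one branch at $q$, so exactly one of $a, a'$ lies on $\hat{C}$ (two branches would produce a node). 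Say $a \in \hat{C}$: then $\hat{C} \cap \hat{\l} = \{a\}$ maps to $q$, giving $C \cap \l = \{q\}$, while $\hat{C} \cap \hat{\l}' = \{b'\}$ with $b' \ne a'$ maps to a point $\bar{\tau}(b') \in \l'$ distinct from $q$. Relabeling to fit the lemma's convention (with $\l$ the line carrying the extra intersection) establishes the first claim.

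For the contraction criterion in the second claim, I would pick analytic coordinates $(x, y, z)$ centered at $q$ so that $s = \{x=y=0\}$ and the two local branches of $V$ are $\Pi_1 = \{x = 0\}$ (containing $C$ and $\l'$) and $\Pi_2 = \{y = 0\}$ (containing $\l$). Every $f \in \X$ satisfies $f \in (x, y)^4 \cap I(C)^2$, and additionally $f \in I(\l_i)^2$ for each line $\l_i \subset C_6$ passing through $q$. The degree-$4$ leading form $f_4$ of $f$ at $q$ lies in the span of $\{x^i y^{4-i}\}_{i=0}^4$; working modulo $I(C)^2 = (x, y - g(z))^2$ for the local equation $y = g(z)$ of $C$ in $\Pi_1$ kills the coefficients of $xy^3$ and $y^4$, leaving $f_4 = x^2 q(x, y)$ for $q$ an arbitrary binary quadratic. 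If $\l \subset C_6$, the analogous computation modulo $I(\l)^2 = (y, x - h(z))^2$ further kills $x^4$ and $x^3 y$, leaving $f_4 = c \cdot x^2 y^2$ up to scalar, so $\X_q$ has no moving part and $E_q$ contracts to a point. If $\l \not\subset C_6$, the form $f_4 = x^2 q(x, y)$ persists and $\X_q = 2t' + (\text{moving }\P^2\text{-family of conics through }s_q)$, with $t' = \{x = 0\} \subset E_q$ the fixed double line. The condition $\l' \subset C_6$ does not affect this, because $x \in I(\l')$ forces $x^2 \in I(\l')^2$ automatically, so no new constraint arises at the level of $f_4$.

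For the last claim, assume $\l' \subset C_6$ and $\l \not\subset C_6$. The moving part of $\X_q$ defines a rational map $E_q \tor \P^2$, $[x:y:z] \mapsto [x^2 : xy : y^2]$, whose image is the conic $\{AC = B^2\}$ and whose fibers are the lines through $s_q$; in particular $t'$ is already contracted to the point $[0:0:1]$ on the conic. I would blow up $q$, then the strict transform of $s$ (which meets $E_q$ at $s_q \in t'$), and finally $t'$, and describe $\X_{t'}$ on $E_{t'}$ as a divisor class. The point $\l'_q \in t'$ where the strict transform of $\l'$ meets $E_q$ lies on $t'$ because $\l' \subset \Pi_1 = \{x = 0\}$; the mult-$2$ condition along $\l'$ therefore contributes an extra mult-$2$ base point on $t'$. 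Combined with the mult-$4$ condition from $s$ meeting $t'$ at $s_q$ and the double fixed component of $\X_q$ along $t'$, I expect these constraints to absorb the entire moving part of $\X_{t'}$, so that applying Lemma \ref{pr_imagemF1F2} to translate $\X_{t'}$ into a linear system on $\P^2$ and performing standard quadratic transformations yields a fixed divisor class and $E_{t'}$ contracts to a point in $X$. The hardest step is this last one: carefully tracking normal bundles and multiplicities through the three blow-ups and verifying that $\X_{t'}$ has no moving part requires bookkeeping analogous to the computations in Section \ref{sl_general_sec}.
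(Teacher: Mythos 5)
Your treatment of the first two claims is correct and essentially coincides with the paper's argument. For the first claim, the paper works with the plane model (a curve of type $(d-1,d-2)$ in $E$) rather than the class $e+(d-1)f$ on $\F_1$, but the content is identical: $\hat{C}\cdot f=1$ together with smoothness of $C$ at $q$ forces $\hat{C}$ to contain exactly one of the two preimages of $q$ on $\hat{s}$. For the second claim, your computation of the leading form $f_4$ in local coordinates is a coordinate version of the paper's synthetic argument that $\X_q$ is a quartic with a point of multiplicity four at $s_q$ and a double point at $C_q\in t^\prime$, hence contains $2t^\prime$, and acquires the second fixed double line $2t$ exactly when $\l\subset C_6$; your observation that $\l^\prime\subset C_6$ imposes nothing new on $f_4$ is also right.

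The third claim, however, is not proved: you only say you \emph{expect} the listed constraints to absorb the moving part of $\X_{t^\prime}$, and the constraints you list would not suffice. After blowing up $q$, $s$ and $t^\prime$ one finds $N_{t^\prime}=\mO_{\P^1}(-1)\oplus\mO_{\P^1}(0)$, so $E_{t^\prime}\cong\F_1$ with $(E_q)_{t^\prime}\equiv e_1$, and since the moving part of $\X_q$ meets $t^\prime$ only at $s_q$, which has been blown up, one gets $\X_{t^\prime}\equiv 2e_1+2f_1$. This class has $h^0=6$, so the single double base point you mention (coming from $\l^\prime$) still leaves a two-dimensional moving system, under which $E_{t^\prime}$ would map to a surface rather than a point. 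What closes the argument is that $\X_{t^\prime}$ has \emph{two} double points on the irreducible curve $V_{t^\prime}\equiv e_1+f_1$: one from $\l^\prime$ and one from $C$, which meets $t^\prime$ precisely because $C_q\in t^\prime$, the fact you established in part two. Since $(2e_1+2f_1)\cdot(e_1+f_1)=2$, two double points on $V_{t^\prime}$ force first $V_{t^\prime}$ and then $2V_{t^\prime}$ to split off as a fixed component, leaving no moving part. You need to include the base point contributed by $C$ and carry out this intersection-theoretic bookkeeping; as written, the decisive step of the third claim is missing.
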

\begin{proof}
As seen in the proof of Corollary \ref{sl_decomposicoes_de_C6}, $C$ is the image via $\bar{\tau}$ of a curve $\hat{C}$ in $E\cong \P^2$ of degree $d-1$ having multiplicity $d-2$ in $p$. The two lines $\l$ and $\l^\prime$ are images of lines $\hat{\l}$ and $\hat{\l}^\prime$ through $p$. 

Let $\hat{q}=\hat{\l}\cap \hat{s}$ and $\hat{q}^\prime=\hat{\l}^\prime\cap \hat{s}$, which are different points, since $\l$ and $\l^\prime$ are distinct. Then both $\hat{q}$ and $\hat{q}^\prime$ are mapped by $\bar{\tau}$ to $q\in s$. Since $C$ is smooth in $q$, $\hat{C}$ passes through only one of these points, say $\hat{q}^\prime$. Therefore:
\begin{align*}
\hat{C} \cap \hat{\l} &= (d-2)p+\{ \text{one point} \} \\ 
\hat{C} \cap \hat{\l}^\prime &= (d-2)p+ \hat{q}^\prime
\end{align*} 
Then it follows that $C$ intersects $\l$ in $q$ and in a second point and intersects $\l^\prime$ only in $q$. This proves the first part.

By Lemma \ref{sl_conetangente_de_V_e_secoes_planas}, the tangent cone of $V$ in $q$ is the union of the plane $\Pi$ containing $s$ and $\l$, and the plane $\Pi^\prime$ containing $s$ and $\l^\prime$. In particular, the tangent line $T_qC$ of $C$ in $q$ lies on one of these planes. 

Since $C$ is of type $(d-1,d-2)$, it intersects $s$ in $d-1$ points, including $q$. Note that $T_qC$ does not lie on $\Pi\setminus s$. Indeed, this would imply that:
\[ C\cap \Pi = 2q+\{d-2\text{ points}\}+q^\prime \]
where $q^\prime$ is the second point of intersection of $C$ with $\l$. Then $C$ would be contained in $\Pi$. But $V\cap \Pi=2s+l$. Hence $T_qC\in \Pi^\prime$.

Blow up $q$. Set $t=\Pi_q$ and $t^\prime={\Pi^\prime}_q$. Then $\X_q$ has multiplicity four in $s_q=t\cap t^\prime$ and multiplicity two in $C_q\in t^\prime$. 

If $l\subset C_6$, then $\X_q$ has also a double point in $\l_q\in t$, which implies that:
\[ \X_q= 2t+2t^\prime \]
and $E_q$ is mapped to a point.

On the other hand, if $\l$ is not contained in $C_6$ the only other possible components through $q$ are $\l^\prime$ and lines infinitely near to it (see Corollary \ref{sl_decomposicoes_de_C6}). So $\X_q$ will only have further double points in $t^\prime$ and:
\[ \X_q= 2t^\prime+ \{\text{pairs of lines through }s_q\} \]
which maps $E_q$ to a conic.

Now suppose $\l^\prime\subset C_6$. Blow up $s$, which has multiplicity four for $\X$. Now, the line $t^\prime={\Pi^\prime}\cap E_q$ has normal bundle:
\[ N_{t^\prime} = \mO_{\P^1}(-1)\oplus \mO_{\P^1}(0) \]

Blow up $t^\prime$. In $E_{t^\prime}\cong \F_1$, $\X_{t^\prime}$ has no intersection with $(E_q)_{t^\prime}\equiv e_1$, then $\X_{t^\prime}\equiv 2e_1+2f_1$. It has double points in the intersections with $\l^\prime$ and $C$, so it is a fixed double curve and $E_{t^\prime}$ is mapped to a point.
 
Figure \ref{sl_figura_q_em_C.s} illustrates these blow ups when $l\not\subset C_6$, but $l^\prime\subset C_6$.
\begin{figure}
\centering
\includegraphics[trim=10 10 110 560,clip,width=1\textwidth]{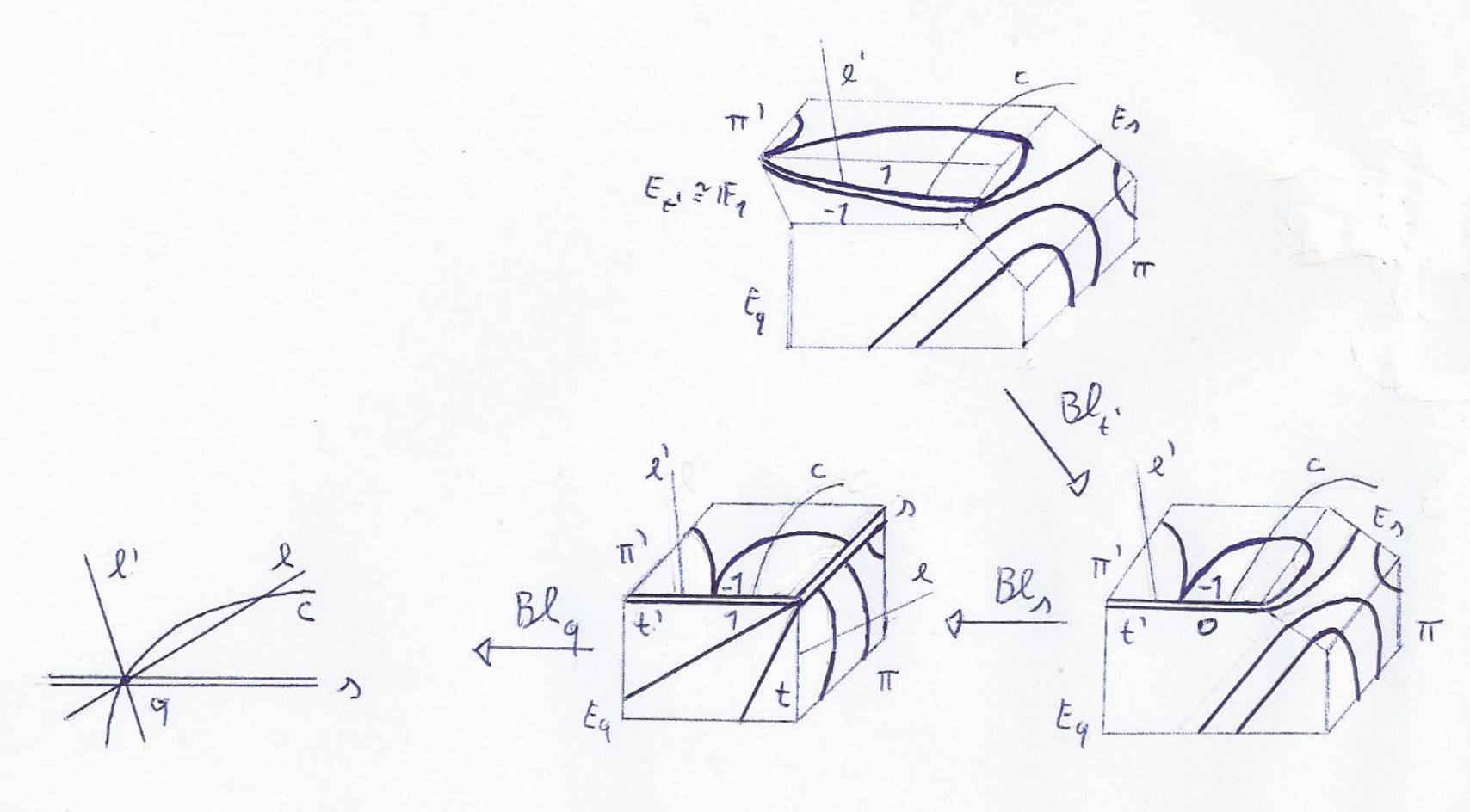}
\caption{Blow up of $\X$ in $q\in C\cap s$ when $\l\not\subset C_6$ and $\l^\prime\subset C_6$}\label{sl_figura_q_em_C.s}
\end{figure}
\\ \end{proof}

Before studying the next type of singularity, we give a Lemma and make some considerations:

\begin{lemma}\label{sl_cubicas_por_C6}
Let $C$ be a non-degenerate reduced irreducible curve of degree six in $\P^3$. Suppose that either $C$ is rational and has a 5-secant line or that $p_a(C)\geq 1$. Then there are at least two distinct cubic surfaces containing $C$.
\end{lemma}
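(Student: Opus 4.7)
The plan is to estimate $h^0(\P^3,\mathcal{I}_C(3))$ from the restriction sequence
\[ 0\to\mathcal{I}_C(3)\to\mathcal{O}_{\P^3}(3)\to\mathcal{O}_C(3)\to 0, \]
which yields $h^0(\mathcal{I}_C(3))\geq 20-h^0(\mathcal{O}_C(3))$, and to show that the right-hand side is at least $2$. A non-degenerate integral sextic in $\P^3$ has $p_a(C)\leq 4$ by Castelnuovo's bound, and since $C$ is reduced irreducible of pure dimension one it is Cohen--Macaulay; duality then gives $H^1(\mathcal{O}_C(3))=0$ because $\deg\mathcal{O}_C(3)=18>2p_a(C)-2$. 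Hence by Riemann--Roch $h^0(\mathcal{O}_C(3))=19-p_a(C)$.

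If $p_a(C)\geq 1$, this already yields $h^0(\mathcal{O}_C(3))\leq 18$ and therefore $h^0(\mathcal{I}_C(3))\geq 2$. The harder case is when $C$ is smooth rational (so $p_a(C)=0$) and carries a $5$-secant line $\l$: then the direct estimate is too weak and $\l$ must be used. The key observation is that every cubic $S$ containing $C$ automatically contains $\l$: for any hyperplane $H$ through $\l$, the plane cubic $S\cap H$ contains the length-$5$ scheme $C\cap\l\subset\l$, and by Bezout a plane cubic meeting a line in five points must contain that line; since this holds for every $H\supset\l$, it follows that $\l\subset S$.

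Consequently $H^0(\mathcal{I}_C(3))=H^0(\mathcal{I}_{C\cup\l}(3))$, and it suffices to bound $h^0(\mathcal{O}_{C\cup\l}(3))$ from above. I will do this with the Mayer--Vietoris sequence
\[ 0\to\mathcal{O}_{C\cup\l}(3)\to\mathcal{O}_C(3)\oplus\mathcal{O}_\l(3)\to\mathcal{O}_{C\cap\l}(3)\to 0. \]
Since $C\cong\P^1$ with line bundle $\mathcal{O}_C(3)$ of degree $18$, the restriction $H^0(\mathcal{O}_C(3))\to H^0(\mathcal{O}_{C\cap\l}(3))$ is surjective (its kernel fits in $0\to\mathcal{O}_{\P^1}(13)\to\mathcal{O}_{\P^1}(18)\to\mathcal{O}_{C\cap\l}\to 0$ and $H^1(\mathcal{O}_{\P^1}(13))=0$), so the connecting map vanishes and $h^0(\mathcal{O}_{C\cup\l}(3))=19+4-5=18$. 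This gives $h^0(\mathcal{I}_C(3))\geq 2$ as required.

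The main obstacle is the smooth rational subcase: the naive cohomological count is off by one, and one must notice that the $5$-secant forces every cubic through $C$ to contain $\l$, producing the extra condition needed to get a second independent cubic via the Mayer--Vietoris bookkeeping on $C\cup\l$. The other ingredients (Castelnuovo's bound, Cohen--Macaulayness of integral curves in $\P^3$, and very ampleness of $\mathcal{O}_C(3)$ on the rational component $C$) are standard and fit together cleanly once the $5$-secant argument is in place.
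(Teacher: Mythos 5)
Your proof is correct, and in the key case it takes a genuinely different route from the paper. Both arguments begin identically: the restriction sequence plus Riemann--Roch (with $H^1(\mO_C(3))=0$ from $\deg\mO_C(3)=18>2p_a(C)-2$) gives $h^0(\mathcal{I}_{C}(3))\geq 1+p_a(C)+h^1(\mathcal{I}_{C}(3))$, which settles the case $p_a(C)\geq 1$ at once. For the smooth rational case the paper simply cites the Gruson--Lazarsfeld--Peskine theorem, which says that for a rational sextic in $\P^3$ one has $h^1(\mathcal{I}_{C}(3))>0$ if and only if $C$ admits a $5$-secant line, and reads off the extra section from the $h^1$ term. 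You instead argue elementarily: by Bezout every cubic through $C$ must contain the $5$-secant $\l$, so it suffices to bound $h^0(\mathcal{I}_{C\cup\l}(3))$ from below, and the Mayer--Vietoris sequence for $C\cup\l$ (with surjectivity of $H^0(\mO_C(3))\to H^0(\mO_{C\cap\l}(3))$ coming from $H^1(\mO_{\P^1}(13))=0$) gives $h^0(\mO_{C\cup\l}(3))=18$, hence $h^0(\mathcal{I}_{C\cup\l}(3))\geq 20-18=2$. Your route is self-contained and avoids the GLP machinery entirely; as a bonus it isolates the geometric fact, used again later in the paper, that every cubic through $C_6$ automatically contains the $5$-secant. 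What the citation of GLP buys the paper is brevity and the converse information that, absent a $5$-secant, the cubic through a rational sextic is unique. One cosmetic remark: you do not actually need the Bezout step to conclude, since $\mathcal{I}_{C\cup\l}\subset\mathcal{I}_C$ already gives $h^0(\mathcal{I}_C(3))\geq h^0(\mathcal{I}_{C\cup\l}(3))\geq 2$; the Bezout observation only upgrades this inclusion of spaces of cubics to an equality.
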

\begin{proof}
The exact sequence:
\[ 0\to \mathcal{I}_{C/\P^3}(3) \to \mO_{\P^3}(3) \to \mO_C(3) \to 0 \]
gives the cohomology exact sequence:
\[ 0\to H^0(\mathcal{I}_{C/\P^3}(3)) \to H^0(\mO_{\P^3}(3)) \to H^0(\mO_C(3)) \to H^1(\mathcal{I}_{C/\P^3}(3))  \to 0 \]

By Riemann-Roch, we have:
\[ h^0(\mO_C(3))=h^1(\mO_C(3))+18+1-p_a(C)=19-p_a(C) \]
since $2p_a(C)-2\leq 6<\deg(\mO_C(3))$ gives $h^1(\mO_C(3))=0$.

On the other hand, $h^0(\mO_{\P^3}(3))=\binom{6}{3}=20$. If $p_a(C)\geq 1$, then: 
\[ h^0(\mathcal{I}_{C/\P^3}) = 20-19+p_a(C) + h^1(\mathcal{I}_{C/\P^3}) \geq 2 \]

If $C$ is rational, then by \cite{glp} $h^1(\mathcal{I}_{C/\P^3})> 0$ if and only if it has a 5-secant line, giving, in this case:
\[ h^0(\mathcal{I}_{C/\P^3}) = 20-19+0 + h^1(\mathcal{I}_{C/\P^3}) >1 \]

Hence in both cases there are at least two different cubic surfaces containing $C$.
\\ \end{proof}

If we are in the general case, that is, if $C_6$ is smooth, then it is rational and has a 5-secant line, namely $s$. Then this Lemma shows that there is a cubic surface other than $V$ containing $C_6$. Therefore there is such a cubic surface in the other special cases (e.g. $C_6$ reducible). We will not give the details here.

Note that any cubic surface containing $C_6$ must also contain the lines $r$ and $s$.  Indeed, if none of these lines is contained in $C_6$, then $C_6$ intersects them in five and four points respectively. If $s\subset C_6$, then $C_6$ contains four lines of the ruling of $V$, intersecting $r$ in four points. And if $r\subset C_6$, there are five lines of the ruling in $C_6$, so it intersects $s$ in five points. 

This implies that, if $S$ is a cubic surface different from $V$ containing $C_6$, then:
\begin{equation}\label{sl_eq_baselocus_mQ}
S\cap V = C_6+2s+r
\end{equation}
 where the multiplicity two in $s$ follows from the fact that $V$ is singular along $s$. Note that the general element of the pencil of cubic surfaces generated by $V$ and $S$ is irreducible, since $V$ is.

Next, the case in which a reduced component $C$ of $C_6$ is singular in $q$ will be studied. As shown in the proof of Corollary \ref{sl_decomposicoes_de_C6}, $C$ is of type $(d-1,d-2)$, so its image in $E\cong \P^2$ via $\bar{\sigma}$ has no singular points, except for $p$. Then $C$ is singular in $q$ if only if $q\in s$ and $\bar{\sigma}(C)$ either contains the two points of $\hat{s}$ that are mapped to $q\notin\{q_1^s,q_2^s\}$ or is tangent to $\hat{s}$ in $\bar{\sigma}(q)$, with $q\in\{q_1^s,q_2^s\}$. In any of these cases, $q$ is a double point of $C$, since the map $\bar{\tau}$ restricted to $\hat{s}$ is $2:1$. 

\begin{lemma}\label{sl_conetangente_de_C}
Let $C$ be an irreducible component of $C_6$ with degree $d>1$ and $q$ be a point in $C\cap s$.

Suppose $q$ is different from $q_1^s$ or $q_2^s$. Let $\Pi_1$ and $\Pi_2$ be the two planes of the tangent cone of $V$ in $q$. After blowing up $q$, set $t_i=\Pi_i\cap E_q$, for $i=1,2$. If $C$ is singular in $q$, then one of the following holds:
\begin{itemize}
\item[(i)] the tangent cone of $C$ in $q$ is a pair of lines different from $s$, one lying on $\Pi_1$ and the other in $\Pi_2$;
\item[(ii)] the tangent cone of $C$ in $q$ is $s$ and another line lying on one of the planes, say $\Pi_1$. Then after blowing up $q$ and $s$, $C$ intersects $E_s\cap E_q$ in $(t_2)_s$;
\item[(iii)] the tangent cone of $C$ in $q$ is $2s$. After blowing up $q$ and $s$, $C$ intersects $E_s\cap E_q$ in $(t_1)_s$ and $(t_2)_s$.
\end{itemize} 

On the other hand, if $q$ is $q_1^s$ or $q_2^s$ and $C$ is singular in $q$, the tangent cone of $C$ in $q$ is a double line; if $C$ is smooth in $q$, it is tangent to the line of the ruling of $V$ through this point.
\end{lemma}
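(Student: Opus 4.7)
The plan is to analyse $C$ via its lift $\hat C$ to the normalisation $\widetilde E = \Bl_p\P^2$ of $V$, exploiting the identification $\widetilde E\cong S(1,2)$ and viewing $\bar\tau\colon \widetilde E\to V$ as the projection of this scroll from an external point. The restriction $\bar\tau|_{\hat s}$ is a $2{:}1$ cover of $s$ ramified exactly over $q_1^s,q_2^s$; outside those ramification points $\bar\tau$ is a local immersion, and the image sheet through a preimage of $q$ that lies on the line through $p$ mapping to $\l_i$ is tangent to $\Pi_i$. These facts reduce the classification of the tangent cone of $C$ at $q$ to the local behaviour of $\hat C$ at the preimage(s) of $q$ on $\hat s$.

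For $q\notin\{q_1^s,q_2^s\}$, let $\hat q_1,\hat q_2\in\hat s$ be the two preimages, with $\hat q_i$ on the line through $p$ mapping to the ruling of $\Pi_i$. Assuming the generic case where $\hat C$ has a smooth branch through each $\hat q_i$ (the remaining configurations are degenerate specialisations of case (iii)), the branch of $C$ at $q$ coming from $\hat q_i$ has tangent $d\bar\tau(T_{\hat q_i}\hat C) \subset \Pi_i$. Since $d\bar\tau$ is injective at $\hat q_i$ and maps $T_{\hat q_i}\hat s$ onto $T_qs$, this tangent line coincides with $s$ if and only if $\hat C$ is tangent to $\hat s$ at $\hat q_i$. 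The three possibilities -- neither, exactly one, or both tangents equal to $s$ -- give cases (i), (ii), (iii) respectively.

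For the finer claim about $E_s\cap E_q$ in cases (ii) and (iii), observe that a branch of $C$ tangent to $s$ and lying on the $\Pi_i$-sheet of $V$ has strict transform, after blowing up $q$, contained in the strict transform of that sheet; since the sheet has tangent plane $\Pi_i$ at $q$, its strict transform meets $E_q$ along $t_i$. Hence the strict transform of the branch reaches $s_q$ with tangent direction along $t_i$, i.e.\ meets $E_s\cap E_q$ at $(t_i)_s$ after the subsequent blow-up of $s$. In case (ii) the $s$-tangent branch is forced to come from the $\Pi_2$-sheet (the other branch, with non-$s$ tangent in $\Pi_1$, already uses the $\Pi_1$-sheet), yielding the intersection point $(t_2)_s$; in case (iii) both branches contribute, giving $(t_1)_s$ and $(t_2)_s$.

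For $q=q_i^s$, the map $\bar\tau$ has a pinch point at the unique preimage $\hat q_i$. In local coordinates $(u,v)$ on $\widetilde E$ with $\hat s=\{v=0\}$, and $(x,y,z)$ on $\P^3$ with $s$ the $x$-axis and $\l$ the $y$-axis, $\bar\tau$ takes the normal form $(u,v)\mapsto (u^2,v,uv)$, so that $V$ is locally $\{z^2=xy^2\}$ and the tangent cone of $V$ at $q$ is the double plane $\{z=0\}$ containing $s$ and $\l$. A smooth branch of $\hat C$ transverse to $\hat s$ parametrises as $u=\alpha t+\ldots,\ v=\beta t+\ldots$ with $\beta\neq 0$; its image $(\alpha^2 t^2+\ldots,\ \beta t+\ldots,\ \alpha\beta t^2+\ldots)$ is a smooth branch of $C$ tangent to $\l$. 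If instead $\hat C$ is tangent to $\hat s$ at $\hat q_i$, one has $u=t+\ldots,\ v=ct^2+\ldots$, whose image $(t^2+\ldots,\ ct^2+\ldots,\ ct^3+\ldots)$ has multiplicity two at $q$ with tangent cone a double line in $\{z=0\}$. I expect the main technical obstacle to be the rigorous direction-tracking in cases (ii) and (iii); the pinch-point analysis for $q_i^s$ is a short local computation.
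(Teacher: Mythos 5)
Your proof is correct, and for $q\notin\{q_1^s,q_2^s\}$ it is essentially the paper's argument: both proofs lift $C$ to $\hat C\subset\widetilde E$, use that singularity of $C$ at $q$ forces $\hat C$ to pass through both preimages $\hat q_1,\hat q_2$ (note this is automatic here -- $\hat C$ is smooth away from $p$ by Corollary \ref{sl_decomposicoes_de_C6}, so your genericity hedge is unnecessary), and read off cases (i)--(iii) from whether $\hat C$ is tangent to $\hat s$ at each $\hat q_i$. For the refined claim about $E_s\cap E_q$ your ``branch lies on the $\Pi_i$-sheet'' argument is cleaner than the paper's limit-of-lines argument, though the phrase ``reaches $s_q$ with tangent direction along $t_i$'' is literally false (the strict transform of the branch is transverse to $t_i$ at $s_q$); what is true, and suffices, is that the strict transform of the sheet meets the fiber $E_s\cap E_q$ in the single point $(t_i)_s$, and the branch, being contained in the sheet, must meet the fiber there. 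Where you genuinely diverge from the paper is at the pinch points $q_i^s$: the paper treats simple contact of $\hat C$ with $\hat s$ via $\II_{X,x}$ and cuspidal plane sections, and handles higher-order contact by a separate argument using an auxiliary cubic surface containing $C_6$ together with Lemma \ref{pr_interseccao_conetangente}; you instead invoke the cross-cap normal form $(u,v)\mapsto(u^2,v,uv)$ and compute directly. This is legitimate -- for the general cubic $x_0^2x_2+x_1^2x_3=0$ the germ of $V$ at $q_i^s$ is analytically the Whitney umbrella $z^2=xy^2$, the normalization of such a germ is unique, and tangent cones are intrinsic to the local analytic ring -- and it is both more elementary and more uniform, since the same parametric computation covers arbitrary contact order $v=ct^k+\ldots$ (always yielding multiplicity two with tangent cone a double line in $\{z=0\}$), whereas the paper needs two separate arguments. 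The trade-off is that you must justify the availability of the normal form, which the paper's projective arguments avoid.
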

\begin{proof}
Let $\hat{C}$ be the strict transform of $C$ in $E$ via $\bar{\tau}$, a curve of degree $d-1\leq 5$ having multiplicity $d-2$ in $p$ (see the proof of Corollary \ref{sl_decomposicoes_de_C6}).

Suppose $q$ is not $q_1^s$ or $q_2^s$. Then it has two preimages via $\bar{\tau}$, say $\hat{q}_1,\hat{q}_2\in \hat{s}$. Let $\l_1\in\Pi_1$ and $\l_2\in \Pi_2$ be the two lines of the ruling through $q$, such that the strict transform of $\l_i$ via $\bar{\tau}$ passes through $\hat{q}_i$. 

We will first give a correspondence between lines through $\hat{q}_1$ in $E$ and lines through $q$ in $\Pi_1$. 
If the line in $\Pi_1$ is $s$ or $\l_1$, it corresponds to its strict transform in $E$. Given a line $\l$ through $q$ in $\Pi_1$ different from $s$ or $\l_1$, $\l$ is the tangent line at $q$ of a smooth conic $C_\l$. Then $\l$ corresponds to the strict transform of $C_\l$ via $\bar{\tau}$, which is a line $\hat{\l}$ through $\hat{q}_1$.

Now, if a curve $D$ is tangent to $\hat{\l}$ in $\hat{q}_1$, then $\bar{\tau}(D)$ is tangent to $C_\l$ in $q$. In other words, this correspondence associates to a tangent line of a curve in $\hat{q}_1$ a line in the tangent cone at $q$ of the image of this curve via $\bar{\tau}$.

The same reasoning gives a correspondence of lines through $q$ in $\Pi_2$ and lines through $\hat{q}_2$ in $E$. 

Since $\hat{C}$ is smooth outside $p$, $C$ is singular in $q$ if and only if $\hat{C}$ contains both $\hat{q}_1$ and $\hat{q}_2$. If $\hat{C}$ has no tangency with $\hat{s}$ in these two points, its tangent lines in these points correspond to a line in $\Pi_1$ and a line in $\Pi_2$ different from $s$. These two lines form the tangent cone of $C$ in $q$, giving (i).

If $\hat{C}$ is tangent to $\hat{s}$ in $\hat{q}_2$, but not in $\hat{q}_1$, the tangent cone of $C$ in $q$ is the union of $s$ and a line in $\Pi_1$. Blowing up $q$, $C$ intersects $E_q$ in $s_q$ and another point in $t_1$. Blowing up $s$, $C$ intersects $E_q\cap E_s$ (which is the fiber over $s_q$ in $E_s$) in one of the two points intersected by $V$, that is, $(t_1)_s$ or $(t_2)_s$. 

But the tangent line of $\hat{C}$ in $\hat{q}_2$ is a limit of lines through $\hat{q}_2$ different from $\hat{s}$. These lines correspond to lines in $\Pi_2$ and, after blowing up $q$ and $s$, to lines intersecting $E_q$ in $t_2$, with limit point $(t_2)_s$. Therefore $C$ intersects $E_q\cap E_s$ in $(t_2)_s$, giving (ii).

If $\hat{C}$ is tangent to $\hat{s}$ in both $\hat{q}_1$ and $\hat{q}_2$, $C$ has a double point in $q$ with tangent cone $2s$. After blowing up $q$, $C$ has a double point in $s_q$. The reasoning made above shows that blowing up $s$, $C$ intersects $E_q\cap E_s$ in $(t_1)_s$ and  $(t_2)_s$, proving (iii).

Now suppose $q$ is $q_1^s$ or $q_2^s$ and let $\hat{q}$ be the preimage of $q$ via $\bar{\tau}$. Assume first that $C$ is singular in $q$. This happens if and only if $\hat{C}$ is tangent to $\hat{s}$ in $\hat{q}$. The order of the tangency determines the type of the singularity of $C$, which is always a double point. 

If the contact of $\hat{C}$ with $\hat{s}$ is simple, it is defined locally by the contact of a conic and $\hat{s}$. Since such conic intersects $\hat{s}$ in $2\hat{q}$, it belongs to $\II_{X,x}$. Then it is mapped to a plane section of $V$ through $q$, which is a cuspidal cubic by Lemma \ref{sl_conetangente_de_V_e_secoes_planas}. So in this case the tangent cone of $C$ is a double line different from $s$. 

We will now prove that if $\hat{C}$ has a higher order contact with $\hat{s}$ in $\hat{q}$, then the  tangent cone of $C$ is also a double line. For that, suppose that the other components of $C_6$ (which are lines of the ruling of $V$, by Corollary \ref{sl_decomposicoes_de_C6}) do not contain $q$. There is no harm in making this assumption, since we could just define $C_6^\prime$ satisfying it and proceed with the proof for this new curve. 

By Lemma \ref{sl_cubicas_por_C6} (and further remarks), there is a pencil of cubic surfaces containing $C_6$, so let $S$ be a cubic surface containing $C_6$ and having transversal intersection with $V$ in $q$, that is:
\[ \mult_q(S\cap V)=(\mult_q S)\cdot (\mult_q V) \]

Then, by \eqref{sl_eq_baselocus_mQ} and by the assumption made above, $S$ has multiplicity two in $q$. In particular, the tangent cone $\mC_qS$ has degree two. Lemma \ref{pr_interseccao_conetangente} implies:
\[ \mC_q(S\cap V) = \mC_qS \cap \mC_qV \] 

Since $2s$ is part of the intersection $S\cap V$, it is a component of $\mC_q(S\cap V)$. But $\mC_qV$ is a double plane, so $\mC_q(S\cap V)$ is the union of $2s$ and a double line. By \eqref{sl_eq_baselocus_mQ}, this double line is the tangent cone of $C_6$ in $q$. Since $C\subset C_6$ has multiplicity two in $q$, this is the tangent cone of $C$ in $q$ and the assertion is proved.

Finally, suppose $C$ is smooth in $q\in \{ q_1^s,q_2^s\}$. Let $\l$ be the line of the ruling of $V$ through $q$.

Since $C$ is smooth in $q$, the tangent line of $\hat{C}$ in $\hat{q}$ is not $s$ and, by Bezout's theorem, it is not the line through $p$ and $\hat{q}$ ($\hat{C}$ has degree $d-1$ and multiplicity $d-2$ in $p$). Then this tangent line is mapped to a conic in $V$.

The union of this conic and $\l$ forms a plane section of $V$, so the conic cuts $\l$ in two points, one of them being $q$. But the other point must also be $q$, since the strict transform of $\l$ via $\bar{\tau}$ intersects the tangent line of $\hat{C}$ only in $\hat{q}$. Therefore the conic is tangent to $\l$ in $q$, and so is $C$.
\\ \end{proof}

We can now conclude the study of singularities in $C_6\cap s$.

\begin{prop}\label{sl_singularidades_em_C6.s}
Let $q$ be a singularity of $C_6$ in $s$. Then, blowing up $q$,  $E_q$ is contracted to a triple point of $X$ if and only if one of the following holds:
\begin{itemize}
\item[(a)] $q$ is a singularity of an irreducible reduced component of $C_6$;
\item[(b)] there are two distinct lines of the ruling through $q$ contained in $C_6$;
\item[(c)] $q$ is $q_1^s$ or $q_2^s$ and $C_6$ contains the line of the ruling through $q$;
\item[(d)] $C_6$ contains a line of the ruling through $q$ and a curve intersecting this line in two points;
\item[(e)] $s$ is contained in $C_6$.
\end{itemize}

If this is not the case, then $\X_q$ has a fixed curve, which is mapped to a double point of $X$.
\end{prop}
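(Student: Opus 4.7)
The plan is to analyze, after blowing up $q$, the restriction $\X_q$ on $E_q \cong \P^2$, and to decide whether its moving part has zero- or positive-dimensional image. Contraction of $E_q$ to a point happens exactly when $\X_q$ consists entirely of fixed components; when this occurs, the multiplicity of the resulting point of $X$ is read off by applying Lemma \ref{pr_truque_secao_tgente} to a general plane $\Omega$ through $q$.

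The common setup is as follows. By Lemma \ref{sl_conetangente_de_V_e_secoes_planas}, the tangent cone of $V$ at $q$ is either the union of two planes $\Pi_1 \cup \Pi_2$ through $s$ (with traces $t_i = \Pi_i \cap E_q$ meeting at $s_q$) or, when $q \in \{q_1^s, q_2^s\}$, a double plane $\Pi$ with trace $t$. The multiplicity $4$ of $\X$ along $s$ forces $\X_q$ to have multiplicity $4$ at $s_q$, and each component of $C_6$ through $q$ imposes a double-point condition on $\X_q$ located on $t_1 \cup t_2$ (or on $t$) by Lemma \ref{sl_conetangente_de_C}.

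For the ``if'' direction I would check each case separately: (b) is precisely the situation $\l \subset C_6$ of Lemma \ref{sl_intersecoes_de_C6_com_2retas_do_ruling}, yielding $\X_q = 2t_1 + 2t_2$; (a) uses Lemma \ref{sl_conetangente_de_C} to show that the singular component contributes an extra fixed line on $t_1$ or $t_2$, which combined with the mult-$4$ condition at $s_q$ closes up $\X_q$; (c) replaces the two planes by a double plane, so the ruling line in $C_6$ produces an extra fixed component along $t$ after an infinitely near blow-up; (d) combines the two components meeting the ruling line to impose enough incidences on $t_1 \cup t_2$; (e) uses the infinitely near $s^\prime$ provided by Lemma \ref{sl_r_s_em_C6} to supply the missing fixed divisor. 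In every one of these cases, the restriction $\X \cap \Omega$ to a general plane $\Omega$ through $q$ consists of degree-$7$ curves acquiring a triple point at $q$ whose tangent cone (tracked via Lemma \ref{pr_truqueconetangente}) is a triple line contracted to $x_q$, so by Lemma \ref{pr_truque_secao_tgente} the point $x_q$ is a triple point of $X$.

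For the ``only if'' direction, if none of (a)--(e) holds then $q$ must lie on a unique smooth irreducible component $C \subset C_6$ of degree $d > 1$, tangent at $q$ to only one of the two planes $\Pi^\prime$ of the tangent cone of $V$, with $q \notin \{q_1^s, q_2^s\}$, $s \not\subset C_6$, and with at most one ruling line of $V$ through $q$ lying in $C_6$ and meeting $C$ only at $q$; this is the second alternative of Lemma \ref{sl_intersecoes_de_C6_com_2retas_do_ruling}, giving $\X_q = 2t^\prime + \{\text{pairs of lines through } s_q\}$. The moving pencil maps $E_q$ onto a conic, so $E_q$ is not contracted to a point, but after blowing up the fixed line $t^\prime$ the argument of Lemma \ref{sl_intersecoes_de_C6_com_2retas_do_ruling} shows that $E_{t^\prime}$ collapses to a single point, which Lemma \ref{pr_truque_secao_tgente} identifies as a double point of $X$ in the style of Proposition \ref{sl_pontos_duplos_de_X}. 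The main obstacle is the case-by-case bookkeeping, especially in (c) and (d): in (c) the degenerate tangent cone merges $t_1$ and $t_2$, so one must pass to an infinitely near blow-up to separate the contributions; in (d) the two distinct branches through $q$ must be combined with the tangency/transversality constraints of Lemma \ref{sl_intersecoes_de_C6_com_2retas_do_ruling}.
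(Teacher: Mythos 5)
Your overall strategy coincides with the paper's: decide whether $\X_q$ on $E_q$ is entirely fixed (contraction of $E_q$ to a point) or has a moving part (image a conic, with a fixed curve contracted instead), and then read off the multiplicity by applying Lemma \ref{pr_truque_secao_tgente} to a general plane $\Omega$ through $q$. The ``if'' direction of your sketch matches the paper's case-by-case treatment of (a)--(e) closely enough.

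There is, however, a genuine gap in your ``only if'' direction. You claim that when none of (a)--(e) holds, $q$ must lie on a unique smooth irreducible component $C\subset C_6$ of degree $d>1$, so that the second alternative of Lemma \ref{sl_intersecoes_de_C6_com_2retas_do_ruling} applies. This misses the configuration in which the \emph{only} component of $C_6$ through $q$ is a non-reduced line of the ruling, with $q\neq q_1^s,q_2^s$ (e.g.\ $C_6=2\l+C_4$ with $q=\l\cap s$ not on $C_4$). That point is a singularity of $C_6$ in $s$, satisfies none of (a)--(e), yet there is no component of degree $d>1$ through it, so Lemma \ref{sl_intersecoes_de_C6_com_2retas_do_ruling} does not apply and your description of $\X_q$ as $2t^\prime+\{\text{pairs of lines}\}$ is unjustified there. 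The paper isolates this as a separate case and handles it by a dedicated computation: one checks via a general plane through $\l$ that the double point of $\X\cap\Omega$ corresponding to the infinitely near line is \emph{not} infinitely near to $q$, so $\X_q=2t+\{\text{pairs of lines through }s_q\}$ with $t$ the trace of the plane spanned by $s$ and $\l$; one then blows up $s$ and $t$ (computing $N_t=\mO_{\P^1}(-1)\oplus\mO_{\P^1}(0)$) to see that $E_t$ is contracted, and only then applies Lemma \ref{pr_truque_secao_tgente} to get a double point. Separately, a small imprecision: the curves of $\X\cap\Omega$ have a point of multiplicity \emph{four} at $q$ (from the multiplicity of $\X$ along $s$), not a triple point; the triple point arises on $\sigma(\Omega)$ from contracting the exceptional curve $e$ once it reaches $e^2=-3$ after resolving the two infinitely near double points, and Lemma \ref{pr_truqueconetangente} plays no role in that count.
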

\begin{proof}
Let $q\in s$ be a singular point of $C_6$. Then it is either a singular point of an irreducible reduced component of $C_6$, a point lying on two or more components of $C_6$ or a point of a non reduced component. We will reorganize these possibilities in the following way:
\begin{itemize}
\item[(i)]  $q$ satisfies (a), (b) or (e);
\item[(ii)] $q$ does not satisfy (i) and is the intersection of two or more distinct components of $C_6$;
\item[(iii)] $q$ does not satisfy (i) or (ii) and lies on a non reduced component of $C_6$.
\end{itemize}

By Corollary \ref{sl_decomposicoes_de_C6}, if $q$ satisfies (ii), it is the intersection of a (possibly multiple) line and a curve smooth in $q$, since (a) and (b) are excluded from (ii). By Lemma \ref{sl_intersecoes_de_C6_com_2retas_do_ruling}, this  possibility fits in (c), (d) or in:
\begin{itemize}
\item[(f)] $C_6$ contains a line of the ruling through $q$, a curve intersecting this line in $q$ with multiplicity one and no other component through $q$.
\end{itemize}

Now, if $q$ satisfies (iii), it fits either in (c) or in:
\begin{itemize}
\item[(g)] $q$ is not $q_1^s$ or $q_2^s$, a line of the ruling through $q$ is a non reduced component of $C_6$ and no other component contains $q$.
\end{itemize}

The next step is to prove that $E_q$ or a curve in it is mapped to a point in $X$ and in which cases each occur.

The situation of case (a) is described in Lemma \ref{sl_conetangente_de_C}. If $q$ is not $q_1^s$ or $q_2^s$, the blow up at $q$ gives $V_q=t_1+t_2$, a pair of lines. Since $\X$ has multiplicity four in $s$ and two in $C_6$, by Lemma \ref{sl_conetangente_de_C}, $\X_q$ has multiplicity four in $s_q$ and multiplicity two in two  points of $E_q$, one of them in $t_1$ and the other in $t_2$. These points can be infinitely near to $s_q$. Then $\X_q=2t_1+2t_2$.

If $q$ is $q_1^s$ or $q_2^s$, by Lemma \ref{sl_conetangente_de_C} the tangent cone of $C_6$ in $q$ is a double line. Blow up $q$ and set $V_q=2t$. Then $C_6$ is either tangent to $E_q$ or has a double point in it, that is, $(C_6)_q$ is a double point. Then $\X_q$ has multiplicity four in $s_q$ and in $(C_6)_q$. Therefore $\X_q=4t$, since both $s$ and $C_6$ lie on $V$.

This proves that, in case (a), $E_q$ is mapped to a point. 

Now suppose $q$ satisfies (b). Each of the two lines of the ruling lies on one of the planes of the tangent cone of $V$ in $q$. Blowing up $q$, $\X_q$ has multiplicity four in $s_q$ and multiplicity two in two points, these three being non collinear. Hence it consists of two fixed double lines, and $E_q$ is mapped to a point.

In case (c), let $\l$ be the line of the ruling through $q$. Then either $\l$ is a non reduced component of $C_6$ or it is a simple line in $C_6$ and there is another component $C$ containing $q$.

First, suppose $\l$ is a double line, that is, $C_6=2\l+C_4$. The case in which it has multiplicity greater than two is analogous. Then there is a line $\l^\prime$ infinitely near to $\l$, determined by the surface $V$.

Let $\Omega$ be a general plane containing $\l$. This plane intersects $\X$ in $2\l$ and degree five curves. These curves have a double point in $q=s\cap \Omega$ and three double points in the intersections of $C_4$ with $\Omega$ outside $\l$. There is a fifth double point corresponding to the line $\l^\prime$. Since $\Omega$ intersects $V$ in $\l$ and a conic, which is tangent to $\l$ in $q$ by Lemma \ref{sl_conetangente_de_C}, this fifth double point is infinitely near to $q$, corresponding to the direction of $\l$.

Blow up $q$. Now $\X$ intersects $\Omega$ in $2\l$ and curves having a double point in $E_q\cap \l$. Therefore $\X$ has multiplicity four in $\l_q$. Since it also has multiplicity four in $s_q$, it follows that $\X_q=4t$, where $t=V_q$ is the line through $\l_q$ and $s_q$. Hence $E_q$ is mapped to a point.

Now suppose that $\l$ is a simple line in $C_6$ and there is another component $C$ containing $q$. If $C$ is singular in $q$, it fits in (a). If it is smooth in $q$, then by Lemma \ref{sl_conetangente_de_C} $C$ is tangent to $\l$ in $q$. Blowing up $q$, $C$ and $\l$ intersect $E_q$ in the same point.

Repeating the reasoning made above, $\X$ intersects, after the blow up at $q$, a general plane $\Omega$ through $q$ in $2\l$ and curves having a double point in $\l_q$. 
Therefore, $\X$ has multiplicity four in $s_q$ and $\l_q$, which implies that $\X_q$ is a fixed line with multiplicity four. Then $E_q$ is mapped to a point. 

If $q$ satisfies (d), then by Lemma  \ref{sl_conetangente_de_C} $q$ is not $q_1^s$ or $q_2^s$, and by Lemma \ref{sl_intersecoes_de_C6_com_2retas_do_ruling} $E_q$ is mapped to a point. This same Lemma asserts that in case (f), $\X_q$ has a fixed double line, which is mapped to a point.

In case (e), $q$ can be any point in $s$. As seen in the Proof of Proposition \ref{sl_imagem_r_e_s}, the blow up at $s$ gives:
\[ \X_s \equiv 2s^\prime + \{ (1,0) \} \equiv (3,4) \]
Then $s$ is mapped to a line $L$.  Note that the fiber  over $q$ in $E_s$ intersects the double line $s^\prime\equiv (1,2)$ in two points, which coincide if $q$ is $q_1^s$ or $q_2^s$. Then blowing up $q$ (instead of blowing up $s$), $\X_q$ has multiplicity four in $s_q$ and multiplicity two in two points infinitely near to $s_q$. Therefore $\X_q$ is a pair of (possibly coincident) fixed double lines and $E_q$ is mapped to a point in $L$. 

Finally, suppose $q$ satisfies (g). Again suppose $\l$ is a double line, the other cases being analogous.  Then $C_6=2\l+C_4$. Let $\Pi$ be the plane containing $\l$ and $s$. 
See Figure \ref{sl_figura_q_em_C.s_caso_g} for a sketch of the blow ups that will now be done.

\begin{figure}[tb]
\centering
\includegraphics[trim=17 490 140 105,clip,width=1\textwidth]{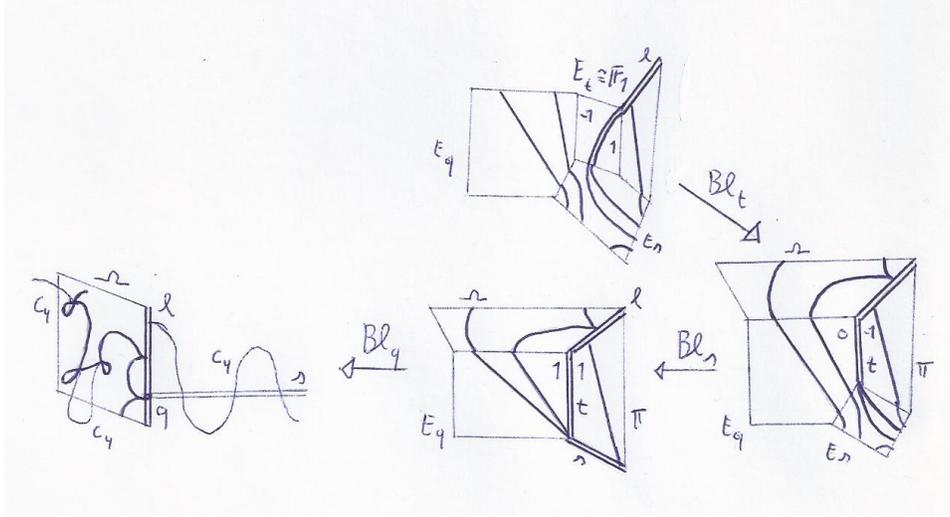}
\caption{$\X$ and the blow ups in case (g) of Proposition \ref{sl_singularidades_em_C6.s}}\label{sl_figura_q_em_C.s_caso_g}
\end{figure}

Let $\Omega$ be a general plane containing $\l$. Then $V\cap\Omega$ consists of $\l$ and a conic intersecting this line in two distinct points. This follows from Lemma \ref{sl_conetangente_de_V_e_secoes_planas} and the fact that $q$ is not $q_1^s$ or $q_2^s$. On the other hand, $\X\cap \Omega$ consists of $2\l$ and degree five curves with three double points in the intersections with $C_4$ not lying on $\l$, a double point in $q$ and a fifth double point. This last point must lie on the conic $V\cap\Omega$. And since it corresponds to a line infinitely near to $\l$, it must lie on $\l$. Then this point is not infinitely near to $q$, otherwise the conic would be tangent to $\l$.

Blowing up $q$, $\X_q$ has multiplicity four in $s_q$ and a double point in the intersection of $\l$ with $E_q$. There is a second double point infinitely near to it, since there is another double line of $\X$ infinitely near to $\l$. But both double points lie on the line $t$ defined by $\l_q$ and $s_q$. 

Note that, unlike case (c), $\X$ does not have multiplicity four in $\l_q$. This follows from the fact that the degree five curves of $\X\cap\Omega$ do not have a double point infinitely near to $q$, as noted above. This gives:
\[ \X_q = 2t +\{\text{pairs of lines through }s_q \} \]
and $E_q$ is mapped to a conic.  Note that $t=\Pi_q$, since $\Pi$ contains both $s$ and $\l$. Then, after blowing up $s$, $t$ has normal bundle: 
\[ N_t = \mO_{\P^1}(-1)\oplus \mO_{\P^1}(0) \]

Blowing up $t$ gives $(E_q)_t\equiv e_1$. Since $\X_t$ does not intersect $E_q$, $\X_t\equiv 2e_1+2f_1$. It has two or more infinitely near double points (associated to $\l$) lying on $V_t\equiv e_1+f_1$. Then $\X_t$ is a double curve and $E_t$ is mapped to a point.

It remains to prove that in cases (a) to (e) the image $x_q$ of $E_q$ is a triple point; and that in cases (f) and (g) the image $x_q$ of the fixed double line in $E_q$ is a double point of $X$. This will be done using Lemma \ref{pr_truque_secao_tgente}. Let $\Omega$ be a general plane trough $q$. The plane $\Omega$ is cut by $\X$ in degree seven curves having, among other base points, multiplicity four in $q$. Since $C_6$ is singular in $q$, these curves have two or more double points infinitely near to $q$.

Blowing up $q$, in cases (a) to (e) $\X$ has two double points in $e=E_q \cap \Omega$. These double points represent the tangent cone of $V$ in $q$ and are infinitely near if $q$ is $q_1^s$ or $q_2^s$.

Blowing up these two points, one gets $e^2=-3$ in $\Omega$ having no intersections with $\X$. Therefore it is mapped to a triple point in the image of $\Omega$. By Lemma \ref{pr_truque_secao_tgente}, $x_q$ is a triple point of $X$.

In cases (f) and (g), $\X$ intersects $e$ in one fixed double point and two moving points. This fixed point is the intersection of $e$ with the fixed double line in $E_q$ mentioned above. Blow up the double point and let $e^\prime$ be the exceptional divisor in $\Omega$. Then $\X$ intersects $e^\prime$ in a fixed double point. Blowing it up, $(e^\prime)^2=-2$ in $\Omega$, so it is contracted to the double point $x_q$ in the image of $\Omega$. By Lemma \ref{pr_truque_secao_tgente},   it is a double point of $X$.
\\ \end{proof}

Note that part (e) of this Lemma and Proposition \ref{sl_imagem_r_e_s} give the following important result:

\begin{coro} \label{sl_s_vai_em_reta_tripla}
Suppose that $s$ is contained in $C_6$. Then the image $L$ of $s$ via $\sigma$ is a triple line of $X$.
\end{coro}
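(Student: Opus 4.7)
The plan is to simply combine the two preceding results that, together, give the statement almost immediately: Proposition~\ref{sl_imagem_r_e_s} already shows that if $s\subset C_6$, then $\sigma(s)=L$ is a line, so the only thing left is to establish that every (general) point of $L$ is a triple point of $X$.

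First I would note that hypothesis $s\subset C_6$ places us precisely in case (e) of Proposition~\ref{sl_singularidades_em_C6.s}, which applies uniformly to every point $q\in s$. From the argument there, blowing up an arbitrary point $q\in s$ shows that $\X_q$ is a (possibly coincident) pair of fixed double lines, so $E_q$ is contracted by $\sigma$ to a point $x_q\in L$. Then the final part of that proof, invoking Lemma~\ref{pr_truque_secao_tgente}, identifies $x_q$ as a triple point of $X$: a general plane $\Omega$ through $q$ has two infinitely near double points in $e=E_q\cap\Omega$ arising from the tangent cone of $V$, whose successive blow ups give $e^2=-3$ with no residual intersection with $\X$, producing a triple point in the image of $\Omega$.

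To conclude, I would take $q\in s$ to be a \emph{general} point (so in particular $q\neq q_1^s,q_2^s$, and $q$ avoids any other component of $C_6$ meeting $s$). Then the argument above shows that the corresponding point $x_q=\sigma(q)\in L$ is a triple point of $X$. Since a general point of $L$ is a triple point and $L$ is irreducible, $L$ is a triple line of $X$, as claimed.

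The main obstacle is essentially bookkeeping rather than genuine difficulty: one must verify that case (e) of Proposition~\ref{sl_singularidades_em_C6.s} indeed covers \emph{all} general points of $s$ (not just those that happen to be extra-singular for $C_6$), and that the triple-point conclusion is independent of the auxiliary choices made inside that proof. Both checks are immediate from the arguments given, since the key blow-up computation $\X_s\equiv 2s'+\{(1,0)\}$ depends only on $s\subset C_6$, not on the remaining components of $C_6$.
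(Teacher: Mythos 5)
Your proposal is correct and follows exactly the paper's route: the paper derives this corollary precisely by combining Proposition~\ref{sl_imagem_r_e_s} (which gives that $s$ is mapped to a line $L$) with case (e) of Proposition~\ref{sl_singularidades_em_C6.s} (which contracts $E_q$ for any $q\in s$ and yields the triple point via the $e^2=-3$ computation and Lemma~\ref{pr_truque_secao_tgente}). Your additional check that case (e) applies at a general point of $s$ is a harmless elaboration of the same argument.
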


Suppose now that $C_6$ has a non reduced component. By Corollary \ref{sl_decomposicoes_de_C6}, it must be either $s$ or a line of the ruling of $V$. If $C_6$ contains a multiple line $\l$, then there are other lines infinitely near to $\l$ lying on the singular locus of $\X$. We will now study what type of singularities such lines produce on $X$.

\begin{lemma}\label{sl_retas_multiplas}
Let $\l$ be a line of the ruling of $V$ contained in $C_6$ with multiplicity $d\geq 2$, that is, $\l=\l_1\prec \l_2\prec \cdots \prec \l_d$ are double lines of $\X$. Then $\l_1\ldots, \l_{d-1}$ are mapped to double lines $R=R_1,\ldots,R_{d-1}$ of $X$, with $R_i\prec R_{i+1}$ and $\l_d$ is mapped to a cubic scroll through $x$, with $R$ being a line of its ruling.

Moreover, if $q=l\cap s$ is mapped to a triple point of $X$, then this  point lies on $R$, unless $d=2$ and $q$ is $q_1^s$ or $q_2^s$. 
\end{lemma}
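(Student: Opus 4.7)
The strategy is to iteratively blow up the chain $\l_1 \prec \l_2 \prec \cdots \prec \l_d$, tracking the restricted linear system $\X_{\l_i}$ on each exceptional divisor $E_{\l_i}$, in close analogy with the analysis of Proposition \ref{ed_doubleline} for the smooth quadric case. The reason the pattern applies is identical: because $\l_{i+1}$ is a double line of $\X$ lying on $V$ and infinitely near to $\l_i$, the restriction $\X_{\l_i}$ must contain $2\l_{i+1}$ as a fixed part, and the residual moving system turns out to be a pencil of fibers over $\l_i$.

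For the first blow-up, $\l_1$ is a line in $\P^3$ so $N_{\l_1}=\mO_{\P^1}\oplus\mO_{\P^1}$ and $E_{\l_1}\cong\F_0$. A general plane through $\l_1$ meets $\X$ in $2\l_1$ plus a quintic, and this quintic meets $\l_1$ in five base points (four absorbed into the quadruple point $q=\l_1\cap s$, plus one moving point); hence $\X_{\l_1}$ has bidegree $(a,b)$ computable by the standard fiber/section count. The strict transform of $V$ meets $E_{\l_1}$ along $\l_2$, a section; because $\X$ has multiplicity two along $\l_2$, we obtain $\X_{\l_1}=2\l_2+M_1$ where $M_1$ is a moving pencil of fibers. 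Thus $M_1$ maps $\l_1$ onto a line $R=R_1$. Iterating, the blow-up at $\l_{i+1}$ reproduces the same situation for each $i\le d-2$: one gets $\X_{\l_{i+1}}=2\l_{i+2}+M_{i+1}$, mapping $E_{\l_{i+1}}$ onto a line $R_{i+1}$, and since $\l_{i+1}\subset E_{\l_i}$ sits in the direction along which $E_{\l_i}$ is contracted to $R_i$, the line $R_{i+1}$ is infinitely near to $R_i$, yielding $R_i\prec R_{i+1}$.

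At the final step $\l_d$, no further infinitely near double line of $\X$ is present, so $\X_{\l_d}$ is a genuinely two-dimensional moving linear system. Computing its class on $E_{\l_d}$ (which will be a Hirzebruch surface whose type depends on the intersection history) and its base points (coming from intersections with the remaining components of $C_6$ and with $\l$ inside $V$), I expect a computation parallel to the one in Proposition \ref{ed_doubleline}: after one standard quadratic transformation of the birational model in $\P^2$ produced by Lemma \ref{pr_imagemF1F2}, $\X_{\l_d}$ becomes a system of conics through a single base point, so $E_{\l_d}$ is mapped to a cubic scroll $S(1,2)$ through $x$. The previous exceptional $E_{\l_{d-1}}$ meets $E_{\l_d}$ along a curve contracted to $R=R_1$, and the moving fibers of $\X_{\l_d}$ showing up as lines of the ruling include this one, so $R$ sits as a line of the ruling of the scroll. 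To confirm that each $R_i$ is in fact a double line of $X$, I apply Lemma \ref{pr_truque_secao_tgente}: taking a general plane $\Omega$ through a general point of $\l_1$, the curve $\X\cap\Omega$ has multiplicity two at that point with $d-1$ further infinitely near double points forced by $\l_2,\dots,\l_d$, and blowing them up produces a chain of $(-2)$-curves contracted to an infinitely near chain of double points $R_1,\dots,R_{d-1}$ in the image of $\Omega$.

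For the final assertion, I track where the preimage of the triple point $x_q$ lands under the successive blow-ups at $\l_1,\dots,\l_{d-1}$. By Proposition \ref{sl_singularidades_em_C6.s}, $q$ is mapped to a triple point precisely in cases (a)--(e) of its proof; since $\l$ is multiple in $C_6$, this means either $q$ supports another component of $C_6$ (cases (a), (b), (d), (e)), in which case $E_q$ itself contracts to $x_q$ and the intersection of $E_q$ with $E_{\l_1}$ meets the moving pencil $M_1$, so $x_q$ lies on $R$; or $q\in\{q_1^s,q_2^s\}$ (case (c)), in which case the tangent cone of $V$ at $q$ is a double plane and the intersection $E_q\cap E_{\l_1}$ coincides with the fiber at $q$, which lies on the fixed component $2\l_2\subset\X_{\l_1}$ rather than on $M_1$, so $x_q$ does not lie on $R_1=R$. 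However, when $d\ge 3$, this fiber is not in the fixed part of $M_2$ in $E_{\l_2}$, and by induction $x_q\in R_i$ for some $i\ge 2$ with $R_i\succ R$; only for $d=2$ does $x_q$ remain strictly separated from the entire chain, giving the exceptional case. The main obstacle in the argument is managing the precise sequence of normal-bundle computations and the location of the base points of each $\X_{\l_i}$ relative to $V$ near $q$, particularly distinguishing the special points $q_1^s,q_2^s$ from generic points of $s$.
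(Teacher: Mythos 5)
Your overall strategy --- iteratively blowing up the chain $\l_1\prec\cdots\prec\l_d$, splitting off $2\l_{i+1}$ as a fixed part of $\X_{\l_i}$ so that the residual moving system is a pencil of fibers mapping $E_{\l_i}$ to a line $R_i$, obtaining the cubic scroll at the last step, certifying the double lines with Lemma \ref{pr_truque_secao_tgente} via a chain of $(-2)$-curves giving an $A_{d-1}$ point, and deciding the triple-point question by tracking the fiber over $q$ and distinguishing $q\in\{q_1^s,q_2^s\}$ from the general case --- is the same as the paper's.

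However, your first step would fail as written. You blow up $\l_1$ directly in $\P^3$ and assert $E_{\l_1}\cong\F_0$ (incidentally, the normal bundle of a line in $\P^3$ is $\mO_{\P^1}(1)^{\oplus 2}$, not $\mO_{\P^1}^{\oplus 2}$, though both projectivize to $\F_0$). The paper instead \emph{starts by blowing up $s$}: since $\X$ has multiplicity four along $s$ and the point $q=\l\cap s$ lies on $\l$, blowing up $\l$ first leaves $\X_\l$ with a fixed multiple fiber over $q$ that you do not account for, and the exceptional divisor is the wrong surface. After the blow-up at $s$, the line $\l$ is the complete intersection of $V$ and the plane $\Pi$ spanned by $\l$ and $s$, with normal bundle $\mO_{\P^1}(0)\oplus\mO_{\P^1}(1)$, so $E_\l\cong\F_1$ and one gets the clean decomposition $\X_\l\equiv 2e_1+3f_1\equiv 2\l_2+\{f_1\}$ with $\l_2=V_\l\equiv e_1+f_1$; the subsequent normal-bundle chain ($N_{\l_2}=\mO_{\P^1}(1)\oplus\mO_{\P^1}(0)$, each $E_{\l_i}\cong\F_1$, and $\X_{\l_d}\equiv 2e_1+3f_1$ with one simple and one double base point) depends on this order. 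The same order matters for the final assertion: the paper's criterion is that $V_s$ is tangent to the fiber $f^q\subset E_s$ over $q$ precisely when $q\in\{q_1^s,q_2^s\}$, which is what forces $f^q\cap E_\l$ onto $\l_2$ and off the moving pencil; your version, phrased in terms of $E_q$ and the case list of Proposition \ref{sl_singularidades_em_C6.s}, reaches the same dichotomy but only concludes $x_q\in R_i$ for some $i\ge 2$ when $d>2$, where the paper pins it to $R_2$ (either suffices for the statement). Once the blow-up order is corrected, the rest of your argument matches the paper's proof.
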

\begin{proof}
Denote by $\Pi$ the plane containing $\l$ and $s$. Then:
\begin{equation}\label{sl_eq_retadupla_pi}
\X\cap \Pi=2\l+4s+\{\text{lines}\}
\end{equation}

\begin{figure}
\centering
\includegraphics[trim=7 6 40 550,clip,width=1\textwidth]{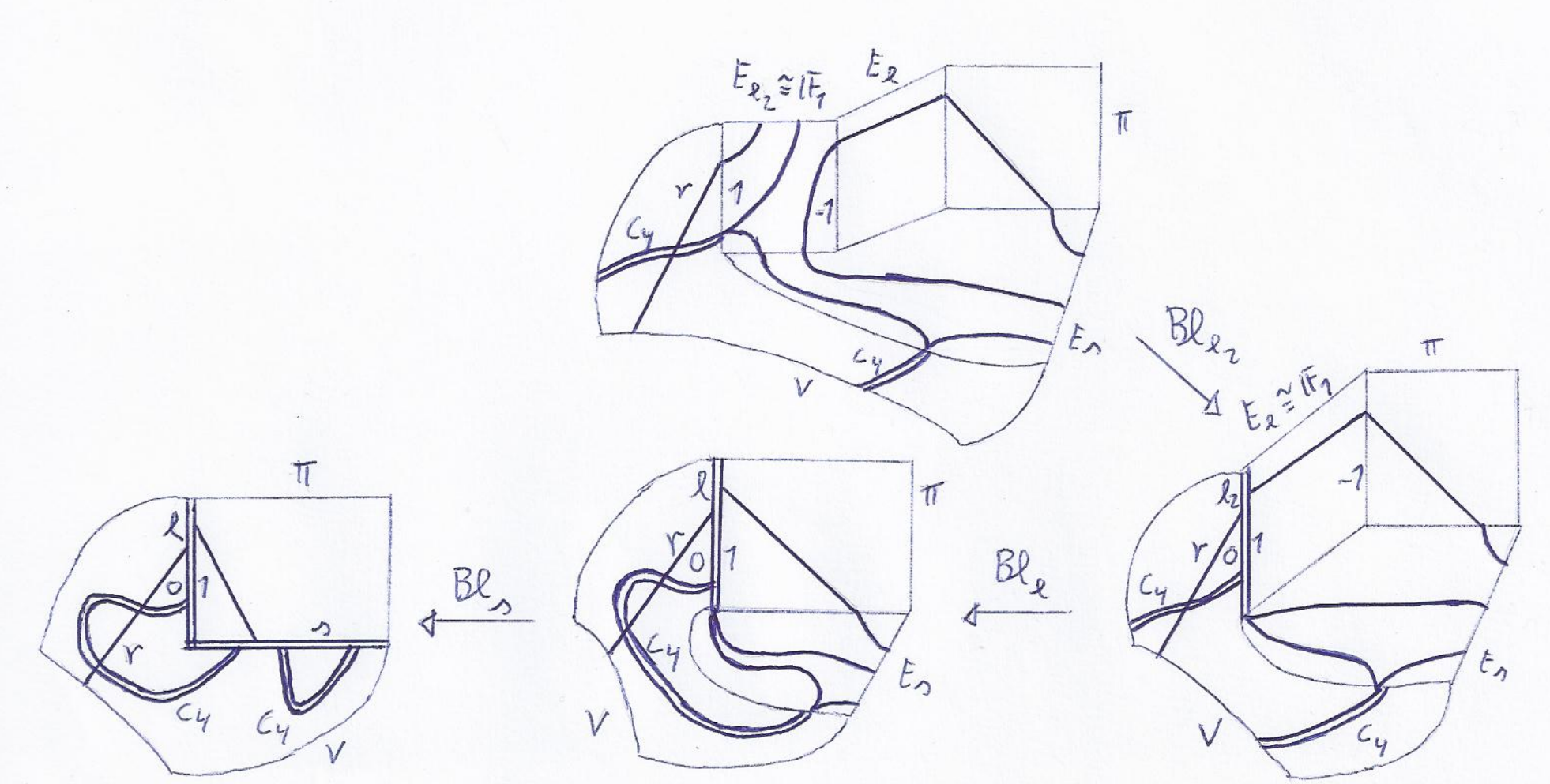}
\caption{The Blow ups for $d=2$}\label{sl_figura_reta_dupla}
\end{figure}

The blow ups that will now be done are illustrated, when $d=2$, in Figure \ref{sl_figura_reta_dupla}. Start blowing up $s$. After this, $\l$ is the complete intersection of $V$ and $\Pi$ and has normal bundle:
\[ N_\l = \mO_{\P^1}(0)\oplus \mO_{\P^1}(1) \]
So blowing up  $\l$ gives $E_{\l}\cong \F_1$ and $V_\l\equiv e_1+f_1$. Then $\l_2$ is the intersection of $E_{\l}$ with $V$. By \eqref{sl_eq_retadupla_pi} and since $\X$ has multiplicity two in $\l$, it follows that:
\begin{equation}\label{sl_eq_retadupla_ag}
\X_{\l} \equiv 2e_1+3f_1 \equiv 2\l_2 + \{ f_1 \}
\end{equation}
 where $\{ f_1\}$ represents the moving part of $\X_\l$, consisting of fibers of $E_\l$. This moving part maps $E_{\l}$ to a line $R$ in $X$. 

The line $\l_2=E_\l\cap V$ has normal bundle:
\[ N_{\l_2} = \mO_{\P^1}(1)\oplus \mO_{\P^1}(0) \]
Blow up  $\l_2$. If $d>2$, $\X$ has multiplicity two in $\l_3=V_{\l_2}\equiv e_1+f_1$ and its intersection with $E_{\l_2}$ is the same as the one given in \eqref{sl_eq_retadupla_ag}. Hence $E_{\l_2}$ is mapped to a line $R_2$. It is infinitely near to $R$, since $\l_2$ was infinitely near to $\l$. And this result repeats for $\l_i$, with $i\leq d-1$, giving $R_{i-1}\prec R_{i}$.

After blowing up $\l_1,\ldots,\l_{d-1}$, blow up $\l_d$, giving again in $E_{\l_d}\cong \F_1$, $E_{\l_{d-1}}\cap E_{\l_d}\equiv e_1$ and $V_{\l_d}\equiv e_1+f_1$.  Then $\X_{\l_d}$ cuts the section $e_1$ in one point, so:
\[ \X_{\l_d}\equiv 2e_1+3f_1 \] 
having one simple point in $r_{\l_d}$ and one double point in the intersection with the other component of $C_6$. Remember that, by Corollary \ref{sl_decomposicoes_de_C6}, this other component can be $r$ itself, $s^\prime\subset E_s$ or an irreducible curve of degree greater than one.

Therefore $\X$ maps $E_{\l_d}$ to a cubic scroll $S(1,2)$ through $x$.  Note that this scroll contains the line $R_{d-1}$, image of $e_1$. It is a line of the ruling not containing $x$. But $R_{d-1}$ is not a proper line in $X$, since it is infinitely near to $R_{d-2}$, so the scroll contains the lines:
\[ R=R_1\prec R_2 \prec \cdots \prec R_{d-1} \]
Hence $R$ is a line of the ruling of the cubic scroll.

The next step is to prove, using Lemma \ref{pr_truque_secao_tgente}, that these are double lines of $X$. So let $p$ be a general point of $\l$, which is mapped to a general point $P$ of $R$, and let $\Omega$ be a general plane in $\P^3$ through $p$.

The plane $\Omega$ is cut by $\X$ in degree seven curves having, among other base points, $d$ double points $p=p_1\prec \cdots \prec p_d$. Blowing up the base points gives a chain of exceptional divisors:
\[ E_1\prec E_2 \prec \cdots \prec E_d \]
with ${E_i}^2=-2$ for $i=1,\ldots,d-1$ having no intersection with $\X$, and ${E_d}^2=-1$ being intersected by $\X$ in two moving points. These are mapped to a singular point of $X$ of type $A_{d-1}$ and a conic. The singular point is $P$ itself, hence the general tangent hyperplane section of $X$ through $P$ has double points in $P=P_1\prec \cdots\prec P_{d-1}$. By Lemma \ref{pr_truque_secao_tgente}, $P$ is a double point of $X$.

Note that $P_i\in R_i$. In fact, if $\Omega$ is a general plane through $q\in \l$, then blowing up $s$ and $\l$ gives $\Omega_\l\equiv f_1$. Therefore, in each $E_{\l_i}$, $\Omega_{\l_i}$ is mapped to a point $P_i\in R_i$. This proves the first part of the Lemma.

Now suppose that $q=l\cap s$ is mapped to a triple point of $X$, that is, $E_q$ is contracted to a point (see Proposition \ref{sl_singularidades_em_C6.s}). Then after the blow up at $s$, the fiber $f^q$ over $q$ in $E_s$ contains the point $\l_s$. Since we have blown up $s$, it is $f^q$ which is contracted to a triple point. 

Note that $V_s$ is tangent to $f^q$ if and only if $q$ is $q_1^s$ or $q_2^s$, since the tangent cone of $V$ in these points is a double plane.  In this case, the point of tangency is $\l_s$.

Suppose $q$ is $q_1^s$ or $q_2^s$ and $d=2$. Blowing up $\l$, $f^q$ intersects $E_\l$ in a point of $\l_2=V_\l$. Blowing up $\l_2$, $E_{\l_2}$ is mapped to a cubic scroll and $E_\l$ is contracted to a line of its ruling, namely $R$. In this case, $f^q$ does not intersect either $V$ or $E_\l$ and its image is not contained in $R$.

Now suppose that $q$ is $q_1^s$ or $q_2^s$ and $d>2$. As it was just remarked, blowing up $s$ and $\l$, $f^q$ intersects $E_\l$ in a point of $\l_2=V_\l$. After these blow ups, $f^q$ and $V_q$ intersect transversally. Since $d>2$, blowing up $\l_2$ gives $E_{\l_2}\cong \F_1$ and:
\[ \X_{\l_2} \equiv 2e_1+3f_1 \equiv 2\l_3 + \{ f_1 \} \]
where $\{f_1\}$ is the moving part of $\X_2$, mapping $E_{\l_2}$ to $R_2$.  After this blow up, $f^q$ and $V_q$ do not intersect, which implies that $f^q\cap E_{\l_2}$ does not lie on $\l_3$. Therefore the image of $f^q$ is contained in $R_2$.

Finally, suppose $q$ is not $q_1^s$ or $q_2^s$. Then, after the blow up at $s$, $V_s$ is not tangent to $f^q$. Therefore, blowing up $\l$, $f^q$ no longer intersects $V_{\l}$ and:
\[ \X_{\l} \equiv 2e_1+3f_1 \equiv 2\l_2 + \{ f_1 \} \]
Since the point $f^q\cap E_\l$ does not lie on $\l_2=V_{\l}$, it is mapped to a point of $R$.
\\ \end{proof}

\subsection{A family of surfaces in $X$}
\label{sl_cubicsfamily_sec} 

Lemma \ref{sl_cubicas_por_C6} and the remarks after it assert that there is a cubic surface, other than $V$, containing $C_6$ and that any cubic surface containing $C_6$ must also contain the lines $r$ and $s$. So let $\mQ$ be the linear system of cubic surfaces in $\P^3$ containing $C_6$, $r$ and $s$. Then by \eqref{sl_eq_baselocus_mQ}  $\mQ$ has fixed intersection with $V$. Suppose that $s$ is not contained in $C_6$, as this case will be considered in Lemma \ref{sl_mQ_com_s_em_C6}.

The exact sequence:
\[ 0 \to \mQ-V \to \mQ \to \mQ\vert_V \to 0 \]
gives:
\[ 0 \to H^0(\P^3,\mQ-V) \to H^0(\P^3,\mQ) \to H^0(\P^3,\mQ\vert_V) \]
where the last map is clearly surjective.  Since $H^0(\P^3,\mQ-V)\cong \C$ and $\mQ$ has fixed intersection with $V$, it follows that $\mQ$ is a pencil.

Since $\mQ$ is generated by $V$ and another cubic, its base locus is given by \eqref{sl_eq_baselocus_mQ}, that is, $C_6+r+2s$. This means that the surfaces in $\mQ$ have in common a curve infinitely near to $s$. It also implies that if $s$ is not contained in $C_6$, $V$ is the only surface in $\mQ$ having a singular curve. 

Note that if $r\subset C_6$, the surfaces in $\mQ$ contain five lines of the ruling, which implies they contain $r$ and $r^\prime$ (see Lemma \ref{sl_r_s_em_C6}).

The following Lemma gives a rational representation of a cubic in $\mQ$ that will be used next:

\begin{lemma}\label{sl_representacao_mQ}
Suppose $s\not\subset C_6$ and let $S_3$ be a cubic in $\mQ$ different from $V$. Then there is a map
\[ \phi: \P^2 \tor S_3 \]
defined by cubic curves through six (possibly infinitely near) points, such that the line $r$ is the exceptional line of the blow up at a point in $\P^2$ and $s$ is the image of a conic through the other five points. 
\end{lemma}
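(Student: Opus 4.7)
The plan is to invoke the classical representation of a cubic surface as the image of $\P^2$ under the linear system of cubics through six points, adapted to the presence of the two skew lines $r$ and $s$ on $S_3$. First I would observe that, by the remarks preceding the lemma, the hypothesis $s\not\subset C_6$ forces $V$ to be the only surface in $\mQ$ carrying a singular curve; hence $S_3$ has only isolated (rational double point) singularities at worst, and its minimal desingularization $\widetilde{S_3}$ is a weak del Pezzo surface of degree three. By the standard theory of weak del Pezzos, $\widetilde{S_3}$ is the blow-up of $\P^2$ at six (possibly infinitely near) points, and the restriction of the anticanonical map to $\widetilde{S_3}$ coincides with the composition $\widetilde{S_3}\to S_3\hookrightarrow \P^3$.

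Next I would match the six blow-up points to the configuration prescribed for $r$ and $s$. The lines $r$ and $s$ lift to disjoint $(-1)$-curves on $\widetilde{S_3}$ (since we are in the general case, type $(i)$ of Proposition \ref{sl_projecoes_de_S(1,2)}, where $r$ and $s$ are skew). By the classical double-six theorem on (weak) cubic surfaces, $r$ belongs to a \emph{sixer} $\{r=a_1,a_2,\dots,a_6\}$ of six pairwise disjoint $(-1)$-curves whose complementary sixer $\{b_1,b_2,\dots,b_6\}$ can be arranged so that $s=b_1$; the incidence relations $a_i\cdot b_j=1-\delta_{ij}$ then assert that each $a_i$ with $i\geq 2$ is skew to $r$ and meets $s$ once. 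Contracting the sixer $\{r,a_2,\dots,a_6\}$ produces a birational morphism $\widetilde{S_3}\to\P^2$ with six (possibly infinitely near) base points $p_1,\dots,p_6$, where $p_1$ is the image of $r$. Composing the inverse of this contraction with $\widetilde{S_3}\to S_3$ yields the map $\phi:\P^2\tor S_3$; because it is defined by the anticanonical system pulled back to $\P^2$, it is exactly the linear system of plane cubics through $p_1,\dots,p_6$.

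Finally, I would compute the class of $s$ on the blow-up to confirm its image description. Writing $s\equiv aL-\sum b_iE_i$ in $\mathrm{Pic}(\widetilde{S_3})$, the conditions $s\cdot E_1=0$, $s\cdot E_i=1$ for $i\geq 2$, and $s^2=-1$ give $b_1=0$, $b_i=1$ for $i\geq 2$, and $a^2=4$, so $s$ is the strict transform of a conic through $p_2,\dots,p_6$; equivalently, $\phi$ sends this conic to $s$.

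The main obstacle is handling the case in which $S_3$ is singular, so that the double-six theorem and the classical sixer construction must be invoked in the weak del Pezzo setting rather than for a smooth cubic. This is controlled by the standard root-system (type $E_6$) description of $\mathrm{Pic}(\widetilde{S_3})$: the enumeration of sixers and the existence of a complementary double-six configuration survive in the weak case, with $(-2)$-curves on $\widetilde{S_3}$ corresponding precisely to infinitely near base points in $\P^2$, which is the reason for the proviso ``possibly infinitely near'' in the statement.
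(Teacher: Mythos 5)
Your route is genuinely different from the paper's. The paper never invokes the classification of weak del Pezzo surfaces: it projects $S_3$ birationally to $\P^2$ by the net of quadrics through a general point of $S_3$ and the two lines $r$ and $s$, computes that the inverse map is given by quartics with two double points and five simple points, identifies the images of $r$ and $s$ as explicit cubic curves in that system, and then normalizes the configuration by three standard quadratic transformations. Your argument is correct when $S_3$ is smooth: there the Weyl group acts transitively on ordered pairs of disjoint lines, every marking of the Picard lattice is geometric, and your final class computation for $s$ is right.

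The gap is precisely in the singular case, where you appeal to ``the enumeration of sixers surviving in the weak case.'' On a weak del Pezzo surface not every lattice-theoretic sixer is geometric: your classes $a_2,\dots,a_6$ are merely exceptional classes, possibly represented by reducible divisors containing $(-2)$-curves, and the existence of an actual birational morphism to $\P^2$ contracting them must be proved. Concretely, after contracting the strict transform $\tilde r$ of $r$ one lands on a weak del Pezzo quartic, and the candidate for the pullback of $\mO_{\P^2}(1)$ is $D=-K-\tilde s$, which has $D^2=1$ and $D\cdot K=-3$; for $\vert D\vert$ to define the desired birational morphism one needs $D$ nef, but $D\cdot R=-\tilde s\cdot R$ for every $(-2)$-curve $R$, so the construction breaks down exactly when $s$ passes through a singular point of $S_3$. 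Equivalently, if $[\tilde s]=2L-E_2-\dots-E_6$ in a geometric marking, then $\tilde s$ is forced to be disjoint from all $(-2)$-curves; so your statement silently assumes $s\cap\Sing(S_3)=\emptyset$, a fact that does not follow from the double-six formalism and must come from the geometry of the pencil $\mQ$ (the paper's explicit construction degenerates gracefully to infinitely near base points instead of meeting this issue). A smaller omission: before invoking the weak del Pezzo classification you must exclude that $S_3$ is a cone over an elliptic cubic (normal, with an isolated but non-rational singularity); this does follow, since a cone contains no two disjoint lines, but it needs to be said.
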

\begin{proof}
Since $s\not\subset C_6$ and $S_3$ is not $V$, $S_3$ is not ruled. Fix a point $p\in S_3$ not lying on a line of $S_3$ and let $\L$ be the linear system of quadric surfaces through $p$ containing $r$ and $s$. These quadrics must also contain the line $\l_p$ through $p$ intersecting both $r$ and $s$. Therefore the base locus of $\L$ is a curve of type $(1,2)$ in each quadric and $\L$ has projective dimension two. Then it defines a map:
\[ \psi: S_3 \tor \P^2 \]

To show that it is birational, take a point $P\in\P^2$. Its preimage via $\psi$ is the set of points in the intersection of $S_3$ with two general quadrics of $\L$ not lying on the base locus of $\L$. Two quadrics of $\L$ intersect in $r$, $s$, $\l_p$ and a line $\l^\prime$ skew to $\l_p$. Since $\l_p\notin S_3$, this line intersects $S_3$ in $p$, a point in $r$ and a point in $s$, all of them lying on the base locus of $\L$. The line $\l^\prime$ intersects $S_3$ in a point in $r$, a point in $s$ and one further point. Then the preimage of $P$ is one point and $\psi$ is birational.

Let $\L^\prime$ be the linear system in $\P^2$ defining the inverse of $\psi$. It is defined by quartic curves.  Indeed, the residual intersection of a quadric of $\L$ with $S_3$ is a quartic curve through $p$ of type $(3,1)$ in this quadric. Then a general plane section of $S_3$ intersects this curve in four points not lying on the base locus. These four points are mapped by $\psi$ to the intersection of a curve in $\L^\prime$ with a line. This shows that $\L^\prime$ is defined by quartic curves.

The map $\psi$ contracts any line in $S_3$ intersecting $r$ and $s$, since such line has fixed intersection with $\L$. If $S_3$ is a smooth cubic, there are five such lines and $\L^\prime$ has five simple base points. If $S_3$ is singular, some of these base points may be infinitely near.

Two other curves are contracted by $\psi$. The residual intersection of $S_3$ with the plane spanned by $p$ and $r$ is a conic intersecting each quadric of $\L$ in $p$, two points in $r$ and one point in $s$. A similar conic can be defined taking the plane through $p$ containing $s$. These conics are contracted by $\psi$ to double points of $\L^\prime$.

Therefore $\L^\prime$ consists of quartic curves having two double points and four (possibly infinitely near) simple points. There are no further base points, since the degree of the image surface is:
\[ 4^2-2\cdot 4-5\cdot 1 = 3 \]

Let us now investigate the image of $r$ and $s$ via $\psi$.

Blow up the line $r$. Then $\L_r\equiv (1,1)$ and the only base point of these curves is $\l_p\cap E_r$. On the other hand, $(S_3)_r\equiv (2,1)$ does not contain this point. Therefore $r$ is mapped by $\psi$ to a cubic curve. It must contain the five base points of $\L^\prime$, since these are  contractions of lines intersecting $(S_3)_r$. It also contains both double points, but it has multiplicity two in one of them, namely the contraction of the conic intersecting $r$ in two points.

The same reasoning applies to $s$. Therefore $r$ and $s$ are mapped to cubic curves $C^r$ and $C^s$ through all simple points having multiplicity one in a double point of $\L^\prime$ and multiplicity two in the other double point.

Now apply in $\P^2$ a standard quadratic transformation $T_1$ centered in the two double points and one simple point of $\L^\prime$. Then $\L^\prime$ is mapped to a linear system $\L^\prime_1$ of cubic curves with six base points. The curves $C^r$ and $C^s$ are mapped to conics $C^r_1$  and $C^s_1$ through five of the six base points. Let $P_1$ be the base point in $C^r_1\setminus C^s_1$. 

Now apply a second quadratic transformation $T_2$ centered in $P_1$ and in two base points of $\L^\prime_1$ in $C^r_1\cap C^s_1$. Then $\L^\prime_1$ is mapped to a linear system $\L^\prime_2$ of cubics with six base points. The image of $C^r_1$ via $T_2$ is a line $C^r_2$ through two base points $P_2$ and $Q_2$. The image of $C^s_1$ is a conic $C^s_2$ containing five of the base points, including $P_2$ and $Q_2$.

Finally apply a third quadratic transformation $T_3$ centered in $P_2$, $Q_2$ and in the base point of $\L^\prime_3$ not lying on $C^s_2$. Again $\L^\prime_2$ is mapped to a linear system $\L^\prime_3$ of cubics with six base points. Now the image of $C^r_2$ is a base point of $\L^\prime_3$ and the image of $C^s_2$ is a conic through the other five points.

Hence $\L^\prime_3$ defines the map $\phi$ described in the statement.
\\ \end{proof}

\begin{lemma}\label{sl_pencil_mQ}
Suppose $C_6$ does not contain $s$. Then the pencil $\mQ$ is mapped by $\sigma$ to a pencil $\mQ^\prime$ of weak Del Pezzo quartic surfaces in $X$. The base locus of $\mQ^\prime$ is a line $L$.

Note that the surface $V\in\mQ$ is contracted to $x$. But it has an extra multiplicity in $s$, so it corresponds to the image of $s$, that is, $D_4^x\in \mQ^\prime$.
\end{lemma}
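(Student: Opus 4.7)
The proof proceeds in three parts: (i) every $S_3 \in \mQ \setminus \{V\}$ maps under $\sigma$ to a weak Del Pezzo quartic; (ii) these images, together with $D_4^x$, form a pencil $\mQ^\prime$; (iii) the base locus of $\mQ^\prime$ is a line.

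For (i) and (ii), fix $S_3 \in \mQ \setminus \{V\}$ and apply Lemma \ref{sl_representacao_mQ} to get a birational parametrization $\phi : \P^2 \tor S_3$ by cubics through six points $P_1,\ldots,P_6$, with $r = \phi(E_1)$ and $s$ the image of the proper transform of a conic through $P_2,\ldots,P_6$. Write $h$ for the line class on the blow up $\widetilde{\P^2}$. Then $\phi^*H = 3h - E_1 - \cdots - E_6$, $\phi^*r = E_1$ and $\phi^*s = 2h - E_2 - \cdots - E_6$; using \eqref{sl_eq_baselocus_mQ} to write $C_6 \sim 3H - r - 2s$ on $S_3$, also $\phi^*C_6 = 5h - 4E_1 - E_2 - \cdots - E_6$. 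Pulling back the class $7H - 4s - 2C_6 - r$ of the moving part of $\X|_{S_3}$ and collecting coefficients yields $\phi^*\X = 3h - E_2 - E_3 - E_4 - E_5 - E_6$, the anticanonical system of $\P^2$ blown up at five points. This four-dimensional linear system embeds $\widetilde{\P^2}$ birationally as a (possibly weak) Del Pezzo surface of degree four in a $\P^4 \subset \P^7$, which is $\sigma(S_3)$. Since $\mQ$ is a pencil (cf.\ the discussion after Lemma \ref{sl_r_s_em_C6}) and $\sigma$ is birational, the assignment $S_3 \mapsto \sigma(S_3)$ gives a one-parameter family of linearly equivalent divisors on $X$; together with the degenerate member $D_4^x$ (produced, as recalled in the concluding sentence of the statement, from $V$ via its extra multiplicity on $s$), this family is the pencil $\mQ^\prime$.

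For (iii), examine $D_1 \cap D_2$ for two general $D_i = \sigma(S_3^i) \in \mQ^\prime$. On $\P^3$ one has $S_3^1 \cap S_3^2 = C_6 + r + 2s$, the base divisor of $\mQ$: the curve $C_6 \subset V$ is contracted to $x$; on each $S_3^i$ the line $r$ contracts to a single point of $\l_x$ (since the restriction $(7H - 4s - 2C_6 - r) \cdot r$ vanishes); and $s$ maps to a line through $x$ on $D_4^x$ (the same class has degree one on $s$). Both this point and this line vary with $S_3^i$, so neither is in the common base locus. To extract the actual one-dimensional fixed stratum I would invoke Theorem \ref{pr_contidonumscroll}: by linear normality of $X$ (Remark \ref{pr_oadpLN}), the union of the spans $\langle D_\lambda\rangle$ of the members of $\mQ^\prime$ is a rational normal scroll $\Sigma \subset \P^7$ of dimension five and degree three; among such scrolls only $S(0,0,1,1,1)$ has a one-dimensional subspace common to every $\P^4$-fibre, and this vertex line is then the sought $L$. \textbf{The main obstacle} is precisely this last identification: pinning down the type of $\Sigma$ (ruling out $S(0,0,0,1,2)$ and $S(0,0,0,0,3)$, whose common fibre intersections have the wrong dimension) and verifying that every Del Pezzo $D_\lambda$ actually contains the vertex line of $\Sigma$ requires a careful analysis of the residual base locus along $2s$ and $r$ beyond the varying contributions just described.
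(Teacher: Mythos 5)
Your parts (i) and (ii) follow essentially the same route as the paper: it too invokes Lemma \ref{sl_representacao_mQ}, computes $C_6\equiv(5,4,1)$ in the plane model of $S_3$, and finds the moving part of $\X\cap S_3$ to be of type $(3,0,1)$, i.e.\ cubics through five of the six points, hence a birational map onto a weak Del Pezzo quartic. Your class computations there are correct.

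Part (iii) contains a genuine gap, and in fact a concrete error. You assert that the line traced on $D_4^x$ by $s\subset S_3^i$ ``varies with $S_3^i$'' and therefore contributes nothing to the base locus. The opposite is true, and it is the heart of the paper's proof: after blowing up the base locus of $\X$, the trace $\mQ_s$ of the pencil on $E_s$ is a curve of type $(2,1)$ through the five double points of $\X_s\equiv(3,4)$, and it is a \emph{fixed} curve $t_L$ --- if it moved, the intersections $\sigma(S_3^i)\cap D_4^x$ would move, which is impossible because $D_4^x$ is itself a member of $\mQ^\prime$ and two members of a pencil meet only along the fixed base locus. The image of $t_L$ has degree $(2,1)\cdot(3,4)-5\cdot 2=1$; that line is precisely $L$. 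By discarding the contribution over $s$ as ``varying'' you discard exactly the locus you are looking for. The fallback via Theorem \ref{pr_contidonumscroll} does not close the gap: a five-dimensional degree-three scroll in $\P^7$ could a priori be $S(0,0,0,1,2)$ or $S(0,0,0,0,3)$, with vertex of dimension two or three, and you give no argument excluding these; moreover, even after identifying the type as $S(0,0,1,1,1)$ one only obtains that the base locus is \emph{contained} in the vertex line, and showing $L\subset D_\lambda$ for every $\lambda$ again requires the fixed-curve analysis over $s$, together with the paper's verification that the other contracted surfaces ($E_r$, resp.\ $E_{r^\prime}$ when $r\subset C_6$, and $E_\l$ for a multiple line $\l$) meet the members of $\mQ^\prime$ only in moving points. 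Note finally that the paper deduces the cone structure of Proposition \ref{sl_X_contido_em_F} \emph{from} this lemma, so the scroll route inverts the logical order without supplying the missing computation.
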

\begin{proof}
Let $S_3$ be a cubic in $\mQ$ different from $V$. Then it is normal and, since $r$ and $s$ are disjoint, it is not a cone. Hence it is weak Del Pezzo cubic (see \cite{dolga_classical}). 

Let $\phi: \P^2 \tor S_3$ be the birational map given in Lemma \ref{sl_representacao_mQ}, that is, $\phi$ is defined by a linear system of cubics through six points, such that the line $r$ is the exceptional line of the blow up at a  point $p_1\in\P^2$ and $s$ is the image of a conic through the other five base points. 

In $\P^2$, write $(d,m,n)$ for the class of a curve with degree $d$ having multiplicity $m$ at $p_1=\phi^{-1}(r)$ and $n$ at each of the other five points. As usual, $(0,-1,0)$ stands for the exceptional divisor of the blow up at $p_1$, whereas the line $s$ is mapped to a $(2,0,1)$ curve. A plane section of $S_3$ is mapped to a curve of type  $(3,1,1)$.

We will adopt this notation for classes of curves in $S_3$, considering their birational image, via $\phi$, in $\P^2$. Then: 
\[ (9,3,3) \equiv V\cap S_3 \equiv C_6+r+2s \equiv (d,m,n)+(0,-1,0)+(4,0,2) \]
which implies that $C_6$ is a curve of type $(5,4,1)$ in $S_3$.

Now, intersect $S_3$ with $\X$:
\[ (21,7,7)  \equiv 2C_6+r+4s+F_4 \equiv (10,8,2)+(0,-1,0)+(8,0,4)+(d,m,n)\]

So the moving part $F_4$ of $\X\cap S_3$ consists of curves of type $(3,0,1)$, which are mapped via $\phi^{-1}$ to cubics through five points in $\P^2$. This linear system of cubics maps $\P^2$ to a weak Del Pezzo surface of degree four. This surface is the image of $S_3$ via $\sigma$.

To prove the assertion on the base locus, remember that the surfaces in $\mQ$ have  a curve infinitely near to $s$ in common. Then, after the blow up at the base locus of $\X$, the fixed part of $\mQ_s$ is the only curve in the base locus of $\mQ$. Therefore, its image is contained in the base locus of $\mQ^\prime$. But the moving part is mapped to a moving intersection of surfaces in $\mQ^\prime$ with $D_4^x$, the image of $s$. Since $D_4^x\in \mQ^\prime$, it follows that there is no such moving part, that is, $\mQ_s$ is a fixed curve.

Consider the blow up at $s$. Then $\mQ_s$ is a fixed curve $t_L$ of type $(2,1)$. It contains the five points corresponding to the intersections with $C_6$. Remember that $\X_s=(3,4)$ and it has double points in these five  points. Then $\mQ_s$ is mapped to a curve of degree:
\[ (2,1)\cdot (3,4) - 5\cdot 2 = 1 \]
and this line $L$ lies on the base locus of $\mQ^\prime$.

Moreover, some surfaces in $\P^3$ (including exceptional divisors of these blow ups) are contacted by $\sigma$ to curves or points. These are $V$, $E_r$ (or $E_{r^\prime}$ if $r\subset C_6$) and $E_\l$ if $\l$ is a multiple line in $C_6$. Depending on the intersection of $\mQ$ with these surfaces, their images can be contained in the base locus of $\mQ^\prime$. 

After the blow up at the base locus of $\X$, the intersection of $\mQ$ with $V$ consists of $\mQ_s$, which was already analysed. The intersection with $E_r$ is $\mQ_r\equiv (2,1)$ and it contains the four double points of $\X_r\equiv (6,1)$, so each surface of $\mQ^\prime$ intersects the image $\l_x$ of $E_r$ in one point. 

If $r\subset C_6$, $\X_r\equiv (5,2)\equiv r^\prime + (3,1)$ and $\mQ_r = r^\prime$. The moving part intersects $r^\prime$ in five fixed points, corresponding to the five lines of $C_6$. Blowing up these five lines and $r^\prime$ gives $\X_{r^\prime}\equiv \mQ_{r^\prime}\equiv (0,1)$, so again each surface intersects the image of $r^\prime$ in one point (see the proof of Proposition \ref{sl_imagem_r_e_s}).

If $\l$ is a multiple line, let $\Pi$ be the plane containing $\l$ and $s$. Then:
\[ \mQ\cap \Pi=\l+s+\{\text{lines}\} \]
Blowing up $s$ and $\l$ gives $\X_\l\equiv 2e_1+3f_1\equiv 2\l^\prime+\{f_1\}$ (see Lemma \ref{sl_retas_multiplas}) and $\mQ_\l\equiv e_1+2f_1\equiv \l^\prime+\{f_1\}$. Therefore, each surface of $\mQ^\prime$ cuts the image of $E_\l$ in one point.

Hence $L$ is the base locus of $\mQ^\prime$. The second part of the Lemma follows from Proposition \ref{sl_imagem_r_e_s}.
\\ \end{proof}

The case in which $s$ is contained in $C_6$ is now analysed:
\begin{lemma}\label{sl_mQ_com_s_em_C6}
Suppose $s\subset C_6$, so that $C_6=\l_1+\cdots +\l_4+2s$. Define $\mQ$ to be the linear system of cubic surfaces containing $\l_1,\ldots,\l_4,r$ and having multiplicity two in $s$. Then $\mQ$ is a pencil and it is mapped by $\sigma$ to a pencil $\mQ^\prime$ of quartic surfaces in $X$. These surfaces are projections of Veronese quartic surfaces and have a double line $L$, the image of $s$, which is the base locus of $\mQ^\prime$.

Note that the surface $V\in\mQ$ is contracted to $x$. But it contains $s^\prime$, so it corresponds to the image of $s^\prime$, that is, $D_4^x\in \mQ^\prime$.
\end{lemma}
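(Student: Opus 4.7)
The plan is to mirror the proof of Lemma \ref{sl_pencil_mQ}, adapting the arguments to the degenerate configuration in which $s$ is a component of $C_6$.

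First, I would establish that $\mQ$ is a pencil. Clearly $V\in\mQ$ since $V$ contains $\l_1,\ldots,\l_4,r$ and is double along $s$. For any $S_3\in\mQ$, the intersection $S_3\cdot V$ is a divisor of degree $9$ on $V$ containing $\l_1+\cdots+\l_4+r+4s$: the $4s$ comes from both $S_3$ and $V$ having multiplicity two along $s$. Since this fixed divisor already has degree $4+1+4=9$, the restriction $\mQ|_V$ consists of the single fixed divisor $\l_1+\cdots+\l_4+r+4s$. The exact sequence
\[ 0\to H^0(\mO_{\P^3})\to H^0(\mathcal{I}_B(3))\to H^0(\mathcal{I}_{B|V}(3)) \]
(with $B$ the base scheme imposed by $\mQ$) then forces $\dim H^0(\mathcal{I}_B(3))\leq 2$, so $\dim\mQ\leq 1$. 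Existence of a second cubic can be checked directly in coordinates (using the model $V:x_0^2x_2+x_1^2x_3=0$) or constructed as an appropriate reducible cubic, establishing $\dim\mQ=1$.

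Second, I would compute the degree of a general image $\sigma(S_3)$. For generic $F\in\X$, the intersection $S_3\cap F$ in $\P^3$ has total degree $3\cdot 7=21$; subtracting the contributions from the base locus of $\X$ yields $4\cdot 2=8$ along $s$, $2\cdot 1=2$ along each $\l_i$ (totalling $8$), and $1\cdot 1=1$ along $r$, for a fixed total of $17$. Hence the moving part has degree $4$ and $\sigma(S_3)$ is a quartic surface; since $\sigma$ is birational, the image family $\mQ'$ is a pencil. To see that each $\sigma(S_3)$ has $L$ as a double line and is a projection of the Veronese quartic, I would analyze $E_s$ as in Proposition \ref{sl_imagem_r_e_s}: the moving part $\{(1,0)\}$ of $\X_s$ defines the projection $E_s\to L$, and the strict transform $\tilde S_3\cap E_s$, being a curve of bidegree $(2,1)$, covers $L$ with degree two. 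The Veronese structure then follows by the same standard quadratic transformations as in the proof of Proposition \ref{sl_imagem_r_e_s}.

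Third, I would handle the correspondence $V\leftrightarrow D_4^x$ and identify the base locus. Although $V$ itself is contracted to $x$, its strict transform meets $E_s$ exactly in $s'=V_s$; after the further blow-up of $s'$, the exceptional divisor maps to $D_4^x$ by Proposition \ref{sl_imagem_r_e_s}, so $V$ corresponds to $D_4^x$ in the pencil $\mQ'$. For the base locus: every member contains $L$ as a double line. The pencil of $(2,1)$-curves $\tilde S_3\cap E_s$ has exactly four base points, located on $s'$ at the intersections with the four lines $\l_i$; under the resolution these are absorbed into the contraction of $V_{s'}$ and contribute only to $x\in L$. An analogous verification at $E_r$ (respectively $E_{r'}$) and at the infinitely-near exceptional divisors of the multiple components shows no further fixed components arise, so the base locus of $\mQ'$ is exactly $L$.

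The main obstacle will be step three: carefully tracking the pencil through the sequence of blow-ups along $s$, then $s'$, then each $\l_i$, and verifying that the resulting intersections indeed reduce to $L$ alone rather than introducing further fixed curves in $\mQ'$. The Veronese identification is a parallel subtlety, but it proceeds exactly as the computation already performed for $D_4^x$ in Proposition \ref{sl_imagem_r_e_s}.
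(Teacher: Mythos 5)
Your overall strategy coincides with the paper's: the pencil claim via the fixed restriction $\mQ\vert_V=\l_1+\cdots+\l_4+r+4s$ and the exact sequence, a degree count for the moving part of $\X\cap S_3$, an analysis on $E_s$ to produce the double line $L$, and a check of the contracted surfaces ($V$, $E_r$, $E_s$, and $E_{\l}$ for a multiple line $\l$) to pin down the base locus of $\mQ^\prime$. Two points in your second step need repair.

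First, ``the moving part has degree $4$, hence $\sigma(S_3)$ is a quartic surface'' is not a valid inference: the degree of the image surface is the self-intersection of the moving system on a resolution of $S_3$, not the degree in $\P^3$ of a general moving curve; the two numbers happen to agree here, but you have not computed the former. Second, and more substantively, the Veronese identification cannot be borrowed from Proposition \ref{sl_imagem_r_e_s}: the quadratic transformations there are applied to $\X_{s^\prime}$ on $E_{s^\prime}\cong\F_3$ and describe the image of $s^\prime$ (that is, $D_4^x$), not the image of a member $S_3\neq V$ of $\mQ$. The missing observation is that $S_3$ is itself a non-normal cubic of type $(i)$ in Proposition \ref{sl_projecoes_de_S(1,2)} (it has the double line $s$ and contains the skew line $r$), so it carries the same plane representation as $V$; in that model the moving part $F_4$ of $\X\cap S_3$ has class $(2,0)$, i.e.\ it is a base-point-free system of conics, and it is \emph{not complete} because the two points of $F_4\cap E_s$ lie on a single fiber of $E_s$ over a point of $s$ and are therefore identified by $\sigma$. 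That non-completeness is exactly what makes $\sigma(S_3)$ a projection of the Veronese quartic with double line $L$ rather than the Veronese surface itself. (A minor slip: with the paper's convention on $E_s\cong\F_0$, where $(1,0)$ denotes a fiber, the curve $\mQ_s$ has class $(1,2)$, not $(2,1)$; your conclusion that it covers $L$ with degree two is what the class $(1,2)$ yields.)
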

\begin{proof}
For the images of $s$ and $s^\prime$, see Proposition \ref{sl_imagem_r_e_s}.

Let $S$ be a cubic of $\mQ$ different from $V$, then:
\begin{equation}\label{sl_blocus_mQ_s_em_C6} 
S\cap V = \l_1+\cdots+\l_4+r+4s
\end{equation}
which implies that $\mQ$ has fixed intersection with $V$. From the exact sequence:
\[ 0 \to \mQ-V \to \mQ \to \mQ\vert_V \to 0 \]
we have:
\[ 0 \to H^0(\P^3,\mQ-V) \to H^0(\P^3,\mQ) \to H^0(\P^3,\mQ\vert_V) \]
where the last map is clearly surjective.  Since $H^0(\P^3,\mQ-V)\cong \C$ and $H^0(\P^3,\mQ\vert_V)\cong \C$, it follows that $\mQ$ is a pencil.

The cubic $S$ is a non normal cubic of type $(i)$ in Proposition \ref{sl_projecoes_de_S(1,2)}, since it has a double line $s$ and contains a line $r$ skew to $s$. The conics in $S$ give a plane representation similar to the one of $V$ via $\bar{\sigma}$, in which $r\equiv (0,-1)$ and $2s\equiv (1,0)$. 

By \eqref{sl_blocus_mQ_s_em_C6}, $S$ does not contain $s^\prime$. The intersection of $S$ with $\X$ gives:
\[ \sum_{i=1}^4 2\l_i + r + 8s + F_4  \equiv (14,7) \equiv \sum_{i=1}^4 2\cdot (1,1)+(0,-1)+4\cdot (1,0)+(a,b) \]
which implies that $F_4\equiv (2,0)$, which does not contain $s^\prime$.

Now note that the intersection of $F_4$ with $s$ is actually a double point of $F_4$. Indeed, blowing up $s$, $S_s\equiv (1,2)$ intersects $V_s=s^\prime\equiv (1,2)$ in four points, each being the intersection of a line $\l_i$ with $E_s$. And $S_s$ intersects $\X_s$, which is the union of $2s^\prime$ and moving fibers, in these four points and two moving points. Then these two points are the intersection of $F_4$ with $E_s$. Since these points lie on a fiber over a point of $s$, the assertion follows.

Since $F_4\equiv (2,0)$, the moving part of $\X\cap S$ consists of curves birationally equivalent to a linear system of conics in $\P^2$ with no base points. This linear system is not complete, since  these conics intersect the image of $s$ in $\P^2$ in pairs of points that are sent to the same point: the double point of $F_4$ in $s$. Therefore, $S$ is mapped to  a projection of a Veronese quartic surface, having double line $L$, the image of $s$.

The base locus of $\mQ$ is given by \eqref{sl_blocus_mQ_s_em_C6}. After blowing up the base locus of $\X$, $\mQ$ no longer has base curves. Then we must look for surfaces being contracted by $\sigma$, which are $V$, $E_r$, $E_s$ and $E_\l$ if $\l$ is a multiple line in $C_6$.

After the blow ups, $\mQ$ has no intersection with $V$. And as in the previous Lemma, $\mQ_r\equiv (2,1)$ and the surfaces of $\mQ^\prime$ intersect $\l_x$ in moving points.

In $E_s$, $\mQ_s\equiv (1,2)$ containing the four points determined by the lines $\l_i$. As seen in the proof of Proposition \ref{sl_imagem_r_e_s}, $s^\prime\equiv (1,2)$ and:
\[ \X_s = 2s^\prime + \{ \text{moving fibers} \} \equiv (3,4) \]
Then the curves $\mQ_s$ intersect $s^\prime$ in those four points and the moving part of $\X_s$ maps each of these curves to $L$.

Now suppose $\l$ is a multiple line in $C_6$. If $\Pi$ is the plane containing $\l$ and $s$, then:
\[ \mQ\cap \Pi=\l+2s \]
As in the previous Lemma, blowing up $s$ and $\l$ gives, in $E_\l\cong \F_1$, $\Pi_\l\equiv e_1+f_1$. But now $\mQ_\l= \l^\prime\equiv e_1+f_1$. Then the surfaces of $\mQ^\prime$ do not intersect the image of $E_\l$. So one of these surfaces must contain this line.

In conclusion, the base locus of $\mQ^\prime$ is $L$.
\\ \end{proof}

Remember that by Corollary \ref{sl_s_vai_em_reta_tripla}, if $s$ is contained in $C_6$, it is mapped to a triple line $L$. And we have just proved that  all surfaces in $\mQ^\prime$ are singular along $L$ in this case. Here is a slight generalization of these facts: 

\begin{prop}\label{sl_ptos_triplos_mQ'}
The triple points of $X$, if any, lie on the base line $L$ of $\mQ^\prime$. Moreover, the surfaces of $\mQ^\prime$ have multiplicity two in these points.
\end{prop}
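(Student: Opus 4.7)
The plan is to prove the proposition in two parts.

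For the first assertion (triple points lie on $L$): every surface $S\in\mQ$ contains the line $s$, hence passes through $q\in s\cap C_6$, which is the point whose exceptional divisor $E_q$ is contracted by $\sigma$ to $x_q$ (cases (a)--(e) of Proposition \ref{sl_singularidades_em_C6.s}). Consequently $S^\prime=\sigma(S)$ contains $x_q$ for every $S\in\mQ$, so $x_q$ lies in the base locus of $\mQ^\prime$. By Lemma \ref{sl_pencil_mQ} and Lemma \ref{sl_mQ_com_s_em_C6}, this base locus is the line $L$, so $x_q\in L$.

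The multiplicity assertion in case (e) ($s\subset C_6$) is immediate from Lemma \ref{sl_mQ_com_s_em_C6}: every surface of $\mQ^\prime$ has multiplicity two along $L$, in particular at $x_q$. For cases (a)--(d), I proceed by an intersection-theoretic analysis on the resolution of $\sigma$ at $q$. After blowing up $q$ and the two fixed lines $t_1,t_2$ of $\X_q$ inside $E_q$, the morphism $\sigma$ contracts the configuration $E_q\cup E_{t_1}\cup E_{t_2}$ to $x_q$. For a general $S\in\mQ$, the strict transform $\tilde S$ meets this configuration in a connected curve whose components lie on $\tilde S$ as a smooth surface; a direct computation of the intersection form of these components on $\tilde S$ (using that $\tilde S\cap E_q$ is the projectivised tangent cone of $S$ at $q$, with degree $\mult_q S$) shows that the contraction produces a point of multiplicity at least two on $S^\prime$.

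The main obstacle is the case analysis within (a)--(d) governing the structure of $\tilde S\cap E_q$. In generic subcases, the incidence constraints coming from $S\supset C_6+r+s$ together with the tangent-cone description of $V$ at $q$ (Lemma \ref{sl_conetangente_de_V_e_secoes_planas}) and of $C_6$ at $q$ (Lemma \ref{sl_conetangente_de_C}) force $\mult_q S\geq 2$ by a rank computation on the tangent cones (the tangent directions of the two branches of $C_6$ at $q$ together with $T_q s$ span $T_q\P^3$, so no tangent plane of $S$ can contain them all), and then the contraction of the conic $\gamma=\tilde S\cap E_q$ yields an $A_1$-type singularity at $x_q$. In the remaining subcases, where a branch of $C_6$ is tangent to $s$ (cases (ii), (iii) of Lemma \ref{sl_conetangente_de_C}), $S$ may be smooth at $q$ with $T_qS$ equal to a plane of the tangent cone of $V$, forcing $\gamma$ to coincide with one of the fixed lines $t_i$; here one must exploit the prescribed second-order contact of the branch with $s$ and track how the strict transform of $S$ is modified by the blow-up of $t_i$, showing that the resulting contracted configuration still has multiplicity at least two. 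This last step parallels the $(-3)$-curve analysis appearing in the proof of Proposition \ref{sl_singularidades_em_C6.s}, transposed to the relative setting of a divisor in $X$.
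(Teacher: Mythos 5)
Your argument for the first assertion is a genuinely different, and in fact slicker, route than the paper's. The paper proves the stronger statement that the fibre $t^q$ of $E_s$ over $q$ is a whole component of the fixed curve $\mQ_s=t_L$ (using the tangent-cone analysis of $C_6$ at $q$ in both planes $\Pi_1,\Pi_2$, and the tangency of $V_s$ with $t^q$ when $q\in\{q_1^s,q_2^s\}$), and then concludes $x_q=\sigma(t^q)\in\sigma(t_L)=L$. Your version only needs membership, but the step ``$S$ passes through $q$, hence $S^\prime$ contains $x_q$'' is not automatic: you must check that the strict transform of $S$ actually meets the part of the exceptional locus over $q$ that is contracted to $x_q$ (a priori it could meet only, say, the fibre of $E_{C_6}$ over a point of $E_q$, whose image is not $x_q$). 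This is easily repaired: $\tilde S$ cuts $E_s$ in the fixed curve $\mQ_s\equiv(2,1)$, which meets the fibre $t^q\equiv(1,0)$ since $(2,1)\cdot(1,0)=1$, and $t^q$ is contracted to $x_q$; so $x_q$ lies on every member of $\mQ^\prime$ and hence on its base locus $L$. With that one line added, your first part stands.

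The second assertion is where the proposal has genuine gaps. Your plan works on $E_q$ and requires (i) that the configuration $E_q\cup E_{t_1}\cup E_{t_2}$ is the full contracted locus over $q$ and that $\tilde S$ is smooth along it --- neither is established (a general member of $\mQ$ is only a \emph{weak} Del Pezzo cubic, singular at $q$ in the generic subcase, and the paper never blows up $t_1,t_2$ in the triple-point cases); (ii) a case analysis over the tangency subcases of Lemma \ref{sl_conetangente_de_C} which you only outline; and (iii) even in the generic subcase you obtain ``multiplicity at least two'', whereas the statement asserts multiplicity two. The paper avoids all of this by working on $E_s$ instead of $E_q$: the linear system $\X_s\equiv(3,4)$ has two (possibly infinitely near) double base points on the fibre $t^q$ (precisely because $q$ is a singular point of $C_6$ of the types (a)--(d)), so after blowing them up $(t^q)^2=-2$ and $t^q$ has no further intersection with $\X_s$; its contraction is therefore a double point of $D_4^x=\sigma(E_s)$. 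Since $x$ is a general point of $X$ and $D_4^x$ is the member of $\mQ^\prime$ through $x$, this single computation shows that the general --- hence every --- member of $\mQ^\prime$ has multiplicity two at $x_q$. I would replace your intersection-theoretic analysis on $\tilde S$ by this argument; it requires no hypothesis on the singularities of $S$ at $q$ and no subcase discussion.
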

\begin{proof}
If $s\subset C_6$, it is mapped to $L$, which is a triple line in this case. Other triple points arise either from the intersection of two lines in $C_6$ or from the intersection of $s$ with a multiple line in $C_6$ through $q_1^s$ or $q_2^s$ (see Proposition \ref{sl_singularidades_em_C6.s} and Corollary \ref{sl_decomposicoes_de_C6}). Let $q$ be this point. Blowing up  $s$, in both cases the triple point is the contraction of the fiber over $q$. By Proposition \ref{sl_imagem_r_e_s}:
\[ \X_s \equiv 2s^\prime + \{ (1,0) \} \]
which implies that the triple point lies on $L$. By Lemma \ref{sl_mQ_com_s_em_C6}, $L$ is a double line of $\mQ^\prime$, so the second assertions is also proved in this case.
 
Now suppose that $C_6$ does not contain $s$. By Proposition \ref{sl_singularidades_em_C6.s}, a triple point of $X$ is the image of the exceptional divisor $E_q$ of the blow up at a singular point $q$ of $C_6$ lying on $s$. As explained in the proof of Lemma \ref{sl_pencil_mQ}, $L$ is the image of a curve $t_L$ of type $(2,1)$ in the exceptional divisor $E_s$ of the blow up at $s$. This curve contains the five intersections of $C_6$ with $s$. 

Let $q\in \P^3$ be a point that is mapped to $x_q$, a triple point of $X$, and let $t^q$ be the fiber over $q$ in $E_s$. Then $q$ fits in one of the cases (a) to (d) of Proposition \ref{sl_singularidades_em_C6.s}. It will be proven that $t_L$ contains $t^q$. 

If $q$ is not $q_1^s$ or $q_2^s$, then the tangent cone of $C_6$ in $q$ have components in both planes $\Pi_1$ and $\Pi_2$, which form the tangent cone of $V$ in $q$.  Then the curve $t_L$ intersects $t^q$ in two points and, therefore, contains this fiber. 

If $q$ is  $q_1^s$ or $q_2^s$, then $V_s$ is tangent to $t_q$. Since $q$ is a singular point of $C_6$, $t_L$ intersects $V_s$ in a point of $t^q$ and another point infinitely near to it. Since $V_s$ is tangent to $t^q$, both points lie on this line, which implies that $t_L$ contains $t^q$.

Therefore, we have that: \[ \mQ_s = t_L = t^q + t_L^\prime \equiv (1,0)+(1,1) \] where $t_L^\prime$ intersects $t^q$ in one point. Then $x_q$ lies on $L$.

We will now prove that $x_q$ is a double point of $D_4^x$, the surface of $\mQ^\prime$ through $x$. Since $x$ is a general point of $X$, $x_q$ is a double point of surfaces of $\mQ^\prime$.

Remember that $D_4^x$ is the image of $E_s$ via $\sigma$ and, since $x_q$ is a triple point of $X$, it is the image of $t^q$, the fiber over $q$ in $E_s$. Therefore, the linear system $\X_s\equiv (3,4)$ has two (possibly infinitely near) double points in $t^q$. 
Blowing up these double points, we get $(t^q)^2=-2$ in $E_s$. Then its image, $x_q$, is a double point of $D_4^x$.
\\ \end{proof}

The pencil  $\mQ^\prime$ will be used to give a geometrical description of the threefold $X$. For that we need another result:

\begin{lemma}\label{sl_numero_quadricas_por_X}
There are at least seven independent quadrics of $\P^7$ containing $X$.
\end{lemma}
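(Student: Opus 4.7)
The plan is to combine Theorem \ref{pr_contidonumscroll} with the fiberwise structure of $X$. By Lemmas \ref{sl_pencil_mQ} and \ref{sl_mQ_com_s_em_C6}, $X$ contains a pencil $\mQ^\prime$ of (possibly singular) quartic surfaces $D_\lambda$, each spanning a $\P^4$-ruling $\Pi_\lambda\subset\P^7$. The union of these spans is, by Theorem \ref{pr_contidonumscroll}, a rational normal scroll $Y\subset\P^7$ of dimension $5$. Being nondegenerate in $\P^7$ with $4$-dimensional rulings, $Y$ has minimal degree $3$, and its homogeneous ideal is therefore generated by the $\binom{3}{2}=3$ quadratic $2\times 2$-minors of a $2\times 3$ matrix of linear forms. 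This produces the first three independent quadrics vanishing on $X$.

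The remaining four would come from a fiberwise analysis. On each $\Pi_\lambda\cong\P^4$, $X\cap\Pi_\lambda=D_\lambda$ is a complete intersection of two quadrics (the standard description of a quartic Del Pezzo surface). Hence $\mathcal{F}:=\pi_\ast\mathcal{I}_{X/Y}(2H)$ is a rank-$2$ vector bundle on $\P^1$, whose global sections correspond exactly to the quadrics containing $X$ modulo those containing $Y$. Using $\deg X=8$ (which, via Proposition \ref{sl_baselocus}, computes as $7^2-4^2-6\cdot 2^2-1^2=8$) and $\deg D_\lambda=4$, the class of $X$ in $Y$ is $4H^2-4HF$, consistent with $X$ being cut out on $Y$ by two divisors $D_1,D_2$ of class $2H-F$. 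The associated Koszul resolution
\[
0\to \mO_Y(-2H+2F)\to \mO_Y(F)\oplus \mO_Y(F) \to \mathcal{I}_{X/Y}(2H)\to 0,
\]
together with the vanishing of $H^\bullet(Y,\mO_Y(-2H+2F))$ (by the projection formula, since $R^i\pi_\ast\mO_Y(-2H)=0$ for all $i$ in relative dimension $4$), then yields $h^0(\mathcal{I}_{X/Y}(2H))=2\cdot h^0(\mO_{\P^1}(1))=4$. Summing, $h^0(\mathcal{I}_X(2))\geq 3+4=7$.

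The main obstacle will be handling the cases where $X$ is not literally a complete intersection on $Y$: either $Y$ is a cone (as in Example \ref{pr_exemplo_sl}), or $\mQ^\prime$ consists of singular surfaces (e.g.\ when $s\subset C_6$, giving projected Veronese quartics with double line). In such situations the plan is either to work on a small resolution of $Y$, or to bypass the complete intersection description altogether by computing $\deg\mathcal{F}$ directly from the exact sequence
\[
0\to\mathcal{F}\to \pi_\ast\mO_Y(2H)\to \pi_\ast\mO_X(2H)\to R^1\pi_\ast\mathcal{I}_{X/Y}(2H)\to 0,
\]
which should suffice to force $h^0(\mathcal{F})\geq 4$ regardless of how the fiberwise complete intersection globalizes.
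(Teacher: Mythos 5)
Your proposal has a genuine circularity problem. The first three quadrics are fine: linear normality plus Theorem \ref{pr_contidonumscroll} applied to the pencil $\mQ^\prime$ does put $X$ on the cone $F$ over the Segre embedding of $\P^1\times\P^2$, a variety of minimal degree cut out by three quadrics. But the remaining four quadrics are obtained by \emph{assuming} that $X$ is cut out on $F$ by two divisors of class $2H-F$ (equivalently, that $X\cap\Pi_\lambda$ is a complete intersection of two quadrics in each $\P^4$ of the ruling, globalized over $\P^1$). In the paper this is precisely the content of Proposition \ref{sl_X_contido_em_F}, whose proof \emph{uses} Lemma \ref{sl_numero_quadricas_por_X}: the seven quadrics are what let one pick $Q_1,Q_2$ through $X$ not containing $F$ in the first place. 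Knowing that the class of $X$ is $4H_0^2-4H_0H_1$ is only a numerical consistency check; it does not produce the Koszul resolution you write down, and without that resolution the count $h^0(\mathcal{I}_{X/F}(2H))=4$ is unjustified. Your fallback via $\pi_\ast$ of the ideal sheaf sequence does not escape this: to extract $h^0(\mathcal{F})\geq 4$ from it you must compute $h^0(\mO_X(2))$ (and control $h^1$), which is exactly the hard part. There is also a secondary issue: when $s\subset C_6$ the fibres of $\mQ^\prime$ are projected Veronese quartics with a double line, which are not a priori complete intersections of two quadrics in $\P^4$, so even the fibrewise starting point of your argument needs proof in the degenerate cases you defer.

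The paper's route is different and avoids all of this: it works entirely in $\P^3$, identifying $H^0(\mO_X(2))$ with the linear system $\L=2\X$ of degree-$14$ surfaces with multiplicities $8,4,2$ along $s,C_6,r$, and computes $h^0(\L)=29$ by the filtration $\L\supset\L-V\supset\cdots\supset\L-4V$ and the restriction sequences to $V$. Then $h^0(\mathcal{I}_{X/\P^7}(2))\geq 36-29=7$ falls out of the ideal sheaf sequence on $\P^7$ with no reference to the scroll. If you want to salvage your approach, you would have to replace the complete intersection assumption by an independent computation of $h^0(\mO_X(2H))$ fibrewise (i.e., $h^0$ of $\mO_{D_\lambda}(2)$ on the quartic surfaces, including the degenerate ones) together with an argument that the relevant $R^1\pi_\ast$ vanishes; at that point you would essentially be redoing the paper's count with extra overhead.
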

\begin{proof}
The exact sequence:
\[ 0 \to \mathcal{I}_{X/\P^7}(2) \to \mO_{\P^7}(2) \to \mO_X(2) \to 0 \]
gives the cohomology exact sequence:
\begin{equation}\label{sl_seq_exata_numero_quadricas_por_X}
0 \to H^0(\mathcal{I}_{X/\P^7}(2)) \to H^0(\mO_{\P^7}(2)) \to H^0(\mO_X(2)) \to H^1(\mathcal{I}_{X/\P^7}(2)) \to 0
\end{equation}

We know that $H^0(\mO_{\P^7}(2))$ stands for the linear system of quadric hypersurfaces of $\P^7$, giving $h^0(\mO_{\P^7}(2))=\binom{9}{7}=36$.

The quadric sections of $X$ correspond, via $\tau$, to the linear system $\L=2\X$ of surfaces of degree $14$ having multiplicity eight along $s$, multiplicity four along $C_6$ and multiplicity two along $r$. Therefore $h^0(\mO_X(2))=h^0(\L)$.

Let $\L-V$ be the linear system of surfaces $S$ of degree $11$ such that  $S+V\in\L$, that is, having multiplicity six along $s$, multiplicity three along $C_6$ and containing $r$. Define analogously $\L-2V$, $\L-3V$ and $\L-4V$.

For $k=0,1,2,3$, the restriction of $\L-kV$ to $V$ gives the exact sequence:
\[ 0 \to \L-(k+1)V \to \L-kV \to (\L-kV)\vert_V \to 0 \]
Passing to cohomology, we have:
\begin{equation}\label{sl_seq_exata_L-kV}
0 \to H^0(\L-(k+1)V) \to H^0(\L-kV) \to H^0((\L-kV)\vert_V) \to 0
\end{equation}
where the exactness on the right follows from the surjectiveness of the last map. 

Note first that $\L$ has fixed intersection with $V$, namely:
\[ \L\vert_V=16s+4C_6+2r \]
Therefore $h^0(\L\vert_V)=1$.

Next, the fixed part of $(\L-V)\vert_V$ is $12s+3C_6+r$. Using the notation of page \pageref{sl_notacao_divisores_de_V} for curves in $V$, the moving part of $(\L-V)\vert_V$ is of type:
\[ (22,11)-(6,0)-(15,12)-(0,-1)=(1,0) \]
Then it corresponds to lines in $\P^2$, giving $h^0((\L-V)\vert_V)=3$.

The linear system $\L-2V$ consists of degree eight surfaces having multiplicity four along $s$ and two along $C_6$. Then the fixed part of $(\L-2V)\vert_V$ is $8s+2C_6$ and the class of its moving part is:
\[ (16,8)-(4,0)-(10,8)=(2,0) \] 
This gives a base point free linear system of conics in $\P^2$, so $h^0((\L-2V)\vert_V)=6$.

Finally, $\L-3V$ consists of degree five surfaces having multiplicity two along $s$ and containing $C_6$. The fixed part of $(\L-3V)\vert_V$ is $4s+C_6$ and the moving part  is of type:
\[ (10,5)-(2,0)-(5,4)=(3,1) \]
Then it corresponds to cubic curves in $\P^2$ with one base point. Therefore $h^0((\L-3V)\vert_V)=9$.

On the other hand, $\L-4V$ is the linear system of quadric surfaces in $\P^3$, giving $h^0(\L-4V)=\binom{5}{3}=10$. Then \eqref{sl_seq_exata_L-kV} gives, for $k=3$:
\[ h^0(\L-3V)= h^0(\L-4V) + h^0((\L-3V)\vert_V) = 10+9=19 \]

Applying the same equation with $k=2$, gives:
\[ h^0(\L-2V)= h^0(\L-3V) + h^0((\L-2V)\vert_V) = 19+6=25 \]

Then, for $k=1$ we have:
\[ h^0(\L-V)= h^0(\L-2V) + h^0((\L-V)\vert_V) = 25+3=28 \]

And finally, for $k=0$:
\[ h^0(\L)= h^0(\L-V) + h^0(\L\vert_V) = 28+1=29 \]

Returning to \eqref{sl_seq_exata_numero_quadricas_por_X}, we have:
\begin{align*}
h^0(\mathcal{I}_{X/\P^7}(2)) &= h^0(\mO_{\P^7}(2)) - h^0(\mO_X(2)) + h^1(\mathcal{I}_{X/\P^7}(2)) \\
&= 36-29+h^1(\mathcal{I}_{X/\P^7}(2))\\
&\geq 7
\end{align*}
\end{proof}

\begin{prop}\label{sl_X_contido_em_F}
Suppose that the linear system $\X$ is relatively complete. Then there is a cone $F$ over the Segre embedding of $\P^1\times \P^2$ with vertex $L$, such that  $X$ is the residual intersection of $F$ with two quadrics containing a $\P^4$ of its ruling. 

In particular, $X$ is an OADP threefold.
\end{prop}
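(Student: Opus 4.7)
The plan is to exhibit $X$ as a codimension-two subvariety of the scroll swept out by the pencil $\mQ^\prime$, then extract the residual intersection description from the class of $X$ in the Chow ring of this scroll, and finally deduce OADP. First, by Lemmas \ref{sl_pencil_mQ} and \ref{sl_mQ_com_s_em_C6} there is a pencil $\mQ^\prime$ of quartic surfaces in $X$ with base line $L$. Each member $D\in\mQ^\prime$, being either a weak Del Pezzo quartic or a projection of the Veronese quartic with double line $L$, spans a $\P^4$ through $L$. The hypothesis that $\X$ is relatively complete combined with Remark \ref{pr_oadpLN} yields that $X$ is linearly normal, so Theorem \ref{pr_contidonumscroll} applies and $F:=\bigcup_{D\in\mQ^\prime}\langle D\rangle\subset\P^7$ is a rational normal scroll. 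Its ruling is a $\P^1$-family of $\P^4$'s all containing $L$; since $X$ is nondegenerate these rulings cannot all coincide, whence $\dim F=5$ and $\deg F=3$. Projecting from $L$ sends the rulings of $F$ to a $\P^1$-family of $\P^2$'s in $\P^5$ forming a $3$-dimensional rational normal scroll of degree $3$, which must be the Segre embedding $\P^1\times\P^2=S(1,1,1)$. Therefore $F=S(0,0,1,1,1)$ is the cone over $\P^1\times\P^2$ with vertex $L$, as required.

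To extract $X$ via intersection theory, I would pass to the smooth model $\widetilde F=\P(\mathcal{E})\to\P^1$. With tautological class $H$ and fiber class $R$, one has $H^5=3$, $H^4R=1$, $R^2=0$, so a codimension-$2$ class $aH^2+bHR$ has degree $3a+b$ and cuts a general ruling $\P^4$ in a surface of degree $a$. Since $\deg X=8$ and a general ruling cuts $X$ in a quartic $D\in\mQ^\prime$, one obtains $[X]=4H^2-4HR=(2H-R)^2$. I then seek two quadrics $Q_1,Q_2\subset\P^7$ through $X$, not vanishing on $F$, each containing a chosen general ruling $R_0$ of $F$: Lemma \ref{sl_numero_quadricas_por_X} gives at least $7$ independent quadrics through $X$; subtracting the $3$-dimensional space cutting out $F$ leaves at least $4$ further quadrics through $X$, from which a parameter count should produce $2$ containing a common $R_0$. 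Each $Q_i\cap F=R_0+Z_i$ as cycles on $F$ with $[Z_i]=2H-R$, and $Z_1\cdot Z_2$ has class $(2H-R)^2=[X]$, giving $X$ as the residual intersection of $F\cap Q_1\cap Q_2$ after removal of $R_0$.

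The OADP property then follows exactly as in the Edge case in Theorem \ref{ed_the_classification}(i), via the argument of \cite[Proposition 2.5]{cmr} applied to this residual intersection description, or by direct reference to Example \ref{pr_exemplo_sl} since $X$ now fits that construction. The main obstacle is the parameter count producing two independent quadrics through $X$ sharing a common ruling $R_0$: although Lemma \ref{sl_numero_quadricas_por_X} bounds the quadrics through $X$ from below, and quadrics through $F$ form a $3$-dimensional subspace, extracting two that also vanish on a prescribed $\P^4$ of the ruling requires careful matching of these dimensions against the restriction map to quadratic forms on $R_0$, guided by the class computation $[X]=(2H-R)^2$.
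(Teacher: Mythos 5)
Your proof follows the same skeleton as the paper's: relative completeness gives linear normality via Remark \ref{pr_oadpLN}, Theorem \ref{pr_contidonumscroll} applied to the pencil $\mQ^\prime$ produces the scroll $F$, which is identified as the cone over $\P^1\times\P^2$ with vertex $L$, the class of $X$ comes out as $(2H-R)^2$, and OADP is quoted from \cite{cmr}. Where you genuinely diverge is in the middle: the paper takes \emph{two arbitrary} quadrics through $X$ not containing $F$, decomposes $Q_1^\prime\cap Q_2^\prime$ into $X^\prime$, an exceptional piece $\delta E$, and a residual $S$ supported on rulings, pins down $[S]=4H_0H_1$ by a degree count ($12-8=4$), and only then reads off the class $(2H_0-H_1)^2$; you instead compute $[X]=4H^2-4HR$ directly from $\deg X=8$ and the ruling degree $4$, which is legitimate and arguably cleaner since $A^2(\P(\mathcal{E}))$ is freely generated by $H^2$ and $HR$ and the pairing against $H^3,H^2R$ is unimodular --- this sidesteps the paper's bookkeeping of the term $\delta E$ over the vertex line. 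In exchange you must \emph{construct} two quadrics through $X$ and a common ruling $R_0$, which the paper never needs explicitly.

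The parameter count you flag as the main obstacle does close, and you should write it out: the restriction of the $\geq 7$-dimensional space $H^0(\mathcal{I}_{X/\P^7}(2))$ to $R_0\cong\P^4$ lands inside $H^0(\mathcal{I}_{X\cap R_0/R_0}(2))$, and since $X\cap R_0$ is a quartic surface spanning $R_0$ (a complete intersection of two quadrics there), that target has dimension $2$; hence the kernel has dimension $\geq 5$, strictly larger than the $3$-dimensional space of quadrics vanishing on $F$, and two independent choices modulo that subspace give $Q_1,Q_2$ as required. The one point you do not address at all is properness: to conclude $Z_1\cap Z_2=X$ from $[Z_1]\cdot[Z_2]=[X]$ and $X\subset Z_1\cap Z_2$ you need $Z_1$ and $Z_2$ to share no component. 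This follows by choosing $Q_1,Q_2$ so that their restrictions to a general ruling $\Pi$ are independent members of the pencil of quadrics through $X\cap\Pi$ (then $Q_1\cap Q_2\cap\Pi=X\cap\Pi$ exactly, so no common component dominates the base, and a component contained in a single ruling is excluded for a general choice). With those two additions your argument is complete and delivers the residual-intersection statement somewhat more literally than the paper's class computation does.
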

\begin{proof}
According to Remark \ref{pr_oadpLN}, $X$ is linearly normal. Then, by Theorem \ref{pr_contidonumscroll}, $X$ lies on a rational normal scroll $F$, described by the $\P^4$'s spanned by the quartics in $\mQ^\prime$. These $\P^4$'s have a line in common, namely $L$. Therefore $F$ is a cone with vertex $L$ over a scroll $Y$ in $\P^5$. This scroll contains a one-dimensional family of disjoint planes, so $Y$ is the Segre embedding of $\P^1\times \P^2$. In particular, $F$ is the intersection of three quadrics containing it.

Note that $X$ cuts a $\P^4$ of $F$ in a quartic surface. Indeed, if $C_6$ does not contain $s$, then by Lemma \ref{sl_pencil_mQ} this intersection is a weak Del Pezzo surface containing $L$. If $s$ is contained in $C_6$ it is a projection of a Veronese quartic surface from an outside point and has multiplicity two along $L$.

Let $\eta:\Bl_L(\P^7)\to\P^7$ be the blow up of $L$, let $G$ be the strict transform of $F$  and $E$ be the intersection of $G$ and the exceptional divisor. Note that $E$ has dimension three. In $G$, set $H_0$ for the class of the strict transform of a hyperplane section of $F$ and $H_1$ for the class of the strict transform of a $\P^4$ of the ruling. Note that $H_1^2=0$.

By Lemma \ref{sl_numero_quadricas_por_X}, there are at least seven independent quadrics containing $X$. Three of these can be chosen containing $F$. Then there are at least four other quadrics through $X$ which do not contain $F$. Among these, pick two:  $Q_1$ and $Q_2$. Note that $Q_1\cap Q_2$ intersects $\Pi$, a general $\P^4$ of the ruling of $F$, in a quartic surface that coincides with $X\cap\Pi$. If $s\subset C_6$, this surface has multiplicity two along $L$.

Let $Q_1^\prime$ and $Q_2^\prime$ be the total transforms via $\eta$ of $Q_1\cap F$ and $Q_2\cap F$, and let $X^\prime$ be the strict transform of $X\subset F$.

The divisors $Q_1^\prime$ and $Q_2^\prime$ contain both $X^\prime$ and $E$. So let $S$ be residual intersection, that is:
\[ Q_1^\prime\cap Q_2^\prime = X^\prime+E+S \]

If $s$ is contained in $C_6$, then $Q_1\cap Q_2$ has multiplicity two along $L$. In this case write:
\[ Q_1^\prime\cap Q_2^\prime = X^\prime+2E+S \]

In both cases, $S$ does not contain $E$. Since $X$ intersects a $\P^4$ of the ruling of $F$ in a quartic surface, it follows that $S$ consists of threefolds contained in rulings of $G$. Moreover: 
\[ \deg(\eta(S)) = \deg(Q_1\cap Q_2\cap F)-\deg(X)=12-8=4 \]
These considerations imply that $S \equiv 4H_0H_1$. Therefore: 
\[ X^\prime+\delta E\equiv (2H_0)^2-4H_0H_1\equiv (2H_0-H_1)^2 \]
with $\delta\in\{1,2\}$. Taking the image via $\eta$ in $\P^7$, it follows that $X$ is the residual intersection of $F$ with two quadrics containing a $\P^4$ of the ruling.

The proof that $X$ is an OADP threefold is the same done in \cite[Example 2.6]{cmr}.
\\ \end{proof}

\section{Description of the general OADP threefold of degree 8}

This section is dedicated to the study of the case in which the curve $C_6$ of Proposition \ref{sl_baselocus} is general, that is, it is smooth. This defines the general OADP threefold of degree 8. A small description of this threefold is now given using the inverse of its general tangential projection.

\begin{prop}
Let $X$ be the threefold corresponding to the case in which $C_6$ is smooth. Then $X$ is the smooth scroll in lines given in Example \ref{pr_exemplo_sl}. 

The threefold $X$ contains a pencil $\mQ^\prime$ of Del Pezzo quartic surfaces with base locus a line $L$. The surface $D_4^x\subset\mQ^\prime$ through $x$ is the image of the line $s$. The line $r$ is mapped to the line through $x$

The curve $C_6$ in the base locus of $\X$ is mapped to a degree $16$ scroll in conics $S_{16}^x$ having multiplicity $5$ in $x$. It intersects quartics of $\mQ^\prime$ in curves of degree $10$. Its intersection with the quartic of $\mQ^\prime$ through $x$ consists of five conics through this point.

Through $x$ there is a one-dimensional family of conics parametrized by $C_6$. These conics describe the scroll $S_{16}^x$. Besides these, there are other five conics through $x$, which lie on $D_4^x$.
\end{prop}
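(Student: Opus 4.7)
The plan is to verify each claim in turn by specializing the general analysis to the smooth case. First, since $C_6$ is smooth it is irreducible, and none of the configurations producing singularities in Lemma~\ref{sl_singularidades_vem_de_C6}, Proposition~\ref{sl_pontos_duplos_de_X} or Proposition~\ref{sl_singularidades_em_C6.s} can occur, so $X$ is smooth. Proposition~\ref{sl_X_contido_em_F} realizes $X$ as the residual intersection of the cone $F$ over the Segre embedding $\P^1\times\P^2$ (vertex $L$) with two quadrics containing a $\P^4$ of its ruling, which is Example~\ref{pr_exemplo_sl}. Moreover, $C_6$ being irreducible of degree $6$ contains neither $r$ nor $s$, so Lemma~\ref{sl_pencil_mQ} supplies the pencil $\mQ^\prime$ of weak Del Pezzo quartics with base line $L$ and identifies $D_4^x=\sigma(s)$, while Proposition~\ref{sl_imagem_r_e_s} identifies $\sigma(r)$ with the line $\l_x$ through $x$.

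The main technical step is describing $S_{16}^x=\sigma(C_6)$ via the blowup at $C_6$. Since $C_6$ is a smooth rational curve (the strict transform of a plane quintic with a quadruple point has arithmetic genus zero), $E_{C_6}$ is a Hirzebruch surface. The restriction $\X_{C_6}$ meets each fiber of $E_{C_6}\to C_6$ with multiplicity $2$ (the multiplicity of $\X$ along $C_6$), so the images of these fibers are conics and $S_{16}^x$ is a scroll in conics parametrized by $C_6$. The base locus of $\X_{C_6}$ on $E_{C_6}$ consists of the five points where $s$ meets $E_{C_6}$ (multiplicity $4$, one per point of $C_6\cap s$) and the four points where $r$ meets $E_{C_6}$ (multiplicity $1$ each), and all nine of these points lie on the section $V_{C_6}$ (since $r,s\subset V$). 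After resolving, $V_{C_6}$ becomes a curve of sufficiently negative self-intersection to contract to a single point, producing the multiplicity $5$ at $x$; intersection-theoretic bookkeeping then gives $\deg S_{16}^x=16$. For a general $Q^\prime\in\mQ^\prime$, $S_{16}^x\cap Q^\prime$ is the image of $\tilde Q\cap E_{C_6}$ and has degree $10$; for $Q^\prime=D_4^x$, $E_{C_6}\cap E_s$ is the union of five fibers of $E_{C_6}\to C_6$ (over $C_6\cap s$), each mapped to a conic through $x$.

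For the families of conics through $x$, the one-parameter family parametrized by $C_6$ is nothing other than the image of the fibers of $E_{C_6}\to C_6$, each of which meets the contracted section $V_{C_6}$ at one point and so produces a conic through $x$; these sweep out $S_{16}^x$. For the five remaining conics through $x$ on $D_4^x$, I would use the description of $D_4^x$ as $\P^2$ blown up at five points embedded by the linear system of cubics through them: conics on this surface arise as lines through one of the five base points (five pencils) or as conics through four of the five base points (five pencils), yielding ten conics through any given point; five of these account for $S_{16}^x\cap D_4^x$ already described, and the other five are the additional conics claimed.

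The principal obstacle is the bookkeeping on $E_{C_6}$ needed to pin down the Hirzebruch type and verify no unexpected fixed components appear. To determine $N_{C_6/\P^3}$ one uses the exact sequence $0\to N_{C_6/S_3}\to N_{C_6/\P^3}\to N_{S_3/\P^3}|_{C_6}\to 0$ with $N_{C_6/S_3}=\mO(4)$ (computing $C_6^2=4$ on $S_3$ from its plane representation in Lemma~\ref{sl_representacao_mQ}) and $N_{S_3/\P^3}|_{C_6}=\mO(18)$; with this in hand the intersection numbers on $E_{C_6}$ and the invariants $16$ and $5$ follow by a direct computation.
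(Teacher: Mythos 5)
Your overall strategy is the same as the paper's: the first two paragraphs of the statement are reduced to Lemma \ref{sl_singularidades_vem_de_C6}, Proposition \ref{sl_X_contido_em_F}, Proposition \ref{sl_imagem_r_e_s} and Lemma \ref{sl_pencil_mQ} (the paper actually identifies $X$ with Example \ref{pr_exemplo_sl} via smoothness, degree $8$ and the classification in \cite{cmr}, but your route is equivalent), and the real work happens on the exceptional divisor $E_{C_6}$. Where you diverge is the normal bundle: the paper runs the sequence through the singular surface $V$ and writes $\mathcal{N}_{C_6/\P^3}=\mO_{\P^1}(9)\oplus\mO_{\P^1}(3)$, while you run it through a cubic $S_3\in\mQ$, getting sub/quotient degrees $4$ and $18$. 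Your degree count is actually the sound one ($4+18=22=4\deg C_6-2$, whereas the paper's total $12$ cannot be correct, and the term $\mathcal{N}_{V/\P^3}\vert_{C_6}$ is delicate because $C_6$ meets $\Sing(V)=s$ in five points).

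The gap is in the claim that ``with this in hand the intersection numbers on $E_{C_6}$ and the invariants $16$ and $5$ follow by a direct computation.'' The exact sequence $0\to\mO(4)\to \mathcal{N}_{C_6/\P^3}\to\mO(18)\to 0$ does \emph{not} determine the splitting type of the middle term ($H^1(\P^1,\mO(-14))\neq 0$, so the extension need not split as $\mO(4)\oplus\mO(18)$), hence it does not pin down which Hirzebruch surface $E_{C_6}$ is --- precisely the ``principal obstacle'' you name. The repair is that the Hirzebruch type is irrelevant: since $V$ and $S_3$ are both cubics containing $C_6$ with multiplicity one, $V_{C_6}$ and $S_{C_6}$ are sections of $E_{C_6}$ in the \emph{same} divisor class, and the residual intersection $V\cap S_3=C_6+r+2s$ gives $V_{C_6}\cdot S_{C_6}=4+2\cdot 5=14$; together with $\X_{C_6}\cdot f=2$ and the fixed intersection of $\X_{C_6}$ with $V_{C_6}$, this determines $16$, $5$, $10$ and the five conics on $D_4^x$ by a computation whose outcome one checks is independent of $n$ for $E_{C_6}\cong\F_n$. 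One further bookkeeping point you elide: at the five points of $C_6\cap s$ the restriction of $\X$ to $E_{C_6}$ contains the five fibers over $C_6\cap s$ as fixed \emph{double} components, and only the residual moving part has ordinary double points there; charging multiplicity $4$ to those points of the moving part would break the degree count. Your last paragraph (the family of conics from the fibers of $E_{C_6}$, and the ten conics through a point of the quartic Del Pezzo $D_4^x$, five of which lie on $S_{16}^x$) agrees with the paper.
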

\begin{proof}
Set $C=C_6$. Since $C$ is smooth, by Lemma \ref{sl_singularidades_vem_de_C6} the associated variety $X$ is smooth, and by Proposition \ref{sl_X_contido_em_F}, it is an OADP threefold. Since it has degree eight, by the classification of smooth OADP threefolds done in \cite{cmr}, it follows that $X$ is projectively equivalent to the variety given in Example \ref{pr_exemplo_sl}. 

Note that the fact that $X$ is ruled by lines also follows from Corollary \ref{sl_X_is_ruled}. Moreover, the description of $X$ as an intersection of divisors of a cone $F$ over the Segre embedding of $\P^1\times \P^2$ with vertex $L$, given  in Example \ref{pr_exemplo_sl}, is also obtained in Proposition \ref{sl_X_contido_em_F}. 

The images of $r$ and $s$ are given by Proposition \ref{sl_imagem_r_e_s} and the assertions on the pencil $\mQ^\prime$ are explained in Lemma \ref{sl_pencil_mQ}. The fact that the general surface in $\mQ^\prime$ is smooth follows from the description of $X$ as an intersection of $F$ with two general quadric hypersurfaces containing a $\P^4$ of the ruling.

Next, we study the image of $C$ via $\sigma$. Remember that $C\equiv (5,4)$, that is, it is the image via $\bar{\tau}$ of a degree five curve in $E\cong \P^2$ having multiplicity four in $p$, the base point of $\II_{X,x}$. In particular, $C$ is rational. The inclusions $C\subset V\subset \P^3$ give an exact sequence:
\[ 0 \to \mathcal{N}_{C/V} \to \mathcal{N}_{C/\P^3} \to \mathcal{N}_{V/\P^3\vert_C} \to 0 \]

Since $C$ is of type $(5,4)$ in $V$, $C^2=9$ and $\mathcal{N}_{C/V}=\mO_{\P^1}(9)$. On the other hand, $V$ is a cubic hypersurface, so $\mathcal{N}_{V/\P^3\vert_C}=\mO_{\P^1}(3)$. Now, since $\mathcal{N}_{C/\P^3}$ is a line bundle over a rational curve, it splits and:
\[ \mathcal{N}_{C/\P^3}= \mO_{\P^1}(9)\oplus \mO_{\P^1}(3) \]

Therefore, blowing up $C$ gives $E_C\cong \F_6$. The surface $V$ has multiplicity one along $C$, except for the five points in $s$, where the multiplicity is two. Then:
\[ V_C\equiv e_6+af_6 \equiv F+e_6+(a-5)f_6 \]
where $F\equiv 5f_6$ is the union of the fibers over the five points in $s$.

Let $S$ be a general surface in $\mQ$. Remember that $V\in\mQ$ and that:
\[ V\cap S=C+r+2s \]
This implies that $S_C\equiv e_6+af_6$ and that it intersects $V_C$ in four simple points corresponding to $r\cap C$ and five double points corresponding to $s\cap C$. Then:
\[ 4+5\cdot 2=V_C\cdot S_C=-6+2a \]
and this gives $a=10$. Set $\widehat{V_C}=V_C-F\equiv e_6+5f_6$.

On the other hand, $\X$ has multiplicity two along $C$, except for the points in $s$, where it is four. Then:
\[ \X_C\equiv  2F+2e_6+bf_6 \]
Let $\widehat{\X_C}$ be the moving part. It has multiplicity two in the points $s_C$ and multiplicity one in $r_C$. We know that $\widehat{\X_C}$ has fixed intersection with $\widehat{V_C}$, which is smooth in these points. Then:
\[ 4+5\cdot 2 = \widehat{\X_C}\cdot \widehat{V_C} = -12+10+b \]
which gives $b=16$. Note that $\widehat{\X_C}$ has no moving intersection with $F$.

Finally, the degree of the image of $E_C$ via $\sigma$ is:
\[ (2e_6+16f_6)^2-4-5\cdot 4 = -24+64-24 = 16 \]
The curve $\widehat{V_C}$ is contracted to the point $x$. 

Note that $(\widehat{V_C})^2=4$ and it contains nine base points of $\widehat{\X_C}$. Then $S_{16}^x$ has multiplicity five in $x$.

The moving intersection of $\widehat{\X_C}$ with $S_C$ is:
\[ (2e_6+16f_6)(e_6+10f_6)-4-2\cdot 5 = -12+20+16-14 = 10 \]
So $S_{16}^x$ intersects a general surface of $\mQ^\prime$ in a degree $10$ curve. Since $s_C$ consists of the five double points of $\widehat{\X_C}$, $S_{16}^x$ intersects $D_4^x$ in five conics through $x$.

To determine all conics through $x$, note that the tangential projection of such conic is a point. Therefore this conic must lie on the image of the exceptional divisors of $\X$.

A general fiber of $E_C$ intersects $\X_C$ in two moving points and intersects $V_C$ in a point outside the base locus of $\X_C$. This gives a one-dimensional family of conics through $x$, which describes $S_{16}^x$ and is parametrized by $C$. 

On the other hand, the Del Pezzo surface $D_4^x$, image of $E_s$, contains ten conics through $x$. As noted above, five of these conics lie on $S_{16}^x$.
\\ \end{proof}

\section{Description of a singular example}
\label{sl_1conica4retas_sec}

In this section, we will describe an example where the curve $C_6$ is the union of a conic and four lines. Two of these lines are infinitely near, that is, it is a double line of $C_6$. In $V$, this means that we are writing:
\[ (5,4) = (1,0)+(1,1)+(1,1)+(2,2) \]

Suppose that the two simple lines $\l_1$ and $\l_2$ intersect in a point $q_1$ (that must lie on $s$). Suppose also that the point of intersection of $C$ and $s$ is $q_1$. Let $\l_3$ be the other line and $\l_4$ be the line infinitely near to it. 

Throughout this section, let $\Sigma$ be the plane containing $C$, let $\Pi_i$ be the plane containing $s$ and $\l_i$, for $i=1,2,3$, and let $\Gamma$ be the plane containing $\l_1,\l_2$ and $r$. Then the tangent cone of $V$ in $q_1$ is the union of $\Pi_1$ and $\Pi_2$. 
By Lemma \ref{sl_intersecoes_de_C6_com_2retas_do_ruling}, $C$ intersects one of the lines through $q_1$, say $\l_2$, in a second point $q_2$. In particular, $\l_2$ lies on $\Sigma$ and the tangent line to $C$ at $q_1$ is $\Sigma\cap \Pi_1$. Set $q_3=\l_3\cap C$ and suppose $\l_3\cap s$ is not $q_1^s$ or $q_2^s$.

All the results of this section refer to this specific configuration. Figure \ref{sl_figura_1conica4retas} illustrates the base locus of $\X$.

\begin{figure}
\centering
\includegraphics[trim=20 650 360 20,clip,width=0.5\textwidth]{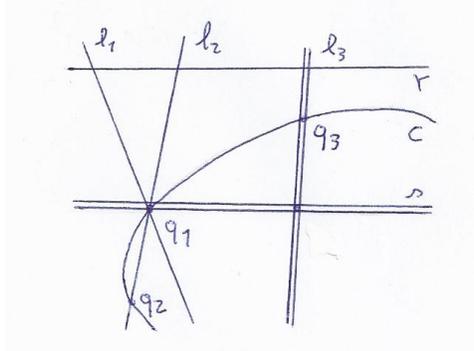}
\caption{The base locus of $\X$ in  Section \ref{sl_1conica4retas_sec}.}\label{sl_figura_1conica4retas}
\end{figure}

The point $q_1$ is an intersection of three components of $C_6$. The following remark will be useful throughout this section:
\begin{remark}\label{sl_1conica4retas_explosao_q1}
Consider the blow up at $q=q_1$. As seen in the proof of Lemma \ref{sl_intersecoes_de_C6_com_2retas_do_ruling}, $\X_q$ is a pair of double lines intersecting in $s_q$.  One of these lines, namely $(\Pi_1)_q$, contains the  intersections of $E_q$ with $C$ and $\l_1$; while the other line $(\Pi_2)_q$ contains the intersection with $\l_2$. See Figure \ref{sl_figura_1co4re_conetg_q1}.
\end{remark}

\begin{figure}
\centering
\includegraphics[trim=340 650 60 40,clip,width=0.6\textwidth]{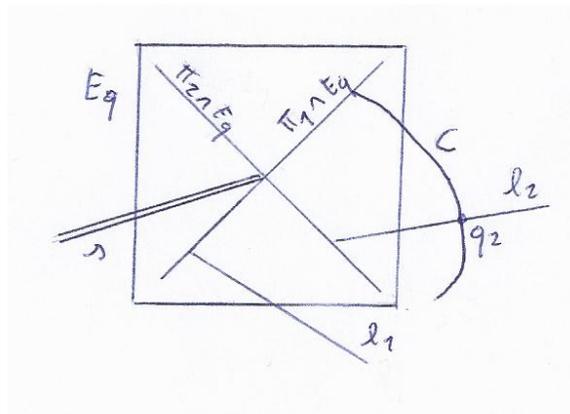}
\caption{The blow up at $q=q_1$}\label{sl_figura_1co4re_conetg_q1}
\end{figure}

\subsection{Singularities of $X$}

By Lemma \ref{sl_singularidades_vem_de_C6}, the singularities of $X$ are images of singularities of $C_6$. Proposition \ref{sl_singularidades_em_C6.s} describes the image of $q_1$ and asserts that $\l_3\cap s$ is not mapped to a triple point. The images of $q_2$ and $q_3$ are explained in Proposition \ref{sl_pontos_duplos_de_X}. The double line $\l_3$ is studied in Lemma \ref{sl_retas_multiplas}. This gives:

\begin{prop}
The points $q_i$ are mapped to points $x_i$ in $X$, for $i=1,2,3$, and $\l_3$ is mapped to a line $R$. The point $x_1$ is a triple point, $x_2$ and $x_3$ are double points and $R$ is a double line of $X$. Among the three singular points, only $x_3$ lies on $R$. Moreover, the projectivization of the tangent cone of $X$ in $x_1$ is the union of a smooth quadric surface and a plane.
\end{prop}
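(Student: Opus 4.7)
My plan is as follows. First, the claims about $x_1, x_2, x_3$ and $R$ follow immediately from prior results. Since $q_1$ is the intersection on $s$ of two distinct lines $\l_1, \l_2 \subset C_6$ of the ruling of $V$ (and also of the conic $C$), cases~(b) and~(d) of Proposition~\ref{sl_singularidades_em_C6.s} both apply, contracting $E_{q_1}$ via $\sigma$ to a triple point $x_1 \in X$. The points $q_2 = C \cap \l_2$ and $q_3 = C \cap \l_3$ are singular points of $C_6$ outside $s$, so Proposition~\ref{sl_pontos_duplos_de_X} sends them to double points $x_2, x_3$ of $X$. Since $\l_3$ is a double line of $C_6$ (so $d=2$ in Lemma~\ref{sl_retas_multiplas}), that Lemma gives $\l_3 \mapsto R = R_1$, a double line of $X$ (while $\l_4$ maps to a cubic scroll through $x$). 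Moreover, by the construction in Lemma~\ref{sl_retas_multiplas}, each point of $R$ is the image under $\sigma$ of some point of $\l_3$, so $q_3 \in \l_3$ yields $x_3 \in R$. On the other hand, $q_1 \notin \l_3$ because by Lemma~\ref{sl_rulingV} only the two ruling lines $\l_1, \l_2$ pass through $q_1$, and $q_2 \notin \l_3$ because $\l_2, \l_3$ are distinct ruling lines (hence meet at most on $s$) while $q_2 \notin s$. This gives $x_1, x_2 \notin R$.

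Next, to compute the tangent cone at $x_1$, I would apply Lemma~\ref{pr_truqueconetangente}. By Remark~\ref{sl_1conica4retas_explosao_q1}, $m = \mult_{q_1}\X = 4$ with fixed part $\X_q = 2t_1 + 2t_2$ in $E_{q_1} \cong \P^2$ meeting at $s_q$; the tangent directions $c$ of $C$ and $\lambda_1$ of $\l_1$ lie on $t_1$, and $\lambda_2$ of $\l_2$ lies on $t_2$. Let $\wX \subset \X$ denote the sub-system of surfaces with multiplicity $\geq 5$ at $q_1$; it restricts to $E_{q_1}$ as degree-$5$ divisors with multiplicity $\geq 4$ at $s_q$ and $\geq 2$ at each of $c, \lambda_1, \lambda_2$. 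A B\'ezout count on $t_1$ and $t_2$ shows that every such divisor must contain $2t_1 + 2t_2$ as fixed part, leaving a residual complete linear system $|\mO_{\P^2}(1)|$ of lines. The latter embeds $E_{q_1}$ as a plane $P$ in some $\P^{N-1}$, which by Lemma~\ref{pr_truqueconetangente} is a component of the projectivized tangent cone $\mC_{x_1}X$.

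Since $\deg \mC_{x_1}X = 3$, the residual $Q$ of $P$ in the tangent cone has degree $2$. To show that $Q$ is a smooth quadric, I would use the ambient cone structure: by Proposition~\ref{sl_X_contido_em_F}, $X$ lies on the cone $F$ over $Y = \P^1 \times \P^2 \subset \P^5$ with vertex line $L$, and by Proposition~\ref{sl_ptos_triplos_mQ'}, $x_1 \in L$. The projectivized tangent cone $\mC_{x_1}F$ is thus a cone over $Y$ with vertex the tangent direction of $L$, inheriting the bigrading of $Y = \P^1 \times \P^2$. Since $X$ cuts each $\P^4$ of the ruling of $F$ in a quartic surface with a double point at $x_1$ (Proposition~\ref{sl_ptos_triplos_mQ'}), the tangent cone of $X$ is an effective $(1,1)$-subscheme of $\mC_{x_1}F$ of total degree $3$. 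Containing the plane $P$ (of class $(1,0)$: a cone over some $\{pt\} \times \P^2 \subset Y$) then forces the residual to be of class $(0,1)$, i.e.\ a cone over some $\P^1 \times \ell \subset Y$ for a line $\ell \subset \P^2$; this is precisely a smooth quadric surface $\P^1 \times \P^1$ embedded in a $\P^3 \subset \P^6$. Thus $\mC_{x_1}X = Q \cup P$, with $Q$ a smooth quadric and $P$ a plane. The main obstacle will be the rigorous identification of the bigraded class of $\mC_{x_1}X$ in the singular ambient cone $\mC_{x_1}F$, together with the verification that $P$ is precisely the $(1,0)$-summand (and not degenerate otherwise).
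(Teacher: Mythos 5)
Your handling of the first assertions (the points $x_i$, the line $R$, and the fact that only $x_3$ lies on $R$) matches the paper's proof. The genuine gap is in the tangent cone computation, where your B\'ezout count fails. Restricting $\wX$ to $E_{q_1}\cong\P^2$ gives quintics with multiplicity four at $s_q$ and multiplicity two at $C_q,(\l_1)_q\in t_1$ and at $(\l_2)_q\in t_2$. Intersecting with $t_1$ forces $t_1$ twice ($4+2+2>5$, then $3+1+1>4$), and the residual cubic meets $t_2$ in $2+2>3$ points, forcing one copy of $t_2$; but the remaining residual is a conic $D$ with $D\cdot t_2\geq \mult_{s_q}D+\mult_{(\l_2)_q}D\geq 2$, which equals, and does not exceed, $\deg D\cdot\deg t_2=2$, so a second copy of $t_2$ is \emph{not} forced. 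Indeed $2t_1+t_2+D$, with $D$ a general conic through $s_q$ and $(\l_2)_q$, satisfies every multiplicity condition without containing $2t_2$. The paper's computation is $\X_q=2t_1+t_2+\{\text{conics through }s_q\text{ and }(\l_2)_q\}$; that moving part maps $E_{q_1}$ birationally onto a \emph{smooth quadric surface}, so Lemma \ref{pr_truqueconetangente} exhibits the quadric component of the projectivized tangent cone directly, and since $x_1$ has multiplicity three the residual component is automatically a plane.

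Because you instead obtained the plane as the component supplied by Lemma \ref{pr_truqueconetangente}, you are driven into the second half of your argument, identifying the residual degree-two piece via a bigraded class inside $\mC_{x_1}F$ for the ambient cone $F$ with vertex $L$. That discussion is both unnecessary once the fixed part is computed correctly and, as you yourself note, not rigorous: neither the structure of the tangent cone of $F$ at a point of its vertex line nor the claimed $(1,1)$-class of $\mC_{x_1}X$ inside it is established. Correcting the fixed-part computation removes the need for that entire apparatus.
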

\begin{proof}
Only the two last assertions need a proof. 

The points $q_1$ and $q_2$ do not lie on $\l_3$, so their images do not lie on $R$. And as noticed in the proof of Lemma \ref{sl_retas_multiplas}, after the blow up at $s$ and $\l_3$ the moving part of $\X_{\l_3}$ consists of fibers over points. Since $q_3\notin s$, it is mapped to a point in $R$.

To prove the assertion on the tangent cone, let $\wX$ be the linear system of surfaces in $\X$ having multiplicity five in $q=q_1$. Then, blowing up $q$, the degree five curves $\X_q$ have multiplicity four in $s_q$ and multiplicity two in $C_q$, $(\l_1)_q$ and $(\l_2)_q$. By Remark \ref{sl_1conica4retas_explosao_q1}, the two former lie on $t_1=(\Pi_1)_q$, and the latter lies on $t_2=(\Pi_2)_q$. Then:
\[ \X_q=2t_1+t_2+\{\text{conics}\} \]
where the moving part has two base points: $(\l_2)_q$ and $s_q$. These conics map $E_q$ to a smooth quadric surface.

By Lemma \ref{pr_truqueconetangente}, the projectivization of the tangent cone of $X$ in $x_1$ contains a smooth quadric surface. Since $x_1$ is a point of multiplicity three, the result follows.
\\ \end{proof}

By Proposition \ref{sl_ptos_triplos_mQ'}, $x_1$ is a double point of the degree four surfaces in $\mQ^\prime$. As noted in the proof of this result, the blow up at $s$ gives:
\begin{align}\label{sl_1conica4retas_mQs}
\mQ_s  = t^q + t_L^\prime \equiv (2,1) 
\end{align}
where $t^q$ is the fiber over $q_1$ and $t_L^\prime\equiv (1,1)$ is the curve containing $\l_1\cap E_s$, $\l_3\cap E_s$ and $\l_4\cap E_s$ (which is infinitely near to $\l_3\cap E_s$).

\quad

\subsection{The image of $C_6$}

We will now study the images of the irreducible components of $C_6$ via $\sigma$.

\begin{lemma}\label{sl_1conica4retas_imagem_li}
Let $i\in\{1,2,4\}$. The line $\l_i$ is mapped to a cubic scroll through $x$. This scroll also contains the point $x_i$, if $i=1,2$. If $i=4$, it contains $x_3$ and $R$ is a line of its ruling. It intersects quartics of $\mQ^\prime$ in conics through a fixed point. This point is $x_1$ when $i=1,2$. It lies on $L$ also when $i=4$.
\end{lemma}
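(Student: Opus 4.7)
The case $i = 4$ follows almost entirely from Lemma \ref{sl_retas_multiplas}, applied with $d = 2$ (identifying our pair $\l_3 \prec \l_4$ with the pair $\l_1 \prec \l_d$ of that lemma). This directly gives that $\l_4$ is mapped to a cubic scroll $S(1,2)$ through $x$, with $R$ a line of its ruling, and $x_3 = \sigma(q_3)$ lies on $R$ (since $q_3 \in \l_3$), hence on the scroll. For the intersection with $\mQ^\prime$, the proof of Lemma \ref{sl_pencil_mQ} already yields $\mQ_{\l_4} \equiv e_1 + 2f_1 \equiv (E_{\l_3} \cap E_{\l_4}) + \{f_1\}$, whose moving pencil of fibers maps to a pencil of conics on the scroll; all these conics share a common point traced back to the intersection $E_{\l_4} \cap E_s$, which lies on $L = \sigma(t_L)$.

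For $i \in \{1, 2\}$, my strategy is parallel to the proof of Lemma \ref{sl_retas_multiplas}. The plan is to first blow up the components of the base locus of $\X$ incident to $\l_i$, namely $q_1$ (and $q_2$ when $i = 2$), then $s$, before blowing up $\l_i$. A direct normal-bundle computation then gives $N_{\l_i} \cong \mO_{\P^1}(0) \oplus \mO_{\P^1}(-1)$, so $E_{\l_i} \cong \F_1$ with $V_{\l_i} \equiv e_1$. The class of $\X_{\l_i}$ is determined by $\X_{\l_i} \cdot f_1 = 2$ (from $\mult_{\l_i} \X = 2$) together with the decomposition $\X \cap \Pi_i = 4s + 2\l_i + \{\text{moving line}\}$. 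After identifying the base points (the $(-1)$-section $V_{\l_i}$ contracted to $x$, the simple point $r_{\l_i}$, and, when $i = 2$, a double point above $q_2$), Lemma \ref{pr_imagemF1F2} converts $\X_{\l_i}$ into a plane linear system which, after one standard quadratic transformation, becomes a pencil of conics with one base point; hence $E_{\l_i}$ is mapped to a cubic scroll $S(1,2)$ through $x$.

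That the scroll contains $x_i$ for $i = 1, 2$ is immediate: $q_i \in \l_i$ is mapped to $x_i$ by Propositions \ref{sl_singularidades_em_C6.s} and \ref{sl_pontos_duplos_de_X}, so $x_i = \sigma(q_i) \in \sigma(\l_i)$. For the intersection with $\mQ^\prime$, I will analyze $\mQ \cap \Pi_i$ and its restriction to $E_{\l_i}$; here $\mQ_{\l_i}$ will have a moving pencil whose elements map to conics on the scroll, and these conics share a common point coming from a fixed point of $\mQ_{\l_i}$ sitting above $q_1$. Since $q_1$ maps to the triple point $x_1$ and $x_1 \in L$ by Proposition \ref{sl_ptos_triplos_mQ'}, this common point is $x_1$, which indeed lies on $L$.

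The main technical obstacle will be the book-keeping at $q_1$, where $s$, $\l_1$, $\l_2$ and $C$ meet with the interrelated tangent directions described in Remark \ref{sl_1conica4retas_explosao_q1}. In particular, determining $\mQ_{\l_i}$ requires understanding how the fixed curve $t_L \subset E_s$ of \eqref{sl_1conica4retas_mQs} interacts with $E_{\l_i}$, which in turn depends on the splitting of the tangent cone of $V$ at $q_1$ into $\Pi_1 \cup \Pi_2$. Once the correct classes and base points of $\X_{\l_i}$ and $\mQ_{\l_i}$ have been pinned down, the identification of the image as a cubic scroll and the identification of the common point in its intersection with $\mQ^\prime$ will follow from Lemma \ref{pr_imagemF1F2} together with routine standard Cremona transformations in $\P^2$.
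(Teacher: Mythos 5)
Your high-level strategy is the paper's (blow up, compute $\X_{\l_i}$ on $E_{\l_i}$, reduce to a cubic scroll via Lemma \ref{pr_imagemF1F2} and a Cremona map, then read the conics off $\mQ_{\l_i}$), and the reduction of the case $i=4$ to Lemma \ref{sl_retas_multiplas} is correct. But the computations you commit to contain genuine errors. The paper blows up only the curve $s$ before $\l_i$: since $s\subset\Pi_i$, this leaves $\l_i^2=1$ in $\Pi_i$ while dropping it to $0$ in a general plane $\Omega\supset\l_i$, so $N_{\l_i}=\mO_{\P^1}(0)\oplus\mO_{\P^1}(1)$, $E_{\l_i}\cong\F_1$ with $\Pi_{\l_i}\equiv e_1$, $\Omega_{\l_i}\equiv e_1+f_1$, and $\X_{\l_i}\equiv 2e_1+3f_1$. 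Your sequence (blow up the points $q_1$, and $q_2$ when $i=2$, then $s$) instead gives $\l_1^2=0$ in both planes, i.e. $N_{\l_1}\cong\mO\oplus\mO$ and $E_{\l_1}\cong\F_0$ (and $\mO(-1)\oplus\mO(-1)$ for $\l_2$); your claimed $\mO(0)\oplus\mO(-1)$ follows from neither sequence. Worse, blowing up the point $q_1$ separates $C$ from $\l_1$ (their tangent directions at $q_1$ differ, since $T_{q_1}C=\Sigma\cap\Pi_1\neq\l_1$), whereas blowing up the curve $s$ keeps them incident (both normal directions lie in $\Pi_1$, hence hit the same point of the fiber of $E_s$ over $q_1$). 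So for $i=1$ your base-point list (only $V_{\l_1}$ and $r_{\l_1}$) misses the double point $C_{\l_1}$ of $\X_{\l_1}$ lying over $q_1$ — and it is exactly the fiber through that point, contracted to $x_1$, that puts $x_1$ on the scroll in the paper's argument.

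The $\mQ^\prime$ part also has a concrete defect. The decomposition $\mQ_{\l_4}\equiv(E_{\l_3}\cap E_{\l_4})+\{f_1\}$ misreads Lemma \ref{sl_pencil_mQ}: that lemma decomposes $\mQ_{\l_3}\equiv e_1+2f_1\equiv\l_4+\{f_1\}$, the fixed section being $V_{\l_3}=\l_4$, not $e_1$. On $E_{\l_4}$ the system $\mQ_{\l_4}\equiv e_1+2f_1$ is a moving family of sections with three base points $(t_L^\prime)_{\l_4}$, $r_{\l_4}$, $C_{\l_4}$; it maps to conics of degree $(e_1+2f_1)(2e_1+3f_1)-1-2=2$ because the last two are base points of $\X_{\l_4}$, and the common point is the image of $(t_L^\prime)_{\l_4}$, which is not a base point of $\X_{\l_4}$ and is a point of $L$. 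A fixed $e_1$ plus moving fibers would give pairs of conics, not conics. Finally, for $i=1$ the mechanism differs again: $\mQ_{\l_1}$ acquires a fixed fiber component $(E_s)_{\l_1}$ (three of its base points lie on that fiber) which is contracted to $x_1$, while the moving part $e_1+f_1$ has a single base point at $r_{\l_1}$ — a case your uniform ``fixed point above $q_1$'' description does not cover. The final surface is of course independent of the resolution order, but as written your plan would not compute it correctly.
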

\begin{proof}
For $i=1,2$, set $\l=\l_i$, $\Pi=\Pi_i$ and $q=q_1=s\cap \l$. The intersection of $\X$ with $\Pi$ is $2\l+4s$ and moving lines. 
Let $\Omega$ be a plane through $\l$. Then $\X$ cuts $\Omega$ in $2\l$ and quintics intersecting $\l$ in the double point $q$ and three moving points.  If $i=1$, there is another base point infinitely near to $q$, but it does not lie on $\l$. 

First blow up  $s$, to avoid fixed components. By Remark \ref{sl_1conica4retas_explosao_q1}, $\X_s$ intersects the fiber over $q$ in two fixed double points. One of them is $(\l_2)_s$, while the other contains the intersection of $E_s$ with $\l_1$ and $C$.

At this stage, $l=\Omega\cap\Pi$, where at $\Omega$ it had a point blown up. So:
\[ N_\l=\mO_{\P^1}(0)\oplus \mO_{\P^1}(1) \]

Blowing up $l$ gives $E_\l\cong \F_1$, $\Omega_\l\equiv e_1+f_1$ and $\Pi_\l\equiv e_1$. Hence:
\[ \X_\l \equiv 2e_1+3f_1  \]
with one double point in $C_\l$ and one simple point in $r_\l$. Note that there is no other base point, since the blow up at $s$ has separated $\l_1$ and $C$ from $\l_2$ in $q$.

Applying Lemma \ref{pr_imagemF1F2}, this linear system is birationally equivalent, in $\P^2$, to cubics with one double and two simple points. After a standard quadratic transformation, these correspond to conics with one base point. Hence $E_\l$ is mapped to a cubic scroll of type $S(1,2)$.

The section $V_\l\equiv e_1+f_1$ containing the base points is contracted to $x$ and the fiber through the double point $C_\l$ is contracted to $x_i$. The lines of the ruling of the scroll are the images of sections $e_1+f_1$ through $C_\l$. The fiber through $r_\l$ is mapped to the directrix line and $r_\l$ is mapped to the line of the ruling through $x$.

A cubic of $\mQ$ intersects $\Omega$ in $\l$ and a conic, and intersects $\Pi$ in $\l,s$ and a line. Blowing up $s$ gives, by \eqref{sl_1conica4retas_mQs}, $\mQ_s=t^q+t_L^\prime$, which is a fixed curve. Next, blowing up $\l$ gives $\mQ_{\l}\equiv e_1+2f_1$.

We'll now study separately the cases $i=1$ and $i=2$. Both are represented in Figure \ref{sl_figura_1co4re_mQ}, as well as case $i=4$.

\begin{figure}[tb]
\centering
\includegraphics[trim=10 7 40 430,clip,width=1\textwidth]{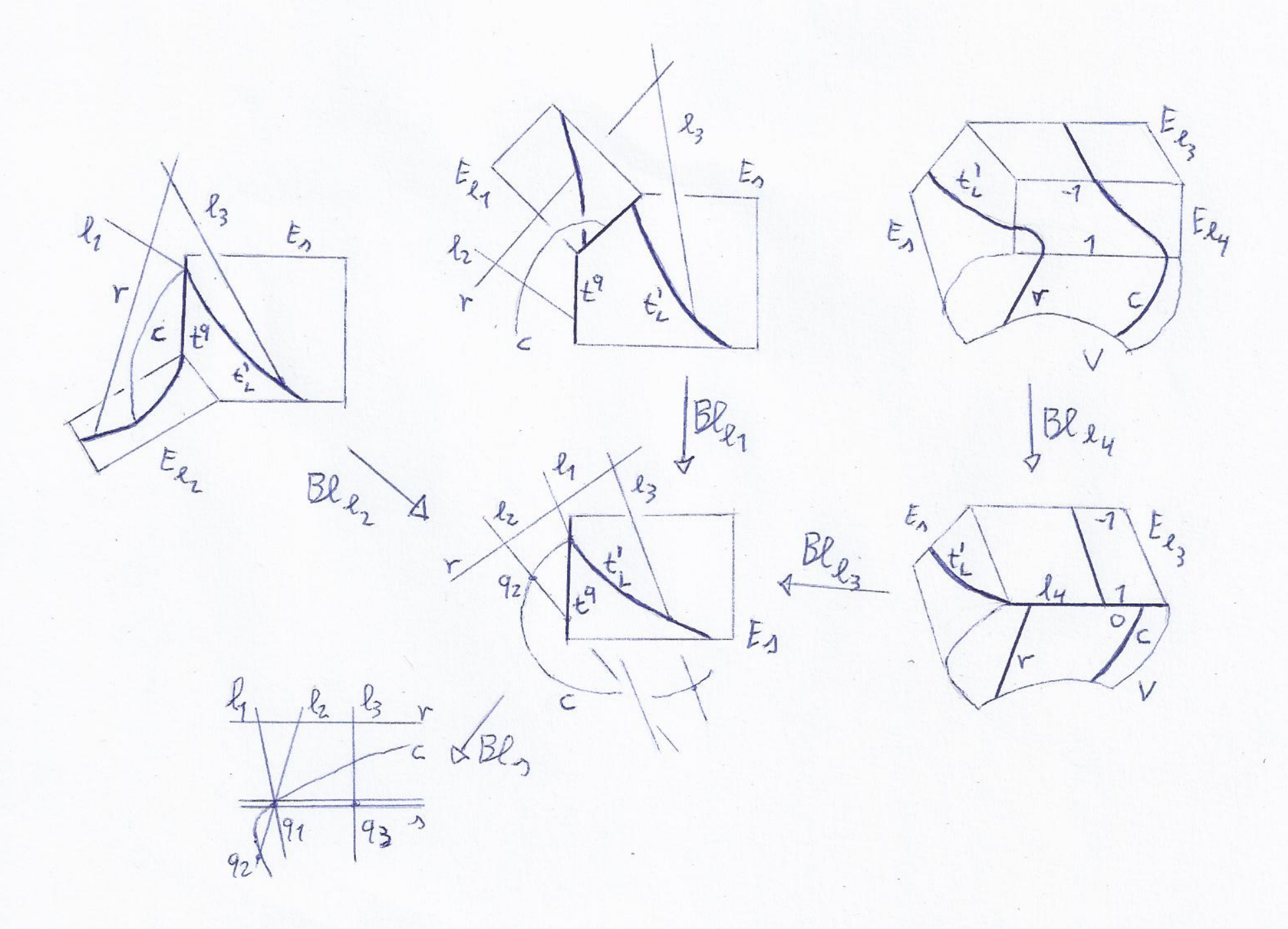}
\caption{$\mQ$ and the blow ups in $\l_i$}\label{sl_figura_1co4re_mQ}
\end{figure}

If $i=2$, $\mQ_{\l_2}$ has three base points. One of them  lies on the intersection with $E_s$, namely, it is $(t^q)_{\l_2}$. Since it is the fiber in $E_s$ over $q_1$, this base point is mapped to $x_1$. Note that $(t^q)_{\l_2}$ is not a base point of $\X_{\l_2}$.

The second is $r_{\l_2}$, a simple point of $\X_{\l_2}$; and the third is $C_{\l_2}$, a double point of $\X_{\l_2}$. There are no other base points, since $(\mQ_{\l_2})^2=3$. Therefore, the cubic scroll intersects a quartic of $\mQ^\prime$ in a curve of degree:
\[ (e_1+2f_1)(2e_1+3f_1) - 1 - 2 = 2 \]
These conics intersect each other in $x_1$.  Indeed, two of the three base points of $\mQ_{\l_2}$ are also base points of $\X_{\l_2}$, while the other one is mapped to $x_1$.

If $i=1$, three of the base points of $\mQ_{\l_1}$ (namely $(t^q)_{\l_1}$, $(t_L^\prime)_{\l_1}$ and $C_{\l_1}$) lie on $(E_s)_{\l_1}\equiv f_1$. Then this is a fixed component and it is mapped to $x_1$. The moving part of $\mQ_{\l_1}$ is of type $e_1+f_1$ and it has a simple base point in $r_{\l_1}$. Then the degree of its image is:
\[ (e_1+f_1)(2e_1+3f_1) - 1 = 2 \]
As before, these conics intersect each other in $x_1$.

This finishes the analysis for $i=1,2$.

For $i=4$, after blowing up $s$, $\l_3$ and $\l=\l_4$ we also have that $ \X_\l \equiv 2e_1+3f_1$. This is explained in the proof of Lemma \ref{sl_retas_multiplas}. In the same Lemma it is showed that $R$ is a line of the ruling of the scroll, image of $\l$. Moreover:
\[ \mQ_{\l_3} \equiv e_1+2f_1 \equiv \l_4+\{f_1\} \]
and $\mQ_\l\equiv e_1+2f_1$ as before. These have base points in $(t_L^\prime)_\l$, $r_\l$ and $C_\l$ and are mapped to conics through a fixed point of $L$, since $(t_L^\prime)_\l$ is not a base point of $\X_\l$. See Figure \ref{sl_figura_1co4re_mQ}.
\\ \end{proof}

Next, we study the image of $C$:

\begin{lemma}
The conic $C$ is mapped to a quartic Veronese surface through $x$, $x_1$, $x_2$ and $x_3$. It cuts each quartic of $\mQ^\prime$ in a conic through $x_1$.

The union of this surface, the plane $\sigma(\Sigma)$ and the cubic scroll, image of $\l_2$, is a tangent hyperplane section of $X$ at $x$. The plane intersects the scroll in a line and the Veronese surface in a conic.
\end{lemma}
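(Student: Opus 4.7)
The plan is to follow the pattern of the previous lemmas: blow up $C$ after suitable preliminary blow ups, compute the class of $\X_C$ on the exceptional divisor $E_C$, and reduce to a plane linear system via Lemma \ref{pr_imagemF1F2} and Cremona transformations.

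First, I would perform the preliminary blow ups of $s$, of the three singular points $q_1, q_2, q_3$ of $C_6$ lying on $C$, and of the lines $\l_1, \dots, \l_4, r$. Using adjunction, $\deg N_{C/\P^3} = 6$ initially, and each of the three blow ups at $q_i$ decreases this degree by $2$; the subsequent line blow ups do not affect it, since the relevant intersections with $C$ were already separated by the point blow ups. Comparing with the strict transforms of $\Sigma$ and of a smooth quadric $Q$ through $C$, for which $C^2$ becomes $1$ and $-1$ respectively, gives $N_C \cong \mO(1) \oplus \mO(-1)$ and $E_C \cong \F_2$, with $\Sigma_C \equiv e_2$ and $Q_C \equiv e_2 + 2f_2$. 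Since $\X$ has multiplicity two along $C$, $\X_C \cdot f_2 = 2$; using the decomposition $\X \cap \Sigma = 2C + 2\l_2 + M$, where $M$ is a pencil of lines in $\Sigma$ through $r \cap \Sigma$ and $M \cdot C = 2$, one determines $\X_C \equiv 2e_2 + 6f_2$. The section $V_C \equiv e_2 + 2f_2$ has fixed intersection with $\X_C$ and is contracted to $x$.

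Applying Lemma \ref{pr_imagemF1F2}, $\X_C$ corresponds in $\P^2$ to a linear system of sextic curves with one point of multiplicity $4$, one infinitely near point of multiplicity $2$, and additional base points arising from the blown-up components meeting $C$ at $q_1, q_2, q_3$ (noting that $r \cdot C = 0$ in $V$, since $C \equiv (1,0)$ and $r \equiv (0,-1)$ in the plane representation, so $r$ contributes no base points). A sequence of standard quadratic transformations based at these singular points should reduce the system to the complete linear system of conics in $\P^2$, which is the classical Veronese map. Thus $E_C$ is mapped to a surface projectively equivalent to the Veronese quartic. Passage through $x$ follows from the contraction of $V_C$, and the fibers of $E_C$ over $q_1, q_2, q_3$ are mapped to the singular points $x_1, x_2, x_3$ of $X$.

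For the intersection with a general quartic $D \in \mQ^\prime$: writing $D = \sigma(S)$ for a cubic $S \in \mQ$, the section $S_C \equiv e_2 + 2f_2$ (same class as $Q_C$, since $S$ contains $C$ simply), and the moving intersection $\X_C \cdot S_C$ minus base-point contributions yields a conic in $D$. This conic passes through $x_1$ because the structure of $\mQ$ at $q_1$, analysed via $\mQ_s$ in Proposition \ref{sl_ptos_triplos_mQ'}, forces $S_C$ to contain the fiber of $E_C$ over $q_1$. Finally, for the tangent hyperplane section: by Lemma \ref{pr_X'eX''}(iii), $\Sigma$ corresponds to a tangent hyperplane section of $X$ at $x$, and since $M$ is a pencil of lines through a single base point, $\sigma(\Sigma)$ is a plane in $\P^7$. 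The full hyperplane section therefore decomposes as $\sigma(\Sigma)$ together with the Veronese image of $E_C$ and the cubic scroll image of $E_{\l_2}$ from Lemma \ref{sl_1conica4retas_imagem_li}. The plane meets the scroll in the image of $\Sigma \cap E_{\l_2}$, a line, and meets the Veronese in $\sigma(\Sigma_C) = \sigma(e_2)$, which is a conic under the Veronese reduction. The main obstacle will be tracking the base points of $\X_C$ on the fibers over the $q_i$ (with their infinitely near contributions from the singularities of $C_6$) to ensure that the Cremona reduction lands precisely on the complete linear system of conics, giving a genuine Veronese rather than some degeneration.
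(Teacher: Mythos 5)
Your overall strategy (blow up, compute the class of $\X_C$ on $E_C\cong\F_2$, reduce to conics via Lemma \ref{pr_imagemF1F2} and quadratic transformations) is the same as the paper's, but the execution diverges at the decisive step and does not close. The paper blows up only $s$ and $\l_2$ before $C$ --- exactly the base curves of $\X$ passing through points of $C$ whose presence would otherwise create fixed components on $E_C$ --- obtaining $N_C=\mO_{\P^1}(3)\oplus\mO_{\P^1}(1)$ and $\X_C\equiv 2e_2+6f_2$ with precisely three double base points, namely $(\l_1)_C$, $(\l_3)_C$ and $(\l_4)_C$ infinitely near to $(\l_3)_C$; the degree of the image is then $16-3\cdot 4=4$ and the Cremona reduction to base-point-free conics is concrete. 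You instead blow up $q_1,q_2,q_3$ and all of $\l_1,\dots,\l_4,r$ first, getting $N_C=\mO_{\P^1}(1)\oplus\mO_{\P^1}(-1)$ and again the class $2e_2+6f_2$ of self-intersection $16$ --- but your preliminary point blow-ups have separated $C$ from $\l_1,\l_3,\l_4$, so the three visible double points are gone, and the $12$ units of base-point correction needed to land on a quartic must now be extracted from the intersections of $E_C$ with the exceptional divisors $E_{q_1},E_{q_3}$ and their infinitely near contributions. This is precisely the step you defer as ``the main obstacle,'' so the conclusion that $E_C$ maps to a Veronese quartic (rather than a surface of some other degree, or a degeneration) is not established. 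Relatedly, your claim that the fibers of $E_C$ over $q_1,q_2,q_3$ are all contracted to $x_1,x_2,x_3$ is false for $q_2$: that fiber carries no base point and maps to a conic; $x_2$ is the image of the single point $E_C\cap E_{\l_2}$ over $q_2$.

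Two further assertions would fail as written. First, the moving part $M$ of $\X\cap\Sigma$ is not a pencil of lines through $r\cap\Sigma$: that point lies on $\l_2$, where the fixed part $2\l_2$ already supplies the required multiplicity, so $M$ is the complete, base-point-free net of lines. This matters because a pencil of lines would contract $\Sigma$ to a line, not map it to a plane, contradicting the very statement you are proving. Second, $S_C\equiv e_2+2f_2$ is incompatible with your own (correct) observation that $S_C$ must contain the fiber $f_2^q$ of $E_C$ over $q_1$: a curve of class $e_2+2f_2$ meets $f_2^q$ in one point, whereas $\mQ_C$ has three base points on that fiber, which forces $\mQ_C=f_2^q+\{e_2+2f_2\}\equiv e_2+3f_2$ with $f_2^q$ fixed and contracted to $x_1$. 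With the wrong class for $S_C$ the degree count for the intersection with the quartics of $\mQ^\prime$, and the reason the resulting conics all pass through $x_1$, both come out incorrectly.
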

\begin{proof}
First note that:
\begin{align}\label{sl_1conica4retas_XeSigma}
\X\cap\Sigma = 2\l_2+2C+\{\text{lines}\} 
\end{align} 

Next, blow up $s$ and then $\l_2$. Now, $C=\Sigma\cap V$. In $V$, $C^2=1$, and in $\Sigma$, $C$ had one point blown up. Hence:
\[ N_C=\mO_{\P^1}(3)\oplus \mO_{\P^1}(1) \]

Blow up $C$, so $E_C\cong\F_2$, $\Sigma_C\equiv e_2$ and $V_C\equiv e_2+2f_2$. Since $\X_C$ cuts $f_2$ in two points, it follows that:
\[ \X_C \equiv 2e_2+6f_2 \]
It has three double points, in the intersections with $\l_1$, $\l_3$ and $\l_4$ (infinitely near to $(\l_3)_C$).

These curves can be birationally mapped to $\P^2$ using Lemma \ref{pr_imagemF1F2}. Their images are  sextic curves with two double points, a third double point infinitely near to one of these, a point of multiplicity four and a double point infinitely near to it. After two standard quadratic transformations, $\X_C$ is mapped to a linear system of conics with no base points.

Therefore $C$ is mapped to a quartic Veronese surface and $V_C$ is contracted to $x$. The points $x_1$ and $x_3$ are the contractions of the fibers through the respective double points. The point $x_2$ is the image of the intersection of $E_C$ with the fiber in $E_{\l_2}$ over $q_2$.

By \eqref{sl_1conica4retas_mQs}, when blowing up  $s$ the cubics of $\mQ$ cut $E_s$ in two fixed curves $t^q$ and $t_L^\prime$. The curve $t_L^\prime$ cuts $\Sigma_s=t^q$ in the point $C_s$.

After blowing up $\l_2$ and $C$, $\mQ_C$ cuts the section $V_C=e_2+2f_2$ in three points, namely $(\l_1)_C$, $(\l_3)_C$ and $(\l_4)_C$. But it cuts the fiber $f_2^q$ of $E_C$ over $q_1$ in three points:  $(\l_1)_C$, $(t^q)_C$ and $(t_L^\prime)_C$. Hence:
\[ \mQ_C = f_2^q + \{ e_2+2f_2 \} \]
and the moving part has two base points: $(\l_3)_C$ and $(\l_4)_C$. It is mapped by $\X_C$ to a conic. The fixed part is mapped to the point $x_1$.

Figure \ref{sl_figura_1co4re_mQ_C} illustrates $\mQ$ under these blow ups.

\begin{figure}
\centering
\includegraphics[trim=10 220 320 130,clip,angle=90,width=1\textwidth]{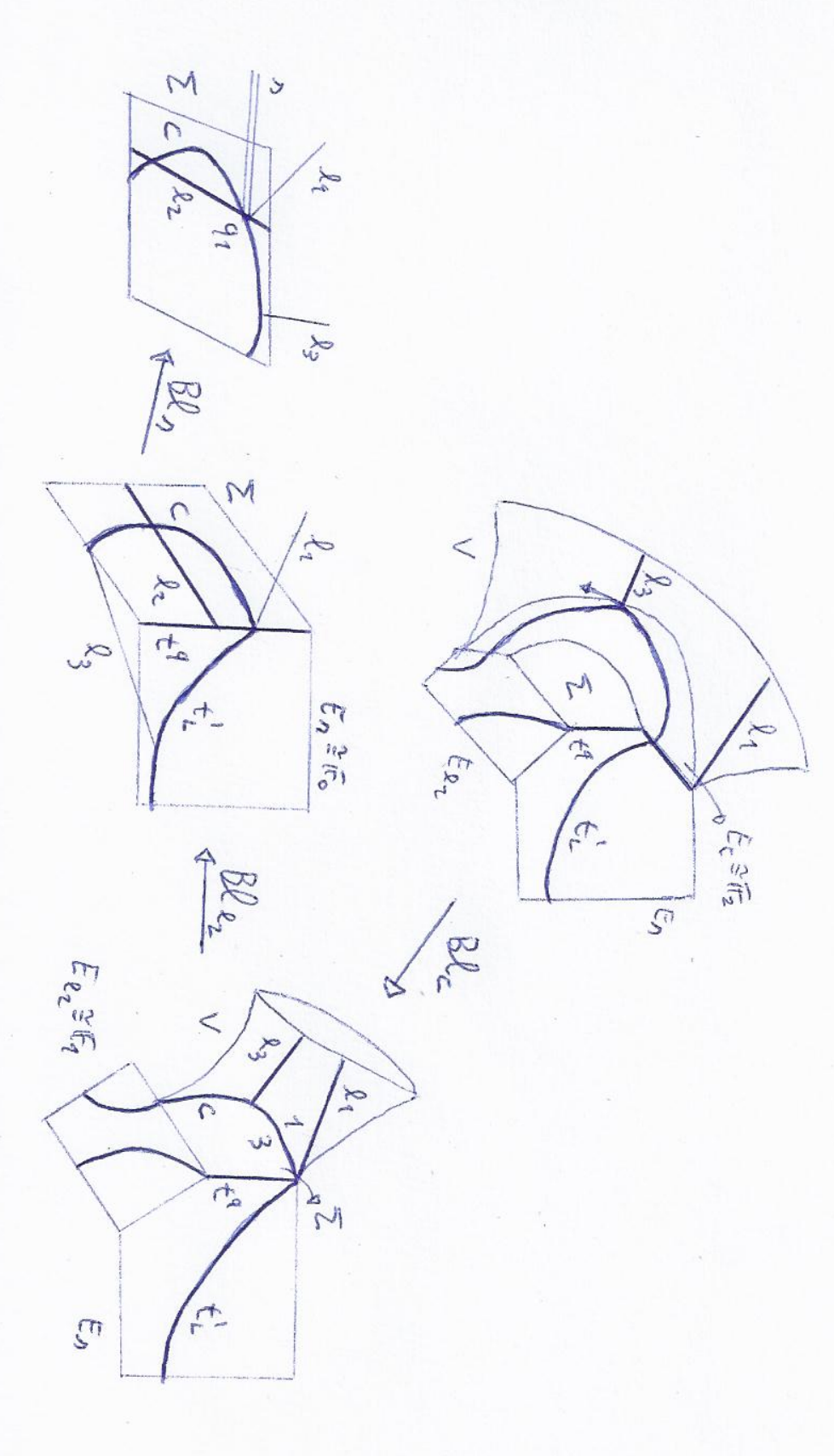}
\caption{$\mQ$ and the blow ups in $s$, $\l_2$ and $C$}\label{sl_figura_1co4re_mQ_C}
\end{figure}

This proves the first part of the Lemma. The second part follows from Lemma \ref{pr_X'eX''} and from \eqref{sl_1conica4retas_XeSigma}.
\\ \end{proof}

Note that $\sigma(\Sigma)$ is not the only plane in $X$. The planes $\Pi_1$ and $\Pi_2$ are also mapped to planes, since:
\[ \X\cap \Pi_i = 4s+2\l_i+\{ \text{lines} \} \]
and these moving lines have no base points. 
For the same reason, the plane containing $s$ and $\l_3$ is mapped to a plane in $X$.

\subsection{Description of the example}

Putting together the results in this example, we have:

\begin{prop}
The threefold $X$ has a double line $R$, a double point $x_2$ and a triple point $x_1$.
It contains a pencil $\mQ^\prime$ of weak Del Pezzo quartic surfaces having a double point in $x_1$. The base locus of this pencil is a line $L$ containing $x_1$.

The tangent cone of $X$ in $x_1$ is the union of a rank four quadric cone and a $\P^3$.

Three lines of the base locus of $\X$ are mapped to cubic scrolls through $x$ and one line is mapped to $R$. The conic $C$ is mapped to a quartic Veronese surface. Each of these surfaces intersects the quartics of $\mQ^\prime$ in conics through a fixed point of $L$. 

In total, counting multiplicities, the curve $C_6$ is mapped to a reducible surface of degree $16$. It has multiplicity $5$ in $x$, multiplicity three in the triple point of $X$ and multiplicity two in the double points, including $R$. It cuts quartics of $\mQ^\prime$ in five conics, three of them through $x_1$.
\end{prop}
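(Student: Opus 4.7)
The proof is essentially a bookkeeping exercise that collects the results established throughout Section \ref{sl_1conica4retas_sec}. The plan is as follows.

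First I would assemble the singular locus. The triple point $x_1$ (image of $q_1\in C_6\cap s$) and the double point $x_2$ (image of $q_2$, a smooth point of $C$ on $\l_2$ off $s$) come from the earlier proposition on singularities in this subsection, while the double line $R$ (image of $\l_3$) is established by Lemma \ref{sl_retas_multiplas} applied with $d=2$. The same lemma tells us that the remaining double point $x_3$ (image of $q_3=\l_3\cap C$) lies on $R$, so it is absorbed into the double line and need not be listed separately. The claim on the tangent cone at $x_1$ follows from the earlier proposition: the projectivized tangent cone was shown to contain a smooth quadric surface, and since $x_1$ is a triple point the tangent cone must be the union of a rank four quadric cone and a $\P^3$ over this surface plus plane.

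Next I would invoke Lemma \ref{sl_pencil_mQ} (which applies because $s\not\subset C_6$) to produce the pencil $\mQ^\prime$ of weak Del Pezzo quartic surfaces with base line $L$, and Proposition \ref{sl_ptos_triplos_mQ'} to place $x_1$ on $L$ and identify it as a double point of every surface of $\mQ^\prime$. The assertions on the images of the components of $C_6$ are then direct citations: Lemma \ref{sl_1conica4retas_imagem_li} handles $\l_1,\l_2,\l_4$ (cubic scrolls through $x$, each intersecting the members of $\mQ^\prime$ in conics through a fixed point of $L$), the same lemma together with Lemma \ref{sl_retas_multiplas} gives the image of $\l_3$ as $R$, and the preceding lemma in this subsection gives the image of $C$ as a quartic Veronese surface meeting each surface of $\mQ^\prime$ in a conic through $x_1$.

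Finally I would tally up degrees and multiplicities to get the last sentence. Recalling that $C_6 = C+\l_1+\l_2+2\l_3$, where $2\l_3$ is the double line with $\l_3\prec \l_4$, the image of $\l_3$ itself is the one-dimensional $R$, while $\l_4$ inherits multiplicity two from the fact that $\X$ has multiplicity two at both $\l_3$ and $\l_4$; hence the image of $\l_4$ (a cubic scroll) must be counted with multiplicity two. Adding the contributions gives $4+3+3+2\cdot 3=16$ for the total degree. The multiplicity at $x$ is $1+1+1+2\cdot 1=5$ (each image surface passes through $x$ smoothly, the $\l_4$-scroll counted twice); the multiplicity at $x_1$ is $1+1+1+0=3$ (the three scrolls through $x$ from $\l_1,\l_2$ and the Veronese pass through $x_1$, while the $\l_4$-scroll misses $x_1$); the multiplicity at $x_2$ is $1+1=2$ (Veronese and $\l_2$-scroll); and the multiplicity along $R$ is $2\cdot 1=2$ since $R$ is a line of the ruling of the $\l_4$-scroll counted with multiplicity two. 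The main obstacle, if any, is not conceptual but rather careful tracking of which surfaces should be counted with multiplicity two in the final sum; the guiding principle is that the multiplicity of a component in $C_6$ propagates to its image.
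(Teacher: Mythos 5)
Your proposal is correct and matches the paper's treatment: the proposition is stated as a summary ("putting together the results in this example") with no separate proof, and the intended argument is exactly the assembly of citations you give — the singularities proposition for $x_1$, $x_2$, $x_3$ and the tangent cone, Lemma \ref{sl_retas_multiplas} for $R$, Lemma \ref{sl_pencil_mQ} and Proposition \ref{sl_ptos_triplos_mQ'} for $\mQ^\prime$ and $L$, and Lemma \ref{sl_1conica4retas_imagem_li} together with the lemma on $C$ for the images of the components. Your explicit tally of the degree $4+3+3+2\cdot 3=16$ and of the multiplicities at $x$, $x_1$, $x_2$ and along $R$ is consistent with the convention, already used for the quadric case, that a double component's image is counted with multiplicity two, and it correctly accounts for the fact that only the $\l_1$-scroll, the $\l_2$-scroll and the Veronese surface pass through $x_1$.
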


\section{The Cayley's ruled cubic case}

In this Section, suppose $V$ is  Cayley's ruled cubic, with equation given in Proposition \ref{sl_projecoes_de_S(1,2)}, $(ii)$. This situation will give us no new varieties, but different projections of varieties having fundamental surface the general cubic $(i)$.

We start by explaining that $r$ is infinitely near to $s$ in $V$ and by computing its image in $X$.

\begin{lemma}\label{sl_r_inf_prox_s}
Blowing up $s$ gives:
\[ V_s=r+s^\prime \equiv (1,1)+(0,1) \]
where $s^\prime$ is the strict transform of the plane intersecting $V$ in $3s$.
\end{lemma}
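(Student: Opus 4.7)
The plan is to prove the lemma by explicit coordinate computation using the equation of Cayley's cubic from Proposition \ref{sl_projecoes_de_S(1,2)}$(ii)$. Take $V : x_0^2 x_3 + x_0 x_1 x_2 + x_1^3 = 0$, so that $s = \{x_0 = x_1 = 0\}$, and realize $\Bl_s \P^3 \subset \P^3 \times \P^1_{(u:v)}$ as the zero locus of $x_0 v - x_1 u$. In the chart $\{v = 1\}$, where $x_0 = x_1 u$, the equation of $V$ pulls back to
\[ x_1^2 \, (u^2 x_3 + u x_2 + x_1) = 0; \]
the factor $x_1^2$ corresponds to the exceptional divisor $E_s = \{x_1 = 0\}$ appearing with multiplicity $2$, as expected since $s$ is a double line of $V$. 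The strict transform of $V$ is therefore $\{u^2 x_3 + u x_2 + x_1 = 0\}$, and intersecting with $E_s$ yields $V_s = \{u(u x_3 + x_2) = 0\}$, which splits into two components.

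Next, I would compute the classes of the two components on $E_s \cong \F_0$, using the convention that $(1,0)$ denotes the fibre class over $s$, with bihomogeneous coordinates $(x_2:x_3)$ on $s$ and $(u:v)$ on the fibres. Globalising, the two components are cut out by $u = 0$ (bidegree $(0,1)$) and by $u x_3 + v x_2 = 0$ (bidegree $(1,1)$), so that their sum is $(1,2)$, as expected for a cubic surface with a double line after a single blow-up. To identify the $(0,1)$-component with $s^\prime$, note that substituting $x_0 = 0$ into the equation of $V$ gives $x_1^3 = 0$, so the plane $\Sigma = \{x_0 = 0\}$ satisfies $\Sigma \cap V = 3s$ scheme-theoretically; this is the plane in the statement. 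Its total transform under the blow-up equals $\{x_1 u = 0\} = E_s \cup \{u = 0\}$ in the chosen chart, so the strict transform of $\Sigma$ meets $E_s$ in $\{u = 0\}$, that is, precisely the $(0,1)$-component.

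The remaining $(1,1)$-component is then $r$. Indeed, by Proposition \ref{sl_projecoes_de_S(1,2)}$(ii)$, the preimage of $s$ on the normalisation $S(1,2) \to V$ is the reducible conic $d + \ell_0$, with $d$ the directrix (mapping to $r$) and $\ell_0$ a line of the ruling; blowing up $s$ separates these two branches on the strict transform of $V$, and having identified $s^\prime$ as the branch coming from $\ell_0$ (its image lies in the ruling of $V$), the remaining $(1,1)$-curve is the component $r$ coming from the directrix. The main obstacle is not the computation itself, which is essentially one substitution, but keeping the bidegree conventions of the chapter straight and justifying that $\Sigma$ really is the plane of triple contact with $V$ — once that is verified, the splitting $V_s = r + s^\prime \equiv (1,1) + (0,1)$ follows.
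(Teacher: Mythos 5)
Your coordinate computation is correct and is essentially the same substitution the paper performs: pulling back the Cayley equation along $x_0=x_1u$ yields $x_1^2(u^2x_3+ux_2+x_1)$, the restriction to $E_s$ factors as $u(ux_3+vx_2)$, and the identification of the $(0,1)$-component with $\Sigma_s$ (where $\Sigma=\{x_0=0\}$, $\Sigma\cap V=3s$) via the strict transform $\{u=0\}$ is airtight. The splitting $(1,2)=(0,1)+(1,1)$ and the bidegree bookkeeping are also fine.

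The gap is in the last step, where you identify the $(1,1)$-component with $r$. You argue that the two components of $V_s$ correspond to the directrix $d$ and the ruling line $\ell_0$ in $\nu^{-1}(s)\subset S(1,2)$, and that $s'$ is "the branch coming from $\ell_0$ (its image lies in the ruling of $V$)". That parenthetical does not distinguish anything: both components of $V_s$ map onto $s$, and $s$ is simultaneously the image of $d$ and of $\ell_0$, so "its image lies in the ruling" is true of either branch. Since the whole point of the lemma is to say which component is $r$ and which is $s'$ (and Lemma \ref{sl_cayley_imagem_r} later relies on $r\equiv(1,1)$ being cut out by a smooth quadric), this step needs an actual argument. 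The paper supplies one by resolving $\bar{\tau}$ at the base point $p$ explicitly ($t_0=1$, $t_1=y$, $t_2=yz$) and checking that the exceptional curve $E_p$ -- which \emph{is} the directrix in the $\F_1$-model of $S(1,2)$ -- lands on $\{u+w=0\}$, the $(1,1)$-curve. If you want to stay inside your normalization framework, you can instead pull back $\Sigma$: since $\nu^*\mO_V(1)=\mO_{\F_1}(d+2f)$ and $\nu^*\Sigma$ is supported on $d\cup\ell_0$, necessarily $\nu^*\Sigma=d+2\ell_0$, so $\Sigma$ is tangent to $V$ along the $\ell_0$-branch and transverse to the $d$-branch; hence $\Sigma_s$ coincides with the $\ell_0$-component of $V_s$ and the remaining $(1,1)$-component is the image of $d$, i.e. $r$. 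Either of these closes the gap; as written, the identification is asserted rather than proved.
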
 
\begin{proof}
By Proposition \ref{sl_projecoes_de_S(1,2)}, $V$ can be given by:
\begin{align}\label{sl_eq_cayley}
x_0^2x_3+x_0x_1x_2+x_1^3 = 0
\end{align}

Moreover, the map $\bar{\tau}$ can be defined by:
\[ (t_0:t_1:t_2)\mapsto (t_2^2:t_1t_2:t_0t_2-t_1^2:-t_0t_1) \]
with $p=(1:0:0)\in E$. Let $s_p$ be the line $t_2=0$ through $p$.

Blowing up $p$, the union of $s_p$ and the exceptional line is $\hat{s}$, which is mapped to $s$, the double line of $V$. Then $r$ is infinitely near to $s$.

We'll compute the blow up at $s$ directly from \eqref{sl_eq_cayley}, where $s$ is given by $x_0=x_1=0$. Setting $x_0=uv$, $x_1=v$, $x_2=w$ and $x_3=1$, \eqref{sl_eq_cayley} gives:
\[ v^2(u^2+uw+v) = 0\]
 with $v=0$ being the equation of $E_s$.

Then, restricting to $E_s$ gives $u(u+w)=0$.  The strict transform of the plane $x_0=0$, which intersects $V$ in $3s$, is given by $u=0$. And the curve $v=u+w=0$ is of type $(1,1)$ in $E_s$. It is the intersection of the strict transform of the quadric $x_0x_3+x_1x_2=0$ with $E_s$. 

To find the equation of $r$, we blow up $E$ in $p$. So put $t_0=1$, $t_1=y$ and $t_2=yz$. Then the resolution of $\bar{\tau}$ is:
\[ (1:y:yz)\mapsto (y^2z^2:y^2z:yz-y^2:-y)=(-yz^2:-yz:y-z:1) \]
where $r$ is defined as $y=0$ in the target, giving $x_0=x_1=0$. The blow up at $s$ done above gives:
\[ -yz^2=uv \qquad ; \qquad -yz=v \qquad ; \qquad y-z=w \]
which implies that $y=u+w$. Therefore $r$ is defined infinitely near to $s$ as $u+w=0$, the $(1,1)$ curve in $E_s$.
\\ \end{proof}

\begin{lemma}\label{sl_cayley_imagem_r}
The image of $r\in E_s$ is a line $\l_x$ through $x$.
\end{lemma}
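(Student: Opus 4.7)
The plan is to adapt the argument of Proposition \ref{sl_imagem_r_e_s} to the infinitely-near setting, by first blowing up $s$ and then blowing up $r$, viewed as a curve inside $E_s$.

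After the first blow-up $\pi_1:\widetilde{\P^3}\to\P^3$, Lemma \ref{sl_r_inf_prox_s} identifies $r$ as the $(1,1)$-curve in $E_s\cong\F_0$ and $s'$ as the $(0,1)$-section. A short check shows $E_s|_{E_s}\equiv(1,-1)$, so a degree-$d$ surface with multiplicity $m$ along $s$ restricts to $E_s$ as a curve of class $(d-m,m)$. In particular $\widetilde\X|_{E_s}\equiv(3,4)$, and since $\X$ has multiplicity one along $r$ we decompose $\widetilde\X|_{E_s}=r+\widehat{\X}_s$ with $\widehat{\X}_s\equiv(2,3)$. The additional base scheme of $\widehat{\X}_s$ consists of the five points $\widetilde C_6\cap E_s$ (inheriting multiplicity from $\X$'s multiplicity two along $C_6$), distributed between $r$, $s'$ and the node $r\cap s'$.

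Next, blow up $r$ to obtain $\pi_2:\widetilde{\widetilde{\P^3}}\to\widetilde{\P^3}$ with exceptional divisor $E_r$. The exact sequence $0\to N_{r/E_s}\to N_{r/\widetilde{\P^3}}\to N_{E_s/\widetilde{\P^3}}|_r\to 0$ combined with $r^2_{E_s}=2$ and $E_s|_{E_s}\cdot r=0$ gives degrees $2$ and $0$ for the two line bundles; since $H^1(\P^1,\mO(2))=0$ the sequence splits, yielding $N_{r/\widetilde{\P^3}}\cong\mO\oplus\mO(2)$ and therefore $E_r\cong\F_2$. Using $\widetilde V\equiv 3\pi_1^*H-2E_s$ one computes $N_{r/\widetilde V}\cong\mO(-1)$, so $r$ is a $(-1)$-curve on the smooth threefold $\widetilde V$ and the section $V_r:=\widetilde V\cap E_r$ has class $e_2+f_2$; similarly $(E_s)_r\equiv e_2+2f_2$. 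One then determines the class of $\widetilde{\widetilde\X}|_{E_r}$ of the form $2e_2+cf_2$ together with its finite base locus, the latter coming from the base points of $\widehat{\X}_s$ at $\widetilde C_6\cap r$ and at $r\cap s'$ lifted to $E_r$. Via Lemma \ref{pr_imagemF1F2} this is Cremona-equivalent in $\P^2$ to a system of plane curves that, after one or two standard quadratic transformations, reduces to the pencil of lines through a point, exactly as in the proof of Proposition \ref{sl_imagem_r_e_s}. Hence $E_r$ is mapped birationally onto a line $\ell_x\subset X$.

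Finally, $\ell_x$ passes through $x$ because the section $V_r\subset E_r$ lies in the strict transform of $V$, which $\sigma$ contracts to $x$; consequently $V_r$ is sent to $x\in\ell_x$. The main obstacle I expect is the third step: correctly pinning down the class and base scheme of $\widetilde{\widetilde\X}|_{E_r}$ on $\F_2$. One must carefully analyse how the five points of $\widetilde C_6\cap E_s$ distribute between $r$ and $s'$ in the Cayley configuration, and track how the resulting base points (some possibly infinitely near) propagate into $E_r$. Once the base locus is properly enumerated, the Cremona reduction to a pencil of lines is routine and mirrors the general case.
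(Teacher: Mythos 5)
Your overall strategy coincides with the paper's: blow up $s$, then blow up $r\subset E_s$, identify $E_r\cong\F_2$, restrict $\X$, and reduce via Lemma \ref{pr_imagemF1F2} and standard quadratic transformations to a pencil of lines, with $V_r$ contracted to $x$. The only genuine methodological difference is in computing the normal bundle of $r$: the paper introduces the smooth quadric $Q^r$ with $Q^r\cap E_s=r$, realizing $r$ as a complete intersection (and also uses $\X\cap Q^r=5s+C_9$ to count how many of the intersections of the moving part with $r$ are fixed), whereas you use the normal bundle sequence inside $E_s$. Both routes give $N_r\cong\mO_{\P^1}(0)\oplus\mO_{\P^1}(2)$ and $E_r\cong\F_2$, so up to that point your argument is sound.

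However, the step you yourself single out as the crux contains concrete errors that block the conclusion. First, since $r^2=2$ in $E_s$ is the larger of the two self-intersections, $E_s$ cuts $E_r$ in the \emph{negative} section: $(E_s)_r\equiv e_2$, not $e_2+2f_2$ (compare the paper's treatment of $s^\prime$, where $(s^\prime)^2=4$ in $E_s$ and $(E_s)_{s^\prime}\equiv e_3$). Second, your own computation $N_{r/\widetilde V}\cong\mO_{\P^1}(-1)$ forces $V_r\equiv e_2+3f_2$, since a sub-line-bundle of degree $c$ of $\mO\oplus\mO(2)$ yields a section of class $e_2+(2-c)f_2$; the class $e_2+f_2$ you assign would mean $r^2=1$ in $\widetilde V$, contradicting your own normal bundle computation. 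Third, and most seriously, $\X_r$ is \emph{not} of the form $2e_2+cf_2$: by Proposition \ref{sl_baselocus} the multiplicity of $\X$ along $r$ is one, so its strict transform meets a general fiber of $E_r$ in a single point and $\X_r\equiv e_2+cf_2$. The correct class is $\X_r\equiv e_2+7f_2=4f_2+\{e_2+3f_2\}$, whose moving part is a unisecant $e_2+3f_2$ with base points on the four fixed fibers over $C_6\cap r$; it is this system that Lemma \ref{pr_imagemF1F2} converts to cubics with one double point, one infinitely near simple point and four simple points, hence to a pencil of lines after Cremona. Starting from a bisection $2e_2+cf_2$, the reduction you describe would not terminate in a pencil of lines, so the image of $E_r$ would not come out to be a line. (A minor slip: $\widetilde V$ is a surface, not a threefold.)
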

\begin{proof}
Let $Q^r$ be the quadric such that $Q^r\cap E_s = r$.  It  is smooth, otherwise $Q^r$ would be singular in a point of $s$, and $r$ would be reducible. But this is not the case, since $V_s$ would contain a fiber of $E_s$, that is, $V$ would have a triple point: it would be a cone. 

Since $\X$ has multiplicity four in $s$ and one in $r$, it follows that:
\[ \X\cap Q^r=5s + C_9\equiv (5,0)+(2,7) \]
where the equivalence classes refer to curves in the smooth quadric $Q^r$. Then $C_9$ intersects $s$ in seven points. 

Blowing up $s$ gives $\X_s\equiv (3,4)$. Since it contains $r\equiv (1,1)$, the movable part is of type $(2,3)$ and intersects $r$ in five points. By Proposition \ref{sl_baselocus}, four of these points lie on $C_6$ and are base points of the movable part of $\X_s$. They are also base points of $\X\cap Q^r$, so only three of the seven points in $C_9\cap E_s$ are moving.

Since $r=Q^r\cap E_s$, its normal bundle is:
\[ N_r = \mO_{\P^1}(0)\oplus \mO_{\P^1}(2) \]

Therefore, blowing up $r$ gives $(Q^r)_r\equiv e_2+2f_2$ and $(E_s)_r\equiv e_2$. The curves of $\X_r$ intersect a general fiber in one point and have four double points, corresponding to the intersections with $C_6$. Then $\X_r$ contains four fibers of $E_r$. And by what was noted above, the moving part of $\X_r$ intersects $E_s$ in one point and  $Q^r$ in three points. Then: 
\[ \X_r \equiv 4f_2 + \{e_2+3f_2\} \]
where $4f_2$ is the fixed part, consisting of the fibers over the four base points in $r$. Each fiber is intersected by $C_6$ in one point. These are the base points of the moving part. 

The blow ups are represented in Figure \ref{sl_figura_cayley_Er}.

\begin{figure}[tb]
\centering
\includegraphics[trim=40 440 180 100,clip,width=0.9\textwidth]{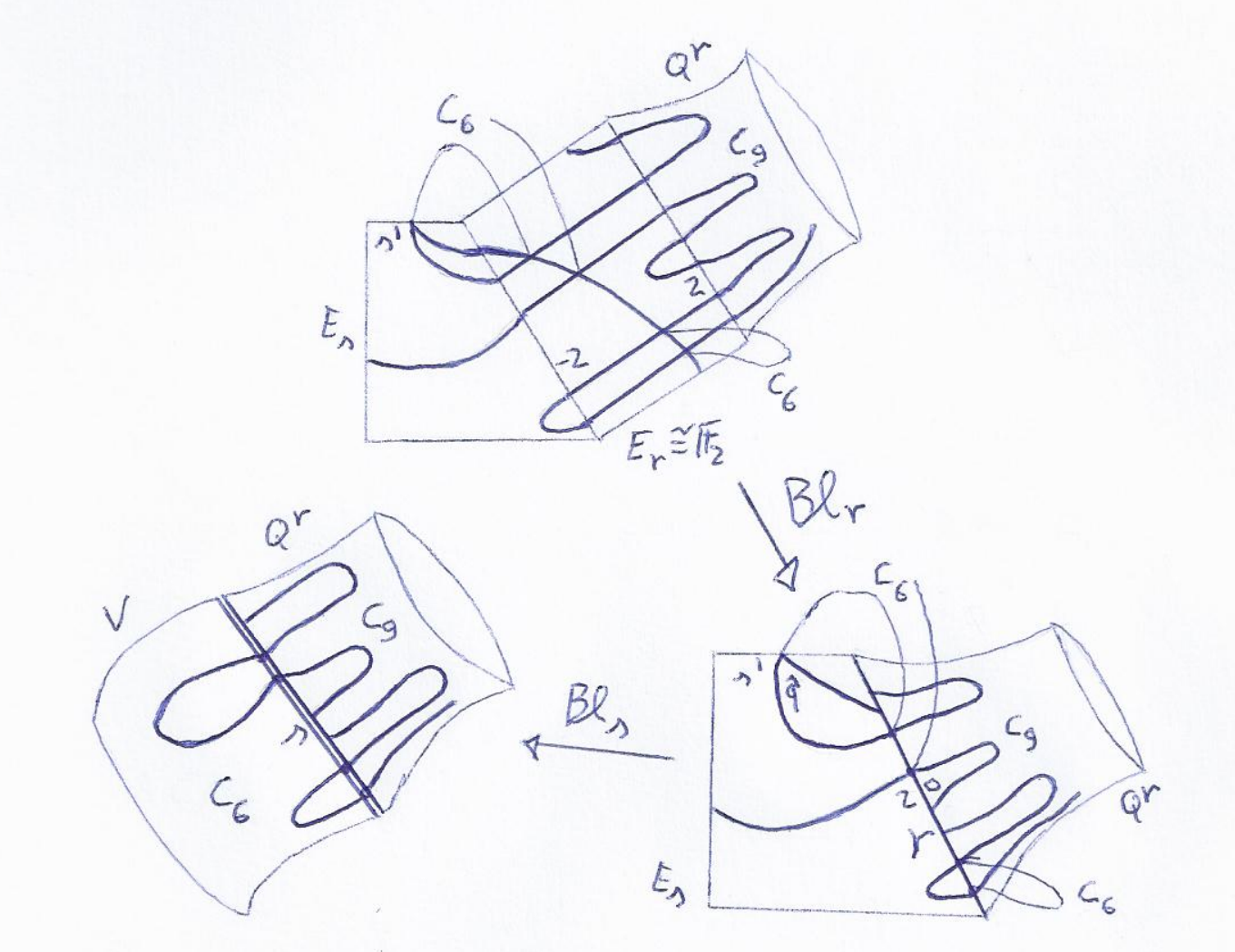}
\caption{$\X$ and the blow ups in $s$ and $r$}\label{sl_figura_cayley_Er}
\end{figure}

By Lemma \ref{pr_imagemF1F2}, the moving part is birationally equivalent to cubics in $\P^2$ with 4 simple points, one double point and another simple point infinitely near to it, which is Cremona equivalent to the linear system of lines with one base point. Moreover, the fixed fibers have no moving intersection with the moving part. Therefore $E_r$ is mapped to a line.

By Lemma \ref{sl_r_inf_prox_s}, $V_s$ has two components: $r$ and $s^\prime$, which intersects $r$ in one point. Then, after the blow up at $r$, $V_r$ intersects $E_s$ in one point. Then $V_r\equiv e_2+3f_2$  and it intersects the moving part of $\X_r$ in its base points. Therefore the image of $E_r$ contains $x$.
\\ \end{proof}

The next result indicates that the center of the tangential projection may not be a general point:

\begin{lemma}\label{sl_cayley_naogeral}
Suppose $V$ is the Cayley's ruled cubic. Then the line $\l_x$ through the point $x\in X$ from which was made the tangential projection intersects another line $L$ of $X$. Moreover, for a general point $y\in X$, the line $\l_y$ through $y$ does not intersect $L$.
\end{lemma}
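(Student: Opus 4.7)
The plan is to identify a specific line $L \subset X$ --- an analog of the base line of the pencil $\mQ'$ of quartic surfaces from Lemma \ref{sl_pencil_mQ} --- and then verify the two incidence statements. The key structural fact is that in the Cayley case, by Lemma \ref{sl_r_inf_prox_s}, the intersection $V_s$ splits as $r + s'$, where $r \equiv (1,1)$ and $s' \equiv (0,1)$ are sections of $E_s \to s$ meeting in a single point $P \in V_s$.

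For the first assertion, $\l_x \cap L \neq \emptyset$: In the type (i) case of Proposition \ref{sl_projecoes_de_S(1,2)}, Lemma \ref{sl_imagem_r_e_s} shows that $\l_x = \sigma(r)$ meets the quartic surface $D_4^x = \sigma(s)$ only at $x$; since $L \subset D_4^x$ (as the base of the pencil $\mQ'$, which contains $D_4^x$), one has $\l_x \cap L \neq \emptyset$ if and only if $x \in L$. I would therefore construct the pencil $\mQ$ of cubic surfaces through $C_6$, $r$, $s$ in the Cayley case (interpreting the condition ``through $r$'' as a tangency along $s$, since $r \prec s$), and compute its restriction $\mQ_s$ to $E_s$ by adapting the proof of Lemma \ref{sl_pencil_mQ}. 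A direct blow-up computation should show that $\mQ_s$ acquires $r$ itself as a fixed component (mirroring the fact that $r$ is a fixed component of $\X_s$ established in Lemma \ref{sl_cayley_imagem_r}), and that the image of this fixed part under the moving part of $\X_s$ is a line $L$ passing through the image of the intersection $P = r \cap s'$. Since $P$ lies on $V$ and $V$ is contracted to $x$, it follows that $L \ni x$, and hence $\l_x \cap L \supseteq \{x\}$.

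For the second assertion, $\l_y \cap L = \emptyset$ for general $y$: the lines of the ruling of $X$ form a two-parameter family (by Corollary \ref{sl_X_is_ruled}), while those meeting the fixed line $L$ form at most a one-parameter subfamily, sweeping out a proper subvariety of $X$; a general $y$ avoids this subvariety, so $\l_y$ is disjoint from $L$. The main obstacle is the first assertion, and specifically the verification that $\mQ_s$ in the Cayley case produces, under the moving part of $\X_s$, a line $L$ passing through $x$ rather than a curve of higher degree or a line disjoint from $x$. This requires a careful analysis of both the fixed and moving parts of $\mQ_s$ and the intersection behaviour at the point $P = r \cap s'$, using the Cayley-specific decomposition $V_s = r + s'$ as the essential new input.
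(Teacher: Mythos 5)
Your reduction of the first assertion to ``$x\in L$'' does not work, and the step you use to close it is false. You invoke the final statement of Proposition \ref{sl_imagem_r_e_s} (``$\l_x$ and $D_4^x$ intersect only in $x$''), but that statement is proved there precisely from the hypothesis that $r$ and $s$ are disjoint, which fails in the Cayley case: by Lemma \ref{sl_r_inf_prox_s}, $r$ is infinitely near to $s$, with $V_s=r+s^\prime$. So you cannot conclude that $\l_x$ meets $L$ only at $x$. Worse, your argument that $L\ni x$ rests on ``$P=r\cap s^\prime$ lies on $V$ and $V$ is contracted to $x$''; but after the blow up at $s$ the component $r$ of $V_s$ is \emph{not} contracted to $x$ --- by Lemma \ref{sl_cayley_imagem_r} it is blown up and mapped onto the whole line $\l_x$ (only the curve $V_r\subset E_r$ is contracted to $x$). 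A point of $V_s$ therefore maps to some point of $\l_x$, not necessarily to $x$; and in fact $x\in L$ is neither needed nor what the paper establishes, which is only that $L$ meets $\l_x$ at the image of one point of $r$.

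The intended line $L$ is also not obtained as the base line of a pencil $\mQ^\prime$: the paper's proof never uses $\mQ$ here. It uses the Cayley-specific splitting $V_s=r+s^\prime$ together with Proposition \ref{sl_baselocus}: $C_6$ meets $s$ in five points and $r$ in four, so after blowing up $s$ the five points of $C_6\cap E_s$ distribute as four on $r$ and exactly one, $\hat q$, on $s^\prime\equiv(0,1)$. The moving part of $\X_s$, of type $(2,3)$, has a double point at $\hat q$, so the fiber $t^q\equiv(1,0)$ through $\hat q$ is mapped to a line $L$; since $t^q\cdot r=1$ and $\sigma(r)=\l_x$, the line $L$ meets $\l_x$. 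Your dimension count for the second assertion is essentially sound but should be made effective as in the paper: $L$ is contracted by $\tau$ to the single point $q\in s$ lying under $\hat q$, while for general $y$ the curve $\tau(\l_y)$ is the unique line through $\tau(y)$ meeting $C_6$ and $s$, and for general $\tau(y)$ it misses $q$; this rules out the degenerate possibility that every line of the ruling meets $L$, which a bare parameter count does not.
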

\begin{proof}
By Proposition \ref{sl_baselocus}, $C_6$ intersects $s$ in five points and $r$ in four points. Blowing up $s$, by Lemma \ref{sl_r_inf_prox_s} $r$ is infinitely near to $s$ of type $(1,1)$. Then $C_6$ cuts $E_s$ in four points in $r$ and one point in $s^\prime\equiv (0,1)$. Let $\hat{q}$ be this point and $t^q\equiv (1,0)$ be the fiber through it in $E_s$. Let $q\in C_6\cap s$ be the point corresponding to this fiber, that is, $\hat{q}$ is infinitely near to $q$.

As it was already noted, the moving part of $\X_s$ is of type $(2,3)$. It has a double point in $\hat{q}$, therefore $t^q$ is mapped to a line, name it $L$. But $t^q$ intersects $r$ in one point, which is mapped to a point of $\l_x$, by Lemma \ref{sl_cayley_imagem_r}. Then $\l_x$ intersects $L$.

Given a general point $y\in X$, it is mapped by $\tau$ to a general point $y^\prime$ in $\P^3$. Since $y^\prime$ is a general point and $C_6$ cuts $s$ in five points, there is a unique line through $y^\prime$ intersecting $C_6$ and $s$ in distinct points. For general $y^\prime$, it does not contain $q$. Therefore it is mapped to the line $\l_y$ through $y$ in $X$, which does not intersect $L$.
\\ \end{proof}

The fact that $x$ is not a general point of $X$ will become clear in the next result.

But before proceeding, note that the proof of Lemma \ref{sl_cubicas_por_C6} can be repeated to show that there is a pencil of cubic surfaces $\mQ$ containing $C_6$, $r$ and $s$, supposing $C_6$ does not contain $s$. Let $S$ be a cubic surface in $\mQ$ different from $V$. Then:
\[  V\cap S = C_6+2s+r \]
which implies that $V$ is the only surface in $\mQ$ with a double curve.

Note that $S$ has a double point in $q$ (notation of Lemma \ref{sl_cayley_naogeral}). In fact, blowing up $q$, $S_q$ contains the non collinear points $(C_6)_q$, $s_q$ and $r_q$ infinitely near to $s_q$, so it is not a line.

In the general setting, $S$ is a general cubic with a point of type $A_1$ in $q$. Then it is the image of a  $\P^2$ blown up at six points lying on a conic $C_q$ via the strict transform of the linear system of cubics through these points. The conic $C_q$ is contracted to the double point $q$ and the six points are mapped to six lines through $q$. Let $p_1\in \P^2$ be the point mapped to $s$. 

For a curve in $S$, write $(d,m,n)$ if its strict transform in $\P^2$ has degree $d$, multiplicity $m$ in $p_1$ and multiplicity $n$ in the other five base points. Therefore $s\equiv (0,-1,0)$ and a general plane section of $S$ is of type $(3,1,1)$. The point $q$ is mapped to $C_q\equiv (2,1,1)$. Then a plane section through $q$ can be written as:
\[ H\cap S\equiv (3,1,1) = (2,1,1)+(1,0,0) \]
Where $(2,1,1)$ corresponds to $C_q$.

Since $S$ contains $C_6$, $r$ and $s$, then:
\[ C_6+3s = V\cap S \equiv (9,3,3) = (4,2,2)+(0,-3,0)+(5,4,1) \]
where $(4,2,2)$ is the class of $2C_q$  and $(0,-3,0)$ represents $3s$. Therefore $C_6\equiv (5,4,1)$.

Intersecting with the linear system $\X$, which has multiplicity four in $q$, gives:
\[ 2C_6+5s+F_4 = \X\cap S \equiv (21,7,7)=(8,4,4)+(0,-5,0)+(13,8,3) \]
with $(8,4,4)$ corresponding to $4C_q$  and $(0,-5,0)$ to $5s$. Then the moving part $F_4$ is of type $(3,0,1)$, that is, it corresponds to cubics in $\P^2$ with five base points.

A similar result can be obtained in the non general setting. If $s$ is contained in $C_6$, the same reasoning done in Lemma \ref{sl_mQ_com_s_em_C6} shows that the moving part of $\X\cap S$, where $S$ is a general cubic of $\mQ$, is birationally equivalent to a non complete linear system of conics in $\P^2$ with no base points. Remember that in this case $S$ is singular in $s$.

These considerations will be used in the proof of the following proposition.

\begin{prop}\label{sl_cayley_finaleira}
Suppose $V$ is Cayley's ruled cubic. Then there is a Cremona transformation in $\P^3$ that maps $\X$ to a linear system defining the inverse of a general tangential projection of $X$, having fundamental surface a general projection of $S(1,2)$ from an outer point.
\end{prop}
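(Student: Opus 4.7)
The plan is to build the desired Cremona transformation by composing the two tangential projections: the one from $x$ and the one from a suitably general point $y\in X$. The point $x$ fails to be general on $X$ precisely because, by Lemma \ref{sl_cayley_naogeral}, the line $\l_x\subset X$ through $x$ meets another distinguished line $L$, whereas for a general $y\in X$ the line $\l_y$ is disjoint from $L$. So first I would pick a general point $y\in X$, and let $\tau_y:X\tor\P^3$ denote the tangential projection from $y$, with inverse $\sigma_y:\P^3\tor X$ defined by a linear system $\X_y$. Since $X$ is a Bronowski threefold, $\sigma_y$ is birational.

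Next, I would set
\[ f := \sigma_y^{-1}\circ\sigma_x:\P^3\tor\P^3. \]
Because both $\sigma_x$ and $\sigma_y$ are birational maps onto $X$, the composition $f$ is a birational self-map of $\P^3$, i.e., a Cremona transformation. By construction $\sigma_x=\sigma_y\circ f$, so pulling back hyperplane sections of $X\subset\P^7$ one sees that $f$ carries the linear system $\X$ to the linear system $\X_y$ defining $\sigma_y$. Note that by the remark on page \pageref{pr_hipoteseH}, hypothesis (H) continues to hold at the general point $y$ (one can argue as in the discussion after Lemma \ref{pr_X'eX''}), so $\X_y$ fits the framework of this chapter.

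It then remains to verify that the fundamental surface $V_y$ at $y$ is of type (i) in Proposition \ref{sl_projecoes_de_S(1,2)}, i.e., a general projection of $S(1,2)$, as opposed to Cayley's ruled cubic. Equivalently, one must show that the line $r_y\subset V_y$ (the image under $\bar{\tau}_y$ of the blow-up of the base point of $\II_{X,y}$) is skew to the double line $s_y$ of $V_y$, not infinitely near to it. For this I would translate the distinction between cases (i) and (ii) back to a geometric condition on $X$: by the same reasoning used in Lemma \ref{sl_r_inf_prox_s} and Lemma \ref{sl_cayley_imagem_r}, $r_y$ is infinitely near to $s_y$ in $V_y$ if and only if the line $\l_y\subset X$ (the image of $r_y$ under $\sigma_y$) meets the line $L_y$ obtained at $y$ in the same way $L$ was obtained at $x$ in Lemma \ref{sl_cayley_naogeral}. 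Since $X$ contains a unique such distinguished line (playing the role of $L$), this condition amounts to $\l_y$ meeting $L$, which by Lemma \ref{sl_cayley_naogeral} fails for general $y$. Hence $V_y$ is of type (i), as required.

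The main obstacle is the final step: rigorously identifying the infinitesimal condition on $V_y$ ($r_y\prec s_y$) with the global incidence condition on $X$ ($\l_y\cap L\neq\emptyset$), and showing that the intrinsically defined distinguished line at $y$ coincides with the line $L$ already produced at $x$ rather than some different line depending on $y$. Once this dictionary between the two types of fundamental surfaces and the incidence of lines in $X$ is established, the proposition follows immediately from Lemma \ref{sl_cayley_naogeral}.
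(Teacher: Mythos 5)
Your setup coincides with the paper's: the Cremona transformation is $T=\tau_y\circ(\tau_x)^{-1}$ for a general $y\in X$, and it carries $\X$ to the system defining $\sigma_y$. The problem is entirely in the last step, and there your argument has a genuine gap which you yourself flag as ``the main obstacle'': the asserted equivalence between the infinitesimal condition $r_y\prec s_y$ on $V_y$ and the incidence $\l_y\cap L\neq\emptyset$ in $X$ is never established. Lemma \ref{sl_cayley_naogeral} gives only one implication, and only at the point $x$: \emph{assuming} $V$ is Cayley's cubic, the line $\l_x$ meets a line $L$ which is constructed from data at $x$ (the image of the fiber of $E_s$ over the special point $q\in C_6\cap s$ whose infinitely near preimage lies on $s^\prime$ rather than on $r$). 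To run your contrapositive at $y$ you would need (a) the analogous implication at $y$, which presupposes $V_y$ Cayley and produces a line $L_y$ built from the tangential projection at $y$, and (b) the identification $L_y=L$, i.e.\ that this line is intrinsic to $X$ and independent of the center of projection. Neither is proved, and your appeal to ``$X$ contains a unique such distinguished line'' is an unjustified uniqueness claim. Without (b) the argument does not close: $\l_y$ missing $L$ says nothing about $\l_y$ missing $L_y$.

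The paper fills exactly this hole by staying in the source $\P^3$ rather than arguing on $X$. It identifies $T$ with the map defined by the linear system $\mathcal{Y}\subset\X$ of surfaces with a double point at $y^\prime=\tau_x(y)$, observes that $E_{y^\prime}$ is mapped to the new fundamental surface $V^\prime$ with $\mathcal{Y}_{y^\prime}$ a system of conics based at $(\l^\prime)_{y^\prime}$, and then pins down which curve in $E_{y^\prime}$ goes to the double line of $V^\prime$: it is the direction of the cubic $S\in\mQ$ through $y^\prime$, because the moving part of $\mathcal{Y}\cap S$ contracts $S$ to that double line via a non-complete series. The Cayley case would therefore occur precisely when $\l^\prime$ lies in the tangent plane of $S$ at $y^\prime$, which forces $S\supset\l^\prime$ and hence $S$ ruled --- excluded because $S$ is neither a cone nor singular along a line. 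If you want to salvage your route, you would need to supply an intrinsic characterization of $L$ (for instance via the base line of the pencil $\mQ^\prime$ of Proposition \ref{sl_X_contido_em_F}) together with a proof of the missing converse; as written, the proposition is not proved.
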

\begin{proof}
The linear system $\X$ defines the inverse of $\tau=\tau_x$, the projection of $X$ from $T_xX$. Let $y$ be a general point in $X$, let $\tau_y$ be the projection from $T_yX$. We are looking for the Cremona transformation $T=\tau_y\circ (\tau_x)^{-1}$.

Take a plane in the target $\P^3$. It's preimage via $\tau_y$ is a tangent hyperplane section of $X$ at $y$, that is, a hyperplane section of $X$ with a double point in $y$. This section is mapped by $\tau_x$ to a surface of $\X$ having a double point in $y^\prime=\tau_x(y)$.

Therefore, the linear system $\mathcal{Y}$ of surfaces in $\X$ with a double point in $y^\prime$ defines the Cremona transformation $T$. Note that the line $\l^\prime=\tau(\l_y)$ through $y^\prime$ intersecting $s$ and $C_6$ lies on the base locus of $\mathcal{Y}$, since it intersects these surfaces in eight points.

Blowing up $y^\prime$, the linear system $\mathcal{Y}_{y^\prime}$ consists of conics with one base point, the intersection of $\l^\prime$ with $E_{y^\prime}$. Therefore it maps $E_{y^\prime}$ to a projection $V^\prime$ in $\P^3$ of $S(1,2)$. By Proposition \ref{sl_projecoes_de_S(1,2)}, this projection is not general if and only if the preimage in $E_{y^\prime}$ of the double line of $V^\prime$ contains the base point $(\l^\prime)_{y^\prime}$.

Now consider the cubic $S$ of $\mQ$ through $y^\prime$. By the remarks made above, if $s$ is not contained in $C_6$ the moving part of $\X\cap S$ corresponds in $\P^2$ to cubics with five base points. Then the moving part of $\mathcal{Y}\cap S$ corresponds to cubics with five simple and one double points. This linear system maps $\P^2$ to a line, that is, $S$ is mapped by $T$ to a line.

Note that blowing up the double point, these plane cubics cut the exceptional divisor in a pencil with degree two. Therefore, blowing up $y$,  $\mathcal{Y}$ cuts the line $S\cap E_{y^\prime}$ in a non complete linear system. Hence, the image of  $S$ via $T$ is the double line of $V^\prime$.

Since the base point of $\mathcal{Y}_{y^\prime}$ is $\l^\prime_{y^\prime}$, $V^\prime$ is not a general projection of $S(1,2)$ if and only if $\l^\prime$ lies on the tangent plane of the cubic surface $S$ in $y^\prime$. If this happens, then in fact $S$ contains $\l^\prime$.

But $S$ cannot contain $\l^\prime$, since this implies that $S$ is ruled, by the generality of $y^\prime$. And therefore, $S$ would be a cone or singular along a line. As it was discussed above, $S$ cannot have a singular line. And it cannot be a cone over a smooth cubic, since $q$ is a double point of $S$ not contained in $\l^\prime$. Therefore $E_{y^\prime}$ is mapped to a general projection of $S(1,2)$.

If $s\subset C_6$, the moving part of $\X\cap S$ is birationally equivalent to a non complete linear system of conics with no base points. Then the moving part of $\mathcal{Y}\cap S$ corresponds to a pencil of conics with one double point. Then $S$ is again mapped to the double line of $V^\prime$.

In this case, $S$ is singular along $s$. Moreover $C_6=2s+\l_1+\cdots + \l_4$. If $S$ contains $\l^\prime$, then there are two lines of the ruling of $S$ intersecting in a point not lying on $s$. But this is not possible.
\\ \end{proof}

\section{Conclusion}

An idea of different possibilities of irreducible components of $C_6$ was given in Corollary \ref{sl_decomposicoes_de_C6}. The next Lemma indicates that the total number of different configurations, and hence, of different Bronowski threefolds with cubic fundamental surface is very high.

\begin{lemma}
Suppose that $C_6$ contains a conic. Then there are $49$ different configurations for $C_6$.
\end{lemma}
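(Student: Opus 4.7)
The plan is a combinatorial enumeration organized by the partition of $4$ recording the multiplicities of the ruling lines in $C_6$. By Corollary \ref{sl_decomposicoes_de_C6} and Lemma \ref{sl_r_s_em_C6}, if $C_6$ contains a conic $C$, then $C_6 = C + m_1\l_1 + \cdots + m_k\l_k$ with $\l_i$ distinct lines of the ruling of $V$ and $\sum m_i = 4$. The five partitions $(4),\,(3,1),\,(2,2),\,(2,1,1),\,(1,1,1,1)$ give the coarse classification.

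Within each partition I would enumerate the projectively distinct configurations by tracking discrete incidence data: (a) which of the points $C\cap\l_i$ lies on $s$, since by Lemma \ref{sl_conetangente_de_C} this forces $C$ to be tangent to the ruling at that point; (b) how the collection of $k+1$ points of $C_6\cap s$ clusters, using that by Lemma \ref{sl_rulingV} two distinct ruling lines meet only at a smooth point of $s$ and precisely as the unique pair of ruling lines through that point; and (c) whether any of these points coincides with a ramification point $q_1^s$ or $q_2^s$ of $\bar{\tau}\vert_{\hat{s}}$, through each of which only one line of the ruling passes. For $m_i\geq 2$, Lemma \ref{sl_retas_multiplas} gives the additional constraint that $\l_i\cap s$ be either a general point of $s$ or $q_1^s$/$q_2^s$ with the corresponding tangency already absorbed, so these subcases are fewer.

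The enumeration would then proceed partition by partition, from simplest to most complex. The partition $(4)$ contributes only a handful of subcases, controlled by the position of $\l_1\cap s$ (general, $q_1^s$, or $q_2^s$) and by whether $C\cap \l_1$ lies on $s$; $(3,1)$ and $(2,2)$ admit progressively more subcases according to how the points of $s$ are distributed among the (at most two) distinct lines; $(2,1,1)$ and $(1,1,1,1)$ are the richest, with the combinatorics of which pairs among the $\l_i$ meet on $s$ and where $C$ meets them producing the bulk of the count. Summing the subcases across the five partitions yields $49$.

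The main obstacle is the bookkeeping: one must avoid both double-counting (symmetric labelings of the $\l_i$ giving the same projective configuration up to $\mathrm{Aut}(V,s)$) and missing boundary subcases — especially the tangential degenerations where $C\cap \l_i$ migrates onto $s$, where $C$ is tangent to $\hat{s}$ at $q_j^s$, or where an intersection point of two ruling lines coincides with a ramification point. In each such borderline situation Lemma \ref{sl_conetangente_de_C} and Lemma \ref{sl_intersecoes_de_C6_com_2retas_do_ruling} must be invoked both to identify the case and to rule out those sub-configurations that would force embedded components in the base locus of $\X$, violating the standing hypothesis of pure dimension one.
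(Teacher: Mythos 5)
Your decomposition is exactly the paper's: the five partitions of $4$ recording the multiplicities of the ruling lines correspond one-to-one to the paper's groups (a)--(e) (namely $(1,1,1,1)$, $(2,1,1)$, $(3,1)$, $(4)$ and $(2,2)$), and the incidence data you propose to track --- which intersection points of $C$ with the $\l_i$ lie on $s$, how the points of $C_6\cap s$ cluster, and whether any of them is a ramification point $q_1^s$ or $q_2^s$ --- is precisely the data the paper uses, via the same two lemmas (Lemma \ref{sl_intersecoes_de_C6_com_2retas_do_ruling} and Lemma \ref{sl_conetangente_de_C}). So the strategy is sound and is not a different route.

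The gap is that the enumeration is never actually carried out. For a counting lemma the subtotals \emph{are} the proof: the paper derives $11$ configurations for four reduced lines, $18$ for one double and two simple lines, $10$ for a triple and a simple line, $4$ for a line of multiplicity four, and $6$ for two double lines, and only then concludes $11+18+10+4+6=49$. Your proposal asserts the final sum without producing any of these intermediate numbers, so there is no way to verify that your parametrization of subcases (which is phrased slightly differently from the paper's --- e.g.\ for a multiple line you index by the position of $\l_i\cap s$ on $s$, whereas the paper indexes by the three ways $C$ can meet a given line in a point of $s$: transversally, coplanarly with a second intersection off $s$, or tangentially at $q_1^s$ or $q_2^s$, plus the option of meeting no line on $s$) actually lands on $49$ rather than some nearby number. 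Until the case-by-case count is written out and the five subtotals are checked against each other for the double-counting and boundary issues you yourself flag, the statement is unproved.
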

\begin{proof}
As it was already remarked, $C_6$ is the union of a conic $C$ and four lines $\l_1,\ldots,\l_4$, which can be infinitely near. We divide the configurations in five groups:
\begin{itemize}
\item[(a)] $\l_1,\ldots,\l_4$ are not infinitely near
\item[(b)] $\l_1\prec \l_2$
\item[(c)] $\l_1\prec \l_2 \prec \l_3$
\item[(d)] $\l_1\prec \l_2 \prec \l_3\prec \l_4$
\item[(e)] $\l_1\prec \l_2$ and $\l_3 \prec \l_4$
\end{itemize}
By Lemma \ref{sl_intersecoes_de_C6_com_2retas_do_ruling}, the conic $C$ intersects $s$ in one point and intersects a line $\l$ of the ruling in one  or two points. If $C$ intersects $\l$ in a point of $s$, there are three possibilities: either $C$ intersects $\l$ transversally in one point, or it intersects $\l$ in $s$ and in a second point outside $s$ (so $\l$ and $C$ are coplanar), or it is tangent to $\l$ in a point of $s$ (which is $q_1^s$ or $q_2^s$). If two lines of the ruling intersect in $q\in s$ and this point lies on $C$, then $C$ intersects one of these lines in a second point not in $s$.

Start with group (a). If the four lines are skew, either $C$ intersects none of the lines in $s$, or intersects one of them in one of the three different ways described above. If $\l_1$ and $\l_2$ intersect in $q$, there are the same four possibilities and also $C$ can contain $q$. If $\l_1$ intersects $\l_2$ and $\l_3$ intersects $\l_4$, then there are two possibilities. So group (a) counts $4+5+2=11$ possibilities.

Now consider group (b). In other words, $\l_1$ is a double line in $C_6$. If $\l_1,\l_3$ and $\l_4$ are skew, there are three possibilities for $C$ intersecting the double line in $s$, three for a simple line and one to intersect none in $s$. If $\l_3$ and $\l_4$ intersect, the possibilities are $3+1+1=5$. If $\l_1$ and $\l_3$ intersect, then $C$ can be coplanar to the double line $\l_1$ or to the simple line $\l_3$, or it intersects $s$ in $\l_4$ or outside the three lines. This gives $2+3+1=6$ possibilities. Therefore, group (b) has $7+5+6=18$ possible configurations.

In group (c), $\l_1$ is a triple line. It can be skew to $\l_4$, producing $1+3+3=7$ possibilities. If the two lines intersect, there are three possible configurations. So group (c) counts $7+3=10$ configurations.

I group (d) there is only one line with multiplicity four, which gives four possibilities.

In group (e), the two double lines can be skew or can intersect. If they are skew, there are four options. If they intersect, there are two. So it gives $4+2=6$ possibilities.

Therefore, the total number of possible configurations is:
\[ 11+18+10+4+6=49 \]
\\ \end{proof}

We can now give a small description of the Bronowski varieties having cubic fundamental surface.

\begin{theorem}\label{sl_the_classification}
Let $X$ be a Bronowski threefold having fundamental surface of degree three. Suppose that the linear system $\X$ defining the inverse of a general tangential projection of $X$ has degree seven and that its base locus scheme has pure dimension one, that is, it has no embedded points. The fundamental surface $V$ associated to this projection is a general projection of a scroll $S(1,2)$ from an outer point.

Then $\X$ has multiplicity four in $s$, the double line of $V$, multiplicity two in a degree six curve $C_6$ and multiplicity one in $r$, the line in $V$ skew to $s$. There is a pencil of cubics $\mQ$ containing the base locus of $\X$.

The threefold $X$ has the following properties:
\begin{itemize}
\item[(i)] It is an OADP variety;
\item[(ii)] There is a cone $F$ over the Segre embedding of $\P^1\times \P^2$ with vertex a line $L$, such that  $X$ is the residual intersection of $F$ with two quadrics containing a $\P^4$ of its ruling;
\item[(iii)] The singularities of $X$ have multiplicity two or three, its singular curves are lines and the triple points of $X$ lie on $L$;
\item[(iv)] The pencil $\mQ$ is mapped to a one-dimensional family of quartic surfaces $\mQ^\prime$ in $X$ with base locus $L$. This family is the intersection of $X$ with the $\P^4$'s of the ruling of $F$.
\end{itemize}

In Table \ref{sl_tabela}, some configurations of the base locus of $\X$ are presented, with the singularities of the corresponding OADP threefold $X$ being described.
\end{theorem}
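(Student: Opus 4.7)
The plan is to assemble the theorem from the results accumulated throughout the chapter, verifying first that we may legitimately restrict to the case in which $V$ is a general projection of $S(1,2)$. The structural description of $\X$ --- multiplicity four along $s$, two along $C_6$ and one along $r$, with $C_6$ a curve of type $(5,4)$ in $V$ --- is exactly Proposition \ref{sl_baselocus}, and the characterisation of $V$ as a (possibly non-general) projection of $S(1,2)$ is Proposition \ref{sl_projecoes_de_S(1,2)}. The reduction to the general case amounts to invoking Proposition \ref{sl_cayley_finaleira}, which produces a Cremona transformation in $\P^3$ carrying $\X$ to a linear system defining the inverse of another general tangential projection of $X$ whose fundamental surface is of the general type; this lets me henceforth assume $V$ is as claimed. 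The existence of the pencil $\mQ$ of cubics through $C_6 \cup r \cup s$ follows from Lemma \ref{sl_cubicas_por_C6} together with the observation (recorded after that lemma) that any cubic containing $C_6$ must also contain $r$ and $s$.

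Next I would establish (iv): depending on whether $s$ is a component of $C_6$, either Lemma \ref{sl_pencil_mQ} or Lemma \ref{sl_mQ_com_s_em_C6} applies, and both yield that $\sigma$ maps $\mQ$ to a pencil $\mQ^\prime$ of quartic surfaces in $X$ whose base locus is a line $L$. With (iv) secured, (ii) is proved by Proposition \ref{sl_X_contido_em_F}: $X$ is linearly normal by Remark \ref{pr_oadpLN}, so by Theorem \ref{pr_contidonumscroll} it lies on the rational normal scroll $F$ described by the $\P^4$'s spanned by the surfaces of $\mQ^\prime$; since these $\P^4$'s meet in the line $L$, $F$ is a cone with vertex $L$ over the Segre embedding of $\P^1 \times \P^2$, and Lemma \ref{sl_numero_quadricas_por_X} supplies the two additional quadrics needed to cut $X$ out of $F$ as a residual intersection. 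Assertion (i) then follows from (ii) by the same argument as in \cite[Example 2.6]{cmr}, reproduced at the end of the proof of Proposition \ref{sl_X_contido_em_F}.

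Item (iii) is proved by assembling the singularity analysis of Section \ref{sl_singularities_sec}. Lemma \ref{sl_singularidades_vem_de_C6} restricts the possible singularities of $X$ to images of singular points of $C_6$ off $r$, together with images of non-reduced components. Proposition \ref{sl_pontos_duplos_de_X} shows that singular points of $C_6$ outside $s$ produce only double points of $X$; Proposition \ref{sl_singularidades_em_C6.s} distinguishes the five configurations in which a point of $C_6 \cap s$ becomes a triple point from those in which it becomes another double point; and Lemma \ref{sl_retas_multiplas} combined with Corollary \ref{sl_s_vai_em_reta_tripla} shows that the only singular curves of $X$ are (double or triple) lines, namely the images of the multiple components of $C_6$ and, when $s \subset C_6$, of $s$ itself. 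The final clause of (iii), that the triple points lie on $L$, is the content of Proposition \ref{sl_ptos_triplos_mQ'}.

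The main conceptual obstacle is the upfront reduction to the general projection case: without invoking Proposition \ref{sl_cayley_finaleira}, the Cayley's ruled cubic case would require the center $x$ not to be a general point of $X$ (see Lemma \ref{sl_cayley_naogeral}), and several of the intermediate statements --- in particular those describing how $r$ and $s$ are blown up --- would have to be re-proved in a configuration where $r$ is infinitely near to $s$. Once the reduction is in place, the bulk of the proof is a matter of citing the appropriate lemmas; the only delicate point is to check that when $s \subset C_6$ the appropriate variant (Lemma \ref{sl_mQ_com_s_em_C6}) of the pencil $\mQ$ still produces the scroll $F$ in Proposition \ref{sl_X_contido_em_F}, so that (i) and (ii) hold uniformly across all admissible configurations of $C_6$.
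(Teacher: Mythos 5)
Your proposal is correct and follows essentially the same route as the paper's own proof, which likewise assembles the theorem by citing Proposition \ref{sl_projecoes_de_S(1,2)} and Proposition \ref{sl_cayley_finaleira} for the reduction to the general projection, Proposition \ref{sl_baselocus} for the base locus (and the relative completeness of $\X$ needed for linear normality), Proposition \ref{sl_X_contido_em_F} for (i) and (ii), Section \ref{sl_singularities_sec} for (iii), and Section \ref{sl_cubicsfamily_sec} for (iv). Your version merely spells out the individual lemmas in more detail than the paper's terse summary.
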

\begin{proof}
The fact that $V$ is a general projection of $S(1,2)$ follows from Proposition \ref{sl_projecoes_de_S(1,2)} and Proposition \ref{sl_cayley_finaleira}. The base locus of $\X$ under the pure dimension hypothesis is  given in Proposition \ref{sl_baselocus}, which also implies that $\X$ is relatively complete.  Then properties (i) and (ii) follow from Proposition \ref{sl_X_contido_em_F}. Property (iii) follows from Section \ref{sl_singularities_sec}, where the singularities of $X$ are discussed. In Section \ref{sl_cubicsfamily_sec}, the family $\mQ^\prime$ is studied, proving (iv).
\\ \end{proof}

\begin{table}
\begin{center}
\begin{tabular}{|>{\centering}m{4.72cm}|>{\centering}m{3.3cm}|>{\centering}m{3.68cm}|}
\hline  $C_6$ & Figure of $C_6$ and $s$ & Singularities of $X$  \tabularnewline 
\hline $r+\l_1+4\l_2$,\qquad \qquad \qquad with $\l_1\cap s \neq \l_2\cap s$ \qquad \qquad  and $\l_2\cap s \neq q_1^s,q_2^s$ 
    & \raisebox{-1\height}{\includegraphics[trim=390 740 90 30,clip,width=0.25\textwidth]{figsl5.pdf}}
    & four double lines \qquad  $R_2\prec R_3\prec R_4\prec R_5$ \qquad  and one double point \tabularnewline
\hline      
	$2s+2\l_1+2\l_2$ 
	& \raisebox{-1\height}{\includegraphics[trim=390 660 90 110,clip,width=0.25\textwidth]{figsl5.pdf}}
	& one triple line \qquad and  two double lines \tabularnewline
\hline $C_2+\l_1+\l_2+2\l_3$, \qquad \qquad as in Section \ref{sl_1conica4retas_sec} 
	& \raisebox{-1\height}{\includegraphics[trim=390 580 90 190,clip,width=0.25\textwidth]{figsl5.pdf}}
	& one double line, \qquad \qquad one triple point \qquad and one double point \tabularnewline 
\hline $C_3+\l_1+2\l_2$, \qquad \qquad \qquad with $C_3$ singular in $q_1^s$, $\l_i\cap s=q_i^s$, for $i=1,2$  
	& \raisebox{-1\height}{\includegraphics[trim=380 480 100 290,clip,width=0.25\textwidth]{figsl5.pdf}}
	& one double line $R$ and two triple points not lying on $R$ \tabularnewline 
\hline $C_4+\l_1+\l_2$, \qquad \qquad \qquad  with $C_4$ smooth, $\l_1\cap s=q$, $C_4\cap s =3q$ and $C_4$ has no other intersection with $\l_1$ 
	& \raisebox{-1\height}{\includegraphics[trim=380 390 100 380,clip,width=0.25\textwidth]{figsl5.pdf}}
	& two double points  \tabularnewline 
\hline $C_5+\l_1$, \qquad \qquad \qquad \qquad with $C_5$ cuspidal in $q_1^s$, $C_5\cap s = 3q_1^s + q$ and $C_5$ intersects $\l_1$ in two points 
	& \raisebox{-1\height}{\includegraphics[trim=370 280 110 490,clip,width=0.25\textwidth]{figsl5.pdf}}
	& two triple and \qquad one double points \tabularnewline
\hline $C_6$, \qquad \qquad \qquad \qquad \qquad with a double point in $q_1$ \qquad and $C_6\cap s=2q_1+3q_2$ 
	& \raisebox{-1\height}{\includegraphics[trim=360 170 120 600,clip,width=0.25\textwidth]{figsl5.pdf}}
	& one triple point \tabularnewline
\hline 
\end{tabular} 
\caption{Example of Bronowski varieties having cubic fundamental surface.}\label{sl_tabela}
\end{center}
\end{table}

\chapter{The quartic case}
\label{sc_chapter}

Finally, we will study the case in which the fundamental surface $V$ has degree four. By \cite{cmr}, there are no smooth OADP threefolds fitting in this situation. Since we are assuming hypothesis (H), $\X$ has degree nine.  A further hypothesis will be made on the base locus of $\X$, see Proposition \ref{sc_base_locus}.

\section{First Considerations}

The map $\bar{\tau}:E\cong \P^2\tor V\subset \P^3$ is defined by a base point free non complete linear system of conics. Then $V$ is a projection of the Veronese quartic surface from a disjoint line.

When computing self-intersections of curves in $V$, we'll consider its normalization, the Veronese surface.

The following proposition describes the three possibilities for these quartic surfaces. This description can be found in \cite{coffman}.

\begin{prop}\label{sc_projecoes_da_veronese}
Let $V\subset \P^3$ be a projection of the Veronese quartic surface from a line disjoint to it. Then $V$ is projectively equivalent to one of the following surfaces:
\begin{align*}
(&i) \quad x_1^2x_2^2-x_0x_1x_2x_3+x_1^2x_3^2+x_2^2x_3^2=0 \\
(i&i) \quad x_0^2x_2^2-2x_0x_1x_2^2-x_0x_1x_3^2+x_1^2x_2^2+x_1^2x_3^2-x_3^4=0 \\
(ii&i) \quad 4x_0^3x_1-13x_0^2x_1^2+14x_0x_1^3-5x_1^4+10x_0^2x_1x_2-22x_0x_1^2x_2+12x_1^3x_2- \\
& -x_0^2x_2^2+ 10x_0x_1x_2^2-10x_1^2x_2^2 -2x_0x_2^3 +4x_1x_2^3- x_2^4 -8x_0^2x_3^2 +20x_0x_1x_3^2- \\
&  -12x_1^2x_3^2-12x_0x_2x_3^2+16x_1x_2x_3^2- 4x_2^2x_3^2-4x_3^4=0
\end{align*}

The surface given by (i) is the general projection and is known as  \textit{Steiner's Roman Surface}. It has a triple point $p=(1:0:0:0)$ and three double lines: 
\[ \l_1:(x_2=x_3=0)\qquad ;\qquad \l_2:(x_1=x_3=0)\qquad ;\qquad \l_3:(x_1=x_2=0) \] 
which intersect in $p$. In the surface (ii), one of the three double lines is infinitely near to another line. In (iii), two lines are infinitely near. 
\end{prop}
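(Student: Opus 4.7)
The starting point is that $\bar{\tau}\colon\P^2\dashrightarrow V\subset\P^3$ is the rational map defined by a four-dimensional base-point-free linear system $\Lambda$ of conics on $\P^2$. Hence classifying $V\subset\P^3$ up to projective equivalence amounts to classifying such $\Lambda$ up to the action of $PGL(3)$ on $\P^2$ (the freedom to change basis in $\Lambda$ absorbing the $PGL(4)$ action on $\P^3$). I would therefore reduce the statement to a classification of base-point-free four-dimensional subspaces of $W=H^0(\P^2,\mO_{\P^2}(2))$, the six-dimensional space of conics.

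To $\Lambda$ I attach the dual pencil. Its annihilator $\Lambda^\perp\subset W^*$ is two-dimensional, and under the canonical identification $W^*\cong S^2\C^3$ it becomes a pencil $\mathcal{P}$ of symmetric $3\times3$ matrices, equivalently a pencil of conics on the dual $\P^2$. A base point of $\Lambda$ is a point $p\in\P^2$ such that the evaluation $\mathrm{ev}_p\in W^*$ lies in $\Lambda^\perp$; but under the above identification $\mathrm{ev}_p$ is exactly the rank-one tensor $p\otimes p$. Thus $\Lambda$ is base-point-free if and only if $\mathcal{P}$ contains no rank-one element, i.e. no double line. The classification of $\Lambda$ up to $PGL(3)$ is consequently equivalent to the classification of pencils of conics with generic smooth member and no double-line member.

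The latter is the classical Segre/Kronecker--Weierstrass theory for pencils of symmetric $3\times3$ matrices. Among pencils with no rank-one member, there are exactly three orbits under $PGL(3)$, labelled by Segre symbols $[1,1,1]$, $[2,1]$, $[3]$, corresponding respectively to three distinct pairs of lines as singular members, to a double pair together with a simple pair, and to a triple pair. For each symbol one writes down a normal form for $\mathcal{P}$ (using the rational canonical form for pairs of symmetric matrices), reads off a complementary four-dimensional $\Lambda$, and obtains the equation of $V=\bar{\tau}(\P^2)$ by a direct elimination in the parameters of $\P^2$. The outcome matches the three explicit quartics (i), (ii), (iii). The geometric data are then immediate: each rank-two element $\ell_1\ell_2\in\mathcal{P}$ is a pair of lines in $\P^2$ whose images by $\bar{\tau}$ collapse onto one line of $V$, forcing a double line; the common intersection of the two lines of each rank-two element is the fixed point $p$ (the unique base point of $\II_{X,x}$ from the introduction of the section), so the three double lines all meet in the triple point $p$; and two singular members of $\mathcal{P}$ coalescing produces two double lines of $V$ infinitely near each other, accounting for the degenerations (ii) and (iii).

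The chief obstacle is not conceptual but computational: the elimination producing the explicit quartic equations in each case, and the verification that the incidence behaviour (triple point, double lines, and chain of infinitely near lines) agrees with the stated normal forms. These case-by-case computations are carried out in detail in \cite{coffman}, which is what we ultimately cite; the paragraphs above show that the classification is indeed captured by the Segre symbol of $\mathcal{P}$ and hence is complete.
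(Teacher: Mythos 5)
Your overall strategy is the standard one and is, in substance, the argument behind the classification the paper relies on: the paper gives no proof of this proposition beyond the citation of \cite{coffman}, and Coffman's classification does proceed through the dual pencil of conics, so you are reconstructing the cited argument rather than replacing it. The duality bookkeeping (annihilator of the web, $\mathrm{ev}_p=p\otimes p$, base-point-freeness $\Leftrightarrow$ no rank-one member of the pencil) is correct.

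There is, however, one genuine gap. The sentence ``the classification of $\Lambda$ is consequently equivalent to the classification of pencils of conics with generic smooth member and no double-line member'' smuggles in the clause ``generic smooth member'', and the subsequent claim that pencils with no rank-one member form exactly three orbits $[1,1,1]$, $[2,1]$, $[3]$ is false as stated. There is a fourth orbit: pencils whose determinant vanishes identically but which contain no rank-one element, with normal form $\langle x_0\odot x_1,\ x_0\odot x_2\rangle$ (every member $x_0\odot(\lambda x_1+\mu x_2)$ has rank exactly two). Such a pencil has no Segre symbol at all, yet its annihilator $\Lambda=\langle X_0^2,X_1^2,X_2^2,X_1X_2\rangle$ is base-point-free. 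Geometrically, the centre line then lies inside the secant variety of the Veronese surface (while still being disjoint from the surface itself), the projection is $2:1$, and the image is the quadric $x_1x_2=x_3^2$ rather than a quartic. So ``line disjoint from the Veronese'' alone does not give the trichotomy; you must exclude this orbit by using that $V$ has degree four, equivalently that $\bar{\tau}$ is birational onto $V$ — which in the paper's setting is exactly what the reference to \cite{gh} in the section on tangential projections supplies. With that hypothesis the pencil must have a smooth member, the Segre symbol is defined, and the three-orbit count (and hence the rest of your outline) is correct.

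Two smaller slips in your closing paragraph. First, in this chapter $\II_{X,x}$ has \emph{no} base point — that is precisely why $V$ is a quartic rather than a cubic — so the parenthetical identifying the common point of the double lines with ``the unique base point of $\II_{X,x}$'' imports a feature of the cubic case of Chapter 3. Second, a rank-two member of $\mathcal{P}\subset S^2\C^3$ is a pair of \emph{points} of $\P^2$ (a pair of lines of the dual plane), not a pair of lines of $\P^2$; what it determines on the source is the line spanned by those two points, on which $\bar{\tau}$ restricts to a double cover of a line of $V$ — that is the mechanism producing the double lines. The concurrence of the three double lines at the triple point is then a separate observation (the three vertices of the resulting triangle are identified by $\bar{\tau}$), not an immediate consequence of the rank-two members sharing a point, which in general they do not.
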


The computations that follow will be done for the three cases, the difference being that the singular lines of $V$ may be infinitely near.

The linear system $\II_{X,x}$ in $E\cong \P^2$ has no base points and maps a general line to a conic. There are three lines $\hat{\l}_i\subset E$ that are mapped by $\bar{\tau}$ to the double lines $\l_i$ of $V$. This gives, for each $i$, a double cover of $\l_i\cong \P^1$, ramified in two points. Let $q^i_1,q^i_2\in \l_i$ be the two branch points of this double cover. \label{sc_ramified_sec}

Given a conic in $E$, it lies on $\II_{X,x}$ if and only if its two points of intersection with each line $\hat{\l}_i$ are mapped to the same point in $V$. If the point is one of the ramification points, the conic must be tangent to $\hat{\l}_i$. 

We start with a small result on the preimages of $p\in V$ in $E$.

\begin{lemma}
Consider the three surfaces given in Proposition \ref{sc_projecoes_da_veronese} and the map $\bar{\tau}:E\tor V$ defined by the linear system $\II_{X,x}$ in each case.

If  $\l_1,\l_2,\l_3$ are proper lines of $V$, then $p$ has three preimages in $E$, namely $\hat{\l}_i\cap \hat{\l}_j$. 

If, for instance, $\l_2$ is infinitely near to $\l_1$, then $p$ is a branch point of the double cover of $\l_3$, whereas it has two preimages via $\bar{\tau}$ in $\hat{\l_1}$. 

On the other hand, if $\l_1\prec \l_2\prec \l_3$, then $p$ has only one preimage and it is a branch point in $\l_1$.
\end{lemma}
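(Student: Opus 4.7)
The plan is to compute the set-theoretic fibre of $\bar{\tau}$ over $p$ in each of the three cases using three basic facts, which I would establish first. Since $\bar{\tau}$ is birational and restricts to an isomorphism over $V\setminus(\l_1\cup\l_2\cup\l_3)$, every preimage of $p$ must lie in $\hat{\l}_1\cup\hat{\l}_2\cup\hat{\l}_3$. Second, the restriction $\bar{\tau}|_{\hat{\l}_i}:\hat{\l}_i\to\l_i$ is a $2{:}1$ cover, so $p$ has two preimages on each $\hat{\l}_i$, and they coincide precisely when $p$ is a branch point of that cover. Third, if $q\in\hat{\l}_i$ satisfies $\bar{\tau}(q)=p$, then $\bar{\tau}(q)$ lies in $\l_j\cap\l_k$ as well, forcing $q\in\bar{\tau}^{-1}(\l_j)\cup\bar{\tau}^{-1}(\l_k)=\hat{\l}_j\cup\hat{\l}_k$; every preimage of $p$ on $\hat{\l}_i$ is therefore an intersection point $\hat{\l}_i\cap\hat{\l}_j$ or $\hat{\l}_i\cap\hat{\l}_k$.

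Next I would treat case $(i)$: here the three lines $\hat{\l}_1,\hat{\l}_2,\hat{\l}_3\subset E$ are distinct and in general position, forming a triangle with three distinct vertices $\hat{\l}_i\cap\hat{\l}_j$. Each vertex maps to the common point $\l_i\cap\l_j=p$, and by the second and third facts the two preimages of $p$ on $\hat{\l}_i$ are precisely $\hat{\l}_i\cap\hat{\l}_j$ and $\hat{\l}_i\cap\hat{\l}_k$; collecting over $i$ produces the three vertices as the full fibre, and nothing more.

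For case $(ii)$ I would interpret the relation $\l_2\prec\l_1$ as the two lines $\hat{\l}_1$ and $\hat{\l}_2$ coalescing into a single line in $E$, with $\hat{\l}_3$ still distinct. On $\hat{\l}_3$ the two would-be preimages $\hat{\l}_3\cap\hat{\l}_1$ and $\hat{\l}_3\cap\hat{\l}_2$ now coincide, so by the second fact $p$ becomes a branch point of $\bar{\tau}|_{\hat{\l}_3}$. On the merged line $\hat{\l}_1=\hat{\l}_2$, the cover to $\l_1$ is not ramified at $p$: one preimage is still the vertex $\hat{\l}_1\cap\hat{\l}_3$, and the second persists as the limit of $\hat{\l}_1\cap\hat{\l}_2$ during the degeneration from case $(i)$, giving a distinct second point. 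Case $(iii)$ is handled analogously: all three $\hat{\l}_i$ collapse to a single line, the two persistent limit vertices on this line come together, and so $p$ becomes a branch point of $\bar{\tau}|_{\hat{\l}_1}$ and admits a unique preimage.

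The main obstacle is the rigorous control of the limiting behaviour of the $\hat{\l}_i$ in cases $(ii)$ and $(iii)$: one has to verify that ``$\l_j$ infinitely near to $\l_i$'' really corresponds to $\hat{\l}_j=\hat{\l}_i$ as subschemes of $E$ (so that the relevant vertices genuinely coalesce), and that in case $(iii)$ both limit vertices on the surviving line actually collapse rather than tracking two separate tangent directions. I would handle this either by working with a one-parameter family of general Steiner projections degenerating to cases $(ii)$ and $(iii)$ and tracking the three preimages, or directly from the explicit equations of Proposition \ref{sc_projecoes_da_veronese} by computing $\bar{\tau}$ and resolving the $2{:}1$ covers over each $\l_i$.
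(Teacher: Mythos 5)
Your proposal is correct in outline, and for case $(i)$ it is actually more explicit than the paper, which disposes of that case in one sentence (``clear, since the lines $\hat{\l}_i$ are not infinitely near''); your triangle argument via the three facts (fibres of $\bar{\tau}$ over $p$ lie on $\hat{\l}_1\cup\hat{\l}_2\cup\hat{\l}_3$, each restriction $\hat{\l}_i\to\l_i$ is $2{:}1$, and a preimage of $p$ on $\hat{\l}_i$ must also lie on $\hat{\l}_j\cup\hat{\l}_k$) is a clean justification of that sentence. The divergence is in cases $(ii)$ and $(iii)$: the paper does not degenerate from case $(i)$ at all, but simply writes down $\bar{\tau}$ explicitly from the equations of Proposition \ref{sc_projecoes_da_veronese} --- e.g.\ in case $(ii)$, $\bar{\tau}(u_0:u_1:u_2)=(u_0^2-u_1^2+u_2^2:u_2^2-u_1^2:u_1u_2:u_0u_1)$, the fibre over $p=(1:1:0:0)$ is $\{(0:0:1),(0:1:0)\}$, both on $\hat{\l}_1=(u_0=0)$, and the cover of $\l_3$ restricts to $(u_0:u_2)\mapsto(u_0^2+u_2^2:u_2^2)$, visibly branched over $p$ --- and similarly in case $(iii)$.

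The gap you flag in your primary (degeneration) argument is real and is exactly where the work lies. Knowing that $\hat{\l}_1$ and $\hat{\l}_2$ coalesce does not by itself locate the limit of the vertex $\hat{\l}_1\cap\hat{\l}_2$ on the common limit line: in case $(ii)$ you must rule out that it lands on $\hat{\l}_1\cap\hat{\l}_3$ (which would wrongly make $p$ a branch point of the cover of $\l_1$ as well), and in case $(iii)$ you must show the two surviving vertices really do collide. A limit argument cannot decide this without tracking the family concretely, so in practice you are pushed to the explicit computation you list as your fallback --- which is precisely the paper's proof. Since that computation is routine and you have correctly identified it as the way to close the gap, I regard the proposal as essentially complete, with the explicit verification of $(ii)$ and $(iii)$ being the non-negotiable part rather than an optional alternative.
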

\begin{proof}
Let $u_0,u_1,u_2$ be projective coordinates in $E\cong\P^2$. The first assertion is clear, since in this case the lines $\hat{\l}_i\subset E$ are not infinitely near.

To prove the second assertion, consider the equation given in Proposition \ref{sc_projecoes_da_veronese}, item (ii). Then the two proper double lines of $V$ are:
\[ \l_3:x_2=x_3=0 \quad ; \quad \l_1:x_0-x_1=x_3=0 \]
and $p=(1:1:0:0)$.

Note that setting $x_0=x_1$ in the equation of $V$ gives $x_3^4=0$, that is, $V$ intersects the plane given by $(x_0-x_1=0)$ in $4\l_1$. Therefore, blowing up $\l_1$, $V$ has a second double line $\l_2$ infinitely near to it given by this plane. Then this notation agrees with the hypothesis that $\l_2$ is infinitely near to $\l_1$.

The map $\bar{\tau}$ can be given by:
\[  (u_0:u_1:u_2)  \mapsto  (u_0^2-u_1^2+u_2^2:u_2^2-u_1^2:u_1u_2:u_0u_1) \]
Then the preimages of $p$ are $(0:0:1)$ and $(0:1:0)$. Moreover:
\[ \hat{\l_3}:u_1=0 \quad ; \quad \hat{\l_1}:u_0=0 \]
so both preimages lie on $\hat{\l_1}$.

The double cover of $\l_3$ is given by:
\[   (u_0:u_2)  \mapsto  (x_0:x_1)=(u_0^2+u_2^2:u_2^2) \]
and $p$ has only one preimage, namely $(0:1)$. This proves the second part.

For the last assertion, consider the variety given by (iii) in Proposition \ref{sc_projecoes_da_veronese}. The double line of $V$ is:
\[ \l_1: x_0-x_1+x_2=x_3=0 \]
and $p=(2:1:-1:0)$. Then $\bar{\tau}$ is given by:
\[  (u_0:u_1:u_2)  \mapsto  (u_0^2+2u_1^2+u_2^2 : 2u_1^2+u_2^2 : u_2^2+2u_0u_2 : u_1u_2+u_0u_1) \]

It follows then that the only preimage of $p$ is $(1:0:-1)$. In particular, $p$ is a branch point of the double cover of $\l_1$.
\\ \end{proof}

For $i\in\{1,2,3\}$, let $\Gamma_i$ be the plane containing the lines $\l_j$, with $j\neq i$. Then:
\[ V\cap\Gamma_i=\l_j+\l_k \]
for $i,j,k$ distinct. If, for example, $\l_2$ is infinitely near to $\l_1$, then the plane $\Gamma_1$ does not exist and $\l_2$ is defined in the exceptional divisor $E_{\l_1}$ of the blow up in $\l_1$ by the intersection with $\Gamma_3$.  If $\l_2$ is infinitely near to $\l_1$ and $\l_3$ is infinitely near to $\l_2$, then there is only the plane $\Gamma_3$.

Consider the blow up at $p$. Then $V_p$ is a degree three curve with three double points, some of them possibly infinitely near. Therefore it is the union of three lines, which may include infinitely near lines. 

If $V$ is the Steiner's Roman Surface, then:
\[ V_p=t_1+t_2+t_3 \]
with $t_i=\Gamma_i\cap E_p$. 

If the line $\l_2$ is infinitely near to $\l_1$, then $V_p$ has two proper and one infinitely near double points. This gives:
\[ V_p=2t_2+t_3 \]
After that, the blow up at $\l_1$ gives $V_{\l_1}\equiv (1,2)$, since $V$ intersects a plane through $\l_1$ in a conic through $p$. Then:
\begin{equation}\label{sc_eq_reta_ip}
V_{\l_1}=f^p+2\l_2\equiv (1,2)
\end{equation}
where $f^p=E_p\cap E_{\l_1}$ and $\l_2=\Gamma_3\cap E_{\l_1}$. In particular, $t_2$ is not a double curve of $V$, that is, blowing up $t_2$ gives $V_{t_2}=E_p\cap E_{t_2}$. To keep the notation of the general case, set $t_1=V_{t_2}$.

If we are in case (iii) of Proposition \ref{sc_projecoes_da_veronese}, say $\l_1\prec\l_2\prec\l_3$, then:
\[ V_p=3t_3 \]
with $t_3=(\Gamma_3)_p$. Blowing up $\l_1$ gives:
\[ V_{\l_1}=f^p_1+2\l_2\equiv (1,2) \]
with $f^p_1=E_p\cap E_{\l_1}$ and $\l_2=\Gamma_3\cap E_{\l_1}$. Since $(\l_2)^2=0$ in both $\Gamma_3$ (which was blown up in $p$) and $E_{\l_1}$, the blow up at $\l_2$ gives $E_{\l_2}\cong \F_0$. The intersections of $V$ with $\Gamma_3$ and $E_{\l_1}$ give $V_{\l_2}\equiv (1,2)$. Then:
\begin{equation} \label{sc_eq_V_l2}
V_{\l_2}=f^p_2+2\l_3\equiv (1,2) 
\end{equation}
where $f^p_2=E_p\cap E_{\l_2}$. Since $V_p=3t_3$, the blow up at $t_3$ gives:
\[ V_{t_3}=t_1=E_p\cap E_{t_3} \]
and the blow up at $t_1$ gives: 
\[ V_{t_1}=t_2=E_p\cap E_{t_1} \]
See Figure \ref{sc_figura_expl_V_1reta}.

\begin{figure}[tb]
\centering
\includegraphics[trim=30 20 100 550,clip,width=1\textwidth]{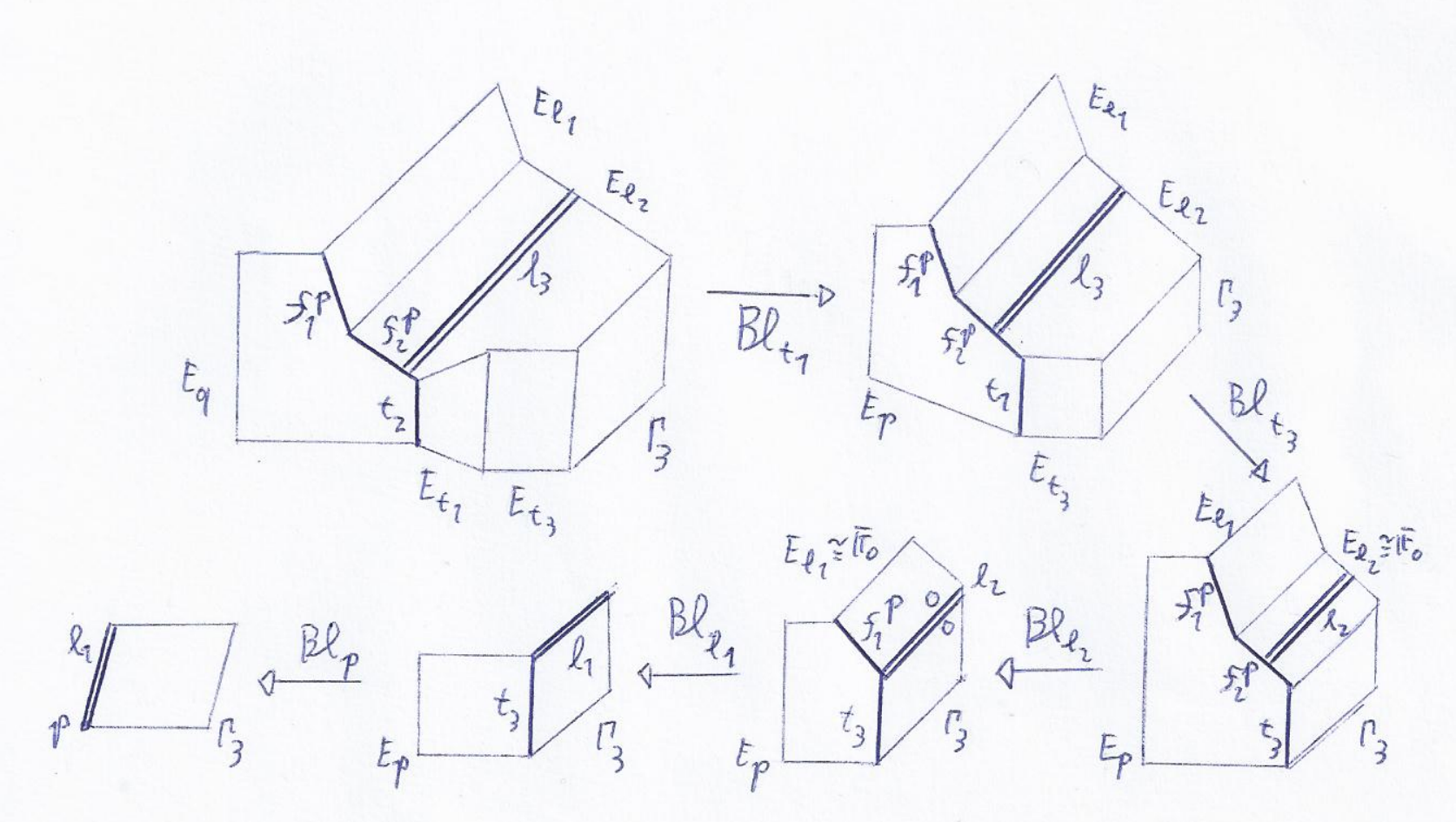}
\caption{Blow ups of $V$ when $\l_1\prec\l_2\prec\l_3$}\label{sc_figura_expl_V_1reta}
\end{figure}

In conclusion, in the three cases we have:
\begin{equation}\label{sc_eq_Vp}
V_p=t_1+t_2+t_3
\end{equation}
where some of these lines can be infinitely near. 

\quad

These considerations explain the tangent cone of $V$ in $p$. Next we study the tangent cone of $V$ in other singular points. These results are very similar to the case in which $V$ has degree three, given in chapter \ref{sl_chapter}. 

\begin{lemma}\label{sc_cones_tangentes_em_V}
Suppose $V$ does not have a double line infinitely near to $\l_i$. Then the tangent cone of $V$ in a point $q\in \l_i\setminus \{p,q^i_1,q^i_2\}$ is a pair of distinct planes containing $\l_i$.  If $q=q^i_j$, the tangent cone of $V$ in $q$ is a double plane containing $\l_i$. In both cases, $V$ intersects one of these planes in $2\l_i$ and a conic through $p$ and $q$.

On the other hand, if for instance $\l_2$ is infinitely near to $\l_1$, then the tangent cone of $V$ in a point $q\in\l_1$ distinct from $p$ is $2\Gamma_3$.
\end{lemma}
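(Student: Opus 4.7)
The proof should be organized in analogy with Lemma \ref{sl_conetangente_de_V_e_secoes_planas} for the cubic case, exploiting the parametrization $\bar\tau:E\tor V$ by the base point free linear system $\II_{X,x}$ of conics to analyze the tangent cone of $V$ at points of the double lines $\l_i$.

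The plan is first to treat the case in which no line is infinitely near to $\l_i$. Since $\bar\tau$ restricted to $\hat\l_i$ is a degree two cover of $\l_i$ ramified at $q^i_1$ and $q^i_2$ (see page \pageref{sc_ramified_sec}), a point $q\in\l_i\setminus\{p,q^i_1,q^i_2\}$ has two distinct preimages $\hat q_1,\hat q_2\in\hat\l_i$. Near each $\hat q_k$, the map $\bar\tau$ is an immersion, so its image near $\hat q_k$ is a smooth sheet of $V$ through $q$. Let $\Pi_k\subset T_q\P^3$ be the tangent plane of this sheet, i.e.\ the image of $d\bar\tau_{\hat q_k}$. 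Because $d\bar\tau$ sends $T_{\hat q_k}\hat\l_i$ to $T_q\l_i$, each $\Pi_k$ contains $\l_i$. Since $V$ has multiplicity two at $q$, the tangent cone is exactly $\Pi_1\cup\Pi_2$; these are distinct because otherwise $V$ would have a second sheet tangent to the first along $\l_i$, producing a double line infinitely near to $\l_i$, which is excluded. When $q=q^i_j$, the two preimages coincide, the two sheets coalesce, and the tangent cone becomes a double plane.

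Next, to verify the assertion on intersections with one of these planes, let $\Pi$ be a plane of the tangent cone at $q$. Since $\Pi$ contains $\l_i$ and $V$ is singular along $\l_i$, one has $V\cap\Pi=2\l_i+C$ for some conic $C\subset\Pi$. At $p$, the surface $V$ has multiplicity three, so $V\cap\Pi$ has multiplicity three at $p$; as $2\l_i$ only contributes two at $p$, $C$ must pass through $p$. At $q$, the fact that $\Pi$ is tangent to one of the two sheets of $V$ forces the intersection multiplicity of $V\cap\Pi$ at $q$ to be at least three (the tangent sheet contributes at least two, the other at least one), so again $C$ passes through $q$.

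Finally, for the infinitely near case, say $\l_2\prec\l_1$, the argument should invoke equation \eqref{sc_eq_reta_ip}: blowing up $\l_1$ gives $V_{\l_1}=f^p+2\l_2$, which is precisely the statement that $V$ is tangent to the plane $\Gamma_3$ along $\l_1$ (away from $p$). Thus for a general $q\in\l_1\setminus\{p\}$, the two sheets of $V$ at $q$ have a common tangent plane equal to $\Gamma_3$, and since $V$ has multiplicity two at $q$, the tangent cone is $2\Gamma_3$. The main technical subtlety will be making the local-differential argument watertight in the infinitely near case, where the parametrization $\bar\tau$ degenerates along $\hat\l_1$; here the cleanest route is to read off the tangency directly from \eqref{sc_eq_reta_ip} rather than trying to compute with the differential of $\bar\tau$.
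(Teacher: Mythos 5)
Your overall strategy is genuinely different from the paper's: you analyse the two local branches of $V$ at $q$ coming from the two preimages $\hat{q}_1,\hat{q}_2$ and take the union of their tangent planes, whereas the paper works globally, writing $\Pi\cap V=2\l_i+C$ for each plane $\Pi$ of the tangent cone and counting the possible conics $C$ via their strict transforms in $E$ (lines joining a preimage of $q$ on $\hat{\l}_i$ to the preimage of $p$ not lying on $\hat{\l}_i$). The local picture is a legitimate route, but as written it has a genuine gap at the central step, namely the distinctness of $\Pi_1$ and $\Pi_2$. You argue that $\Pi_1=\Pi_2$ would force "a second sheet tangent to the first along $\l_i$", hence a double line infinitely near to $\l_i$. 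But tangency of the two branches at the single point $q$ does not propagate to tangency along $\l_i$: the two tangent planes are two points of the pencil of planes through $\l_i$ varying with $q$, and a priori they could coincide at finitely many special points of $\l_i$ other than $q^i_1,q^i_2$ without the blown-up surface acquiring an infinitely near double line (which is a condition at the generic point of $\l_i$). Since the lemma is asserted for \emph{every} $q\in\l_i\setminus\{p,q^i_1,q^i_2\}$ and is later used as an exact dichotomy (e.g.\ "$u_1$ and $u_2$ are infinitely near if and only if $q$ is $q^1_1$ or $q^1_2$" in Lemma \ref{sc_explosao_pt_geral_li}, and the triple/double point distinction in Lemma \ref{sc_sing_em_l1}), ruling out only generic tangency is not enough. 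The paper closes this by showing that each plane of the tangent cone determines a distinct conic $C$ with $\Pi\cap V=2\l_i+C$, and that there are exactly two candidate conics when $q$ has two preimages and exactly one when $q=q^i_j$.

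Two smaller points. First, in the decomposition $V\cap\Pi=2\l_i+C$ you must exclude the degenerate case $C=2\l_i$ (i.e.\ $\Pi\cap V=4\l_i$), for which "a conic through $p$ and $q$" would be vacuous; the paper does this by observing that $2\hat{\l}_i\notin\II_{X,x}$ precisely because no double line is infinitely near to $\l_i$. Second, at $q=q^i_j$ the description "the two sheets coalesce" is not accurate: there is a single analytic branch and $\bar{\tau}$ fails to be an immersion at the unique preimage (a pinch point), so the double-plane conclusion cannot be read off from tangent planes of smooth sheets there; the paper's conic count handles this case uniformly. The final part of your argument, reading $2\Gamma_3$ off from \eqref{sc_eq_reta_ip}, agrees with the paper and is fine, except that the conclusion holds for every $q\neq p$, not only for general $q$, and this stronger form is what the statement requires.
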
 
\begin{proof}
In the first case, set $i=1$. Since $\l_1$ is a double line of $V$, the tangent cone of $V$ in a point of $\l_1$ different from $p$ is a pair of planes containing $\l_1$. These planes may coincide or not.

Let $q$ be a point in $\l_1$ different from $p$ and let $\Pi$ be a plane in the tangent cone of $V$ in $q$. Then $\Pi\cap V$ has multiplicity three in $p$ and $q$, so it is the union of $2\l_1$ and a conic $C$ through $p$ and $q$. Note that $2\hat{\l}_1\notin\II_{X,x}$, since the hypothesis that $V$ does not have a double line infinitely near to $\l_1$ implies that $\hat{\l}_1\cap \hat{\l}_j$ is not a ramification point of $\hat{\l}_j$, for $j\in\{2,3\}$. Therefore $4\l_1$ is not a plane section of $V$, whence $C\neq 2\l_1$.

The point $q$ has one or two preimages in $\hat{\l}_1$, and $p$ has two or  three preimages (depending on $V$), namely $\hat{\l}_i\cap \hat{\l}_j$. The strict transform of $C$ cannot be $\hat{\l}_1$, so it is a line through one of the points $\bar{\tau}^{-1}(q)\in \hat{\l}_1$ and the preimage of $p$ not lying on $\hat{\l}_1$. 

If $q$ is not $q^1_1$ or $q^1_2$, there are two such lines, so there are two distinct conics $C$. Therefore the two planes in the tangent cone of $V$ in $q$ are distinct. If $q$ is $q^1_1$ or $q^1_2$, there is only one conic $C$ and the tangent cone of $V$ in $q$ is a double plane. This proves the first part.

Now suppose $\l_2$ is infinitely near to $\l_1$. The blow up at $p$ and $\l_1$ gives:
\[ V_{\l_1}=f^p+2\l_2 \]
where $\l_2=(\Gamma_3)_{\l_1}$ and $f^p=E_{\l_1}\cap E_p$, as explained above. 
Since $q\neq p$, the tangent cone of $V$ in $q$ is $2\Gamma_3$.
\\ \end{proof}

The base locus of $\X$ is now described:\label{sc_blocus_sec}

\begin{prop} \label{sc_base_locus}
Let $\l_1$, $\l_2$ and $\l_3$ be the double lines of $V$ and let $p$ be its triple point. Suppose that the base locus of $\X$ has dimension one, except for $p$. Then $\X$ is the linear system of surfaces with degree nine having:
\begin{itemize}
\item multiplicity six in $p$
\item multiplicity four along each $\l_i$
\item multiplicity two along $C_6$, 
\end{itemize} 
where $C_6$ is a degree six curve in $V$, image via $\bar{\tau}$ of a cubic in $E$. In particular, $C_6$ cuts each $\l_i$ in three points, supposing it does not contain any of these lines.

Moreover, $X$ has degree nine and $p$ is mapped to a point $x_p$ of multiplicity four in $X$. If $C_6$ does not contain $p$, the tangent cone of $X$ in $x_p$ is a cone over a Veronese quartic surface.
\end{prop}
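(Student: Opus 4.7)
The proof follows the template of Proposition \ref{sl_baselocus} from the cubic case. First, I would bound the multiplicities of $\X$ along the singular locus of $V$. Set $m_i=\mult_{\l_i}\X$ and $m_p=\mult_p\X$. Hypothesis (H), combined with Lemma \ref{pr_X'eX''}(iii), forces $\Xpp=\X-2V$ to be the complete linear system of planes with no fixed part. Since $V$ has multiplicity two along each $\l_i$ and multiplicity three at $p$, this yields $m_i\leq 4$ and $m_p\leq 6$. The reverse bound $m_i\geq 3$ comes from Lemma \ref{pr_X'eX''}(ii): $\Xp$ must desingularize each double line of $V$.

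Next, I would analyze $\Xp\cap V$ as a divisor on $V$. Its degree in $\P^3$ equals $5\cdot 4=20$. Each $\l_i$ is a double line of $V$, so the intersection contains $\l_i$ with coefficient $2(m_i-2)$, one copy per branch of $V$. By Lemma \ref{pr_X'eX''}(ii), the moving part defines $\bar\sigma=\bar\tau^{-1}$, corresponding to lines on $E\cong\P^2$; the image of a line under $\bar\tau$ (defined by a three-dimensional sublinear system of $|2l|$) is a conic on $V$. Writing $\Xp\cap V=\sum_i 2(m_i-2)\l_i+\{\text{conics}\}+C$ for a residual curve $C$, the degree identity yields $\deg C=20-\sum 2(m_i-2)-2$. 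Reassembling $\X\cap V=V|_V+\Xp\cap V=\sum_i 2m_i\,\l_i+2C$ whose total degree must equal $9\cdot 4=36$ gives $\sum 2m_i+2\deg C=36$: the case $m_i=3$ for all $i$ yields $18+24=42\neq 36$, contradiction; hence $m_i=4$ for all $i$ and $\deg C=6$. The multiplicity $m_p$ is then forced: the bound $m_p\leq 6$ combined with the local fact that a monomial $y_1^ay_2^by_3^c$ satisfying $b+c,\,a+c,\,a+b\geq 4$ automatically has $a+b+c\geq 6$ (and its analog in cases (ii) and (iii) of Proposition \ref{sc_projecoes_da_veronese}) gives $m_p=6$. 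Pulling $C_6:=C$ back via $\bar\tau$ gives a plane cubic in $E$, and by Bezout this cubic meets each $\hat\l_i$ in three points, hence projects to three points on $\l_i$ whenever $C_6\not\supset\l_i$.

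For $\deg X$, I would intersect $\X$ with a general plane $\Omega\subset\P^3$, which avoids $p$ and meets each $\l_i$ in one point and $C_6$ in six points. The plane curve $\X\cap\Omega$ has degree nine with three points of multiplicity four and six double points, and $\sigma$ maps $\Omega$ birationally onto a tangent hyperplane section of $X$ of degree $9^2-3\cdot 4^2-6\cdot 2^2=9$.

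Finally, for the tangent cone at $x_p=\sigma(p)$, I would apply Lemma \ref{pr_truqueconetangente} with $\wL\subset\X$ the sublinear system of elements of multiplicity at least $m_p+1=7$ at $p$. After blowing up $p$, the restriction $\wL\cap E_p$ on $E_p\cong\P^2$ is a sublinear system of $|7l|$ with multiplicity four at each of the three (possibly infinitely near) tangent directions of the $\l_i$. It has fixed part $V\cap E_p=t_1+t_2+t_3$ by \eqref{sc_eq_Vp}. Subtracting this fixed part gives a moving system of class $|4l|$ with three double base points, which a standard quadratic transformation (Definition \ref{pr_def_stdquad}) centered at those points converts into the complete base-point-free system of conics $|2l|$, precisely the linear system embedding $\P^2$ into $\P^5$ as the Veronese quartic surface. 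Hence $E_p$ maps to a Veronese quartic surface, which by Lemma \ref{pr_truqueconetangente} is a component of the projectivized tangent cone of $X$ at $x_p$; for dimensional reasons (both have dimension two) it is the entire projectivized tangent cone, giving $\mult_{x_p}X=4$ and identifying the tangent cone as a cone over the Veronese quartic surface. The main delicacy is running the Cremona reduction uniformly across the three cases of Proposition \ref{sc_projecoes_da_veronese}, where the triple $t_1,t_2,t_3$ degenerates into configurations with infinitely near lines.
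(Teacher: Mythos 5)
Your determination of the base locus and the degree computation follow the paper's own route almost verbatim (bounds $3\le m_i\le 4$ from $\Xp$ and $\Xpp$, the degree count of $\Xp\cap V$ and $\X\cap V$ forcing $m_i=4$ and $\deg C_6=6$, then the plane-section count $81-3\cdot 16-6\cdot 4=9$). One small point of care: the paper writes $\X\cap V=2d_1\l_1+2d_2\l_2+2d_3\l_3+2C+D$ with an a priori unknown residual curve $D$ and derives the \emph{inequality} $36\ge 2\sum d_i+2\deg C$, which yields $\sum d_i\ge 12$ and hence kills all sub-maximal cases at once, whereas you assert the equality $\X\cap V=\sum 2m_i\l_i+2C$ outright and only test the all-$m_i=3$ case explicitly; your linear computation does in fact exclude every case with $\sum m_i<12$, so this is a presentational rather than a substantive difference.

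The genuine gap is in your last step. Lemma \ref{pr_truqueconetangente} only tells you that the Veronese quartic surface is \emph{a component} of the projectivized tangent cone of $X$ at $x_p$; "for dimensional reasons" does not upgrade this to equality, because the projectivized tangent cone of a threefold at a point is always pure of dimension two and may well be reducible (the paper itself produces tangent cones that are unions of a quadric and a plane). What the containment gives you is only $\mult_{x_p}X\ge\deg(\text{Veronese})=4$; to conclude $\mult_{x_p}X=4$ you need an independent upper bound. The paper gets it from Lemma \ref{pr_truque_secao_tgente}: for a general plane $\Omega$ through $p$, the curve $\Omega_p$ meets the fixed divisor $\X_p=2t_1+2t_2+2t_3$ in three double points, so after blowing them up $(\Omega_p)^2=-4$ with no residual intersection with $\X$, whence $\sigma(\Omega)$ has a point of multiplicity four at $x_p$ and the Lemma forces $\mult_{x_p}X=4$; only \emph{then} does the degree-four bound identify the tangent cone with the cone over the Veronese. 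Note also that your route cannot establish the multiplicity statement when $p\in C_6$ (your Veronese computation explicitly assumes $p\notin C_6$, since otherwise the moving part of $\wL\cap E_p$ acquires extra base points), whereas the proposition asserts $\mult_{x_p}X=4$ unconditionally; the paper's plane-section argument covers that case as well. You should therefore prove the multiplicity first by the plane-section method and deduce the tangent cone afterwards, rather than the other way around.
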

\begin{proof}
Let $d_i$ be the multiplicity of $\X$ in $\l_i$. Since $\Xpp$ is the linear system of planes, $d_i\leq 4$. And since $\Xp$ desingularizes $V$ to $E$, $d_i-2\geq 1$. Therefore $d_i\in\{3,4\}$

The linear system $\Xp$ has multiplicity $d_i-2$ along $\l_i$. Its restriction to $V$ defines the inverse of $\bar{\tau}$. Then:
\[ \Xp \cap V = 2(d_1-2)\l_1+2(d_2-2)\l_2+2(d_3-2)\l_3 + \{ \text{conics} \} +C \]
for a fixed curve $C$. Since $\deg\Xp=5$, this implies:
\begin{equation}\label{sc_eq_d1d2d3}
\deg C = 20- 2(d_1-2)-2(d_2-2)-2(d_3-2)-2
\end{equation}

But $\X$ has multiplicity two along $C$. So its intersection with $V$ is:
\[ \X\cap V = 2d_1\l_1+2d_2\l_2+2d_3\l_3 + 2C+D \]
for a certain curve $D$. This implies:
\[ 36\geq 2d_1+2d_2+2d_3+2\deg C \]
Plugging \eqref{sc_eq_d1d2d3} in the above inequality, gives:
\[ d_1+ d_2+ d_3 \geq 12 \] 

Therefore $d_i=4$ for $i=1,2,3$. In particular there is no curve $D$ and $C=C_6$ has degree six. This proves the assertion on the base locus. 

Therefore the intersection of $\X$ with $V$ is:
\[ \X\cap V = 8\l_1+8\l_2+8\l_3 + 2C_6 \]

A general plane is cut by $\X$ in degree nine curves with three points of multiplicity four and six double points. It is mapped to a general tangent hyperplane section of $X$. Then $X$ has degree $81-3\cdot 4^2-6\cdot 2^2=9$.

To find the multiplicity of $\X$ in $p$, consider the blow up at this point. Then $\X_p$ has multiplicity four in three non collinear points, so $mult_p \X\geq 6$. But since $\Xpp$ has no base points and $p$ is a triple point of $V$, $mult_p \X\leq 6$ and equality holds.

Since $\X_p$ has degree six and three points of multiplicity four, it is the union of three fixed double lines. Some of these lines are infinitely near if $V$ is not the Steiner's Roman Surface. Using the notation of \eqref{sc_eq_Vp}, we have:
\begin{equation}\label{sc_eq_Xp}
\X_p=2t_1+2t_2+2t_3
\end{equation}

Therefore $\X_p$ is a fixed curve and $p$ is mapped to a point $x_p$ in $X$. Let $\Omega$ be a general plane through $p$. Then $\Omega_p$ intersects $\X_p$ in three (possibly infinitely near) fixed double points. After blowing up these points, $\Omega_p$ has no intersection with $\X$ and $(\Omega_p)^2=-4$ in $\Omega$. Then $x_p$ is a point of multiplicity four in the hyperplane section $\sigma(\Omega)$ of $X$. By Lemma \ref{pr_truque_secao_tgente}, $X$ has multiplicity four in $x_p$.

Finally, suppose $p\notin C_6$. Let $\wX$ be the linear system of surfaces in $\X$ having multiplicity seven in $p$. Then, the blow up in $p$ gives:
\[ \wX_p=t_1+t_2+t_3+\{\text{quartic curves}\} \]
The moving part has three double points, namely $t_i\cap t_j=\l_k\cap E_p$, with $i,j,k$ distinct. If $V$ is not the Steiner's Roman surface, some of these points are infinitely near. Since $p \notin C_6$, it has no other base points. A standard quadratic Cremona transformation maps the moving part of $\wX$ to a linear system of conics having no base points. Therefore, by Lemma \ref{pr_truqueconetangente} the projectivization of the tangent cone of $X$ in $x_p$ contains a Veronese surface. Since $X$ has multiplicity four in $x_p$, the result follows.
\\ \end{proof}

Note that the possible irreducible components of $C_6$ are conics, quartics or the double lines $\l_i$. Indeed, $C_6$ is the image via $\bar{\tau}$ of a cubic in $E$. But $\bar{\sigma}$ maps any curve of degree $d$ to a curve of degree $2d$, except for the three lines $\hat{\l}_i$. This proves the assertion.

\section{Images of $\l_i$}
\label{sc_imagens_li_sec}

In this section we study the image of the lines $\l_i$ via $\sigma$. We'll keep the hypothesis of Proposition \ref{sc_base_locus}, that is, $\X$ has degree nine and pure dimension one except for $p$. We will first consider the case in which $C_6$ does not contain any of the lines $\l_i$, and then the other cases.

\begin{prop}\label{sc_imagem_li}
Suppose $C_6$ does not contain any of the lines of $V$ and fix $i\in\{1,2,3\}$. If $V$ has no double line infinitely near to $\l_i$, then $\l_i$ is mapped by $\sigma$ to a weak Del Pezzo surface $^iD_4^x$ of degree four through $x$ having a double point in $x_p$. If there is a double line infinitely near to $\l_i$, then $\l_i$ is mapped to a double line $R_i$ of $X$ not containing $x_p$.
\end{prop}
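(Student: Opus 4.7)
The plan is to blow up $p$ and then $\l_i$, compute the class of $\X_{\l_i}$ on $E_{\l_i}\cong\F_0$, and analyze its base locus to identify the image of $E_{\l_i}$ under the map defined by $\X$. Since $p\in\l_i$, after blowing up $p$ the strict transform $\widetilde{\l_i}$ has trivial normal bundle, so $E_{\l_i}\cong\F_0$. Using the conventions where $(1,0)$ denotes a fiber over $\l_i$ and $(0,1)$ a section, and using that $\X$ has degree $9$, multiplicity $6$ at $p$ (so $\pi_p^*\X - 6E_p$), and multiplicity $4$ along $\l_i$, I would compute
\[
\X_{\l_i} \;\equiv\; 9(1,0) - 6(1,0) + 4(0,1) \;=\; (3,4).
\]
The first contribution comes from $\pi^*\pi_p^*\O(9)|_{E_{\l_i}}$ = 9 fibers, the second from $-6\pi^*E_p|_{E_{\l_i}}=-6f^p$ (since $E_p\cap E_{\l_i}=f^p$ is a fiber), and the third from $-4E_{\l_i}|_{E_{\l_i}}=-4(0,-1)$. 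The same method gives $V_{\l_i}\equiv(1,2)$, in agreement with equation~(\ref{sc_eq_reta_ip}).

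Next I would identify the base locus of $\X_{\l_i}$. The three points of $C_6\cap\l_i$ produce three double base points on the fibers $f^{q_1},f^{q_2},f^{q_3}$, all lying on $V_{\l_i}$. To handle the fiber $f^p$, I would restrict to $\widetilde{E}_p=\Bl_{(\l_i)_p}E_p$ and compute $\widetilde{\X}|_{\widetilde{E}_p}=\pi^*(\X_p)-4f^p$, where $\X_p$ is the fixed sextic from equation~(\ref{sc_eq_Xp}). In the absence of any line infinitely near $\l_i$, $\X_p=2(t_1+t_2+t_3)$ has multiplicity exactly $4$ at $(\l_i)_p$ (contributions $2$ from each of the two lines $t_j,t_k$ through $(\l_i)_p$; $t_i$ avoids it), yielding $\widetilde{\X}|_{\widetilde{E}_p}=2t_i+2\widetilde{t}_j+2\widetilde{t}_k$: the fiber $f^p$ is not fixed, but carries two double base points at $\widetilde{t}_j\cap f^p$ and $\widetilde{t}_k\cap f^p$, both on $V_{\l_i}$. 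When some $\l_j$ is infinitely near to $\l_i$, equation~(\ref{sc_eq_Xp}) (as explained in the case~$(ii)$ discussion) gives $\X_p=4t_j+2t_k$ with multiplicity $6$ at $(\l_i)_p$, whence $\widetilde{\X}|_{\widetilde{E}_p}=4\widetilde{t}_j+2\widetilde{t}_k+2f^p$: here $f^p$ becomes a fixed component of multiplicity $2$, and the line $\l_j$ sits as a $(0,1)$-section on $E_{\l_i}$ which $\X_{\l_i}$ contains with multiplicity $4$.

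In the first case, the five double base points all lie on $V_{\l_i}$, and $(1,2)\cdot(3,4)=10=5\cdot 2$ confirms that $V_{\l_i}$ is contracted; since $V$ is contracted to $x$ by $\sigma$, this image point is $x$. The moving part has dimension $h^0((3,4))-1-5\cdot\binom{3}{2}=20-1-15=4$ and defines a map to $\P^4$ whose image has degree $(3,4)^2-5\cdot 2^2=4$, hence a quartic surface $^iD_4^x\subset\P^4$. After blowing up the five double base points, $V_{\l_i}$ acquires self-intersection $4-5=-1$ and contracts to a smooth point (namely $x$), while $f^p$ acquires self-intersection $0-2=-2$ and contracts to an $A_1$-singularity (necessarily $x_p$, the image of $p$). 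This identifies $^iD_4^x$ as a weak Del Pezzo quartic through $x$ with a double point at $x_p$.

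In the second case the fixed components $4\l_j$ and $2f^p$ contribute class $(0,4)+(2,0)=(2,4)$, leaving the moving part of class $(3,4)-(2,4)=(1,0)$: a pencil of fibers factoring through the projection $E_{\l_i}\to\l_i\cong\P^1$, whose image in $\P^7$ is a line $R_i$. Because $f^p$ is fixed and maps to $x_p$, while the moving fibers $f^q$ for $q\neq p$ map to distinct points of $R_i$, we obtain $x_p\notin R_i$. The main obstacle will be justifying the precise fixed multiplicities in the infinitely-near case: specifically, that the merging of tangent directions at $p$ encoded by $\X_p=4t_j+2t_k$ forces exactly multiplicity $2$ of $f^p$ as a fixed component of $\X_{\l_i}$, and that the three base points of $\X_{\l_i}$ coming from $(C_6)_{\l_i}$---which happen to lie on the fixed curve $\l_j\subset E_{\l_i}$---do not impose independent conditions on the residual pencil $(1,0)$, being already absorbed by the multiplicity-$4$ fixed part $4\l_j$.
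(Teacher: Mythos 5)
Your treatment of the first case is essentially the paper's argument in different clothing: the class $\X_{\l_i}\equiv(3,4)$, the five double base points on $V_{\l_i}\equiv(1,2)$ with two of them on $f^p\equiv(1,0)$, the contraction of $V_{\l_i}$ (self-intersection $-1$ after the blow ups) to $x$ and of $f^p$ (self-intersection $-2$) to a double point $x_p$ of a quartic weak Del Pezzo surface. The paper reaches the same conclusion by passing to $\P^2$ via Lemma \ref{pr_imagemF1F2} and two quadratic transformations, but the content is identical. One omission: the first alternative of the statement also applies to lines $\l_i$ that are themselves infinitely near (e.g.\ $\l_2$ when $\l_1\prec\l_2$ and $\l_3$ is proper), for which your blow-up of ``$p$ then $\l_i$'' does not apply verbatim; the paper has to realize such an $\l_i$ as $\Gamma\cap E_{\l'}$ or $Q\cap E_{\l'}$ to recover $N_{\l_i}=\mO_{\P^1}(0)\oplus\mO_{\P^1}(0)$ and the same class $(3,4)$.

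The second case contains two genuine gaps. First, you show the image of $E_{\l_i}$ is a line but never prove that $R_i$ is a \emph{double} line of $X$; this requires the general-plane-section argument (blow up a general $q\in\l_i$ and the quadruple point infinitely near to it corresponding to $\l$, obtain a $(-2)$-curve with no intersection with $\X\cap\Omega$, and invoke Lemma \ref{pr_truque_secao_tgente}). Second, and more seriously, your justification of $x_p\notin R_i$ is a non sequitur. The moving pencil $(1,0)$ assigns to \emph{every} fiber of $E_{\l_i}$, including $f^p$, a point of $R_i\cong\P^1$; read naively, your own setup would therefore place the point of $R_i$ corresponding to $p$ at $\sigma(f^p)=x_p$, i.e.\ it would yield $x_p\in R_i$, the opposite of what is claimed. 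The point is that $4\l$ and $2f^p$ are still base curves of $\X$, so $\sigma$ is not computed on them by the moving pencil: one must blow up $\l$ and then the point where $E_p\cap E_\l$ meets $E_{\l_i}\cap E_\l$, after which the last exceptional divisor is mapped to a conic on which $x_p$ (image of $E_p\cap E_\l$) and the point of $R_i$ over $p$ (on the image of $E_{\l_i}\cap E_\l$) appear as two \emph{distinct} points. This separation argument is the crux of the ``not containing $x_p$'' claim and is entirely absent from your proposal; the fixed multiplicities you flag as the main obstacle are in fact the part you handle correctly.
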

\begin{proof}
Set $\l=\l_i$. Suppose first that $\l$ is a proper double line of $V$ with no other double line infinitely near to it. Let $\Omega$ be a general plane containing $\l$. Then $\X\cap \Omega$ consists of $4l$ and moving quintic curves, which intersect $\l$ in $p$ with multiplicity two and in three moving points. 

After the blow up at $p$, the normal bundle of $\l$ is:
\[ N_\l = \mO_{\P^1}(0)\oplus \mO_{\P^1}(0) \]

Blowing up $\l$ gives $\X_\l\equiv (3,4)$. Since there is no double line of $V$ infinitely near to $\l$, $\X_\l$ has no fixed components. It has two double points in $f^p_{\l}=E_p\cap E_\l$, given by the intersections with two of the lines $t_i$ (see \eqref{sc_eq_Vp}). One of these points is infinitely near if $V$ is not the Steiner's Roman surface. It also has three double points in the intersections with $C_6$, which does not contain $\l$. The curve of type $(1,2)$ through these five base points is $V_\l$ and is mapped to $x$.

By Lemma \ref{pr_imagemF1F2}, $\X_\l$ is birationally equivalent to degree seven curves in $\P^2$ with one point of multiplicity four, one triple point and five double points. The fiber $f^p_\l$ is mapped to a line through the triple point and two of the double points. After two standard quadratic Cremona transformations, the degree seven curves are mapped to cubics with five base points. Three of these points lie on a line, the image of $f^p_\l$. Since $f^p_\l$ is mapped to $x_p$, $E_\l$ is mapped to a weak Del Pezzo quartic surface through $x$ with a double point in $x_p$.

Now suppose $\l$ is a double line of $V$ infinitely near to $\l^\prime$, which can be proper or infinitely near to another line $\l^{\prime\prime}$. Blowing up $p$ (and $\l^{\prime\prime}$, if it is the case) and $\l^\prime$, by \eqref{sc_eq_reta_ip} and \eqref{sc_eq_V_l2} we have that $\l\equiv (0,1)$ in $E_{\l^\prime}$. As remarked above, $\X_{\l^\prime}\equiv (3,4)$ in both cases. Moreover, it has two double points in $f^p_{\l^\prime}=E_p\cap E_{\l^\prime}$, due to \eqref{sc_eq_Vp} and \eqref{sc_eq_Xp}. Therefore:
\begin{align}\label{sc_eq_reta_dupla_ip}
\X_{\l^\prime}=4\l+ 2f^p_{\l^\prime}+\{\text{moving fibers} \}\equiv (0,4)+(2,0)+(1,0) 
\end{align} 
Then $\l^\prime$ is mapped to a line $R^\prime$.

If $\l^\prime$ is a proper line of $V$ (and $V$ is of type (ii) in Proposition \ref{sc_projecoes_da_veronese}), then $\l=\Gamma\cap E_{\l^\prime}$, with $\Gamma\cong \F_1$ being the strict transform of the plane spanned by $\l^\prime$ and $\l$. 

If $\l^\prime$ is infinitely near to $\l^{\prime\prime}$, then $\l=Q\cap E_{\l^\prime}$, with $Q$ being the strict transform of a quadric containing the three double lines of $V$. This implies that the original quadric $Q$ is singular in $p$, since $Q_p$ contains three non collinear points. After the blow up at $p$ we get, in $Q\cong\F_2$, $\l^{\prime\prime}\equiv f_2$ and $Q_p\equiv e_2$. Then:
\[ V\cap Q=6\l^{\prime\prime}+F \equiv e_2+8f_2 \]
where $F\equiv e_2+2f_2$ is a plane section of $Q$. Blowing up $\l^{\prime\prime}$ and $\l^\prime$ does not affect $Q$, and gives $\l\equiv f_2$ in $Q$. Analogously, the intersection of $\X$ with $Q$ is:
\[ \X\cap Q=12\l^{\prime\prime}+G \equiv 3e_2+18f_2 \]
where $G\equiv 3e_2+6f_2$ is a cubic section of $Q$.

Therefore, in both cases $\l$ has normal bundle:
\[ N_\l = \mO_{\P^1}(0)\oplus \mO_{\P^1}(0) \]
and blowing up $\l$ gives $V_\l\equiv (1,2)$ and $\X_\l\equiv (3,4)$. $\X_\l$ has two double points in $E_p\cap E_\l$, and three double points in $C_6\cap E_\l$, giving the same linear system as before. Then $E_\l$ is  mapped to a weak Del Pezzo surface $D_4^x$.

Next we prove that $R^\prime$ does not contain $x_p$. First note that $R^\prime\subset D_4^x$, since $(E_\l)_{\l^\prime}$ is a curve of type $(0,1)$ in $E_{\l^\prime}$, so it is mapped to $R^\prime$. Now,  one of the two double points in $E_p\cap E_\l$ is $f^p_{\l^\prime}\cap E_\l\subset E_{\l^\prime}\cap E_\l$. The other double point is infinitely near to it if $\l^\prime$ is not a proper line of $V$.

Now note that $E_p\cap E_\l$ and $E_{\l^{\prime}}\cap E_\l$ are transversal and consider the blow up of the point $q=f^p_{\l^\prime}\cap E_\l$, which is the intersection of these two curves. If $\l^\prime$ is a proper line of $V$, $\X_\l$ intersects the exceptional divisor  in two moving points, mapping it to a conic. If $\l^\prime$ is infinitely near to $\l^{\prime\prime}$, $\X_\l$ intersects the exceptional divisor in another double point $q^\prime$, lying on $E_\l\cap E_p$. Blowing up this point, $\X_\l$ intersects the new exceptional divisor in two moving points. So in both cases, the last exceptional divisor is mapped to a conic. One point of this conic is $x_p$, image of $E_\l\cap E_p$. And a distinct point of it lies on the image of $E_{\l^\prime}\cap E_\l$, namely $R^\prime$.  Therefore $x_p\notin R^\prime$, since no other curve of $E_\l$ intersecting $E_p$ and $E_{\l^\prime}$ is contracted.

In Figure \ref{sc_figura_l''l'l} is a sketch of these blow ups when $\l^{\prime\prime}\prec \l^\prime\prec \l$.

\begin{figure}[tb]
\centering
\includegraphics[trim=40 380 10 150,clip,width=1\textwidth]{figsc3.pdf}
\caption{Blow ups when $\l^{\prime\prime}\prec \l^\prime\prec \l$}\label{sc_figura_l''l'l}
\end{figure}

To prove that $\l^\prime$ is mapped to a double line of $X$, suppose it is a proper line of $V$ and let $q$ be a general point of $\l^\prime$ and $\Omega$ be a general plane through $q$. Then $\X\cap \Omega$ has multiplicity four in $q$. Blowing up $q$, by \eqref{sc_eq_reta_dupla_ip} $\X\cap \Omega$ has a fixed point of multiplicity four in $\Omega_q$, corresponding to $\l$. Blowing up this point, $\Omega_q$ has self-intersection $-2$ in $\Omega$ and is not cut by $\X\cap \Omega$. Then it is mapped to a double point in the image of $\Omega$, a hyperplane section of $X$. By Lemma \ref{pr_truque_secao_tgente}, it is a double point of $X$, which implies that the image of $\l^\prime$ is a double line.

Clearly the same reasoning applies if $\l^{\prime\prime}\prec \l^\prime\prec \l$, giving a double line $R^{\prime\prime}$ and a second double line $R^\prime$ infinitely near to it.
\\ \end{proof}

A similar result holds when $C_6$ contains double lines of $V$. We'll study a simple case to illustrate this situation.

First note that $\l_i\subset C_6$ means that the cubic curve $\bar{\sigma}(C_6)\subset E$ contains the line $\hat{\l}_i$. This implies that $C_6=2\l_i+C_4$. Since $\X$ has multiplicity four in $\l_i$, this means that $\X$ has a double curve infinitely near to this line. This double curve is determined by $V$, since $C_6\subset V$. In other words, blowing up $p$ and then $\l=\l_i$, the linear system $\X$ has multiplicity two along the curve $V_{\l}$. The reason we blow up $p$ first is to avoid fixed components, since $\mult_p\X>\mult_\l\X$.

From these considerations, it follows that if, for example, $\l_3$ lies infinitely near to $\l_1$, then  $\l_1\subset C_6$ implies $\l_3\subset C_6$. So we write $C_6=2\l_3+C_4$.

Below is a result similar to Proposition \ref{sc_imagem_li}, supposing $\l_1\subset C_6$.

\begin{lemma}\label{sc_2l1+C4}
Suppose that $C_6=2\l_1+C_4$ and suppose $V$ is the Steiner's Roman Surface. Then $\l_1$ is mapped to a triple line $L_1$ of $X$ through $x_p$, and the double curve of $\X$ infinitely near to $\l_1$ is mapped to a projection of a quartic Veronese surface through $x$ having multiplicity two along $L_1$ and spanning a $\P^4$.
\end{lemma}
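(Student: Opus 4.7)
The hypothesis $C_6=2\l_1+C_4$ is, by the paragraph preceding the statement, the condition that on top of its prescribed multiplicity four along $\l_1$, the system $\X$ acquires multiplicity two along the infinitely near curve $V_{\l_1}$. The plan is first to blow up $p$ and then $\l_1$: as in the proof of Proposition \ref{sc_imagem_li}, $E_{\l_1}\cong\F_0$ carries $V_{\l_1}\equiv(1,2)$ and $\X_{\l_1}\equiv(3,4)$, but now with $V_{\l_1}$ a double fixed component, so
\[
\X_{\l_1}=2V_{\l_1}+\{(1,0)\}.
\]
The moving part is a free pencil of fibers of $E_{\l_1}\to\l_1$, which contracts $E_{\l_1}$ onto a line $L_1\subset X$; this is the image of $\l_1$. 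Since the fiber $E_p\cap E_{\l_1}$ is contained in $E_p$, which itself maps to $x_p$, we obtain $x_p\in L_1$.

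To see that $L_1$ is a triple line I apply Lemma \ref{pr_truque_secao_tgente}. For a general $q\in\l_1$ and a general plane $\Omega\ni q$, the curve $\X\cap\Omega$ has multiplicity four at $q$, so its strict transform meets the exceptional fiber $f^q=E_{\l_1}\cap\Omega$ in four points with multiplicity. Because $V_{\l_1}\cdot f^q=2$, these four points are all fixed: $V_{\l_1}$ cuts two (possibly infinitely near) points on $f^q$ at which the system has infinitely near multiplicity two, absorbing the whole intersection. After blowing them up, $f^q$ becomes a $(-3)$-curve disjoint from the moving part of $\X$ and is contracted to a point of multiplicity three of $\sigma(\Omega)$. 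By Lemma \ref{pr_truque_secao_tgente}, $\sigma(q)$ is a triple point of $X$, so $L_1$ is a triple line.

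Next, I would blow up $V_{\l_1}$. Its normal bundle is determined by its self-intersection $4$ on $E_{\l_1}$ together with its self-intersection on the strict transform of $V$, the latter computed by identifying, via the Veronese normalization, this strict transform with $\P^2$ blown up at the three preimages of $p$, two of which lie on $\hat\l_1\cong V_{\l_1}$. This determines $E_{V_{\l_1}}$ as a Hirzebruch surface. On it, the strict transform $\X_{V_{\l_1}}$ has fiber-intersection two (from the double structure) and base points coming from $C_4\cap V_{\l_1}$ and from the fibers over the branch points $q^1_1,q^1_2$. Applying Lemma \ref{pr_imagemF1F2} and a sequence of standard quadratic transformations reduces the moving part to a non-complete base-point-free linear system of conics in $\P^2$, so $E_{V_{\l_1}}$ is mapped to a projection of the quartic Veronese surface, of degree $4$ and spanning $\P^4$. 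The section $V_{V_{\l_1}}$ is contracted to $x$, while the section $(E_{\l_1})_{V_{\l_1}}\cong V_{\l_1}$ covers $L_1$ two-to-one (since $V_{\l_1}\to\l_1$ is a 2:1 cover), making $L_1$ a double line of the image.

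The main obstacle is precisely this last step: tracking the exact bidegree and full base locus of $\X_{V_{\l_1}}$ on $E_{V_{\l_1}}$ and then verifying that the Cremona reduction does land on the non-complete base-point-free conic system characteristic of a projected Veronese in $\P^4$, with the correct multiplicity-two pattern along $L_1$ and containment of the point $x$.
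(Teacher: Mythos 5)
Your treatment of the first two claims coincides with the paper's: the decomposition $\X_{\l_1}=2V_{\l_1}+\{(1,0)\}$ contracting $E_{\l_1}$ to a line $L_1\ni x_p$, and the triple-line argument in which the two double points of $V_{\l_1}$ on the fiber $f^q$ absorb the full multiplicity four, leaving a $(-3)$-curve that Lemma \ref{pr_truque_secao_tgente} converts into a triple point of $X$. The problem is the final step, which you yourself flag as ``the main obstacle'' and do not carry out, and where your preparatory computations are not consistent with a correct resolution of the base locus.

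Before blowing up $u=V_{\l_1}$, the paper first blows up $\l_2$, $\l_3$ and then the two lines $t_2=(\Gamma_2)_p$ and $t_3=(\Gamma_3)_p$ inside $E_p$, and proves, via the degree count $9^2-6^2-4\cdot 2^2-3\cdot 2^2-2\delta=9$ forcing $\delta=4$, that $\X$ has honest double curves $t_2^\prime$, $t_3^\prime$ infinitely near to $t_2$, $t_3$. This matters in two ways. First, $t_2$ and $t_3$ meet $E_{\l_1}$ in two points of $u$, so after these blow ups $u^2=4-2=2$ in $E_{\l_1}$; combined with $u^2=-1$ computed on the normalization of $V$, this gives $N_u=\mO_{\P^1}(2)\oplus\mO_{\P^1}(-1)$ and $E_u\cong\F_3$. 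Your computation with $u^2=4$ would yield $\F_5$: you are blowing up $u$ in an ambient threefold in which the base locus of $\X$ over $p$ has not yet been resolved, so the resulting surface is not the one $\sigma$ actually produces. Second, the four fixed double points of $\X_u\equiv 2e_3+8f_3$ on $V_u$ are $(t_2^\prime)_u$, $(t_3^\prime)_u$ and the two points of $(C_4)_u$; they have nothing to do with the branch points $q^1_1,q^1_2$ that you invoke. Without the correct Hirzebruch surface and the correct base points you cannot run Lemma \ref{pr_imagemF1F2} and the three standard quadratic transformations that reduce the moving part to a non-complete, base-point-free system of conics --- which is precisely the verification you leave open, and precisely where the content of the lemma lies. (Your identification of $L_1$ as a double line of the image, via the non-complete $g^1_2$ cut by the moving fibers on $(E_{\l_1})_u$, does agree with the paper.)
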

\begin{proof}
Set $\l=\l_1$. The blow ups that will now be done are represented in Figure \ref{sc_figura_2l1+c4}.

\begin{figure}[tb]
\centering
\includegraphics[trim=40 10 70 460,clip,width=1\textwidth]{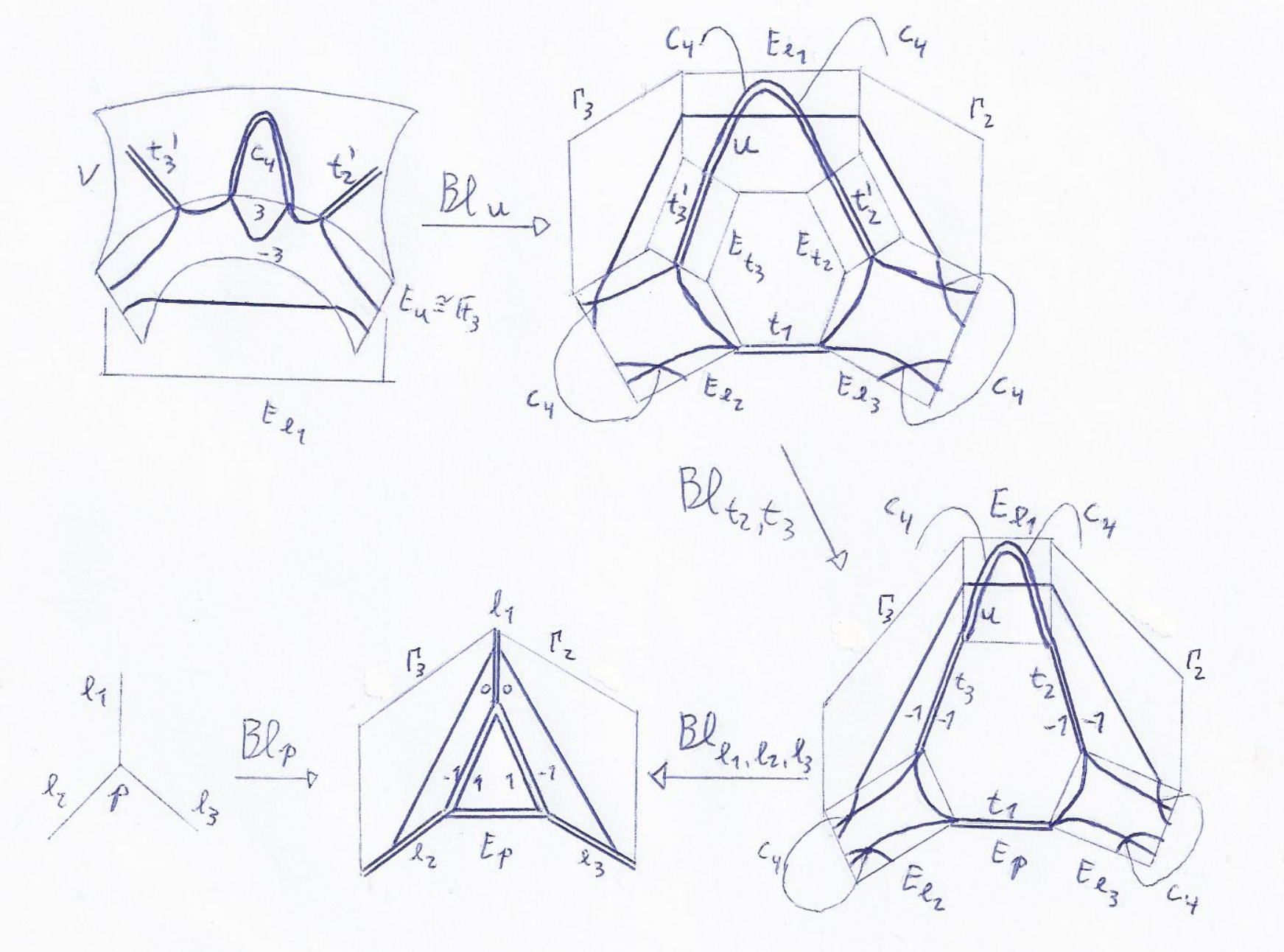}
\caption{Blow ups when $C_6=2\l_1+C_4$}\label{sc_figura_2l1+c4}
\end{figure}

Start blowing up $p$. Set $t_i=(\Gamma_i)_p$. Then:
\[ \X_p=2t_1+2t_2+2t_3=2V_p \]

The line $\l$ is the complete intersection of $\Gamma_2$ and $\Gamma_3$. After the blow up at $p$, we get $\l^2=0$ in both surfaces. Blow up $\l$. We want to study the image of $E_\l\cong \F_0$ via $\sigma$.  As remarked above, since $C_6=2\l+C_4$, $\X_\l$ has multiplicity two along $V_\l$.

A general plane through $\l$ is intersected by $V$ in $2\l$ and a conic through $p$. Therefore $V_\l\equiv (1,2)$, a rational curve. On the other hand, a plane through $\l$ is cut by $\X$ in $4l$ and quintic curves having a double point in $p$. Then $\X_\l\equiv (3,4)$. This implies:
\[ \X_\l=2u+\{\text{fibers}\}\equiv (3,4) \]
where $u=V_\l$ and the moving part consists of fibers in $E_\l$. Therefore $E_\l$ is mapped to a line $L_1$. Since $(E_p)_\l\equiv (1,0)$, $L_1$ contains $x_p$. Note that the moving part of $\X_\l$ cuts $u$ in a degree two non complete linear series.

Before proceeding to the investigation of the image of $u$,  other blow ups will be made in order to bring to light the existence of base curves of $\X$ infinitely near to $t_2$ and $t_3$. First consider the blow up of the lines $\l_2$ and $\l_3$, since $\X$ has multiplicity four along them.

The curve $t_2$ is the complete intersection of $\Gamma_2$ and $E_p$. After the blow ups along the three lines $\l_i$, $t_2$ has self-intersection $(-1)$ in both surfaces. The same holds for $t_3$. Then, for $i=2,3$ the normal bundle of $t_i$ is:
\[ N_{t_i} = \mO_{\P^1}(-1)\oplus \mO_{\P^1}(-1) \]

Set $t=t_2$. Remember that the moving part of $\X\cap\Gamma_2$ does not intersect $t$. Then, blowing up $t$, we get $\X_{t}\equiv (0,2)$. These curves have multiplicity two in the point $u_t$,  therefore $\X_t=2t^\prime_2$, where $t^\prime_2$ is the fiber containing $u_t$. Since $t_2$ did not intersect $t_3$, this blow up did not affect $t_3$. So the same reasoning can be made, giving $\X_{t_3}=2t^\prime_3$.

The curves $t^\prime_2$ and $t^\prime_3$ are actually double curves of $\X$ (infinitely near to $t_2$ and $t_3$). To prove this assertion, let $\Omega$ be a general plane through $p$ and denote intersections with $\Omega$ with an index (for instance, $\X\cap\Omega=\X_\Omega$). We have to repeat the blow ups made above and prove that $\X_\Omega$ has multiplicity two in $(t^\prime_2)_\Omega$ and $(t^\prime_3)_\Omega$. Before the blow ups, $\X_\Omega$ has multiplicity six in $p$ and multiplicity two in four points $(C_4)_\Omega$. After the blow up of $p$, $\X_\Omega$ has three double points on the exceptional divisor, namely $(t_i)_\Omega$, with $i=1,2,3$. Blowing up $\l_1,\l_2,\l_3,t_2$ and $t_3$, we have $t^\prime_2\subset E_{t_2}$ and $t^\prime_3\subset E_{t_3}$. Then the degree of the image of $\Omega$ via $\X$ is:
\[ d=9^2-6^2-4\cdot 2^2-3\cdot 2^2-2\cdot\delta=17-2\delta \]
where $\delta$ corresponds to $(t^\prime_j)_\Omega$, for $j=2,3$. If it is a double point of $\X_\Omega$, then $\delta=4$. If it is a base point with a second infinitely near base point, then $\delta=2$. But the image of $\Omega$ is a tangent hyperplane section of $X$, giving $d=9$ and $\delta=4$. This proves the assertion.

Now let's investigate the normal bundle of $u=E_\l\cap V$. As already noted, $u$ is rational. In $E_\l$, two points of $u$ were blown up, so $u^2=4-2=2$. In $V$,  $u$ is the line $\l$ blown up at $p$, since all the curves that were blown up lied in $V$. To compute $\l^2$, we look at the curve $\bar{\l}$ in the normalization $\bar{V}$ of $V$ that is mapped to $\l$. It is a conic and two of its points are mapped to $p$. The self-intersection of a conic in the Veronese variety $\bar{V}$ is $1$. Then $u^2=1-2=-1$. Therefore the normal bundle of $u$ is:
\[ N_u = \mO_{\P^1}(2)\oplus \mO_{\P^1}(-1) \]

Blowing up $u$ gives $E_u\cong\F_3$, $V_u\equiv e_3+3f_3$ and $(E_\l)_u\equiv e_3$. $\X_u$ intersects $(E_\l)_u$ in two points moving in a non complete linear series and intersects $V_u$ in four fixed double points. These points are $(t^\prime_2)_u$,$(t^\prime_3)_u$ and two points in $(C_4)_u$. Therefore:
\[ \X_u\equiv 2e_3+8f_3 \]
having four double base points. By Lemma \ref{pr_imagemF1F2}, $\X_u$ can be birationally mapped to a linear system in $\P^2$ of degree eight curves with four double points, a point of multiplicity six and two double points infinitely near to it. After applying three standard quadratic maps in $\P^2$, these curves are mapped to conics with no base points. Since $\X_u$ is not complete, this is a non complete linear system. Therefore the image of $E_u$ is a quartic surface, a projection of a Veronese surface. The curve $(E_\l)_u$ is mapped to a double line of it, namely $L_1$, and $V_u$ is contracted to $x$. Since it has no other double lines (proper or infinitely near), this surface spans a $\P^4$.

We are left to prove that $L_1$ is a triple line of $X$. Note that a point in $\l_1$ is blown up and mapped back to a point in $L_1$. So let $q$ be a general point in $\l_1$, let $x_q$ be its image in $X$ and let $\Omega$ be a general plane through $q$. Blowing up $p$ and $\l=\l_1$ as above, we see that $\Omega_\l$ is the fiber over the point $q$, intersecting $u$ in two points. 

Therefore $\X\cap\Omega$ has multiplicity four in $q$ and two double points infinitely near to it. After the blow up at these points, the self-intersection of the exceptional divisor of $q$ in $\Omega$ is $-3$, having no intersection with $\X\cap\Omega$. Then $\Omega$ is mapped to a tangent section of $X$ having multiplicity three in $x_q$. By Lemma \ref{pr_truque_secao_tgente}, $X$ has multiplicity three in $x_q$ and $L_1$ is a triple line.
\\ \end{proof}

If $C_6$ contains other lines, the threefold $X$ will have other triple lines. It may also happen that $C_6=4\l_1+C_2$ or $C_6=6\l_1$, in which cases $X$ has singularities infinitely near to $L_1$. Similar results also hold when $V$ is not the Steiner's Roman Surface.  We will not dwell on this here.

\section{Quartic surfaces in $X$}

A consequence of Proposition \ref{sc_imagem_li} and Lemma \ref{sc_2l1+C4} is that there is at least one quartic surface through  $x$ in $X$ having a double point in $x_p$. If $C_6$ does not contain lines, then it is a weak Del Pezzo surface. Otherwise it can be a surface with a double line. In both cases, this surface spans a $\P^4$.

Since $x$ is a general point of $X$,  these surfaces cover $X$. The following Lemma explains their images in $\P^3$.

\begin{lemma}\label{sc_mQi_pencil}
Fix $i\in \{1,2,3\}$ and suppose $V$ does not have a double line infinitely near to $\l_i$. Let $\mQ_i$ be the linear system of quartic surfaces in $\P^{3}$ containing $C_6$ and $\l_i$ and having multiplicity two in $\l_j$, for $j\neq i$. Then $\mQ_i$ is a pencil and it is mapped to a family $\mQ^{\prime}_i$ of quartic surfaces covering $X$. 
The surface in $\mQ^\prime_i$ corresponding to $V\in\mQ_i$ is the image of $\l_i$.
\end{lemma}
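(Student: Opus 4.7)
The plan is to mirror the argument of Lemma \ref{sl_pencil_mQ}. First I would check $V\in\mQ_i$: the surface $V$ contains $C_6$ by Proposition \ref{sc_base_locus}, contains $\l_i$ since $\l_i$ is a (double) line of $V$, and has multiplicity two along $\l_j,\l_k$ since these are the other double lines of $V$. Next, to show $\mQ_i$ is a pencil, I would use the short exact sequence
$$0 \to \mQ_i - V \to \mQ_i \to \mQ_i|_V \to 0$$
where $H^0(\mQ_i-V)\cong\C$ since both $\mQ_i$ and $V$ consist of quartic surfaces, so it suffices to show that $\mQ_i$ has fixed intersection with $V$.

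For the fixed intersection, I would compute $S\cdot V$ as a cycle in $\P^3$ for a general $S\in\mQ_i$ different from $V$. The total degree is $4\cdot 4=16$, and the known fixed components exhaust this: $C_6$ (multiplicity one in each) contributes $1\cdot 1\cdot 6=6$; the line $\l_i$ (multiplicity one in $S$, two in $V$) contributes $1\cdot 2\cdot 1=2$; and each of $\l_j,\l_k$ (multiplicity two in each of $S,V$) contributes $2\cdot 2\cdot 1=4$. Since $6+2+4+4=16$, there is no moving residual, so $\mQ_i|_V$ consists of a single divisor and $\mQ_i$ is a pencil.

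To identify the image family $\mQ'_i$, I would perform the analogous computation for $\X\cdot S$ on a general $S\in\mQ_i$. The total intersection has degree $9\cdot 4=36$; the fixed contributions from the base curves of $\X$ are $2\cdot 1\cdot 6=12$ from $C_6$, $4\cdot 1\cdot 1=4$ from $\l_i$ and $4\cdot 2\cdot 1=8$ from each of $\l_j,\l_k$, totalling $32$. After accounting for the embedded point base locus at $p$ (where $\X$ has multiplicity six by Proposition \ref{sc_base_locus} and a general $S$ has multiplicity three, as one checks monomial by monomial using the conditions along the three coordinate lines), the moving part has degree four. Hence $\sigma(S)$ is a quartic surface, and as the pencil $\mQ_i$ covers $\P^3$, the family $\mQ'_i$ covers $X$. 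Finally, although $V$ itself is contracted to the point $x$ by $\sigma$, it carries one extra unit of multiplicity along $\l_i$ compared to a general member of $\mQ_i$; the limit in $\mQ'_i$ corresponding to $V$ is therefore the image of $\l_i$, namely the weak Del Pezzo surface $^iD_4^x$ of Proposition \ref{sc_imagem_li}.

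The main technical obstacle will be verifying the moving degree of $\X\cdot S$ in the presence of the embedded point $p$, which lies simultaneously on $C_6,\l_1,\l_2,\l_3$ and carries a high multiplicity for $\X$; this requires a careful intersection-theoretic computation on the blow-up of the base locus of $\X$ rather than the naive subtraction of fixed contributions.
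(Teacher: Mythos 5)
Your outline follows the paper's strategy closely, but it has one genuine gap and leaves one step unfinished that the paper resolves with a simple observation.

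The gap: you never establish that $\mQ_i$ contains a member other than $V$. The exact sequence
\[ 0 \to H^0(\mQ_i-V) \to H^0(\mQ_i) \to H^0(\mQ_i\vert_V) \]
together with $h^0(\mQ_i-V)=1$ and $h^0(\mQ_i\vert_V)=1$ only gives the upper bound $h^0(\mQ_i)\leq 2$; to conclude that $\mQ_i$ is a pencil you need the map on the right to be surjective, which is equivalent to exhibiting a quartic in $\mQ_i$ not containing $V$. Your fixed-intersection computation already presupposes ``a general $S\in\mQ_i$ different from $V$,'' so the argument is circular at this point. The paper devotes the first half of its proof to exactly this: using Lemma \ref{sl_cubicas_por_C6} it produces a cubic $S_3$ through $C_6$, argues (with a case analysis on whether $p\in C_6$ and whether $C_6$ is smooth there, using the fact that $C_6$ meets each $\l_j$ in three points) that any such cubic must contain all three lines $\l_1,\l_2,\l_3$, and then takes $S_3\cup\Gamma_i\in\mQ_i$. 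Some such construction is indispensable; a naive dimension count does not show the conditions defining $\mQ_i$ leave anything besides $V$.

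The unfinished step: you flag the behaviour of $\X\cdot S$ at the point $p$ as ``the main technical obstacle'' and propose an intersection computation on the blow-up. The paper sidesteps this entirely by noting that the fixed part $4\l_i+8\l_j+8\l_k+2C_6$ of $\X\cap S$ equals $(2V)\cap S$ by \eqref{sc_eq_baselocus_mQ}, so the moving part is the restriction to $S$ of $\X-2V=\Xpp$, i.e.\ a base-point-free system of plane sections of $S$ of degree four. This gives the quartic image at once, with no analysis at $p$ required. Your remaining points (membership of $V$, the degree-$16$ bookkeeping for $S\cap V$, the covering of $X$, and the identification of the limit member with $^iD_4^x$) agree with the paper.
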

\begin{proof}
The last assertion follows from the fact that $V$ is the surface in $\mQ_i$ with an extra multiplicity along $\l_i$ and from the fact that $V$ is contracted to $x$.

To simplify the notation, set $i=1$ and $\mQ=\mQ_1$. Let us first show that there is a surface different from $V$ in $\mQ$. We'll obtain such surface as the union of a cubic $S_3$ and a plane. By Lemma \ref{sl_cubicas_por_C6}, there is at least a pencil of cubic surfaces in $\P^3$ containing $C_6$. This holds in the general case, where $C_6$ is an elliptic curve, so it must also hold in the degenerated cases.

If $p\notin C_6$, let $S_3$ be the cubic containing $C_6$ and $p$. By Proposition \ref{sc_base_locus}, $C_6$ cuts each of the lines $\l_1,\l_2,\l_3$ in three points, so $S_3$ intersects these lines in four points. Then $S_3$ contains $C_6,\l_1,\l_2$ and $\l_3$.

If $C_6$ is smooth in $p$, then it is the image via $\bar{\tau}$ of a cubic in $E$ through one of the three preimages of $p$, say $\hat{\l}_1\cap\hat{\l}_2$. This cubic intersects $\hat{\l}_1$ and $\hat{\l}_2$ in two other points each, and intersects $\hat{\l}_3$ in three points. This implies that every cubic surface containing $C_6$ intersects $\l_3$ in $p$ and in other three points, so it contains $\l_3$. In particular, these cubic surfaces have fixed tangent plane in $p$. Therefore there is a cubic $S_3$ containing $C_6$ and having a double point in $p$. This cubic contains the three lines $\l_1,\l_2,\l_3$.

Similar considerations prove that if $p$ is a singular point of $C_6$, then there is a cubic surface $S_3$ containing $C_6$, $\l_1$, $\l_2$ and $\l_3$. This can be achieved by doing a case by case analysis in the types of singularities of $C_6$ and its intersections with these lines. 

Now, consider the union of $S_3$ and the plane $\Gamma_1$, which exists since $V$ does not have a double line infinitely near to $\l_1$. This  is a surface in $\mQ$ different from $V$. 

To prove that $\mQ$ is a pencil, let $S$ be a quartic in $\mQ$ different from $V$. Then:
\begin{equation}\label{sc_eq_baselocus_mQ}
S\cap V=2\l_1+4\l_2+4\l_3+C_6 
\end{equation}
that is, the surfaces in $\mQ$ have fixed intersection with $V$. Repeating the argument used in Section \ref{sl_cubicsfamily_sec}, the exact sequence:
\[ 0 \to H^0(\P^3,\mQ-V) \to H^0(\P^3,\mQ) \to H^0(\P^3,\mQ\vert_V) \to 0 \]
shows that $\mQ$ is a pencil.

Next we compute the image of $S$ via $\sigma$. Intersecting with $\X$ gives:
\[ \X\cap S = 4\l_1+8\l_2+8\l_3+2C_6+F_4 \]
where $F_4$ is the moving part. However, note that the fixed part is the intersection of $2V$ with $S$, so $F_4$ is a base point free plane section of $S$. Then the image of $S$ is a quartic surface in $X$.

The surfaces in $\mQ^\prime$ clearly cover $X$, since $\mQ$ covers $\P^3$.
\\ \end{proof}

Two remarks have to be made on the linear systems $\mQ_i$. Note that if, for instance, $\l_2$ is infinitely near to $\l_1$ (i.e., $V$ is not the Steiner's Roman Surface), then the surfaces in $\mQ_1$ have multiplicity two along $\l_2,\l_3$ and multiplicity one along $\l_1$. Applying Enrique's unloading principle (cf. \cite{enrchi}, IV.17), we get surfaces having multiplicity two along $\l_1$ and $\l_3$ and containing $\l_2$. Therefore $\mQ_1$ coincides with $\mQ_2$. To avoid confusion, we will not consider $\mQ_1$ in this case. This explains the hypothesis made in the Lemma.

A second remark is that $C_6$ may contain a  double line of $V$, say $\l_1$. In this case, the surfaces of $\mQ_i$ (for $i=1,2,3$) have in common a curve $u$ infinitely near to $\l_1$, determined by $V$, in order to have \eqref{sc_eq_baselocus_mQ} (and corresponding equations for $i=2,3$). This is the same double curve of $\X$, of type $(1,2)$ in $E_{\l_1}$, as it is explained in the proof of Lemma \ref{sc_2l1+C4}. 

This implies that any surface containing $\l_1$ and $u$ actually has multiplicity two along $\l_1$. Therefore $\mQ_1$ is, in the case where $C_6=2\l_1+C_4$, the linear system of surfaces having multiplicity two along $\l_1,\l_2,\l_3$ and containing $C_4$. This gives, for $S\in\mQ_1$ different from $V$:
\[ S\cap V=4\l_1+4\l_2+4\l_3+C_4 \] 
which agrees with \eqref{sc_eq_baselocus_mQ}. Note that in this case, $u$ is not a base curve of $\mQ_1$, since these surfaces already have multiplicity two along $\l_1$. 

On the other hand, $\mQ_2$ is the linear system having multiplicity two along $\l_1$ and $\l_3$ and containing $C_4$, $u$ and $\l_2$. And the linear system $\mQ_3$ is defined in the obvious way. In all these cases, the proof that $\mQ_i$ is a pencil follows from the same argument given in Lemma \ref{sc_mQi_pencil}.

The next result gives more details on the families $\mQ_i^\prime$.

\begin{lemma}\label{sc_mqi_delpezzo}
The base locus of $\mQ_i^\prime$ is a line $L_i$ through $x_p$. There are possibly other base curves infinitely near to $L_i$, in case $\l_i\subset C_6$. 
Moreover, if $\l_i\not\subset C_6$, then the general surface in $\mQ_i^\prime$ is a quartic weak Del Pezzo surface having a double point in $x_p\in L_i$. If $\l_i\subset C_6$, it is a quartic surface having multiplicity two along $L_i$.
\end{lemma}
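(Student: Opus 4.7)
The plan is to identify the base locus of the pencil $\mQ_i^\prime$ by analyzing how the pencil $\mQ_i$ restricts to the exceptional divisors of the resolution $\tilde{\P}^3\to\P^3$ of the base locus of $\X$. By Lemma~\ref{sc_mQi_pencil} and \eqref{sc_eq_baselocus_mQ}, the fixed divisor of $\mQ_i$ is the curve $2\l_i+4\l_j+4\l_k+C_6$ on $V$, which after resolution is absorbed into the base locus of $\X$ and contracted (or mapped to $x$). Consequently, any base curve of $\mQ_i^\prime$ in $X$ must arise as the image of a residual fixed component on some exceptional divisor.

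The main computation is the restriction of $\mQ_i$ to $E_{\l_i}\cong \F_0$. Using $\mult_{\l_i}\mQ_i=1$ and $\mult_p\mQ_i=3$, a general plane through $\l_i$ cuts $S\in\mQ_i$ in $\l_i$ plus a residual cubic meeting $\l_i$ at $2p$ and in one further moving point, so $\mQ_i|_{E_{\l_i}}$ is a pencil of $(1,1)$-curves. Its two base points lie on $V_{\l_i}\equiv(1,2)$ and are contracted to $x$ by the map $\X_{\l_i}\equiv(3,4)$. Comparing with the birational model used in the proof of Proposition~\ref{sc_imagem_li}, the pencil of $(1,1)$-curves is mapped to a pencil of conics on ${}^iD_4^x$ whose residual base point (beyond $x$) sweeps, as $S$ varies, a single line $L_i\subset {}^iD_4^x\subset X$. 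Restricting $\mQ_i$ to $E_p$ and using \eqref{sc_eq_Vp} and \eqref{sc_eq_Xp} shows that the pencil on $E_p$ has a base point contracted to $x_p$, hence $x_p\in L_i$.

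For the surface type of a general $S^\prime=\sigma(S)\in\mQ_i^\prime$ with $\l_i\not\subset C_6$, the moving part of $\X|_S$ is a plane section of $S$ (from the proof of Lemma~\ref{sc_mQi_pencil}). Choosing a plane model of $S$ of the same type used for $V$ in the proof of Proposition~\ref{sc_imagem_li} (cubics through five points, obtained by contracting the appropriate $(-1)$-curves on $S$), the induced linear system realizes $S^\prime$ as a weak Del Pezzo quartic surface with a double point at $x_p$, matching the description of ${}^iD_4^x$. When $\l_i\subset C_6$, the modified $\mQ_i$ (as discussed before the statement) contains the infinitely near curve $u\subset E_{\l_i}$; Lemma~\ref{sc_2l1+C4} then computes the image of any member as a projection of the Veronese quartic surface with multiplicity two along $L_i$, and $u$ produces the extra fixed base components of $\mQ_i^\prime$ infinitely near to $L_i$.

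The hardest step will be the case analysis when $V$ is not Steiner's Roman surface: when two or three of the lines $\l_j$ are infinitely near, the divisor-class computations on $E_{\l_j}$ must be redone after blowing up the preceding lines in the correct order, and one must verify in each configuration that the plane model still produces the stated image quartic. A secondary subtlety is to confirm the scheme-theoretic structure of the base locus at $x_p$ and, in the case $\l_i\subset C_6$, to determine precisely which infinitely near curves of $u$ lie in the base locus of $\mQ_i^\prime$.
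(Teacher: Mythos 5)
There is a genuine gap in your central computation. You assert that $\mQ_i$ restricted to $E_{\l_i}\cong\F_0$ is a \emph{moving pencil} of $(1,1)$-curves with two base points on $V_{\l_i}$ contracted to $x$. The class $(1,1)$ is right, but the system is not moving: $C_6$ lies in the base locus of $\mQ_i$ and (when $\l_i\not\subset C_6$) meets $E_{\l_i}$ in three points lying on the irreducible curve $V_{\l_i}\equiv(1,2)$, hence in general position for $(1,1)$-curves; since there is a unique $(1,1)$-curve through three such points, the restriction $\mQ_i\vert_{E_{\l_i}}$ is a single \emph{fixed} curve $t_L\equiv(1,1)$. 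This is precisely the mechanism that produces the line: $t_L$ is common to all members of $\mQ_i$, and $\X_{\l_i}\equiv(3,4)$ has double points at the three points of $C_6\cap E_{\l_i}$ on $t_L$, so its image has degree $(1,1)\cdot(3,4)-3\cdot 2=1$, giving $L_i$, which contains $x_p$ as the image of $(E_p)_{\l_i}\equiv(1,0)$. With your moving pencil the argument collapses: two members of $\mQ_i^\prime$ would cut $^iD_4^x$ in \emph{distinct} conics, so no common line would appear in the base locus (the base points of a pencil do not ``sweep'' anything as the member varies), and your claim that the two base points are contracted to $x$ would force every member of $\mQ_i^\prime$ through the general point $x$, which is absurd since only $^iD_4^x$ contains $x$.

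For the second assertion your route is heavier than necessary. Rather than building a plane model of a general $S\in\mQ_i$ and re-deriving the image, note that the member of $\mQ_i^\prime$ through the general point $x$ is $\sigma(E_{\l_i})$, whose structure (weak Del Pezzo quartic with a double point at $x_p$ when $\l_i\not\subset C_6$, respectively a quartic with a double line $L_i$ when $\l_i\subset C_6$) was already established in Proposition \ref{sc_imagem_li} and Lemma \ref{sc_2l1+C4}; generality of $x$ then gives the general member. Your treatment of the case $\l_i\subset C_6$ (multiplicity two of $\mQ_i$ along $\l_i$, so $\mQ_{\l_i}\equiv(1,2)$ and $E_{\l_i}$ is contracted to $L_1$ by the moving fibers of $\X_{\l_i}$, with possible extra base curves infinitely near to $L_i$) is in the right spirit, but it too needs the corrected bookkeeping of which restrictions are fixed.
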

\begin{proof}
By the definition of $\mQ_i$, $V$ does not have a double line infinitely near to $\l_i$. By Lemma \ref{sc_imagem_li}, if $\l_i\not\subset C_6$ then $\l_i$ is mapped to a quartic weak Del Pezzo surface through $x$ having a double point in $x_p\in L_i$. This is the surface of $\mQ_i^\prime$ through $x$. Since $x$ is a general point of $X$, this is a general surface of $\mQ_i^\prime$. If $\l_i\subset C_6$, the result follows from Lemma \ref{sc_2l1+C4}.

To prove the assertion on the base locus, suppose $i=1$ and set $\mQ=\mQ_i$. Assume first that $\l_1\not\subset C_6$. Equation \eqref{sc_eq_baselocus_mQ} gives us the base locus of $\mQ$, consisting of base curves of $\X$, and a curve infinitely near to $\l_1$, since a general surface of $\mQ$ is smooth along $\l_1$. 

The base locus of $\mQ^\prime$ arises either from curves in the base locus of $\mQ$ not lying on the base locus of $\X$ or from curves in $\mQ$ that are contracted by $\sigma$. Only the curve infinitely near to $\l_1$ fits in the first case. On the other hand, $\sigma$ contracts $V$ and exceptional surfaces of blow ups. But $\mQ$ intersects $V$ in the base locus of $\X$ and if an exceptional surface is contracted, $\X$ has a base curve in it. And $\mQ$ contains the base curves of $\X$, the only exception happening when $\l_1\subset C_6$. Therefore the image of the base curve of $\mQ$ infinitely near to $\l_1$ is the base locus of $\mQ^\prime$.

Blowing up $p$, the curves $\mQ_p$ have multiplicity two in $(\l_2)_p$ and $(\l_3)_p$ and multiplicity one in $(\l_1)_p$. Then $\mQ_p$ is the union of the line $(\Gamma_1)_p$ and movable conics through these three points. In particular, $\mQ_p$ has  multiplicity three in $p$ and it intersects a general plane through $\l_1$ in $\l_1$ and cubics with a double point in $p$. 

Now blow up $\l_1$. The intersection of $\mQ_1$ with the exceptional divisor $E_{\l_1}\cong \F_0$ is a curve of type $(1,1)$. Since $\l_1\not\subset C_6$, this curve contains the three points $C_6\cap E_{\l_1}$. But fixed three non collinear points in $\F_0$, there is only one curve of type $(1,1)$ through them. And these points are indeed non collinear, since they lie on $V_{\l_1}$, which is irreducible. We have thus shown that:
\[  \mQ_{\l_1}=t_L^1\equiv (1,1) \]
where $t_L=t_L^1$ is a fixed curve. By Lemma \ref{sc_imagem_li}, $\X_{\l_1}\equiv (3,4)$ and it has three double points in $t_L$. Then $t_L$ is mapped to a line $L=L_1$, which is the base locus of $\mQ^\prime$. It contains the point $x_p$, image of $(E_p)_{\l_1}\equiv (1,0)$.

Suppose now that $\l_1\subset C_6$. We'll explain the case $C_6=2\l_1+C_4$, the other cases being similar. As explained in a remark after Lemma \ref{sc_mQi_pencil}, $\X$ has a double curve $u$ infinitely near to $\l_1$, whereas $\mQ$ has multiplicity two along $\l_1$ and does not contain $u$. Then blowing up $p$ gives:
\[ \mQ=t_1+t_2+t_3 \]
keeping the notation of \eqref{sc_eq_Vp}. A general plane through $\l_1$ intersects $\mQ$ in $2\l_1$ and conics through $p$. Then the blow up in $\l_1$ gives $\mQ_{\l_1}\equiv (1,2)$. The movable part of $\X_{\l_1}$, consisting of fibers, contracts $E_{\l_1}$ to the line $L_1$ defined in Lemma \ref{sc_2l1+C4}. Then it follows that this is a double line of $\mQ^\prime$.

As remarked after Lemma \ref{sc_2l1+C4}, if $C_6=4\l_1+C_2$ or $C_6=6\l_1$, $\X$ maps curves infinitely near to $\l_1$ to singular curves infinitely near to $L_1$. In these cases, these curves lie on the base locus of $\mQ^\prime$.

Since the base locus of $\mQ$ is contained in the base locus of $\X$, the base locus of $\mQ^\prime$ is $L_1$ and possibly other curves infinitely near to $L_1$.
\\ \end{proof}

\begin{remark}\label{sc_rmk_retas_Li}
The line $L_i$ in $X$ was defined in two different ways in this section. In the above Lemma, it is defined as the base locus of $\mQ_i$. In Lemma \ref{sc_2l1+C4}, $L_i$ is defined as the contraction of $E_{\l_i}$ when $\l_i\subset C_6$. In the proof of Lemma \ref{sc_mqi_delpezzo} it is explained that these definitions coincide.
\end{remark}

We now explain the first consequence of the results of this section. Unlike Chapter \ref{sl_chapter}, different quartic fundamental surfaces produce different Bronowski threefolds, instead of representing different tangential projections of the same variety.

\begin{prop}
Let $x$ and $y$ be two points of the   Bronowski threefold $X$ such that the tangential projections $\tau_x$ and $\tau_y$ are birational. If the fundamental surfaces associated to $\tau_x$ and $\tau_y$ have degree four, then they are projectively equivalent.
\end{prop}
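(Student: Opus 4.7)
The strategy is to show that the projective equivalence class of the fundamental surface of degree four is an intrinsic invariant of $X$, independent of the choice of projection center. The plan hinges on Proposition \ref{sc_base_locus} together with Proposition \ref{sc_projecoes_da_veronese}: the former identifies an intrinsic point $x_p \in X$ of multiplicity four with projectivized tangent cone a Veronese quartic surface $V_4 \subset \P^5$, and the latter classifies projections of $V_4$ from a disjoint line into three projective types (Steiner's Roman Surface and its two degenerations), each of which constitutes a single $PGL_4$-orbit in $\P^3$. So it suffices to show that the ``type'' of $V_x$ coincides with the type of $V_y$, since within each type the surface is unique up to projective equivalence in $\P^3$.

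The first step is to argue that the point of multiplicity four is intrinsic to $X$, so that the images $x_p$ and $y_p$ of the triple points of $V_x$ and $V_y$ coincide with a single distinguished point $x_0 \in X$. One way to do this is to establish that $X$ contains a unique point of multiplicity four; given that $\deg X = 9$, a second such point would, by a Terracini-type argument applied to the chord through the two points together with the bound on higher secant defects, force $X$ to contain linear subvarieties incompatible with hypothesis (H). Once $x_p = y_p = x_0$, the projectivized tangent cone of $X$ at $x_0$ is the same Veronese $V_4$ for both projections, providing the common intrinsic datum.

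The second step is to show that the projective type of the projection is determined by intrinsic data of $X$ at $x_0$. Using Lemma \ref{sc_mqi_delpezzo}, for each proper double line $\l_i$ of $V_x$ one obtains a pencil $\mQ_i^\prime$ of quartic surfaces in $X$ whose base line $L_i$ passes through $x_0$, while infinitely-near double lines of $V_x$ produce infinitely-near base lines in $X$. Thus the number of \emph{distinct proper} base lines $L_i \subset X$ through $x_0$ of pencils of quartic surfaces equals the number of proper double lines of $V_x$, which by Proposition \ref{sc_projecoes_da_veronese} determines the type (three, two, or one). Since this count is intrinsic to the pair $(X, x_0)$ and does not refer to the projection center $x$, it must agree for $V_x$ and $V_y$. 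Combined with the uniqueness of each type up to projective equivalence, this yields the claim.

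The main obstacle will be the first step: proving that the multiplicity-four point is intrinsic to $X$ (equivalently, unique), since a priori $x_p$ is defined via the chosen projection and the argument requires careful use of the singularity analysis of Chapter \ref{sc_chapter} together with the degree bound $\deg X = 9$. A secondary technical point is ensuring that the identification of $V_x$ as a specific projection of $V_4$ goes through cleanly, so that the base-line count of Lemma \ref{sc_mqi_delpezzo} really does translate into the number of double lines of $V_x$ in each of the three projection types.
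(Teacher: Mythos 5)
Your overall strategy --- distinguishing the three types of Proposition \ref{sc_projecoes_da_veronese} by an intrinsic count attached to the pencils of quartic surfaces of Lemma \ref{sc_mQi_pencil} --- is the same idea as the paper's, but your write-up has a genuine gap exactly where the paper's proof does its real work. You construct, from the double lines of $V_x$, a certain number of pencils $\mQ_i^\prime$ with base lines $L_i$, and then declare that ``the number of distinct proper base lines of pencils of quartic surfaces'' is intrinsic and equal to the number of proper double lines of $V_x$. But all your construction gives is a lower bound: starting from $\tau_x$ you exhibit some pencils, and starting from $\tau_y$ you exhibit some (a priori different) pencils. If, say, $V_x$ were of type (ii) and $V_y$ of type (i), you would simply conclude that $X$ contains at least two and at least three such pencils --- no contradiction. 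To turn the count into an invariant you must prove exhaustiveness: that \emph{every} quartic surface of $X$ spanning a $\P^4$ and passing through a general point belongs to one of the pencils $\mQ_i^\prime$. This is precisely what the paper proves, by taking a hypothetical extra quartic $S$ through the general point $x$, observing that $T_xS\subset T_xX$ forces $\tau_x$ to contract $S$ to a line or a point, and then showing (via Proposition \ref{sc_imagem_li}, Lemma \ref{sc_2l1+C4} and Proposition \ref{sc_base_locus}) that the contracted line must be a double line of $V$, so $S$ is already on the list. Your proposal contains no substitute for this step.

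A secondary problem is your first step. The uniqueness of the multiplicity-four point is not established by the paper and is not needed by it: the paper counts pencils of quartic surfaces covering $X$, each member spanning a $\P^4$, without anchoring them at a distinguished point, so the comparison between $\tau_x$ and $\tau_y$ goes through directly. Your version does need $x_p=y_p$, because you count base lines \emph{through} $x_0$, and the sketch you offer (``a Terracini-type argument applied to the chord through the two points together with the bound on higher secant defects'') is not an argument --- Terracini's Lemma controls tangent spaces at general points and says nothing obvious about two special quadruple points. Either supply a genuine proof of uniqueness of the quadruple point, or reformulate the invariant so that it does not refer to $x_0$ (as the paper does); in both cases you still need the exhaustiveness argument above.
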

\begin{proof}
The projective equivalence classes of these quartic surfaces are given by (i), (ii) and (iii) in Proposition \ref{sc_projecoes_da_veronese}. In (i), $V$ is the Steiner's Roman Surface and by Lemma \ref{sc_mQi_pencil}, there are three pencils of quartic surfaces in $X$. Each of  these pencils covers $X$ and each quartic surface spans a $\P^4$. In the same result, it is shown that in case (ii) there are two such pencils in $X$ and in case (i) there is one. The goal is to show that in cases (ii) and (iii), the variety $X$ does not contain another pencil covering $X$ consisting of quartic surfaces, each spanning a $\P^4$.

Note that there is no quartic surface belonging to two different families. This follows from the fact that a quartic surface (in $\P^3$) defines a unique complete linear system containing it (in this case, a pencil).
Then we only need to show that there is no other quartic surface spanning a $\P^4$ through the point $x$ from which is made the tangential projection. This shows that the cases (i), (ii) and (iii) produce different threefolds.

Suppose that, except from the quartic surfaces described in Lemma \ref{sc_mQi_pencil}, there is another quartic surface $S$ through $x$ spanning a $\P^4$. Since $S\subset X$, $T_xS$ is a subspace of $T_xX\cong \P^3$, the center of the projection. Therefore, every hyperplane containing $T_xX$ contains $T_xS$, the induced map on $S$ is defined by a sublinear system of its tangential projection at $x$.  

If $S$ is smooth at $x$, the image of $S$ is either a line $\l$ or a base point of $\X$. 

In the first case, a tangent hyperplane section of $S$ at $x$ (a quartic curve with a double point in $x$) is contracted to a point in $\l$. Therefore, $\X$ has multiplicity four along $\l$, since $\sigma$ maps points of $\l$ to quartic curves. This implies that $\l$ is a double line of $V$. By Proposition \ref{sc_imagem_li} and Lemma \ref{sc_2l1+C4}, $S$ must be one of the surfaces described in these results. 

Suppose now it is a base point of $\X$. But, as seen in Section \ref{sc_imagens_li_sec} and in Proposition \ref{sc_base_locus}, a point of a line $\l_i$ is not mapped to a surface. And neither is a point of $C_6$, since these are mapped to points or curves in $X$.

If $S$ is singular at $x$, the image of $S$ is again a base point of $\X$. As seen above, this is not possible

Then, in both cases, $S$ is a surface in $\mQ_i^\prime$.
\\ \end{proof}

This result also follows from the considerations in the next Section on the singularities of $X$. By Lemma \ref{sc_imagem_li} if $\l_1\prec\l_2$, then $\l_1$ is mapped to a double line $R_1$ of $X$, and if $\l_1\prec\l_2\prec\l_3$, then $X$ has a second double line $R_2$ infinitely near to $R_1$. In the next Section, we'll see this is the only way to produce double lines in $X$.

\section{Singularities of $X$}
\label{sc_singularities_sec}

According to Lemma \ref{sc_base_locus}, the point $p$ is mapped to a point $x_p$ of multiplicity four in $X$. Moreover, Proposition \ref{sc_imagem_li} asserts that if $V$ is not the Steiner's Roman Surface, then $X$ has one or more double lines $R_i$. Apart from these, the other singularities of $X$ will depend, as in the other chapters, on the singularities of $C_6$:

\begin{lemma}\label{sc_singularidades_vem_de_C6}
Let $x_q\neq x_p$ be an isolated singularity of $X$ or a general point on a singular curve of $X$. Then $x_q$ is either a point of a double line $R_i$, image of $\l_i\in V$, or it is mapped by $\tau$ to a singular point of $C_6$. This includes general points in non reduced components of $C_6$.
\end{lemma}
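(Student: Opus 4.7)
The strategy mirrors the proofs of Lemma \ref{ed_singularidades_vem_de_C4} and Lemma \ref{sl_singularidades_vem_de_C6}. Because $x$ is general on $X$, Terracini's Lemma prevents an isolated singular point, or a general point on a singular curve, of $X$ from lying in $T_xX$; hence $\tau$ is regular at $x_q$ and $q:=\tau(x_q)$ is a well-defined point of $\P^3$. Since $\sigma$ restricted to $\P^3\setminus V$ is an isomorphism onto its image, every point outside $V$ is sent to a smooth point of $X$; because $\sigma(V)=x\neq x_q$, the point $q$ must lie in the base locus of $\X$, which by Proposition \ref{sc_base_locus} is $\{p\}\cup\l_1\cup\l_2\cup\l_3\cup C_6$, and the assumption $x_q\neq x_p$ excludes $q=p$.

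Next, I would dispose of the case in which $q$ lies on a line $\l_i$ of $V$ having another double line infinitely near to it. By Proposition \ref{sc_imagem_li}, $\l_i$ is contracted by $\sigma$ to a double line $R_i$ of $X$, so $x_q\in R_i$, which is the first alternative in the statement. It remains to show that if $q$ is not such a point and is not a singular point of $C_6$ (where general points of non reduced components of $C_6$ are counted as singular), then $x_q$ must be smooth; this contradiction will force the second alternative.

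Following the recipe of Lemma \ref{sl_singularidades_vem_de_C6}, I would establish smoothness of $x_q$ in the admissible cases by taking a general plane $\Omega\subset\P^3$ through $q$, which by Lemma \ref{pr_X'eX''} is mapped by $\sigma$ to a general tangent hyperplane section of $X$ at $x$ containing $x_q$, and checking that the image $\sigma(\Omega)$ is smooth at $x_q$; Lemma \ref{pr_truque_secao_tgente} then concludes that $x_q$ is smooth on $X$. Concretely one resolves the indeterminacy of $\sigma|_\Omega$ at $q$ by blowing up $q$ together with the infinitely near base points of $\X\cap\Omega$ imposed by the tangent cone of $V$, as described in Lemma \ref{sc_cones_tangentes_em_V}, and by the branches of $C_6$ through $q$. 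The task is then to verify that each successive exceptional curve in $\Omega$ has a moving intersection with $\X\cap\Omega$, so that it is sent to a curve (typically a line or a conic) rather than being contracted to $x_q$.

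The main obstacle will arise when $q\in C_6\cap\l_i$ with $\l_i$ a proper double line of $V$ having no infinitely near companion: the combined multiplicities $4$ along $\l_i$ and $2$ along $C_6$, together with the reducible tangent cone of $V$ at $q$, produce a chain of infinitely near base points in $\Omega$, some imposed purely by $V$ and some reflecting the actual intersection behavior of $C_6$ with $\l_i$. One must separate the base points forced by the tangent cone of $V$ from those signalling a genuine singularity of $C_6$, in order to certify that in the admissible sub-case the last exceptional curve in $\Omega$ still carries a moving intersection with $\X\cap\Omega$, and hence $\sigma(\Omega)$ is indeed smooth at $x_q$.
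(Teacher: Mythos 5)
Your overall strategy is the right one and matches the paper's: use Terracini to see that $x_q\notin T_xX$, so $q=\tau(x_q)$ is well defined and must lie in the base locus of $\X$; then, for $q$ not a singular point of $C_6$, show that a general tangent hyperplane section of $X$ at $x$ through $x_q$ (the image of a general plane through $q$) is smooth at $x_q$. However, there are two places where you assert something the paper has to argue for, and the first is a genuine gap. You write that ``the assumption $x_q\neq x_p$ excludes $q=p$.'' This does not follow: $\tau$ is a linear projection, so the set-theoretic fibre $\tau^{-1}(p)$ is the trace on $X$ of the $\P^4$ spanned by $T_xX$ and $x_p$, and a priori it can contain singular points of $X$ other than $x_p$ (the fact that $\sigma$ contracts $E_p$ to the single point $x_p$ only describes the generic behaviour of the inverse map, not the whole fibre of $\tau$). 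The paper excludes this case by a separate argument: if $x_q\in\tau^{-1}(p)$ then $x_q$ lies in $\langle T_xX,x_p\rangle$ for the chosen $x$; letting $x$ vary and using Terracini's Lemma (two general tangent spaces of $X$ are disjoint), a fixed isolated singular point different from $x_p$, or a general point of a fixed singular curve, cannot lie in all of these $\P^4$'s. You need this step.

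The second issue is your disposal of the case where $q$ lies on a double line $\l_i$ that has another double line infinitely near to it. From Proposition \ref{sc_imagem_li} you conclude ``$\l_i$ is contracted to $R_i$, so $x_q\in R_i$.'' But $\tau^{-1}(q)$ for such a $q$ is not contained in $R_i$: besides the point of $R_i$, the fibre meets the image of the exceptional divisor over the infinitely near line, which is a quartic surface $D_4^x$, and possibly images of further exceptional divisors over $q$. So a singular point $x_q$ lying over such a $q$ need not be on $R_i$, and for the lemma to hold you must still show that every point of the fibre off $R_i$ is smooth on $X$ (when $q$ is not a singular point of $C_6$). The paper handles this by assuming $x_q\notin R_i$ from the outset and running the plane-section analysis in the case $q\in\l_j\setminus C_6$ as well, rather than removing that case beforehand. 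With these two repairs --- the varying-$x$ Terracini argument at $p$, and keeping the $q\in\l_j$ cases inside the plane-section analysis under the hypothesis $x_q\notin R_j$ --- your outline coincides with the paper's proof.
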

\begin{proof}
Remember first that a double line $R_i$ is the contraction of $E_{\l_1}$, when $V$ has a double line infinitely near to $\l_i$, as it was defined in Lemma \ref{sc_imagem_li}.

As noted in the other chapters,  $x_q$ is mapped to a point $q$ in the base locus of $\X$. 

If $x_q\in \tau^{-1}(p)$, then it lies on the $\P^4$ spanned by $T_xX$ and $x_p$. Varying $x$ in  $X$, we find that $x_q$ cannot be an isolated singular point, since by Terracini's Lemma two general tangent spaces of $X$ are disjoint and $x_q\neq x_p$. By the same reason, if a singular curve of $X$ contains $x_q$, it does not lie on $\tau^{-1}(p)$. Therefore it is projected to a curve in the base locus of $\X$.

This shows that if $x_q$ is an isolated singular point or a general point in a singular curve of $X$, then $q\neq p$.

Suppose $x_q\notin R_i$.  We'll prove that if $q$ is not a singular point of $C_6$, then $x_q$ is a smooth point of $X$. This will be done by showing that  a general tangent hyperplane section of $X$ at $x$ through $x_q$ is smooth on $x_q$. Since $x_q$ is either an isolated singular point or a general point in a singular line, this hyperplane section is mapped by $\tau$ to a general plane through $q$. This is the same technique used in Lemma \ref{sl_singularidades_vem_de_C6}. 

Therefore, let $\Omega$ be a general plane through $q$. If $q\in \l_j\setminus C_6$, then $\X\cap\Omega$ consists of degree $9$ curves having multiplicity four in $q$ and in two other points and having multiplicity two in six points $C_6\cap\Omega$. None of these points are infinitely near to $q$, since $x_q\notin R_j$ and $q\notin C_6$. Since the image of $\Omega$ via $\sigma$ has degree nine, it follows that $\X\cap\Omega$ has no other base points. Then $x_q\in\tau^{-1}(q)$ is a smooth point of $\Omega$.

If $q$ is a smooth point of $C_6$ not lying on any of the double lines, then $\X\cap\Omega$ has three points of multiplicity four, has multiplicity two in $q$ and in other five points. Since $q\notin \l_j$ and since it is a smooth point of $C_6$, none of the other base points is infinitely near to it. Then again the image of $\Omega$ is smooth in $x_q$. 

Suppose then that $q$ is a smooth point of $C_6$ lying on a double line $\l_j$ of $V$ (in particular, $\l_j\not\subset C_6$). Then $\X\cap\Omega$ consists of degree nine curves having multiplicity four in $q=\l_j\cap \Omega$ and in two other points not infinitely near to $q$, and having multiplicity two in six  points of $C_6\cap \Omega$. Since $q\in C_6$, one of the six double points $q^\prime$ lies infinitely near to $q$.  Since $q$ is a smooth point of $C_6$, $\X\cap\Omega$ has no other double points infinitely near to $q$ or $q^\prime$.

Blowing up $q$, the exceptional curve $\Omega_q\subset \Omega$ is mapped by $\sigma$ to a conic. Blowing up $q^\prime$, this second exceptional curve is also mapped to a conic. Then none of these curves is contracted. Moreover, $\sigma$ is an isomorphism outside $V$, so the only curve in $\Omega$ through $q$ that is contracted is $V\cap\Omega$, which is mapped to the smooth point $x$. Hence $x_q$ is a smooth point in the image of $\Omega$. This completes the proof.
\\ \end{proof}

The simplest singularities of $X$ are the images of singular points of $C_6$ not lying on the double lines of $V$:

\begin{lemma}\label{sc_sing_fora_de_li}
Let $q$ be a singular point of $C_6$ not lying on any $\l_i$. Then $q$ is mapped to a double point of $X$ and the tangent cone of $X$ in this point has:
\begin{itemize}
\item rank $4$,  if $q$ is the transversal intersection of two simple branches of $C_6$;
\item rank $3$, if $q$ is a cuspidal point, a point of contact of two simple branches or a general point of a double component of $C_6$;
\item rank $2$, that is, it is a pair of three-dimensional planes, if $q$ is the intersection of three simple branches, the intersection of a cuspidal and a simple branch or the intersection of two cuspidal branches of $C_6$.\end{itemize}
\end{lemma}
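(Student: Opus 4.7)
The argument follows the two-step strategy of Proposition \ref{ed_singularidades} in the quadric case, adapted to the present setting where $\X$ has multiplicity two along $C_6$ exactly as it had along $C_4$ there.

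First, I would show that $x_q=\sigma(q)$ is a double point of $X$ via Lemma \ref{pr_truque_secao_tgente}. Since $q\notin\l_i$, the surface $V$ is smooth at $q$, so blowing up $q$ gives $V_q$ a line in $E_q\cong\P^2$. All tangent directions of $C_6$ at $q$ lie in $T_qV$, hence on $V_q$. Because $\X$ has multiplicity two along $C_6$ and $q$ is a singular point of $C_6$, the conic $\X_q$ carries at least two (possibly infinitely near) double points on the line $V_q$, forcing $\X_q=2V_q$. Taking a general plane $\Omega$ through $q$, the curve $\X\cap\Omega$ acquires a fixed double point at $V_q\cap(\Omega\cap E_q)$; blowing it up, the exceptional curve $\Omega\cap E_q$ has self-intersection $-2$ and is disjoint from $\X\cap\Omega$, so it contracts to a double point of $\sigma(\Omega)$. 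Lemma \ref{pr_truque_secao_tgente} then gives $\mult_{x_q}X=2$.

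Second, to identify the rank of the tangent cone, I would note that since $q$ avoids both $p$ and the lines $\l_i$, the only component of the base locus of $\X$ passing through $q$ is $C_6$ (with multiplicity two). Consequently, near $q$ the map $\sigma$ is locally resolved by the blow up of $\P^3$ along $C_6$, and the analytic germ of $X$ at $x_q$ agrees with that of $\Bl_{C_6}(\P^3)$ at the corresponding point above $q$ — this is the same key identification invoked in Proposition \ref{ed_singularidades}. I would then carry out the case-by-case local computation in $\mathbb{A}^3$ with coordinates $(x,y,z)$, writing the blow up in $\mathbb{A}^3\times\P^1$ with chart $v=1$.

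\begin{itemize}
\item[(R4)] Two transversal smooth branches: model $\{z=xy=0\}$ gives $uz-xy=0$, a rank $4$ quadric cone.
\item[(R3a)] Cusp: model $\{z=y^2-x^3=0\}$ gives $uz-y^2+x^3=0$, tangent cone $uz-y^2=0$, rank $3$.
\item[(R3b)] Tangential contact of two smooth branches: model $\{z=y(y-x^2)=0\}$ gives $uz-y^2+x^2y=0$, tangent cone $uz-y^2=0$, rank $3$.
\item[(R3c)] General point of a double component: model $\{z=y^2=0\}$ gives $uz-y^2=0$, rank $3$.
\item[(R2a)] Three transversal branches: model $\{z=xy(x+y)=0\}$ yields tangent cone $uz=0$, a pair of $3$-planes.
\item[(R2b)] Cusp plus transversal smooth branch: model $\{z=(y^2-x^3)(y-x)=0\}$ has lowest-degree part $uz$, so tangent cone $uz=0$.
\item[(R2c)] Two cusps with distinct tangent lines: model $\{z=(y^2-x^3)(x^2-y^3)=0\}$ has lowest-degree part $uz$, so tangent cone $uz=0$.
\end{itemize}

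The main obstacle is the local identification used at the start of the second step: verifying that the analytic germ of $X$ at $x_q$ really coincides with that of $\Bl_{C_6}(\P^3)$ above $q$. This requires that in a neighbourhood of $q$ neither $V$ nor the other base components of $\X$ contribute to the resolution of $\sigma$, which holds because $q$ lies in the smooth stratum of $V$ and is separated from $p$ and from $\l_1\cup\l_2\cup\l_3$; with this in hand, the rank computations above are routine and match the three ranks claimed in the statement.
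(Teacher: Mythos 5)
Your proposal is correct and follows essentially the same route as the paper: the paper's actual proof is a one-line reduction — it observes that $\bar{\tau}$ is an isomorphism off the lines $\l_i$, so a singularity of $C_6$ at $q\notin\l_1\cup\l_2\cup\l_3$ is locally the same as a singularity of the curve $C_4$ in the quadric chapter, and then quotes Proposition \ref{ed_singularidades}, which was itself established by exactly the two steps you carry out (Lemma \ref{pr_truque_secao_tgente} for the multiplicity, then the local models $uz-vf(x,y)=0$ for the blow up of $\C^3$ along the singular curve). Your in-place redoing of those model computations, together with the observation that $\X_q=2V_q$ because all branches of $C_6$ at $q$ are tangent to the smooth locus of $V$, reproduces the Chapter \ref{ed_chapter} argument case by case and yields the stated ranks.
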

\begin{proof}
The curve $C_6$ is the image via $\bar{\tau}$ of a plane cubic and this map is an isomorphism out of the three double lines of $V$. But in Chapter \ref{ed_chapter}, the curve $C_4$ in the base locus of $\X$ is also the image via $\bar{\tau}$ of a cubic. And it is also a double curve of $\X$. Moreover, the singularities of $C_4$ do not lie on the two simple lines of $\X$ (see Lemma \ref{ed_baselocus}), since the tangential projection is general. Then the two maps $\bar{\tau}$ coincide on the point $q$.

Therefore the singularities produced by points of $C_6$ not lying on any of the double lines of $V$ are the same as the ones produced by the curve $C_4$ in Chapter \ref{ed_chapter}. Then the result follows directly from Lemma \ref{ed_singularidades}.
\\ \end{proof}

As described in Section \ref{sc_ramified_sec}, the map $\bar{\tau}$ restricts to a double cover of each proper double line $\l_i$ of $V$. This double cover is ramified at two points, namely $q^i_j$, for $j=1,2$.

Before studying the singularities of $C_6$ lying on a line $\l_i$, we'll describe the behaviour of $\X$ under the blow up at a general point of $\l_i$. To simplify the notation, this will be done for $i=1$.

\begin{lemma}\label{sc_explosao_pt_geral_li}
Suppose $\l_1$ is a proper line of $V$, let $q$ be a point of $\l_1$ distinct from $p$ and let $\{\hat{q}_1,\hat{q}_2\}$ be the preimages of $q$ via $\bar{\tau}$.

If $V$ does not have a double line infinitely near to $\l_1$, then blowing up $q$ gives $V_q=u_1+u_2$, a pair of lines through $(\l_1)_q$. Moreover $\X_q$  is a linear system of quadruples of lines through $u_1\cap u_2$. If $\X_q=2u_1+2u_2$, then $E_q$ is contracted to a point in $L_1$.

If $\l_2$ is infinitely near to $\l_1$ and $\l_3$ is a proper line of $V$, then blowing up $q$ gives $\X_q=4t$, where $t=(\Gamma_3)_q$. After blowing up $\l_1$ and $\l_2$, the blow up at $t$ gives, in $E_t\cong\F_0$, $\X_t\equiv (0,4)$ and $V_t=u_1+u_2\equiv (0,2)$. If $\X_t=2u_1+2u_2$, then $E_t$ is contracted to a point in $R_1$.

If $\l_1\prec\l_2\prec\l_3$, then again blowing up $q$ gives $\X_q=4t^\prime$, where $t^\prime=(\Gamma_3)_q$. After blowing up $\l_1$, $\l_2$, $\l_3$ and $t^\prime$, $V$ and $\X$ intersect $E_{t^\prime}$ in a curve $t$ with multiplicity two and four respectively. The blow up at $t$ gives, in $E_{t}\cong\F_0$, $\X_{t}\equiv (0,4)$ and $V_{t}=u_1+u_2\equiv (0,2)$.  If $\X_t=2u_1+2u_2$, then $E_t$ is contracted to a point in $R_1$. 

In the three cases, the lines $u_1$ and $u_2$ are infinitely near if and only if $q$ is $q^1_1$ or $q^1_2$. Moreover, the indexes can be chosen to satisfy the following property: If a curve in $E=\bar{\tau}^{-1}(V)$ contains $\hat{q}_j$, for $j\in\{1,2\}$, then its image in $V$ intersects $E_t$ in a point of $u_j$.
\end{lemma}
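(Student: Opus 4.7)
The plan is to treat the three cases uniformly by computing local multiplicities after successive blow ups, and then to identify the contractions using Lemmas \ref{sc_mqi_delpezzo} and \ref{sc_imagem_li}. For a general $q \in \l_1 \setminus \{p\}$ we have $q \notin C_6$, so the only data relevant at $q$ is that $\X$ has multiplicity four along $\l_1$ (Proposition \ref{sc_base_locus}) and $V$ has multiplicity two along $\l_1$. Thus $\mult_q \X = 4$ and $\mult_q V = 2$, so $\X_q \subset E_q \cong \P^2$ is a plane quartic and $V_q$ is a conic; moreover $(\l_1)_q$ is a point of $\X_q$ of multiplicity four, which forces $\X_q$ to be a quadruple of concurrent lines through $(\l_1)_q$.

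In Case 1, Lemma \ref{sc_cones_tangentes_em_V} gives $V_q = u_1 + u_2$, with the two lines coinciding exactly when $q \in \{q_1^1, q_2^1\}$. The hypothesis $\X_q = 2u_1 + 2u_2$ fixes the entire $\X_q$, so $\sigma$ contracts $E_q$. To locate the image on $L_1$, I blow up $\l_1$ next and observe that $E_q \cap E_{\l_1}$ is the fiber $f^q$ of $E_{\l_1} \cong \F_0$ over $(\l_1)_q$; by the proof of Lemma \ref{sc_mqi_delpezzo}, $f^q$ meets the fixed curve $t_L^1 \equiv (1,1)$ in a single point, and the moving fibers of $\X_{\l_1}$ map $f^q$ to the corresponding point on $L_1 = \sigma(t_L^1)$.

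In Cases 2 and 3, Lemma \ref{sc_cones_tangentes_em_V} yields $V_q = 2t$ with $t = (\Gamma_3)_q$. The infinitely near double line $\l_2$ (and $\l_3$, in Case 3) imposes, after blowing up $\l_1$, a multiplicity-four condition along $\l_2 = (\Gamma_3)_{\l_1}$; restricting $\X_{\l_1}$ to a fiber $f^q$ of $E_{\l_1}$ forces all four intersection points to collapse onto $\l_2 \cap f^q$, which corresponds in $E_q$ to the single direction $t$. Hence $\X_q = 4t$. Subsequent blow ups of $\l_1$ and $\l_2$ (and in Case 3 also $\l_3$ and $t'$) reduce to a situation on $E_t \cong \F_0$ with $\X_t \equiv (0,4)$ and $V_t = u_1 + u_2 \equiv (0,2)$, by the same degree and multiplicity argument as in Case 1 applied after the chain of blow ups; the contraction of $E_t$ under the hypothesis $\X_t = 2u_1 + 2u_2$ then sends it to a point of $R_1$ via Proposition \ref{sc_imagem_li}, which realises $R_1$ as the image of $\l_1$ under the moving fibers of $\X_{\l_1}$. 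The final indexing assertion is obtained by identifying the two branches of $V$ at $q$ (respectively the two lines $u_j$) with the two preimages $\hat{q}_j$ of $q$ under the $2{:}1$ cover $\bar{\tau}|_{\hat{\l}_1}$, so that a curve in $E$ through $\hat{q}_j$ lifts to a curve of $V$ meeting $u_j$ on $E_t$.

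The main obstacle will be the bookkeeping in Case 3, where the chain $\l_1 \prec \l_2 \prec \l_3$ of infinitely near lines forces each multiplicity-four condition to propagate coherently through four successive exceptional divisors before one recovers the $\F_0$ model on which the argument of Case 1 can be repeated; by contrast, the identification of the images with points of $L_1$ and $R_1$ is essentially a cross-reference to Lemmas \ref{sc_mqi_delpezzo} and \ref{sc_imagem_li} once the local picture on $E_q$ (resp.\ $E_t$) has been pinned down.
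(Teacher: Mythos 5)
Your overall strategy coincides with the paper's (blow up, read off multiplicities, identify the contraction), but two steps as written do not work. First, in Case 1 you locate the image of $E_q$ on $L_1$ by invoking ``the moving fibers of $\X_{\l_1}$'': there are none in this case. When $V$ has no double line infinitely near to $\l_1$, Proposition \ref{sc_imagem_li} gives $\X_{\l_1}\equiv(3,4)$ with five double base points and no fixed component, so $E_{\l_1}$ is mapped onto the quartic surface $^1D_4^x$, not fibered over $L_1$. The paper's mechanism is different: the hypothesis $\X_q=2u_1+2u_2$ forces $q\in C_6$ with branches of $C_6$ meeting both $u_1$ and $u_2$ (note that this also contradicts your opening reduction to ``general $q\notin C_6$'' --- the lemma is applied precisely at singular points of $C_6$ on $\l_1$); hence $q$ is a singular point of the pencil $\mQ_1$, hence the fixed curve $t_L^1\equiv(1,1)$ of $\mQ_{\l_1}$, whose image is $L_1$, contains the entire fiber $f^q=E_q\cap E_{\l_1}$ as a component. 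Your conclusion can be rescued more cheaply --- the contracted $E_q$ contains $f^q$, and $f^q\cdot t_L^1\geq 1$ already places the image on $L_1$ --- but the justification you give is not the right one.

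Second, the last paragraph of the lemma is asserted rather than proved. The claim that $u_1$ and $u_2$ are infinitely near iff $q\in\{q^1_1,q^1_2\}$ follows from Lemma \ref{sc_cones_tangentes_em_V} only in Case 1; in Cases 2 and 3 the tangent cone of $V$ at any $q\in\l_1\setminus\{p\}$ is $2\Gamma_3$, so the splitting $V_t=u_1+u_2$ only becomes visible after the chain of blow ups and the dichotomy must be re-derived there. The paper does this, simultaneously with the indexing statement, by taking a line $\hat{C}\subset E$ through $\hat{q}_j$, passing to the plane $\Sigma$ containing the conic $\bar{\tau}(\hat{C})$, with $\Sigma\cap V=C+C^\prime$, and following $\Sigma_t\equiv(1,0)$ through the blow ups to see that it meets $V_t$ in one point on each $u_j$; ``identifying the two branches with the two preimages'' is precisely the statement to be proved, not an argument for it. Relatedly, the reduction in Cases 2 and 3 to $E_t\cong\F_0$ with $\X_t\equiv(0,4)$ and $V_t\equiv(0,2)$ requires the normal bundle computations ($N_t=\mO_{\P^1}(-1)\oplus\mO_{\P^1}(-1)$, and in Case 3 also $N_{t^\prime}=\mO_{\P^1}(-2)\oplus\mO_{\P^1}(-1)$), which ``the same degree and multiplicity argument as in Case 1'' does not supply.
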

\begin{proof}
Suppose first that $V$ does not have a double line infinitely near to $\l_1$. By Lemma \ref{sc_cones_tangentes_em_V}, the tangent cone of $V$ on $q$ is a pair of planes containing $\l_1$, which coincide if $q$ is $q^1_1$ or $q^1_2$. Then $V_q=u_1+u_2$. Since $\X$ has multiplicity four in $q$ and along $\l_1$, $\X_q$ is the union of four lines through $(\l_1)_q=u_1\cap u_2$.

Remember that the line $L_1$ is the base locus of $\mQ_1^\prime$. Suppose  $\X_q=2u_1+2u_2$, so that $E_q$ is mapped to a point. This implies that $q\in C_6$ and $(C_6)_q$ consists of points in both $u_1$ and $u_2$. Then $\mQ_q$ has three non collinear base points, that is, $q$ is a singular point of $\mQ$.  Now blow up $\l_1$. As shown in the proof of Lemma \ref{sc_mqi_delpezzo}, $\mQ_{\l_1}=t_L^1$ is a fixed curve of type $(1,1)$ and it is  mapped to $L_1$. Since $\mQ$ is singular on $q$, $t_L^1$ contains $E_q\cap E_{\l_1}\equiv (1,0)$. Therefore the contraction of $E_q$ lies on $L_1$. This proves the first part.

Suppose now that $\l_2$ is infinitely near to $\l_1$ and that $\l_3$ is a proper line of $V$. Start blowing up $q$. Then $\X_q$ has multiplicity four in $(\l_1)_q$ and in  $(\l_2)_q$, which is infinitely near to $(\l_1)_q$ in the direction of $t=(\Gamma_3)_q$. Therefore $\X_q=4t$. For the same reason, $V_q=2t$.

Blow up $\l_1$ and $\l_2=\Gamma_3\cap E_{\l_1}$. The self-intersection of $t=\Gamma_3\cap E_q$ in $E_q$ has decreased from $1$ to $-1$ with the blow ups of $\l_1$ and $\l_2$. In $\Gamma_3$, $t$ is the exceptional curve of the blow up at $q$, giving $t^2=-1$. Since $\l_1$ and $\l_2$ lied in $\Gamma_3$, this is not affected by their blow ups. Then the normal bundle of $t$ is:
\[ N_t = \mO_{\P^1}(-1)\oplus \mO_{\P^1}(-1) \]
Therefore, blowing up $t$ gives $V_t\equiv (0,2)$ and $\X_t\equiv (0,4)$. Set $V_t=u_1+u_2$.

As explained in Lemma \ref{sc_imagem_li}, the moving part of $\X_{\l_1}$ consists of moving fibers, which map $E_{\l_1}$ to the line $R_1$.  If $\X_t=2u_1+2u_2$, then $E_t$ is contracted to a point, which coincides with the image of $E_q$. Since $E_q$ intersects $E_{\l_1}$ in a fiber, it follows that the image of $E_t$ lies on $R_1$. This completes the proof of the second part.

Now suppose that $\l_1\prec\l_2\prec\l_3$. This case is illustrated in Figure \ref{sc_figura_expl_q_em_l1}. 

\begin{figure}
\centering
\includegraphics[trim=15 500 100 30,clip,width=1\textwidth]{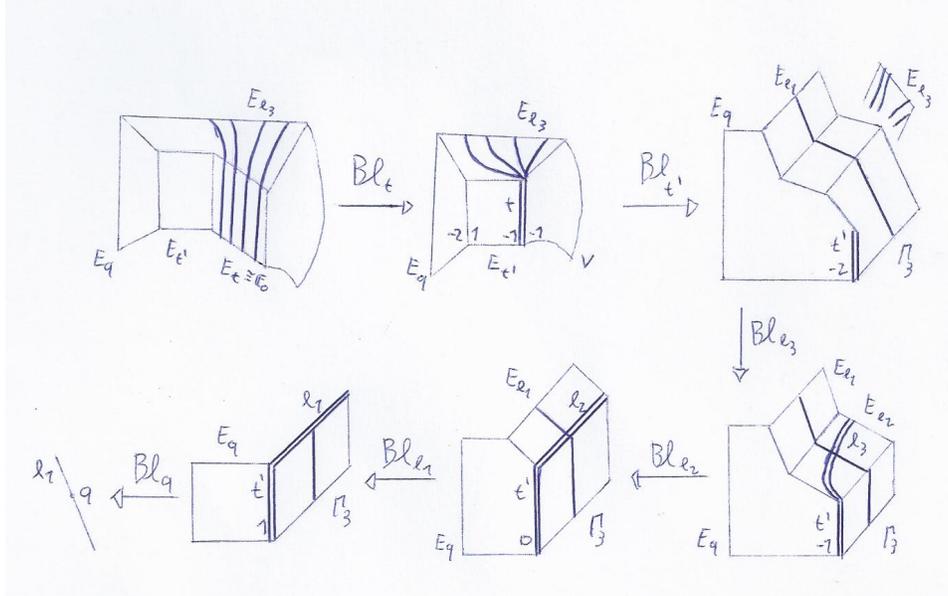}
\caption{The blow up of $\X$ when $\l_1\prec\l_2\prec\l_3$}\label{sc_figura_expl_q_em_l1}
\end{figure}

Blow up $q$, giving again  $\X_q=4t^\prime$ and $V_q=2t^\prime$, with the new notation $t^\prime=(\Gamma_3)_q$. Consider the blow ups along $\l_1$, $\l_2$ and $\l_3$.

After this, $t^\prime$ is no longer the complete intersection of $E_q$ with $\Gamma_3$, but $E_q\cap V=2t^\prime$. In $E_q$, $t^\prime$ is a smooth curve with $(t^\prime)^2=-2$. In $V$, $t^\prime$ is a double curve. But in the normalization of $V$, this double curve is the union of two disjoint smooth curves, the exceptional curves of the blow up at two smooth points. Therefore, $(t^\prime)^2=-1$ and its normal bundle is:
\[ N_{t^\prime} = \mO_{\P^1}(-2)\oplus \mO_{\P^1}(-1) \]

Blow up $t^\prime$. In $E_{t^\prime}\cong\F_1$, clearly $V_{t^\prime}=2t\equiv 2e_1$. On the other hand, $\X$ had no moving intersection with $E_q$, so $\X_{t^\prime}=4t$. The normal bundle of $t$ is:
\[ N_{t} = \mO_{\P^1}(-1)\oplus \mO_{\P^1}(-1) \]

Hence, the blow up at $t$ gives $V_{t}\equiv (0,2)$ and $\X_{t}\equiv (0,4)$, as stated. If $\X_t=2u_1+2u_2$, the surfaces $E_t$, $E_{t^\prime}$ and $E_q$ are contracted to the same point. Since $E_q$ intersects $E_{\l_1}$ in a line, this point lies on $R_1$. The third part is proved.

Now, let $\hat{C}\subset E$ be a line through $\hat{q}_1$, let $C\subset V$ be its image via $\bar{\tau}$ and let $\Sigma$ be the plane containing $C$. Then $\Sigma\cap V=C+C^\prime$, where $C^\prime$ is a conic through $q$. Following the blow ups made above, we get, in the two last cases, $\Sigma_t\equiv (1,0)$. It intersects $V_t=u_1+u_2\equiv (0,2)$ in two points. The same happens in the first case, where $\Sigma_q$ is a line. Then one point, say $\Sigma\cap u_1$, is the intersection with $C$. The other point is the intersection with $C^\prime$. 

By continuity, the lines through $\hat{q}_j$ are mapped to conics intersecting $u_j$, for $j\in\{1,2\}$. Taking tangent lines (or tangent cones) of curves in $E$, the same result follows for curves through $\hat{q}_j$.

The lines $u_1$ and $u_2$ are infinitely near if and only if every plane $\Sigma$ as above intersects $V_t$ (or $V_q$) in two infinitely near points. By the above result, this is equivalent to $\hat{q}_1$ being infinitely near to $\hat{q}_2$, that is, $q$ is $q^1_1$ or $q^1_2$.
\\ \end{proof}

We now consider the singularities of $X$  coming from  a singular point of $C_6$ lying on $\l_i\setminus p$. The study of these singularities is very similar to the singularities described in Chapter \ref{sl_chapter} which lied in the double line $s$ of $V$.

\begin{lemma}\label{sc_sing_em_l1}
Let $q\neq p$ be a singular point of $C_6$ lying on $\l_i$, a proper double line of $V$. Let $\hat{C}$ be the strict transform of $C_6$ and $\{\hat{q}_1,\hat{q}_2\}$ be the preimages of $q$ via $\bar{\tau}$.

If $\hat{C}$ contains both $\hat{q}_1$ and $\hat{q}_2$, then $X$ has a triple point that is mapped to $q$. This point lies on $R_i$ if $V$ has a double line infinitely near to $\l_i$; otherwise it lies on $L_i$.

If $\hat{C}$ contains only one point among $\{\hat{q}_1,\hat{q}_2\}$, then $X$ has a double point that is projected to $q$.

These considerations are also valid if $q=q^i_j$, in which case $\hat{q}_1$ and $\hat{q}_2$ are infinitely near.
\end{lemma}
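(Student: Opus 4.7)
Take $i=1$ throughout. The strategy is to apply Lemma \ref{pr_truque_secao_tgente}: the multiplicity of $X$ at $x_q$ equals the multiplicity at $x_q$ of the image of a general plane $\Omega$ through $q$ via $\sigma$. First I would invoke Lemma \ref{sc_explosao_pt_geral_li} after performing the successive blowups it prescribes (which differ according to whether $V$ has no, one, or two lines infinitely near to $\l_1$); in each of the three subcases this produces $V_q = u_1 + u_2$ and reduces $\X_q$ to a plane quartic with a quadruple point at $u_1\cap u_2 = (\l_1)_q$. The crucial structural input is the last statement of that lemma: a curve on $V$ passing through $\hat{q}_j$ is mapped to a curve meeting the relevant exceptional divisor along $u_j$. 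The ramification situation $q\in\{q^i_1, q^i_2\}$ is handled identically with the understanding that $u_1$ and $u_2$ are then infinitely near rather than distinct.

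For Case 1, with $\hat{C}$ passing through both $\hat{q}_1$ and $\hat{q}_2$, the two branches of $C_6$ at $q$ contribute fixed double base points $\alpha\in u_1$ and $\beta\in u_2$ to $\X_q$. The unique line through $(\l_1)_q$ and $\alpha$ is $u_1$, which must therefore appear in $\X_q$ with multiplicity at least two (similarly $u_2$); as these multiplicities already sum to four, no other component of the quartic is possible, forcing $\X_q = 2u_1 + 2u_2$. Hence $E_q$ is contracted to a point $x_q$, which by Lemma \ref{sc_explosao_pt_geral_li} lies on $L_1$ or $R_1$ depending on the subcase. To verify the multiplicity three via Lemma \ref{pr_truque_secao_tgente}, for a general plane $\Omega$ through $q$ the linear system $\X\cap\Omega$ has multiplicity four at $q$ with tangent cone cutting $e=E_q\cap\Omega$ in the fixed divisor $2(u_1\cap e) + 2(u_2\cap e)$. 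Blowing up these two fixed double points, the strict transform $\widetilde{e}$ of $e$ has $\widetilde{e}^2 = -1-1-1 = -3$ and residual intersection with the strict transform of $\X\cap\Omega$ equal to $4-2-2=0$; thus $\widetilde{e}$ contracts to a triple point of the image of $\Omega$, giving a triple point of $X$ at $x_q$.

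For Case 2, $\hat{C}$ passes through only one preimage, say $\hat{q}_1$, and must be singular there (node or cusp, since $\hat{C}$ is a cubic) in order for $C_6$ to be singular at $q$. All branches of $C_6$ at $q$ then have tangent directions on $u_1$, so $\X_q$ acquires $2u_1$ as fixed component but not $2u_2$; the moving part is a pencil of pairs of lines through $(\l_1)_q$, which contracts the line $u_1$ to a point $x_q$. For a general $\Omega$, the fixed double point $\alpha'=u_1\cap e$ on $e$ (from $2u_1$) is accompanied by a further fixed infinitely-near double point produced by the node or cusp of $\hat{C}$ at $\hat{q}_1$, in a manner parallel to case (ii) of Lemma \ref{sl_conetangente_de_C}. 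Blowing up both gives an exceptional curve of self-intersection $-2$ with no residual intersection with $\X\cap\Omega$, which contracts to a double point of the image of $\Omega$, exactly as in case (f) of Proposition \ref{sl_singularidades_em_C6.s}. Lemma \ref{pr_truque_secao_tgente} then makes $x_q$ a double point of $X$.

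The main obstacle is Case 2: correctly pinning down the infinitely-near fixed base point on $e$ produced by the singularity of $\hat{C}$ at $\hat{q}_1$, and verifying in each singularity type (node or cusp, and the ramification degeneration) that after the two blowups the exceptional curve has no residual intersection with the strict transform of $\X\cap\Omega$. This is a direct degeneration of the arguments in cases (f) and (g) of Proposition \ref{sl_singularidades_em_C6.s}, and the cleanest presentation is by a short case analysis rather than a uniform formula.
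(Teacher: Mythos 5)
Your proposal follows essentially the same route as the paper: reduce via Lemma \ref{sc_explosao_pt_geral_li} to a configuration $V_\bullet=u_1+u_2$ on an exceptional divisor over $q$, split according to whether $C_6$ meets both $u_j$ or only one (forcing $\X_\bullet=2u_1+2u_2$, respectively $2u_1+\{\text{moving part}\}$), and then apply Lemma \ref{pr_truque_secao_tgente} to a general plane section with the self-intersection counts $-3$ and $-2$. The only difference is presentational: in the second case the paper makes the contraction precise by blowing up the curve $u_1$ itself and checking that $\X_{u_1}\equiv 2e_1+2f_1$ is forced to be the fixed double curve $2V_{u_1}$ by the double points of $C_6$ on it --- exactly the step your plan flags as the main obstacle and attributes, correctly, to the branches of $\hat{C}$ at $\hat{q}_1$.
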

\begin{proof}
Set $i=1$ and $\l=\l_1$. Suppose that $q$ is not $q^1_j$, so that $\hat{q}_1$ and $\hat{q}_2$ are not infinitely near. If $q=q^i_j$, the same argument can be used, noting that $\hat{C}$ is tangent to $\hat{\l}$ on $\hat{q}=\bar{\tau}^{-1}(q)$.

Suppose first that $V$ does not have a double line infinitely near to $\l_1$. 

By Lemma \ref{sc_explosao_pt_geral_li}, blowing up $q$ gives $V_q=u_1+u_2$ and $\X_q$ is the union of four lines through $u_1\cap u_2$. If $\hat{C}$ contains $\hat{q}_1$ and $\hat{q}_2$, the same Lemma asserts that $C_6$ intersects $E_q$ in points of both $u_1$ and $u_2$, which implies that $\X_q=2u_1+2u_2$. If $\hat{C}$ contains only $\hat{q}_1$, then $C_6$ intersects $E_q$ in points of $u_1$ and $\X_q=2u_1+\{\text{pairs of lines}\}$.

Now repeat the arguments in the proof of Proposition \ref{sl_singularidades_em_C6.s}, of Chapter \ref{sl_chapter}: in the first case $E_q$ is mapped to a triple point of $X$; in the second case $E_q$ is mapped to a curve and $u_1$ is mapped to a double point of $X$ lying on this curve.

By Lemma \ref{sc_explosao_pt_geral_li}, the triple point lies on $R_1$ or $L_1$, as stated.

Suppose now that $V$ has a double line $\l_2$ infinitely near to $\l_1$. The line $\l_3$ can be either proper or infinitely near to $\l_2$. In Lemma \ref{sc_explosao_pt_geral_li}, both cases are considered: If $\l_3$ is a proper line of $V$, then blowing up $q$, $\l_1$, $\l_2$ and a curve $t\in E_q$, gives, in $E_t\cong\F_0$, $\X_t\equiv (0,4)$ and $V_t=u_1+u_2\equiv (0,2)$. If $\l_3$ is infinitely near to $\l_2$, then blowing up $q$, $\l_1$, $\l_2$, a curve $t^\prime\in E_q$ and a curve $t\in E_{t^\prime}$, gives, in $E_{t}\cong\F_0$, $\X_{t}\equiv (0,4)$ and $V_{t}=u_1+u_2\equiv (0,2)$.

So in both cases we have:
\[ V_t=u_1+u_2\equiv (0,2) \]
and $\X_t\equiv (0,4)$, where $t$ is a curve infinitely near to $q$. According again to Lemma \ref{sc_explosao_pt_geral_li}, if a curve in $E=\bar{\tau}^{-1}(V)$ contains $\hat{q}_j$, for $j\in\{1,2\}$, then its image in $V$ intersects $E_t$ in a point of $u_j$.

Therefore, if  $\hat{C}$ contains both $\hat{q}_1$ and $\hat{q}_2$, then:
\[ \X_t=2u_1+2u_2 \] 
and $E_t$ is contracted to a point. It lies on $R_1$, by Lemma \ref{sc_explosao_pt_geral_li}. Since $t$ is infinitely near to $E_q$, this point is projected by $\tau$ to $q$. 

To prove that it is a triple point of $X$, use Lemma \ref{pr_truque_secao_tgente}. Let $\Omega$ be a general plane through $q$. Making the blow ups described in Lemma \ref{sc_explosao_pt_geral_li}, the line $t$ intersects $\Omega$ in one point. Blowing up this point, $\X\cap\Omega$ intersects the exceptional curve $e=E_t\cap \Omega$ in two fixed double points. After the blow up at these two points, we get $e^2=-3$ in $\Omega$ and it is contracted to a triple point of $\sigma(\Omega)$. Then by Lemma \ref{pr_truque_secao_tgente}, it is a triple point of $X$.

If, on the other hand, $\hat{C}$ contains only one point among $\{\hat{q}_1,\hat{q}_2\}$, say $\hat{q}_1$, then:
\[ \X_t=2u_1+\{\text{pairs of lines}\} \] 
where the moving part is of type $(0,2)$. Then $E_t$ is mapped to a curve. The line $u=u_1$ has normal bundle:
\[ N_u = \mO_{\P^1}(0)\oplus \mO_{\P^1}(-1) \]

The blow up at $u$ gives $\X_u\equiv 2e_1+2f_1$, having two or more double points in $(C_6)_u$. Therefore, $\X_u$ is a fixed curve and $E_u$ is mapped to a point. 

To prove that it is a double point of $X$, let $\Omega$ be a general plane through $q$. Perform the blow ups described in Lemma \ref{sc_explosao_pt_geral_li} and the blow ups of $t$ and $u$. Then $\X\cap\Omega$ intersects $e=E_u\cap\Omega$ in one fixed double point. After the blow up at this point, we get $e^2=-2$ in $\Omega$. Then $e$ is mapped to a double point of $\sigma(\Omega)$ and lemma \ref{pr_truque_secao_tgente} concludes the argument.
\\ \end{proof}

Next, the case in which $C_6$ contains a non reduced component is analysed. If this component is a line, this has been studied in Section \ref{sc_imagens_li_sec}. Since $C_6$ is the image via $\bar{\tau}$ of a cubic, the other possibilities are that a non reduced component is a double conic or a triple conic. 

\begin{lemma}
Suppose $C$ is a double conic in $C_6$, that is, $C=C_1\prec C_2$ are double curves of $\X$. Then $C$ is mapped to a double conic of $X$ and $C_2$ is mapped to a Veronese quartic surface.

If $C_6=3C$, that is, $C=C_1\prec C_2\prec C_3$ are contained in $C_6$, then $C_3$ is mapped to a Veronese quartic surface through $x$ and $C_1$ and $C_2$ are mapped to infinitely near double conics of $X$ lying on this surface.

Moreover, if for some $i\in\{1,2,3\}$, $q=C\cap \l_i$ is mapped to a triple point of $X$, then this  point lies on the double conic, unless $C_6\neq 3C$ and $q$ is $q^i_1$ or $q^i_2$.
\end{lemma}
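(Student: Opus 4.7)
The plan is to follow the blow-up pattern used throughout this chapter for non-reduced components of the base locus, combining Lemma \ref{pr_imagemF1F2} with the tangent-cone and tangent-section tricks of Lemma \ref{pr_truqueconetangente} and Lemma \ref{pr_truque_secao_tgente}. I would first perform the preliminary blow ups of $p$ and of the three double lines $\l_i$ so that no hidden fixed components interfere with the analysis along $C$. Since $C=\bar\tau(L)$ for some line $L\subset E$, and by hypothesis $\bar\sigma(C_6)$ contains $L$ with multiplicity $\geq 2$, the surface $V$ forces the infinitely near curves: after blowing up $C=C_1$, one has $V_C=2C_2$, and analogously in the case $C_6=3C$ one gets $V_{C_2}=2C_3$. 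The normal bundle of $C$ is pinned down by combining $C^2=1$ in the normalisation of $V$ (a line in the Veronese) with the self-intersection in a plane section of $V$ containing $C$, taking into account the previous blow ups; this determines $E_C$ as a Hirzebruch surface and fixes the classes of $V_C$ and $\X_C$.

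For part 1, the key point is that the restriction $\X_C$ splits as $4C_2$ plus a non-complete moving pencil of fibres, because $\X$ has multiplicity two along both $C_1$ and $C_2$; a general fibre is met by this moving part in one point, so $E_C$ is contracted to a conic. That this is a double conic of $X$ is verified by Lemma \ref{pr_truque_secao_tgente} applied to a general plane through a general point of $C$, where after blowing up the two infinitely near double points one obtains an exceptional curve of self-intersection $-2$ contracted to a double point. To produce the Veronese quartic image of $C_2$, I would then blow up $C_2$, compute its normal bundle from $C_2^2$ in $V$ and in $E_C$, and analyse $\X_{C_2}$: after applying Lemma \ref{pr_imagemF1F2} and a sequence of standard quadratic transformations, the moving part becomes a non-complete linear system of conics without base points, and $V_{C_2}$ (contracted to $x$) becomes a double line. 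The non-completeness, inherited from the degree-two non-complete pencil cut by $\X_{C_2}$ on $(E_C)_{C_2}$, is exactly what forces the image to be a projection of the Veronese rather than a smooth quadric or a weak Del Pezzo.

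For part 2 the argument goes one level deeper. Blowing up $C_1,C_2,C_3$ in succession, the first two exceptional divisors are each contracted to conics, and because $E_{C_1}\cap E_{C_2}$ is a common curve mapped into both images, these two conics are infinitely near in $X$. The only curve on $E_{C_3}$ still contracted to a point is $V_{C_3}$, mapped to $x$, and the same standard-Cremona argument as above shows $E_{C_3}$ is mapped to a Veronese quartic surface through $x$ containing the two infinitely near double conics.

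Finally, for the triple-point statement, I would invoke Lemma \ref{sc_sing_em_l1}: $q=C\cap \l_i$ is mapped to a triple point of $X$ exactly when the strict transform $\hat C_6\subset E$ contains both preimages of $q$ under $\bar\tau$. Since $L\subset \bar\sigma(C_6)$ with multiplicity $\geq 2$, one preimage is automatically on $L$, and a continuity/specialisation argument along the birational map $E_C\dashrightarrow C'$ (the double conic) shows that in the non-branch case the image of the triple point is forced onto $C'$. The delicate case is when $q=q_1^i$ or $q_2^i$: then both preimages of $q$ collapse to a single point in $E$, and whether that point lies on $L$ or only on the residual component of $\bar\sigma(C_6)$ determines whether $x_q$ is on $C'$; when $C_6=3C$ the preimage is always on $L$ and the triple point stays on $C'$, while when $C_6\neq 3C$ the residual component may carry it off $C'$, giving the stated exception. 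The main obstacle I anticipate is the bookkeeping of the Hirzebruch types and the restrictions $\X_{C_i}$ in all the sub-cases dictated by the incidence of $C$ with $p$ and the $\l_i$, together with the careful verification that the non-completeness of the final linear system produces a projection of the Veronese and not a smoother degeneration.
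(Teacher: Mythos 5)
Your overall strategy (preliminary blow ups, normal bundle computations, restriction of $\X$ to the exceptional divisors, Lemma \ref{pr_imagemF1F2} plus the tangent-section trick of Lemma \ref{pr_truque_secao_tgente}) is the same as the paper's, but several of your concrete steps are wrong. First, the splitting of $\X_C$: the paper gets $E_C\cong\F_1$ with $C_2=V_C\equiv e_1+f_1$ and $\X_C\equiv 2e_1+4f_1=2C_2+\{2f_1\}$, so the fixed part is $2C_2$ (the multiplicity of $\X$ along the infinitely near curve is two), not $4C_2$; indeed $4C_2\equiv 4e_1+4f_1$ cannot even be contained in a class with $e_1$-coefficient $2$. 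The moving part consists of \emph{pairs} of fibres (fibres are contracted, they are not "met in one point"), and it is the two-dimensional series $|2f_1|$ that sends $E_C$ to a conic. Second, and more seriously, you have transplanted the non-completeness mechanism of Lemma \ref{sc_2l1+C4} into a case where it does not occur. There the moving part of $\X_{\l}$ is a \emph{pencil} of single fibres cutting $V_\l$ in a non-complete $g^1_2$, which is what forces the image of the next exceptional divisor to be a projected Veronese with a double line spanning a $\P^4$. Here the series cut on $C_2$ is the complete $g^2_2$, the system $\X_{C_2}\equiv(2,2)$ with one double point is birationally a base-point-free complete system of conics, and the image is an honest Veronese quartic spanning a $\P^5$ — which is what the statement asserts. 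Your version would prove a different (and false) conclusion; note also that "$V_{C_2}$ becomes a double line" contradicts your own (correct) claim that $V_{C_2}$ is contracted to $x$. You also do not address the case $p\in C$, where $\Sigma\cap V=C+2\l_j$ and one must first uncover the base curves of $\X$ infinitely near to $t_1$ before the normal bundle of $C$ can be computed; relatedly, the paper pins down $N_C$ by realizing $C$ as the complete intersection $\Sigma\cap Q$ with $Q$ the cone over $C$ with vertex $p$, whereas $C$ is not a complete intersection of $\Sigma$ and $V$, so your route through the normalization of $V$ needs more care than you give it.

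The argument for the exceptional case of the triple-point statement also does not work as you describe it. Since $q=C\cap\l_i$, the curve $L=\bar{\sigma}(C)$ always passes through a preimage of $q$, so "whether that point lies on $L$ or only on the residual component" cannot be the criterion that produces the exception. The actual mechanism in the paper is purely about infinitely near incidences after the blow ups: $V_{\l}$ is tangent to the fibre $f^q$ over $q$ exactly when $q$ is $q^i_1$ or $q^i_2$, with point of tangency $C_\l$; blowing up $C$ and then $C_2$, one tracks whether $f^q$ still meets the surface that gets contracted to the double conic. When $q$ is a branch point and $C_6\neq 3C$, the tangency causes $f^q$ to separate from $E_C$ after the blow up at $C_2$, so its image misses the double conic; when $C_6=3C$ the extra infinitely near curve $C_3$ keeps $f^q$ attached to the image of $C_2$. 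A "continuity/specialisation" appeal does not substitute for this computation.
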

\begin{proof}
Let $\Sigma$ be the plane containing $C$. Suppose first that $C$ does not contain $p$ and let $Q$ be the cone over $C$ with vertex $p$. 

The linear system $\X$ intersects $\Sigma$ in $2C$ and degree five curves, which intersect $C$ in ten points. Among these, three double points of the moving part of $\X\cap\Sigma$ lie on the lines $\l_i$. 
On the other hand, $V$ intersects $\Sigma$ in $C$ and another conic, intersecting $C$ in three points in the lines $\l_i$ and one other point.

The intersection of $\X$ with $Q$ is $2C+4\l_1+4\l_2+4\l_3$ and moving conics. These conics do not contain $p$, since the fixed part has already multiplicity twelve in this point and the tangent cone of $\X$ in $p$ is the union of the planes $\Gamma_i$ with multiplicity two. Therefore, the moving part of $\X\cap Q$ intersects $C$ in two points. The surface $V$ cuts $Q$ in $C+2\l_1+2\l_2+2\l_3$. 

First blow up the  lines $\l_1,\l_2,\l_3$. After that, $C=\Sigma\cap Q$ has normal bundle:
\[ N_C = \mO_{\P^1}(1)\oplus \mO_{\P^1}(2) \]

Blowing up $C$ gives $\Sigma_C\equiv e_1+f_1$ and $Q_C\equiv e_1$. Then $C_2=V_C\equiv e_1+f_1$, which is, by hypothesis, a double curve of $\X_C$. Therefore:
\[ \X_C\equiv 2e_1+4f_1\equiv 2C_2 + \{2f_1\} \]

The moving part maps $E_C$ to a conic in $X$. The curve $C_2=V\cap E_C$ has normal bundle:
\[ N_{C_2} = \mO_{\P^1}(1)\oplus \mO_{\P^1}(1) \]

Blowing up $C_2$ gives $\X_{C_2}\equiv (2,2)$ and $V_{C_2}\equiv (0,1)$. If $C$ is a double curve in $C_6$, $\X_{C_2}$ has one fixed double point, corresponding to the intersection with  the other component of $C_6$, a conic. Then $\X_{C_2}$ is birationally equivalent to a linear system of conics in $\P^2$ with no base points, and $E_{C_2}$ is mapped to a quartic Veronese surface. It contains $x$, the image of $V_{C_2}$.

If $C$ is a triple curve in $C_6$, $\X_{C_2}\equiv 2C_3+(2,0)$, where $C_3=V_{C_2}$. Then $E_{C_2}$ is mapped to a conic. Blowing up $C_3=E_{C_2}\cap V$ gives $E_{C_3}\cong \F_1$, with $V_{C_3}\equiv e_1$. Then $\X_{C_3}\equiv 2e_1+2f_1$ with no base points and again $E_{C_3}$ is mapped to a Veronese surface through $x$.

Now suppose  $C$ contains $p$. The case $C_6=2C+C^\prime$, with $C^\prime$ intersecting $\l_1$ in one point, is illustrated in Figure \ref{sc_figura_2c}. 

\begin{figure}[tb]
\centering
\includegraphics[trim=7 5 110 460,clip,width=1\textwidth]{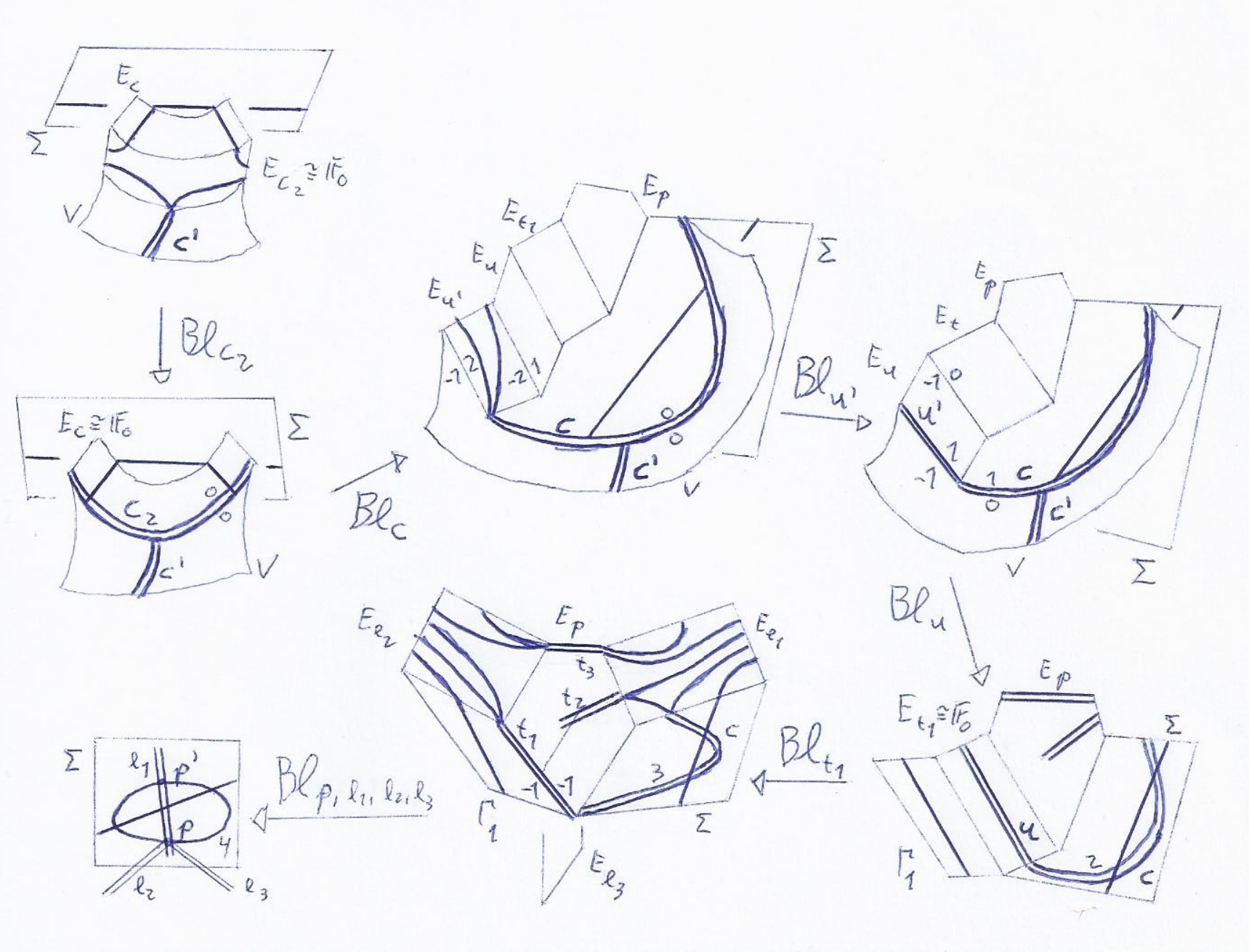}
\caption{Blow ups when $C_6=2C+C^\prime$, with $C$ containing $p$}\label{sc_figura_2c}
\end{figure}

The intersection $\Sigma\cap V$ consists of $C$ and a pair of lines through $p$. Since the only lines in $V$ are its double lines, it follows that $\Sigma\cap V=C+2\l_j$, for some $j$. For simplicity, set $j=1$. This implies that $C$ cuts $\l_1$ in a point $p^\prime$ other than $p$, but cuts $\l_2$ and $\l_3$ only in $p$. Then $C_2\cap \Gamma_1=2p$, and $C_2\cap \Gamma_i=p+p^\prime$, for $i\in \{2,3\}$. Note that $p^\prime\neq p$, otherwise $\Sigma$ would be a component of the tangent cone of $V$ in $p$, so $C$ would be a double line of $V$. 

Moreover:
\[ \X\cap \Sigma=2C+4\l_1+\{\text{lines}\} \]

Start blowing up $p$, giving, as explained in \eqref{sc_eq_Xp}:
\[ \X_p=2t_1+2t_2+2t_3 \]
where $t_i=(\Gamma_i)_p$. Then $C_p$ is a point in $t_1$ and $\Sigma_p$ is the line described by this point and $(\l_1)_p$.

Next blow up the three lines $\l_1$, $\l_2$ and $\l_3$. Before blowing up $C$, we'll blow up $t_1$, in order to show there are base curves of $\X$ infinitely near to it. The normal bundle of $t_1=(\Gamma_1)_p$, after these blow ups, is:
\[ N_{t_1} = \mO_{\P^1}(-1)\oplus \mO_{\P^1}(-1) \]

Therefore, blowing up $t_1$, gives, in $E_{t_1}\cong \F_0$, $\X_{t_1}=2u\equiv (0,2)$, since $C_{t_1}$ is a double point. The normal bundle of $u=E_{t_1}\cap V$ is:
\[ N_{u} = \mO_{\P^1}(0)\oplus \mO_{\P^1}(-1) \]
Then blowing up $u$ gives $V_u\equiv e_1+f_1$ and $\X_u\equiv 2e_1+2f_1$. Again, $C_u$ is a double point of $\X_u$. But there is a second double point, namely $(C_2)_u$, which is infinitely near to $C_u$. Both points lie on $u^\prime=V_u$, so $\X_u=2u^\prime$.

We proceed blowing up $u^\prime=E_u\cap V$, which gives $E_{u^\prime}\cong \F_2$ and $\X_{u^\prime}\equiv 2e_2+4f_2$. If $C$ is a triple curve in $C_6$, then $\X_{u^\prime}$ has three double points: $C_{u^\prime}\prec (C_2)_{u^\prime}\prec (C_3)_{u^\prime}$. All of these points lie on $u^{\prime\prime}=V_{u^\prime}\equiv e_2+2f_2$, giving $\X_{u^\prime}=2u^{\prime\prime}$. The same happens if $C_6=2C+C^\prime$, with $C^\prime$ intersecting $\l_1$ in $p$ and in a second point, giving a third double point of $\X_{u^\prime}$. In both cases, similar considerations show that there are no further base curves of $\X$ infinitely near to $u^{\prime\prime}$.

On the other hand, if $C_6= 2C+C^\prime\neq 3C$, with $C^\prime$ not as above, then $\X_{u^\prime}$ intersects $V_{u^\prime}$ in two double points, so it's not a fixed curve. This is the case illustrated in Figure \ref{sc_figura_2c}.

We now concentrate on $C=\Sigma\cap V$. The following considerations hold for $C_6=2C+C^\prime$ and for $C_6=3C$. In $\Sigma$, the blow up of $p$, $t_1$, $u$ and $u^\prime$ decreased the self-intersection of $C$, giving $C^2=0$. In $V$ (blown up in $p$), we have $C^2=0$. Then the normal bundle of $C$ is:
\[ N_C = \mO_{\P^1}(0)\oplus \mO_{\P^1}(0) \]

Blow up $C$, giving $C_2=V_C\equiv (0,1)$. Since $\X\cap\Sigma$ intersects $E_C$ in two moving points, it follows that:
\[ \X_C=2C_2+\{\text{moving part}\}\equiv (0,2)+\{(2,0)\} \]
and again the moving part maps $E_C$ to a conic.

The curve $C_2=E_C\cap V$ has normal bundle:
\[ N_{C_2} = \mO_{\P^1}(0)\oplus \mO_{\P^1}(0) \]
Blow up $C_2$. Then $\X_{C_2}\equiv (2,2)$ and $V_{C_2}\equiv (0,1)$.

To study the base points of $\X_{C_2}$, suppose first $C_6=2C+C^\prime$, with $C^\prime\neq C$. If $C^\prime$ intersects $\l_1$ in two points, then $(u^{\prime\prime})_{C_2}$ is a double point of $\X_{C_2}$. If $C^\prime$ intersects $\l_1$ only in $p$, then it intersects $\Sigma$ (before the blow ups) in $p$ and in a point lying on $C$. Therefore, $C^\prime$ intersects $E_{C_2}$, and this is a double point of $\X_{C_2}$. Finally, if $C^\prime$ does not contain $p$, then again $(C^\prime)_{C_2}$ is a double point of $\X_{C_2}$.

Then in all cases $\X_{C_2}\equiv (2,2)$ has one double point. By Lemma \ref{pr_imagemF1F2}, this linear system corresponds, in $\P^2$, to curves of degree four having three fixed double points. Applying a standard quadratic map gives a base-point free linear system of conics. Hence $E_{C_2}$ is mapped to a Veronese surface.

Now suppose $C_6=3C$. Then:
\[ \X_{C_2}=2C_3+\{\text{moving part}\}\equiv (0,2)+\{(2,0)\} \]

Then, blowing up $C_3=E_{C_2}\cap V$, gives $E_{C_3}\cong \F_0$ and $V_{C_3}\equiv (0,1)$. The linear system $\X_{C_3}\equiv (2,2)$ has one double point in $(u^{\prime\prime})_{C_3}$. As before, it maps $E_{C_3}$ to a quartic Veronese surface.

\quad

The next step is to show that $X$ has multiplicity two in the conics. Let $q$ be a general point in $C$ and let $\Omega$ be a general plane through $q$. Then $q$ is a double point of $\X\cap \Omega$, which has another double point infinitely near to it. If $C_6=3C$, then there is a third infinitely near double point. 

Blowing up the double points, we get one curve (or two, if $C_6=3C$) with self-intersection $(-2)$ in $\Omega$ having no intersection with $\X\cap\Omega$. Then $\sigma(\Omega)$ has multiplicity two in $\sigma(q)$ and another double point infinitely near to it when $C_6=3C$. Then Lemma \ref{pr_truque_secao_tgente} completes the proof.

\quad

To prove the last part,  set $\l=\l_i$ and suppose that $q=C\cap \l$ is mapped to a triple point of $X$. According to Lemma \ref{sc_sing_em_l1}, $E_q$ is contracted to a point. Then, after the blow up at $\l$, the fiber $f^q$ over $q$ in $E_\l$, which contains the point $C_\l$, is contracted to a triple point. Also note that:
\[ V_\l\equiv (2,2)\equiv f^p+(1,2) \]
where $f^p$ is the fiber over $p$. The curve of type $(1,2)$ is reducible if and only if $V$ has a double line infinitely near to $\l$. In this case, blow up this line in order to find an irreducible curve of type $(1,2)$ in the other exceptional divisor. Then $V_\l$ is tangent to $f^q$ if and only if $q$ is $q^i_1$ or $q^i_2$. In this case, the point of tangency is $C_\l$.

Suppose first that $q$ is $q^i_1$ or $q^i_2$ and $C_6\neq 3C$. Blowing up $C$,  $f^q$ and $V_\l$ intersect $E_C$ in the same point of $C_2=V_C$. Blowing up $C_2$, $E_{C_2}$ is mapped to a quartic surface and $E_C$ is contracted to a conic in it. In this stage, $f^q\subset E_\l$ does not intersect either $V_\l$ or $E_C$. Then it is mapped to a point not lying on the image of $E_C$. Therefore the image of $q$ does not lie on the double conic.

Now suppose that $C_6=3C$. Then, after the blow ups of $\l_1,\l_2,\l_3,C$ and $C_2$, $f^q$ does not intersect $V_\l$, even if $q$ is $q^i_1$ or $q^i_2$. Moreover, as seen before:
\[ \X_{C_2} \equiv (2,2) \equiv 2C_3 + (2,0) \]
and the point $f^q\cap E_{C_2}$ does not lie on $C_3=V_{C_2}$. Hence $f^q$ is mapped to a point in the image of $C_2$, which is a conic infinitely near to the image of $C$. Hence it is a point in the double conic.

Finally, suppose that $q$ is not $q^i_1$ nor $q^i_2$ and $C_6\neq 3C$. Then, after the blow up at $\l$, $V_\l$ intersects $f^q$ transversally in $C_\l$. Blowing up $C$, $f^q$ does not intersect $V_\l$ any more. Then Blowing up $C_2=V_C$, $f^q$ still intersects $E_C$, which is mapped to the double conic containing the image of $f^q$.
\\ \end{proof}

The following proposition summarizes the results on singularities of $X$. 

\begin{prop}\label{sc_singularidades_de_X}
The threefold $X$ has multiplicity four in $x_p$. If $V$ has a double line infinitely near to $\l_i$, then $X$ has a double line $R_i$, which does not contain $x_p$ and is mapped to $\l_i$. The other singularities of $X$ are projected to singularities of $C_6$. 

Let $q$ be an isolated singular point of $C_6$, then it is the image of a singular point $x_q\in X$. If $q$ does not lie on any of the double lines of $V$, then $x_q$ is a double point of $X$. If $q$ lies on $l_i$ and the strict transform of $C_6$ via $\bar{\tau}$ contains both preimages of $q$, then $x_q$ is a triple point of $X$ lying on $R_i$ or $L_i$. If not, it is a double point.

Let $C$ be a non reduced component of $C_6$. If $C=\l_i$, then $X$ has a triple line $L_i$, which is projected to $\l_i$. If $C$ is a conic, then it is mapped to a double conic of $X$. 
\end{prop}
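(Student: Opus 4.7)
The plan is to organize the proof as a collection of the results established throughout Section \ref{sc_singularities_sec}, since Proposition \ref{sc_singularidades_de_X} is essentially a summary statement. First I would invoke Proposition \ref{sc_base_locus} to obtain that $p$ is mapped to a point $x_p$ of multiplicity four in $X$. Next I would apply Proposition \ref{sc_imagem_li}, which already treats the case in which $V$ has a double line infinitely near to $\l_i$: the image $R_i$ is a double line of $X$, and in the proof of that proposition it is shown explicitly that $x_p\notin R_i$ (by analysing the two disjoint conics on $E_\l$ to which $x_p$ and a point of $R_i$ belong).

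The claim that the remaining singularities are projected to singularities of $C_6$ follows from Lemma \ref{sc_singularidades_vem_de_C6}: any isolated singular point distinct from $x_p$, and any general point on a singular curve of $X$ not contained in some $R_i$, is projected via $\tau$ to a singular point of $C_6$ (taking into account non-reduced components). The case analysis on an isolated singular point $q\in C_6$ splits into two subcases: if $q\notin\l_1\cup\l_2\cup\l_3$, the assertion that $x_q$ is a double point follows from Lemma \ref{sc_sing_fora_de_li}; if $q\in \l_i$, the assertion that $x_q$ is a triple point on $R_i$ or $L_i$ (when the strict transform of $C_6$ via $\bar\tau$ meets both preimages of $q$) and a double point otherwise follows from Lemma \ref{sc_sing_em_l1}, combined with the identification of the base line of $\mQ_i'$ from Lemma \ref{sc_mqi_delpezzo} and Remark \ref{sc_rmk_retas_Li}.

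For non-reduced components of $C_6$, the case $C=\l_i$ is handled by Lemma \ref{sc_2l1+C4} (and its analogues for the non-Steiner surfaces), which shows that $L_i$ is a triple line; the case of a double (or triple) conic is the content of the lemma immediately preceding this proposition, which shows that $C$ is mapped to a (possibly infinitely near) double conic of $X$. The only point requiring a small extra remark is that the three types of data (the multiplicity-four point $x_p$, the double lines $R_i$ coming from $V$, and the images of singular points/non-reduced components of $C_6$) exhaust all singularities: this is guaranteed by Lemma \ref{sc_singularidades_vem_de_C6}, since any singular point of $X$ distinct from $x_p$ either lies on some $R_i$ or projects to a singular point of $C_6$.

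I do not expect a serious obstacle here, as every constituent claim has already been proved in the preceding lemmas. The only mild subtlety is bookkeeping: ensuring that the three cases (whether the line of $V$ carrying $q$ is proper and isolated, proper with an infinitely near companion, or itself infinitely near) are all covered by the statements of Lemmas \ref{sc_sing_em_l1} and \ref{sc_explosao_pt_geral_li} that $E_q$ (or an infinitely near exceptional divisor) is contracted to a point of $L_i$ when $V$ has no double line infinitely near to $\l_i$, and to a point of $R_i$ otherwise. Once this dichotomy is recorded explicitly, the proof reduces to assembling the cited results.
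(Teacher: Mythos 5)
Your proposal is correct and matches the paper's own proof, which is likewise just an assembly of Proposition \ref{sc_imagem_li}, Lemma \ref{sc_2l1+C4}, Lemma \ref{sc_singularidades_vem_de_C6}, Lemma \ref{sc_sing_fora_de_li} and Lemma \ref{sc_sing_em_l1}. Your additional citations (Proposition \ref{sc_base_locus} for the multiplicity of $x_p$, Lemma \ref{sc_mqi_delpezzo} and Remark \ref{sc_rmk_retas_Li} for identifying the line $L_i$, and the unlabelled lemma on double conics) only make the bookkeeping more explicit than the paper's one-line proof.
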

\begin{proof}
These results follow from Proposition \ref{sc_imagem_li}, Lemma \ref{sc_2l1+C4}, Lemma \ref{sc_singularidades_vem_de_C6}, Lemma \ref{sc_sing_fora_de_li} and Lemma \ref{sc_sing_em_l1}.
\\ \end{proof}

We recall a remark made before Lemma \ref{sc_2l1+C4}. If, for instance, $\l_2$ is infinitely near to $\l_1$ and $C_6$ contains $\l_1$, then $\X$ has a double base curve infinitely near to $\l_1$ determined by $V$. But since $\X$ has multiplicity four along $\l_2$, this curve lies infinitely near to $\l_2$. Then in this case we say that $C_6$ contains $\l_2$, instead of $\l_1$. This avoids confusion among the lines $R_i$ and $L_i$.

\section{Description of the general Bronowski threefold of degree 9}
\label{sc_oadpgeral_sec}

In this Section we investigate the Bronowski threefold in which the fundamental surface $V$ is the Steiner's Roman Surface and in which $C_6$ is a smooth curve not containing $p$ and intersecting the lines $\l_i$ transversally. By Proposition \ref{sc_singularidades_de_X}, the only singularity of $X$ is $x_p$, a point of multiplicity four.

The following result is very similar  to Proposition \ref{sl_X_contido_em_F}:

\begin{prop}\label{sc_X_contido_em_3_cones}
Let $X$ be the general Bronowski threefold of degree 9. Then for $i\in\{1,2,3\}$, $X$ lies on the cone with vertex $L_i$ over a Segre embedding of $\P^1\times\P^2$. In each of these cones, let $H_0$ be the class of a plane section and let $H_1$ be the class of a $\P^4$ of the ruling. Then:
\[ X\equiv 4H_0^2-3H_0H_1 \]
\end{prop}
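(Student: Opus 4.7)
The plan is to adapt the argument of Proposition \ref{sl_X_contido_em_F} to the present situation. Since $X$ is assumed to satisfy hypothesis (H) with $\X$ relatively complete (so Remark \ref{pr_oadpLN} applies), $X$ is linearly normal. Fix $i\in\{1,2,3\}$. By Lemma \ref{sc_mQi_pencil} the pencil $\mQ_i^\prime$ covers $X$ and each of its members is a quartic surface spanning a $\P^4$; applying Theorem \ref{pr_contidonumscroll} then yields a rational normal scroll $F_i$ containing $X$, namely $F_i = \bigcup_\lambda \overline{D}_\lambda$, where $\overline{D}_\lambda \cong \P^4$ is the linear span of $D_\lambda \in \mQ_i^\prime$.

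By Lemma \ref{sc_mqi_delpezzo} every surface in $\mQ_i^\prime$ contains the line $L_i$, so every ruling $\P^4$ of $F_i$ contains $L_i$, and $F_i$ is a cone with vertex $L_i$ over a threefold $Y_i\subset \P^5$. Two distinct rulings of $F_i$ meet exactly along $L_i$, so modding out by $L_i$ their images are pairwise disjoint planes of $Y_i$. Hence $Y_i$ is a rational normal threefold swept out by a one-dimensional family of disjoint planes, which identifies $Y_i$ with the Segre embedding of $\P^1\times \P^2$ and gives $\deg F_i = 3$.

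For the class of $X$ in $F_i$, I blow up $L_i$ in $\P^7$ and consider the strict transform $G_i$ of $F_i$, which is the smooth $\P^4$-bundle $\P(\mO_{\P^1}^{\oplus 2}\oplus \mO_{\P^1}(1)^{\oplus 3})$ over $\P^1$. Its Chow ring is generated by $H_0$ (hyperplane section class) and $H_1$ (class of a $\P^4$-ruling), subject to $H_1^2=0$ and $H_0^5=3H_0^4H_1$. The (strict transform of the) threefold $X$ has codimension two, so its numerical class has the form $aH_0^2+bH_0H_1$. Two intersection numbers determine the coefficients: $X\cdot H_0^3 = \deg X = 9$ forces $3a+b=9$, while the fact that $X$ cuts a general $\P^4$-ruling of $F_i$ in the quartic Del Pezzo surface $D_4 \in \mQ_i^\prime$ gives $X\cdot H_0^2H_1 = a = 4$. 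Solving yields $a=4$, $b=-3$, hence $X\equiv 4H_0^2 - 3H_0H_1$.

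The main obstacle will be handling rigorously the singularity of $F_i$ along $L_i$ and verifying that the strict transform of $X$ in $G_i$ has no extra exceptional contribution that would perturb the intersection numbers. This is completely analogous to the residual intersection argument of Proposition \ref{sl_X_contido_em_F}: one picks auxiliary hypersurfaces through $X$ but not through $F_i$, and analyses their strict transforms in $G_i$ against $E=\eta^{-1}(L_i)\cap G_i$; once this is in hand, the linear algebra above forces the stated class.
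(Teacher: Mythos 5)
Your first half (linear normality, Theorem \ref{pr_contidonumscroll}, identification of each $F_i$ as the cone with vertex $L_i$ over the Segre embedding of $\P^1\times\P^2$) is the same as the paper's. For the class of $X$ you take a genuinely different route: the paper uses the existence of three distinct cones to extract two quadrics $Q_1,Q_2$ containing $X$ but not $F_i$, writes $Q_1^\prime\cap Q_2^\prime=X^\prime+E+S$ on the resolution, computes $\deg(\eta(S))=12-9=3$ so that $S\equiv 3H_0H_1$, and pushes $X^\prime+E\equiv 4H_0^2-3H_0H_1$ forward to $F_i$; you instead pin down the class by the two intersection numbers $X^\prime\cdot H_0^3=9$ and $X^\prime\cdot H_0^2H_1=4$, which works because $A^2$ of the bundle is freely generated by $H_0^2$ and $H_0H_1$ and the pairing against $H_0^3,H_0^2H_1$ has matrix with entries $3,1,1,0$, hence determinant $-1$. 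This is cleaner and dispenses with the auxiliary quadrics altogether. One correction is needed in your setup: the strict transform of $F_i$ in $\Bl_{L_i}\P^7$ is $\Bl_{L_i}F_i$, whose exceptional locus is a \emph{divisor} (a bundle over $L_i$ with fibre $Y_i$), and its Chow ring is strictly larger than $\Z[H_0,H_1]/(H_1^2,\,H_0^5-3H_0^4H_1)$; the variety with that Chow ring is the scroll resolution $\P(\mO_{\P^1}^{\oplus 2}\oplus\mO_{\P^1}(1)^{\oplus 3})$, which also dominates $F_i$ but contracts only a $\P^1\times\P^1$ to $L_i$. You should carry out the computation on the scroll resolution: there the contracted locus has codimension three, so it cannot contribute to a codimension-two class, the two intersection numbers are computed exactly as you say (the general fibre meets the proper transform of $X$ in the quartic $D_4\in\mQ_i^\prime$, and $H_0$ restricted to the proper transform is the pullback of $\mO_X(1)$ under a birational map), and the ``extra exceptional contribution'' you defer to a residual-intersection argument at the end simply does not arise. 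With that adjustment your argument is complete and agrees with the paper's conclusion after pushforward to $F_i$, since $\eta_*[E]=0$.
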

\begin{proof}
For simplicity, set $i=1$. By Remark \ref{pr_oadpLN}, $X$ is linearly normal. Then, by Theorem \ref{pr_contidonumscroll}, $X$ lies on a rational normal scroll $F$, described by the $\P^4$'s spanned by the quartics in $\mQ^\prime_1$ (see Lemma \ref{sc_mqi_delpezzo}). These $\P^4$'s have in common the line $L_1\in X$, so $F$ is the cone with vertex $L_1$ over a scroll $Y$ in $\P^5$. It contains a one-dimensional family of disjoint planes, so $Y$ is the Segre embedding of $\P^1\times \P^2$. 

Since $V$ is the Steiner's Roman surface, $X$ lies on three distinct cones. The vertex of each of these cones is a line $L_i$. Since the cones are distinct and since each cone is the intersection of three quadric hypersurfaces, it follows that $X$ lies on the intersection of $F$ with two quadrics $Q_1$ and $Q_2$. Note that $Q_1\cap Q_2$ intersects a $\P^4$ of the ruling in a surface of $\mQ^\prime_1$. By Lemma \ref{sc_mqi_delpezzo}, this surface is a weak Del Pezzo quartic surface having a double point in $x_p\in L_1$.

Let $\eta:\Bl_{L_1}(\P^7)\to\P^7$ be the blow up of $L_1$, let $G$ be the strict transform of $F$  and $E$ be the intersection of $G$ with the exceptional divisor. Set $H_0$ and $H_1$ for the strict transforms in $G$ of the classes $H_0$ and $H_1$ in $F$. Let $Q_1^\prime$ and $Q_2^\prime$ be the total transforms via $\eta$ of $Q_1\cap F$ and $Q_2\cap F$, and let $X^\prime$ be the strict transform of $X\subset F$.

The divisors $Q_1^\prime$ and $Q_2^\prime$ contain both $X^\prime$ and $E$. So let $S$ be residual intersection, that is:
\[ Q_1^\prime\cap Q_2^\prime = X^\prime+E+S \]
Since $Q_1\cap Q_2$ intersects a $\P^4$ of the ruling of $F$ in a weak Del Pezzo surface, $S$ does not contain $E$. And since $X$ intersects a $\P^4$ of the ruling of $F$ in a quartic surface, it follows that $S$ consists of threefolds contained in rulings of $G$. Moreover: 
\[ \deg(\eta(S)) = \deg(Q_1\cap Q_2\cap F)-\deg(X)=12-9=3 \]
These considerations imply that $S \equiv 3H_0H_1$. Therefore: 
\[ X^\prime+E\equiv 4H_0^2-3H_0H_1 \]
Taking the image via $\eta$ in $\P^7$, the result follows.
\\ \end{proof}

This description gives us an important result:

\begin{theo}
The general Bronowski threefold of degree 9 is an OADP variety.
\end{theo}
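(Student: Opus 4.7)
The plan is to mimic the OADP proofs for the other Bronowski threefolds in this thesis (Theorem~\ref{ed_the_classification}(i) and Proposition~\ref{sl_X_contido_em_F}), both of which reduce to the argument of \cite[Example 2.6]{cmr}. The starting point is the cone description given by Proposition~\ref{sc_X_contido_em_3_cones}: $X$ lives inside the $5$-dimensional cone $F$ over the Segre embedding $\P^1\times\P^2\subset\P^5$ with vertex the line $L=L_1$, in the numerical class $4H_0^2-3H_0H_1$. The ruling of $F$ is a one-parameter family of $\P^4$s, each containing $L$, and by Lemmas~\ref{sc_mQi_pencil} and~\ref{sc_mqi_delpezzo} it cuts out on $X$ the pencil $\mQ_1'$ of weak Del Pezzo quartic surfaces, every slice $D_4\subset\Pi$ containing $L$ and passing through the quadruple point $x_p\in L$.

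The proof of OADP then proceeds by counting secants through a general point $P\in\P^7$. The Bronowski hypothesis guarantees at least one such secant, so the content is uniqueness. Classically, a smooth Del Pezzo quartic surface in $\P^4$ is an OADP surface: through a general point of $\P^4$ there is exactly one secant of $D_4$. The plan is therefore to show that any secant of $X$ through a sufficiently general $P$ is forced to lie inside a single $\P^4$ of the ruling of $F$; once this is in place, the classical OADP property of the corresponding slice $D_4$ supplies the unique secant of $X$ through $P$. Singling out the correct ruling $\P^4$ is carried out by a join-type construction: the pencil of $\P^4$s sweeps out $F$, and the incidence of $P$ with the ruling (combined with the class $4H_0^2-3H_0H_1$, which controls how many points of $X$ lie on a general ruling $\P^4$) picks out the unique member hosting the secant.

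The main obstacle I expect is ruling out secants whose two endpoints lie on \emph{distinct} members of $\mQ_1'$. Two distinct $\P^4$s of the ruling of $F$ meet exactly along $L$, so their span has dimension at most $7$ and their join sweeps out a locus of dimension at most $7$; however the sublocus of genuine secants (i.e.\ joining a point of $X\cap\Pi_1$ to a point of $X\cap\Pi_2$) should be strictly smaller in dimension, because both quartic slices pass through the common vertex line $L$ and share the quadruple point $x_p$, forcing an extra incidence condition. Verifying this dimension drop is the heart of the argument and mirrors the analogous step in \cite[Example 2.6]{cmr}. Once it is established, the Bronowski hypothesis combined with the OADP property of the general Del Pezzo quartic slice yields the unique secant of $X$ through a general $P\in\P^7$, completing the proof.
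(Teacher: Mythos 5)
Your reduction to the quartic Del Pezzo slices cannot work, for two independent reasons. First, the cone $F$ is only five-dimensional, so a general point $P\in\P^7$ does not lie on $F$, let alone on a $\P^4$ of its ruling; consequently no secant of $X$ through $P$ can have both endpoints in a single member of $\mQ_1^\prime$ (the line joining two points of a ruling $\P^4$ stays inside that $\P^4$, which misses $P$). The secants you need to control are therefore exactly the ones you hoped to exclude, namely those joining points of $X$ lying in distinct ruling $\P^4$'s. Second, the quartic Del Pezzo surface $D_4\subset\P^4$ is not an OADP surface: its abstract secant variety has dimension five and maps onto $\P^4$, so through a general point of $\P^4$ there is a one-dimensional family of secant lines, not a unique one. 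OADP surfaces live in $\P^5$, and the surfaces relevant here are the quintic Del Pezzos.

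The reduction that does work, and the one the paper uses following \cite[Example 2.6]{cmr}, is different: to a general point $P\in\P^7$ one associates a smooth quadric surface $Q\subset Y=\P^1\times\P^2$ disjoint from the vertex line $L$, such that the secants of $X$ through $P$ are controlled by the $\P^5$ spanned by $Q$ and $L$. The class $X\equiv 4H_0^2-3H_0H_1$ from Proposition~\ref{sc_X_contido_em_3_cones} then shows that $X$ meets this $\P^5$ in a non-degenerate surface of degree $(4H_0^2-3H_0H_1)\cdot H_0^2=5$; since $X$ has no line through its general point (as $\II_{X,x}$ is base point free), the classification of non-degenerate quintic surfaces in $\P^5$ forces this section to be a (possibly weak) Del Pezzo quintic, which is an OADP surface. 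Your instinct to slice $X$ by the geometry of $F$ and invoke an OADP property of the slice is the right one, but the slice must be a $\P^5$-section transverse to the ruling, not a member of the ruling itself.
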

\begin{proof}
By Proposition \ref{sc_X_contido_em_3_cones}, $X\equiv 4H_0^2-3H_0H_1$ in a cone $F$ over $Y$ with vertex $L$, where $Y$ is a Segre embedding of $\P^1\times\P^2$. Following \cite[Example 2.6]{cmr},  there is a smooth quadric surface $Q\subset Y$ not intersecting $L$ such that if $X$ intersects  the $\P^5$ spanned by $Q$ and $L$ in an OADP surface, then $X$ is an OADP variety. The quadric $Q$ (and the $\P^5$ spanned by $Q$ and $L$) depends on the choice of a general point in $\P^7$, and $X$ is OADP if there is a unique secant line to $X$ through this point.

Since $Q\subset Y$ and $X\subset F$, the intersection of $X$ with the $\P^5$ is a surface of type $4H_0^2-3H_0H_1$ in the cone $F^\prime$ over $Q$ with vertex $L$, where now $H_0$ is the class of a section of $F^\prime$ and $H_1$ is the class of a $\P^3$ of the ruling. Then the degree of this intersection is:
\[ (4H_0^2 - 3H_0H_1)H_0^2  = 8 - 3 + 0 = 5\]

Since $X$ is non degenerate, this degree five surface also is. This follows from the fact that hyperplane sections of a non degenerate variety are non degenerate. 
 The classification of non degenerate degree five surfaces in $\P^5$ is well known (see \cite{nagata} or \cite{coskun}), giving one of the following:
\begin{itemize}
\item A (possibly weak) Del Pezzo surface
\item A projection to $\P^5$ of a degree five scroll in $\P^6$;
\item A cone over an elliptic curve of degree five in $\P^4$.
\end{itemize}

The two last surfaces are ruled by lines. However, there is no line in $X$ passing through a general point of it. Indeed, this would imply that the linear system $\II_{X,x}$ has  a base point, which is not the case. Therefore, since $Q$ depends on a general point of $\P^7$, it follows that the intersection of $X$ with the $\P^5$ spanned by $Q$ and $L$ is a (possibly weak) Del Pezzo Surface of degree five. And this is an OADP surface (see \cite{cmr}). Then $X$ is an OADP variety.
\\ \end{proof}

The other Bronowski threefolds of degree nine are degenerations of the general one studied in this Section. Then those are also OADP varieties. We will not give the details here.

\end{document}